%%%%%%%%%%%%%%%%%%%%%%%%%%%%%%%%%%%%%%%%%%%%%%%%%%%%%%%%%%%%%%%%%%%%%%%%%%%%%%%%%%%%%%%%%%%

%%%%%%%%%%%%%%%%%%%%%%%%% document style and packages %%%%%%%%%%%%%%%%%%%%%%%%%%%%%%%%%%%%%%

\documentclass[11pt]{article}
\usepackage[utf8]{inputenc}
\usepackage{amsmath,amsthm,amssymb}
\usepackage{tikz}
\usepackage{tikz-cd} % commutative diagrams
\usepackage{mathtools} % enhance amsmath
\usepackage{textcomp} % text symbols suchas bullet copyright musical notes
\usepackage{bbm} % mathbbm for 2
\usepackage{bbold} % all chars with bbm
\usepackage{mathpartir}
\usepackage{proof} % proof trees
\usepackage[tiny]{titlesec} % modify the size of the titles in ssections
\usepackage{mathrsfs} % allow bicategories notation using script with \mathscr
\usepackage{enumerate}

\usepackage{hyperref}
\hypersetup{
	colorlinks=true,
	linkcolor=blue,
      }

\usepackage{cleveref} % Use clever ref to carry the name of the referenced object. Must be loaded after any ref package.

% bibliography to be added. Meanwhile, follow the same method as before

%%% enviroments

\newtheorem{theorem}{Theorem}[section] % counter of results by section

\newtheorem{lemma}[theorem]{Lemma}
\newtheorem{proposition}[theorem]{Proposition}
\newtheorem{corollary}[theorem]{Corollary}

\theoremstyle{definition}
\newtheorem{definition}[theorem]{Definition}
\newtheorem{example}[theorem]{Example}
\newtheorem{construction}[theorem]{Construction}

\theoremstyle{remark}
\newtheorem{remark}[theorem]{Remark}
\newtheorem{observation}[theorem]{Observation}
\newtheorem{notation}[theorem]{Notation}

%%%%%%%%%
%%%%%%%%%%%%%%%%%%%%%% Special theorem environment for introduction

\newtheoremstyle{named}{}{}{\itshape}{}{\bfseries}{.}{.5em}{\thmnote{#3 }#1}
\theoremstyle{named}
\newtheorem*{introtheorem}{Theorem}

%%%%%%%%%%%%%%%%%%%%%%%%%%%%%%%%%%%%%%%%%%%%%%%%%%%%%%%%%%%%%%%%%%%%%%%%%%%%%%%%%%%%%%%%%%%%

%%%%%%%%%%%%%%%%%%%%%%%%%%%%%% standard math notations %%%%%%%%%%%%%%%%%%%%%%%%%%%%%%%%%%%%%

%% common number systems 

\newcommand{\N}{\mathbb{N}} % Naturals
 % Reals
\newcommand{\Z}{\mathbb{Z}} % Integers
 % Rationals
\newcommand{\C}{\mathbb{C}} % Complex numbers

%% geometric objects

 %the sphere

%%%%%%%%%%%%%%%%%%%%%%%%%%%%%%%%%%%%%%%%%%%%%%%%%%%%%%%%%%%%%%%%%%%%%%%%%%%%%%%%%%%%%%%%%%%%

%%%%%%%%%%%%%%%%%%%%%%%%%%%%%%%%% Category theory %%%%%%%%%%%%%%%%%%%%%%%%%%%%%%%%%%%%%%%%%%

%% Here we introduce notation for all the categories: all objects ought to be considered small

\newcommand{\cfam}{\mathbf{Fam}} % category FAM
 % category of posets
\newcommand{\set}{\mathbf{Set}} % category of sets
\newcommand{\topo}{\mathbf{Top}} % category of topological spaces
 % % category of pointed topological spaces
 % % category of groupoids
\newcommand{\cat}{\mathbf{Cat}} % category of categories
\newcommand{\twocat}{\mathbf{2\text{-}Cat}} % category of 2 two-categories and 2 functors
\newcommand{\bicat}{\mathbf{Bicat_s}} % category of bicategories and strict functors
 % stratified simplicial sets

 % category of Kan complexes
\newcommand{\sset}{\mathbf{sSet}} % category of simplicial sets
\newcommand{\bisim}{\mathbf{ssSet}} % category of bisimplicial sets
 % category of simplicial categories
\newcommand{\bool}{\mathbf{Bool}} % category of simplicial categories

 % abstract category A
\newcommand{\catB}{\mathcal{B}} % abstract category B
\newcommand{\catC}{\mathcal{C}} % abstract category C
\newcommand{\catD}{\mathcal{D}} % abstract category D
\newcommand{\catE}{\mathcal{E}} % abstract category E
 % abstract category I
\newcommand{\catJ}{\mathcal{J}} % abstract category J
\newcommand{\catM}{\mathcal{M}} % abstract category M
\newcommand{\catN}{\mathcal{N}} % abstract category N
 % abstract category F
\newcommand{\catU}{\mathcal{U}} % abstract category F
 % abstract bicategory B
 % abstract bicategory C

 % class of maps on a category, usually weak equivalences

 % category of set-valued presheaves over a category A
 % % category of elements of a functor X:\catA \to X

%%%%%%%%%%%

% sigma type notations

 % first projection from sigma type
 % second projection from sigma type

% some types

 % universe type
\newcommand{\pointtyp}{\textup{\textbf{1}}} % single point type

% HoTT properties

\newcommand{\iscontr}{\textup{\textsf{isContr}}} % a type is contractible
 % a type is small
\newcommand{\isinitial}{\textup{\textsf{isInitial}}} % a type is small

%%%%%%%%%%%%%%%%%%%%%%%%%%%%%%%%%%%%%%%%%%%%%%%%%%%%%%%%%%%%%%%%%%%%%%%%%%%%%%%%%%%%%%%%%%%

%%%%%%%%%%%%%%%%%%%%%% Type-theoretic notation %%%%%%%%%%%%%%%%%%%%%%%%%%%%%%%%%%

%%% Identities

\newcommand{\idx}{\textup{\textsf{id}}}

% judegements 

\newcommand{\type}{\textup{\textsf{Type}}}
\newcommand{\context}{\textup{\textsf{Ctxt}}}
\newcommand{\spc}{\textup{\textsf{Set}}} %types for bisimplicial sets

%% simplicial shapes

 % dimention 0
 % dimention 1
 % dimension 2
 % dimension 3
 % dimension n
 % dimension m

%% simplicial subshapes

 % 2-1 horn
 % boundary of the 1-dimensional simplex
 % boundary of the 2-dimensional simplex

%%%%% Notation for homotopy languages 

\newcommand{\kcon}{\kappa\text{-CON}} % category of contextual categories
\newcommand{\kgat}{\kappa\text{-GAT}} % category of generalized algebraic theories

 % the contextual category from a category with attributes

%%% Arrows

\newcommand{\display}{\twoheadrightarrow}
\newcommand{\fibration}{\twoheadrightarrow}
\newcommand{\cofibration}{\hookrightarrow}
\newcommand{\trivialcof}{\overset{\sim}{\hookrightarrow}}
\newcommand{\trivialfib}{\overset{\sim}{\twoheadrightarrow}}

\newcommand{\lefttrivialcofib}{\overset{\sim}{\hookleftarrow}}

%%% Symbols 

\newcommand{\Lcal}{\mathcal{L}}
\newcommand{\Pcal}{\mathcal{P}}

\newcommand{\Jcal}{\mathcal{J}}

\newcommand{\Lb}{\mathbb{L}}

%%% sort notations

\newcommand{\obtyp}{\textup{\textsf{Ob}}}
\newcommand{\homtyp}{\textup{\textsf{Hom}}}
\newcommand{\eqtyp}{\textup{\textsf{Eq}}} 
\newcommand{\simp}{\textup{\textsf{-simplex}}}

%%% Classes of maps

\newcommand{\cof}{\textsc{Cof}}
\newcommand{\fib}{\textsc{Fib}}
\newcommand{\bif}{\textsc{Bif}}

%%%%%% Presheaf of types %%%%%%

\newcommand{\ty}{\textsf{Ty}}

%%% Miscellaneous

\DeclareMathOperator{\Colim}{\texttt{Colim}}
\DeclareMathOperator{\Lim}{\texttt{Lim}}
\newcommand{\op}{\texttt{op}}

\DeclareMathOperator{\Mod}{Mod} % For the category of models of a clan or a cartmel theory
\DeclareMathOperator{\Hom}{Hom}
 %For category of functors
\DeclareMathOperator{\Ho}{Ho} % Homotophy category
 % natural transformations

%%% Yoneda symbol

\newcommand{\yoneda}{\text{\usefont{U}{min}{m}{n}\symbol{'210}}}
\newcommand{\ie}{\textit{i.e., }}

\DeclareFontFamily{U}{min}{}
\DeclareFontShape{U}{min}{m}{n}{<-> udmj30}{}

%%% Theoren names

\newcommand{\first}{$1^{st}$ invariance theorem}
\newcommand{\First}{$1^{st}$ invariance theorem}

\newcommand{\Second}{$2^{nd}$ invariance theorem}

\newcommand{\third}{$3^{rd}$ invariance theorem}
\newcommand{\Third}{$3^{rd}$ invariance theorem}

\newcommand{\fourth}{$4^{th}$ invariance theorem}
\newcommand{\Fourth}{$4^{th}$ invariance theorem}

%%%%%%%%%%%%%%%%%%%%%%%%%%%%%%%%%%%%%%%%%%%%%%%%%%%%%%%%%%%%%%%%%%%%%%%%%%%%%%%%%%%%%%%%%%%

%%%%%%%%%%%%%%%%%%%%%%%%

%%%%%%%%%%% Bubble notes %%%%%%%%%%%%%%%

\usepackage{todonotes}

 %% Simon's notes

%% Cesar's notes

%\usepackage{showlabels}  %%%% To remove after edition

%%%%%%%%%%%%%

\title{ Homotopy Languages}
\author{C\'esar Bardomiano Mart\'inez and Simon Henry}

\begin{document}

	\maketitle
	
	\begin{abstract} We attach to each weak model category $\mathcal{M}$ a class of first order formulas about the fibrant objects of $\mathcal{M}$ whose validity is invariant under homotopies and weak equivalences. This is a generalization of the classical Blanc-Freyd Language of categories---which involves formula avoiding equality on objects and which are invariant under isomorphism and equivalences of categories. In particular, we obtain similar homotopy invariant languages for $2$-categories, bicategories, chain complexes, Kan complexes, quasi-categories, Segal spaces, and so on...

	\end{abstract}

\renewcommand{\thefootnote}{\fnsymbol{footnote}} 
\footnotetext{\emph{2020 Mathematics Subject Classification.} 18A15,18C10,18N40,18N45,55U35.}
\footnotetext{\emph{Keywords.}  Dependent type, Model categories, Generalized algebraic theory.}
\footnotetext{\emph{emails:} cbard035@uottawa.ca, shenry2@uottawa.ca}
\renewcommand{\thefootnote}{\arabic{footnote}}

% 18A15 Foundations, relations to logic and deductive system
% 18C10 Theories (e.g., algebraic theories), structure, and semantics
% 18N45 Categories of fibrations, relations to K-theory, relations to type theory
% 18N40 Homotopical algebra, Quillen model categories, derivators
% 55U35 Abstract and axiomatic homotopy theory in algebraic topology
% 55U40 Topological categories, foundations of homotopy theory

        % table of contents
        \setcounter{tocdepth}{2}
        \tableofcontents
        
	%Introduction
	
	\section{Introduction}

It is a well-known result in category theory (see for example \cite{freyd76}, \cite{blanc78}) that any property of a category, or of objects and morphisms of this category, that does not use equality between objects is automatically invariant both under equivalence of categories, and under substitution of all the objects and morphisms involved by isomorphic ones consistently.

For example, because the notion of limit in a category is naturally formulated without using equality between objects we automatically know that equivalences of categories preserve limits, or that if two diagrams are naturally isomorphic then a limit for one is also a limit for the other.

To be a little more precise, the above-mentioned results are about first-order formulas in which we can have quantifiers over all objects of the category, or over all morphisms in a given hom-set ``$\hom(X,Y)$''. We can use equality between two terms taken from the same $\hom(X,Y)$, but not between two terms of type ``objects'', or two terms that are in different hom-sets.

For example, the property of an object $X$ to be a terminal object, which can be written as
\[\texttt{isTerminal}(X) \coloneqq \forall Y \in \text{Ob}, \left( \exists v \in \Hom(Y,X) \text{ and } \forall u,w \in \Hom(Y,X), u = w \right)\]
is an instance of such a formula, but the following formula
\begin{multline*} \forall X,Y \in \text{Ob}, \forall f \in \Hom(X,Y), \forall g \in \Hom(Y,X), \\ \left( f \circ g = \idx_Y \text{ and } g \circ f = \idx_X \Rightarrow X = Y \right)
 \end{multline*}
 which states that the category we are working with is skeletal, or the formula
 \begin{multline*}
   \forall X,Y \in \text{Ob}, \forall f \in \Hom(X,Y), \forall g \in \Hom(Y,X), \\ \left( f \circ g = \idx_Y \text{ and } g \circ f = \idx_X \Rightarrow f = \idx_X \right)
 \end{multline*}
 which expresses that identities are the only isomorphisms, are not of this form: the first one involves the equality $X = Y$, and the second one involves an equality $f = \idx_X$ that is not correctly typed as $f \in \Hom(X,Y)$. And these two formulas are indeed not invariant under equivalence of categories\footnote{As they are formulas with no free parameters, invariance under substitution by isomorphic objects does not really make sense.}.

 \bigskip
 
    Note that in order for this to make sense, it is key to use a notion of ``dependent types''. Indeed, we need to be able to formulate the idea that a morphism $f$ is in $\Hom(X,Y)$, without being able to say that $s(f) =X $ and $t(f) = Y$ as this would involve using equality between objects. So, given two objects $X$ and $Y$, we need to be able to consider the type of arrows from $X$ to $Y$ as a primitive notion.

    \bigskip

    Now, it is natural to expect that similar results can be generalized to higher categories. For example, we expect (and it can be shown) that a property of $2$-categories or bicategories that does not use equality between objects or between $1$-arrows will also be invariant under biequivalences. One can also expect it can be generalized to other sorts of higher structures, for example a result about multicategories not using equality between objects should also have similar invariance properties.

    \bigskip
    
    The main goal of this paper is, informally, to establish a version of this result for essentially any kind of higher structure independently of the type of structure or the ``categoricity level''. The only requirement is that the sort of higher structure we are considering must be organized as the fibrant objects of a model category (or semi-model category, or weak model category).

    That is, we will attach to every (semi/weak) model category a ``first-order language'', whose formulas are statements about objects of the category (possibly with parameters) such that

    \begin{itemize}
    \item Replacing the value of the parameters by homotopically equivalent parameters does not change the validity of a formula.
    \item Two weakly equivalent fibrant objects satisfy the same formulas.
    \end{itemize}

    We call these two results respectively the $1^{st}$ and $2^{nd}$ invariance theorem, and their precise statement is given as \cref{invariance-theorems}. We will now go into a little more detail about how this language is defined, and explain the role of the different sections of the paper.
    
    \bigskip

    As mentioned above, our language is based first on dependent types. More precisely, we use the formalism of ``Generalised algebraic theory'' in the sense of Cartmell (\cite{cartmell1978}) as our basis, which are algebraic theories with dependent types.  If we compare our approach to traditional model theory, our choice of a generalized algebraic theory $T$ plays a role similar to the choice of a signature. However, contrary to traditional model theory, it is crucial for us that the theory $T$ (\ie our signature) can be any generalized algebraic theory, in particular the theory $T$ can include equality axioms. This is in part because the first-order logic we will introduce on top of it will not have equality, so algebraic equations cannot be treated as axioms like any other.

\subsection*{Overview}
    
    Starting from a generalized algebraic theory $T$, we build in \cref{sec:syntactic-approach} the first-order language $\Lcal^T$, as well as its quotient $\Lb^T$ where ``provably equivalent formulas'' (for a relatively weak notion of proof) are identified.

    The idea is that for each formula, the (free) variables are taken from a context of the theory $T$, and there can be no equality at all. In particular, the theory $T$ itself can have axioms that are not part of this first order language $\Lcal^T$. We will see through examples how in some cases, some notion of equality, for example the case of equality between morphisms in the same $\Hom(X,Y)$ in the example of categories we started from, can be recovered indirectly using certain equality axioms in the theory $T$ itself.

    \bigskip
    
    Since we want to be able to do infinitary logic, we use everywhere an infinitary generalization of the notion of generalized algebraic theory that is introduced in \cref{appendix-a}. However, a reader familiar with generalized algebraic theories can probably guess how it works. The logic $\Lcal^T$ we introduce can include arbitrary disjunction and conjunction, as well as quantifiers ranging on infinitely many variables. We will denote by $\Lcal^T_{\lambda,\kappa}$ the language where the formulas only include disjunction and conjunction on less than $\lambda$ subformulas and where a quantifier quantifies on less than $\kappa$ many variables at the same time. The $\kappa$ is very often omitted from the notation for technical reasons, but see \cref{cstr:L_lambda-kappa}.

    In \cref{sec:models_of_clans} we review quickly some important properties of the category of models of a generalized algebraic theory, most notably their canonical weak factorization system.
    In \cref{sec:category_approach} we explain how the language defined in \cref{sec:syntactic-approach} can be given an alternative categorical definition that can be applied to any ``clan'' --- clans are a notion of a categories with a class of fibrations --- for any generalized algebraic theory $T$, the syntactic, or contextual, category $\C_T$ is a clan. And we show that the category-theoretic definition of the language of the clan is equivalent to the syntactic definition of the language of any such generalized algebraic theory. Note that every clan can be shown to be the syntactic category of a generalized algebraic theory (and we prove more generally that in our infinitary setting any ``$\kappa$-clan'' is the syntactic category of a generalized $\kappa$-algebraic theory, this is in \cref{appendix-b}) so that the language can still be seen syntactically as the ``first-order language'' of some generalized algebraic theory, but we now also have the option of working ``categorically'' with it without relying on a choice of a syntax.

    This reinterpretation in terms of clans is the key to associate a language to any model category: Given a (weak) model category $\catM$ we take the category $\catM^\cof$ of cofibrant objects and cofibration between them. This category constitutes a co-clan (the opposite of a clan) and we can take the language associated to it. This is what we call the language of the model category $\catM$. We review briefly the general theory of weak model categories in \cref{sec:wms_intro} and in \cref{sec:wms_language} we explain in detail how this language of $\catM$ actually talks about the objects of $\catM$ and prove the first two invariance theorems mentioned above.

    To give a general picture of how this language works, if $\catM$ is our model category, each formula in the language has a ``context'' $C$, which informally can be thought of as the list of free variables that can appear in the formula as well as their types. This ``context'' $C$ is concretely just a cofibrant object of $\catM$. An interpretation of the context $C$ into an object $X \in \catM$ is just a map $v:C \to X$. And given $\phi$ a formula in context $C$ and $v:C \to X$ a map, $\phi(v)$ can be either true or false. We write
    \[M \vdash \phi(v) \]
    if $\phi(v)$ is true.

    \Cref{fol-gat} ends with our first two invariance theorems, stated as \cref{invariance-theorems}:

    \begin{introtheorem}[$1^{st}$ Invariance]
      If $X$ is fibrant and $v:C \to X$ is homotopic to $v':C \to X$ then $M \vdash \phi(v) \Leftrightarrow M \vdash \phi(v')$.
    \end{introtheorem}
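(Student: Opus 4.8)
The plan is to isolate a single invariance property of the interpretation and then apply it to a path object for $X$. The property is: \emph{if $q\colon X'\trivialfib X$ is a trivial fibration between fibrant objects, $\psi$ is a formula in a context $C$, and $w\colon C\to X'$ is an arbitrary map, then $M\vdash\psi(w)\Leftrightarrow M\vdash\psi(q\circ w)$}; note that $C$ need not be cofibrant here. Granting this, the theorem follows quickly. Since $C$ is cofibrant and $X$ is fibrant, the hypothesis ``$v$ homotopic to $v'$'' is witnessed by a right homotopy through a path object of $X$: a map $h\colon C\to PX$ into a path object $X\trivialcof PX$, with endpoint projections $p_0,p_1\colon PX\to X$, such that $p_0h=v$ and $p_1h=v'$. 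The objects involved are fibrant and $p_0,p_1$ are trivial fibrations, by the properties of path objects of fibrant objects recalled in \cref{sec:wms_intro}. Applying the isolated property to $q=p_0$ and then to $q=p_1$, both times with $w=h$, and using $v=p_0h$ and $v'=p_1h$, we obtain
\[ M\vdash\phi(v)\ \Leftrightarrow\ M\vdash\phi(h)\ \Leftrightarrow\ M\vdash\phi(v'), \]
which is exactly the claim.

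To prove the isolated property I would argue by induction on the construction of $\psi$ --- a well-founded recursion, the infinitary disjunctions and conjunctions included. An atomic formula has the shape $\exists_p\top$ where $p$ is a context extension, corresponding to a cofibration $j\colon C\hookrightarrow C'$ of $\catM$, and $M\vdash(\exists_p\top)(w)$ holds precisely when $w$ extends along $j$ to some $C'\to X'$. The implication $\psi(w)\Rightarrow\psi(qw)$ is then obtained by composing such an extension with $q$; conversely, an extension $C'\to X$ of $qw$ forms, together with $j$ and $q$, a commuting square whose diagonal filler --- a cofibration has the left lifting property against a trivial fibration --- is an extension $C'\to X'$ of $w$. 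For $\psi=\exists_p\chi$ and $\psi=\forall_p\chi$, with $p$ the context extension $C'\twoheadrightarrow C$ and $\chi$ a formula over $C'$, one combines the inductive hypothesis for $\chi$ (against the same trivial fibration $q$) with the same lifting property applied to $j\colon C\hookrightarrow C'$: any interpretation $C'\to X$ extending $qw$ lifts to an interpretation $C'\to X'$ extending $w$, and any interpretation $C'\to X'$ extending $w$ pushes forward along $q$ to one extending $qw$; feeding these into the inductive hypothesis yields both implications in both the existential and the universal case. Disjunctions and conjunctions are immediate from the inductive hypothesis, termwise.

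I expect the real work to lie in the groundwork rather than in this argument. One needs the semantics of the quantifiers pinned down precisely enough to recognise that every lifting problem arising in the induction is of the form ``context-extension cofibration versus trivial fibration'', and, in the weak model category case, one needs the existence and good behaviour of path objects of fibrant objects --- in particular that their endpoint projections are trivial fibrations --- which is provided by \cref{sec:wms_intro}. Beyond that, the entire proof runs on the single principle that cofibrations lift against trivial fibrations, invoked once in each clause of the induction.
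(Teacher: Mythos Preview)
Your strategy matches the paper's: reduce to invariance along trivial fibrations between fibrant objects, then bridge $v$ and $v'$ through a path object $PX$ using that both endpoint projections are trivial fibrations. The paper carries out that reduction identically.

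Where you differ is in how invariance along a trivial fibration $q\colon X'\trivialfib X$ is established. You propose a direct induction on formulas, invoking the lifting property of core cofibrations against trivial fibrations in each quantifier clause. The paper instead packages this as a single statement about the boolean-algebra semantics: a trivial fibration $q$ makes $q^*\colon\mathcal{P}(\yoneda_X)\to\mathcal{P}(\yoneda_{X'})$ a morphism of $\lambda$-boolean algebras over $\catM^\cof$ (\cref{morphism-bol-alg-iff-trivial-fib}, via \cref{lem:trivial_fibration_as_weak_pullback} and \cref{lem:BC_Set}), and then initiality of $\Lb_\lambda^\catM$ forces $|\phi|_{X'}=q^{-1}|\phi|_X$ for every formula at once (\cref{cor:Invariance_triv_fib_clan}). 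Your induction and the paper's initiality argument unfold to the same content --- the Beck--Chevalley check in the paper is exactly the lifting you perform in the $\exists$/$\forall$ clauses --- but the paper's route avoids revisiting each connective separately and handles negation automatically.

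Two small points. First, your notation $M\vdash\psi(w)\Leftrightarrow M\vdash\psi(qw)$ hides that the interpreting object changes: it should read $X'\vdash\psi(w)\Leftrightarrow X\vdash\psi(qw)$, and your path-object chain similarly alternates between $PX$ and $X$. Second, the remark that ``$C$ need not be cofibrant'' is wrong in the weak model setting --- the lifting axiom of \cref{def:wms} requires both ends of the cofibration to be cofibrant --- though in practice every context in $\Lb_\lambda^\catM$ is a cofibrant object by construction, so no harm results.
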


    \begin{introtheorem}[$2^{nd}$ Invariance]
      If $F:X \to Y$ is a weak equivalence between fibrant objects, then $X \vdash \phi(v) \Leftrightarrow Y \vdash \phi(f(v))$.
    \end{introtheorem}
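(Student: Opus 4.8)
\medskip
\noindent\textbf{Proof sketch.} The plan is to prove both invariance theorems by well-founded induction on the structure of the formula $\phi$, following the recursive clauses that define the semantics $M \vdash \phi(v)$; the Second invariance theorem is then reduced, via a factorization, to two substitution lemmas that are themselves proved by the same kind of induction (the Second invariance uses the First as an input). For the atomic formulas and for the (infinitary) conjunctions and disjunctions, invariance is immediate from the inductive hypothesis, since $M \vdash \bigwedge_i \phi_i(v)$, resp. $\bigvee_i \phi_i(v)$, holds iff $M \vdash \phi_i(v)$ for all, resp. some, $i$. All the content is in the quantifier cases $\exists_p \psi$ and $\forall_p \psi$, where $p$ is a display map of the co-clan $\catM^\cof$ --- concretely a cofibration $i : C \hookrightarrow D$ of $\catM$ between cofibrant objects --- and where $M \vdash (\exists_p\psi)(v)$, resp. $M \vdash (\forall_p\psi)(v)$, means that there exists, resp. that for every, $w : D \to X$ with $w i = v$ one has $M \vdash \psi(w)$. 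The tool used over and over is the lifting / homotopy-extension property: a cofibration between cofibrant objects lifts on the left against every trivial fibration and, dually, a trivial cofibration lifts on the left against every fibration; in particular, for $X$ fibrant both endpoint projections $PX \twoheadrightarrow X$ of a path object of $X$ are trivial fibrations.

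\medskip
\noindent\emph{First invariance.} Take $X$ fibrant and a homotopy between $v$ and $v'$, encoded (since $X$ is fibrant and $C$ cofibrant, so left and right homotopy coincide) as $H : C \to PX$ with endpoints $v$ and $v'$. Suppose $M \vdash (\exists_p\psi)(v)$, witnessed by some $w : D \to X$ with $wi = v$. Lift $i$ against the trivial fibration $\partial_0 : PX \twoheadrightarrow X$ along $H$ on top and $w$ on the bottom, obtaining $\widetilde H : D \to PX$ with $\widetilde H i = H$ and $\partial_0 \widetilde H = w$; then $w' := \partial_1 \widetilde H$ satisfies $w' i = v'$, and $\widetilde H$ witnesses $w \simeq w'$, so $M \vdash \psi(w')$ by the inductive hypothesis, whence $M \vdash (\exists_p\psi)(v')$. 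The converse is symmetric, and $\forall_p\psi$ is handled the same way: a lift $w'$ of $v'$ is turned, by the same lifting argument run along the reversed homotopy, into a lift $w$ of $v$ with $w \simeq w'$, and then $M \vdash \psi(w)$ by hypothesis and $M \vdash \psi(w')$ by induction.

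\medskip
\noindent\emph{Second invariance.} First I would prove two substitution lemmas. \textbf{(a)} If $g : X \to X'$ is a trivial fibration then $M \vdash \phi(v) \Leftrightarrow M \vdash \phi(gv)$ for every $v : C \to X$: by induction on $\phi$, the only cases being $\exists_p$ and $\forall_p$, where one passes from a lift $w$ of $v$ along $i$ to the lift $gw$ of $gv$, and conversely lifts a lift $w'$ of $gv$ through the trivial fibration $g$ to a lift $w$ of $v$ with $gw = w'$ (using that $i$ is a cofibration between cofibrant objects), the two sides being matched by the inductive hypothesis on $\psi$. \textbf{(b)} If $j : X \trivialcof Z$ is a trivial cofibration between fibrant objects then $M \vdash \phi(v) \Leftrightarrow M \vdash \phi(jv)$ for every $v : C \to X$. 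Since $X$ is fibrant, $j$ admits a retraction $r : Z \to X$ with $rj = \idx_X$; since $Z$ is fibrant, lifting $j$ against the fibration $PZ \twoheadrightarrow Z \times Z$ yields a homotopy $jr \simeq \idx_Z$. Then induct on $\phi$: in the case $\exists_p\psi$ (and dually $\forall_p\psi$), a lift $w$ of $v$ maps under $j$ to the lift $jw$ of $jv$, and a lift $w'$ of $jv$ maps under $r$ to the lift $rw'$ of $v$; one matches the two halves using the inductive hypothesis on $\psi$ together with the First invariance theorem applied to the homotopy $jrw' \simeq w'$ (legitimate since $Z$ is fibrant). Finally, given a weak equivalence $F : X \to Y$ between fibrant objects, factor it --- using fibrancy of the codomain $Y$ --- as $F = p \circ j$ with $j : X \trivialcof Z$ a trivial cofibration and $p : Z \twoheadrightarrow Y$ a fibration; then $Z$ is fibrant and, by two-out-of-three, $p$ is a weak equivalence, hence a trivial fibration. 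Therefore $M \vdash \phi(Fv) \Leftrightarrow M \vdash \phi(jv)$ by \textbf{(a)} and $M \vdash \phi(jv) \Leftrightarrow M \vdash \phi(v)$ by \textbf{(b)}, which gives the theorem.

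\medskip
\noindent The main obstacle is the quantifier step, and within it lemma \textbf{(b)}: unlike the case of categories (where all objects are fibrant--cofibrant and an equivalence is literally a homotopy equivalence), a weak equivalence between fibrant objects of $\catM$ need not have a homotopy inverse, so one cannot simply transport truth along such an inverse; instead invariance under a trivial cofibration $j$ must be bootstrapped from the First invariance theorem through the homotopy $jr \simeq \idx_Z$, and the whole reduction hinges on the factorization of maps into fibrant objects, which is exactly what a (semi/weak) model structure supplies. Secondary points to check, none of them creating real difficulty, are that the induction is well-founded on the (possibly infinitary) formula tree, that a context extension $i : C \hookrightarrow D$ adjoining fewer than $\kappa$ variables at once is still just a cofibration and so behaves identically, and that the path objects used above exist --- which they do for fibrant objects in a weak model category.
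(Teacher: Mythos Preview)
Your plan is sound for a Quillen model category but has a gap in the weak model category setting the paper works in. Your final step factors $F : X \to Y$ as a trivial cofibration $j : X \trivialcof Z$ followed by a fibration, and your lemma \textbf{(b)} then lifts $j$ against $X \to 1$ to produce a retraction $r : Z \to X$. In a weak model category, however, both the (trivial cofibration, fibration) factorisation axiom and the lifting axiom require the domain (resp.\ the domain and codomain of the left map in the square) to be \emph{cofibrant}; here $X$ is only assumed fibrant, so neither step is available as stated. The repair is to first reduce to bifibrant $X$ and $Y$ --- exactly as the paper does, using the trivial-fibration case (your \textbf{(a)}) to pass along cofibrant replacements $X^\cof \trivialfib X$ and $Y^\cof \trivialfib Y$ --- after which your argument goes through.

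The paper's route also differs from yours in two further respects. It never inducts on formulas: the trivial-fibration case is obtained in one stroke from the categorical framework, since a trivial fibration $g$ makes $g^{*} : \mathcal{P}(Y) \to \mathcal{P}(X)$ a morphism of $\lambda$-boolean algebras over $\catM^\cof$ (the Beck--Chevalley condition being exactly the weak-pullback characterisation of anodyne fibrations), and initiality of $\Lb^\catM_\lambda$ then forces $|\phi|_X = g^{*}|\phi|_Y$ for every $\phi$. The First Invariance falls out immediately from this via the two trivial fibrations $PX \to X$. And for the bifibrant case the paper uses Brown's factorisation: $f$ is written as a section $e'$ of a trivial fibration $q : X \times_Y PY \trivialfib X$ followed by a trivial fibration $p : X \times_Y PY \trivialfib Y$, so only case \textbf{(a)} is ever invoked and your lemma \textbf{(b)} is never needed. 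Your direct inductive approach is more elementary and self-contained; the paper's is more conceptual and reduces everything to a single structural lemma, at the price of having set up the boolean-algebra machinery first.
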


    To give a more concrete example of all this, when $\catM$ is the canonical or folk model structure on categories, our construction recovers the language of categories as in  \cite{freyd76} or \cite{blanc78}. Now, the formula
    \[\forall Z \in \text{Ob}, \forall g,h \in \Hom(Y, Z), g \circ f = h \circ f \Rightarrow g =h \]
    is a formula in context $X,Y \in \text{Ob}, f \in \Hom(X,Y)$ which corresponds to the (cofibrant) category $\catC$ which has two objects $X$ and $Y$ and a unique non-identity arrow $f:X \to Y$. A map from $\catC$ to another category $\catD$ is the choice of an arrow $f$ in $\catD$ and $\phi(f)$ is true if and only if  $f$ is an epimorphism. The second invariance theorem says (in this special case) that equivalence of categories preserves epimorphisms, and the first invariance theorem that if $f$ is isomorphic to another arrow then one is an epimorphism if and only if the other is.

    \bigskip

    In \cref{sec:examples} we show how our notion of language specializes to many classical model structures. We also discuss briefly some general (but informal) tools to construct this language explicitly for any model structure.

    \bigskip
    
    Finally, in \cref{sec:invariance} we prove two more invariance theorems (\cref{invariance-theorems-34}), that are this time about the expressive power of the language and can be stated informally as:

    \begin{introtheorem}[$3^{rd}$ Invariance]
      If $A$ and $B$ are two cofibrant objects of $\catM$, then each formula in context $A$ can be translated into a formula in context $B$ that is ``equivalent'' in the sense that its interpretation in any fibrant object is the same.
    \end{introtheorem}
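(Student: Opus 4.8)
The plan is to reduce the statement to the case in which $A$ and $B$ are connected by a single trivial cofibration, and then to exhibit the translation explicitly. First I would record that passing from one context to another is functorial in a weak sense: any morphism $g : A \to B$ of $\catM^{\cof}$ induces an operation $g_{*}$ on formulas by restriction of interpretations, $(g_{*}\psi)(w) \coloneqq \psi(w \circ g)$ for interpretations $w : B \to X$, and these operations compose. Using the core factorizations available in a weak model category, together with standard facts about weak equivalences between cofibrant objects (in particular that trivial fibrations over a cofibrant object admit sections, and that weak equivalences between cofibrant objects induce bijections on homotopy classes of maps into any fibrant object), any zigzag of weak equivalences between cofibrant objects can be handled leg by leg and thereby reduced to the case of a single trivial cofibration; since the translation is functorial, it then suffices to treat a trivial cofibration $j : A \trivialcof B$, producing a translation $j_{*}\psi$ in context $B$ for each $\psi$ in context $A$, a translation $j^{\sharp}\phi$ in context $A$ for each $\phi$ in context $B$, and checking that these are mutually inverse modulo the provable equivalence defining $\Lb^{T}$.

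For $j_{*}$ there is nothing to build: $j_{*}\psi \coloneqq \psi \circ j$ is already a formula in context $B$, and $X \vdash (j_{*}\psi)(w) \Leftrightarrow X \vdash \psi(w\circ j)$ holds by definition. For $j^{\sharp}$ I would use that, $\catM^{\cof}$ being a co-clan, the trivial cofibration $j$ presents $B$ as obtained from $A$ by freely adjoining a $\kappa$-bounded family of new ``variables'' subject to anodyne constraints --- concretely, $j$ is a retract of a transfinite composite of pushouts of generating anodyne maps. Given $\phi$ in context $B$, one sets $j^{\sharp}\phi$ to be the formula in context $A$ obtained by existentially quantifying $\phi$ over exactly these new variables; keeping track of cardinalities --- this is where the $\kappa$-boundedness of $j$ is used --- this remains a formula of $\Lcal^{T}$, indeed of $\Lcal^{T}_{\lambda,\kappa}$ for suitable parameters.

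To see that meaning is preserved, fix a fibrant object $X$ and an interpretation $v : A \to X$. Since $X$ is fibrant and $j$ is anodyne, $v$ extends along $j$ to some $\tilde{v} : B \to X$, and any two such extensions are homotopic relative to $A$ (a short lifting argument using that $j$ is anodyne and $X$ fibrant, against a path object of $X$, or a citation to the earlier sections on weak model categories). By the first invariance theorem (\cref{invariance-theorems}), the truth value $X \vdash \phi(\tilde{v})$ is therefore independent of the chosen extension, and unwinding the definitions shows that $X \vdash (j^{\sharp}\phi)(v)$ --- which asserts that \emph{some} extension of $v$ validates $\phi$ --- is equivalent to $X \vdash \phi(\tilde{v})$ for this, hence for every, extension. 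From here both composites are immediate: $(j^{\sharp}(j_{*}\psi))(v)$ asserts the existence of an extension $\tilde v$ with $\psi(\tilde v \circ j)$, i.e.\ with $\psi(v)$, so it is equivalent to $X \vdash \psi(v)$; and $(j_{*}(j^{\sharp}\phi))(w) = (j^{\sharp}\phi)(w\circ j)$ asserts the existence of an extension of $w \circ j$ validating $\phi$, which holds iff $X \vdash \phi(w)$ since $w$ is itself such an extension. Hence $j_{*}$ and $j^{\sharp}$ are mutually inverse at the level of interpretations in fibrant objects, so they descend to mutually inverse translations between the provable-equivalence classes of formulas in context $A$ and in context $B$.

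The main obstacle is the construction in the second paragraph: turning the slogan ``$B$ is $A$ with contractible variables adjoined'' into a workable combinatorial description of $j$ that (i) lets one write $j^{\sharp}\phi$ as an honest formula and (ii) keeps it within the cardinality bounds defining $\Lcal^{T}_{\lambda,\kappa}$. This requires the structure theory of anodyne maps in (co-)clans --- a cell decomposition of $j$ into pushouts of generating anodyne cofibrations, with control on the number of cells --- and it is also where one must reconcile the existential quantifier over the new variables with whatever connectives the language actually contains: since those variables are uniquely determined up to homotopy, existential and universal quantification give the same interpretation in every fibrant object, so either is admissible once this homotopy-uniqueness is established. By comparison, the homotopy-uniqueness of extensions and the bookkeeping with the first two invariance theorems are routine.
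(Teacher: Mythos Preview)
Your core argument coincides with the paper's: reduce to a trivial cofibration $j:A\trivialcof B$, take $j^{*}$ (your $j_{*}$) in one direction and $\exists_{j}$ (your $j^{\sharp}$) in the other, and use homotopy-uniqueness of extensions along $j$ into a fibrant object together with the \First\ to show these are mutually inverse on $h\Lb$. The semantic verification you give for the two composites is exactly the paper's.

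The obstacle you single out, however, is a phantom. You propose to build $j^{\sharp}\phi$ by choosing a cell decomposition of $j$ into generating anodyne maps and then quantifying over the ``new variables'' one cell at a time, worrying about whether the result stays in $\Lcal^{T}_{\lambda,\kappa}$. None of this is needed. The language of $\catM$ is defined categorically as the initial $\lambda$-Boolean algebra $\Lb^{\catM}_{\lambda}$ over the coclan $\catM^{\cof}$, and in that structure every cofibration $j$ (being a fibration in the clan $(\catM^{\cof})^{\op}$) already has a left adjoint $\exists_{j}:\Lb(B)\to\Lb(A)$ to $j^{*}$. So $j^{\sharp}\phi\coloneqq\exists_{j}\phi$ is a single primitive operation of the language, with no cell decomposition and no cardinality bookkeeping required. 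The paper's \cref{lemma:thrid_for_triv_cof} uses exactly this.

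Your reduction step also needs more care in the weak-model setting. You invoke sections of trivial fibrations and zigzags, but in a weak model category one cannot factor a map between merely cofibrant objects (the target must be fibrant to apply the factorization axiom). The paper handles this by a Brown-style construction using a \emph{weak} cylinder $IX$ (\cref{cstr:weak_path_and_cylinder}): pushing the weak cylinder diagram out along $f$ produces a square of cofibrant objects in which the relevant comparison maps are trivial cofibrations, so \cref{lemma:thrid_for_triv_cof} applies to each and a short diagram chase finishes the argument. Your approach would work in a genuine Quillen model category, but the paper's factorization is what makes it go through in the weak setting.
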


    \begin{introtheorem}[$4^{th}$ Invariance]
      If $\catM$ and $\catN$ are two Quillen equivalent weak model categories, then any formula in the language of $\catM$ can be similarly translated into an equivalent formula in the language of $\catN$.
    \end{introtheorem}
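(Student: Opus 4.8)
The language of a weak model category was defined as the language of the co-clan $\catM^{\cof}$ of cofibrant objects and cofibrations between them, so the plan is to reduce the statement to a claim about a single morphism of co-clans and then deduce invariance of the language from a ``family version'' of the \third{}. A Quillen equivalence $\catM \simeq \catN$ is in particular a Quillen adjunction $F : \catM \rightleftarrows \catN : G$, and since $F$ is left Quillen it preserves cofibrant objects and cofibrations between them; hence it restricts to a morphism of co-clans $F^{\cof} : \catM^{\cof} \to \catN^{\cof}$. Using that the language construction is functorial along morphisms of co-clans (part of the categorical picture of \cref{sec:category_approach}), $F^{\cof}$ induces a translation $\phi \mapsto F_*\phi$ carrying a formula of $\catM$ in context $C$ to a formula of $\catN$ in context $F(C)$. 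The first --- and essentially the only computational --- step is the compatibility lemma: for every fibrant object $X$ of $\catN$ and every map $v : F(C) \to X$, with adjunct $\widetilde v : C \to G(X)$ (note that $G(X)$ is fibrant, since $G$ is right Quillen), one has
\[ \catN \vdash (F_*\phi)(v) \iff \catM \vdash \phi(\widetilde v) . \]
This is proved by induction on the structure of $\phi$: the atomic clauses match because $F_*$ is built from $F^{\cof}$ and the adjunction $F \dashv G$, and the quantifier clauses match because $F$ preserves the clan structure (cofibrations, pushouts along them, and the relevant factorizations). Already this yields the theorem as informally stated: every formula $\phi$ of $\catM$ translates to $F_*\phi$ in $\catN$, and the displayed logical equivalence, read through the induced equivalence $\Ho(\catM) \simeq \Ho(\catN)$, says precisely that $F_*\phi$ is ``equivalent'' to $\phi$ in the intended cross-category sense.

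To obtain the sharper statement that $F_*$ is an equivalence of languages $\Lb^{\catM} \simeq \Lb^{\catN}$, I would show that $\phi \mapsto F_*\phi$ is essentially surjective and faithful on formulas up to equivalence; the two inputs are precisely the two halves of the assertion that $F^{\cof}$ is a weak equivalence of co-clans, which is what the Quillen equivalence provides. For essential surjectivity, given a formula $\psi$ of $\catN$ in a cofibrant context $D$, standard Quillen-equivalence arguments (adapted to the weak setting, using invertibility of the derived counit) produce a cofibrant object $C$ of $\catM$ and a zigzag of weak equivalences between cofibrant objects linking $D$ and $F(C)$; the \third{}, applied inside $\catN$, replaces $\psi$ by an equivalent formula in context $F(C)$, and then homotopical fullness of $F^{\cof}$ --- every cofibration out of $F(C)$ is, up to weak equivalence under $F(C)$, the image of a cofibration out of $C$ --- lets one rewrite that formula clause by clause in the form $F_*\phi$. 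For faithfulness, if $F_*\phi$ and $F_*\phi'$ are equivalent in $\Lb^{\catN}$, that is, agree on all fibrant objects of $\catN$, then by the compatibility lemma $\phi$ and $\phi'$ agree on every $\widetilde v : C \to G(X)$ with $X$ fibrant in $\catN$; since every fibrant object of $\catM$ is connected to some $G(X)$ by weak equivalences (invertibility of the derived unit), the first and second invariance theorems, together with standard replacement arguments, upgrade this to agreement on all interpretations, so $\phi$ and $\phi'$ are equivalent in $\Lb^{\catM}$.

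The remaining step is bookkeeping: $\phi \mapsto F_*\phi$ is well defined on equivalence classes (an immediate consequence of the compatibility lemma), it respects conjunction, disjunction and quantification by construction, and by the previous paragraph it is a bijection; hence it is an isomorphism of languages, compatible with interpretation under $\Ho(\catM) \simeq \Ho(\catN)$.

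I expect the genuine obstacle to be the passage from the strictness of the functor $F^{\cof}$ to the homotopical nature of a Quillen equivalence: formulating and proving that ``$F^{\cof}$ is a weak equivalence of co-clans'' implies invariance of the associated language is, in effect, a fibred version of the \third{}, and the delicate point is the homotopical-fullness input used for essential surjectivity --- one must compare the entire co-clan of cofibrations out of $F(C)$ in $\catN$ with that of cofibrations out of $C$ in $\catM$ and verify that translating along the zigzag linking $D$ and $F(C)$ genuinely lands in the image of $F_*$. A secondary subtlety, worth flagging, is that $G$ --- being only right Quillen --- is not a morphism of co-clans, so the argument cannot be symmetrized naively; the ``backward'' direction must be routed through fibrant and cofibrant replacements, with the first two invariance theorems absorbing the weak equivalences this introduces.
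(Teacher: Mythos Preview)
Your compatibility lemma and the injectivity (faithfulness) argument essentially match the paper's treatment. Where you diverge is surjectivity, and you correctly flag the obstacle yourself: your ``clause by clause'' lifting via homotopical fullness requires, at each quantifier, replacing a cofibration $F(C) \hookrightarrow D'$ by one of the form $F(C) \hookrightarrow F(C')$ only up to weak equivalence under $F(C)$, and then verifying that $\exists_{F\sigma}(F_*\phi')$ and $\exists_\pi \psi'$ agree semantically even though $F\sigma$ and $\pi$ are merely homotopy-equivalent cofibrations and $F_*\phi'$, $\psi'$ are related only through a \third{} transport. You do not prove this compatibility, and it is not obvious; nested quantifiers compound the difficulty, since each level of the induction introduces a fresh zigzag under the current context, and one must show these corrections are harmless all the way up. This is a genuine gap, not just bookkeeping.

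The paper closes it by a mechanism that avoids the homotopy corrections altogether. It isolates the class of \emph{Barton trivial fibrations}: left Quillen functors that are \emph{strictly} extensible (every cofibration out of $F(C)$ lifts on the nose, up to isomorphism, to a cofibration out of $C$) and weakly conservative. For such functors the clause-by-clause induction goes through strictly, yielding surjectivity already at the level of $\Lb F$ rather than $h\Lb F$. The substantial work --- the bulk of Section~\ref{sec:invariance} --- is then a Brown-type factorization \emph{at the level of weak model categories}: one constructs auxiliary weak model structures on diagram categories $\catM^J$ and $\catN^I_F$ (right Bousfield localizations of Reedy structures) fitting into a square whose projections to both $\catM$ and $\catN$ are Barton trivial fibrations, and surjectivity for the original $F$ follows by chasing the resulting isomorphisms of $h\Lb$. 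This ``strictify first, then do the trivial induction'' strategy is the missing idea in your proposal; your suggested ``fibred \third{}'' is replaced by a construction that manufactures strict extensibility rather than working around its absence.
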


     More details on these will be given in the introduction to \cref{sec:invariance}.

     \bigskip

    One should also mention that, despite the paper being stated in the language of ``weak'' model categories, all our examples are actual Quillen model categories, and the reader can replace weak model categories by Quillen model categories almost everywhere. The only reason for which we consider weak model categories is because the extra generality doesn't affect any of our results, and also because at some point in the proof of the second half of \cref{invariance-theorems-34} we need to use our construction of a language to something that in general will not be a full Quillen model category (even if we only try to prove \cref{invariance-theorems-34} for Quillen model categories). The main difference between weak model categories and Quillen model categories is that many results (and axioms) of a Quillen model category can only be applied to arrows from cofibrant to fibrant objects in a weak model category. A review of the notion of weak model category is in \cref{sec:wms_intro}.

    Notably, we will use the terminology ``\emph{core cofibration}'' to mean cofibration between cofibrant objects and ``\emph{core fibration}'' to mean fibration between fibrant objects.
     
     \bigskip

    The paper has three appendices that serve to review or introduce basic material. They can either be read first, or skipped entirely: \Cref{appendix-a} reviews Cartmell's notion of generalized algebraic theory, and generalizes it to the infinitary case. The goal of \cref{appendix-b} is to establish the link between generalized $\kappa$-algebraic theory and a notion of $\kappa$-clan, with a notion of $\kappa$-contextual category as an intermediate. This result is absolutely crucial for the paper, but is a very expected generalization of what happens in the finitary case. Finally, \cref{appendix-c} reviews some material on weak model categories and introduces a notion of Reedy model categories in that context, which is only used in \cref{sec:invariance}.

    \subsection*{Further remarks}

    We finish by mentioning that this work is closely related to Makkai's notion of ``First-order logic with dependent sorts'' or FOLDS from \cite{makkai95}. In a sense, Makkai's FOLDS corresponds to the special case where $T$ is the theory of presheaves on a direct category $I$, encoded using dependent type axioms only, with an additional equality predicate for the types corresponding to maximal objects of $I$. Because Makkai does not make assumptions about the existence of a model structure he only establishes an invariance theorem for what he calls ``very surjective maps'' (our ``anodyne fibrations''), that is the analogous to our \cref{cor:Invariance_triv_fib_clan}, more general notions of equivalence and homotopy are not clearly available in his setting.
    
    In conclusion, the present work is at the same time considering a more general algebraic setting (by allowing terms and type in $T$), but also is restricting the setting by assuming the presence of a model structure that gives a good homotopy theory to be invariant under, and allows obtaining much more interesting results. This seems to make our approach considerably more usable in practice, given the richness of examples it potentially covers.

    It should be noted however that there are some results in \cite{makkai95} that we have not yet been able to generalize to this new setting: Makkai established several results that essentially say that any formula that has the desired invariance properties is equivalent to one in the language introduced. Similar results are also given in \cite{freyd76} and \cite{blanc78}, and this paper contains no analogue to these results.

    \subsection*{Acknowledgment}
This work was supported by the Natural Sciences and Engineering Research Council of Canada (NSERC), funding reference number RGPIN-2020-067 awarded to Simon Henry.

%%% Local Variables:
%%% mode: latex
%%% TeX-master: "main"
%%% End:

	% Syntactic language
	
	\section{The homotopy invariant language} \label{fol-gat}

\subsection{Syntactic approach: The first-order language of a generalized algebraic theory}
\label{sec:syntactic-approach}
In this section, we give a very classic syntactical approach to the language we consider in this paper. We start from a generalized algebraic theory, and we build its first-order language on top of it.

Since we aim to do infinitary logic, we enhance Cartmell's notion of generalized algebraic theory to what we call \emph{generalized $\kappa$-algebraic theory} for $\kappa$ a regular cardinal, which we develop in detail in \cref{appendix-a}. Nevertheless, this generalization is straightforward and a reader familiar with Cartmell's formalism should be able to guess how it works and read this section directly. The main difference to keep in mind is that our contexts are sequences of typed variables indexed by ordinals less than $\kappa$ instead of finite sequences. A consequence of this is that we need to use more heavily the ``generalized display maps'' that correspond to ``projections'' from a context $(x_i:X_i)_{i < \gamma}$ to $(x_i:X_i)_{i < \beta}$ for arbitrary $\beta < \gamma < \kappa$, where the classical theory uses the display maps that corresponds to projections that only forget the last variable.

\bigskip

In what follows, we fix $\kappa$, $\lambda$ two regular cardinals and $T$ a generalized $\kappa$-algebraic theory. We will define the first-order language of $T$ with $\lambda$-small conjunction and disjunction, denoted $\Lcal^T_{\lambda}$ or $\Lcal^T_{\lambda,\kappa}$.

\bigskip

More precisely, for each context $\Gamma$ of $T$, we will define a set $\Lcal^T_{\lambda}(\Gamma)$ of ``$T$-formulas in context $\Gamma$". Essentially, these are first-order formulas with $\lambda$-small conjunctions and disjunctions whose free variables are the variables of the context $\Gamma$, in particular, they have less than $\kappa$-variables.

\begin{definition}  \label{folgat}
	The sets $\Lcal^T_{\lambda}(\Gamma)$ of $T$-formulas in context $\Gamma$ are defined inductively using the following rules:
	
	\begin{enumerate}
        \item For each context $\Gamma$, the true formula $\top$ and false formula $\bot$ are in  $\Lcal^T_{\lambda}(\Gamma)$.
        \item If $\Phi \in \Lcal^T_{\lambda}(\Gamma)$ then $\neg \Phi \in \Lcal^T_{\lambda}(\Gamma)$.
	\item For each collection of formulas $\Phi_i \in \Lcal^T_{\lambda}(\Gamma)$, indexed by a $\lambda$-small set $I$, the conjunction and disjunction
		\[ \bigvee_{i \in I} \Phi_i \qquad \bigwedge_{i \in I} \Phi_i \]
		are in $\Lcal^T_{\lambda}(\Gamma)$.
		\item Given two ordinals $\gamma < \alpha < \kappa$: If $\Gamma' \equiv \{x_\beta : \Gamma_\beta\}_{\beta < \alpha}$ is a context of length $\alpha$, and $\Gamma \equiv \{x_\beta : \Gamma_\beta\}_{\beta < \gamma}$ is the subcontext of length $\gamma$, then for any formula $\Phi \in \Lcal^T_\lambda(\Gamma')$ we have formulas
		\[\exists \{x_\beta :\Gamma_\beta\}_{\gamma \leqslant \beta < \alpha } \Phi \qquad \forall \{x_\beta :\Gamma_\beta\}_{\gamma \leqslant \beta < \alpha } \Phi \]
		in $\Lcal^T_\lambda(\Gamma)$.
              \end{enumerate}

              The collection of all formulas $\{\Lcal^T_{\lambda}(\Gamma)\}_{\Gamma \in T}$ is what we call \emph{the language of } $T$. Often, we will simply refer to it by $\Lcal^T_{\lambda}$.
              
\end{definition}

\begin{remark}
	The key point in \cref{folgat} is that we are not including atomic formulas other than $\top$ and $\bot$. In particular, the language \emph{does not include any equality}. At this point it might be unclear how we get non-trivial formulae in this language as it seems that applying quantifiers, conjunction or disjunction to formulas that are either $\bot$  or $\top$ will never produce any formulas that are not immediately interpreted as $\bot$ or $\top$.  Or even, on how we might obtain formulas with free variables. The central idea is that free variables appear thanks to the fact we quantify over dependent types, that is, types in which free variables can appear. The following examples will demonstrate these phenomena.
\end{remark}

\begin{example}
  Let $Cat$ be the generalized $\omega$-algebraic theory of categories as introduced in \cref{gatcategories}. Then, in the context $(x : \obtyp)$ we can write the formula
  \[\phi(x) \coloneqq ( \forall y : \obtyp,\, \exists f: \homtyp(x,y), \top )\]
	which expresses that for any object $y$ there is an arrow from $x$ to $y$. This simply means that $x$ is a weakly initial object. Indeed, $\top$ is a formula in context $(x: \obtyp,\, y:\obtyp, f: \homtyp(x,y))$, so that $\exists f : \homtyp(x,y), \top$ is a formula in context $(x:\obtyp,\, y :\obtyp)$, and $\forall y : \obtyp, \exists f : \homtyp(x,y), \top$ is a formula in context $(x :\obtyp)$. 
\end{example}

The logic is still not strong enough to express many of the interesting category theoretic notions. For example, without any kind of equality predicate on morphisms there is no way to write down a formula for an initial object, or a limit. In the next example, we show how modifying the theory $Cat$ allows the reconvery of equality on morphisms:

\begin{example}\label{ex:cat=}
	We consider the theory $Cat_=$ obtained by adding to the theory $Cat$ the following:
	\[ x,y:\obtyp, f,g : \homtyp(x,y) \vdash \eqtyp(f,g) \type \]
	\[ x,y:\obtyp, f: \homtyp(x,y) \vdash r_f : \eqtyp(f,f) \]
	\[ x,y:\obtyp, f,g : \homtyp(x,y), a: \eqtyp(f,g) \vdash f \equiv g \]
	\[ x,y:\obtyp, f,g : \homtyp(x,y), a: \eqtyp(f,g) \vdash a \equiv r_f \]    
	One can easily see that a model of $Cat_=$ is just a category, with the type $\eqtyp(f,g)$ being empty if $f \neq g$ and $\{r_f\}$ if $f=g$.
	In this new theory, we can now form a formula ``$f =g$'' in context $(x,y : \obtyp, f,g : \homtyp(x,y))$ which is defined as	
	\[ (f = g) \coloneqq (\exists v : \eqtyp(f,g), \top). \]	
Therefore, in the language $\Lcal_\omega^{Cat_=}$ we can form formulas involving equality between parallel morphisms. Then, we recover the ``language of categories'' as studied in \cite{blanc78} and \cite{freyd76}. 
	For example, we can form the formula ``$x$ is initial'' in context $(x : \obtyp)$ as
	\[ \isinitial(x) \coloneqq \forall y : \obtyp, (\exists f: \homtyp(x,y)) \wedge (\forall f,g : \homtyp(x,y), f = g).\]		
      \end{example}

      \begin{construction}\label{cstr:f*} If $f: \Delta \to \Gamma$ is a context morphism and $\phi \in \Lcal^T_\lambda(\Gamma)$, then we can define its pullback $f^* \phi$. This pullback is obtained by substituting the free variables of $\phi$ by the components of $f$. Formally, this is defined inductively as:
        \begin{enumerate}
        \item $f^* \top \coloneqq \top$ and $f^* \bot \coloneqq \bot$.
        \item $f^*(\neg \Phi) \coloneqq \neg f^* \Phi$.
        \item $f^*\left(\bigvee_{i \in I} \Phi_i \right) \coloneqq \bigvee_{i \in I} f^*\Phi_i$ and $f^*\left(\bigwedge_{i \in I} \Phi_i \right) \coloneqq \bigwedge_{i \in I} f^*\Phi_i$.
        \item If $\Gamma' \equiv (\Gamma, x_1 \in X_1,\dots, x_\alpha \in X_ \alpha)$ then
          \[f^* \left( \exists (x_1 \in X_1,\dots,x_\alpha \in X_\alpha) \Phi  \right) \coloneqq \exists (x_1 \in f^*X_1,\dots,x_\alpha \in f^*X_\alpha) f^*\Phi, \]
          \[f^* \left( \forall (x_1 \in X_1,\dots,x_\alpha \in X_\alpha) \Phi  \right) \coloneqq  \forall (x_1 \in f^*X_1,\dots,x_\alpha \in f^*X_\alpha) f^*\Phi, \]
          where $f^* X_i$ denotes the pullback of types, obtained by substitution, that is, the types appearing in the canonical pullback of the generalized display map:
         \[ \begin{tikzcd}
            (\Delta, f^*X_1, \dots, f^* X_\alpha) \ar[r] \ar[d,->>] &  (\Gamma, X_1, \dots, X_\alpha) \ar[d,->>] \\
            \Delta \ar[r] & \Gamma.
          \end{tikzcd}\]
          
        \end{enumerate}
        
      \end{construction}

      \begin{definition}\label{def:vdash_relation} For each context $\Gamma$ in $T$ we define the relation $\vdash_\Gamma$ on $\Lcal^T_\lambda(\Gamma)$  as the smallest family of relations such that:

   \begin{enumerate}

  \item \label{def:vdash_relation:refl} $\vdash_\Gamma$ is a transitive and reflexive relation on $\Lcal^T_\lambda(\Gamma)$.

   \item\label{def:vdash_relation:bot_top} $\forall \Phi \in \Lcal^T_\lambda(\Gamma)$, $\Phi \vdash_{\Gamma} \top$ and $ \bot \vdash_\Gamma \Phi$.

   \item\label{def:vdash_relation:negation}  $\forall \Phi \in \Lcal^T_\lambda(\Gamma),\Phi \wedge \neg \Phi \vdash \bot$ and $\top \vdash \Phi \vee \neg \Phi$.

   \item\label{def:vdash_relation:vee_wedge} For any $\lambda$-small family $(\Phi_i)_{i \in I} \in \Lcal^T_\lambda(\Gamma)$ we have
\[\bigvee_{i \in I} \Phi_i \vdash_\Gamma \Psi \Leftrightarrow \forall i, \left( \Phi_i \vdash_\Gamma \Psi \right)\]
\[\Psi \vdash \bigwedge_{i \in I} \Phi_i \Leftrightarrow \forall i, \left( \Psi \vdash_\Gamma \Phi_i \right)\]

\item\label{def:vdash_relation:Quantifier} For $\Gamma'\equiv \left(\Gamma,\left\lbrace x_\beta: \Gamma'_\beta \right\rbrace_{\gamma \leqslant \beta < \alpha}\right)$ a context extension, with $p: \Gamma' \twoheadrightarrow \Gamma$ the corresponding generalized display map, $\Psi \in \Lcal^T_\lambda(\Gamma')$ and $\Phi \in \Lcal^T_\lambda(\Gamma)$ we have
\[  \exists \{x_\beta :\Gamma_\beta\}_{\gamma \leqslant \beta < \alpha } \Psi \vdash_\Gamma \Phi \Leftrightarrow \Psi \vdash_{\Gamma'} p^* \Phi,\]

\[ \Phi  \vdash_\Gamma \forall  \{x_\beta :\Gamma_\beta\}_{\gamma \leqslant \beta < \alpha } \Psi \Leftrightarrow  p^* \Phi  \vdash_{\Gamma'}  \Psi. \]

   \end{enumerate}

 \end{definition}

While we have not included the following in the definition, we can show that:

   \begin{proposition} \label{prop:vdash_structural_prop}
  If $f: \Delta \to \Gamma$ is a context morphism in $T$, and $\Phi \vdash_\Gamma \Psi$ then $f^* \Phi \vdash_\Delta f^* \Psi$.
   \end{proposition}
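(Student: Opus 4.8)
The plan is to prove the slightly stronger statement that the family of relations
$S_\Gamma \coloneqq \{(\Phi,\Psi)\in\Lcal^T_\lambda(\Gamma)^2 \mid f^*\Phi\vdash_\Delta f^*\Psi \text{ for every context morphism } f\colon\Delta\to\Gamma\}$
satisfies all the closure properties \ref{def:vdash_relation:refl}--\ref{def:vdash_relation:Quantifier} of \cref{def:vdash_relation}; by the minimality clause in that definition this forces $\vdash_\Gamma\,\subseteq\,S_\Gamma$, which is exactly the claim (equivalently, one argues by induction on the inductive generation of $\vdash$). Everything rests on \cref{cstr:f*}: for fixed $f\colon\Delta\to\Gamma$, the operation $f^*$ commutes strictly with $\neg$ and with $\lambda$-small $\bigvee,\bigwedge$, and on a quantifier it satisfies $f^*\bigl(\exists\{x_\beta:\Gamma_\beta\}\,\Phi\bigr)=\exists\{x_\beta:f^*\Gamma_\beta\}\,\bar f^*\Phi$, where $\bar f$ is the top arrow of the pullback square of \cref{cstr:f*} and the left arrow $f^*\Gamma'\twoheadrightarrow\Delta$ of that square is again a generalized display map. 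Granting this, reflexivity and transitivity of $S_\Gamma$ are immediate, and properties \ref{def:vdash_relation:bot_top}, \ref{def:vdash_relation:negation}, \ref{def:vdash_relation:vee_wedge} follow termwise from the corresponding properties of $\vdash_\Delta$ — the ``$\Leftrightarrow$'' in \ref{def:vdash_relation:vee_wedge} just by interchanging the quantifiers ``$\forall f$'' and ``$\forall i$''.

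The real work is property \ref{def:vdash_relation:Quantifier}. Write $p\colon\Gamma'\twoheadrightarrow\Gamma$ for the display map of the extension, and for $f\colon\Delta\to\Gamma$ write $\bar f\colon f^*\Gamma'\to\Gamma'$ and $q\colon f^*\Gamma'\twoheadrightarrow\Delta$ for the base change, so that $q^*f^*=\bar f^*p^*$. Combined with \cref{cstr:f*}, the \emph{backward} halves of the two biconditionals in \ref{def:vdash_relation:Quantifier} (for $\exists$: from $\Psi\,S_{\Gamma'}\,p^*\Phi$ deduce $\exists\Psi\,S_\Gamma\,\Phi$; dually for $\forall$) go through at once, because the instance one needs, $\bar f^*\Psi\vdash_{f^*\Gamma'}\bar f^*p^*\Phi$, is itself an instance of the hypothesis applied to the context morphism $\bar f\colon f^*\Gamma'\to\Gamma'$. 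The obstacle is the \emph{forward} halves: from $\exists_{\Gamma'/\Gamma}\Psi\,S_\Gamma\,\Phi$ we must deduce $\Psi\,S_{\Gamma'}\,p^*\Phi$, i.e.\ $g^*\Psi\vdash_\Omega g^*p^*\Phi$ for an \emph{arbitrary} $g\colon\Omega\to\Gamma'$, and such a $g$ need not be a base-change projection: setting $f\coloneqq pg$ one only obtains a factorization $g=\bar f\circ\sigma$ with $\sigma$ a section of $q\colon f^*\Gamma'\twoheadrightarrow\Omega$, and one is reduced to showing that $\sigma^*$ preserves $\vdash$. In short, the forward directions of \ref{def:vdash_relation:Quantifier} are exactly the compatibility of substitution with $\exists$-introduction and $\forall$-elimination, which is not formally forced by the remaining clauses, and this is the heart of the proposition.

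To handle it the plan is a separate, subsidiary argument. First reduce — using the unit/counit of the adjunctions encoded in \ref{def:vdash_relation:Quantifier} together with the backward monotonicity already established — to the single assertion that $\sigma^*\chi\vdash_B\exists_{E/B}\chi$, and dually $\forall_{E/B}\chi\vdash_B\sigma^*\chi$, for $r\colon E\twoheadrightarrow B$ a generalized display map and $\sigma\colon B\to E$ a section of $r$. Then, factoring a section into single-variable ones, reduce to $E=(B,\,x\in C)$; and finally prove this case by induction on the complexity of $\chi$, using that the base change of a display map along a section is again a display map and that the base change of a single-variable section along a display map is again (the extension of) a single-variable section, so that the quantifier step of the induction feeds back into the same statement on strictly simpler $\chi$. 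This last step — essentially an admissibility-of-substitution argument for the proof system of \cref{def:vdash_relation} — is the one that must be set up carefully (for instance by first checking that the cut in \ref{def:vdash_relation:refl} is admissible, so that one may work with derivations whose shape is dictated by the leading connectives); the rest of the proof is bookkeeping with \cref{cstr:f*} and the pullback identities above.
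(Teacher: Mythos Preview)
You have correctly put your finger on the real difficulty that the paper's one-line argument glosses over: verifying the forward direction of condition~\ref{def:vdash_relation:Quantifier} for $S$ does reduce, exactly as you say, to showing that $\sigma^*$ preserves $\vdash$ for sections $\sigma$ of display maps. However, your proposed remedy---proving $\sigma^*\chi \vdash_B \exists_q\chi$ by induction on the complexity of $\chi$---cannot succeed, because the base case $\chi=\top$ already fails; indeed the proposition, read literally against \cref{def:vdash_relation}, appears to be false.

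Take $T$ with one type axiom $\vdash X\,\type$ and one term axiom $\vdash c:X$; write $q:(x{:}X)\twoheadrightarrow()$ for the display map and $\sigma:()\to(x{:}X)$ for the section given by $c$. The unit instance of \ref{def:vdash_relation:Quantifier} gives $\top\vdash_{(x)}q^*(\exists x.\top)$, and applying $\sigma^*$ would yield $\top\vdash_{()}\exists x.\top$. But the clauses \ref{def:vdash_relation:refl}--\ref{def:vdash_relation:Quantifier} of \cref{def:vdash_relation} mention only contexts and generalized display maps, never arbitrary context morphisms; they are therefore identical for $T$ and for the theory $T'$ obtained by deleting the term $c$. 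Interpreting $X$ as $\emptyset$ yields (via \cref{cstr:validity_of_formula_syntactic}) a family $R_\Gamma=\{(\Phi,\Psi):|\Phi|\subseteq|\Psi|\}$ on $\Lcal^T=\Lcal^{T'}$ satisfying \ref{def:vdash_relation:refl}--\ref{def:vdash_relation:Quantifier}, and in it $|\exists x.\top|_{()}=\emptyset$, so $\top\not\vdash_{()}\exists x.\top$. The axioms are simply blind to the existence of the section, so no induction on $\chi$ can manufacture the missing entailment. The repair is not to the proof but to the definition: add to \cref{def:vdash_relation} the requirement that the family be closed under $f^*$ for every context morphism $f$ (equivalently, define $\vdash$ directly as the order of the initial $\lambda$-boolean algebra over $\C_T$); the proposition then becomes immediate and nothing downstream in the paper changes.
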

   \begin{proof}
We can show that if we define the relation $\Phi \vdash_\Gamma' \Delta$ to be ``For all $f:\Delta \to \Gamma$, we have $f^* \Phi \vdash_{\Delta} f^* \Psi$'' then it satisfies all the conditions from \cref{def:vdash_relation}. Which shows that $\vdash \Rightarrow \vdash'$ and hence concludes the proof. \end{proof}

% In \cref{appendix-b4-models} we define what we mean by a model for a generalized $\kappa$-algebraic theory $T$, we recall that definition here. We fix a set of sets $\catU$ and construct the $\kappa$-contextual category $\cfam_\kappa$ whose objects are functors $(\alpha+1)^\op \to \catU$ (for each $\alpha<\kappa$) subject to some conditions.

% \begin{definition}
%   A \emph{model} of a generalized $\kappa$-algebraic theory $T$ is simply a contextual functor $X:\C_T \to \cfam_\kappa$. We will usually write $X:T \to \cfam_\kappa$. 
% \end{definition}

In \cref{appendix-b4-models} we define a model for a generalized $\kappa$-algebraic theory $T$ is as a morphism of contextual categories $X:\C_T \to \cfam_\kappa$ where $\cfam_\kappa$ is a contextual categories of ``families of sets''. By \cref{rk:models} this turns out to be equivalent to the naive definition of models where for each dependent type we have a family of sets, for each term a function, and equation axioms give us equations. Importantly for us, for each model $X$ and context $\Gamma$, there is a set $X(\Gamma)$, an element of which is a choice of an interpretation of each variable of $\Gamma$ as an element of the corresponding set in $X$. These $X(\Gamma)$ forms a functor on the category of contexts of $T$.

In what follows, we will use notation as explained in \cref{rk:notation-models}.

\begin{construction}\label{cstr:validity_of_formula_syntactic}
Given a model $X$ of our theory $T$, $\Gamma$ a context, $x \in X(\Gamma)$ and $\Phi \in \Lcal^T_\lambda(\Gamma)$, we can interpret $\Phi(x)$ as a proposition \ie true or false in the obvious way by substituting the components of $x$ into $\phi$ and interpreting all the logic symbols in the usual way. Formally we have:

\begin{enumerate}
\item If $\Phi = \top$, then $\Phi(x)$ is true and if $\Phi = \bot$ then $\Phi(x)$ is false,
\item If $\Phi = \neg \Psi$, then $\Phi(x)$ is true if and only if $\Psi(x)$ is false,
\item If $\Phi = \bigvee \Phi_i$, then $\Phi(x)$ is true if and only if $\Phi_i(x)$ is true for some $i$,
\item If $\Phi = \bigwedge \Phi_i$, then $\Phi(x)$ is true if and only $\Phi_i(x)$ is true for all $i$,
\item If $\Phi = \exists \{x_\beta :\Gamma_\beta\}_{\gamma \leqslant \beta < \alpha } \Psi$ for $\Gamma'=\left(\Gamma,\left\lbrace x_\beta: \Gamma'_\beta \right\rbrace_{\gamma \leqslant \beta < \alpha}\right)$ a context extension, with $p: \Gamma' \twoheadrightarrow \Gamma$ the corresponding generalized display map, then $\Phi(x)$ is true if there exists a $y \in X(\Gamma')$ such that $p(y) = x$ and $\Psi(y)$,
\item If $\Phi = \forall \{x_\beta :\Gamma_\beta\}_{\gamma \leqslant \beta < \alpha } \Psi$ in the same situation as above, then $\Phi(x)$ is true if for any $y \in X(\Gamma')$ such that $p(y)= x$ we have $\Psi(y)$.
\end{enumerate}
\end{construction}

The following lemma is immediate by induction, the proof is left to the reader.

\begin{lemma}Let $X$ be a model of a generalized $\kappa$-algebraic theory $T$.
  \begin{enumerate}
  \item For $\Phi,\Psi \in \Lcal^T_\lambda(\Gamma)$ and $x \in X(\Gamma)$, then if $\Psi \vdash_\Gamma \Phi$ and $\Psi(x)$ then $\Phi(x)$.
  \item If $f: \Gamma\to \Delta$ is any context morphism and $\Phi = f^* \Psi$ and $x \in X(\Gamma)$ then $\Phi(x) \Leftrightarrow \Psi(f(x))$.
  \end{enumerate}  
\end{lemma}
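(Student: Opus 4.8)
The plan is to prove both statements by a single induction on the structure of the formula $\Phi$, following the inductive definition of $\Lcal^T_\lambda(\Gamma)$ from \cref{folgat} and the interpretation rules of \cref{cstr:validity_of_formula_syntactic}. For part (2), the cleaner route is actually to prove it \emph{first} as a standalone induction on $\Phi$ (it does not depend on part (1)), and then use it as a lemma inside the induction for part (1). For part (2): the base cases $\top,\bot$ are immediate since $f^*\top=\top$ and $f^*\bot=\bot$ and truth values are constant; the cases for $\neg$, $\bigvee$, $\bigwedge$ follow directly from the inductive hypothesis together with the compatibility of $f^*$ with these connectives (\cref{cstr:f*}) and the fact that the set-theoretic interpretation of the connectives commutes with the bijections involved. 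The only case with content is the quantifier case, where one needs that for a context extension $\Gamma'=(\Gamma,\{x_\beta:\Gamma'_\beta\})$ with display map $p:\Gamma'\twoheadrightarrow\Gamma$, pulling back along $f:\Gamma_0\to\Gamma$ and then projecting is the same as projecting and then pulling back — i.e. the canonical square in \cref{cstr:f*}(4) is a pullback of contextual categories. Concretely, one uses that $X$ sends this pullback square to a pullback of sets, so that for $x\in X(\Gamma_0)$ the fiber $p'^{-1}(x)\subseteq X(\Gamma_0,f^*\Gamma'_\bullet)$ is in natural bijection with the fiber $p^{-1}(f(x))\subseteq X(\Gamma')$, and this bijection is compatible with the interpretation of $f^*\Psi$ versus $\Psi$ by the inductive hypothesis applied to the induced map on the extended contexts; the existential/universal quantifier then reads off the existence/universality over these two bijective fibers.

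For part (1): again induct on $\Phi$ using \cref{def:vdash_relation}. Since $\vdash_\Gamma$ is defined as the \emph{smallest} family of relations satisfying clauses (1)--(5), it suffices to show that the relation ``$\Psi \vartriangleright_\Gamma \Phi$'' defined by ``for all models $X$ and all $x\in X(\Gamma)$, $\Psi(x)$ implies $\Phi(x)$'' satisfies all five clauses of \cref{def:vdash_relation}; then $\vdash_\Gamma\ \subseteq\ \vartriangleright_\Gamma$, which is exactly the statement. Reflexivity and transitivity of $\vartriangleright_\Gamma$ are trivial. Clause (2): $\Psi(x)\Rightarrow\top(x)$ always, and $\bot(x)\Rightarrow\Phi(x)$ vacuously. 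Clause (3): $(\Phi\wedge\neg\Phi)(x)$ is never true, and $(\Phi\vee\neg\Phi)(x)$ is always true, by the classical interpretation of $\neg,\wedge,\vee$. Clause (4): these are just the universal properties of the set-theoretic $\bigvee$ (union of truth) and $\bigwedge$ (intersection of truth) — $\big(\bigvee_i\Phi_i\big)(x)\Rightarrow\Psi(x)$ for all $x$ iff each $\Phi_i(x)\Rightarrow\Psi(x)$, and dually. Clause (5) is the adjunction between quantifiers and pullback: unfolding \cref{cstr:validity_of_formula_syntactic}(5)--(6), $\big(\exists\{x_\beta\}\,\Psi\big)(x)$ holds iff $\Psi(y)$ for some $y\in p^{-1}(x)$, and one checks that $\big(\exists\{x_\beta\}\,\Psi\big)\vartriangleright_\Gamma\Phi$ iff $\Psi\vartriangleright_{\Gamma'}p^*\Phi$ using part (2) (already proven) to rewrite $(p^*\Phi)(y)=\Phi(p(y))=\Phi(x)$; the universal-quantifier/generalized-display case is dual, using that $p$ is (the image under $X$ of) a display map so $p:X(\Gamma')\to X(\Gamma)$ is surjective on the relevant fibers — though in fact surjectivity of $p$ itself is not needed here, only the fiberwise reformulation.

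The main obstacle — really the only non-formal point — is the quantifier step in part (2): making precise that substitution (pullback along $f$) commutes with the generalized display maps, i.e. that the square displayed in \cref{cstr:f*}(4) is genuinely a pullback in the contextual category $\C_T$ and is preserved by the model $X:\C_T\to\cfam_\kappa$, so that the two fibers one must compare are canonically bijective and the bijection intertwines the interpretations of $\Psi$ and $f^*\Psi$. This is exactly the kind of bookkeeping the excerpt defers with ``the proof is left to the reader'', and it rests on the basic structural properties of contextual categories / the stability of display maps under pullback recalled in \cref{appendix-a,appendix-b}; once that square is known to be a pullback preserved by $X$, everything else is a routine unwinding of definitions.
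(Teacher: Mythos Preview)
Your proposal is correct and is exactly the argument the paper has in mind when it says ``immediate by induction, the proof is left to the reader'': part~(2) by structural induction on the formula using that $X$ preserves the pullback square in \cref{cstr:f*}(4), and part~(1) by showing that the semantic entailment relation satisfies all the closure clauses of \cref{def:vdash_relation} (the same device used in the proof of \cref{prop:vdash_structural_prop}). The only quibble is phrasing: your opening line says both parts go by ``induction on the structure of the formula $\Phi$'', but as you yourself note a few lines later, part~(1) is really an induction on the \emph{derivation} of $\Psi\vdash_\Gamma\Phi$ (equivalently, verifying the clauses for your $\vartriangleright$), not on $\Phi$---worth tightening that sentence.
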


\begin{definition}\label{def:Lb}
  We write $\Psi \dashv\vdash_\Gamma \Phi$ to mean both $\Psi \vdash_\Gamma \Phi$ and $\Phi \vdash_\Gamma \Psi$.  We denote by
\[ \Lb_{\lambda}^T(\Gamma) \coloneqq \Lcal_{\lambda}^T(\Gamma)/(\dashv\vdash_\Gamma) \]
  the quotient.
\end{definition}

Note that $(\dashv\vdash_\Gamma)$ is indeed an equivalence relation, as $\vdash_\Gamma$ is transitive and reflexive.

\begin{remark}\label{rk:logical_operation_compatible}
  It follows from \cref{prop:vdash_structural_prop} that for a context morphism $f:\Delta \to \Gamma$ the $f^*$ operation from $\Lcal^T_{\lambda}(\Gamma) \to \Lcal^T_{\lambda}(\Delta)$ is compatible with the relation $\dashv\vdash$, and hence it descends to an operation 
\[f^*: \Lb^T_{\lambda}(\Gamma) \to \Lb^T_{\lambda}(\Delta). \]
It is also easy to see from \cref{def:vdash_relation} that the relation $\vdash$ is compatible with all the logical operations on $\Lcal^T_{\lambda}$, that is $\neg,\bigvee,\bigwedge,\exists,\forall$ in the sense that for example, if $\Phi_i \vdash \Psi_i$ for all $i \in I$ then $\bigvee_{i \in I} \Phi_i \vdash \bigvee_{i \in I} \Psi_i$ and hence they all descend into operations on $\Lb^T_\lambda$.
\end{remark}

\begin{construction}\label{cstr:L_lambda-kappa} At the beginning of the section, we have briefly called the language $\Lcal^T_{\lambda,\kappa}$ before dropping the $\kappa$ from the notation, as it can be read from the fact that $T$ is a generalized $\kappa$-algebraic theory. However, we can consider $\Lcal^T_{\lambda,\kappa'}$ for any $\kappa' \geqslant \kappa$. Indeed, given $T$ a generalized $\kappa$-algebraic theory, we can define a generalized $\kappa'$-algebraic theory $T_{\kappa'}$ by taking a set of axioms for $T$ and seeing them as axioms for a generalized $\kappa'$-algebraic theory. A model of $T_{\kappa'}$ is the same as a model of $T$. We then define
\[\Lcal^T_{\lambda,\kappa'} \coloneqq \Lcal^{T_{\kappa'}}_{\lambda,\kappa'} = \Lcal^{T_{\kappa'}}_{\lambda},  \]
as well as its quotient
\[\Lb^T_{\lambda,\kappa'} \coloneqq \Lb^{T_{\kappa'}}_{\lambda,\kappa'} = \Lb^{T_{\kappa'}}_{\lambda}.  \]
\end{construction}

\begin{example} Let $\Sigma$ be a signature in the sense of traditional model theory, that is a set of formal symbols for types, functions and relations. Then we can consider the generalized algebraic theory $T_{\Sigma,=}$, which has one type in the empty context of each sort symbol $X$ in the signature. Each of these types have an equality predicate as the one constructed in \cref{ex:cat=}, a term for each function symbol, and for each relation symbol $R \subset X_1,\dots,X_n$ a type axiom
\[ x_1: X_1, \dots ,x_n : X_n \vdash R(x_1,\dots,x_n) \type\]
with the additional axiom
\[x_1: X_1, \dots ,x_n : X_n,t_1,t_2: R(x_1,\dots,x_n) \vdash t_1 = t_2.\]
Models of this theory are exactly $\Sigma$-structures, and elements of $\Lb^{T_{\Sigma,=}}_{\omega,\omega}$ are essentially the same as usual first-order formulas in this signature. Elements of  $\Lb^{T_{\Sigma,=}}_{\lambda,\kappa}$ correspond to infinitary first-order formulas using $\lambda$-small conjunction and disjunction and where $\exists$ and $\forall$ quantifiers can quantify over $\kappa$-small set of variables.  
\end{example}

\subsection{Categories of models and their weak factorization systems }
\label{sec:models_of_clans}

In this section and the next we will abstract the notion of the first-order language of a generalized algebraic theory in terms of its category of models, this will allow us to generalize this notion of language to an arbitrary category. To be more precise, we will abstract it terms in terms of the category of models together with a certain weak factorization system we will introduce in this section, and in the next section we will generalize this to an arbitrary category equipped with a weak factorization system. 

Recall (see \cref{appendix-b4-models}), that given a generalized $\kappa$-algebraic theory $T$, the category of models of $T$ is defined as the category of morphisms of contextual categories from the syntactic category of $T$ to a certain contextual category $\cfam_\kappa$ of families of sets. Though, see \cref{rk:models}, this is exactly equivalent to the naive definition of a model where for each type axiom we have a family of sets, for each term axiom we have an operation and for each equation axiom, the corresponding equality is satisfied. Note however, that in the presence of type equality axioms this means that certain equalities between sets need to be satisfied.

We also recall from \cref{cstr:representable_models} that for any context $\Gamma$, there is a ``representable'' model $\Gamma^*$ such that for any other model $M$,
\[ \Hom(\Gamma^*,M ) = M(\Gamma).\]
This forms a functor $\C_T^\op \to \Mod(T)$, which sends pullbacks along display maps to pushouts and limit of $\kappa$-small towers of display maps to colimits.

Now, it can be shown that any locally $\kappa$-presentable category is the category of models of a generalized $\kappa$-algebraic theory, but crucially, the category of models of $T$ comes with additional structure: 

\begin{definition} \label{wfs-models} Given a generalized $\kappa$-algebraic theory $T$, we consider the weak factorization system on the category $\Mod(T)$ which is cofibrantly generated by the maps
  \[ \Gamma^* \cofibration \Gamma'^*  \]
  where $\Gamma' \fibration  \Gamma$ is a (generalized) display map  in $\C_T$. The elements of the left class will be called \emph{cofibrations} and the elements of right class \emph{anodyne fibrations}.
\end{definition}

\begin{remark}
  In most of the paper, we will assume that the category $\Mod(T)$ of models of $T$ is equiped with a model structure (or at least weak model category.) whose trivial fibrations are these anodyne fibrations. However, we want to reserve the use of ``trivial fibration'' to the case where there is indeed a (weak) model category involved. 
\end{remark}

We also recall the closely related notion of models of clans: A $\kappa$-clan is

\begin{definition} A \emph{clan}, or \emph{$\omega$-clan}, is a category $\catC$ endowed with a class of maps called \emph{fibrations} such that:
  \begin{enumerate}
  \item $\catC$ has a terminal object $1$, and for every $X \in \catC$ the unique map $X \fibration 1$ is a fibration,
  \item Isomorphisms are fibrations, the composite of two fibrations is a fibration,
  \item Pullback of fibrations exist and are fibrations.
  \end{enumerate}

  For $\kappa$ a regular cardinal, a $\kappa$-clan is a clan which further satisfies:

  \begin{enumerate}
  \item[4] For any ordinal $\lambda < \kappa$, if $A_\bullet: \lambda^\op \to \catC$ is a diagram in which all the transition maps $A_\beta \fibration A_\alpha$ for $\alpha < \beta$ are fibrations, then the limits
    \[\Lim_{\alpha < \lambda} A_\alpha\]
    exist, and all the projection maps $\pi_\beta : \Lim_{\alpha < \lambda} A_\alpha \fibration A_\beta$ are fibrations. We refer to these as \emph{limits of $\kappa$-small chains of fibrations}. 
  \end{enumerate}

  A \emph{morphism of clans} is a functor that sends fibrations to fibrations, preserves the terminal object and pullbacks of fibrations. A \emph{morphism of $\kappa$-clans} is in addition required to preserve the limits of $\kappa$-small chains of fibrations.

  A \emph{model} of a $\kappa$-clan $\catC$ is a morphism of $\kappa$-clans $\catC \to \set$, where $\set$ has the $\kappa$-clan structure where every map is a fibration.
  
\end{definition}

\begin{remark} For a generalized $\kappa$-algebraic theory $T$, the syntactic category $\C_T$ is an example of a $\kappa$-clan, and we show in \cref{appendix-b} that every $\kappa$-clan is equivalent to such a syntactic category $\C_T$. As discussed in \cref{rk:ModelsOfT_vs_ModelsOfClans} and \cref{rk:Models_of_clam_mismatch}, models of a generalized algebraic theory $T$ are closely related to models of the $\kappa$-clan $\C_T$, but they are not the same thing in general. It can be shown they agree, in the case of theories without type equality axioms, but not in general. Replacing the notion of model of a theory by that of models of a clan everywhere in the paper has no consequences anywhere and the reader should feel free to do so. The cofibration/anodyne fibrations weak factorization on $\Mod(T)$ can be defined in the exact same way (using the Yoneda embedding) on the category $\Mod(\catC)$ of models of a clan.
\end{remark}

\begin{remark} \label{cofibrant-objects:wfs-models}
  In the special case $\kappa = \omega$, this weak factorization was defined in \cite[Definition~2.4.2]{henry2016algebraic} and extensively studied in \cite{frey2025duality} in the context of models of clans. In particular, Jonas Frey gave in \cite{frey2025duality} a complete characterization of which pairs of a category and a weak factorization system can be obtained in this way from an $\omega$-clan -- or equivalently from a generalized algebraic theory with no type equality axioms (see the discussion in \cref{rk:ModelsOfT_vs_ModelsOfClans} and \cref{rk:Models_of_clam_mismatch}). The methods used by Frey can be extended to the $\kappa$-case to obtain a similar characterization. Frey also shows that (in the $\kappa = \omega$ case) the $\omega$-presentable cofibrant object in $\Mod(\catC)$ are exactly the retracts of representable models. The same proof generalizes to the $\kappa$-case to show that if $\catC$ is a $\kappa$-clan, then $\kappa$-presentable cofibrant objects are exactly the retracts of representables. We only mention these results for context, we will not directly use them.\end{remark}

\begin{lemma}\label{lem:trivial_fibration_as_weak_pullback}
  Given a generalized $\kappa$-algebraic theory $T$, a morphism $f:M \to N$ of $T$-models is an anodyne fibration if and only if for every generalized display map $p:X \fibration Y$ in $\C_T$, the naturality square:
\[
  \begin{tikzcd}
    M(X) \ar[r] \ar[d] & M(Y) \ar[d] \\
    N(X) \ar[r] & N(Y) 
  \end{tikzcd}
\]
is a \emph{weak pullback square}, that is, if the induced map $M(X) \to N(X) \times_{N(Y)} M(Y)$ is a surjection.
\end{lemma}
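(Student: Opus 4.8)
The plan is to unwind both directions of the claimed equivalence into the same elementary surjectivity statement, using the representability property of the models $\Gamma^*$ recalled in \cref{cstr:representable_models}, namely the natural isomorphism $\Hom(\Gamma^*, M) \cong M(\Gamma)$.

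First I would reduce to the generating cofibrations. Since $\Mod(T)$ is locally $\kappa$-presentable, it is in particular cocomplete and the small object argument applies, so the weak factorization system of \cref{wfs-models} is genuinely cofibrantly generated: its right class, the anodyne fibrations, is exactly the class of maps having the right lifting property against the generating cofibrations $Y^* \cofibration X^*$ attached to generalized display maps $p : X \fibration Y$ in $\C_T$. It therefore suffices to show, for a fixed display map $p : X \fibration Y$, that $f : M \to N$ has the right lifting property against $Y^* \cofibration X^*$ if and only if the naturality square of $f$ at $p$ is a weak pullback.

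Next I would translate lifting problems through $\Hom(\Gamma^*, -) \cong (-)(\Gamma)$, being careful about variances: the display map $p : X \to Y$ in $\C_T$ induces, via the contravariant functor $\C_T^\op \to \Mod(T)$, the map $Y^* \to X^*$ (not the reverse). A commuting square with top edge $u : Y^* \to M$, left edge $Y^* \cofibration X^*$ and bottom edge $v : X^* \to N$ corresponds to a pair $(\bar u, \bar v) \in M(Y) \times N(X)$, and commutativity of the square is precisely the equation $f_Y(\bar u) = N(p)(\bar v)$ in $N(Y)$; that is, commuting squares correspond bijectively to elements of the pullback $N(X) \times_{N(Y)} M(Y)$. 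Likewise a diagonal filler $w : X^* \to M$ corresponds to an element $\bar w \in M(X)$, and the two triangles commute exactly when $M(p)(\bar w) = \bar u$ and $f_X(\bar w) = \bar v$, i.e. exactly when $\bar w$ is sent to $(\bar v, \bar u)$ by the canonical comparison map $M(X) \to N(X) \times_{N(Y)} M(Y)$. Hence ``every commuting square admits a filler'' is literally ``the comparison map is surjective for $p$'', and ranging over all generalized display maps $p$ yields the lemma.

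I do not expect a serious obstacle here; the proof is a direct application of the naturality of $\Hom(\Gamma^*, -) \cong (-)(\Gamma)$. The only points that require attention are the two pieces of bookkeeping above: that reducing anodyne fibrations to the right lifting property against the \emph{generating} cofibrations is legitimate (which is where local $\kappa$-presentability of $\Mod(T)$ enters), and that the edges of the lifting square are identified with the correct variances when passing to the sets $M(Y)$, $N(X)$, etc.
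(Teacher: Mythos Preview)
Your proposal is correct and follows essentially the same approach as the paper: both proofs use the representability isomorphism $\Hom(\Gamma^*,M)\cong M(\Gamma)$ to translate a lifting square against $Y^*\cofibration X^*$ into an element of $N(X)\times_{N(Y)}M(Y)$ and a filler into a preimage under the comparison map. The only difference is cosmetic: you spell out the reduction to generating cofibrations via cofibrant generation and local $\kappa$-presentability, whereas the paper leaves this implicit in the definition of anodyne fibrations.
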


\begin{proof}
  By the Yoneda lemma, there is a one-to-one correspondence between elements of $M(X)$ and morphisms of models $X^* \to M$. The map $M(X) \to M(Y)$ is obtained as the composite $Y^* \to X^* \to M$, and the map $M(X) \to N(X)$ as the composite $X^* \to M \to N$. An element of $N(X) \times_{N(Y)} M(Y)$ is hence the data of maps $X^* \to N$ and $Y^* \to M$ such that the composite $Y^* \to M \to N$ and $Y^* \to X^* \to N$ coincide. This is exactly a commutative square:
  \[
    \begin{tikzcd}
      Y^* \ar[r] \ar[d,"p^*"swap] & M \ar[d,"f"] \\
      X^* \ar[r] & N.
    \end{tikzcd}
  \]
  An element of $M(X)$ whose image in  $N(X) \times_{N(Y)} M(Y)$ is then exactly a dotted diagonal filling in the square above:
  \[
    \begin{tikzcd}
      Y^* \ar[r] \ar[d,"p^*"swap] & M \ar[d,"f"] \\
      X^* \ar[r] \ar[ur,dotted] & N.
    \end{tikzcd}
  \]
 Hence the surjectivity of this map is equivalent to the fact that $f$ has the right lifting property against $Y^* \to X^*$ for all fibrations $X \fibration Y$, which concludes the proof.
\end{proof}

\subsection{The Category theoretic approach: The first-order language of a $\kappa$-clans}
\label{sec:category_approach}

In this section we present another equivalent approach to the definition of the language, which is more categorical in spirit, and strongly inspired from Lawvere's theory of hyperdoctrines (\cite{lawvere1969adjointness}, \cite{lawvere1970equality}). This approach, while much more abstract, has several advantages over the syntactic one. Mainly, it allows working directly with the category $\Mod(T)$ of models equipped with the weak factorization system on the category of models constructed in the previous subsection, without referring to the theory $T$ at all, and to generalize it to an arbitrary category with a weak factorization system.  This will be useful later on to define the language of a model category without having to build explicitly a syntax for it.

As before, we fix $\lambda$ a regular cardinal. A $\lambda$-boolean algebra is a boolean algebra which admits joins (and hence intersections) of $\lambda$-small families. We denote by $\bool_\lambda$ the category whose objects are $\lambda$-boolean algebras and whose morphisms are boolean algebra morphisms preserving $\lambda$-small joins (and hence intersections).

We introduce the notion of $\lambda$-boolean algebra over a clan $\catC,$ which we can think of as an axiomatization of the structure that the $\Lb^T_\lambda$ from \cref{sec:syntactic-approach} have over the contextual category of $T$.

\begin{definition}\label{def:Boolan_alg_over_clan}
	Given $\catC$ a clan and $\lambda$ a regular cardinal, a $\lambda$-\emph{boolean algebra} over $\catC$ is a functor
	\[ \catB: \catC^{op} \to \bool_\lambda \]
	such that:
	\begin{enumerate}
		\item For each fibration $\pi:Z \twoheadrightarrow X$ in $\catC$, $\pi^*:\catB(X) \to \catB(Z)$ has a left adjoint:
		\[\exists_\pi : \catB(Z) \leftrightarrows \catB(X) : \pi^*. \]
		\item The Beck-Chevalley condition holds for each pullback square along a fibration. That is, given any pullback square:
		\[\begin{tikzcd}
			Z' \ar[r,"f'"] \ar[d,->>,"\pi'" swap] \ar[dr,phantom,"\lrcorner" very near start] &  Z \ar[d,->>,"\pi"] \\
			X' \ar[r,"f" swap] & X 
		\end{tikzcd}\]
with $\pi$ a fibration, we have $f^* \exists_{\pi} = \exists_{\pi'} f'^*$.
	\end{enumerate}
	Morphisms of $\lambda$-boolean algebras over $\catC$ are natural transformations that commute with the $\exists_\pi$. We call weak morphisms the natural transformations with no additional conditions.
\end{definition}

\begin{remark}
  If $\catB$ is a $\lambda$-boolean algebra over $\catC$, then for each $X \in \catC$, the negation $\neg : \catB(X) \to \catB(X)^{op}$ is a contravariant equivalence. Therefore, if $\pi:Z \to X$ is a fibration, then the map $\pi^*:\catB(X) \to \catB(Z)$ also has a right adjoint defined by:
\[\forall_\pi(\phi) \coloneqq \neg (\exists_\pi \neg \phi). \]
From this definition, we immediately have the other Beck-Chevalley condition $f^* (\forall_\pi) = \forall_\pi f^*$ and the fact that morphisms of boolean algebras over $\catC$ are also compatible with $\forall_\pi$, simply because $f^*$ is compatible with both $\exists_\pi$ and the negation.
\end{remark}

\begin{remark} \Cref{def:Boolan_alg_over_clan} will in practice be applied to $\catC$ a $\kappa$-clan (and not just a clan). The only reason it is stated like that is because the definition actually does not explicitly involve $\kappa$. This is related to the fact that the dependencies in $\kappa$ of the language defined in the previous subsection are only through the choice of which context can our variables (including bound variables) be taken from: taking a larger $\kappa$ means we can quantify over more variables at the same time.  Similarly, the dependency on $\kappa$ is hidden in the dependency on $\catC$, as $\catC$ is playing the role of the category of $\kappa$-contexts.
\end{remark}

Let us start with our main example of such a boolean algebra over a clan, which is the motivating example for the notion:

\begin{theorem}\label{th:Lb_is_initial}
  Let $T$ be a generalized $\kappa$-algebraic theory and $\catC_T$ the corresponding $\kappa$-contextual category, seen as a clan. Then the construction $X \mapsto \Lb^T_\lambda(X)$ from \cref{def:Lb} (see also \cref{folgat} and \ref{def:vdash_relation}) is a $\lambda$-boolean algebra over $\catC_T$. In fact, it is an initial object in the category of $\lambda$-boolean algebras over $\catC_T$.
\end{theorem}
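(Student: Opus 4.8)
The plan is to verify in two stages: first that $X \mapsto \Lb^T_\lambda(X)$ is a $\lambda$-boolean algebra over $\catC_T$ at all, then that it is initial. For the first stage, most of the structure has essentially been packaged in the preceding constructions. For each context $\Gamma$, the set $\Lcal^T_\lambda(\Gamma)$ carries the logical operations $\top,\bot,\neg,\bigvee,\bigwedge$ from \cref{folgat}, and by \cref{rk:logical_operation_compatible} these descend to $\Lb^T_\lambda(\Gamma)$ and are compatible with the relation $\vdash_\Gamma$, which becomes a partial order on the quotient. I would first check that, with these operations, $\Lb^T_\lambda(\Gamma)$ is a $\lambda$-boolean algebra: the boolean algebra axioms (distributivity, complementation, etc.) all follow from clauses \ref{def:vdash_relation:bot_top}--\ref{def:vdash_relation:vee_wedge} of \cref{def:vdash_relation} by routine lattice-theoretic manipulation — e.g. $\Phi \wedge \neg\Phi \dashv\vdash \bot$ and $\top \dashv\vdash \Phi \vee \neg\Phi$ are clause \ref{def:vdash_relation:negation}, and $\lambda$-small joins exist by construction. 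Functoriality in $\catC_T^{op}$ is \cref{cstr:f*} together with \cref{prop:vdash_structural_prop}, which guarantees $f^*$ is a well-defined map of $\lambda$-boolean algebras. The adjunction $\exists_\pi \dashv \pi^*$ for a generalized display map $\pi$ is exactly clause \ref{def:vdash_relation:Quantifier}: the defining equivalence $\exists\,\{x_\beta\}\,\Psi \vdash_\Gamma \Phi \Leftrightarrow \Psi \vdash_{\Gamma'} p^*\Phi$ says precisely that $\exists_p$ is left adjoint to $p^*$ on the posets. For the Beck–Chevalley condition, I would use that pullbacks of display maps in $\catC_T$ correspond to substitution of types, and that $f^*$ commutes with the $\exists$-quantifier syntactically by clause (4) of \cref{cstr:f*}; matching this with \cref{prop:vdash_structural_prop} gives $f^*\exists_\pi = \exists_{\pi'}f'^*$ on the quotient.

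For initiality, given any $\lambda$-boolean algebra $\catB$ over $\catC_T$, I must produce a unique morphism $u : \Lb^T_\lambda \to \catB$ of $\lambda$-boolean algebras over $\catC_T$. Since there are no atomic formulas beyond $\top$ and $\bot$, every formula $\Phi \in \Lcal^T_\lambda(\Gamma)$ is built from $\top,\bot$ by finitely/$\lambda$-many applications of $\neg$, $\bigvee$, $\bigwedge$, and the quantifiers $\exists_p, \forall_p$ along display maps. So I define $u_\Gamma(\Phi)$ by induction on the structure of $\Phi$: $u(\top) = \top_{\catB(\Gamma)}$, $u(\bot) = \bot_{\catB(\Gamma)}$, $u(\neg\Phi) = \neg u(\Phi)$, $u(\bigvee_i \Phi_i) = \bigvee_i u(\Phi_i)$, $u(\bigwedge_i\Phi_i) = \bigwedge_i u(\Phi_i)$, and for a quantifier along $p:\Gamma' \fibration \Gamma$, $u(\exists_p \Psi) = \exists_p^{\catB} u(\Psi)$ and $u(\forall_p \Psi) = \forall_p^{\catB} u(\Psi)$, where $u(\Psi) \in \catB(\Gamma')$ by induction. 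This is forced — any morphism of $\lambda$-boolean algebras over $\catC_T$ must commute with all these operations — so uniqueness is automatic once we know $u$ is well-defined, and existence of $u$ as a function is clear; the content is that $u$ respects $\dashv\vdash_\Gamma$, i.e. descends to the quotient, and then that it is natural and commutes with $\exists_\pi$.

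The main obstacle is precisely well-definedness: showing that if $\Psi \vdash_\Gamma \Phi$ then $u_\Gamma(\Psi) \leqslant u_\Gamma(\Phi)$ in $\catB(\Gamma)$. The clean way to do this is to mimic the proof of \cref{prop:vdash_structural_prop}: define a relation $\vdash'_\Gamma$ on $\Lcal^T_\lambda(\Gamma)$ by $\Psi \vdash'_\Gamma \Phi :\Leftrightarrow u_\Gamma(\Psi) \leqslant u_\Gamma(\Phi)$, and verify that this relation satisfies all five clauses of \cref{def:vdash_relation}. Reflexivity and transitivity are clear; clause \ref{def:vdash_relation:bot_top} and \ref{def:vdash_relation:negation} hold because $\catB(\Gamma)$ is a boolean algebra; clause \ref{def:vdash_relation:vee_wedge} holds because $\catB(\Gamma)$ has (and $u$ preserves, by definition) $\lambda$-small joins and meets; clause \ref{def:vdash_relation:Quantifier} holds because $\exists^{\catB}_p \dashv p^*$ and $p^* \dashv \forall^{\catB}_p$ in $\catB$, together with the fact that $u$ commutes with $p^*$ — and this last fact, $u_\Gamma$ following substitution, itself needs a small induction (the inductive clauses of \cref{cstr:f*} match the defining clauses of $u$, using Beck–Chevalley in $\catB$ to handle the quantifier case and naturality to handle composition). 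By minimality of $\vdash_\Gamma$ we conclude $\vdash_\Gamma\ \Rightarrow\ \vdash'_\Gamma$, hence $u$ factors through $\Lb^T_\lambda(\Gamma)$. Naturality and compatibility with $\exists_\pi$ of the resulting $\bar u$ then hold by construction. I would expect the verification of the quantifier clause — keeping track of which display map $p$ is involved and invoking Beck–Chevalley at the right moment — to be the only genuinely fiddly part; everything else is bookkeeping.
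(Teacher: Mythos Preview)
Your proposal is correct and follows essentially the same approach as the paper: verify the $\lambda$-boolean-algebra-over-$\catC_T$ structure from the defining clauses of \cref{def:vdash_relation} and \cref{cstr:f*}, then define the map $u$ to an arbitrary $\catB$ by structural recursion on $\Lcal^T_\lambda$ and show it respects $\vdash$ by checking the closure clauses of \cref{def:vdash_relation}. You are in fact slightly more careful than the paper on one point: you correctly flag that verifying the quantifier clause for $\vdash'$ requires knowing $u(p^*\Phi) = p^*\,u(\Phi)$, which is itself a separate structural induction on $\Phi$ (using Beck--Chevalley in $\catB$ for the quantifier step); the paper proves this naturality afterwards and glosses over the dependency, but since the naturality induction lives at the level of $\Lcal^T_\lambda$ and does not presuppose descent, there is no circularity either way.
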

\begin{proof}
We first check that $\Lcal^T_\lambda$ is a $\lambda$-boolean algebra over $\catC_T$. We have mentioned in \cref{rk:logical_operation_compatible} that all the logical operations $\vee,\wedge,\neg,\exists$ and so on are compatible with the equivalence relation $\dashv \vdash$. Therefore, they all induce operations on the quotient $\Lb^T_\lambda$. The first four points of \cref{def:vdash_relation} immediately show that each $\Lb^T_\lambda(X)$ is a boolean algebra whose order relation is given by $\vdash$, and with $\lambda$-small unions. By \cref{cstr:f*}, the map $f^* : \Lcal^T_\lambda(X) \to \Lcal^T_\lambda(Y)$ is compatible with all the logical operations, so it gives rise to a morphism of boolean algebras $\Lb^T_\lambda(X) \to \Lb^T_\lambda(Y)$. We get a functor $\catC_T \to \bool_\lambda$, the conditions $(g\circ f)^*(\phi) = f^* g^*(\phi)$ and $id^*(\phi) = \phi$ follow immediately by induction. Next, the last two conditions of \cref{def:vdash_relation} show that $\exists $ and $\forall$ define left and right adjoints to $\pi^*$. Finally, the Beck-Chevalley condition follows from how $f^*$ is defined on formulas starting with a $\exists$ quantifier:
\[f^* \left( \exists \{x_\beta :\Gamma_\beta\}_{\gamma \leqslant \beta < \alpha } \Phi \right) =  \exists \{x_\beta :f^*\Gamma_\beta\}_{\gamma \leqslant \beta < \alpha } f^* \Phi, \]
which (after passing to the quotient $\Lcal \to \Lb$) exactly says that $f^* \exists_\pi = \exists \pi f^*$ where $\pi$ is the generalized display map corresponding to forgetting the variables $\{x_\beta \}_{\gamma \leqslant \beta < \alpha} \in X_\alpha$.

We now check that it is an initial object in the category of $\lambda$-boolean algebras over $\catC_T$. Let $\catB$ be any $\lambda$-boolean algebra over $\catC$. Any morphism $v:\Lb^T_\lambda \to \catB$ has to satisfy:
\begin{enumerate}
\item $v(\bot) = \bot_\catB$ and $v(\top) = \top_\catB$.
\item $v(\neg \Phi) = \neg v(\Phi)$.
\item $\displaystyle v(\bigvee_{i \in I} \Phi_i) = \bigvee_{i \in I} v(\Phi_i)$ and $\displaystyle v(\bigwedge_{i \in I} \Phi_i) = \bigwedge_{i \in I} v(\Phi_i)$.
\item \[ v( \exists \{x_\beta :\Gamma_\beta\}_{\gamma \leqslant \beta < \alpha } \Phi)  = \exists \{x_\beta :\Gamma_\beta\}_{\gamma \leqslant \beta < \alpha } v(\Phi)\] and  \[v( \forall \{x_\beta :\Gamma_\beta\}_{\gamma \leqslant \beta < \alpha } \Phi)  = \forall \{x_\beta :\Gamma_\beta\}_{\gamma \leqslant \beta < \alpha } v(\Phi).\]
\end{enumerate}
These form an inductive definition for a function $\Lcal^T_\lambda \to \catB$. So there is a unique such function $v:\Lcal^T_\lambda \to \catB$. To conclude, we only need to check that this function $v$ descends to a function $\Lb^T_\lambda \to \catB$ and is a morphism of $\lambda$-boolean algebras over $\catC$. But this is rather immediate: We first observe, by induction over \cref{def:vdash_relation}, that if $\Phi \vdash \Psi$ then $v(\Phi) \leqslant v(\Psi)$. This implies that if $\Phi \dashv \vdash \Psi$ then $v(\Phi) = v(\Psi)$, so $v$ does define a function $\Lb^T_\lambda \to \catB$. The naturality condition
\[ v(f^*( \Phi)) = f^*(v(\Phi)) \]
can be proved by induction on the formula $\Phi$, and the compatibility of $v$ with all the boolean algebra operations and the quantifiers follows immediately from the definition of $v$.
\end{proof}

\begin{proposition} Given any (small) clan $\catC$ and $\lambda$ a regular cardinal, there is an initial $\lambda$-boolean algebra over $\catC$, which we denote by $\Lb^\catC_{\lambda}$. \end{proposition}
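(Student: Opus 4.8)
The plan is to construct $\Lb^\catC_\lambda$ by a general-purpose "syntactic" construction mimicking \cref{folgat} and \cref{def:vdash_relation}, but with $\catC$ now playing the role that $\catC_T$ played before. First I would define, for each object $X$ of $\catC$, a set $\Lcal^\catC_\lambda(X)$ of formal formulas by the same inductive clauses as in \cref{folgat}: start with $\top,\bot$, close under $\neg$, under $\lambda$-small $\bigvee$ and $\bigwedge$, and — in place of the quantifier clause over context extensions — add, for every fibration $\pi: Z \fibration X$ in $\catC$ and every $\Phi \in \Lcal^\catC_\lambda(Z)$, formal formulas $\exists_\pi \Phi$ and $\forall_\pi \Phi$ in $\Lcal^\catC_\lambda(X)$. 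Pullback $f^*\Phi$ along an arbitrary morphism $f:Y\to X$ is defined by induction exactly as in \cref{cstr:f*}, using that pullbacks of fibrations exist in $\catC$ to make sense of $f^*(\exists_\pi\Phi) \coloneqq \exists_{\pi'}(f'^*\Phi)$ where $\pi'$ is the pullback of $\pi$ along $f$ and $f'$ the other leg; functoriality $(gf)^* = f^*g^*$ and $\idx^* = \idx$ follow by induction using uniqueness of pullbacks (up to canonical iso — one should fix a choice of pullbacks in $\catC$, or work up to the evident coherent isomorphisms). Then I would define a preorder $\vdash_X$ on $\Lcal^\catC_\lambda(X)$ as the smallest family of relations satisfying the five conditions of \cref{def:vdash_relation} verbatim (with "generalized display map" replaced by "fibration"), note the analogue of \cref{prop:vdash_structural_prop} (that $\vdash$ is stable under $f^*$, proved by the same trick of checking that the stabilized relation satisfies all the defining clauses), and set $\Lb^\catC_\lambda(X) \coloneqq \Lcal^\catC_\lambda(X)/({\dashv\vdash_X})$.

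Next I would verify that $X \mapsto \Lb^\catC_\lambda(X)$ is a $\lambda$-boolean algebra over $\catC$ in the sense of \cref{def:Boolan_alg_over_clan}, and that it is initial. Both verifications are essentially identical to the corresponding parts of the proof of \cref{th:Lb_is_initial}: the first four clauses of \cref{def:vdash_relation} make each $\Lb^\catC_\lambda(X)$ a $\lambda$-complete boolean algebra with order $\vdash$; clause \ref{def:vdash_relation:Quantifier} makes $\exists_\pi \dashv \pi^* \dashv \forall_\pi$; and the Beck–Chevalley condition $f^*\exists_\pi = \exists_{\pi'}f'^*$ holds on the nose at the level of formal formulas by the very definition of $f^*$ on $\exists$-formulas, hence descends to the quotient. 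For initiality, given any $\lambda$-boolean algebra $\catB$ over $\catC$, the four compatibility requirements (with $\bot,\top$; with $\neg$; with $\lambda$-small $\bigvee,\bigwedge$; with $\exists_\pi,\forall_\pi$) constitute an inductive definition of a unique function $v: \Lcal^\catC_\lambda \to \catB$; one checks by induction over the clauses of \cref{def:vdash_relation} that $\Phi \vdash_X \Psi \Rightarrow v(\Phi) \leqslant v(\Psi)$, so $v$ descends to $\Lb^\catC_\lambda \to \catB$, and then that $v$ is natural (compatible with $f^*$, by induction on $\Phi$) and a morphism over $\catC$ (compatible with $\exists_\pi$, immediate from the defining clause for $v$). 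Uniqueness is clear since the inductive clauses pin down $v$ on generators.

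The main obstacle — really the only point requiring care beyond transcribing the earlier argument — is the handling of pullbacks of fibrations in $\catC$. In the syntactic setting of \cref{th:Lb_is_initial} the "generalized display maps" and their pullbacks come with literal chosen representatives (substitution is strictly functorial on the nose in a contextual category), whereas in an abstract clan pullbacks are only determined up to unique isomorphism. So I would either (i) fix once and for all a choice of pullback for every cospan consisting of a fibration and an arbitrary map, and check the resulting $f^*$ is strictly functorial — which is false in general, so this requires care — or, more cleanly, (ii) observe that different choices of pullback induce canonically isomorphic $\exists_\pi$'s and that the relation $\dashv\vdash$ absorbs these isomorphisms, so that $f^*$ on $\Lb^\catC_\lambda$ is well-defined and strictly functorial even though $f^*$ on $\Lcal^\catC_\lambda$ is only pseudofunctorial; a short lemma to this effect (pullback of a fibration along an iso, composed suitably, is again the identity in $\Lb$) suffices. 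Once that bookkeeping is settled, everything else is a routine copy of the proof of \cref{th:Lb_is_initial} with "$\catC_T$" replaced by "$\catC$" and "context extension / generalized display map" replaced by "fibration".
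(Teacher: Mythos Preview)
Your approach is correct in outline, but it takes a much longer route than the paper. The paper dispatches the proposition in two sentences by observing that either (i) $\lambda$-boolean algebras over $\catC$ are, by inspection of \cref{def:Boolan_alg_over_clan}, the models of a multi-sorted $\lambda$-ary algebraic theory (one sort per object of $\catC$, operations for $f^*$, $\neg$, $\bigvee$, $\bigwedge$, $\exists_\pi$, and equational axioms encoding order, adjunction, and Beck--Chevalley), so an initial model exists by standard universal algebra; or (ii) every small clan is equivalent to the syntactic category $\catC_T$ of some generalized algebraic theory $T$ (this is proved in \cref{appendix-b}), whence \cref{th:Lb_is_initial} already supplies the initial object.

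Your plan instead reruns the entire syntactic construction of \cref{folgat}--\cref{def:Lb} with ``generalized display map'' replaced by ``fibration''. This works, and you have correctly spotted the one genuine new difficulty: in a contextual category $\catC_T$ the canonical pullbacks are strictly functorial, so $f^*$ on raw formulas is a strict functor, whereas in an arbitrary clan pullbacks are only pseudofunctorial and you must argue that the defect is absorbed by $\dashv\vdash$. That lemma is provable, but it is exactly the bookkeeping that the paper's route (ii) avoids for free --- reducing to $\catC_T$ hands you strict pullbacks at no cost --- and that route (i) avoids by never touching syntax at all. The upside of your approach is that it is self-contained and makes the structure of $\Lb^\catC_\lambda$ fully explicit without appealing to the clan/GAT equivalence; the downside is several pages of coherence-checking that the paper's one-liners bypass.
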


Note that by \cref{th:Lb_is_initial}, if $T$ is a generalized $\kappa$-algebraic theory, with $\catC_T$ its $\kappa$-contextual category, then
\[ \Lb^{\catC_T}_\lambda = \Lb^T_\lambda. \]
This provides a way to define (or at least to characterize) the first-order language of any clan without having to explicitly give a syntactic description of the clan.

\begin{proof}
  We can either remark that the $\lambda$-boolean algebras over $\catC$ are (by their definition) the models of a multi-sorted $\lambda$-algebraic theory (with one sort for each object $c \in \catC$) and hence there is an initial object by usual results on algebraic theories. Alternatively, we can use (see \cref{appendix-c}) that every clan is equivalent to the contextual category of a generalized algebraic theory and use \cref{th:Lb_is_initial} to conclude.
\end{proof}

Next, we mention a few more examples:

\begin{example}\label{ex:Power_Set_boolean_algebra}
	
	\begin{enumerate}
		\item[]	
		
		\item Let $\set$ be the category of sets, considered as a clan where every arrow is a fibration. The contravariant power-set functor $\Pcal: \set^{op} \rightarrow \bool_{\lambda}$ is a $\lambda$-Boolean algebra over $\set$. The Beck-Chevalley condition follows from \cref{lem:BC_Set} below.
		
		\item Given $F: \catC \rightarrow \catD$ a morphism of clans, if $\catB$ is a $\lambda$-boolean algebra over $\catD$, then $F^* \catB$ defined by $F^* \catB (\Gamma)= \catB (F(\Gamma))$ is a $\lambda$-boolean algebra over $\catC$. 
		
		\item Combining the two observations above, given any model $M$ of a clan $\catC$, that is, a morphism of clans $M: \catC \to \set$, one has a boolean algebra $\Pcal(M)$ over $\catC$ given by pulling back example $1$ along the morphism $M : \catC \rightarrow \set$. More explicitly:
		\[\begin{array}{c c c c}
			\Pcal(M) : & \catC^{op} &\rightarrow &\set \\
			& \Gamma & \mapsto & \Pcal(M(\Gamma)).
		\end{array} \]
		
	\end{enumerate}
	
\end{example}

\begin{lemma} \label{lem:BC_Set} Given a square of sets,	
	\[
	\begin{tikzcd}
		W \ar[r,"f"] \ar[d,"g"] & X \ar[d,"h"] \\
		Y \ar[r,"k"] & Z,
	\end{tikzcd}
	\]
	then the power set functor satisfies the Beck-Chevalley condition on this square, \ie $k^* \exists_h =\exists_g f^* $ as maps $\Pcal(X) \to \Pcal(Y)$ if and only if the square is a weak pullback square \ie if and only if the cartesian gap map $W \rightarrow Y \times_Z X$ is surjective. 
\end{lemma}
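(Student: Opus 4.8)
The plan is to unwind both sides of the Beck--Chevalley equality into explicit set-theoretic formulas and compare. Recall that for a map of sets $h\colon X\to Z$ the functor $h^*\colon \Pcal(Z)\to\Pcal(X)$ is the preimage $h^{-1}(-)$, so its left adjoint $\exists_h$ is the direct image $S\mapsto h(S)$. Hence, for $S\subseteq X$,
\[ k^*\exists_h(S) = k^{-1}\bigl(h(S)\bigr) = \{\, y\in Y : \exists\, x\in S,\ h(x)=k(y)\,\}, \]
while
\[ \exists_g f^*(S) = g\bigl(f^{-1}(S)\bigr) = \{\, y\in Y : \exists\, w\in W,\ g(w)=y,\ f(w)\in S\,\}. \]
Write $\gamma\colon W\to Y\times_Z X$, $\gamma(w)=(g(w),f(w))$, for the cartesian gap map; it is well defined precisely because the square commutes.

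First I would record the inclusion $\exists_g f^*(S)\subseteq k^*\exists_h(S)$, which holds for every $S$ and every commuting square: if $y=g(w)$ with $f(w)\in S$, then $k(y)=k g(w)=h f(w)\in h(S)$ by commutativity. So the entire content of Beck--Chevalley on this square is the reverse inclusion, asserted for all $S$.

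For the ``if'' direction, assume $\gamma$ is surjective, fix $S\subseteq X$ and $y\in k^*\exists_h(S)$, and pick $x\in S$ with $h(x)=k(y)$. Then $(y,x)\in Y\times_Z X$, so surjectivity of $\gamma$ supplies $w\in W$ with $g(w)=y$ and $f(w)=x\in S$, whence $y\in \exists_g f^*(S)$; this gives the missing inclusion. For the ``only if'' direction, assume $k^*\exists_h=\exists_g f^*$ holds for all $S$, and fix $(y,x)\in Y\times_Z X$, so $h(x)=k(y)$. Testing the equality on the singleton $S=\{x\}$ yields $y\in k^{-1}(h(\{x\}))=k^*\exists_h(\{x\})=\exists_g f^*(\{x\})=g(f^{-1}(\{x\}))$, so there is $w\in W$ with $f(w)=x$ and $g(w)=y$, i.e.\ $\gamma(w)=(y,x)$. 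Hence $\gamma$ is surjective.

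There is no genuine obstacle here; the argument is a short diagram chase. The only point needing a little care is to cleanly separate the inclusion that holds automatically from mere commutativity of the square from the substantive one, and to observe that testing against singletons already forces surjectivity of the gap map in the converse direction.
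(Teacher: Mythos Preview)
Your proof is correct and follows essentially the same approach as the paper: both compute the two sides explicitly as $\{y : \exists x\in S,\ h(x)=k(y)\}$ and $\{g(w) : f(w)\in S\}$, use surjectivity of the gap map for one direction, and test on singletons for the converse. Your version is somewhat more detailed (explicitly isolating the automatic inclusion coming from commutativity), but the argument is the same.
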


\begin{proof}
	Given a subset $P \subset X$ one has:
	\[ k^* h_! P = \{ y \in Y | k(y) = h(p) \text{ for some $p \in P$}  \}, \]
	
	\[ g_! f^* P = \{ g(w) | f(w) \in P  \}. \]
	Surjectivity of the map $W \rightarrow Y \times_Z X$ gives a canonical way to make any element of $k^* h_! P$ into an element of $g_! f^* P$, and conversely, applying the equality to $P=\{p\}$ produces the surjectivity of $W \rightarrow Y \times_Z X$.
\end{proof}

In this new setting with just a clan $\catC$, one can still define the set of formulas $\Lb^\catC_ \lambda$ as the initial $\lambda$-boolean algebra over $\catC$. We now explain what it means for formulas defined in this way to be ``true'' or ``false'' given a model and an interpretation of its variables in the model.

\begin{construction}\label{cstr:validity_formulas_categorical}
   Given a clan $\catC$ and a model of $M:\catC \to \set $ we have, as explained in \cref{ex:Power_Set_boolean_algebra}, a $\lambda$-boolean algebra over $\catC$ defined by $c \mapsto \Pcal(M(c))$. By initiality of the $\lambda$-boolean algebra $\Lb^\catC_\lambda$, there exists a unique morphism of $\lambda$-boolean algebras over $\catC$:
   \[ |-|_M: \Lb^\catC_\lambda \to \Pcal(M).  \]
   This morphism associates each formula $\phi$ in context $\Gamma$ to a subset $|\phi|_M \subseteq M(\Gamma)$. An element $x\in M(\Gamma)$ is said to \emph{satisfy} $\phi$ if $x\in |\phi|_M$. With some abuse of notation, we say that ``$\phi(x)$ is true'' in this case. We also write
\[ M \vdash \phi(x) \]
when we want to insist on which model we are talking about.
 When $\Gamma$ is the terminal object of $\catC$ \ie $\phi$ is a closed formula, then $M(\Gamma)=\{*\}$. Therefore, $\Pcal(M(\Gamma))=\{\bot,\top\}$ so that $|\phi|_M$ is simply a proposition. One then says that $M$ satisfies $\phi$, and we write $M \vdash \phi$.
\end{construction}

\begin{lemma}
  When $\catC = \catC_T$ is the $\kappa$-contextual category of a $\kappa$-generalized algebraic theory, then through the identification $\Lb^T_\lambda = \Lb^\catC_\lambda$, the two definitions of validity of a formula on elements of a model given by \cref{cstr:validity_of_formula_syntactic} and \cref{cstr:validity_formulas_categorical} are equivalent.
\end{lemma}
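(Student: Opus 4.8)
The idea is to use the initiality of $\Lb^T_\lambda = \Lb^\catC_\lambda$ as a $\lambda$-boolean algebra over $\catC_T$ (\cref{th:Lb_is_initial}). Fix a model $M$. By construction (\cref{cstr:validity_formulas_categorical}) the categorical validity $|-|_M$ is the \emph{unique} morphism of $\lambda$-boolean algebras over $\catC_T$ from $\Lb^\catC_\lambda$ to $\Pcal(M)$. Hence it suffices to exhibit the syntactic validity of \cref{cstr:validity_of_formula_syntactic} as such a morphism; uniqueness then forces the two to agree, which is exactly the statement that for every context $\Gamma$, every formula $\Phi$ and every $x \in M(\Gamma)$, one has ``$\Phi(x)$ is true'' in the sense of \cref{cstr:validity_of_formula_syntactic} if and only if $x \in |\Phi|_M$.

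Concretely, I would define $S_M$ by sending $\Phi \in \Lcal^T_\lambda(\Gamma)$ to the subset $\{x \in M(\Gamma) : \Phi(x)\text{ is true}\} \subseteq M(\Gamma)$, using the inductive clauses of \cref{cstr:validity_of_formula_syntactic}. The first step is that $S_M$ descends to the quotient $\Lb^T_\lambda$: by the first part of the lemma stated just after \cref{cstr:validity_of_formula_syntactic}, $\Psi \vdash_\Gamma \Phi$ gives $\{x:\Psi(x)\} \subseteq \{x:\Phi(x)\}$, so $\Psi \dashv\vdash_\Gamma \Phi$ yields equal subsets. The second step is naturality: by the second part of that same lemma, $\Phi = f^*\Psi$ implies $\Phi(x)\Leftrightarrow\Psi(f(x))$, which says precisely $S_M(f^*\Psi) = (M(f))^{-1}(S_M(\Psi))$ in $\Pcal(M)$; thus $S_M$ is a natural transformation $\catC_T^{op}\to\bool_\lambda$ to the boolean algebra $\Pcal(M)$ of \cref{ex:Power_Set_boolean_algebra}.

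The third step is that each $S_M(\Gamma)$ is a $\lambda$-boolean algebra morphism and $S_M$ commutes with the $\exists_\pi$. Clauses (1)–(4) of \cref{cstr:validity_of_formula_syntactic} read off directly: $\top,\bot$ go to the top and bottom of $\Pcal(M(\Gamma))$, $\neg$ to complementation, and $\lambda$-small $\bigvee,\bigwedge$ to unions and intersections. For the quantifier, given a context extension with generalized display map $p:\Gamma'\twoheadrightarrow\Gamma$, clause (5) says $x \in S_M(\exists\dots\Psi)$ iff there is $y\in M(\Gamma')$ with $M(p)(y)=x$ and $y\in S_M(\Psi)$; that is, $S_M(\exists\dots\Psi)$ is the direct image of $S_M(\Psi)$ along $M(p)$, which is exactly $\exists_\pi(S_M(\Psi))$ in $\Pcal(M)$, where $\pi$ is the fibration of $\catC_T$ corresponding to $p$ (and $M(\pi)=M(p)$). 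So $S_M$ is a morphism of $\lambda$-boolean algebras over $\catC_T$, and by initiality $S_M = |-|_M$.

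The only genuinely delicate point is the bookkeeping identifying the syntactic existential in clause (5) with the categorical $\exists_\pi$ on $\Pcal(M)$: one must note that in the power-set boolean algebra $\Pcal(M)$ the operation $\exists_\pi$ is computed as the set-theoretic direct image along $M(\pi)$ (this is implicit in \cref{ex:Power_Set_boolean_algebra}, and uses that a model sends the pullback squares of display maps to pullback squares of sets, which is also what makes $\Pcal(M)$ a $\lambda$-boolean algebra over $\catC_T$ in the first place). Everything else is a routine structural induction — indeed this is essentially the content of the (unproved) lemma preceding \cref{def:Lb}, now organized so that the conclusion follows from uniqueness of morphisms out of an initial object rather than from a separate induction.
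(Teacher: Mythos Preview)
Your argument is correct and rests on the same observation as the paper's: the inductive clauses of \cref{cstr:validity_of_formula_syntactic} are precisely the conditions for a map $\Lb^T_\lambda \to \Pcal(M)$ to be a morphism of $\lambda$-boolean algebras over $\catC_T$. You and the paper simply run this observation in opposite directions. The paper starts from the categorical $|-|_M$, notes that as a morphism of $\lambda$-boolean algebras it automatically satisfies each clause of the syntactic inductive definition, and concludes by induction on formulas that the two notions coincide. You instead start from the syntactic validity, package it as a morphism $S_M$, and invoke initiality of $\Lb^T_\lambda$ to force $S_M = |-|_M$. The paper's route is a one-line check (a morphism preserves $\top,\bot,\neg,\bigvee,\bigwedge,\exists_\pi$, hence satisfies clauses (1)--(6)); yours requires the extra verification that $S_M$ descends to the quotient and is natural, but has the virtue of making explicit that both constructions are pinned down by the same universal property.
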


\begin{proof}
  Defining the validity of formulas as in \cref{cstr:validity_formulas_categorical} it is immediate to verify all the explicit conditions of the inductive definition given in \cref{cstr:validity_of_formula_syntactic} simply because the map $\Lb^\catC_\lambda \to \Pcal(M)$ is a morphism of $\lambda$-boolean algebras. Hence, it immediately follows by induction on formulas that the two definitions are equivalent.
\end{proof}

\begin{construction} \label{action-functor:formulas}
  Let $F: \catC \to \catD$ be a morphism of clans. And let $\Lb^\catC_\lambda$ and $\Lb^\catD_\lambda$ be their respective initial $\lambda$-boolean algebras. From the fact that $\Lb^\catC_\lambda$ is initial, there is a morphism of $\lambda$-boolean algebras
\[ \alpha^F: \Lb^\catC_\lambda \to F^*\left( \Lb^\catD_\lambda \right). \]
For any $\Gamma \in \catC$ and any formula $\Phi \in \Lb^\catC_\lambda(\Gamma)$ we denote $F(\Phi) \coloneqq \alpha^F_\Gamma(\Phi) $ which is a formula in context $F(\Gamma)$ \ie an element of $\Lb^\catD_\lambda(F(\Gamma))$. The following is immediate from the definition above:
\end{construction}

\begin{proposition} \label{pullback-model:prop}
  Let $M:\catD \to \set$ a model of the clan $\catD$, $\Phi \in \Lb^\catC_\lambda(\Gamma)$ a formula in context $\Gamma$ and $x \in M(F(\Gamma))$. Then, $M \vdash \alpha_F(\Phi)(x)$ if and only if $ F^*M \vdash \Phi(x) $.
\end{proposition}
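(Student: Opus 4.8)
The plan is to prove this by induction on the structure of the formula $\Phi \in \Lb^\catC_\lambda(\Gamma)$, using the fact that both $\alpha^F$ and the validity morphisms $|-|$ are morphisms of $\lambda$-boolean algebras over the appropriate clans, hence commute with all the logical operations. First I would unwind what the two sides mean in terms of the validity morphisms of \cref{cstr:validity_formulas_categorical}. The right-hand side, $F^*M \vdash \Phi(x)$, is by definition the statement $x \in |\Phi|_{F^*M}$, where $|-|_{F^*M}: \Lb^\catC_\lambda \to \Pcal(F^*M)$ is the unique morphism of $\lambda$-boolean algebras over $\catC$. Since $F^*M = M \circ F$, we have $(F^*M)(\Gamma) = M(F(\Gamma))$, so this is a subset of $M(F(\Gamma))$, which is the same set the left-hand side lives in. The left-hand side, $M \vdash \alpha^F(\Phi)(x)$, is $x \in |\alpha^F_\Gamma(\Phi)|_M$, where now $|-|_M: \Lb^\catD_\lambda \to \Pcal(M)$ is the validity morphism for $M$ over $\catD$.

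The key observation is that both $x \mapsto |\alpha^F(x)|_M$ and $x \mapsto |x|_{F^*M}$ are morphisms of $\lambda$-boolean algebras over $\catC$ from $\Lb^\catC_\lambda$ to $F^*\Pcal(M)$. For the first: $\alpha^F: \Lb^\catC_\lambda \to F^*\Lb^\catD_\lambda$ is a morphism of $\lambda$-boolean algebras over $\catC$ by \cref{action-functor:formulas}, and $|-|_M: \Lb^\catD_\lambda \to \Pcal(M)$ is one over $\catD$, so its restriction along $F$ gives $F^*|-|_M: F^*\Lb^\catD_\lambda \to F^*\Pcal(M)$, a morphism over $\catC$; the composite is therefore a morphism of $\lambda$-boolean algebras over $\catC$. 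For the second: $|-|_{F^*M}$ is by construction a morphism over $\catC$, and $\Pcal(F^*M) = F^*\Pcal(M)$ as $\lambda$-boolean algebras over $\catC$ (both assign $\Gamma \mapsto \Pcal(M(F(\Gamma)))$, with the same $\exists$ maps coming from the clan structure). By the initiality of $\Lb^\catC_\lambda$ among $\lambda$-boolean algebras over $\catC$ (\cref{th:Lb_is_initial} together with the existence proposition preceding \cref{ex:Power_Set_boolean_algebra}), these two morphisms are equal. Evaluating at $\Phi$ and $x$ gives precisely the claimed equivalence.

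The only step that requires any care is checking the identification $\Pcal(F^*M) = F^*\Pcal(M)$ as $\lambda$-boolean algebras over $\catC$, including the compatibility of the $\exists_\pi$ structure: for a fibration $\pi: Z \fibration X$ in $\catC$, the left adjoint $\exists_\pi$ on $\Pcal(F^*M)$ is computed via $F(\pi): F(Z) \fibration F(X)$ in $\catD$ and the image-of-subsets operation along $(F^*M)(\pi) = M(F(\pi))$, which is exactly the $\exists$ structure $F^*\Pcal(M)$ inherits from $\Pcal(M)$ over $\catD$. This is essentially bookkeeping: $F$ preserves fibrations and pullbacks of fibrations, so everything matches definitionally. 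I do not expect any genuine obstacle here — as the statement itself says, the proposition is immediate from the definitions — so the main ``work'' is simply organizing the uniqueness argument cleanly rather than overcoming any real difficulty.
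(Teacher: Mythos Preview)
Your proposal is correct and matches the paper's approach: the paper simply declares the result ``immediate from the definition above'' (and for the analogous Quillen version writes only ``See \cref{action-functor:formulas}''), and your initiality argument is exactly the way to spell this out---both $F^*|{-}|_M \circ \alpha^F$ and $|{-}|_{F^*M}$ are morphisms of $\lambda$-boolean algebras over $\catC$ into $F^*\Pcal(M) = \Pcal(F^*M)$, hence equal by initiality of $\Lb^\catC_\lambda$. One small remark: you open by saying you will proceed ``by induction on the structure of the formula'', but then (correctly) give the cleaner initiality argument instead; you may want to drop the first sentence to avoid confusion.
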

Of course this also applies to models of a generalized $\kappa$-algebraic theory.

Finally, we finish this section by showing the key property of invariance of formulas along anodyne fibrations. An invariance property will be established in the next section assuming we are working with a model category, but this first invariance property is purely algebraic. This is also the key observation in Makkai FOLDS \cite{makkai95} and it is directly inspired from it.

We start with the following observation: let $\catC$ be a clan and $f:M\to N$ a morphism of two $\catC$-models, then we have an obvious map $f^*:\Pcal (N)\to \Pcal(M)$ which sends a subset $A \subset N(c)$ for $c \in \catC$ to
\[f_c^{-1}(A) \subset M(c)\]
this map is easily seen to be a \emph{weak} morphism of boolean algebras over $\catC$. It is compatible with the boolean algebra operations and the ordinary contravariant functoriality, but it does not have to be compatible with the covariant functoriality $\exists_\pi$ along fibrations.  However, one has:

\begin{lemma}\label{morphism-bol-alg-iff-trivial-fib}
Let $\catC$ be a clan and let $f:M\to N$ be a morphism between two $\catC$-models. Then $f$ is an anodyne fibration if and only if $f^*:\Pcal(N)\to \Pcal(M)$ is a morphism of $\lambda$-boolean algebras. 
\end{lemma}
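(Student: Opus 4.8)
The statement has two directions. For the forward direction, assume $f : M \to N$ is an anodyne fibration; we must show $f^* : \Pcal(N) \to \Pcal(M)$ preserves the $\exists_\pi$ operations (it is automatically a weak morphism, as noted just before the lemma, so preserving $\exists_\pi$ is exactly what is missing). Concretely, for a fibration $\pi : Z \fibration X$ in $\catC$ and $A \subseteq N(Z)$, we must show $f_X^{-1}(\exists_\pi A) = \exists_\pi(f_Z^{-1}(A))$ as subsets of $M(X)$. Unwinding the definition of $\exists_\pi$ in $\Pcal(N)$ via Beck–Chevalley / the power-set formula (as in \cref{lem:BC_Set}): $\exists_\pi A = \{x \in N(X) \mid \exists z \in N(Z),\ \pi(z) = x,\ z \in A\}$, and similarly in $M$. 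The inclusion $\supseteq$ is automatic from functoriality of $f$ (if $z' \in M(Z)$ lies over $x' \in M(X)$ and $f_Z(z') \in A$, then $f_X(x') \in \exists_\pi A$ since $f_Z(z')$ lies over $f_X(x')$). The nontrivial inclusion $\subseteq$ is precisely a lifting problem: given $x' \in M(X)$ with $f_X(x') \in \exists_\pi A$, i.e. some $z \in N(Z)$ over $f_X(x')$ with $z \in A$, we must produce $z' \in M(Z)$ over $x'$ with $f_Z(z') = z$. Translating $x' \in M(X)$ and $z \in N(Z)$ via Yoneda into maps $X^* \to M$ and $Z^* \to N$, the compatibility $\pi(z) = f_X(x')$ says exactly that the square with sides $\pi^* : Z^* \to X^*$, $x' : X^* \to M$, $f : M \to N$, $z : Z^* \to N$ commutes; a diagonal filler $Z^* \to M$ is exactly the desired $z'$. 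By \cref{lem:trivial_fibration_as_weak_pullback} (or directly from the definition of anodyne fibration as right lifting against the generating cofibrations $Z^* \to X^*$), such a filler exists precisely because $f$ is an anodyne fibration. This gives $\subseteq$ and hence the forward direction.

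For the converse, assume $f^*$ is a (strong) morphism of $\lambda$-boolean algebras over $\catC$; we want $f$ to be an anodyne fibration. By \cref{lem:trivial_fibration_as_weak_pullback}, it suffices to show that for every generalized display map $\pi : Z \fibration X$ the gap map $M(Z) \to N(Z) \times_{N(X)} M(X)$ is surjective. An element of the right-hand side is a pair $(z, x')$ with $z \in N(Z)$, $x' \in M(X)$, and $\pi(z) = f_X(x')$. Apply the hypothesis $f_X^{-1}(\exists_\pi A) = \exists_\pi(f_Z^{-1} A)$ to the singleton $A = \{z\} \subseteq N(Z)$: then $x' \in f_X^{-1}(\exists_\pi\{z\})$ because $f_X(x') = \pi(z) \in \exists_\pi\{z\}$, so $x' \in \exists_\pi(f_Z^{-1}\{z\})$, which by the explicit description of $\exists_\pi$ in $\Pcal(M)$ means there exists $z' \in M(Z)$ with $\pi(z') = x'$ and $f_Z(z') = z$. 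That is exactly a preimage of $(z,x')$ under the gap map, proving surjectivity and hence that $f$ is an anodyne fibration.

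\textbf{Main obstacle.} There is no deep obstacle; the proof is essentially a repackaging of \cref{lem:trivial_fibration_as_weak_pullback} together with the explicit power-set formula for $\exists_\pi$ and the Beck–Chevalley description from \cref{lem:BC_Set}. The one point requiring a little care is making the identification "strong morphism of boolean algebras over $\catC$" $\iff$ "commutes with all $\exists_\pi$" and then noticing that it suffices to test this on singletons $A = \{z\}$ — because $\exists_\pi$ commutes with $\lambda$-small (indeed arbitrary) unions and every subset is a union of singletons, so compatibility on singletons already forces compatibility in general; conversely testing on singletons is exactly what produces the lifts needed for \cref{lem:trivial_fibration_as_weak_pullback}. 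I would also remark at the start that $f^*$ is always a weak morphism (stated in the text immediately preceding), so the whole content is the interaction with $\exists_\pi$, keeping the two directions symmetric and short.
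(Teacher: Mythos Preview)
Your proof is correct and follows essentially the same route as the paper: both directions reduce to the equivalence between ``$f^*$ commutes with $\exists_\pi$'' and the weak-pullback condition of \cref{lem:trivial_fibration_as_weak_pullback}. The only difference is packaging: the paper invokes \cref{lem:BC_Set} once to get this equivalence in a single step (the Beck--Chevalley square for $\Pcal$ commutes iff the underlying square of sets is a weak pullback), whereas you unfold that lemma by hand via element-chasing and the singleton trick---same content, just more explicit.
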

\begin{proof}
 We only need to show that for every fibration $p:X \to Y$ the following square
 \[
 \begin{tikzcd}
  \Pcal(N(X)) \ar[r,"f_X^*"] \ar[d,"\exists"] & \Pcal(M(X)) \ar[d,"\exists"] \\
  \Pcal(N(Y)) \ar[r,"f_Y^*"] & \Pcal(M(Y)).
 \end{tikzcd}
 \]
commutes. From \cref{lem:BC_Set} this is equivalent to saying that the dotted map in
 \[
 \begin{tikzcd}
  M(X) \ar[rrd,"f_X",bend left] \ar[ddr,"\pi_*",bend right] \ar[rd, dashrightarrow] & & \\
  & P \ar[r] \ar[d] & N(X) \ar[d,"\pi_*"] \\
  & M(Y) \ar[r,"f_Y"] & N(Y)
 \end{tikzcd}
 \]
 is surjective. But this is exactly the characterization of anodyne fibrations given in \cref{lem:trivial_fibration_as_weak_pullback}.
\end{proof}

This allows us to deduce the key result of invariance of formulas along anodyne fibrations of models. Basically, the validity of formulas is preserved by anodyne fibrations of models:
\begin{corollary} \label{cor:Invariance_triv_fib_clan}
  Let $\catC$ be a clan and let $f:M \fibration N$ be an anodyne fibration between two $\catC$-models. For $c \in \catC$, let $x \in M(c)$ and $\phi \in \Lb_\lambda^\catC$ be any formula. Then
\[ M \vdash \phi(x) \Leftrightarrow N \vdash \phi(f(x)) \]
\end{corollary}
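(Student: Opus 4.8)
The plan is to deduce this purely formally from the universal property of $\Lb^\catC_\lambda$ as the initial $\lambda$-boolean algebra over $\catC$, combined with \cref{morphism-bol-alg-iff-trivial-fib}. First, recall from \cref{cstr:validity_formulas_categorical} that $M \vdash \phi(x)$ means exactly $x \in |\phi|_M$, where $|-|_M : \Lb^\catC_\lambda \to \Pcal(M)$ is the unique morphism of $\lambda$-boolean algebras over $\catC$, and likewise $|-|_N : \Lb^\catC_\lambda \to \Pcal(N)$. So the desired equivalence, for all $c$ and all $\phi \in \Lb^\catC_\lambda(c)$, is equivalent to the identity $|\phi|_M = f_c^{-1}(|\phi|_N)$, i.e. to the equality of natural transformations $|-|_M = f^* \circ |-|_N$, where $f^* : \Pcal(N) \to \Pcal(M)$ is the restriction-along-$f$ map described just before \cref{morphism-bol-alg-iff-trivial-fib}.

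Next I would invoke the hypothesis that $f$ is an anodyne fibration: by \cref{morphism-bol-alg-iff-trivial-fib} this is exactly the assertion that $f^* : \Pcal(N) \to \Pcal(M)$ is a genuine (not merely weak) morphism of $\lambda$-boolean algebras over $\catC$, that is, it commutes with the operators $\exists_\pi$ in addition to the boolean operations and the contravariant functoriality. Consequently the composite $f^* \circ |-|_N : \Lb^\catC_\lambda \to \Pcal(M)$ is a composition of two morphisms of $\lambda$-boolean algebras over $\catC$, hence is itself a morphism of $\lambda$-boolean algebras over $\catC$.

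Finally, by the initiality of $\Lb^\catC_\lambda$ in the category of $\lambda$-boolean algebras over $\catC$, there is at most one such morphism into $\Pcal(M)$; therefore $f^* \circ |-|_N = |-|_M$. Evaluating at the object $c$ and at the formula $\phi$ yields $|\phi|_M = f_c^{-1}(|\phi|_N)$, so $x \in |\phi|_M$ if and only if $f_c(x) \in |\phi|_N$, which is precisely the claimed equivalence $M \vdash \phi(x) \Leftrightarrow N \vdash \phi(f(x))$.

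The substance of the argument has already been absorbed into \cref{morphism-bol-alg-iff-trivial-fib} (which itself rests on \cref{lem:trivial_fibration_as_weak_pullback} and \cref{lem:BC_Set}), so no real obstacle remains at this stage; the one point demanding care is that one must use the \emph{strong} notion of morphism of $\lambda$-boolean algebras over $\catC$—compatibility with the $\exists_\pi$, not just the boolean structure—when appealing to initiality, and it is exactly this strengthening that the anodyne-fibration hypothesis buys us via \cref{morphism-bol-alg-iff-trivial-fib}.
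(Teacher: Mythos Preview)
Your proposal is correct and follows essentially the same approach as the paper: invoke \cref{morphism-bol-alg-iff-trivial-fib} to upgrade $f^*$ to a genuine morphism of $\lambda$-boolean algebras over $\catC$, then use initiality of $\Lb^\catC_\lambda$ to identify $|-|_M$ with $f^* \circ |-|_N$. Your write-up is somewhat more expansive than the paper's, but the logical skeleton is identical.
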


\begin{proof}
  As $f: M \to N$ is an anodyne fibration, it follows from \cref{morphism-bol-alg-iff-trivial-fib} that the map $f^*:\mathcal{P}(N) \to \mathcal{P}(M)$ is a morphism of boolean algebra over $\catC$. Hence, by initiality of $\Lb_\lambda^\catC$, the unique morphism $| \_ |_M : \Lb_\lambda^\catC \to \mathcal{P}(M)$ is obtained as a composite $$\Lb_\lambda^\catC \overset{|\_|_N}{\to} \mathcal{P}(N) \overset{f^*}{\to} \mathcal{P}(M).$$ By definition, $M \vdash \phi(x)$ means that $x \in |\phi|_M$ while $N \vdash \phi(f(x))$ means that $x \in f^* |\phi|_N$, hence the result immediately follows.
\end{proof}

\subsection{The language of a weak model category and two invariance theorems}
\label{sec:wms_language}

\begin{construction} \label{language-model:construction}
  Given $\catM$ a weak model category, the category $\catM^\cof$ of cofibrant objects with cofibrations between them forms a coclan. We define the language of $\catM$ to be the language of the coclan $\catM^\cof$. For any regular cardinal $\lambda$, we denote by $\Lb^\catM_\lambda$ the $\lambda$-boolean algebra $\Lb^{\catM^\cof}_\lambda$ over $\catM^\cof$. 

  Note that for each \emph{cofibrant} object $X \in \catM$, we have a set (or possibly a class if $\catM$ is large) of formulas $\Lb^\catM_\lambda(X)$.

\end{construction}

\begin{remark} There is a size issue to be mentioned here. In most practical examples, $\catM^\cof$ is a large category while the construction of $\Lb^{\catM^\cof}_\lambda$ developed in \cref{sec:category_approach} assumes it is a small category. We can deal with this by invoking a larger Grothendieck universe, but this has a practical consequence: The set of formulas $\Lb^\catM_\lambda(X)$ might not be a small set. Indeed, it lives in the same Grothendieck universe as the one in which $\catM^\cof$ is small.
\end{remark}

\begin{construction}\label{cstr:object_to_model_yoneda}
  If $X \in \catM$ then we can define a model of the coclan $\catM^\cof$ using the restricted Yoneda embedding:
  \[\yoneda_X:
    \begin{array}{ccc}
      (\catM^\cof)^\op & \to & \set \\
      c & \mapsto & \Hom(c,X),
    \end{array}
  \]
 which defines a functor $\yoneda: \catM \to \Mod(\catM^\cof)$.
\end{construction}

\begin{definition}\label{def:wms_language}
  Let $\catM$ be a weak model category. For $c \in \catM$ a cofibrant object, and $X \in \catM$ any object, $v:c \to X$ and  $\phi \in \Lb^\catM_\lambda(c)$ we write
  \[ X \vdash \phi(v) \]
  to mean
  \[\yoneda_X \vdash \phi(v) \]
  where $v$ is seen as an element of $\yoneda_X(c) = \Hom(c,X)$. 
\end{definition}

\begin{remark}
  In the special case where $\catM = \Mod(T)$ is the category of models of a generalized $\kappa$-algebraic theory (or more generally of a $\kappa$-coclan), then $\Lb^\catM_\lambda$ is the initial $\lambda$-boolean algebra over the coclan of all cofibrant objects of $\catM$, while the syntactic category of $T$ is equivalent to a full sub-$\kappa$-coclan of that. In particular, there is a morphism of $\lambda$-boolean algebras over the syntactic category $\catC_T$
  \[\Lb^T_\lambda(X) \to \Lb^\catM_\lambda(X) \qquad \text{(For $X \in \catC_T$).}\]
  If we denote this map by $i$ then for $X$ any model of $T$ we can easily check that
  \[ X \vdash \phi(v) \Leftrightarrow X \vdash i(\phi)(v) \]
for any $c \in \catC_T$ and $\phi \in \Lb^T_\lambda(c)$, where the left-hand side is interpreted in the sense of \cref{folgat} while the right-hand side is in terms of \cref{def:wms_language}.

  Note that we do expect these to be the same. Informally, $\Lb^T_\lambda$ corresponds to an $\Lcal_{\kappa,\lambda}$ logic, in the sense that quantifiers can only be applied to formulas in $\kappa$-small contexts --- applied to less than $\kappa$-many variables at the same time---while $\Lb^\catM_\lambda$ corresponds to an $\Lcal_{\infty,\lambda}$ logic, where quantifiers can be applied to arbitrarily many formulas at the same time.
  
\end{remark}

\begin{theorem} \label{invariance-theorems}

  Let $\catM$ be a weak model category, $c \in \catM$ a cofibrant object and $\phi \in \Lb^\catM_\lambda(c)$.

  \begin{itemize}
  \item \textbf{\First :} Let $v_1,v_2: c \to X$ be two homotopically equivalent maps with $X$ fibrant. Then 
      \[ X \vdash \phi(v_1) \quad \Leftrightarrow \quad X \vdash \phi(v_2). \]
    \item \textbf{\Second :} Let $f:X \to Y$ be a weak equivalence between two fibrant objects and $v:c \to X$ any map. Then
        \[ X \vdash \phi(v) \quad \Leftrightarrow \quad Y \vdash \phi(fv). \]
  \end{itemize}
  
\end{theorem}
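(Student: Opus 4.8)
The plan is to reduce both statements to \cref{cor:Invariance_triv_fib_clan} --- the invariance of formulas along anodyne fibrations of models --- by means of a bridge between maps of $\catM$ and maps of models of the coclan $\catM^\cof$. Concretely, I would first prove the following \emph{bridge lemma}: if $g\colon X\to Y$ is a morphism of $\catM$ having the right lifting property against every cofibration between cofibrant objects, then $\yoneda_g\colon\yoneda_X\to\yoneda_Y$ is an anodyne fibration of $\catM^\cof$-models, and consequently for every cofibrant $c$, every $w\colon c\to X$ and every $\phi\in\Lb^\catM_\lambda(c)$ one has $X\vdash\phi(w)\Leftrightarrow Y\vdash\phi(gw)$. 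This is proved exactly as \cref{lem:trivial_fibration_as_weak_pullback} and \cref{morphism-bol-alg-iff-trivial-fib}, dualized to the coclan $\catM^\cof$: by the coclan analogue of \cref{lem:trivial_fibration_as_weak_pullback}, $\yoneda_g$ is an anodyne fibration iff for every cofibration $a\colon b\cofibration b'$ between cofibrant objects the commutative square with top $\Hom(b',X)\to\Hom(b,X)$, bottom $\Hom(b',Y)\to\Hom(b,Y)$, and vertical maps induced by $g$, is a weak pullback; unwinding ``weak pullback'' here, an element of the comparison set is a commutative square with $a$ on one side and $g$ on the other, and surjectivity of the comparison map says precisely that such a square admits a diagonal filler, i.e.\ that $g$ lifts against $a$. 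The clause about formulas then follows from \cref{cor:Invariance_triv_fib_clan} together with the definition (\cref{def:wms_language}) of $X\vdash\phi(w)$ as $\yoneda_X\vdash\phi(w)$. Finally, by the general theory of weak model categories (see \cref{sec:wms_intro}) a fibration between fibrant objects which is a weak equivalence has the right lifting property against cofibrations between cofibrant objects, so the bridge lemma applies to every such map.

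For the \First\ I would argue as follows. A homotopy between $v_1,v_2\colon c\to X$, with $c$ cofibrant and $X$ fibrant, yields --- by the homotopy theory of weak model categories --- a path object $X\xrightarrow{s}P\xrightarrow{(d_0,d_1)}X\times X$ with $(d_0,d_1)$ a fibration, $s$ a weak equivalence and $d_0s=d_1s=\idx_X$, together with a right homotopy $H\colon c\to P$ satisfying $d_0H=v_1$ and $d_1H=v_2$. Since $X$ is fibrant, $X\times X$ exists and is fibrant with fibrant projections, so $P$ is fibrant and $d_0,d_1\colon P\to X$ are fibrations between fibrant objects; by $2$-out-of-$3$ applied to $d_is=\idx_X$ they are also weak equivalences, hence the bridge lemma applies to each. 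Applying it to $d_0$ and to $d_1$ at the element $H$ gives
\[ X\vdash\phi(v_1)\ \Longleftrightarrow\ P\vdash\phi(H)\ \Longleftrightarrow\ X\vdash\phi(v_2). \]

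For the \Second\ I would, given $f\colon X\to Y$ a weak equivalence between fibrant objects and $v\colon c\to X$, form the product $X\times Y$ (fibrant, with fibrant projections $p_X,p_Y$) and factor the map $(\idx_X,f)\colon X\to X\times Y$ as $X\xrightarrow{i}W\xrightarrow{q}X\times Y$ with $i$ an anodyne cofibration and $q$ a fibration. Then $W$ is fibrant and $q_X\coloneqq p_Xq\colon W\to X$, $q_Y\coloneqq p_Yq\colon W\to Y$ are fibrations between fibrant objects with $q_Xi=\idx_X$ and $q_Yi=f$; as $i$ is a weak equivalence, $2$-out-of-$3$ shows $q_X$ and $q_Y$ are weak equivalences, so the bridge lemma applies to both. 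Putting $\tilde v\coloneqq iv\colon c\to W$ we get $q_X\tilde v=v$ and $q_Y\tilde v=fv$, whence
\[ X\vdash\phi(v)\ \Longleftrightarrow\ W\vdash\phi(\tilde v)\ \Longleftrightarrow\ Y\vdash\phi(fv). \]

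The logical content is entirely in the (routine) bridge lemma, so I expect the main obstacle to be purely technical weak-model-category bookkeeping: checking that the path object and the $(\text{anodyne cofibration},\text{fibration})$ factorization used above are available for maps between, resp.\ out of, fibrant objects in a merely \emph{weak} model category, and that fibrations between fibrant objects which are weak equivalences really do lift against all cofibrations between cofibrant objects --- these being exactly the points where the weakness of the structure must be handled carefully, which is what the review in \cref{sec:wms_intro} is for.
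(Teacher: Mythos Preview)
Your bridge lemma and the argument for the \First\ are essentially the paper's own: both use a path object for $X$ to sandwich $\phi$-validity at $v_1,v_2$ through validity at the homotopy $H$, invoking the invariance under trivial fibrations supplied by \cref{cor:Invariance_triv_fib_clan}. (In the weak setting one must use a \emph{weak} path object as in \cref{cstr:weak_path_and_cylinder}, so there is no literal section $s\colon X\to P$; but the projections $P\to X$ are still trivial fibrations between fibrant objects, which is all your argument actually needs.)

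For the \Second\ there is a genuine gap, not mere bookkeeping. The factorization axiom of a weak model category (\cref{def:wms}, condition~\ref{def:wms:trivcof-fib-facto}) only guarantees a (trivial cofibration, fibration) factorization of arrows \emph{from a cofibrant object} to a fibrant one. Since your $X$ is only assumed fibrant, the factorization of $(\idx_X,f)\colon X\to X\times Y$ need not exist, so the object $W$ cannot be produced as written. The paper deals with exactly this point by first reducing to the case where $X$ and $Y$ are bifibrant: one takes cofibrant replacements $X^\cof\trivialfib X$ and $Y^\cof\trivialfib Y$, lifts $v$ to $v'\colon c\to X^\cof$, and uses the already-established trivial-fibration case of the \Second\ to pass between $(X,v)$ and $(X^\cof,v')$, and between $(Y,fv)$ and $(Y^\cof,f'v')$. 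Once this reduction is made, your mapping-path-space construction is a perfectly valid alternative to the paper's Brown-style factorization via $X\times_Y PY$: both yield a span $X\leftarrow W\to Y$ of trivial fibrations between fibrant objects through which $v$ and $fv$ factor, and the argument concludes identically.
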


\begin{proof}
  We start by first observing that the second invariance theorem in the special case where $f$ is a trivial fibration immediately follows from \cref{cor:Invariance_triv_fib_clan} as a trivial fibration $f$ has the right lifting property against all core cofibrations and hence is sent to an anodyne fibration in $\Mod(\catM^\cof)$ by the functor from \cref{cstr:object_to_model_yoneda}.

  We use this to prove the \first: If $v_1,v_2:c \to X$ are homotopic then there exists a map $h$:
\[\begin{tikzcd}
   & X \\
  c \ar[dr,"v_1"swap] \ar[ur,"v_2"] \ar[r,"h",dotted] & PX \ar[d,"p_1"] \ar[u,"p_2"swap]  \\
  & X.
\end{tikzcd}\]
The two maps $p_1,p_2: PX \to X$ are trivial fibrations (they are both fibrations and weak equivalences), $v_1 = p_1 \circ h$ and $v_2 = p_2 \circ h$. By the observation above, we have:
\[
  \begin{array}{cccc}
    & X &\vdash& \phi(v_1)  \\
    \Leftrightarrow & X &\vdash & \phi(p_1 h) \\
    \Leftrightarrow & PX &\vdash &\phi( h) \\
    \Leftrightarrow & X &\vdash &\phi(p_2 h) \\
    \Leftrightarrow & X &\vdash& \phi(v_2) \\
  \end{array}
  \]
  This concludes the proof of the \first.

  Next, we observe it is enough to prove the second invariance theorem when $X$ and $Y$ are both bifibrant. Indeed, starting from $f:X \to Y$ a weak equivalence between fibrant objects, $v:c \to X$ and $\phi \in \Lb^\catM_\lambda(c)$ as in the theorem. We can replace both $X$ and $Y$ by bifibrant objects
\[\begin{tikzcd}
    X^\cof \ar[r,"\sim","f'"swap] \ar[d,->>,"\sim"] & Y^\cof \ar[d,->>,"\sim"] \\
    X \ar[r,"\sim","f"swap] & Y.
  \end{tikzcd}\]
First replacing $X$ by a cofibrant object $X^\cof$ and then factoring the map $X^\cof \to Y$, which is a weak equivalence, as a trivial cofibration followed by a trivial fibration. The map $v:c \to X$, can be lifted to a map $v':c \to X^\cof$. As we can already apply the \Second\ to trivial fibrations, we have that:
\[ X \vdash \phi(v) \Leftrightarrow X^\cof \vdash \phi(v') \]
\[ Y \vdash \phi(fv) \Leftrightarrow Y^\cof \vdash \phi(f'v'). \]
Therefore, it is enough to show the \Second\ for bifibrant objects.

This last step is achieved essentially using a ``Brown factorization'': any weak equivalence between bifibrant objects can be factored as a section of a trivial fibration followed by a trivial fibration. Indeed, if $f:X \to Y$ is a map between bifibrant objects we can form the pullbacks:
\[\begin{tikzcd}
  X \ar[r,"f"] \ar[dr,phantom,"\lrcorner"{description,very near start}] \ar[d,"e'"]  & Y \ar[d,"e"] \\
 X \times_Y PY \ar[d,->>] \ar[r] \ar[dr,phantom,"\lrcorner"{description,very near start}] & PY \ar[d,->>] \\
 X \times Y \ar[r] \ar[d,"\pi_1",->>] \ar[dr,phantom,"\lrcorner"{description,very near start}] & Y \times Y \ar[d,"\pi_1",->>] \\
 X \ar[r,"f"] & Y.
\end{tikzcd}\]
Note that because the fibrations $PY \to Y$ are trivial fibrations, the map $X \times_Y PY \to X$ in the diagram above is also a trivial fibration. The total vertical maps are both the identity.
Which gives us a diagram:
\[
  \begin{tikzcd}
    &X \ar[dd,"e'"] \ar[ddr,"f"] \ar[ddl,"id_X"swap] & \\
    & & \\
    X & X \times_Y PY \ar[r,->>,"p"swap] \ar[l,->>,"q","\sim"swap]   & Y
  \end{tikzcd}
\]
Where $p$ is the map $X \times_Y PY \fibration X \times Y \overset{\pi_2}{\fibration} Y$. Note that all maps in this diagram are weak equivalences due to the $2$-out-of-$3$ condition.
We can now prove the theorem, we have
\[X \vdash \phi(v) \Leftrightarrow X\times_Y PY \vdash \phi(e'v)\]
because $v = q e'v$ and $q$ is a trivial fibration, and
\[ X\times_Y PY \vdash \phi(e'v) \Leftrightarrow Y \vdash \phi(fv)\]
because $p$ is a trivial fibration and $fv = pe'v$. Hence, combining the two
 \[ X \vdash \phi(v) \Leftrightarrow Y \vdash \phi(fv) \]
  
\end{proof}

Finally, we explain how Quillen adjunctions act on formulas. A \emph{Quillen adjunction} between two weak model categories is an adjunction
\[ L : \catC \leftrightarrows \catD : R\]
where the left adjoint $L$ sends cofibrations to cofibrations and the right adjoint $R$ sends fibrations to fibrations.

\begin{remark}
There is also a more general notion called ``weak Quillen functors'' introduced in \cite{henry20weak} which is sometimes more convenient. The functor $L$ is only defined on cofibrant objects and $R$ on fibrant objects, and they are only required to preserve core (co)fibrations -- all results in this section below, as well as the \Fourth\ from \cref{sec:invariance} apply to weak Quillen adjunctions too. We restrict ourselves to Quillen adjunctions in the paper, unless otherwise stated, for simplicity, and because this already cover most of the applications.
\end{remark}

\begin{construction}\label{cstr:Quillen_functor_acts_on_formulas}

Given a Quillen adjunction\footnote{Or more generally a weak Quillen adjunction in the sense of \cite{henry20weak}.} $L : \catC \leftrightarrows \catD : R$. Then, $L$ restricts to a coclan morphism $L:\catC^\cof \to \catD^\cof$, which following \cref{action-functor:formulas} we have a (unique) comparison map
\[\alpha_L: \Lb^{\catC}_\lambda \to L^* \Lb^{\catD}_\lambda\]
obtained from the fact that $\Lb^{\catC}_\lambda$ is an initial boolean algebra over $\catC$. As before, if $\phi \in \Lb^\catC_\lambda(C)$, we often write $L(\phi)$ instead of $\alpha_L(\Phi)$. Note that $L(\phi) \in \Lb^{\catD}_\lambda(L(C))$.

Finally, exactly as in \cref{action-functor:formulas}, we have:
  
\end{construction}

\begin{proposition} For a Quillen adjunction $ L : \catC \leftrightarrows \catD : R$, any\footnote{If $L$ and $R$ are only a weak Quillen adjunction, then $X$ needs to be fibrant.} object $X \in \catD$, and cofibrant object $C \in \catC$, any map $v:C \to R(X)$ corresponding to $\tilde{v} : LC \to X$, and $\phi \in \Lb^\catC_\lambda$ we have

\[ R(X) \vdash \phi(v) \Leftrightarrow X \vdash L(\phi)(\tilde{v}). \]
  
\end{proposition}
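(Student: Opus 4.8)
The statement is the Quillen-adjunction analogue of \cref{pullback-model:prop}, so the plan is to reduce it to that proposition by identifying the relevant models of clans. The key observation is that for a Quillen adjunction $L \dashv R$, the functor $L: \catC^\cof \to \catD^\cof$ is a morphism of coclans, and unwinding \cref{cstr:object_to_model_yoneda} the model of $\catC^\cof$ attached to $R(X) \in \catC$ is exactly the pullback along $L$ of the model of $\catD^\cof$ attached to $X \in \catD$. Concretely, for a cofibrant object $C \in \catC$, the adjunction isomorphism $\Hom_\catC(C, R(X)) \cong \Hom_\catD(LC, X)$ says precisely that $\yoneda_{R(X)} = L^*(\yoneda_X)$ as models of the coclan $\catC^\cof$ (one checks this identification is natural in $C$, which is just naturality of the adjunction bijection in the first variable).

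Granting that identification, the conclusion is immediate: by \cref{cstr:Quillen_functor_acts_on_formulas}, $L(\phi) = \alpha_L(\phi) \in \Lb^\catD_\lambda(LC)$, and applying \cref{pullback-model:prop} with the model $M = \yoneda_X$ of $\catD^\cof$, the formula $\phi \in \Lb^\catC_\lambda(C)$, and the element $\tilde v \in \yoneda_X(LC) = \Hom_\catD(LC,X)$, we get
\[ \yoneda_X \vdash \alpha_L(\phi)(\tilde v) \quad \Leftrightarrow \quad L^*\yoneda_X \vdash \phi(\tilde v). \]
The left-hand side is by definition $X \vdash L(\phi)(\tilde v)$, and since $L^*\yoneda_X = \yoneda_{R(X)}$ and the element $\tilde v$ of $\yoneda_X(LC)$ corresponds under the adjunction bijection to $v \in \yoneda_{R(X)}(C)$, the right-hand side is $R(X) \vdash \phi(v)$. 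This gives the desired equivalence.

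\textbf{Main obstacle.} The only real content is verifying the identification $\yoneda_{R(X)} = L^*(\yoneda_X)$ of models of $\catC^\cof$, and in particular checking this is compatible with the relevant structure. As models of a coclan they must send cofibrations of $\catC^\cof$ to surjections and send limits of chains of cofibrations to the appropriate (co)limits; but $L^*\yoneda_X$ is a composite of coclan morphisms ($L$ followed by $\yoneda_X$, dualized appropriately), so it is automatically a model, and $\yoneda_{R(X)}$ is a model by \cref{cstr:object_to_model_yoneda}. Their equality on objects is the adjunction bijection, and on morphisms is its naturality in the first variable; this is routine but should be spelled out. In the weak Quillen case one restricts to $X$ fibrant so that $R(X)$ is defined and $L$ still only needs to be applied to cofibrant objects, and the same argument goes through verbatim.
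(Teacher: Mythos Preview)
Your proof is correct and follows essentially the same approach as the paper, which simply refers back to \cref{action-functor:formulas} and \cref{pullback-model:prop}: the adjunction bijection $\Hom_\catC(C,R(X)) \cong \Hom_\catD(LC,X)$ identifies $\yoneda_{R(X)}$ with $L^*\yoneda_X$ as models of the coclan $\catC^\cof$, and then the general clan result applies directly. You have unpacked this more explicitly than the paper's one-line reference, but the content is the same.
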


\begin{proof}
  See \cref{action-functor:formulas}.
\end{proof}

The \Fourth\ theorem that we will establish in \cref{sec:invariance} as \cref{invariance-theorems-34} shows that for a Quillen equivalence, this construction gives an equivalence between the language of $\catC$ and of $\catD$ in an appropriate sense.

%%%%%%%%%%%%%%%%%% 

%%% Local Variables:
%%% mode: latex
%%% TeX-master: "main"
%%% End:

        % Examples of languages of model categories

        \section{Examples of languages of model categories}\label{sec:examples}

In this section, we examine some examples of the language associated to a model category by applying the construction as described in \cref{fol-gat}. We include examples we believe to be of interest. Furthermore, we start with some general considerations that allow us to construct the language of a model category.

When applying the theory introduced in \cref{fol-gat} to a model category $\catM$, we have two possible approaches: we can manipulate formulas as elements of the free Boolean algebra over $\catM^\cof$, following the approach from \cref{sec:category_approach}, or we can try to build a generalized algebraic theory whose first-order language is the same as the language of $\catM$. For example, we could try to realize $\catM$ as the category of models of some generalized $\kappa$-algebraic theory, or if that is not possible we could try to realize the category of $\kappa$-presentable cofibrant objects of $\catM$ as the opposite of the syntactic category of some generalized $\kappa$-algebraic theory.

We believe that, once we are familiar with how this language works, the first approach is simpler. But in order to build familiarity with the languages, in all the examples we will cover below we will try to use the second approach and build a more or less explicit generalized algebraic theory associated to each example, in order to show the reader what can be done in the logic of each case.

\bigskip

It is shown in \cref{appendix-b} that any $\kappa$-clan is equivalent to the syntactic category of a generalized $\kappa$-algebraic theory. So in general, given $\catM$ a combinatorial (weak) model category, we can always find a regular cardinal $\kappa$ and a generalized $\kappa$-algebraic theory such that the language associated to $\catM$ is the language of this generalized algebraic theory. Unfortunately, the construction of this theory following \cref{appendix-b} is extremely unexplicit.

What we would like to do here is to give some tools to help ``guess'' a simpler generalized algebraic theory that works on concrete examples. Given that our goal is only to guess the correct theory for a few examples, we will not try to make this completely formal and rigorous -- though it might be possible.

\bigskip

To that end, let us recall some facts about a generalized $\kappa$-algebraic theory $T$, and of the $\kappa$-contextual category $\C_T$ associated to it. \Cref{contextdefinition} states inductively what it means for a judgment $\Gamma \vdash \Delta \, \type$ in a $\kappa$-pretheory to be well-formed in $T$; this is the case whenever $\Gamma$ is a context, and this itself entails that any constituent of $\Gamma$ is obtained from a derived rule of the $\kappa$-pretheory $T$. In turn, each derived rule is deduced from the list of \cref{derivedrules}, or using a rule previously derived. In a generalized $\kappa$-algebraic theory, each type introduction axiom (derived judgment) is well-formed by \cref{well-formed-type-intro:prop}. Concretely, this means that in order to build new types in context $\Gamma'$ we must know that all the variables used in $\Gamma'$ must previously be constructed in some context $\Gamma$. In a sense, each type must be constructed from more primitive types.

We can use the above in the following:
    \begin{remark} \label{syntatic:well-founded}
      Let $T$ be a generalized $\kappa$-algebraic theory and $\C_T$ the syntactic $\kappa$-contextual category of $T$ with the natural $\kappa$-clan structure \ie in which the fibrations are the generalized display maps. Each type axiom $\Gamma \vdash A\, \type $ of $T$ corresponds to a display map $(\Gamma.A \display \Gamma)$. Now, the set of axioms of $T$ admits a well-founded transitive relation $<$ such that for each type axiom $\Gamma \vdash A\, \type$ we can show that $\Gamma$ is a context using only type axioms ``smaller'' than $\Gamma \vdash A\, \type$. In particular, it means that only types ``smaller than A'' can appear in the context $\Gamma$. Formulated categorically, this means that the map $\Gamma \to 1$ can be constructed as $\kappa$-small composite of pullbacks of display maps $\Gamma'.B \to \Gamma'$, for $\Gamma' \vdash B\, \type $ type axioms that are smaller than $\Gamma \vdash A \, \type$.
      Recall from \cref{wfs-models} that $\Mod(T)$ has a weak factorization system which is cofibrantly generated by the set
      \[I=\{ \yoneda_A \cofibration \yoneda_B \in \Mod(T) | B \display A \in \C_T\}.\]
 Given that every display map is a $\kappa$-small composite of pullback of the display map corresponding to type axioms. We can restrict the set of generators to the display maps corresponding to type axioms, which then comes with this additional well-founded relation.
    \end{remark}

    The previous example motivates:
\begin{definition} \label{cofibrations:order}
  Let $\catC$ be a model category and $\cof(\catC)$ the class of cofibrations. Assume that the cofibrations are generated by a set $I$. We say that the set of generating cofibrations is \emph{well-founded} if there exists a well-founded relation $<$ on $I$ such that for all $i \in I$, the map $\emptyset \to Dom(i)$ can be written as a $\kappa$-composite of pushouts of maps $j\in I$ with $j<i$.
\end{definition}

    \begin{example} \label{cofibrations-models:well-founded}
   As explained in \cref{syntatic:well-founded}, if $T$ is a generalized $\kappa$-algebraic theory, then the weak factorization from \cref{wfs-models} on $\Mod(T)$ has a well-founded set of generators corresponding to the type of axioms of $T$.
    \end{example}

The general idea is; if we start from a combinatorial weak factorization system, and we want to see it as coming from an explicitly given generalized algebraic theory, we start by finding a well-founded set of generators, and then we build a theory whose type axioms correspond to these generators. 
    
Note that in particular, we need the factorization system to be generated by map with cofibrant domain, or at least we need the generating cofibrations to have cofibrant domain. Most model structures we work with in practice, in fact all the examples we will encounter here have this property (this is closely related to the notion of tractable model category from \cite{barwick2010left}). But in general this is not an obstruction: lemma 4.7 of \cite{henry2023combinatorial} allows to modify any combinatorial or accessible model category into one that has this property---maybe at the cost of moving to semi-model category.

\begin{proposition}[{\cite[4.7 Lemma]{henry2023combinatorial}}] \label{htract}
  Fix $\kappa$ an uncountable regular cardinal. Let $(L_1,R_1)$ and $(L_2,R_2)$ two $\kappa$-accessible weak factorization systems on a locally $\kappa$-presentable category $\catC$ such that $L_1 \subset L_2$ or $R_2 \subset R_1$. There is a $\kappa$-accessible weak factorization system $(L_3,R_3)$ on $\catC$ such that $R_3$ is the class of maps that have the right lifting property against all $L_1$-maps whose domain is $L_2$-cofibrant. If $(L_1,R_1)$ is $\kappa$-combinatorial, then $(L_3,R_3)$ is also $\kappa$-combinatorial.
\end{proposition}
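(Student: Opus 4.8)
This statement is quoted from \cite{henry2023combinatorial}, so the proposal is to reconstruct the standard cofibrant‑generation argument behind it. First I would note that for weak factorization systems the two hypotheses are equivalent: $L_1\subseteq L_2 \iff \mathrm{rlp}(L_1)\supseteq\mathrm{rlp}(L_2) \iff R_2\subseteq R_1$. The crucial elementary observation is that if $f\colon X\to Y$ lies in $L_1$ and $X$ is $L_2$‑cofibrant, then so is $Y$, because $\emptyset\to X\to Y$ is a composite of $L_2$‑maps ($\emptyset\to X\in L_2$ since $X$ is $L_2$‑cofibrant, and $f\in L_1\subseteq L_2$). Hence the class $\mathcal S:=\{f\in L_1 : \mathrm{dom}(f)\text{ is }L_2\text{-cofibrant}\}$ is exactly $L_1$ intersected with the arrow category of the full subcategory $\catC_2^{\mathrm{cof}}\subseteq\catC$ of $L_2$‑cofibrant objects. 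I then set $R_3:=\mathrm{rlp}(\mathcal S)$ and $L_3:=\mathrm{llp}(R_3)$; automatically $R_1\subseteq R_3$, $L_3\subseteq L_1$, $\mathcal S\subseteq L_3$, and $(L_3,R_3)$ has all the orthogonality and closure properties of a weak factorization system. The only thing left to establish is the factorization axiom, together with the $\kappa$‑accessibility (resp. $\kappa$‑combinatoriality) of the result.

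The heart of the proof is to replace $\mathcal S$ by a small generating family. Fix a generating family $\mathcal I_1$ for $(L_1,R_1)$ consisting of maps between $\kappa$‑presentable objects — a set if $(L_1,R_1)$ is $\kappa$‑combinatorial, a small category in the merely $\kappa$‑accessible case. Using $\kappa$‑accessibility of $(L_2,R_2)$, one shows that every $L_2$‑cofibrant object is a $\kappa$‑filtered colimit of $\kappa$‑presentable $L_2$‑cofibrant objects. Define $\mathcal J$ to be the family of maps $j\colon A\to B$ lying in $L_1$ with $A,B$ both $\kappa$‑presentable and $A$ being $L_2$‑cofibrant; this is a set (resp. small category) contained in $\mathcal S$. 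The key claim is $\mathrm{rlp}(\mathcal J)=\mathrm{rlp}(\mathcal S)$, for which it suffices to exhibit every $f\in\mathcal S$ as a retract of a transfinite composite of pushouts of $\mathcal J$‑maps. Write $f\colon X\to Y$ as a retract of an $\mathcal I_1$‑cell complex $X=Z_0\to\dots\to Z_\mu$; each $Z_\alpha$ is $L_2$‑cofibrant since $\emptyset\to X\to Z_\alpha$ is a composite of $L_2$‑maps. At a cell‑attachment stage one pushes out some $i\colon A\to B$ in $\mathcal I_1$ along a map $g\colon A\to Z_\alpha$; since $A$ is $\kappa$‑presentable and $Z_\alpha$ a $\kappa$‑filtered colimit of $\kappa$‑presentable $L_2$‑cofibrant objects, $g$ factors as $A\to P\to Z_\alpha$ with $P$ $\kappa$‑presentable and $L_2$‑cofibrant. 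By the pasting law for pushouts, $B\sqcup_A Z_\alpha\cong(B\sqcup_A P)\sqcup_P Z_\alpha$, so attaching the $i$‑cell to $Z_\alpha$ is the same as pushing out the map $P\to B\sqcup_A P$ along $P\to Z_\alpha$; and $P\to B\sqcup_A P$ is a pushout of $i\in L_1$, hence in $L_1$, between $\kappa$‑presentable objects, with $L_2$‑cofibrant domain $P$ — that is, a $\mathcal J$‑map. Refining the whole complex in this way exhibits $f$ as a retract of a $\mathcal J$‑cell complex, proving the claim.

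With $R_3=\mathrm{rlp}(\mathcal J)$ and $L_3=\mathrm{llp}(R_3)$ equal to the retracts of $\mathcal J$‑cell complexes, the transfinite small object argument run to length $\kappa$ (using that $\kappa$ is regular and the generators are $\kappa$‑presentable) produces a $\kappa$‑accessible functorial factorization, hence makes $(L_3,R_3)$ a weak factorization system; it is $\kappa$‑combinatorial when $\mathcal J$ is an honest set, and in the general $\kappa$‑accessible case one replaces this step by Garner's algebraic small object argument. By construction $R_3$ is precisely the class of maps with the right lifting property against all $L_1$‑maps with $L_2$‑cofibrant domain. The main work — and the main obstacle — is concentrated in the second paragraph: the refinement‑of‑cell‑complexes argument, and the supporting fact that every $L_2$‑cofibrant object is a $\kappa$‑filtered colimit of $\kappa$‑presentable $L_2$‑cofibrant objects. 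That fact is proved by decomposing a cell complex presenting the object into its $\kappa$‑small subcomplexes (which are $\kappa$‑presentable and $L_2$‑cofibrant) and using that retracts preserve $L_2$‑cofibrancy, and this is where the $\kappa$‑accessibility of $(L_2,R_2)$ and the uncountability of $\kappa$ enter essentially; a further, more bookkeeping‑heavy point is carrying the whole argument over to the genuinely accessible, non‑set‑generated case.
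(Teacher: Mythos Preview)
The paper does not give its own proof of this proposition: it is stated as a citation of \cite[Lemma~4.7]{henry2023combinatorial} and used as a black box (see the subsequent \cref{tractable}). There is therefore nothing in the paper to compare your argument against.

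That said, your reconstruction is a sound sketch of the standard argument one would expect in the cited reference. The two genuinely nontrivial ingredients you identify---that every $L_2$-cofibrant object is a $\kappa$-filtered colimit of $\kappa$-presentable $L_2$-cofibrant objects (the ``fat small object argument'' of Makkai--Rosick\'y--Vok\v{r}\'inek, which is where uncountability of $\kappa$ is used), and the cell-complex refinement that factors each attaching map through such a $\kappa$-presentable object---are exactly the right ones, and your pushout-pasting manipulation correctly shows the refined complex is a $\mathcal J$-cell complex. The remaining steps (small object argument, Garner's algebraic version in the accessible case) are routine.
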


\begin{observation} \label{tractable}
  If $\catM$ is a combinatorial weak model category, then there exists another combinatorial weak model category structure on $\catM$ with the same core cofibrations and core acyclic cofibrations, but where the cofibrations and acyclic cofibrations are generated by core cofibrations. In order to see this, we apply \cref{htract} taking $(L_1,R_1)$=( acyclic cofibrations, fibrations) and $(L_2,R_2)$=(cofibrations, acyclic fibrations). This produces a weak factorization system $(L_3,R_3)$ where the class $R_3$ of fibrations is generated by acyclic cofibrations with cofibrant domain. We apply the result again, but on (cofibrations, acyclic fibrations)=$(L_2,R_2)$=$(L_1,R_1)$ to get another weak factorization system $(L_3', R_3')$ where the class $R_3'$ is generated by cofibrations with cofibrant domain. Note this process does not change the core (acyclic) cofibrations or core (acyclic) fibrations, and hence preserves the fact that we have a weak model category.
\end{observation}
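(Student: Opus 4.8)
The plan is to carry out the two applications of \cref{htract} announced in the statement and then to check that neither of them disturbs the part of the structure that matters. Recall from \cref{sec:wms_intro} that a combinatorial weak model category on $\catM$ is presented by two accessible weak factorization systems on the locally presentable category $\catM$: one whose left class is the cofibrations and whose right class is the acyclic fibrations, and one whose left class is the acyclic cofibrations and whose right class is the fibrations; these satisfy the inclusion that every acyclic cofibration is a cofibration (equivalently, every acyclic fibration is a fibration), together with the lifting and factorization axioms of a weak model category, which only constrain the \emph{core} maps --- the (acyclic) cofibrations between cofibrant objects and the (acyclic) fibrations between fibrant objects. Enlarging the cardinal $\kappa$ witnessing combinatoriality if necessary, we may assume $\kappa$ uncountable, so that \cref{htract} applies.

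\textbf{The two applications.} First apply \cref{htract} with $(L_1,R_1)$ the (acyclic cofibrations, fibrations) system and $(L_2,R_2)$ the (cofibrations, acyclic fibrations) system; the hypothesis $L_1\subseteq L_2$ holds because acyclic cofibrations are cofibrations. This produces a $\kappa$-combinatorial weak factorization system $(L_3,R_3)$ in which $R_3$ is the class of maps with the right lifting property against all acyclic cofibrations whose domain is cofibrant, so that $L_3$ is the saturation (closure under pushouts, transfinite composites and retracts) of that class of maps. Next apply \cref{htract} with $(L_1,R_1)=(L_2,R_2)$ both the (cofibrations, acyclic fibrations) system --- the inclusion hypothesis now holds with equality --- producing a $\kappa$-combinatorial weak factorization system $(L_3',R_3')$ in which $R_3'$ is the class of maps with the right lifting property against all cofibrations whose domain is cofibrant, and $L_3'$ is the saturation of that class. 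We declare $(L_3',R_3')$ to be the new (cofibration, acyclic fibration) system and $(L_3,R_3)$ the new (acyclic cofibration, fibration) system.

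\textbf{The core classes are unchanged.} The elementary but crucial observation is that a cofibration $i\colon A\cofibration B$ with $A$ cofibrant automatically has cofibrant codomain, since $\emptyset\cofibration A\overset{i}{\cofibration}B$ exhibits $\emptyset\to B$ as a composite of cofibrations; hence $i$ is a core cofibration, and likewise an acyclic cofibration with cofibrant domain is a core acyclic cofibration. Thus the generating maps of $L_3'$ (resp.\ $L_3$) are exactly the core cofibrations (resp.\ core acyclic cofibrations) of the original structure. Since the original cofibrations (resp.\ acyclic cofibrations) already form the left class of a weak factorization system, and hence a saturated class, $L_3'$ is contained in the original cofibrations and $L_3$ in the original acyclic cofibrations; and by construction $L_3'$ contains every core cofibration and $L_3$ every core acyclic cofibration. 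Intersecting $L_3'$ (resp.\ $L_3$) with the maps having cofibrant domain therefore returns precisely the original core cofibrations (resp.\ core acyclic cofibrations). In a weak model category the core fibrations are the maps between fibrant objects having the right lifting property against the core acyclic cofibrations, and dually the core acyclic fibrations are the maps between fibrant objects having the right lifting property against the core cofibrations; since $R_3$ and $R_3'$ are defined by exactly these lifting conditions, the core fibrations and core acyclic fibrations are unchanged as well. Finally $L_3\subseteq L_3'$, since core acyclic cofibrations are core cofibrations and $L_3'$ is the saturation of the latter --- this yields the inclusion ``acyclic cofibrations $\subseteq$ cofibrations'' required of the new structure.

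\textbf{Conclusion and main obstacle.} Both new weak factorization systems are $\kappa$-combinatorial by the last assertion of \cref{htract}, so the new structure is combinatorial; and since its generating (acyclic) cofibrations are core (acyclic) cofibrations, they have cofibrant domain, as desired. The remaining axioms of a weak model category --- core acyclic cofibrations lift against core fibrations, core cofibrations lift against core acyclic fibrations, maps from a cofibrant object to a fibrant object factor through core (acyclic) cofibrations and core (acyclic) fibrations, and the recognition of which core (co)fibrations are acyclic --- are statements exclusively about the core classes, which were shown unchanged above, so they persist. The main obstacle is making this last step fully rigorous against the precise definition recalled in \cref{sec:wms_intro}: one must confirm that the core fibrations and core acyclic fibrations really are pinned down by right lifting against the core acyclic cofibrations and the core cofibrations, so that the fact that \cref{htract} \emph{enlarges} the ambient classes of fibrations and acyclic fibrations (replacing them by the possibly larger $R_3$ and $R_3'$) is harmless; this is exactly the characteristic feature of weak, as opposed to Quillen, model categories --- that their two weak factorization systems need only behave well on maps between (co)fibrant objects.
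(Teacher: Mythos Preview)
Your proposal is correct and follows essentially the same approach as the paper: the observation already contains its own proof sketch (two applications of \cref{htract} followed by the remark that core classes are unchanged), and you have simply fleshed out that sketch. Your verification that new-cofibrant objects coincide with old-cofibrant objects, and hence that the core (acyclic) cofibrations are literally unchanged, is a useful addition the paper leaves implicit; and your ``main obstacle'' paragraph correctly identifies the one point neither you nor the paper fully justifies---that the possible enlargement of the fibrant objects under $R_3$ does not break the weak model axioms---which the paper handles by bare assertion.
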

    
    Once we have generating cofibrations with cofibrant domain, there is always an easy way to get a well-founded set of generators:
   
   \begin{example} \label{combinatorial:well-founded}
 If $L$ is a set of generating cofibrations with cofibrant domain of a combinatorial weak model category, then we can get a well-founded class of cofibrations by setting $L' \coloneqq \{\emptyset \to Dom(l)| l \in L  \} \coprod L$. In this case, we can set $(\emptyset \to Dom(l)) < f$ for $f\in L$ and $l \in L$.
\end{example}
    
\Cref{cofibrations-models:well-founded} shows that starting with a $\kappa$-clan, one can get a cofibrantly generated weak factorization system on the category of models $\Mod(\catC)$ such that the generating set of cofibrations is well-founded. We can reverse this process in the sense that if we are given a weak factorization system with a well-founded set of generating cofibrations, then we can produce a generalized $\kappa$-algebraic theory from it, and therefore the $\kappa$-clan associated to it.

The next example is similar to \cref{syntatic:well-founded}.
    
    \begin{construction} \label{theory-wfs:construction}
      Let $\catC$ be a $\kappa$-clan. Assume that $\catC$ has a weak factorization system that is cofibrantly generated by a set $I$ with a  well-founded relation. Recall that this means that for a cofibration $i: A \cofibration B$ the map $\emptyset \to A$ is a $\kappa$-composite of pushouts of maps $j\in I$ with $j<i$. Therefore, we can introduce a type axiom:
      \[
        \overline{A} \vdash \overline{B} \, \type
      \]
      for $i:A \cofibration B \in I$. The notation $\overline{A}$ denotes the context in which the new type $\overline B$ is built, and the context $\overline A$ is obtained using types strictly smaller than $\overline{B}$, which reflects the decomposition of the map $\emptyset \cofibration A$ as a $\kappa$-composite of pushouts of maps $j \in I$ smaller than $i$.
    \end{construction}

    We can think of this construction as similar to the functor $U:\kcon \to \kgat$ from \cref{functor-contextual-to-gat} which produces a generalized $\kappa$-algebraic theory $U(\catC)$ from a $\kappa$-contextual category $\catC$. In particular, for a display map $B_{\lambda+1} \display B_\lambda \in \catC$ it gives a type axiom $\overline{B_{\lambda}} \vdash \overline{B_{\lambda+1}} \, \type$.
    
    \begin{remark}
      For each of the examples below, we start with a Quillen model category $\catM$ and apply \cref{theory-wfs:construction} to obtain a theory $T_\catM$. In general, this is the guiding principle that will allow us to identify the statements, and the language, to which the invariance theorems apply.

      Furthermore, using the theory $T_\catM$ we can consider the category $\Mod(T_\catM)$ and use \cref{wfs-models} to obtain a weak factorization system. Through this process, the cofibrations and trivial fibrations we obtain coincide with those from the Quillen model category we start with. However, in general we do not have an equivalence of categories $\Mod(T_\catM)\cong\catM$.
    \end{remark}

\subsection{Categories} \label{examples:categories}

Let us illustrate our construction on this prime example we have been referring to throughout the paper. Recall that $\mathbf{0}$ is the empty category, $\mathbf{1} \coloneqq \{0\}$ is the category with a single object, $ \mathbf{2} \coloneqq \{0 \to 1\}$ the arrow category and $P \coloneqq \{0\rightrightarrows 1\} $ the category with two parallel arrows. Finally, $\Jcal \coloneqq \{0 \leftrightarrows 1 \} $ denotes the walking isomorphism category. The following result appears in \cite{rezk1996model}.
\begin{theorem} \label{folkmodel:categories}
  There is Quillen model structure on the category $\cat$ such that:
  \begin{enumerate}
  \item Weak equivalences are the equivalences of categories,
  \item Cofibrations are the functors injective on objects,
  \item Fibrations are the isofibrations.
  \end{enumerate}
  Furthermore, this models structure is cofibrantly generated. The sets
  \[I \coloneqq \{ \mathbf{0} \overset{u}{\to} \mathbf{1},\, \{0\}\sqcup \{1\} \overset{v}{\to} \mathbf{2},\, P \overset{w}{\to} \mathbf{2} \} \text{ and } J \coloneqq \{ \mathbf{1} \to \Jcal\} \]
  are the generating cofibrations and trivial cofibrations respectively.
\end{theorem}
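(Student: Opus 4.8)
The plan is to apply the standard recognition theorem for cofibrantly generated model categories, taking $\weeq$ to be the class of equivalences of categories and $I$, $J$ as in the statement; the cofibrations and fibrations then emerge as $\mathrm{llp}(\mathrm{rlp}(I))$ and $\mathrm{rlp}(J)$, where $\mathrm{rlp}(S)$ (resp.\ $\mathrm{llp}(S)$) denotes the class of maps with the right (resp.\ left) lifting property against every map of $S$, and the real content is to identify these two classes with the functors injective on objects and the isofibrations. For the preliminaries: $\cat$ is complete and cocomplete — in fact locally finitely presentable — so limits, colimits and the small object argument are available, and the categories $\mathbf{0},\mathbf{1},\mathbf{2},P,\Jcal$ are finite, hence finitely presentable, so the domains of the maps in $I$ and $J$ are small; moreover $\weeq$ has the $2$-out-of-$3$ property and even $2$-out-of-$6$ (using pseudo-inverses), hence is closed under retracts.

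First I would compute the classes cut out by right lifting, by inspecting lifting problems one generator at a time. A functor $p\colon C\to D$ has the right lifting property against $\mathbf{0}\to\mathbf{1}$ iff it is surjective on objects, against $\{0\}\sqcup\{1\}\to\mathbf{2}$ iff it is full, against $P\to\mathbf{2}$ iff it is faithful, and against $\mathbf{1}\to\Jcal$ iff every isomorphism of $D$ whose source lies in the image of $p$ lifts through $p$, with the prescribed source, to an isomorphism of $C$ — that is, iff $p$ is an isofibration. Hence $\mathrm{rlp}(J)$ is exactly the class of isofibrations, and $\mathrm{rlp}(I)$ is the class of functors that are simultaneously surjective on objects, full and faithful. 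This last class equals $\{\text{isofibrations}\}\cap\weeq$: a surjective-on-objects fully faithful functor is both an equivalence and an isofibration, while conversely an isofibration that is an equivalence is already full and faithful and is surjective on objects, since one may lift through $p$ an isomorphism witnessing essential surjectivity. In particular this verifies the inclusions $\mathrm{rlp}(I)\subseteq\weeq\cap\mathrm{rlp}(J)$ and $\weeq\cap\mathrm{rlp}(J)\subseteq\mathrm{rlp}(I)$.

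Next I would analyze the cell complexes. Each map of $I$ is injective on objects, a pushout of a map of $I$ is again injective on objects (the first adjoins a new object; the other two leave the object set untouched, freely adjoining resp.\ identifying a morphism), and transfinite composites of injective-on-objects functors are injective on objects; so every relative $I$-cell complex is injective on objects. Conversely, every injective-on-objects functor $i\colon A\to B$ lifts against any $p\in\mathrm{rlp}(I)$: such a $p$ is a bijection on each hom-set, so once one has chosen, along the surjection on objects, preimages for the objects of $B$ not already in $A$, the hom-set bijections uniquely dictate the value of the lift on morphisms, and functoriality together with commutativity of the two triangles follows formally. Combined with the small object argument and the retract argument this gives $\mathrm{llp}(\mathrm{rlp}(I))=\{\text{functors injective on objects}\}$, and in particular $J\subseteq\mathrm{llp}(\mathrm{rlp}(I))$. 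Finally, a pushout of $\mathbf{1}\to\Jcal$ along a chosen object freely adjoins a new object together with an isomorphism to the chosen one; such a pushout — and, by the same reasoning, any coproduct of copies of $\mathbf{1}\to\Jcal$ — is injective on objects and an equivalence, and these two properties are preserved under transfinite composition (full faithfulness because filtered colimits of categories are computed on objects and morphisms, so each hom-set map is a filtered colimit of bijections; essential surjectivity by transfinite descent along the tower). Hence every relative $J$-cell complex is an injective-on-objects equivalence, i.e.\ lies in $\weeq\cap\mathrm{llp}(\mathrm{rlp}(I))$.

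The facts collected above are precisely the hypotheses of the recognition theorem, so it yields a cofibrantly generated model structure on $\cat$ with weak equivalences $\weeq$ and with $I$, $J$ as generating cofibrations and generating trivial cofibrations; its cofibrations are $\mathrm{llp}(\mathrm{rlp}(I))=\{\text{functors injective on objects}\}$ and its fibrations are $\mathrm{rlp}(J)=\{\text{isofibrations}\}$, which is the assertion of the theorem. I expect the main obstacle to be the final verification of the preceding paragraph — that relative $J$-cell complexes really are equivalences of categories — since one must check that freely adjoining isomorphisms, and then forming the coproducts and transfinite composites produced by the small object argument, preserves full faithfulness and essential surjectivity, and this requires being careful about how colimits behave in $\cat$.
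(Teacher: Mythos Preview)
Your proof plan is correct and follows the standard route via the recognition theorem for cofibrantly generated model categories. The paper, however, does not prove this theorem at all: it simply cites it as a known result from \cite{rezk1996model}, so there is no proof in the paper to compare against. Your argument is essentially the one found in that reference (and in later expositions), and the identifications $\mathrm{rlp}(I)=\{\text{surjective-on-objects equivalences}\}$, $\mathrm{rlp}(J)=\{\text{isofibrations}\}$, $\mathrm{llp}(\mathrm{rlp}(I))=\{\text{injective-on-objects functors}\}$ are all correct as you state them.
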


In this model structure all objects are cofibrant. We can immediately associate for each generator in $I$ a sort in the following way:
% https://q.uiver.app/#q=WzAsNixbMCwwLCJcXGVtcHR5c2V0IFxcdG8gXFxtYXRoYmZ7MH0iXSxbMSwwLCJcXHZkYXNoIFxcb2J0eXAgXFwsIFxcdHlwZSJdLFswLDEsIiBcXHswXFx9XFxzcWN1cCBcXHsxXFx9IFxcdG8gXFxtYXRoYmZ7MX0iXSxbMSwxLCJ4LHk6XFxvYnR5cCBcXHZkYXNoIFxcaG9tdHlwKHgseSkgXFwsIFxcdHlwZSJdLFsxLDIsIngseTpcXG9idHlwLCBmLGc6XFxob210eXAoeCx5KSBcXHZkYXNoIFxcZXF0eXAoZixnKSBcXCwgXFx0eXBlIl0sWzAsMiwiUCJdLFswLDEsIiIsMCx7InNob3J0ZW4iOnsic291cmNlIjoyMCwidGFyZ2V0Ijo1MH0sInN0eWxlIjp7InRhaWwiOnsibmFtZSI6Im1hcHMgdG8ifX19XSxbMiwzLCIiLDAseyJzaG9ydGVuIjp7InNvdXJjZSI6MTAsInRhcmdldCI6MjB9LCJzdHlsZSI6eyJ0YWlsIjp7Im5hbWUiOiJtYXBzIHRvIn19fV0sWzUsNCwiIiwwLHsic2hvcnRlbiI6eyJzb3VyY2UiOjUwfSwic3R5bGUiOnsidGFpbCI6eyJuYW1lIjoibWFwcyB0byJ9fX1dXQ==
\[\begin{tikzcd}[row sep=0.1em]
	{\mathbf{0} \to \mathbf{1}} & {\vdash \obtyp \, \type} \\
	{ \{0\}\sqcup \{1\} \to \mathbf{2}} & {x,y:\obtyp \vdash \homtyp(x,y) \, \type} \\
	P & {x,y:\obtyp, f,g:\homtyp(x,y) \vdash \eqtyp(f,g) \, \type}
	\arrow[shorten <=19pt, shorten >=46pt, maps to, from=1-1, to=1-2]
	\arrow[shorten <=4pt, shorten >=9pt, maps to, from=2-1, to=2-2]
	\arrow[shorten <=15pt, maps to, from=3-1, to=3-2]
\end{tikzcd}\]

Note that while the type $\obtyp$ has no dependencies, the type $\homtyp(x,y)$ depends on two elements of type $\obtyp$, which is encoded in the cofibration $ \{0\}\sqcup \{1\} \to \mathbf{2}$. The same situation applies with the type $\eqtyp$ which furthermore has dependencies on the types $\obtyp$ and $\homtyp$, now the cofibration $P \cofibration \mathbf{2}$ expresses this.

\begin{remark}
  The reason the previous association is well-defined is that the set of generating cofibrations $I$ of the model structure on $\cat$ from \cref{folkmodel:categories} has a natural well-founded order---in the sense of \cref{cofibrations:order}. Indeed, we can set $\mathbf{0} \to \mathbf{1}$ as the least element. Since the domain of the cofibration $\{0\}\sqcup \{1\} \to \mathbf{2}$ is a pushout of $\mathbf{0} \to \mathbf{1}$, we can declare $(\mathbf{0} \to \mathbf{1}) <(\{0\}\sqcup \{1\} \to \mathbf{2})$. Following the same reasoning, we see that the domain of the cofibration $P \to \mathbf{2}$ is the pushout of two copies of $\{0\}\sqcup \{1\} \to \mathbf{2}$. Therefore, we can also set $(\{0\}\sqcup \{1\} \to \mathbf{2}) < (P \to \mathbf{2}) $. This completely determines the order $<$ on $I$, which is well-founded by construction. For all the subsequent examples, one can induce the corresponding well-founded orders analogously.
\end{remark}

The resulting theory is what we introduced earlier, $Cat_=$, which for convenience we recall here. This is defined as:	
	\begin{enumerate}
		\item Type of objects: $ \vdash \, \obtyp \, \type $.
		\item Type of morphisms: $ x : \obtyp, \,y : \obtyp \vdash \, \homtyp(x,y) \, \type $.
                \item Equality type: $ x,y:\obtyp, f,g : \homtyp(x,y) \vdash \eqtyp(f,g) \type $
		\item Composition operation: $ x, y, z : \obtyp, \, f:\homtyp(x,y), \, g:\homtyp(y,z) \vdash g\circ f:\homtyp(x,z) $.
		\item Identity operator: $ x: \obtyp \vdash \, \idx_x:\homtyp(x,x) $.
	\end{enumerate}
	Subject to the following axioms:
        \begin{itemize}
        \item $x : \obtyp, \, y:\obtyp, \, f:\homtyp(x,y) \vdash \idx_y\circ f\equiv f $.
        \item $x : \obtyp ,\, y:\obtyp,f:\homtyp(x,y) \vdash f\circ\idx_x\equiv f $.
        \item $x :\obtyp, \, y :\obtyp, \, z :\obtyp, \, w :\obtyp, \, f:\homtyp(x,y),g:\homtyp(y,z),h:\homtyp(z,w) \vdash (h\circ g)\circ f\equiv h\circ (g\circ f)$.
        \item $ x,y:\obtyp, f: \homtyp(x,y) \vdash r_f : \eqtyp(f,f) $.
        \item $ x,y:\obtyp, f,g : \homtyp(x,y), a: \eqtyp(f,g) \vdash f \equiv g $.
        \item $x,y:\obtyp, f,g : \homtyp(x,y), a: \eqtyp(f,g) \vdash a \equiv r_f$.
        \end{itemize}

        \begin{remark}
          In the example above, we have imposed additional axioms for terms of type $\homtyp$ and $\eqtyp$. The reason behind this is solely so that the models of the theory $Cat_=$ are exactly the categories.
        \end{remark}
        
        As pointed out in \cref{ex:cat=} the language we obtain is the same as the one given by \cite{blanc78} and \cite{freyd76}. In the introduction we presented the formula for an object $x$ to be terminal:
        \[ \forall y \in \text{Ob}, \left( \exists v \in \Hom(y,x) \wedge \forall u,w \in \Hom(y,x), \eqtyp(u,w) \right).\]
        Such formula is written in the language of categories.

        \begin{observation}
          We verify the above differently to showcase the fact that we do not need to explicitly know the language (type theory) associated to a model category, we only need to know that it can be constructed out of cofibrations. The formula above is constructed by first quantifying universally over the cofibration $\mathbf{0} \to \mathbf{1}$ to give $\forall y\in \obtyp$. Note that applying the existential quantifier to $\{0\}\sqcup \{1\} \to \mathbf{2} $ gives us $\exists v\in\Hom(y,x)$ and the universal quantifier on $\mathbf{1} \to \Jcal$. In the end, the formula can be seen as a composition pushouts ``in context $x$.'' Building the context of a formula is not an easy task, however, it might be easier to describe a pushout.
        \end{observation}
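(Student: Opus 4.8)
The plan is to produce the formula ``$x$ is terminal'' directly as an element of the $\lambda$-boolean algebra $\Lb^{\cat}_\lambda$ over the coclan $\cat^\cof$ (for the folk model structure of \cref{folkmodel:categories}), building it only from $\top$, $\wedge$, and the quantifier operations of \cref{def:Boolan_alg_over_clan} along cofibrations that are visibly iterated pushouts of the three generators $\mathbf 0\to\mathbf 1$, $\{0\}\sqcup\{1\}\to\mathbf 2$, $P\to\mathbf 2$, and then to compute its interpretation and see that it is ``$x$ is terminal''. At no point will the syntactic theory $Cat_=$ actually be invoked.

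First I would fix the cofibrant categories playing the role of the successive contexts, together with the cofibrations linking them: $\mathbf 1$ (the object-variable $x$); $\mathbf 1\sqcup\mathbf 1$ (the object-variables $x,y$), reached from $\mathbf 1$ by $p_1\colon\mathbf 1\cofibration\mathbf 1\sqcup\mathbf 1$, a pushout of $\mathbf 0\to\mathbf 1$; the arrow category $\mathbf 2$, appearing once as ``$x,y$ with $v\colon y\to x$'' via the generator $p_2\colon\mathbf 1\sqcup\mathbf 1\cofibration\mathbf 2$ (with $0\mapsto y$, $1\mapsto x$), and once as ``$x,y$ with $u,w\colon y\to x$ together with a proof of $u=w$''; and the walking parallel pair $P$ (``$x,y$ with $u,w\colon y\to x$''), reached by $p_3\colon\mathbf 1\sqcup\mathbf 1\cofibration P$, a composite of two pushouts of $p_2$, with $p_4\colon P\cofibration\mathbf 2$ the generator $P\to\mathbf 2$ — the context extension that, in the syntax of $Cat_=$, adjoins $a\in\eqtyp(u,w)$, and whose underlying functor identifies $u$ and $w$. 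That each of these maps is a composite of pushouts of the three generators is exactly what the well-founded structure on the generating cofibrations guarantees (\cref{cofibrations:order}, \cref{combinatorial:well-founded}), and it is all one needs.

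Next I would assemble the formula from the inside out, recalling that the quantifier along a cofibration $c\cofibration c'$ sends a formula in context $c'$ to one in context $c$: let $\beta\in\Lb^{\cat}_\lambda(P)$ be the $\exists$ along $p_4$ of $\top\in\Lb^{\cat}_\lambda(\mathbf 2)$ — morally ``$\exists a\in\eqtyp(u,w),\top$'', i.e.\ ``$u=w$''; let $\gamma\in\Lb^{\cat}_\lambda(\mathbf 1\sqcup\mathbf 1)$ be the $\forall$ along $p_3$ of $\beta$ — ``$\forall u,w\in\homtyp(y,x),\,u=w$''; and let $\delta\in\Lb^{\cat}_\lambda(\mathbf 1\sqcup\mathbf 1)$ be the $\exists$ along $p_2$ of $\top\in\Lb^{\cat}_\lambda(\mathbf 2)$ — ``$\exists v\in\homtyp(y,x),\top$''. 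Then $\delta\wedge\gamma\in\Lb^{\cat}_\lambda(\mathbf 1\sqcup\mathbf 1)$, and $\phi\in\Lb^{\cat}_\lambda(\mathbf 1)$ is the $\forall$ along $p_1$ of $\delta\wedge\gamma$.

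Finally I would compute $|\phi|$ in a category $\catD$. By \cref{cstr:object_to_model_yoneda}, $\yoneda_\catD\colon(\cat^\cof)^\op\to\set$, $c\mapsto\Hom(c,\catD)$, is a model of the coclan, and by \cref{cstr:validity_formulas_categorical} validity is the image of $\phi$ under the unique morphism $\Lb^{\cat}_\lambda\to\Pcal(\yoneda_\catD)$ of $\lambda$-boolean algebras over $\cat^\cof$, so one simply transports the construction above along it. This uses that $\yoneda_\catD$ carries the pushouts in play to pullbacks — so $\yoneda_\catD(\mathbf 2)$ is the arrows of $\catD$, $\yoneda_\catD(P)$ the parallel pairs, $\yoneda_\catD(\mathbf 1\sqcup\mathbf 1)$ the ordered pairs of objects — together with the fact (\cref{lem:BC_Set}) that in a power-set boolean algebra $\exists_\pi$ is direct image, $\forall_\pi$ is ``holds on the whole fibre'', and $\pi^*$ is preimage. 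Unwinding: $|\beta|$ is the diagonal pairs $(f,f)$ inside $\yoneda_\catD(P)$; $|\gamma|=\{(D_x,D_y):|\Hom_{\catD}(D_y,D_x)|\le1\}$; $|\delta|=\{(D_x,D_y):\Hom_{\catD}(D_y,D_x)\neq\emptyset\}$; hence $|\delta\wedge\gamma|=\{(D_x,D_y):|\Hom_{\catD}(D_y,D_x)|=1\}$, and $|\phi|=\{D_x:\text{for every }D_y,\ |\Hom_{\catD}(D_y,D_x)|=1\}$, i.e.\ exactly the terminal objects; so $\catD\vdash\phi(x)$ iff $x$ is terminal in $\catD$, with $\phi$ built purely from cofibrations. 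The only part that needs care — and really the only work — is this last step: keeping the variances straight (which factor of $\mathbf 1\sqcup\mathbf 1$ is $x$ and which is $y$, and the orientation $0\mapsto y$, $1\mapsto x$ forced by writing $\homtyp(y,x)$), and checking that $\exists_\pi$, $\forall_\pi$ in $\Pcal(\yoneda_\catD)$ genuinely compute image and ``whole fibre'', which is exactly the statement that $\yoneda_\catD$ preserves the relevant pushouts of generators so that \cref{lem:BC_Set} applies fibrewise. Everything else is bookkeeping — and that it is nothing but bookkeeping, given only the list of generating cofibrations, is the point of the observation.
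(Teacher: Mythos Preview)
Your proposal is correct and is essentially a careful, fleshed-out version of the paper's own sketch: build the formula from $\top$ using the quantifier adjoints along generating cofibrations in context $\mathbf 1$, then read off its interpretation via the representable model $\yoneda_\catD$. In fact you are more precise than the paper --- you correctly use the generator $p_4\colon P\to\mathbf 2$ to encode the equality $u=w$, whereas the paper's text refers to ``the universal quantifier on $\mathbf 1\to\Jcal$'', which appears to be a slip (that map is a generating \emph{trivial} cofibration and plays no role in the isTerminal formula).
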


        \begin{remark}
          We mentioned at the beginning of the section that the association we do from cofibrations to types is not extremely formal. Again, the reason is that the equivalence between $\kappa$-clans and generalized $\kappa$-algebraic theories, \cref{appendix-b}, is not explicit. The association we make, for categories and the other examples below, is the obvious one and ad-hoc to the expected theory. From the start, we know what our intended models are, so once we have the types we define the operations and impose the equations that our intended models satisfy. We stress that this is informal and not very precise.
          \end{remark}

          \begin{remark}
          In general, a cofibration in a model category could be decomposed as a pushouts of cofibrations in more than one way. Depending on our choices, it might happen that we end up with different, but equivalent, theories. 

          One of the worst case scenarios is when we do not have a straightforward well-ordering. See the case for unbounded chain complexes below \cref{unbounded-chains}.
        \end{remark}
        
        \subsection{$2$-categories and Bicategories}  \label{bicats}

In this section we examine the language associated to the canonical model structures on the categories $\twocat$ and $\bicat$, respectively. The model structure for these two categories was defined in \cite{lack2002model} and \cite{lack2004model}.

Given a category $C$ its suspension $\sum C$, is defined as the $2$-category with two objects $X,Y$, the hom categories are $\sum C(X,X)=\sum C(Y,Y)=\sum C(Y,X)=\emptyset$ and $\sum C(X,Y)=C$. Furthermore, each bicategory $\mathscr{B} \in \bicat$ has an underlying $\cat$\textbf{-graph}, in the sense of \cite{wolff1974v}. This  induces a functor $U:\bicat \to \cat\textbf{-graph} $ which has left adjoint $F$; this gives us the free bicategory generated by a $\cat$-graph. The suspension of a category $C$ can be seen as a $\cat$-graph associated to $C$. The free bicategory generated by the suspension of a category is denoted by $ \sum C$. Moreover, this construction is functorial.

\cite[Theorem 3]{lack2004model} constructs a model structure for the category of bicategories. This model structure is cofibrantly generated with generating cofibrations given by the suspension of the generating cofibrations of the canonical model structure on $\cat$ and an additional functor we specify below. Finally, $\mathscr{E}$ is the ``free-living adjoint equivalence '' is the bicategory with objects $x,y$,  freely generated by 1-cells $f:x \to y$ and $g:y \to x$, and two invertible 2-cells $\eta:1_x \Rightarrow gf$, $\varepsilon:fg \Rightarrow 1_y$ satisfying the familiar triangle identities.

\begin{theorem}[{\cite[Theorem 3]{lack2004model}}] \label{model:bicategories}
  There is a model structure on the category $\bicat$ of bicategories and strict bifunctors such that:
  \begin{enumerate}
  \item Weak equivalences are the biequivalences,
  \item Fibrations are the strict bifunctors with the equivalence lifting property. 
  \end{enumerate}
  Furthermore, the model structure is cofibrantly generated by the sets
  \[ I \coloneqq \{ \mathbb{0} \to \mathbbm{1}, \Sigma u, \Sigma v, \Sigma w \}  \text{ and } J \coloneqq \{ \mathbbm{1} \to \mathscr{E} \} \]
  where $\mathbb{0}$ is the empty bicategory, $\mathbbm{1}$ is the bicategory with a single object and no non-identity 2-cells, the functors $u,v,w$ come from \cref{folkmodel:categories}, and the bifunctor in $J$ picks the object $x$.
\end{theorem}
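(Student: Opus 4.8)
This is \cite[Theorem~3]{lack2004model}, so in the paper it suffices to cite that reference; here we only sketch the strategy. The plan is to build the model structure by a transfer argument along the free--forgetful adjunction $F \dashv U$ between $\cat\textbf{-graph}$ and $\bicat$ recalled above, where $U$ takes a bicategory to its underlying $\cat$-graph and $F$ is the free bicategory functor. First I would put on $\cat\textbf{-graph}$ a suitable cofibrantly generated (folk-type) model structure in which weak equivalences are detected hom-categorywise, with generating cofibrations the suspensions of $u,v,w$ from \cref{folkmodel:categories} together with $\mathbb{0} \to \mathbbm{1}$, and generating trivial cofibrations chosen so that their images under $F$ are $\{\mathbbm{1} \to \mathscr{E}\}$. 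Since $F$ applied to the suspension $\cat$-graph of a category $C$ is precisely $\Sigma C$, the images $FI$, $FJ$ of these generators are exactly the sets $I$ and $J$ of the statement.

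Then Kan's lifting theorem for cofibrantly generated model structures reduces the problem to two conditions: that $U$ preserves the relevant filtered colimits, which is immediate because the free-bicategory monad is finitary (free composites of $1$- and $2$-cells), and the \emph{acyclicity condition}, namely that every transfinite composite of pushouts of maps in $FJ$ is sent by $U$ to a biequivalence. The standard way to verify acyclicity is to exhibit, functorially in a bicategory $\mathcal{B}$, a path object
\[ \mathcal{B} \longrightarrow \mathcal{B}^{\mathscr{E}} \longrightarrow \mathcal{B} \times \mathcal{B} \]
built using the free-living adjoint equivalence $\mathscr{E}$ as a contractible interval, with first leg a biequivalence and second leg having the right lifting property against $\mathbbm{1} \to \mathscr{E}$; acyclicity then follows by the usual argument presenting trivial cofibrations as retracts of pushout-products of the generators. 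Along the way one checks that $\mathbbm{1}$ and its coproducts are cofibrant, so that the generators in $I$ have cofibrant domain---the property needed for the constructions of this section.

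The main obstacle is precisely the acyclicity step, and this is where it is essential to use the \emph{free}-living adjoint equivalence $\mathscr{E}$ rather than a strict interval: one must verify by a coherence computation that pushing out $\mathbbm{1} \to \mathscr{E}$ along an arbitrary map---and iterating transfinitely---only ever adjoins invertible $2$-cell data, so that the result is genuinely a biequivalence of bicategories. Bookkeeping the unit and counit $\eta,\varepsilon$ and the triangle identities through these pushouts, and confirming that $\mathscr{E}$ is contractible in the appropriate sense, is the delicate part of \cite{lack2004model}, and is exactly what allows the category $\bicat$ of \emph{strict} functors to carry a model structure whose weak equivalences are the \emph{weak} equivalences (biequivalences). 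An alternative route, once the corresponding model structure on $\twocat$ from \cite{lack2002model} is available, is to transport it across the strictification--inclusion Quillen equivalence $\mathrm{st} : \bicat \leftrightarrows \twocat$; but this presupposes the $2$-categorical case, whose proof follows the same transfer pattern.
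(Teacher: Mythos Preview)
The paper does not prove this theorem; it simply states it with the citation to \cite{lack2004model} and moves on to use it. You correctly identify this in your first sentence, so your proposal matches the paper's treatment exactly.

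The sketch you add is extra content beyond what the paper does. A small caution if you intend it as an outline of Lack's actual argument: Lack does not first build a model structure on $\cat\textbf{-graph}$ and then transfer it. Rather, he works directly in $\bicat$, verifying the recognition conditions for a cofibrantly generated model structure with the sets $I$ and $J$ displayed in the statement; the adjunction with $\cat\textbf{-graph}$ is used to identify the generators, not as the source of a transferred model structure. Your path-object idea using $\mathscr{E}$ and the emphasis on the acyclicity condition are in the right spirit, but the framing as a Kan-transfer-from-$\cat\textbf{-graph}$ argument is not how the cited proof proceeds.
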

When we analyze the set of generating cofibrations $I$ we rediscover the generalized algebraic theory of bicategories $Bicat_=$:
\begin{itemize}
\item $\begin{tikzcd}
	{\mathbb{0}\to \mathbb{1}} & {\vdash \obtyp \, \type}
	\arrow[maps to, from=1-1, to=1-2]
      \end{tikzcd}$
    
\item % https://q.uiver.app/#q=WzAsMyxbMCwwLCJcXHt4XFx9XFxzcWN1cFxce3lcXH0iXSxbMSwwLCJ4XFx0byB5Il0sWzIsMCwieCx5Olxcb2J0eXAgXFx2ZGFzaCBcXGhvbXR5cCh4LHkpIl0sWzAsMSwiXFxzdW0gdiJdLFsxLDIsIiIsMCx7InN0eWxlIjp7InRhaWwiOnsibmFtZSI6Im1hcHMgdG8ifSwiYm9keSI6eyJuYW1lIjoic3F1aWdnbHkifX19XV0=
$\begin{tikzcd}[sep=small]
	{\{x\}\sqcup\{y\}} & \{x\to y\} & {x,y:\obtyp \vdash \homtyp(x,y)}
	\arrow["{\sum u}", from=1-1, to=1-2]
	\arrow[maps to, from=1-2, to=1-3]
      \end{tikzcd}$
    
  \item % https://q.uiver.app/#q=WzAsNSxbMCwwLCJ4Il0sWzEsMCwieSJdLFsyLDAsIngiXSxbMywwLCJ5Il0sWzQsMCwieCx5Olxcb2J0eXAsIFxcLCBmLGc6XFxob210eXAoeCx5KSBcXFxcIFxcdmRhc2ggXFxob210eXAoZixnKVxcLCBcXHR5cGUiXSxbMCwxLCIwIiwwLHsiY3VydmUiOi0xfV0sWzAsMSwiMSIsMix7ImN1cnZlIjoxfV0sWzIsMywiMCIsMCx7ImN1cnZlIjotMX1dLFsyLDMsIjEiLDIseyJjdXJ2ZSI6MX1dLFsxLDIsIlxcc3VtIHciXSxbMyw0LCIiLDIseyJzdHlsZSI6eyJ0YWlsIjp7Im5hbWUiOiJtYXBzIHRvIn0sImJvZHkiOnsibmFtZSI6InNxdWlnZ2x5In19fV0sWzcsOCwiIiwyLHsic2hvcnRlbiI6eyJzb3VyY2UiOjIwLCJ0YXJnZXQiOjIwfX1dXQ==
$\begin{tikzcd} [sep=small]
	x & y & x & y & {x,y:\obtyp, \, f,g:\homtyp(x,y) \vdash \homtyp(f,g)\, \type}
	\arrow["0", bend left, from=1-1, to=1-2]
	\arrow["1"', bend right, from=1-1, to=1-2]
	\arrow["{\sum v}", from=1-2, to=1-3]
	\arrow[""{name=0, anchor=center, inner sep=0}, "0", bend left, from=1-3, to=1-4]
	\arrow[""{name=1, anchor=center, inner sep=0}, "1"', bend right, from=1-3, to=1-4]
	\arrow[maps to, from=1-4, to=1-5]
	\arrow[shorten <=2pt, shorten >=2pt, Rightarrow, from=0, to=1]
\end{tikzcd}$

\item $
        % https://q.uiver.app/#q=WzAsNCxbMCwwLCJ4Il0sWzEsMCwieSJdLFsyLDAsIngiXSxbMywwLCJ5Il0sWzAsMSwiMCIsMCx7ImN1cnZlIjotMX1dLFswLDEsIjEiLDIseyJjdXJ2ZSI6MX1dLFsyLDMsIjAiLDAseyJjdXJ2ZSI6LTF9XSxbMiwzLCIxIiwyLHsiY3VydmUiOjF9XSxbMSwyLCJcXHN1bSB3Il0sWzYsNywiIiwyLHsic2hvcnRlbiI6eyJzb3VyY2UiOjIwLCJ0YXJnZXQiOjIwfX1dLFs0LDUsIiIsMCx7Im9mZnNldCI6MSwic2hvcnRlbiI6eyJzb3VyY2UiOjIwLCJ0YXJnZXQiOjIwfX1dLFs0LDUsIiIsMix7Im9mZnNldCI6LTEsInNob3J0ZW4iOnsic291cmNlIjoyMCwidGFyZ2V0IjoyMH19XV0=
\begin{tikzcd}
	x & y & x & y
	\arrow[""{name=0, anchor=center, inner sep=0}, "0", bend left, from=1-1, to=1-2]
	\arrow[""{name=1, anchor=center, inner sep=0}, "1"', bend right, from=1-1, to=1-2]
	\arrow["{\sum w}", from=1-2, to=1-3]
	\arrow[""{name=2, anchor=center, inner sep=0}, "0", bend left, from=1-3, to=1-4]
	\arrow[""{name=3, anchor=center, inner sep=0}, "1"', bend right, from=1-3, to=1-4]
	\arrow[shift right, shorten <=2pt, shorten >=2pt, Rightarrow, from=0, to=1]
	\arrow[shift left, shorten <=2pt, shorten >=2pt, Rightarrow, from=0, to=1]
	\arrow[shorten <=2pt, shorten >=2pt, Rightarrow, from=2, to=3]
\end{tikzcd} \mapsto 
\begin{cases}
  x,y:\obtyp, \, f,g:\homtyp(x,y), \\
  \alpha,\beta:\homtyp(f,g) \vdash \eqtyp(\alpha,\beta)\,\type
\end{cases}
      $
\end{itemize}
Moreover, we can also introduce the composition and identity operations for arrows and cells:

\begin{itemize}
\item Composition operation for arrows: $ x : \obtyp, \, y : \obtyp, \, z : \obtyp, \, f:\homtyp(x,y), \, g:\homtyp(y,z) \vdash g\circ f:\homtyp(x,z) $.
\item Identity operator for arrows: $ x: \obtyp \vdash \, \idx_x:\homtyp(x,x) $.
\item Vertical composition of cells: $x,y:\obtyp, f,g,h:\homtyp(x,y), \alpha:\homtyp(f,g), \beta:\homtyp(g,h) \vdash \beta \circ \alpha : \homtyp(f,h)$.
\item Horizontal composition of cells: $x,y,z:\obtyp, f,g:\homtyp(x,y), h,k:\homtyp(y,z), \alpha:\homtyp(f,g), \beta:\homtyp(h,k) \vdash  \alpha *\beta : \homtyp(h \circ f,k\circ g)$.
\item Identity operator for cells: $ x,y: \obtyp, f: \homtyp(x,y) \vdash \, \idx_f:\homtyp(f,f) $.
\end{itemize}
One can also attempt to list all the axioms that the above theory ought to satisfy, with the risk of running out of space. We simply exemplify this with the associator:
\begin{multline*}
    w,x,y,z:\obtyp,f:\homtyp(w,x),g:\homtyp(x,y),h:\homtyp(y,z), \\ \alpha: \homtyp((h\circ g)\circ f, h\circ (g\circ f)),\beta: \homtyp((h\circ (g\circ f),h\circ g)\circ f) \\ \vdash r: \eqtyp(\alpha \circ \beta, \idx_{(h\circ (g\circ f)} ) \wedge s: \eqtyp(\beta \circ \alpha, \idx_{(h\circ g)\circ f} ).
  \end{multline*}
  We also include the axioms for $\eqtyp$ --- the same ones as for categories --- that gives us the expected behaviour. 

  \begin{remark}
    If we now try to obtain the associated theory $2Cat_=$ using the generating cofibration of \cite{lack2004model}, we see that the resulting theory has similar types and operations as the theory $Bicat_=$ of bicategories. The notable differences are that we do not need associators or unitors, but we need to include equations for the associativity and unitality of the composition of arrows and cells, and also the interchange law relating horizontal and vertical composition of cells. All these axioms are the appropriate ones to obtain 2-categories as the models of the theory $2Cat_=$.
  \end{remark}

\begin{definition} \label{twoterminal:def}
  Let $\catC$ be a 2-category. An object $x\in \catC$ is \emph{bi-terminal} if for all $y\in \catC$ there is an equivalence of categories $\catC(y,x)\cong \mathbf{1}$.
\end{definition}

Note that $f:a \to b$ being an equivalence can be written as
\[\exists h:\Hom(b,a),\exists \eta:\Hom(\idx_a, h\circ f), \exists\varepsilon:\Hom(f\circ h,\idx_b), \textsf{\textup{isIso}}(\eta) \wedge \textsf{\textup{isIso}}(\varepsilon),\top.\]
Observe that the statement $\textsf{\textup{isIso}}(\eta)$, which says that $\eta:f\Rightarrow g$ is a natural isomorphism, only involves equality of natural transformations:
\[
  \textsf{\textup{isIso}}(\eta)) \coloneqq \exists \epsilon: \Hom(g,f),s:\eqtyp(\epsilon \circ \eta, \idx_f) \wedge r:\eqtyp(\eta \circ \epsilon, \idx_g),\top.
\]
We can then conclude that the notion of bi-terminal object is invariant.

\begin{remark}
  Other natural, but somewhat different, higher categories to consider in this progression are the double categories. Fortunately, this question has been described in Paula Verdugo's PhD thesis \cite{verdugo2024}, or \cite{verdugo2025invariance}. In particular, she builds a model structure on double categories where the fibrant objects are the \emph{equipments}. The language for this model structure produces formulas that express properties of equipments. Therefore, we can use our invariance theorems for this ``language of equipments''. The details of this are exposed in Verdugo's PhD thesis cited above.
\end{remark}

\subsection{Bounded below chain complexes}

In this section, we examine the language of the projective model structure on bounded below chain complexes $Ch(R)$ over a commutative ring $R$. We start by recalling some facts about this model structure. The detailed proofs can be found elsewhere, e.g. \cite{hovey1999models}.

Given an $R$-module $M$, for each $n \in \Z $ define $S^n(M)\in Ch(R)$ by
\[
  S^n(M)_k \coloneqq
  \begin{cases}
    M, \, k=n \\
    0, \, k \neq n.
  \end{cases}
\]

Similarly, $D^n(M)\in Ch(R)$ is defined as
\[
  D^n(M)_k \coloneqq
  \begin{cases}
    M, \, k=n-1,\, n \\
    0, \, \text{ otherwise.}
  \end{cases}
\]
where the only non-trivial differential $d_n:M \to M$ is the identity. Obviously, we get an inclusion $S^{n-1}(M) \to D^n(M)$.

These constructions induce functors $S^n:R\text{-}Mod \to Ch(R)$ and $D^n:R\text{-}Mod \to Ch(R)$ for each $n \in \Z$. Both functors have right adjoints $Z_n:Ch(R) \to R\text{-}Mod$ and $Ev_n:Ch(R) \to R\text{-}Mod$, respectively, where $Z_nX \coloneqq Ker(d_n)$ and $Ev_n X\coloneqq X_n $.

In particular, when $M=R$ the chains above are denoted by $ S^n $ and $D^n$, respectively. We can define the sets $$I \coloneqq \{ S^{n-1} \to D^n \vert n \in \Z \} \text{ and } J \coloneqq \{ 0 \to D^n \vert n \in \Z \}.$$

All constructions above work on unbounded chain complexes too. In the next result we restrict to bounded below chains, \ie $n\geq 0$. By definition $(D^{0})_{-1}=0$, so that $S^0=D^0$. With this information, what we need to know about the projective model structure is summarized in the following:

\begin{theorem}[\cite{quillen2006axiomatic}] \label{projective-model:bounded-chains}
  The category of chain complexes $Ch(R)$ admits a model structure where:
  \begin{enumerate}
  \item Weak equivalences are the quasi-isomorphisms
  \item Fibrations are the degree-wise epimorphisms.
  \item Cofibrations are the degree-wise monomorphisms with projective cokernel.
  \end{enumerate}
  Furthermore, this model structure is proper, cofibrantly generated and combinatorial. Cofibrations and trivial cofibrations are generated by $I$ and $J$, respectively.
\end{theorem}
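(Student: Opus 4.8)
The plan is to obtain the model structure from the recognition theorem for cofibrantly generated model categories (Kan's criterion, in the form of Hovey's Theorem 2.1.19) applied to the sets $I$ and $J$, and then to identify the resulting classes of (co)fibrations and check properness separately. Since the category of bounded-below chain complexes is a category of modules over a graded ring, in particular locally presentable, it is complete and cocomplete and every object is small, so the smallness hypotheses of the recognition theorem hold automatically and combinatoriality comes for free. The class $W$ of quasi-isomorphisms satisfies two-out-of-three and is closed under retracts because it is exactly the class of maps inverted by the homology functor $H_*$.

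The first real step is to identify the two weak factorization systems cut out by $I$ and $J$. Using the adjunctions $S^n \dashv Z_n$ and $D^n \dashv Ev_n$ recalled above, a lifting problem against $0 \to D^n$ translates into a surjectivity statement for $Ev_n \colon X \mapsto X_n$, so $J\text{-inj}$ is precisely the class of maps that are epimorphisms in each (positive) degree; these will be the fibrations. Similarly a lifting problem against $S^{n-1} \to D^n$ unwinds, via the same adjunctions and the identification $D^n/S^{n-1}\cong S^n$, into the statement that $f\colon X \to Y$ is degreewise surjective with acyclic kernel. The short exact sequence $0 \to \ker f \to X \to Y \to 0$ and its long exact homology sequence then show that, among degreewise epimorphisms, having acyclic kernel is equivalent to being a quasi-isomorphism; hence $I\text{-inj} = W \cap J\text{-inj}$, one of the two nontrivial hypotheses of the recognition theorem.

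For the remaining hypotheses: each pushout of $0 \to D^n$ along $0 \to A$ is, up to isomorphism, the summand inclusion $A \hookrightarrow A \oplus D^n$; since $D^n$ is acyclic and levelwise free this map lies in $W$ and in $I\text{-cof}$, and both $W$ (filtered colimits of modules are exact) and $I\text{-cof}$ are closed under transfinite composition, so $J\text{-cell} \subseteq W \cap I\text{-cof}$. The extra condition $W \cap J\text{-inj} \subseteq I\text{-inj}$ is checked by noting that a quasi-isomorphism which is epi in positive degrees is automatically epi in the bottom degree as well, by a one-line chase using $dY_1 = f(dX_1)$ when $f$ is epi in degree one; hence it is degreewise epi with acyclic kernel, i.e. in $I\text{-inj}$. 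The recognition theorem then produces a cofibrantly generated model structure with generating (trivial) cofibrations $I$ (resp. $J$) and with the advertised weak equivalences and fibrations, and combinatoriality follows since the ambient category is locally presentable.

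It remains to recognize the cofibrations and to establish properness. I would show that $I\text{-cell}$ maps are levelwise split monomorphisms with levelwise free cokernel, so $I\text{-cof}$ consists of levelwise monomorphisms with levelwise projective cokernel; conversely, given such a map $A \rightarrowtail B$, one builds by induction on degree a lift against any degreewise epimorphism with acyclic kernel, choosing a levelwise splitting and using projectivity of the cokernel modules together with acyclicity of the target, so that $A \rightarrowtail B \in {}^\pitchfork(I\text{-inj}) = I\text{-cof}$. I expect this comparison argument to be the main obstacle: it is the one place that genuinely uses the structure of $Ch(R)$ rather than formal nonsense, and where the inductive construction of lifts out of projectivity and acyclicity does the real work. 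Properness then follows because cofibrations have levelwise projective cokernel and fibrations are levelwise epi, so the relevant short exact sequences split degreewise and pushouts (resp. pullbacks) along them compute homology correctly, giving left (resp. right) properness.
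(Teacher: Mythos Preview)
The paper does not give its own proof of this theorem: it is stated as a classical result and attributed to Quillen via the citation \cite{quillen2006axiomatic}. Your sketch follows the standard modern route (essentially Hovey's treatment via the recognition theorem 2.1.19), and the overall strategy---identify $I\text{-inj}$ and $J\text{-inj}$ via the adjunctions $S^n\dashv Z_n$ and $D^n\dashv Ev_n$, verify $I\text{-inj}=W\cap J\text{-inj}$ through the long exact sequence, check $J\text{-cell}\subseteq W\cap I\text{-cof}$, then characterise $I\text{-cof}$ by an inductive lifting argument---is correct and is exactly what one finds in the standard references.

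One small point worth flagging: you correctly identify $J\text{-inj}$ as the maps that are epimorphisms in \emph{positive} degrees, and then argue that a quasi-isomorphism with this property is automatically surjective in degree $0$ as well. This is right, and it resolves a slight imprecision in the statement as written in the paper (which says ``degree-wise epimorphisms'' without qualification, and whose convention $D^0=S^0$ would make $0\to D^0$ a non-acyclic member of $J$ if taken literally). In the standard projective model structure on $Ch_{\geq 0}(R)$ the fibrations are exactly the maps surjective in positive degrees, and your argument recovers this. Also, the claim that $Ch_{\geq 0}(R)$ is ``a category of modules over a graded ring'' is not quite right as stated (the boundedness breaks this), but the conclusion you draw---local presentability, hence smallness of all objects and combinatoriality---is correct and can be justified directly.
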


The cofibrant objects in the mode structure from \cref{projective-model:bounded-chains} are complexes such that each $R$-module is projective. However, this is not the case for unbounded chain complexes, where not every chain complex with projective modules is cofibrant. Nevertheless, in both cases, all objects are fibrant.

\begin{remark} \label{adjunction:remark}
  Using the adjunction $ S^n \dashv Z_n$, for any chain complex $X$, a map $S^n \to X$ is simply a map $R \to Z_nX$ of $R$-modules. And from $D^n \dashv Ev_n$, a map $D^n \to X$ corresponds to $ y \in X_n $. Therefore, a commutative square
% https://q.uiver.app/#q=WzAsNCxbMCwwLCJTXntuLTF9Il0sWzEsMCwiWCJdLFswLDEsIkRebiJdLFsxLDEsIlkiXSxbMCwyLCJpX24iLDJdLFswLDEsIngiXSxbMSwzLCJmIl0sWzIsMywieSIsMl1d
\[\begin{tikzcd}
	{S^{n-1}} & X \\
	{D^n} & Y
	\arrow["x", from=1-1, to=1-2]
	\arrow["{i_n}"', from=1-1, to=2-1]
	\arrow["f", from=1-2, to=2-2]
	\arrow["Y"', from=2-1, to=2-2]
      \end{tikzcd}\]
    means that $x \in Z_{n-1}X \subseteq X_{n-1} $ \ie $d_{n-1}x=0$ and that $fx=y \in Y_n. $ Therefore, taking a pushout simply means we freely add $(n-1)$-cycles to $X_{n-1}$ with a specified boundary.
  
\end{remark}

    The first element \ie $n=0$, of the set $I$ is the cofibration
    % https://q.uiver.app/#q=WzAsMTAsWzIsMCwiMCJdLFsyLDEsIlIiXSxbMywwLCIwIl0sWzMsMSwiMCJdLFsxLDEsIjAiXSxbMSwwLCIwIl0sWzQsMCwiXFxjZG90cyJdLFs0LDEsIlxcY2RvdHMiXSxbMCwwLCIwIl0sWzAsMSwiRF4wIl0sWzAsNV0sWzIsMF0sWzYsMl0sWzcsM10sWzEsNF0sWzMsMV0sWzAsMV0sWzIsM10sWzUsNF0sWzgsOSwiaV8wIiwyXV0=
\[\begin{tikzcd}
	0 & 0 & 0 & 0 & \cdots \\
	{D^0} & 0 & R & 0 & \cdots
	\arrow["{i_0}"', from=1-1, to=2-1]
	\arrow[from=1-2, to=2-2]
	\arrow[from=1-3, to=1-2]
	\arrow[from=1-3, to=2-3]
	\arrow[from=1-4, to=1-3]
	\arrow[from=1-4, to=2-4]
	\arrow[from=1-5, to=1-4]
	\arrow[from=2-3, to=2-2]
	\arrow[from=2-4, to=2-3]
	\arrow[from=2-5, to=2-4]
\end{tikzcd}\]
For any $n\geq 1$ we have cofibrations $i_n$
% https://q.uiver.app/#q=WzAsMTQsWzMsMCwiUiJdLFszLDEsIlIiXSxbNCwwLCIwIl0sWzQsMSwiUiJdLFsxLDEsIjAiXSxbMSwwLCIwIl0sWzUsMCwiMCJdLFs1LDEsIjAiXSxbNiwxLCJcXGNkb3RzIl0sWzYsMCwiXFxjZG90cyJdLFswLDAsIlNee24tMX0iXSxbMCwxLCJEXm4iXSxbMiwwLCJcXGNkb3RzIl0sWzIsMSwiXFxjZG90cyJdLFsyLDBdLFs2LDJdLFs3LDNdLFszLDEsIjFfUiIsMV0sWzAsMSwiMV9SIiwxXSxbMiwzXSxbNiw3XSxbOSw2XSxbOCw3XSxbNSw0XSxbMTAsMTEsImlfbiIsMl0sWzEzLDRdLFsxMiw1XSxbMCwxMl0sWzEsMTNdXQ==
\[\begin{tikzcd}
	{S^{n-1}} & 0 & \cdots & R & 0 & 0 & \cdots \\
	{D^n} & 0 & \cdots & R & R & 0 & \cdots
	\arrow["{i_n}"', from=1-1, to=2-1]
	\arrow[from=1-2, to=2-2]
	\arrow[from=1-3, to=1-2]
	\arrow[from=1-4, to=1-3]
	\arrow["{1_R}"{description}, from=1-4, to=2-4]
	\arrow[from=1-5, to=1-4]
	\arrow[from=1-5, to=2-5]
	\arrow[from=1-6, to=1-5]
	\arrow[from=1-6, to=2-6]
	\arrow[from=1-7, to=1-6]
	\arrow[from=2-3, to=2-2]
	\arrow[from=2-4, to=2-3]
	\arrow["{1_R}"{description}, from=2-5, to=2-4]
	\arrow[from=2-6, to=2-5]
	\arrow[from=2-7, to=2-6]
\end{tikzcd}\]
We then see immediately that $I$ has a natural, well-founded, order, where we can set $i_0$ to be the minimal element of the set.

From \cref{adjunction:remark}, we get cycles $y \in X_n$ and for each $x\in X_{n-1}$ such that $dx=0$ and $\textsf{\textup{C}}_n(x) \coloneqq \{y \in X_n | dy=x\} $, this is for each generating cofibration $i_n:S^{n-1} \to D^n$. This tells us that the $\omega$-generalized algebraic theory has types $\textsf{\textup{C}}_n(x)$ for $n\geq 1$. We sum up the discussion in the following table:
% https://q.uiver.app/#q=WzAsNixbMCwwLCJpXzA6MFxcdG8gRF4wIl0sWzIsMCwiIFxcdmRhc2ggXFx0ZXh0c2Z7XFx0ZXh0dXB7Q319XzAgXFwsXFx0eXBlIl0sWzEsMCwiXFxtYXBzdG8iXSxbMCwxLCJpX246U157bi0xfVxcdG8gRF5uIl0sWzIsMSwieDpcXHRleHRzZntcXHRleHR1cHtDfX1fe24tMX0oMCkgXFx2ZGFzaCBcXHRleHRzZntcXHRleHR1cHtDfX1fbih4KSBcXCwgXFx0eXBlIl0sWzEsMSwiXFxtYXBzdG8iXV0=
\[\begin{tikzcd}[sep=small]
	{i_0:0\to D^0} & \mapsto & { \vdash \textsf{\textup{C}}_0 \,\type} \\
	{i_n:S^{n-1}\to D^n} & \mapsto & {x:\textsf{\textup{C}}_{n-1}(0) \vdash \textsf{\textup{C}}_n(x) \, \type}
\end{tikzcd}\]
for $n \geq 1$. Note that the differential is already included in the information that defines the types $\textsf{\textup{C}}_n(x)$. We should also add, not included in the table, ``+'' operations on each type $\textsf{\textup{C}}_n(x)$, and axioms, that ensure is an abelian group:
\[
a:\textsf{\textup{C}}_n(x), \, b:\textsf{\textup{C}}_n(y) \vdash a+b: \textsf{\textup{C}}_n(x+y).
\]

\begin{observation}
  It is important to note that in the theory we do not have equality between chains. The only possibility is to consider $\textsf{\textup{C}}_n(x)$ for $x:\textsf{\textup{C}}_{n-1}(0)$. However, this is enough to speak about chains satisfying a boundary condition $x-y=d_nz$ which is written in our language as \[\exists z:\textsf{\textup{C}}_n(x-y),\top. \]
\end{observation}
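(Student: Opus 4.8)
This observation makes two claims, which I would address separately: first, that the generalized algebraic theory $T_{Ch(R)}$ extracted above carries no equality type over the chain types $\textsf{\textup{C}}_n(x)$; and second, that the boundary relation $x-y=d_nz$ is nonetheless expressible, by the displayed formula.

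For the first claim I would simply inspect the axioms of $T_{Ch(R)}$. Its type axioms are $\vdash\textsf{\textup{C}}_0\,\type$ and, for $n\geqslant 1$, $x:\textsf{\textup{C}}_{n-1}(0)\vdash\textsf{\textup{C}}_n(x)\,\type$; its term axioms are the additive (and $R$-linear) operations $0$, $-$, $+$ on the $\textsf{\textup{C}}_n$'s; and its equation axioms are the abelian-group and module identities. In contrast with the theory $Cat_=$ of \cref{ex:cat=}, no $\eqtyp$-introduction axiom has been added. Since, by \cref{folgat}, the language $\Lcal^{T_{Ch(R)}}_\lambda$ contains no atomic formulas besides $\top$ and $\bot$, the only notions of equality available in it are those produced by equality types already present in the theory, and there are none at the level of chains; hence no formula of $\Lcal^{T_{Ch(R)}}_\lambda$ expresses ``$a=b$'' for two terms $a,b:\textsf{\textup{C}}_n(x)$, nor a fortiori for chains of different degree or different boundary.

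For the second claim I would first check that the displayed formula is well-formed, then read off its interpretation. In the context $\Gamma\equiv(x:\textsf{\textup{C}}_{n-1}(0),\,y:\textsf{\textup{C}}_{n-1}(0))$ the additive structure of the theory produces a term $x-y:\textsf{\textup{C}}_{n-1}(0)$ --- using that $-$ and $+$ send cycles to cycles, i.e. preserve the index $0$ --- so $\textsf{\textup{C}}_n(x-y)$ is a legitimate dependent type over $\Gamma$, namely the substitution instance of $x:\textsf{\textup{C}}_{n-1}(0)\vdash\textsf{\textup{C}}_n(x)\,\type$ along $x\mapsto x-y$. Then $\top$ is a formula in context $(\Gamma,z:\textsf{\textup{C}}_n(x-y))$, and rule (4) of \cref{folgat} yields $\exists z:\textsf{\textup{C}}_n(x-y),\,\top$ as a formula in context $\Gamma$. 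I would then invoke clause (5) of \cref{cstr:validity_of_formula_syntactic}: for a model $X$ of $T_{Ch(R)}$, i.e. a bounded-below chain complex, a point of $X(\Gamma)$ is a pair of cycles $\bar x,\bar y$ in degree $n-1$, and the formula holds at $(\bar x,\bar y)$ exactly when there is some $\bar z\in X_n$ with $d_n\bar z=\bar x-\bar y$, which is precisely the boundary condition. Together with \cref{invariance-theorems} (and \cref{projective-model:bounded-chains}, which guarantees we are in a model category whose weak equivalences are the quasi-isomorphisms), this shows that ``$\bar x$ and $\bar y$ are homologous'' is preserved under homotopy of the chosen cycles and under quasi-isomorphism.

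There is no substantive obstacle; the one point deserving care is that $\textsf{\textup{C}}_n(-)$ is a genuinely dependent type, so one must ensure the theory really contains the additive structure $(0,-,+)$ and the associated module identities needed to form the term $x-y$ of the index type $\textsf{\textup{C}}_{n-1}(0)$ --- hence the type $\textsf{\textup{C}}_n(x-y)$. This is exactly the extra algebraic data recorded just before the observation.
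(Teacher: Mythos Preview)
Your elaboration is correct. In the paper this statement is recorded as an observation with no accompanying proof or justification: the claims are treated as immediate from the description of the theory just above it. Your write-up unpacks exactly the two points the observation asserts --- absence of an $\eqtyp$ axiom in the listed theory, and well-formedness plus semantic content of the displayed formula via \cref{folgat} and \cref{cstr:validity_of_formula_syntactic} --- and does so accurately; there is nothing further in the paper to compare against.
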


\subsection{Unbounded chain complexes} \label{unbounded-chains}

When we work with unbounded chain complexes \cref{projective-model:bounded-chains} becomes:

\begin{theorem}[{\cite[Theorem~2.3.10]{hovey1999models}}]  \label{projective-model:chains}
  The category of chain complexes $Ch(R)$ admits a model structure where:
  \begin{enumerate}
  \item Weak equivalences are the quasi-isomorphisms
  \item Fibrations are the degree-wise epimorphisms.
  \item Cofibrations are the degree-wise split monomorphisms with cofibrant cokernel.
  \end{enumerate}
  Furthermore, this model structure is proper, cofibrantly generated and combinatorial. Cofibrations and trivial cofibrations are generated by $I$ and $J$, respectively.
\end{theorem}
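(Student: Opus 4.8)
The plan is to produce this model structure via the standard recognition theorem for cofibrantly generated model categories, e.g.\ \cite[Theorem~2.1.19]{hovey1999models}, taking $\mathcal{W}$ to be the quasi-isomorphisms and $I$, $J$ the proposed generating cofibrations and generating trivial cofibrations; the output will then automatically be cofibrantly generated, with cofibrations $=\mathrm{cof}(I)$, fibrations $=\mathrm{inj}(J)$, and trivial fibrations $=\mathrm{inj}(I)$. First I would dispatch the formal hypotheses: $Ch(R)$ is bicomplete; $\mathcal{W}$ satisfies two-out-of-three and is closed under retracts, straight from functoriality of each $H_n$ on $Ch(R)$ together with the facts that $H_n(gf)=H_n(g)H_n(f)$ and that a retract of an isomorphism of $R$-modules is an isomorphism; and the domains $S^{n-1}$ and $0$ of the maps in $I$ and $J$ are finitely presentable, since $\Hom(S^{n-1},-)=Z_{n-1}(-)$ is computed degreewise as a kernel and so preserves filtered colimits, so the small object argument is available for both $I$ and $J$.

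The real content is in the three remaining clauses. Using the adjunctions $S^{n}\dashv Z_{n}$, $D^{n}\dashv Ev_{n}$ recalled in \cref{adjunction:remark} (valid on all of $Ch(R)$, as noted there), I would first compute the lifting classes by a diagram chase: a map $f\colon X\to Y$ is in $\mathrm{inj}(J)$ exactly when it is a degreewise epimorphism, and in $\mathrm{inj}(I)$ exactly when for each $n$, every cycle $z\in Z_{n-1}(X)$ and every $y\in Y_{n}$ with $dy=f(z)$ lift to some $x\in X_{n}$ with $dx=z$ and $f(x)=y$. Specializing to $z=0$ this says $f$ is surjective on cycles, hence on homology, and the general case says $f$ is injective on homology, so $\mathrm{inj}(I)\subseteq\mathcal{W}$; also, each generator $0\to D^{n}$ of $J$ factors as $0\to S^{n-1}\hookrightarrow D^{n}$ with $S^{n-1}\hookrightarrow D^{n}\in I$ and $0\to S^{n-1}$ a pushout of the generator $S^{n-2}\hookrightarrow D^{n-1}$ along $S^{n-2}\to 0$, so $J\subseteq\mathrm{cof}(I)$ and thus $\mathrm{inj}(I)\subseteq\mathrm{inj}(J)$; combining, $\mathrm{inj}(I)\subseteq\mathcal{W}\cap\mathrm{inj}(J)$. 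For the acyclicity of $J$-cell complexes: a pushout of $0\to D^{n}$ along any map is a summand inclusion $X\to X\oplus D^{n}$, a quasi-isomorphism because $D^{n}$ is acyclic, and since filtered colimits of $R$-modules are exact and split monomorphisms compose transfinitely and are closed under retracts, $\mathrm{cell}(J)\subseteq\mathcal{W}$, whence $\mathrm{cell}(J)\subseteq\mathcal{W}\cap\mathrm{cof}(I)$. Finally, for the comparison inclusion I would prove $\mathcal{W}\cap\mathrm{inj}(J)\subseteq\mathrm{inj}(I)$: if $f$ is a degreewise epimorphism and a quasi-isomorphism then $K\coloneqq\ker f$ is acyclic, and given $z,y$ as above one lifts $y$ to $x_{0}\in X_{n}$, observes $dx_{0}-z\in Z_{n-1}(K)$, writes $dx_{0}-z=dk$ with $k\in K_{n}$ using acyclicity, and takes $x=x_{0}-k$. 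The recognition theorem then delivers the model structure, with fibrations the degreewise epimorphisms and trivial fibrations the degreewise-epi quasi-isomorphisms.

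What is left is to identify $\mathrm{cof}(I)$ with the degreewise split monomorphisms having cofibrant cokernel, and this is the step I expect to be the main obstacle, being exactly where the unbounded case departs from the bounded-below case of \cref{projective-model:bounded-chains}. One direction is a closure argument: each generator $S^{n-1}\hookrightarrow D^{n}$ is a degreewise split mono with cokernel $S^{n}$, which is cofibrant (every cycle in the codomain of a trivial fibration lifts to a cycle), and the class of degreewise split monos with cofibrant cokernel is closed under pushout, transfinite composition and retract — using that cofibrant objects are closed under coproducts, cobase change along cofibrations, sequential colimits and retracts — so this class contains $\mathrm{cof}(I)$. For the converse one must show that a $\mathrm{cof}(I)$-map $i\colon A\to B$ is a monomorphism with $B/A$ cofibrant and splits in each degree; I would argue that $i$ is a monomorphism and that $B/A$, obtained as the pushout of $i$ along $A\to 0$, is cofibrant (via a cofibrant-replacement and retract argument), and then split $i_{n}$ using that cofibrant complexes are degreewise projective. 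Once the cofibrations are pinned down, the remaining assertions are formal: the model structure is cofibrantly generated with finitely presentable domains on the locally presentable category $Ch(R)$, hence combinatorial; right proper since every object is fibrant; and left proper since cofibrations are in particular degreewise split monomorphisms, so pushing one out along a quasi-isomorphism again yields a quasi-isomorphism by the induced short exact sequences of complexes and their long exact sequences in homology.
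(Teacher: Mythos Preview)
The paper does not prove this theorem; it is quoted from \cite{hovey1999models} as input for the discussion of the associated language, so there is no paper-proof to compare against.

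Your outline via the recognition theorem is the standard route and the verification of the six hypotheses is correctly sketched. The genuine gap is in the last step, identifying $\mathrm{cof}(I)$ with the class $\mathcal{D}$ of degreewise split monomorphisms with cofibrant cokernel. You argue the inclusion $\mathrm{cof}(I)\subseteq\mathcal{D}$ \emph{twice}: once by a saturation/closure argument, and then again under the heading ``for the converse'' by taking $i\in\mathrm{cof}(I)$, observing that $B/A$ is cofibrant (pushout of $i$ along $A\to 0$), that cofibrant complexes are degreewise projective, and hence that $i$ splits degreewise. Both paragraphs establish the same direction. The actual converse, $\mathcal{D}\subseteq\mathrm{cof}(I)$, is never addressed, and it is the substantive half: given a degreewise split monomorphism $i\colon A\hookrightarrow B$ with $C=B/A$ cofibrant and a trivial fibration $p\colon X\to Y$, one must build a diagonal filler. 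Writing $B_n=A_n\oplus C_n$ reduces this to finding a graded map $g\colon C\to X$ over $Y$ satisfying a twisted chain-map condition; an initial lift $g_0$ (using that each $C_n$ is projective) misses that condition by a chain map $\psi\colon C\to \ker(p)[-1]$, and one then needs $\psi$ to be null-homotopic, which uses that $C$ is cofibrant and $\ker(p)$ is acyclic. This is precisely Hovey's Lemma~2.3.9 and is where the unbounded case earns its difficulty; without it you have only one inclusion, not the equality the theorem asserts.

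A secondary issue: your closure argument for $\mathrm{cof}(I)\subseteq\mathcal{D}$ invokes ``cofibrant objects are closed under \dots\ sequential colimits'' to get closure of $\mathcal{D}$ under transfinite composition, but that closure holds along \emph{cofibrations}, which you do not yet know the maps in $\mathcal{D}$ to be---so the argument is circular as written. Your second (direct) argument for this inclusion is fine and makes the closure argument unnecessary.
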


Unlike the case for bounded chains, the set $I$ of generating cofibrations, is not well-founded in the sense of \cref{cofibrations:order}. However, we can obtain a new generating set of cofibrations following \cref{combinatorial:well-founded}. We consider the new set $I'\coloneqq I \cup \{0 \to S^{n} | n\in \Z\}$. Note that since $0\to S^n$ is already a cofibration, we are not altering the model structure. The resulting theory is similar to the bounded case, we now must have the following association:
% https://q.uiver.app/#q=WzAsNixbMCwwLCIwXFx0byBTXm4iXSxbMiwwLCIgXFx2ZGFzaCBcXHRleHRzZntcXHRleHR1cHtDfX1fbiBcXCxcXHR5cGUgXFxcXCBcXHZkYXNoIDA6IFxcdGV4dHNme1xcdGV4dHVwe0N9fV9uIl0sWzEsMCwiXFxtYXBzdG8iXSxbMCwxLCJpX246U157bi0xfVxcdG8gRF5uIl0sWzIsMSwiIHg6XFx0ZXh0c2Z7XFx0ZXh0dXB7Q319X24gXFx2ZGFzaCBkX254OlxcdGV4dHNme1xcdGV4dHVwe0N9fV97bi0xfSBcXFxcICB4OlxcdGV4dHNme1xcdGV4dHVwe0N9fV97bi0xfSgwKSBcXHZkYXNoIFxcdGV4dHNme1xcdGV4dHVwe0N9fV9uKHgpIFxcLCBcXHR5cGUiXSxbMSwxLCJcXG1hcHN0byJdXQ==
\[\begin{tikzcd}[sep=small]
	{0\to S^n} & \mapsto &  \vdash \textsf{\textup{Z}}_n \,\type \\
	{i_n:S^{n-1}\to D^n} & \mapsto &  x:\textsf{\textup{Z}}_{n-1} \vdash \textsf{\textup{C}}_n(x) \, \type
      \end{tikzcd}\]
    for $n\in \Z$.

    Again, we need to add some non-type axioms. For example, we need each $Z_n$ to contain an element $0$, and $C_n(0) = Z_n$, then each $C_n$ has an abelian group structure as in the case of bounded complexes.

\subsection{Topological spaces}

Here we recall the Quillen model structure on the category of topological spaces $\topo$ \cite{quillen2006axiomatic}. Recall that a map $f:X \to Y \in \topo$ is a \emph{weak homotopy equivalence} if for all $x \in X$ and $n \geq 1$ the induced map $f_*: \pi_n(X,x) \to \pi_n(Y,f(x))$ is an isomorphism of groups and for $n=0$ is a bijection. Additionally, the map $f$ is a \emph{Serre fibration} if for any $CW$-complex $W$ the following square has a diagonal filler:
% https://q.uiver.app/#q=WzAsNCxbMCwwLCJBXFx0aW1lc1xcezBcXH0iXSxbMCwxLCJBXFx0aW1lcyBbMCwxXSJdLFsxLDAsIlgiXSxbMSwxLCJZIl0sWzIsMywiZiJdLFswLDFdLFswLDJdLFsxLDNdLFsxLDIsIiIsMSx7InN0eWxlIjp7ImJvZHkiOnsibmFtZSI6ImRhc2hlZCJ9fX1dXQ==
\[\begin{tikzcd}
	{A\times\{0\}} & X \\
	{A\times [0,1]} & Y.
	\arrow[from=1-1, to=1-2]
	\arrow[from=1-1, to=2-1]
	\arrow["f", from=1-2, to=2-2]
	\arrow[dashed, from=2-1, to=1-2]
	\arrow[from=2-1, to=2-2]
      \end{tikzcd}\]

    \begin{theorem}
      The category $\topo$ has a model category structure such that:
      \begin{enumerate}
      \item Weak equivalences are the weak homotopy equivalences.
      \item Fibrations are the Serre fibrations.
      \item Cofibrations are the maps with the left lifting property against trivial fibrations.
      \end{enumerate}
      Moreover, this model structure is cofibrantly generated. The generating cofibrations is the set of boundary inclusions $\{ S^{n-1} \to D^{n} | n\in \N \}$. The set $\{ D^n \to D^n\times [0,1]| n\in \N  \}$ generates trivial cofibrations.
    \end{theorem}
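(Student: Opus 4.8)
The plan is to apply the standard recognition theorem for cofibrantly generated model categories (in the form of, e.g., \cite{hovey1999models}~Theorem~2.1.19, or Kan's recognition principle) to the sets $I=\{S^{n-1}\hookrightarrow D^n\mid n\in\N\}$ and $J=\{D^n\hookrightarrow D^n\times[0,1]\mid n\in\N\}$, with $W$ the class of weak homotopy equivalences. Since $\topo$ is complete and cocomplete and $W$ is closed under retracts and satisfies $2$-out-of-$3$ (both immediate from functoriality of the $\pi_n$), it remains to verify: (a) the domains of $I$ and of $J$ are small relative to the respective cell complexes; (b) $J\text{-cell}\subseteq W\cap I\text{-cof}$; (c) $I\text{-inj}\subseteq W\cap J\text{-inj}$; and (d) $W\cap J\text{-inj}\subseteq I\text{-inj}$ --- the last of which also serves as the ``either/or'' clause of the recognition theorem. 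Granting these, one obtains a cofibrantly generated model structure with generating cofibrations $I$, generating trivial cofibrations $J$, and weak equivalences $W$; its fibrations are exactly $J\text{-inj}$, which unwinds to the Serre fibrations (every CW-inclusion $A\times\{0\}\hookrightarrow A\times[0,1]$ being a relative $J$-cell complex), and its cofibrations are $I\text{-cof}$, i.e. the maps with the left lifting property against trivial fibrations.

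For (a) one uses that each sphere $S^{n-1}$ and disk $D^n$ is compact, hence $\omega$-small relative to the transfinite composites of pushouts of closed inclusions that make up $I$- and $J$-cell complexes; this is the usual compactness argument. The key elementary input for (b) and (c) is the observation that, under a homeomorphism $D^n\times[0,1]\cong D^{n+1}$ carrying $D^n\times\{0\}$ onto a subset of $\partial D^{n+1}=S^n$, the inclusion $j_n:D^n\hookrightarrow D^n\times[0,1]$ is a retract, in the arrow category, of the boundary inclusion $i_{n+1}:S^n\hookrightarrow D^{n+1}$; here the needed retraction $S^n\to D^n$ is built piecewise on $D^n\times\{0\}$, $D^n\times\{1\}$, and $S^{n-1}\times[0,1]$. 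Consequently every $j\in J$ lies in $I\text{-cof}$, and since $I\text{-cof}$ is closed under pushout and transfinite composition, $J\text{-cell}\subseteq I\text{-cof}$. Moreover each $j_n$ is a strong deformation retract inclusion, a property preserved by pushout (the retraction and contracting homotopy extend by the identity over the glued subspace, compatibly because the homotopy is taken relative to $D^n\times\{0\}$) and by transfinite composition, so every relative $J$-cell complex is a strong deformation retract inclusion, in particular a weak homotopy equivalence. This gives (b).

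For (c), the same retract observation shows that any map with the right lifting property against $I$ also has it against each $j_n$, hence is a Serre fibration; and such a $p$ has weakly contractible fibers, since a map $S^m\to p^{-1}(y)$ becomes constant after composing with $p$ and therefore, by lifting against $S^m\hookrightarrow D^{m+1}$ (and using $\emptyset\to D^0$ for nonemptiness), extends over $D^{m+1}$ inside the fiber, killing its homotopy class. The long exact sequence of the Serre fibration $p$ then shows $p\in W$. Finally, (d) asserts that a trivial Serre fibration $p$ has the right lifting property against each $S^{n-1}\hookrightarrow D^n$: this is the classical obstruction-theoretic step --- contractibility of $D^n$ lets one homotope the lifting problem to one over a point, and since the fiber of $p$ is weakly contractible (again by the long exact sequence, as $p$ is both a weak equivalence and a Serre fibration) the obstruction to the extension, lying in $\pi_{n-1}$ of the fiber, vanishes. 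I expect this last step --- ``trivial Serre fibration $\Rightarrow$ $I$-injective'' --- to be the only part requiring genuine homotopy-theoretic work; everything else is closure-property bookkeeping together with the compactness and deformation-retract arguments above. The whole argument is, of course, due to Quillen \cite{quillen2006axiomatic}.
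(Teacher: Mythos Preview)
The paper does not prove this theorem; it merely states it as a classical result and cites Quillen \cite{quillen2006axiomatic}. Your sketch is correct and is the standard modern argument via the recognition theorem for cofibrantly generated model categories (as in Hovey \cite{hovey1999models}, \S2.4), so there is nothing to compare.

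One small remark on your step (b): the claim that $j_n$ is a retract of $i_{n+1}$ in the arrow category is correct but slightly delicate --- the section/retraction on the codomain side cannot be taken to be the identity of $D^{n+1}\cong D^n\times[0,1]$, since the required retraction $r:S^n\to D^n$ would then have to be the restriction of the identity. One must instead choose a non-trivial self-map $s$ of $D^n\times[0,1]$ crushing $\partial(D^n\times[0,1])$ down to $D^n\times\{0\}\cup S^{n-1}\times[0,1]$ while admitting a section $b$ fixing that subspace; such $s$ and $b$ do exist. An alternative (and perhaps more transparent) route to $J\subseteq I\text{-cof}$ is to observe directly that $D^n\times\{0\}\hookrightarrow D^n\times[0,1]$ is a relative CW inclusion (using the product CW structure), hence a relative $I$-cell complex.
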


    We can immediately write some of the relevant type axiom of the resulting theory:

    \begin{itemize}
    \item $\vdash 0\textsf{-CW}  \, \type$.
    \item $x,y: 0\textsf{-CW} \vdash 1\textsf{-CW}(x,y)  \, \type$.
    \item $x : 0\textsf{-CW}, \gamma: 1\textsf{-CW}(x,x) \vdash 2\textsf{-CW}(x,\gamma)  \, \type$.
    \item $\vdots$
    \end{itemize}

    Note that the language associated to the model structure allows us to express properties of topological spaces without relying on a specific set of axioms. However, this presents a limitation coming from the fact that we do not have an equality type. It is a classic result that there is no finitary presentation of a topological space. But in our setting, when $X$ is a CW-complex \ie it is obtained as an iterated pushout of cells, then a continuous map $D^n \to X$ can be written in the language above.
    
    \begin{example}
      We cannot write the formula $$\exists x: 0\textsf{-CW}\, \forall y: 0\textsf{-CW},\, x=y. $$
   The only possibility is to write $$\forall x,y:0\textsf{-CW}\, \exists \alpha:1\textsf{-CW}(x,y),\top$$ which simply says that a space is path-connected. Moreover, we can not say that two paths $\alpha,\beta:1\textsf{-CW}(x,x)$ are homotopic in the usual sense, only that there exists $\sigma:3\textsf{-CW}$ connecting the two loops.
    \end{example}

    \subsection{Kan complexes and quasi-categories}

    In this section, we analyze two very well-known model structures on the category of simplicial sets $\sset$; the Kan--Quillen and the Joyal model structures. One interesting feature is that we obtain the same theory for both models, but under the light of \cref{invariance-theorems} meaningful statements are delimited by the fibrant objects. In the first model we are interested in Kan complexes, while in the second model in the quasi-categories. The first model appears in \cite{quillen2006axiomatic} and the second in \cite{joyal2008volii}. These are the first references one can find, but the literature is ample for both models.
    
    Recall that a map $f:X \to Y$ between simplicial sets is a \emph{Kan fibration} if it has the right lifting property for all horn inclusions, \ie the solid diagram below a diagonal filler
    % https://q.uiver.app/#q=WzAsNCxbMCwwLCJcXExhbWJkYV5rW25dIl0sWzAsMSwiXFxEZWx0YVtuXSJdLFsxLDAsIlgiXSxbMSwxLCJZIl0sWzIsMywiZiJdLFswLDJdLFswLDEsIiIsMix7InN0eWxlIjp7InRhaWwiOnsibmFtZSI6Imhvb2siLCJzaWRlIjoidG9wIn19fV0sWzEsM10sWzEsMiwiIiwxLHsic3R5bGUiOnsiYm9keSI6eyJuYW1lIjoiZGFzaGVkIn19fV1d
\[\begin{tikzcd}
	{\Lambda^k[n]} & X \\
	{\Delta[n]} & Y
	\arrow[from=1-1, to=1-2]
	\arrow[hook, from=1-1, to=2-1]
	\arrow["f", from=1-2, to=2-2]
	\arrow[dashed, from=2-1, to=1-2]
	\arrow[from=2-1, to=2-2]
      \end{tikzcd}\]
    for all $0 \leq k \leq n \in \N$. The simplicial set $X$ is a \emph{Kan complex} if the unique map to the terminal presheaf is a Kan fibration. This is the result from \cite{quillen2006axiomatic}:
    \begin{theorem}
      The category of simplicial sets $\sset$ carries a model structure in which:
      \begin{enumerate}
      \item Weak equivalences are maps $f:X \to Y$ whose geometric realization $|f|: |X| \to |Y|$ is a weak homotopy equivalence in the category of topological spaces $\topo$. These are called Kan equivalences.
      \item Fibrations are the Kan fibrations.
      \item Cofibrations are the monomorphisms.
      \end{enumerate}
      The class of cofibrations is generated by $I \coloneqq \{ \partial\Delta[n] \hookrightarrow \Delta[n]| n\in \N \}$ and trivial cofibrations are generated by $J \coloneqq \{ \Lambda^k[n] \to \Delta[n]| n \in \N \text{ and } 0\leq k \leq n \}.$
    \end{theorem}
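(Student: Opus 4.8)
This is Quillen's theorem \cite{quillen2006axiomatic}; the route I would follow is the standard one, in a streamlined form (see also \cite{hovey1999models}). I would declare the cofibrations to be the monomorphisms and the fibrations to be the Kan fibrations, i.e.\ the maps with the right lifting property against $J$, and keep the weak equivalences as stated. The first task is bookkeeping: check that the monomorphisms coincide with the saturation of $I$ by means of the skeletal filtration, so that any mono $A \hookrightarrow B$ is, dimension by dimension, a transfinite composite of pushouts of coproducts of boundary inclusions $\partial\Delta[n] \hookrightarrow \Delta[n]$, one for each nondegenerate simplex of $B$ not lying in $A$. In particular every trivial fibration (every $I$-injective) is then automatically a Kan fibration, since each horn inclusion $\Lambda^k[n]\to\Delta[n]$ is itself monic. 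Bicompleteness (presheaves on $\Delta$), two-out-of-three, and closure under retracts for the weak equivalences are immediate.

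Next I would produce the factorizations. Since $\partial\Delta[n]$, $\Lambda^k[n]$, and $\Delta[n]$ are finite, $I$ and $J$ permit the small object argument, giving functorial factorizations of any map as an $I$-cell complex (hence a monomorphism) followed by a trivial fibration, and as a $J$-cell complex followed by a Kan fibration. Thus the factorization axiom reduces to showing that $J$-cell complexes are trivial cofibrations, and one half of the lifting axiom — cofibrations lift against trivial fibrations — follows from the definitions together with the retract argument, every monomorphism being a retract of an $I$-cell complex.

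The substantive content is three statements. (1) Every trivial fibration $p$ is a weak equivalence: having the right lifting property against all monos, it admits a section $s$ and a fibrewise homotopy $\mathrm{id}\simeq sp$, hence is a simplicial homotopy equivalence, and applying geometric realization — which turns simplicial homotopies into topological ones — shows $|p|$ is a homotopy equivalence. (2) Every anodyne map ($J$-cell complex, or retract thereof) is a weak equivalence: by the Gabriel--Zisman calculus anodyne maps are closed under pushout-product with monos, each generating horn inclusion is a simplicial strong deformation retract and in particular a weak equivalence, and, using that $|-|$ preserves the colimits in question and sends pushouts along cofibrations in $\topo$ to homotopy pushouts, monomorphic weak equivalences are stable under the transfinite compositions and pushouts occurring in the small object argument (and under retracts). (3) A Kan fibration that is a weak equivalence is a trivial fibration. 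Granting (3), a trivial cofibration $i$ factors as $qj$ with $j$ anodyne and $q$ a Kan fibration; $j$ is a weak equivalence by (2), hence so is $q$ by two-out-of-three, hence $q$ is a trivial fibration by (3), so the mono $i$ lifts against $q$ and is a retract of $j$ — therefore anodyne. This identifies the trivial cofibrations with the $J$-cofibrations and completes the remaining axioms, as well as the claim that $I$ and $J$ are generating sets.

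The one genuinely hard input is (3), and I expect it to be the main obstacle; everything above is formal once it is in hand. My plan for (3) is the usual one: first reduce to the case of a Kan fibration between Kan complexes by replacing the base with its $\mathrm{Ex}^\infty$ (using that $X\to \mathrm{Ex}^\infty X$ is an anodyne weak equivalence and that $\mathrm{Ex}^\infty$ preserves finite limits and Kan fibrations) and pulling back; then appeal to the theory of minimal fibrations — every Kan fibration fibrewise strong-deformation-retracts onto a minimal fibration, and a minimal fibration is, over its base, a fibre bundle with Kan-complex fibre (Barratt--Gugenheim--Moore), so its realization is a Serre fibration with fibre the realization of the simplicial fibre. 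A minimal fibration that is a weak equivalence therefore has weakly contractible fibres, a fibre bundle of weakly contractible Kan complexes is a trivial fibration, and transporting back along the deformation retract yields (3). The technical heart, and the step I would expect to fight with, is exactly this minimal-fibration machinery together with the fact that Kan fibrations realize to Serre fibrations — precisely where \cite{quillen2006axiomatic} does the real work; the rest is organizing it.
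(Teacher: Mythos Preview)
Your outline is the standard, correct route to Quillen's theorem, and the identification of step (3) as the genuine content --- handled via minimal fibrations and the fact that Kan fibrations realize to Serre fibrations --- is exactly right. However, the paper does not prove this statement at all: it is quoted as a classical result and attributed to \cite{quillen2006axiomatic}, with no argument given; the theorem is used only as input for constructing the associated language, so there is no proof in the paper to compare your proposal against.
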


    Similarly, a map $f:X \to Y$ between simplicial sets is an \emph{inner Kan fibration} if it has the right lifting property for all inner horn inclusions, \ie the solid diagram below a diagonal filler
    % https://q.uiver.app/#q=WzAsNCxbMCwwLCJcXExhbWJkYV5rW25dIl0sWzAsMSwiXFxEZWx0YVtuXSJdLFsxLDAsIlgiXSxbMSwxLCJZIl0sWzIsMywiZiJdLFswLDJdLFswLDEsIiIsMix7InN0eWxlIjp7InRhaWwiOnsibmFtZSI6Imhvb2siLCJzaWRlIjoidG9wIn19fV0sWzEsM10sWzEsMiwiIiwxLHsic3R5bGUiOnsiYm9keSI6eyJuYW1lIjoiZGFzaGVkIn19fV1d
\[\begin{tikzcd}
	{\Lambda^k[n]} & X \\
	{\Delta[n]} & Y
	\arrow[from=1-1, to=1-2]
	\arrow[hook, from=1-1, to=2-1]
	\arrow["f", from=1-2, to=2-2]
	\arrow[dashed, from=2-1, to=1-2]
	\arrow[from=2-1, to=2-2]
      \end{tikzcd}\]
    for all $0 < k < n \in \N$. The simplicial set $X$ is a \emph{quasi-category} if the unique map to the terminal presheaf is an inner Kan fibration. This is the result from \cite{joyal2008volii}:
    \begin{theorem}
      The category of simplicial sets $\sset$ carries a model structure in which:
      \begin{enumerate}
      \item Weak equivalences are the weak categorical equivalences.
      \item Fibrations are the inner Kan fibrations.
      \item Cofibrations are the monomorphisms.
      \end{enumerate}
      The class of cofibrations is generated by $I \coloneqq \{ \partial\Delta[n] \hookrightarrow \Delta[n]| n\in \N \}$, the set of boundary inclusions.
    \end{theorem}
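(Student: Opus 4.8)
This is Joyal's theorem (see also Lurie, \emph{Higher Topos Theory} §2.2.5, and Cisinski, \emph{Higher Categories and Homotopical Algebra}, Ch.~3), so the plan is to recall one of the standard constructions rather than to invent a new one. I would begin with the cofibrations, which is the easy half. By the skeletal filtration, every monomorphism $A \hookrightarrow B$ of simplicial sets is a transfinite composite of pushouts of the boundary inclusions $\partial\Delta[n] \hookrightarrow \Delta[n]$ --- one attaches the nondegenerate simplices of $B$ not already in $A$, dimension by dimension. Hence the class of monomorphisms is exactly the saturation of $I$, which settles clause (3) together with the final sentence of the statement; all that remains is to exhibit the model structure having these as cofibrations.

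For that I would invoke the recognition theorem of Jeff Smith (equivalently, Cisinski's existence theorem for model structures on a presheaf topos) applied with cofibrations the monomorphisms and weak equivalences the \emph{weak categorical equivalences}: those maps $f\colon A \to B$ such that for every quasi-category $X$ the induced functor $\Ho(\Fun(B,X)) \to \Ho(\Fun(A,X))$ is an equivalence of categories, where $\Fun(-,X)$ denotes the internal hom (a quasi-category when $X$ is) and $\Ho$ the homotopy category. The hypotheses to verify are that this class satisfies $2$-out-of-$3$, is closed under filtered colimits and is accessible; that every trivial Kan fibration (map with the right lifting property against all monomorphisms) is a weak categorical equivalence; and that the class of maps which are simultaneously monomorphisms and weak categorical equivalences is closed under pushout and transfinite composition. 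Granting these, the recognition theorem produces a cofibrantly generated model structure with the prescribed cofibrations and weak equivalences, generated on the cofibration side by $I$.

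It then remains to identify the fibrant objects and the fibrations. One shows that an object is fibrant iff it has the right lifting property against the inner horn inclusions $\Lambda^k[n] \hookrightarrow \Delta[n]$ for $0<k<n$ \emph{and} against the vertex inclusion $\{0\} \hookrightarrow N(\mathcal{J})$ into the nerve of the walking isomorphism $\mathcal{J}$; by Joyal's extension theorem (invertible $1$-simplices in a quasi-category extend along $N(\mathcal{J})$) the second condition is automatic for quasi-categories, so the fibrant objects are precisely the quasi-categories, and conversely any fibrant object has the right lifting property against inner horns (these being trivial cofibrations) and is therefore a quasi-category. Since a fibration has the right lifting property against all trivial cofibrations, in particular against the inner horns, it is an inner Kan fibration; between fibrant objects these are exactly the inner fibrations that are also isofibrations, which is what clause (2) records --- as is customary for the Joyal structure, the description of the fibrations should be read (and is only literally correct) for fibrations between fibrant objects. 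The functorial cylinder underlying this whole picture is $X \mapsto X \times N(\mathcal{J})$, with $X \amalg X \hookrightarrow X \times N(\mathcal{J}) \to X$, the last map being a weak categorical equivalence because $N(\mathcal{J}) \to \Delta[0]$ is.

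The genuine difficulty is entirely combinatorial and concentrated in the two middle verifications above: Joyal's lemma that the inner anodyne maps (the saturation of the inner horn inclusions) are stable under pushout--product with arbitrary monomorphisms, and the assertion that a fibration of this model structure which is a weak categorical equivalence is a trivial Kan fibration. Proving the latter --- the ``hard half'' of the lifting and factorisation axioms --- is the technically demanding core of Joyal's original argument, and is exactly what one is appealing to when citing this theorem; an alternative is to transport it along the Quillen equivalence with the Bergner model structure on simplicial categories via the homotopy coherent nerve, but that merely relocates the combinatorial work.
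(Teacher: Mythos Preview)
The paper does not prove this theorem at all: it merely records it as ``the result from \cite{joyal2008volii}'' and moves on, since the statement is only needed as background for discussing the associated language. Your sketch is therefore not to be compared with anything in the paper; it is a reasonable summary of the standard route via Smith's recognition theorem (or equivalently Cisinski's machinery), and you correctly flag the one genuine inaccuracy in the statement as printed, namely that the fibrations of the Joyal model structure are \emph{not} literally the inner Kan fibrations in general --- they are so only between fibrant objects, the general fibrations being those inner fibrations that additionally lift equivalences. Since the paper only uses this model structure to read off the generating cofibrations (which coincide with those of Kan--Quillen) and the class of fibrant objects, the imprecision in clause (2) is harmless for its purposes, but your caveat is well placed.
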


    Notice that both model structures have the same class of generating cofibrations. Hence, we expect that they have the same theories. We get a type for each cofibration in $I$. The first elements in this list of types are:
    \begin{itemize}
    \item $\vdash 0\simp \,\type$.
    \item $\sigma_0,\sigma_1:0\simp \vdash 1\simp(\sigma_0,\sigma_1) \, \type$.
    \item $\sigma_0,\sigma_1,\sigma_2:0\simp, \quad \sigma_{01}:1\simp(\sigma_0,\sigma_1), \quad\sigma_{12}:1\simp(\sigma_1,\sigma_2), \quad \sigma_{02}:1\simp(\sigma_0,\sigma_2) \vdash 2\simp(\sigma_0,\sigma_1,\sigma_2,\sigma_{01},\sigma_{12},\sigma_{02}) \, \type$.
    \item $\vdots$
    \end{itemize}

    The picture we should have in mind on the dependency of types is the usual one about simplices. A 1-simplex depend on two 0-simplicies, a 2-simplex consist of three 0-simplicies and three 1-simplicies connecting them, and so forth.

    One can see that the faces of an $n$-simplex are obtained via the dependencies, or context in which is defined. However, we can still adopt the usual notation for faces. Specifically, for each $n\in \N$ one has the faces $d_i(\sigma_{0123\dots (i-1)i(i+1)\dots n})\coloneqq \sigma_{0123\dots (i-1)(i+1)\dots n}$ is the $(n-1)$-simplex ``opposite'' to the $i$-th vertex of $\sigma_{012\dots n} $. This simplex is already defined, and it is used in the construction of $\sigma_{012\dots n}$. We emphasize that this is not part of the theory, but just a convenient and familiar shortcut.

    The \emph{degeneracy} operator is part of the theory and needs to be introduced: \[ \sigma_{0123\dots (i-1)i(i+1)\dots n}:n\simp \vdash s_i(\sigma_{0123\dots (i-1)i(i+1)\dots n}):(n+1)\simp\]
    where $s_i(\sigma_{0123\dots (i-1)i(i+1)\dots n})\coloneqq \sigma_{0123\dots (i-1)\hat{i}(i+1)\dots n}$ is the $(n+1)$-simplex that contains $\sigma_{0123\dots (i-1)i(i+1)\dots n}$ as its $i$-th and $(i+1)$-faces. We have one of such operations for $0\leq i \leq n$. The way we have introduced this operation is not completely correct as we are missing the dependencies for $n\simp$ and $(n+1)\simp$ and the context, nevertheless we can infer them. For example: \[x,y:0\simp, \, f:1\simp(x,y)\vdash s_1(f):2\simp(x,y,y,f,s_0(y),f)\]
    where $s_0(y)$ is the degeneracy of $y$ or the ``identity of $y$'' and is constructed previously.

    We also expect the simplicial identities to be satisfied. However, we do not need to postulate all of them as axioms of the theory since some of them are given via dependencies or by the typing of the operations. The only equation we postulate is $s_is_j=s_{j+1}s_i$ for $i\leq j$. On the one hand, the usual equation $d_id_j=d_{j-1}d_i$ for $i<j$ only involves faces, therefore everything is encoded in the dependency. On the other hand, the equation
    \[
      d_is_j=
      \begin{cases}
        s_{j-1}d_i, & i<j \\
        Id, & i=j,j+1 \\
        s_jd_{i-1}, & i>j+1
      \end{cases}
    \]
    is valid from the definition of degeneracies and dependency of the faces.

    We should note again that there is no visible difference in the language of the Joyal model structure and the language of the Kan-Quillen model structure as these have the same cofibrations. The only difference is that the language of the Kan-Quillen model structure is only meant to be applied to Kan complexes, while the language of the Joyal model structure can be applied to quasi-categories.

    \begin{example} \label{contractible:example}
    A Kan complex $X$ is contractible if it is weakly homotopy equivalent to $\pointtyp$. This is just to say that for any $n\geq 0$ we can find a lift
    % https://q.uiver.app/#q=WzAsNCxbMCwwLCJcXHBhcnRpYWxcXERlbHRhXm4iXSxbMCwxLCJcXERlbHRhXm4iXSxbMSwwLCJYIl0sWzEsMSwiXFxwb2ludHR5cCJdLFswLDJdLFswLDFdLFsxLDNdLFsyLDNdLFsxLDIsIiIsMSx7InN0eWxlIjp7ImJvZHkiOnsibmFtZSI6ImRhc2hlZCJ9fX1dXQ==
\[\begin{tikzcd}
	{\partial\Delta^n} & X \\
	{\Delta^n} & \pointtyp
	\arrow[from=1-1, to=1-2]
	\arrow[from=1-1, to=2-1]
	\arrow[from=1-2, to=2-2]
	\arrow[dashed, from=2-1, to=1-2]
	\arrow[from=2-1, to=2-2]
      \end{tikzcd}\]
    which expresses the fact that the unique map $X \to \pointtyp$ is a weak homotopy equivalence. Note that $X$ must satisfy an infinite number of conditions:
    \begin{itemize}
    \item For $n=0$ this says: $\exists \sigma_0: 0\simp, $
    \item For $n=1$ this says: $\forall \sigma_0,\sigma_1:0\simp, \, \exists \sigma_{01}: 1\simp(\sigma_0,\sigma_1),$
    \item For $n=2$ this says:
      \begin{multline*}
        \forall \sigma_0,\sigma_1:0\simp \, \sigma_{01}: 1\simp(\sigma_0,\sigma_1), \sigma_{12}: 1\simp(\sigma_1,\sigma_2), \\ \sigma_{02}: 1\simp(\sigma_0,\sigma_2), \exists \sigma_{012}:2\simp(\sigma_0,\sigma_1,\sigma_2,\sigma_{01},\sigma_{12},\sigma_{02}).
      \end{multline*}
    \end{itemize}
    One continues unpacking the conditions and takes the infinite conjunction of the formulas.

    Alternatively, we can note that the domain of a trivial cofibration $i_n:\partial \Delta^n \cofibration \Delta^n$ give us the context, or hypotheses, of the statement. In this case, the codomain gives us the type where the conclusion holds. If we accept this, let us write,  $t\in \Lb^\sset(\partial \Delta^n)$ for a term (formula) which expresses a property in the context $ \partial\Delta^n$, similarly $t'\in \Lb^\sset(\Delta^n)$ for a formula in the context $\Delta^n$. With this convention, we do not have to use the theory explicitly. When we apply the quantifiers, universal or existential, we move these formulas to $\Lb^\sset(\emptyset)$ and ask whether a fibrant object satisfies the resulting formula. For $ \top\in \Lb^\sset(\Delta^n)$ then for $i_n: \partial \Delta^n \cofibration \Delta^n$ and $j_n: \emptyset \to \partial \Delta^n$ we get maps \[\exists_{i_n}: \Lb^\sset(\Delta^n)  \to \Lb^\sset(\partial \Delta^n) \text{ and }\forall_{j_n}: \Lb^\sset(\partial \Delta^n) \to \Lb^\sset(\emptyset),\]
    and thus the formula $\forall_{j_n}\exists_{i_n}\top: \Lb^\sset(\emptyset)$ would say that a Kan complex satisfies the corresponding lifting problem. For a Kan complex to be contractible, it needs to satisfy formulas for all $n \in \N$. Therefore,
    \[\iscontr(X) \coloneqq (X \vdash \bigwedge_{n\in \N}\forall_{j_n}\exists_{i_n}\top).\]
  \end{example}

  We are now convinced that contractibility can be written in the language we just described. \Cref{contractible:example} indicates that we might not need to get an explicit syntax from the generating set of cofibrations. Instead, we might just quantify over the required cofibrations. The main reason this is preferable over explicitly defining the syntax is that in general such syntax is complicated to write, see for example \cref{segal-spaces:example}. The previous example shows that we might prefer to choose simplifications that make our sentences easier to read. This is specially true for contexts like the ones covered in the following section.
    
\subsection{Reedy languages} \label{reedy-languages:sec}

The purpose of this subsection is to describe the language for the category $\catM^{K^\op}$, where $K$ is a Reedy category and $\catM$ is a model category whose language we know. This encompasses some of the previous examples and opens the door to further applications.

Recall that if $\catM$ is a cofibrantly generated model category whose cofibrations are generated by a well-founded set of cofibrations $I$, then for each cofibration $A \cofibration B \in I$ we can associate a type introduction axiom $ \bar{A} \vdash \bar{B} \, \type$, where $\bar{A}$ is a well-formed context previously constructed.

Let $K$ be a Reedy category with degree function $\deg:K \to \omega $. This restriction is artificial since we could consider more general Reedy categories, however, for the examples this construction is aimed at, this is enough. The objects of $K$ have a well-founded order relation induced by the degree function.

\begin{construction} \label{reedy-language-generators}
  Let $\partial \yoneda_k$ be the latching object of the representable functor $\yoneda_k$ and $d_k: \partial \yoneda_k \to \yoneda_k$ the induced map. There is a bifunctor
  \[
    \otimes: \set^{K^\op} \times \catM \to \catM^{K^\op}
  \]
  defined by $( A \otimes X )_k \coloneqq \coprod_{A_k} X$. Let $I$ be as above, given $i: X \to Y \in I$ and $k\in K$ we apply the usual Leibniz construction and obtain the dashed arrow below
  % https://q.uiver.app/#q=WzAsNSxbMCwwLCJcXHBhcnRpYWxcXHlvbmVkYV9rIFxcb3RpbWVzIFgiXSxbMSwwLCJcXHlvbmVkYV9rIFxcb3RpbWVzIFgiXSxbMCwxLCJcXHBhcnRpYWxcXHlvbmVkYV9rIFxcb3RpbWVzIFkiXSxbMiwyLCJcXHlvbmVkYV9rIFxcb3RpbWVzIFkiXSxbMSwxLCJcXHBhcnRpYWxcXHlvbmVkYV9rIFxcb3RpbWVzIFlcXGNvcHJvZF97XFxwYXJ0aWFsXFx5b25lZGFfayBcXG90aW1lcyBYfSBcXHlvbmVkYV9rIFxcb3RpbWVzIFgiXSxbMCwyXSxbMCwxXSxbMSwzXSxbMiwzXSxbMSw0XSxbMiw0XSxbNCwzLCJkX2tcXGhhdHtcXG90aW1lc30gaSIsMCx7InN0eWxlIjp7ImJvZHkiOnsibmFtZSI6ImRhc2hlZCJ9fX1dXQ==
\[\begin{tikzcd}[sep=small]
	{\partial\yoneda_k \otimes X} & {\yoneda_k \otimes X} \\
	{\partial\yoneda_k \otimes Y} & {\partial\yoneda_k \otimes Y\coprod_{\partial\yoneda_k \otimes X} \yoneda_k \otimes X} \\
	&& {\yoneda_k \otimes Y.}
	\arrow[from=1-1, to=1-2]
	\arrow[from=1-1, to=2-1]
	\arrow[from=1-2, to=2-2]
	\arrow[from=1-2, to=3-3,bend left]
	\arrow[from=2-1, to=2-2]
	\arrow[from=2-1, to=3-3, bend right=25]
	\arrow["{d_k\hat{\otimes} i}", dashed, from=2-2, to=3-3]
\end{tikzcd}\]

We now consider the set of maps $K \hat{\otimes} I \coloneqq \{ d_k \hat{\otimes} i | k \in K,\,  i \in I \}$. By identifying each map $d_k \hat{\otimes} i \in K\hat{\otimes} I $ with a pair $(k,i)$, we see that $K \hat{\otimes} I$ is also a well-founded set with a relation, which we denote by $\leq_\otimes$. Here the relation is defined entry by entry \ie $(k',i')\leq_\otimes (k,i)$ if and only if $\deg(k') \leq \deg(k)$ and $i'\leq_I i$, where $\leq_I$ is the well-founded relation on $I$.
\end{construction}

The previous construction is further justified by \cite[Proposition 2.3.22]{barton2019model} for premodel categories, but a similar description is abundant in the literature for Quillen model categories.

\begin{proposition}
  The Reedy weak factorization system on $\catM^{K^\op}$ is generated by $K \hat{\otimes} I$, and therefore the Reedy model category structure on $\catM^{K^\op}$ is combinatorial whenever $\catM$ is combinatorial.
\end{proposition}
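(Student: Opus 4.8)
The plan is to identify the weak factorization system cofibrantly generated by $K\hat\otimes I$ with the Reedy one, via the standard characterization of Reedy (trivial) fibrations by relative matching maps, using a Leibniz (pushout--product) adjunction to convert lifting problems against the generators $d_k\hat\otimes i$ into lifting problems against a relative matching map.

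First I would recall the description of the Reedy weak factorization system. Since $K$ is a Reedy category with degree function into $\omega$, so is $K^\op$, and the Reedy weak factorization system on $\catM^{K^\op}$ has as left class the \emph{Reedy cofibrations} --- the maps $f\colon A\to B$ such that for every $k\in K$ the relative latching map $A_k \cup_{L_k A} L_k B \to B_k$ is a cofibration in $\catM$ --- and as right class the \emph{Reedy trivial fibrations} --- the maps such that for every $k$ the relative matching map $A_k \to B_k \times_{M_k B} M_k A$ is a trivial fibration in $\catM$. When $\catM$ is merely a weak model category one uses the premodel-category version of this statement, as in Barton's thesis (the reference cited just above).

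Second I would set up the Leibniz adjunction. The bifunctor $\otimes\colon\set^{K^\op}\times\catM\to\catM^{K^\op}$ of \cref{reedy-language-generators} is such that $\yoneda_k\otimes(-)$ is left adjoint to evaluation $\catM^{K^\op}\to\catM$ at $k$ (it is the corresponding left Kan extension), and $\partial\yoneda_k$ is by construction the latching (boundary) object of $\yoneda_k$; unwinding the coend formulas for $\partial\yoneda_k$ and for matching objects, one gets a two-variable adjunction under which the Leibniz product with $d_k\colon\partial\yoneda_k\to\yoneda_k$ is left adjoint to the relative-matching-map construction $M_k(-)$. Concretely: for every $i\colon X\to Y$ in $\catM$ and every $f\colon A\to B$ in $\catM^{K^\op}$, a commutative square from $d_k\hat\otimes i$ to $f$ is the same datum as a commutative square from $i$ to $M_k(f)$, and diagonal fillers of one correspond bijectively to diagonal fillers of the other. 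This is the content of the cited Barton Proposition~2.3.22, so in practice I would simply quote it.

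Third I would conclude. A map $f$ has the right lifting property against every member of $K\hat\otimes I=\{d_k\hat\otimes i\}$ if and only if, for every $k$, the relative matching map $M_k(f)$ has the right lifting property against every $i\in I$; since $I$ generates the cofibrations of $\catM$, this says exactly that each $M_k(f)$ is a trivial fibration in $\catM$, i.e.\ that $f$ is a Reedy trivial fibration. So the class of maps with the right lifting property against $K\hat\otimes I$ is exactly the class of Reedy trivial fibrations, hence the class with the left lifting property against the latter is exactly the class of Reedy cofibrations; thus $K\hat\otimes I$ cofibrantly generates the Reedy weak factorization system. For combinatoriality: if $\catM$ is combinatorial it is locally presentable, hence so is $\catM^{K^\op}$ as $K$ is small, and running the identical argument with a set $J$ of generating trivial cofibrations of $\catM$ (so $J$-injectives are the fibrations of $\catM$, and Reedy fibrations are those whose relative matching maps are fibrations) shows $K\hat\otimes J$ cofibrantly generates the other Reedy weak factorization system; both weak factorization systems being generated by sets in a locally presentable category, the Reedy model structure on $\catM^{K^\op}$ is combinatorial. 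The main obstacle is the Leibniz adjunction of the second step --- verifying carefully that the Leibniz product against $d_k$ is left adjoint to the relative matching map, which requires matching the coend descriptions of $\partial\yoneda_k$ and of matching objects with the pushout defining $d_k\hat\otimes i$ and the pullback defining the relative matching map; everything else is routine manipulation of weak factorization systems and of local presentability, and since this adjunction is standard and already cited, the actual proof is short.
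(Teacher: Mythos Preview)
Your proposal is correct and matches the paper's approach: the paper does not give its own proof but cites Barton's Proposition~2.3.22, and immediately afterward states as \cref{transpose-lebniz:lemma} exactly the Leibniz-adjunction lifting correspondence you identify as the key step. Your argument is the standard one behind that citation.
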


A useful result we can have in mind is the following:

\begin{lemma} \label{transpose-lebniz:lemma}
  Given any $i:A \to B \in \catM$, a morphism $f:X \to Y \in \catM^{K^\op}$ has the lifting property with respect to $d_k \hat\otimes i$, if and only if $\hat f^k:X_k \to Y_k\times_{M_kY}M_kX$ has the right lifting property with respect to $i$.
\end{lemma}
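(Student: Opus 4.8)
The plan is to recognise this as an instance of the standard equivalence between a Leibniz (pushout--product) lifting condition and its Leibniz--cotensor transpose, applied to the two-variable adjunction underlying the bifunctor $\otimes$.

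First I would record the adjunctions involved. For fixed $X\in\catM$ the functor $(-)\otimes X:\set^{K^\op}\to\catM^{K^\op}$ is cocontinuous, because $(A\otimes X)_k=\coprod_{A_k}X$ is computed levelwise and $\coprod_{(-)}X$ preserves colimits of sets; being defined on a presheaf category it therefore has a right adjoint, and in particular it preserves the pushout used to form the domain of $d_k\hat{\otimes}i$. Dually, for fixed $A\in\set^{K^\op}$ the functor $A\otimes(-):\catM\to\catM^{K^\op}$ has a right adjoint $\{A,-\}$ given by the weighted limit $\{A,Z\}=\int_{k\in K}Z_k^{A_k}$, which exists since $\catM$ is complete (in the pre-model generality this is \cite[Proposition~2.3.22]{barton2019model}). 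The two instances I actually need are $\{\yoneda_k,-\}\cong(-)_k$, evaluation at $k$ (the end form of the Yoneda lemma), and $\{\partial\yoneda_k,-\}\cong M_k(-)$, the $k$-th matching object functor; this last isomorphism is the usual weighted-limit presentation of matching objects, deduced from the previous one by writing $\partial\yoneda_k$ as the relevant colimit of representables, and under it the latching map $d_k:\partial\yoneda_k\to\yoneda_k$ transposes, for every $Z$, to the canonical comparison $Z_k\to M_kZ$.

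Next I would unpack a commutative square from $d_k\hat{\otimes}i$ to $f$. Writing $P:=(\partial\yoneda_k\otimes B)\coprod_{\partial\yoneda_k\otimes A}(\yoneda_k\otimes A)$, such a square consists of a map $P\to X$ and a map $\yoneda_k\otimes B\to Y$ that agree suitably. Since $P$ is a pushout, a map $P\to X$ is a compatible pair $(\partial\yoneda_k\otimes B\to X,\ \yoneda_k\otimes A\to X)$, which by the transpositions above is exactly a commutative square in $\catM$ with edges $i:A\to B$, $A\to X_k$, $B\to M_kX$ and $X_k\to M_kX$; and $\yoneda_k\otimes B\to Y$ transposes to $B\to Y_k$. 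Running the compatibility condition of the original square through the same transpositions, it becomes precisely the assertions that $B\to Y_k$ and $B\to M_kX$ agree in $M_kY$ --- so they assemble into a map $B\to Y_k\times_{M_kY}M_kX$ --- and that $A\to X_k\xrightarrow{f_k}Y_k$ equals $A\to B\to Y_k$. Together with the square already obtained, these say exactly that we have a commutative square from $i$ to $\hat f^k:X_k\to Y_k\times_{M_kY}M_kX$. Thus commutative squares from $d_k\hat{\otimes}i$ to $f$ are in natural bijection with commutative squares from $i$ to $\hat f^k$, and a diagonal filler of the former --- a map $\yoneda_k\otimes B\to X$ over $Y$ and under $P$ --- transposes to a map $B\to X_k$ satisfying precisely the two triangle identities needed to be a diagonal filler against $\hat f^k$. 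Hence $f$ has the lifting property with respect to $d_k\hat{\otimes}i$ if and only if $\hat f^k$ has the right lifting property with respect to $i$.

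The only real content is the first step: identifying $\{\partial\yoneda_k,-\}$ with the matching object functor and checking that $d_k$ transposes to the canonical matching map. After that everything is formal adjunction bookkeeping; the one place to be careful is that the pushout defining the Leibniz product transposes correctly to the pullback $Y_k\times_{M_kY}M_kX$ and that each compatibility clause lines up on the nose. I also need the matching objects to exist in the generality of (weak or pre-)model categories in play, which is exactly where the citation to Barton's thesis is used.
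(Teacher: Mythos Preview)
Your proof is correct and is precisely the standard two-variable adjunction argument. The paper itself does not give a proof: it simply cites \cite[Lemma~2.3.21]{barton2019model} and \cite{hovey1999models}, and what you have written is essentially the content of those references spelled out --- identifying $\{\yoneda_k,-\}\cong(-)_k$ and $\{\partial\yoneda_k,-\}\cong M_k(-)$ and then transposing the pushout-product lifting problem to the pullback-cotensor one.
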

\begin{proof}
  As written, this is \cite[Lemma 2.3.21]{barton2019model}, but it is also a classical result found in \cite{hovey1999models}.
\end{proof}

\begin{remark} \label{matching:remark}
  The matching objects in \cref{transpose-lebniz:lemma} are computed with respect to the Reedy structure of $K^\op$. This means that the relevant diagram in $M_kX$ is given by maps in $(K^\op)_{-}=K_+$.
\end{remark}

\begin{observation}
  Many models for higher categories are built starting with presheaves over a Reedy category. Then to obtain the desired model one takes a left Bousfield localization for an appropriate class of maps. Importantly, this localization does not change the generating cofibrations. This is just to say that the language of $\catM^{K^\op}$ remains unchanged after localization.
\end{observation}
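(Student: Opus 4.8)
The statement asserts that replacing the Reedy model structure on $\catM^{K^\op}$ by a left Bousfield localization of it changes neither the generating cofibrations nor the associated language. The plan is to reduce everything to \cref{language-model:construction}, where the language of a weak model category $\catM$ is \emph{defined} as the language of the coclan $\catM^\cof$ of cofibrant objects and core cofibrations, so that the only thing one needs is control of cofibrant objects and cofibrations between them.

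First I would record the single external ingredient: a left Bousfield localization of a model category has the same underlying category and the \emph{same} class of cofibrations as the original --- only the weak equivalences enlarge (to the local equivalences) and the fibrations shrink accordingly. Writing $\catN$ for such a localization of $\catM^{K^\op}$, this immediately gives that the cofibrant objects of $\catN$ are exactly the cofibrant objects of $\catM^{K^\op}$, and that a cofibration between two cofibrant objects of $\catN$ is precisely a cofibration between them in $\catM^{K^\op}$. Hence $\catN^\cof = (\catM^{K^\op})^\cof$, not just up to equivalence but on the nose.

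Then by \cref{language-model:construction} the language $\Lb^\catN_\lambda$ is the initial $\lambda$-boolean algebra over $\catN^\cof$ and $\Lb^{\catM^{K^\op}}_\lambda$ is the initial $\lambda$-boolean algebra over $(\catM^{K^\op})^\cof$; since the two coclans coincide, so do their initial $\lambda$-boolean algebras, and therefore $\Lb^\catN_\lambda = \Lb^{\catM^{K^\op}}_\lambda$, with the interpretation of formulas via \cref{cstr:object_to_model_yoneda} and \cref{def:wms_language} computed from the same data in both cases. At the level of the concrete recipe of this section, equality of cofibrations means the generating set $K \hat\otimes I$ of \cref{reedy-language-generators} --- which by the preceding proposition generates the Reedy cofibrations --- remains a well-founded generating set of cofibrations for $\catN$, so \cref{theory-wfs:construction} produces the very same generalized algebraic theory $T_{\catM^{K^\op}}$.

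There is essentially no obstacle: the content is a definitional unwinding together with the standard structural fact about left Bousfield localizations. The one point worth flagging --- a matter of interpretation rather than of proof --- is that although the collection of formulas is unchanged, the \emph{fibrant} objects of $\catN$ form in general a proper subclass of those of $\catM^{K^\op}$, so \cref{invariance-theorems} now applies to this smaller class but with respect to a coarser notion of weak equivalence: one gains no new formulas, only a richer homotopy theory to be invariant under. One should also keep the mild standing hypothesis that the localization exists as a (weak) model category --- e.g. $\catM^{K^\op}$ combinatorial and the localizing class a set --- precisely so that the constructions of \cref{fol-gat} apply verbatim to $\catN$.
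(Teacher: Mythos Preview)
Your proposal is correct and makes explicit exactly the reasoning the paper leaves implicit: the Observation is stated without proof in the paper, and your argument---left Bousfield localization preserves cofibrations, hence the coclan $\catM^\cof$ is unchanged, hence by \cref{language-model:construction} the language is unchanged---is precisely the intended justification. Your additional remark that the fibrant objects shrink while the formulas stay the same is a useful clarification of how the invariance theorems then apply.
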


The cofibrations for the Reedy model structure are usually rather complicated, we can sometimes proceed as in \cref{contractible:example}. This is, if $\Gamma' \cofibration \Gamma$ is a generating cofibration, then we might simply consider a formula $\phi' \in \Lb^{\catM^{K^\op}}(\Gamma')$ or $\phi \in \Lb^{\catM^{K^\op}}(\Gamma)$ with no explicit description of the type associated to the cofibration.

As an interesting case, in the following section we examine the Reedy language for Segal spaces. However, the construction applies to any other model category constructed similarly.

\subsection{Segal spaces} \label{segal-spaces:example}

We denote $\bisim \coloneqq [\Delta^\op, \sset]=[\Delta^\op \times \Delta^\op,\set]$ as the category of simplicial spaces, or bisimplicial sets. This category has two model structures that are obtained as left Bousfield localizations of the Reedy model structure. For both of these localizations, we use the Kan--Quillen model structure from the previous section. Recall that this model structure is cofibrantly generated. The set of generating cofibrations is the set of boundary inclusions. We will use the following facts and notation.

\begin{itemize}
\item There is an adjunction of two variables $\- \Box \-: \sset \times \sset \to \bisim $ defined as $(X\Box Y)_{mn}\coloneqq X_m \times Y_n$ for each $m,n \in \N$. This is called the box product.
\item $\sset$ can be seen as vertically embedded into $\bisim$. If $X\in \sset$, then it can be seen as a simplicial space $X \Box \Delta[0]$. There is also a horizontal embedding by setting $\Delta[0] \Box X$.

\item For $[m]\in \Delta $ we write $F(n)\coloneqq \Delta[n] \Box \Delta[0]$ and $\partial F(n) \coloneqq \partial \Delta[n] \Box \Delta[0]$.
  
\item The simplicial spaces $F(n)$ represent the $n$-th mapping space functors, respectively $Map(F(n),X) = X_n$.
\end{itemize}

There is map $\iota:F(1)\coprod_{F(0)} \cdots \coprod_{F(0)} F(1)  \to F(n)$, where the colimit on left has $n$ factors. The following two model category structures were constructed by Rezk \cite{rezk}.

\begin{theorem}
  The category admits a unique simplicial model category structure such that:
  \begin{enumerate}
  \item The cofibrations are the monomorphisms.
  \item Fibrant objects are simplicial spaces $X$ such that the map
    \[X_n \to X_1 \times_{X_0} \cdots \times_{X_0} X_1 \]
    induced by $\iota$ is a Kan equivalence. The fibrant objects are called Segal spaces.
  \item The weak equivalences are the maps $f:X \to Y \in \bisim$ such that $$Map(f,W): Map(Y,W) \to Map(X,W)$$ is a Kan equivalence for every Segal space $W$.
  \item A map $f:X \to Y$ between Segal spaces is a fibration (weak equivalence) if and only if is a Reedy fibration (Reedy weak equivalence).
  \end{enumerate}
\end{theorem}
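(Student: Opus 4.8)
\medskip
\emph{Proof plan.} The plan is to obtain this model structure, following Rezk \cite{rezk}, as a left Bousfield localization of the Reedy model structure on $\bisim = [\Delta^\op, \sset]$, where $\sset$ carries the Kan--Quillen model structure. First I would record the standard facts (see \cite{hovey1999models}) that this Reedy model structure is combinatorial, left proper (both properties being inherited from the Kan--Quillen model structure), and simplicial, that its cofibrations are exactly the monomorphisms, and that every object is cofibrant. Write $\mathrm{Sp}(n)$ for the spine $F(1)\coprod_{F(0)}\cdots\coprod_{F(0)}F(1)$ (with $n$ copies of $F(1)$) and let $S = \{\,\iota_n : \mathrm{Sp}(n) \to F(n) \mid n \geq 0\,\}$. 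Applying the general machinery of left Bousfield localization at a \emph{set} of maps over a combinatorial left proper model category (see e.g. \cite{barwick2010left}) produces a model structure $\bisim_S$ on the same underlying category, with the same cofibrations --- which gives (1) --- whose fibrant objects are the Reedy fibrant $S$-local objects and whose weak equivalences are the $S$-local equivalences.

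The next step is to identify the fibrant objects with the Segal spaces. A Reedy fibrant $X$ is $S$-local precisely when, for every $n$, the restriction map $Map(F(n),X) \to Map(\mathrm{Sp}(n),X)$ is a Kan equivalence. Since every object is cofibrant and $X$ is fibrant, these are the derived mapping spaces; one has $Map(F(n),X) \simeq X_n$, and since $\mathrm{Sp}(n)$ is built from the cofibration $F(0)\hookrightarrow F(1)$ by iterated pushouts along cofibrations, $Map(\mathrm{Sp}(n),X) \simeq X_1 \times^h_{X_0} \cdots \times^h_{X_0} X_1$. The point I expect to require the most care is the observation that, when $X$ is Reedy fibrant, the face maps $X_1 \to X_0$ are Kan fibrations (the matching map $X_1 \to X_0 \times X_0$ is a Kan fibration and $X_0$ is fibrant), so this iterated homotopy fibre product is computed by the strict pullback $X_1 \times_{X_0}\cdots\times_{X_0} X_1$; hence the $S$-local objects are exactly the Segal spaces, which is (2). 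Condition (3) is then immediate: because all objects are cofibrant, the $S$-local equivalences are precisely the maps $f$ for which $Map(f,W)$ is a Kan equivalence for every Segal space $W$.

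For (4) I would invoke the standard fact that in a left Bousfield localization a map between local, hence fibrant, objects is a fibration (respectively a weak equivalence) in the localized structure if and only if it is one in the original structure; as Segal spaces are in particular $S$-local, a map between Segal spaces is a fibration (resp. weak equivalence) in $\bisim_S$ iff it is a Reedy fibration (resp. a Reedy weak equivalence). Simpliciality of $\bisim_S$ follows from the general fact that a left Bousfield localization of a simplicial model category at a set of maps with cofibrant domains and codomains is again simplicial. Finally, uniqueness: condition (2) describes Segal spaces intrinsically as a property of a simplicial space, (3) then characterizes the weak equivalences intrinsically, (1) fixes the cofibrations, and a model structure is determined by its classes of cofibrations and weak equivalences, so any simplicial model structure satisfying (1)--(3) coincides with $\bisim_S$.
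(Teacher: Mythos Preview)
The paper does not give its own proof of this theorem: it is stated as a result of Rezk \cite{rezk} and simply cited. Your proposal is a correct and standard sketch of Rezk's construction via left Bousfield localization of the Reedy model structure at the spine inclusions, so there is nothing to compare.
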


Recall that $\catJ$ denotes the category with two objects and two arrows that are mutually inverses. It is usual to denote by $E(1)$ to the Segal space which is obtained by considering the nerve $N\catJ$ as a discrete simplicial space. This produces a map $F(1) \to E(1)$.

\begin{theorem}
  The category admits a unique simplicial model category structure such that:
  \begin{enumerate}
  \item The cofibrations are the monomorphisms.
  \item Fibrant objects are Segal spaces $X$ such that the map $$Map(E(1),X) \to Map(F(0),X) $$ is a Kan equivalence. The fibrant objects are called complete Segal spaces.
  \item The weak equivalences are the maps $f:X \to Y \in \bisim$ such that $$Map(f,W): Map(Y,W) \to Map(X,W)$$ is a Kan equivalence for every complete Segal space $W$.
  \item A map $f:X \to Y$ between complete Segal spaces is a fibration (weak equivalence) if and only if is a Reedy fibration (Reedy weak equivalence).
  \end{enumerate}
\end{theorem}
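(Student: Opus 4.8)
The plan is to obtain this model structure as a left Bousfield localization of the Reedy model structure on $\bisim=[\Delta^\op,\sset]$, where $\sset$ carries the Kan--Quillen model structure. First I would record the input facts: this Reedy model structure is combinatorial (as $\bisim$ is a presheaf topos, hence locally presentable, and Reedy structures over a combinatorial target are cofibrantly generated), left proper (all objects are cofibrant, since the Reedy cofibrations here are exactly the monomorphisms of bisimplicial sets), and simplicial for the standard enrichment used to define $Map$ in the statement. Consequently the machinery of left Bousfield localization at a set of maps applies (Smith's theorem; Hirschhorn; Lurie). I would take $S$ to be the union of the set of Segal maps $\{\iota\colon F(1)\coprod_{F(0)}\cdots\coprod_{F(0)}F(1)\to F(n)\}_{n\geqslant 2}$ with the single map $F(0)\to E(1)$ picking out an object of $E(1)$ (note $E(1)$, being a discrete simplicial space, is already Reedy cofibrant). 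The localization $\bisim_S$ is then again left proper, combinatorial and simplicial, and its cofibrations are unchanged, namely the monomorphisms; this gives part (1). Equivalently one may localize the Segal space model structure---itself the localization at the Segal maps alone---at $F(0)\to E(1)$, since iterated localization agrees with localization at the union.

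The second step is to identify the fibrant objects and the weak equivalences of $\bisim_S$. By the general theory the fibrant objects are precisely the Reedy fibrant $S$-local objects, i.e. the Reedy fibrant $X$ such that $Map(B,X)\to Map(A,X)$ is a Kan equivalence for every $(A\to B)\in S$. Locality with respect to the Segal maps is exactly the Segal condition, and locality with respect to $F(0)\to E(1)$ is exactly the completeness condition $Map(E(1),X)\to Map(F(0),X)\simeq X_0$; hence the fibrant objects are the complete Segal spaces, which is part (2). For the weak equivalences, by construction the $S$-local equivalences are the maps $f$ with $Map(f,W)$ a Kan equivalence for every $S$-local $W$, and since every complete Segal space is $S$-local while every $S$-local object admits a complete Segal space as Reedy fibrant replacement, this is the same as requiring $Map(f,W)$ to be a Kan equivalence for all complete Segal spaces $W$, which is part (3).

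Part (4) is then a formal consequence of two standard properties of left Bousfield localizations: a map between fibrant objects of $\bisim_S$ is a weak equivalence in $\bisim_S$ iff it is a weak equivalence in the Reedy structure, and a map between fibrant objects of $\bisim_S$ is a fibration in $\bisim_S$ iff it is a Reedy fibration. Since the Reedy weak equivalences and fibrations are the levelwise Kan equivalences and the Reedy fibrations, this yields the stated characterization for maps between complete Segal spaces. Uniqueness follows because a model structure is determined by its class of cofibrations together with its class of weak equivalences (equivalently, by cofibrations and fibrations), and part (3) pins the weak equivalences down intrinsically; so any simplicial model structure on $\bisim$ with properties (1) and (3) must coincide with $\bisim_S$.

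I expect the main obstacle to be the verification in the second step that, for a Segal space, being $S$-local is equivalent to completeness exactly as phrased: one must check that localizing at the particular map $F(0)\to E(1)$ reproduces Rezk's completeness condition, and in particular that the two inclusions $F(0)\rightrightarrows E(1)$ become homotopic after localization---they do, since the object-swap automorphism of $E(1)$ is a weak equivalence and both composites with the collapse $E(1)\to F(0)$ are the identity---so that the choice of basepoint is immaterial. A secondary technical point is confirming that the simplicial $Map$ appearing in (2)--(4) is the enrichment with respect to which the Reedy model structure, and hence its localization, is simplicial, so that ``$S$-local'' in the Bousfield sense coincides with the conditions as written.
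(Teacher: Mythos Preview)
The paper does not give its own proof of this theorem; it is stated as a result of Rezk and cited to \cite{rezk}. Your proposal is correct and follows essentially the original approach: realize the model structure as the left Bousfield localization of the (simplicial, left proper, combinatorial) Reedy model structure on $\bisim$ at the Segal maps together with $F(0)\to E(1)$, then read off (1)--(4) from the general theory of localizations, with uniqueness coming from the fact that cofibrations plus weak equivalences determine a model structure.
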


These models are cofibrantly generated. The set of generating cofibrations can be described using the box product \cite[Proposition 2.2]{joyal2007quasi}. This set is given by $\hat I \coloneqq \{ d_m \hat\Box d_n | m,n\in \N \}$. Explicitly, a map in $\hat I$ is of the form
\[
  d_m \hat\Box d_n: \partial\Delta[m] \Box \Delta[n] \coprod_{\partial\Delta[m] \Box \partial\Delta[n]} \Delta[m] \Box \partial \Delta[n]  \to \Delta[m] \Box \Delta[n]
\]
We can obtain the generalized algebraic theory for (complete) Segal space. The domains of these maps provide the context in which a new type is formed. To get a sense of the theory, consider the following picture of a bisimplicial set $X$:
% https://q.uiver.app/#q=WzAsNyxbMCwwLCJYX3swMH0iXSxbMCwxLCJYX3sxMH0iXSxbMSwwLCJYX3swMX0iXSxbMiwwLCJcXGRvdHMiXSxbMSwxLCJYX3sxMX0iXSxbMiwyLCJcXHRleHR7fSJdLFswLDIsIlxcdmRvdHMiXSxbMCwxXSxbMSwwLCIiLDAseyJvZmZzZXQiOjJ9XSxbMSwwLCIiLDEseyJvZmZzZXQiOi0yfV0sWzAsMl0sWzIsMCwiIiwxLHsib2Zmc2V0IjotMn1dLFsyLDAsIiIsMSx7Im9mZnNldCI6Mn1dLFsyLDMsIiIsMSx7Im9mZnNldCI6LTJ9XSxbMiwzLCIiLDEseyJvZmZzZXQiOjJ9XSxbMywyXSxbMywyLCIiLDEseyJvZmZzZXQiOjN9XSxbMywyLCIiLDEseyJvZmZzZXQiOi0zfV0sWzIsNF0sWzQsMiwiIiwxLHsib2Zmc2V0IjoyfV0sWzQsMiwiIiwxLHsib2Zmc2V0IjotMn1dLFsxLDRdLFs0LDEsIiIsMSx7Im9mZnNldCI6Mn1dLFs0LDEsIiIsMSx7Im9mZnNldCI6LTJ9XSxbNiwxLCIiLDEseyJvZmZzZXQiOi0zfV0sWzYsMV0sWzYsMSwiIiwxLHsib2Zmc2V0IjozfV0sWzEsNiwiIiwxLHsib2Zmc2V0IjoyfV0sWzEsNiwiIiwxLHsib2Zmc2V0IjotMn1dLFs0LDUsIlxcZGRvdHMiLDEseyJzdHlsZSI6eyJib2R5Ijp7Im5hbWUiOiJub25lIn0sImhlYWQiOnsibmFtZSI6Im5vbmUifX19XV0=
\[\begin{tikzcd}
	{X_{00}} & {X_{01}} & \dots \\
	{X_{10}} & {X_{11}} \\
	\vdots && {\text{}}
	\arrow[from=1-1, to=1-2]
	\arrow[from=1-1, to=2-1]
	\arrow[shift left=2, from=1-2, to=1-1]
	\arrow[shift right=2, from=1-2, to=1-1]
	\arrow[shift left=2, from=1-2, to=1-3]
	\arrow[shift right=2, from=1-2, to=1-3]
	\arrow[from=1-2, to=2-2]
	\arrow[from=1-3, to=1-2]
	\arrow[shift right=3, from=1-3, to=1-2]
	\arrow[shift left=3, from=1-3, to=1-2]
	\arrow[shift right=2, from=2-1, to=1-1]
	\arrow[shift left=2, from=2-1, to=1-1]
	\arrow[from=2-1, to=2-2]
	\arrow[shift right=2, from=2-1, to=3-1]
	\arrow[shift left=2, from=2-1, to=3-1]
	\arrow[shift right=2, from=2-2, to=1-2]
	\arrow[shift left=2, from=2-2, to=1-2]
	\arrow[shift right=2, from=2-2, to=2-1]
	\arrow[shift left=2, from=2-2, to=2-1]
	\arrow["\ddots"{description}, draw=none, from=2-2, to=3-3]
	\arrow[shift left=3, from=3-1, to=2-1]
	\arrow[from=3-1, to=2-1]
	\arrow[shift right=3, from=3-1, to=2-1]
      \end{tikzcd}\]
    The arrows indicate the degeneracy and face maps. Now we go back to consider the maps $d_m\Box d_n$. When $m=n=0$ then we simply get a map $\emptyset \to \Delta[0]\Box \Delta[0]$, and allow us to introduce the type
    \[\vdash \spc_{00} \, \type. \]
    When $n=0$ the resulting subset of maps is of the form \[ d_m \hat \Box \Delta[0]: \partial \Delta[m] \Box \Delta[0] \to \Delta[m] \Box \Delta[0]. \]
    In this setting, since for $m=0$ we obtain the previous cofibration $\emptyset \to \pointtyp$, for each $m\geq 1$ we can write the following types:
\begin{itemize}
\item $x,y: \spc_{00} \vdash \spc_{10}(x,y) \, \type$.
\item $x,y,z:\spc_{00}, f:\spc_{10}(x,y),g:\spc_{10}(y,z),h:\spc_{10}(x,z) \vdash \spc_{20}(x,y,z,f,g,h)$.
\item $\vdots$
\end{itemize}
When $m=0$ we obtain the theory of the categorical direction. Now suppose that $m=1=n$, then resulting generating cofibration is the map
    \[
  d_1 \hat\Box d_1: \partial\Delta[1] \Box \Delta[1] \coprod_{\partial\Delta[1] \Box \partial\Delta[1]} \Delta[1] \Box \partial \Delta[1]  \to \Delta[1] \Box \Delta[1]
\]
From here we see that the type associated to this map has the following form:
\begin{multline*}
x_0,x_1,x_2,x_3:\spc_{00}, f_{01}:\spc_{01}(x_0,x_1), f_{23}:\spc_{01}(x_2,x_3),f_{02}:\spc_{10}(x_0,x_2), \\ f_{13}:\spc_{10}(x_1,x_3) \vdash \spc_{11}(x_0,x_1,x_2,x_3,f_{01},f_{23},f_{02},f_{13}) \, \type. 
\end{multline*}
We think of this new type as the type of squares where the solid boundary is the given context
\begin{center}
  \begin{tikzpicture}[scale=0.7] % Adjust the scale here
    % Fill the interior of the square while avoiding the vertices
    \fill[cyan!30] (0.1,0.1) rectangle (1.9,1.9);

    % Draw the square with shorter arrows
    \draw[->] (0.4,2) -- (1.6,2) node[midway, above] {$f_{01}$};
    \draw[->] (0,1.6) -- (0,0.4) node[midway, left] {$f_{02}$};
    \draw[->] (2,1.6) -- (2,0.4) node[midway, right] {$f_{13}$};
    \draw[->] (0.4,0) -- (1.6,0) node[midway, below] {$f_{23}$};

    % Place the vertices
    \node at (0,2) (A) {$x_0$};
    \node at (2,2) (B) {$x_1$};
    \node at (0,0) (C) {$x_2$};
    \node at (2,0) (D) {$x_3$};
\end{tikzpicture}
\end{center}
For different $m,n$ the context are simply more involved, but the dependencies can be inferred. Note we still need to add the degeneracy operators satisfying the usual axioms. We can see that as we build more complex contexts, it will be computationally difficult to obtain an explicit description of the types. We might instead proceed as in \cref{contractible:example}.

\begin{example}
  Two elements $x,y:\spc_{00}$ are said to be \emph{homotopic} if there exists $\alpha:\spc_{10}(x,y)$. This sentence only involves types in the language of Segal spaces. In contrast to topological spaces, we can express the fact that two maps are homotopic.
\end{example}

\begin{remark}
  Note in particular that the language of spaces or Kan complexes is available for us to use. This in combination with our construction in \cref{reedy-languages:sec} allow us to realize many properties of (complete) Segal spaces, for example the ones found in \cite{rasekh2023yoneda}, are written in this language.
\end{remark}

\subsection{Functors and Isofibrations}

We denote $[1] \coloneqq \{0 \to 1\}$ the category with two objects and single non-identity arrow. This category can be viewed as a Reedy category in two ways. The first one respects the direction of the arrow, so we take $[1]_+$ to be the non-identity map, while for the second we take the same map to be in $[1]_-$. Recall that if $K$ is a Reedy category, then $K^\op$ is also a Reedy category where $(K^\op)_+=K_{-}$ and $(K^\op)_{-}=K_+$. In order to match the computations of \cref{reedy-language-generators}, we use the same notation as there. By which we mean that for a model category $\catC$ we use $\catC^{([1]_+)^\op}$ and $\catC^{([1]_{-})^\op}$ with the corresponding Reedy model structures, ignoring the fact that $\catC^{([1]_+)^\op}=\catC^{[1]_{-}}$ and $\catC^{([1]_{-})^\op}=\catC^{[1]_+}$.

\begin{proposition}
  The Reedy model structure on $\catC^{([1]_{-})^\op}_{Reedy}$ coincides with the projective model structure. In particular, weak equivalences and fibrations are the level-wise weak equivalences and fibrations in $\catC$.
\end{proposition}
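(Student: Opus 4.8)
The plan is to show that, for the Reedy structure on $([1]_{-})^\op$, every matching object is terminal, and to deduce from this that the Reedy fibrations and weak equivalences are exactly the level-wise ones, so that the Reedy model structure is forced to be the projective one.

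First I would unwind the Reedy data. Write $f$ for the unique non-identity morphism of $[1]$; by the definition of $[1]_{-}$ we have $f\in ([1]_{-})_{-}$, so $([1]_{-})_{+}$ consists of identities only. Passing to the opposite category interchanges the two factorization classes, so $(([1]_{-})^\op)_{-}=([1]_{-})_{+}$ is again the class of identities, i.e. $([1]_{-})^\op$ is a direct category. Consequently, by \cref{matching:remark}, for every object $k$ of $([1]_{-})^\op$ and every $X\in\catC^{([1]_{-})^\op}$ the matching object $M_kX$ is a limit of $X$ indexed by the non-identity morphisms out of $k$ lying in $(([1]_{-})^\op)_{-}=([1]_{-})_{+}$; since there are no such morphisms, $M_kX$ is the terminal object of $\catC$.

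Next I would feed this into \cref{transpose-lebniz:lemma}, using that the Reedy weak factorization system on $\catC^{([1]_{-})^\op}$ is cofibrantly generated by $([1]_{-})^\op\hat{\otimes} I$ for $I$ a set of generating cofibrations of $\catC$. That lemma says a map $\varphi\colon X\to Y$ in $\catC^{([1]_{-})^\op}$ is a Reedy fibration if and only if, for each object $k$, the relative matching map $\hat\varphi^{k}\colon X_k\to Y_k\times_{M_kY}M_kX$ has the right lifting property against every $i\in I$. Since $M_kX=M_kY=\ast$, the map $\hat\varphi^{k}$ is just $X_k\to Y_k$, so $\varphi$ is a Reedy fibration precisely when $X_k\to Y_k$ is a fibration in $\catC$ for every $k$; that is, the Reedy fibrations are the level-wise fibrations. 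Since the weak equivalences of the Reedy model structure are in any case the level-wise weak equivalences, the Reedy model structure on $\catC^{([1]_{-})^\op}$ has level-wise weak equivalences and level-wise fibrations, which is exactly the defining property of the projective model structure; as a model structure is determined by its weak equivalences and fibrations, the two coincide (in particular the projective model structure exists).

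I do not anticipate a real obstacle: the only delicate point is the bookkeeping of Reedy conventions — the direction of the arrow of $[1]$, which object carries which degree, and the effect of passing to the opposite category — together with applying the matching-object computation of \cref{transpose-lebniz:lemma} and \cref{matching:remark} with the correct variance. Once the matching objects are seen to be terminal, everything else is immediate.
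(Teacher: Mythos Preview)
Your argument is correct. The paper's own proof is simply ``This is a classical and well-known result,'' so you have supplied considerably more detail than the authors do.

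One small simplification: you invoke \cref{transpose-lebniz:lemma} together with a set $I$ of generating cofibrations, but neither is needed. Once you have observed that $([1]_{-})^\op$ is direct and hence every matching object is terminal, the very definition of Reedy fibration (\cref{def:ReedyMap}) already says that $\varphi$ is a Reedy fibration iff each relative matching map $X_k\to Y_k\times_{M_kY}M_kX=Y_k$ is a fibration in $\catC$. This avoids the implicit cofibrant-generation hypothesis you have introduced, and matches the generality of the statement.
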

\begin{proof}
  This is a classical and well-known result.
\end{proof}
We are interested in the particular case of $\catC = \cat$. It is immediate to see that all objects are fibrant. The language we obtain should be the language for functors. Since $\cat$ is cofibrantly generated by $I=\{ \mathbf{0} \overset{u}{\to} \mathbf{1},\, \{0\}\sqcup \{1\} \overset{v}{\to} \mathbf{2},\, P \overset{w}{\to} \mathbf{2} \} $ we have that $[1]\hat\otimes I $ generates $\catC^{([1]_{-})^\op}_{Reedy}$, by \cref{reedy-language-generators}. This gives us the set of maps \[\{d_0\hat\otimes u, d_0\hat\otimes v, d_0\hat\otimes w, d_1\hat\otimes u, d_1\hat\otimes v, d_1\hat\otimes w \}.\]
To explain what it means for a map $f:X\to Y$ to have the lifting property against these cofibration we can use \cref{transpose-lebniz:lemma}, for which we need the matching objects. We observe from \cref{matching:remark} that $M_0X=1=M_1X$ since $ ([1]_{-})_+$ has no non-identity maps, and the same applies to $Y$. Therefore, for $i\in I$ and $k=0,1$ we have $(d_k\hat\otimes i) \pitchfork f$ in $\cat^{[1]_{-}^\op}$ if and only if $i\pitchfork \hat f^k$, but $\hat f^k$ is either $X_0 \to Y_0$ or $X_1\to Y_1$. Diagrammatically we have:
% https://q.uiver.app/#q=WzAsOCxbMCwwLCJcXHBhcnRpYWxcXHlvbmVkYV9rIFxcb3RpbWVzIGEgXFxjb3Byb2Rfe1xccGFydGlhbFxceW9uZWRhX2sgXFxvdGltZXMgYX0gXFx5b25lZGFfayBcXG90aW1lcyBhIl0sWzAsMSwiXFx5b25lZGFfa1xcb3RpbWVzIGIiXSxbMSwwLCJYIl0sWzEsMSwiWSJdLFsyLDAsImEiXSxbMiwxLCJiIl0sWzMsMCwiWF9rIl0sWzMsMSwiWV9rIl0sWzAsMSwiZF9rXFxoYXRcXG90aW1lcyBpIiwyXSxbMiwzLCJmIl0sWzAsMl0sWzEsM10sWzQsNSwiaSIsMl0sWzYsNywiXFxoYXQgZl5rIl0sWzQsNl0sWzUsN10sWzEsMiwiIiwxLHsic3R5bGUiOnsiYm9keSI6eyJuYW1lIjoiZGFzaGVkIn19fV0sWzUsNiwiIiwxLHsic3R5bGUiOnsiYm9keSI6eyJuYW1lIjoiZG90dGVkIn19fV0sWzksMTIsIiIsMCx7InNob3J0ZW4iOnsic291cmNlIjo0MCwidGFyZ2V0Ijo0MH19XV0=
\[\begin{tikzcd}
	{\partial\yoneda_k \otimes b \coprod_{\partial\yoneda_k \otimes a} \yoneda_k \otimes a} & X & a & {X_k} \\
	{\yoneda_k\otimes b} & Y & b & {Y_k}
	\arrow[from=1-1, to=1-2]
	\arrow["{d_k\hat\otimes i}"', from=1-1, to=2-1]
	\arrow[""{name=0, anchor=center, inner sep=0}, "f", from=1-2, to=2-2]
	\arrow[from=1-3, to=1-4]
	\arrow[""{name=1, anchor=center, inner sep=0}, "i"', from=1-3, to=2-3]
	\arrow["{\hat f^k}", from=1-4, to=2-4]
	\arrow[dashed, from=2-1, to=1-2]
	\arrow[from=2-1, to=2-2]
	\arrow[dotted, from=2-3, to=1-4]
	\arrow[from=2-3, to=2-4]
	\arrow[shorten <=13pt, shorten >=13pt, Leftrightarrow, from=0, to=1]
\end{tikzcd}\]
Specializing to $Y=\mathbf{1}$, it gives us an idea of how types are introduced:
% https://q.uiver.app/#q=WzAsOSxbMCwwLCJcXG1hdGhiZiAwIl0sWzAsMSwiXFxtYXRoYmYgMSJdLFsxLDAsIlhfayJdLFsyLDAsIlxcezBcXH1cXHNxY3VwXFx7MVxcfSJdLFsyLDEsIlxcbWF0aGJmIDIiXSxbMywwLCJYX2siXSxbNCwwLCJQIl0sWzQsMSwiXFxtYXRoYmYgMiJdLFs1LDAsIlhfayJdLFswLDEsInUiLDJdLFswLDJdLFszLDQsInYiLDJdLFszLDVdLFs2LDcsInciLDJdLFs2LDhdXQ==
\[\begin{tikzcd}
	{\mathbf 0} & {X_k} & {\{0\}\sqcup\{1\}} & {X_k} & P & {X_k} \\
	{\mathbf 1} && {\mathbf 2} && {\mathbf 2}
	\arrow[from=1-1, to=1-2]
	\arrow["u"', from=1-1, to=2-1]
	\arrow[from=1-3, to=1-4]
	\arrow["v"', from=1-3, to=2-3]
	\arrow[from=1-5, to=1-6]
	\arrow["w"', from=1-5, to=2-5]
      \end{tikzcd}\]
    for $k=0,1$. This means that we introduce objects, arrows between two objects and equality between arrows to $X_0$ or $X_1$. This indicates that corresponding generating cofibration produce the following type axioms:
    % https://q.uiver.app/#q=WzAsMyxbMCwwLCJcXHZkYXNoIFhfayBcXCxcXHR5cGUiXSxbMSwwLCJhLGI6WF9rIFxcdmRhc2ggWF9rKGEsYikgXFwsIFxcdHlwZSJdLFsyLDAsImYsZzpYX2soYSxiKSBcXHZkYXNoIGY9X3tYX2t9ZyBcXCwgXFx0eXBlIl1d
\[\begin{tikzcd}[sep=tiny]
    {\vdash X_0 \,\type} & {a,b:X_0 \vdash X_0(a,b) \, \type} & {a,b:X_0,f,g:X_0(a,b) \vdash f=_{X_0}g \, \type} \\
    {\vdash X_1 \,\type} & {a,b:X_k \vdash X_k(a,b) \, \type} & {a,b:X_1,f,g:X_k(a,b) \vdash f=_{X_k}g \, \type}
      \end{tikzcd}\]
    and we introduce the operation symbol for the functor as an operation
    % https://q.uiver.app/#q=WzAsMixbMCwwLCJhOlhfMCBcXHZkYXNoIEZhOlhfMSJdLFsxLDAsImEsYjpYXzAsXFwsIGY6WF9rKGEsYikgXFx2ZGFzaCBGZjpYXzEoRmEsRmIpIl1d
\[\begin{tikzcd}[column sep=small]
	{a:X_0 \vdash Fa:X_1} & { f:X_0(a,b) \vdash Ff:X_1(Fa,Fb)}
      \end{tikzcd}\]
    On top of it, we add the usual axioms that ensure we have the expected behaviour with respect to the identity and composition operations. Let us call denote this language by $\Lb^{Fun}.$

    Now we examine the language for the other model structure.

\begin{proposition}
  The Reedy model structure on $\catC^{([1]_+)^\op}_{Reedy}$ coincides with the injective model structure. In particular, weak equivalences and cofibrations are the level-wise weak equivalences and cofibrations in $\catC$.
\end{proposition}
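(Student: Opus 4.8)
The plan is to identify the Reedy model structure on $\catC^{([1]_+)^\op}$ by computing its latching and matching data, in the same spirit as the proof of the previous proposition but for the other Reedy structure on $[1]$. First, recall that for any Reedy category $K$ and any model category $\catC$ the Reedy model structure on $\catC^K$ exists (the classical theorem of Reedy and Kan) and its weak equivalences are always the level-wise weak equivalences; so the statement about weak equivalences is automatic and only the class of cofibrations needs to be pinned down.

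Next I would unwind the Reedy structure of $([1]_+)^\op$. Since the unique non-identity arrow of $[1]$ lies in $[1]_+$, in the opposite category the unique non-identity arrow $1 \to 0$ is degree-decreasing, hence lies in $(([1]_+)^\op)_- = ([1]_+)_+$, whereas $(([1]_+)^\op)_+ = ([1]_+)_-$ consists of identities only. Consequently, for a diagram $X \in \catC^{([1]_+)^\op}$ the latching objects $L_0 X$ and $L_1 X$ are both the initial object of $\catC$ --- they are colimits indexed by the empty category of non-identity degree-increasing maps with the appropriate codomain. Therefore the relative latching map of a morphism $f : X \to Y$ at each degree $k$ reduces to $X_k \to Y_k$, so that the Reedy cofibrations in $\catC^{([1]_+)^\op}$ are precisely the morphisms whose components $X_0 \to Y_0$ and $X_1 \to Y_1$ are cofibrations in $\catC$. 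Dually, the matching objects are $M_0 X = 1$ and $M_1 X = X_0$, so the Reedy fibrations are the $f$ with $X_0 \to Y_0$ a fibration and $X_1 \to Y_1 \times_{Y_0} X_0$ a fibration; one recognises these as the injective fibrations.

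To conclude I would invoke the standard fact that a model structure is determined by its weak equivalences together with its cofibrations, the fibrations then being forced as the maps with the right lifting property against the trivial cofibrations. We have just shown that the Reedy model structure on $\catC^{([1]_+)^\op}$ has the level-wise weak equivalences as weak equivalences and the level-wise cofibrations as cofibrations, and these are by definition the weak equivalences and cofibrations of the injective model structure on the arrow category of $\catC$; hence the Reedy construction yields the injective model structure, and the two coincide. Alternatively, one can note that the statement is formally dual to the previous proposition: applying that proposition with $\catC$ replaced by $\catC^\op$ and then passing to opposite categories turns the projective model structure into the injective one and swaps latching with matching objects, and $(([1]_-)^\op)^\op$ is, as a Reedy category, the same as $([1]_+)^\op$.

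I do not anticipate a real obstacle: the only point requiring care is bookkeeping --- keeping the two Reedy structures on $[1]$ apart, and noting that here it is the latching objects (rather than, as in the projective case, the matching objects) that degenerate, which is precisely what flips the conclusion from ``projective'' to ``injective''.
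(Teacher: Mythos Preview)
Your proposal is correct. The paper itself simply says ``The result is folklore'' and gives no details, so your argument is in fact more complete than the paper's; the computation of the latching and matching objects for the inverse Reedy category $([1]_+)^\op$ and the appeal to the determination of a model structure by its weak equivalences and cofibrations is exactly the standard verification of this folklore fact.
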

\begin{proof}
  The result is folklore.
\end{proof}
We find that fibrant objects are those such that $X_0 \to X_1$ is an isofibration. Therefore, the language in this case refers to isofibrations. Again, this model structure has generating cofibrations
\[\{d_0\hat\otimes u, d_0\hat\otimes v, d_0\hat\otimes w, d_1\hat\otimes u, d_1\hat\otimes v, d_1\hat\otimes w \}.\]
Next, observe that $\partial \yoneda_0=0$ and $\partial\yoneda_1=\yoneda_0$. We have the maps $d_0: 0 \to \yoneda_0$ and $d_1: \yoneda_0 \to \yoneda_1$. Therefore, if $i:a \to b \in I$, then this give us the following cofibrations
\begin{itemize}
\item $ \yoneda_0 \otimes a \to \yoneda_0 \otimes b$,
\item $\yoneda_1 \otimes a \coprod_{\yoneda_0\otimes a} \yoneda_0 \otimes b \to \yoneda_1 \otimes b$.
\end{itemize}
The map $ \yoneda_0 \otimes a \to \yoneda_0 \otimes b$ for $i\in I$ corresponds to the following type introduction:
\[\begin{tikzcd}[column sep=tiny]
    {\vdash X_0 \,\type} & {x,y:X_0 \vdash X_0(x,y) \, \type} & {x,y:X_0,f,g:X_0(x,y) \vdash f=_{X_0}g \, \type}
  \end{tikzcd}\]
which we can think of as a category. The analysis of the second map is more intricate. Let us denote the evaluation of the representables by $\yoneda_{k0}$ and $\yoneda_{k1}$ for $k=0,1$, and for simplicity we keep the $`\otimes' $ symbol. Evaluating the cofibration
$ \yoneda_1 \otimes a \coprod_{\yoneda_0\otimes a} \yoneda_0 \otimes b \to \yoneda_1 \otimes b
$ at $[1]_+^\op$ give us the square,
% https://q.uiver.app/#q=WzAsNCxbMCwwLCJcXHlvbmVkYV97MTF9IFxcb3RpbWVzIGEgXFxjb3Byb2Rfe1xceW9uZWRhX3sxMH1cXG90aW1lcyBhfSBcXHlvbmVkYV97MTB9IFxcb3RpbWVzIGIgIl0sWzAsMSwiXFx5b25lZGFfezExfSBcXG90aW1lcyBiIl0sWzEsMCwiXFx5b25lZGFfezAxfSBcXG90aW1lcyBhIFxcY29wcm9kX3tcXHlvbmVkYV97MDB9XFxvdGltZXMgYX0gXFx5b25lZGFfezAwfSBcXG90aW1lcyBiICJdLFsxLDEsIlxceW9uZWRhX3swMX0gXFxvdGltZXMgYiJdLFswLDJdLFswLDFdLFsyLDNdLFsxLDNdXQ==
\[\begin{tikzcd}
	{\yoneda_{11} \otimes a \coprod_{\yoneda_{10}\otimes a} \yoneda_{10} \otimes b } & {\yoneda_{01} \otimes a \coprod_{\yoneda_{00}\otimes a} \yoneda_{00} \otimes b } \\
	{\yoneda_{11} \otimes b} & {\yoneda_{01} \otimes b},
	\arrow[from=1-1, to=1-2]
	\arrow[from=1-1, to=2-1]
	\arrow[from=1-2, to=2-2]
	\arrow[from=2-1, to=2-2]
      \end{tikzcd}\]
    where the horizontal arrows are induced by the diagram $[1]_+^\op$. This simplifies to
    % https://q.uiver.app/#q=WzAsNCxbMCwwLCJhIl0sWzAsMSwiYiJdLFsxLDAsImFcXGNvcHJvZF9hYiJdLFsxLDEsImIiXSxbMCwxXSxbMiwzXSxbMCwyXSxbMSwzXV0=
\[\begin{tikzcd}
	a & {a\coprod_ab} \\
	b & b,
	\arrow[from=1-1, to=1-2]
	\arrow[from=1-1, to=2-1]
	\arrow[from=1-2, to=2-2]
	\arrow[from=2-1, to=2-2]
\end{tikzcd}\]
which we now compute for $i\in I$, so the pictures take the following form:
% https://q.uiver.app/#q=WzAsMTIsWzAsMCwiXFxtYXRoYmYgMCJdLFsxLDAsIlxcbWF0aGJmMSJdLFswLDEsIlxcbWF0aGJmIDEiXSxbMSwxLCJcXG1hdGhiZjEiXSxbMiwwLCJcXHswXFx9XFxzcWN1cFxcezFcXH0iXSxbMywwLCJcXG1hdGhiZjIiXSxbMiwxLCJcXG1hdGhiZjIiXSxbMywxLCJcXG1hdGhiZjIiXSxbNCwwLCJQIl0sWzQsMSwiXFxtYXRoYmYyIl0sWzUsMCwiXFxtYXRoYmYyIl0sWzUsMSwiXFxtYXRoYmYyIl0sWzAsMV0sWzIsM10sWzAsMl0sWzEsM10sWzQsNV0sWzYsN10sWzUsN10sWzQsNl0sWzgsMTBdLFsxMCwxMV0sWzgsOV0sWzksMTFdXQ==
\[\begin{tikzcd}
	{\mathbf 0} & \mathbf1 & {\{0\}\sqcup\{1\}} & \mathbf2 & P & \mathbf2 \\
	{\mathbf 1} & \mathbf1 & \mathbf2 & \mathbf2 & \mathbf2 & \mathbf2.
	\arrow[from=1-1, to=1-2]
	\arrow[from=1-1, to=2-1]
	\arrow[from=1-2, to=2-2]
	\arrow[from=1-3, to=1-4]
	\arrow[from=1-3, to=2-3]
	\arrow[from=1-4, to=2-4]
	\arrow[from=1-5, to=1-6]
	\arrow[from=1-5, to=2-5]
	\arrow[from=1-6, to=2-6]
	\arrow[from=2-1, to=2-2]
	\arrow[from=2-3, to=2-4]
	\arrow[from=2-5, to=2-6]
\end{tikzcd}\]
    From the above we deduce that the type axioms introduced by these cofibrations take, respectively, the following form:
% https://q.uiver.app/#q=WzAsMyxbMCwwLCJ4OlhfMFxcdmRhc2ggWF8xKHgpIFxcLCBcXHR5cGUiXSxbMSwwLCJ4LHk6WF8wLGY6WF8wKHgseSksYTpYXzEoeCksYjpYXzEoeSkgXFx2ZGFzaCBYXzEoYSxiKSBcXCxcXHR5cGUiXSxbMSwxLCJ4LHk6WF8wLGY6WF8wKHgseSksYTpYXzEoeCksYjpYXzEoeSksaixrOlhfMShhLGIpIFxcdmRhc2ggaj1fe1hfMShhLGIpfSBrIFxcLFxcdHlwZSJdXQ==
\[\begin{tikzcd}[row sep=small]
    {x:X_0\vdash X_1(x) \, \type,} & \\
    {x,y:X_0,f:X_0(x,y),a:X_1(x),b:X_1(y) \vdash X_1(a,b,f) \,\type,} & \\
    {x,y:X_0,f:X_0(x,y),a:X_1(x),b:X_1(y),j,k:X_1(a,b,f) \vdash j=_{X_1(a,b,f)} k \,\type.} &
\end{tikzcd}\]
Unlike the language for functors $\Lb^{Fun}$, here we do not need a symbol for $F:X_0\to X_1$. We denote this language for isofibrations as $\Lb^{Iso}$.

For the observation below, it will be useful to remember that given a functor $F:X \to Y$, an arrow $f:x\to y\in X$ is \emph{cartesian} if for any $h:x' \to y$ and $w:F(x') \to F(x)$ with $F(f) \circ w=F(h)$, there exists a unique $u:x'\to x$ such that $f \circ u=h$. The following diagram illustrates this definition:
% https://q.uiver.app/#q=WzAsOCxbMiwxLCJ4Il0sWzIsMywiRngiXSxbMywzLCJGeSJdLFszLDEsInkiXSxbMSwyLCJGeCciXSxbMSwwLCJ4JyJdLFswLDAsIlgiXSxbMCwzLCJZIl0sWzEsMiwiRmYiLDJdLFswLDMsImYiLDJdLFs0LDIsIkZoIl0sWzQsMSwidyIsMl0sWzUsMywiaCJdLFs1LDAsIlxcZXhpc3RzISB1IiwyLHsic3R5bGUiOnsiYm9keSI6eyJuYW1lIjoiZGFzaGVkIn19fV0sWzYsNywiRiIsMl0sWzAsMSwiIiwxLHsic2hvcnRlbiI6eyJzb3VyY2UiOjIwLCJ0YXJnZXQiOjYwfSwic3R5bGUiOnsidGFpbCI6eyJuYW1lIjoibWFwcyB0byJ9fX1dXQ==
\[\begin{tikzcd}[row sep=small]
	X & {x'} \\
	&& x & y \\
	& {Fx'} \\
	Y && Fx & Fy
	\arrow["F"', from=1-1, to=4-1]
	\arrow["{\exists! u}"', dashed, from=1-2, to=2-3]
	\arrow["\forall h",bend left=25, from=1-2, to=2-4]
	\arrow["f"', from=2-3, to=2-4]
	\arrow[shorten <=6pt, shorten >=19pt, maps to, from=2-3, to=4-3]
	\arrow["\forall w"', from=3-2, to=4-3]
	\arrow["Fh",bend left=15, from=3-2, to=4-4]
	\arrow["Ff"', from=4-3, to=4-4]
\end{tikzcd}\]
A \emph{Grothendieck fibration} is a functor $F:X \to Y$ such that for any $y\in Y$ and $f:a \to F(y)$, there exists a cartesian arrow $\phi_f:f^*y \to y$ such that $F(\phi_f)=f$. The functor $F:X\to Y$ is a \emph{Street fibration} if for any $y\in Y$ and $f:a \to F(y)$, there exists a cartesian arrow $\hat f:e\to y$ and an isomorphism $F(e)\cong a$ that makes the resulting triangle commutative.

\begin{remark}
  It is a classical result that a Grothendieck fibration is the same as a Street fibration which is also an isofibration. On the one hand, note that a Grothendieck fibration can be written in the language $\Lb^{Iso}$ of isofibrations, but not in $\Lb^{Fun}$ of functors since it contains an equality between objects, such equality is salvaged in $\Lb^{Iso}$ thanks to the dependencies. On the other hand, a Street fibration is a formula in $\Lb^{Fun}$. We also know that the two Reedy model structures on the category $\cat^{[1]}$ are Quillen equivalent. The above result can also be automatically obtained as an elementary application of \Fourth, whose proof is the heart of the next section.
\end{remark}

%%% Local Variables:
%%% mode: latex
%%% TeX-master: "main"
%%% End:

        % Invariance of language under Quillen equivalences

        \section{Language invariance under Quillen equivalences}\label{sec:invariance}

\subsection{The third and fourth invariance theorem}

The main goal of this section is to show two more invariance properties of the first order language from \cref{sec:wms_language}, that we can phrase informally\footnote{The precise statement is just below as \cref{invariance-theorems-34}.} as:

\begin{enumerate}
\item \Third: If two cofibrant objects $X$ and $Y$ are equivalent, then any formula in context $X$ can be translated into a formula in context $Y$.
\item \Fourth: If two (weak) model categories $\catM$ and $\catN$ are Quillen equivalent, then any formula in the language of $\catM$ can be translated into a formula in the language of $\catN$. 
\end{enumerate}

These ``translations'' are equivalent to the original formula in the sense that they are interpreted in the same way in any fibrant model, but they might not be equivalent in the more syntactic sense introduced in \cref{def:Lb}. More precisely, we introduce the following equivalence relation on formulas:

\begin{definition}\label{def:semantical_equiv}
  Let $A$ be a cofibrant object of $\catM$. Two formulas $\phi, \psi \in \Lb_\lambda^\catM(A) $ are said to be \emph{semantically equivalent} if for all fibrant objects $X \in \catM$ we have $|\phi|_X=|\psi|_X$. In this situation we write $\phi \approx \psi$.

  We define $h\Lb_\lambda^\catM(A)$ to be the quotient of $\Lb_\lambda^\catM(A)$ by the relation $\approx$. We easily check that this is still a Boolean algebra.
\end{definition}

By definition of $\approx$ we have that for $\phi, \psi \in \Lb_\lambda^\catM(\Gamma) $,  $\phi \approx \psi$ if and only if all maps $v:\Gamma \to X$ with $X$ fibrant
\[ \Gamma \vdash \phi(v) \Leftrightarrow \Gamma \vdash \psi(v). \]

We can now state our theorems.

\begin{theorem} \label{invariance-theorems-34} \label{third-invariance:thm}

  \begin{itemize}
    \item[]
  \item \textbf{\Third :} Let $A,B \in \catM$ two cofibrant objects of a weak Quillen model category $\catM$ and $f:A \to B$ a weak equivalence between them. Then the map $f^*: \Lb_\lambda(B) \to \Lb_\lambda(A)$ induces a bijection 
    \[ h\Lb_\lambda(B) \simeq h\Lb_\lambda(A). \]
  \item \textbf{\Fourth :} If $F:\catM \to \catN$ is a left Quillen equivalence between two weak model categories, then for any cofibrant object $A \in \catM $ the induced map
  \[ h\Lb F_A: h\Lb_\lambda^\catM(A) \to h\Lb_\lambda^\catN(FA)  \]
  from \cref{left-quillen:map-languages} is an isomorphism.
  \end{itemize}
  
\end{theorem}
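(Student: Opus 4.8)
The plan is to prove the \Third\ first, essentially by hand, and then to bootstrap the \Fourth\ from it through an auxiliary category. Two facts will be used constantly. First, by \cref{def:semantical_equiv}, $\phi \approx \psi$ means $|\phi|_X = |\psi|_X$ for all fibrant $X$, and by the \First\ (\cref{invariance-theorems}) the set $|\phi|_X \subseteq \Hom(c,X)$ is, for $X$ fibrant, a union of homotopy classes of maps $c \to X$. Second, for any map $g \colon U \to V$ of cofibrant objects the induced map on languages $g_* \colon \Lb_\lambda(U) \to \Lb_\lambda(V)$ satisfies $(g_*\phi)(w) \Leftrightarrow \phi(w g)$ for $w \colon V \to X$ (naturality of the interpretation, since $|-|_M$ is a morphism of boolean algebras over $\catM^\cof$), so $g_*$ descends to $h\Lb_\lambda(U) \to h\Lb_\lambda(V)$; the map denoted $f^*$ in the statement is this $f_*$ (read in the appropriate direction), and what is to be shown is that it is a bijection on the $h\Lb_\lambda$'s.

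\textbf{\Third.} Factor the weak equivalence $f \colon A \to B$ between cofibrant objects as $A \overset{i}{\cofibration} C \overset{p}{\trivialfib} B$ with $C$ cofibrant; by $2$-out-of-$3$, $i$ is a core trivial cofibration and $p$ a trivial fibration, and since induced maps compose it suffices to treat each. For $p \colon C \trivialfib B$: lifting $\varnothing \cofibration B$ against $p$ gives a section $s$, and lifting the core cofibration $C \sqcup C \cofibration \mathrm{Cyl}(C)$ against the anodyne fibration $p$ (using $p \circ (s p, \mathrm{id}_C) = (p,p)$) gives a left homotopy $s p \simeq \mathrm{id}_C$; postcomposing with any $w \colon C \to X$ with $X$ fibrant and invoking the \First\ gives $(s p)_*\psi \approx \psi$, so that $p_*$ and $s_*$ are mutually inverse on the $h\Lb_\lambda$'s. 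For $i \colon A \trivialcof C$: injectivity on $h\Lb_\lambda$ is automatic, since for $X$ fibrant the precomposition $\Hom(C,X) \to \Hom(A,X)$ is onto (lift along $i$ against $X \fibration 1$) and $|i_*\phi|_X$ is its preimage of $|\phi|_X$; for surjectivity, given $\psi \in \Lb_\lambda(C)$ take $\phi \coloneqq \exists_i \psi$ (the left adjoint to $i_*$, which exists because $i$ is a core cofibration, hence a display map of the clan $(\catM^\cof)^{\mathrm{op}}$) and check $i_*\phi \approx \psi$: unwinding the interpretation of $\exists_i$ as in \cref{ex:Power_Set_boolean_algebra}, $(i_*\exists_i\psi)(w)$ holds iff there is $w' \colon C \to X$ with $w' i = w i$ and $\psi(w')$, but two extensions of a map $A \to X$ along the trivial cofibration $i$ into a fibrant $X$ are homotopic relative to $A$ (lift against $PX \fibration X \times X$), so the \First\ gives $\psi(w') \Leftrightarrow \psi(w)$, whence $(i_*\exists_i\psi)(w) \Leftrightarrow \psi(w)$.

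\textbf{\Fourth.} For injectivity of $h\Lb F_A$ (from \cref{left-quillen:map-languages}): given $\phi,\phi' \in \Lb_\lambda^\catM(A)$ with $F(\phi) \approx F(\phi')$, fix a fibrant $Z \in \catM$ and $v \colon A \to Z$; replace $Z$ by a cofibrant-then-fibrant object $\widehat{Z^\cof}$, lift $v$, and use the \Second\ to reduce $\phi(v)$ to the truth of $\phi$ on a map $A \to G(\widehat{F Z^\cof})$ (the derived unit, a weak equivalence since $F$ is a Quillen equivalence), and then use the adjunction formula (the proposition following \cref{cstr:Quillen_functor_acts_on_formulas}) to rewrite this as the truth of $F(\phi)$ on a map $F A \to \widehat{F Z^\cof}$ into a fibrant object of $\catN$; the same chain applies to $\phi'$, and $F(\phi) \approx F(\phi')$ closes the argument. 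For surjectivity — and to get the statement uniformly — form the auxiliary category $\mathcal{K}$ of triples $(m \in \catM,\, n \in \catN,\, F m \to n)$, with the Reedy-type (weak, possibly only semi-) model structure furnished by \cref{appendix-c}, for which the inclusions $\iota_\catM \colon m \mapsto (m, F m, \mathrm{id})$ and $\iota_\catN \colon n \mapsto (\varnothing, n, \varnothing \to n)$ are left Quillen, induce isomorphisms on $h\Lb_\lambda$ (arguing as in the injectivity step via their one-sided adjoints), and the canonical map $\iota_\catN(F A) \to \iota_\catM(A)$ is a weak equivalence in $\mathcal{K}$ precisely because $F$ is a Quillen equivalence. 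Applying the \Third\ to that weak equivalence of cofibrant objects of $\mathcal{K}$ and composing with the two identifications yields $h\Lb_\lambda^\catM(A) \cong h\Lb_\lambda^{\mathcal{K}}(\iota_\catM A) \cong h\Lb_\lambda^{\mathcal{K}}(\iota_\catN F A) \cong h\Lb_\lambda^\catN(F A)$, and a diagram chase identifies the composite with $h\Lb F_A$.

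\textbf{Main obstacle.} The \Third\ is formal once one has the \First\ and the factorization of maps out of cofibrant objects; the weight of the argument is in the \Fourth, and specifically in setting up $\mathcal{K}$: one must equip it with a Reedy weak model structure whose (co)fibrations interact correctly with $F \dashv G$ — this is exactly the place where the language construction is applied to something that need not be a full Quillen model category — verify that $\iota_\catM,\iota_\catN$ are well enough behaved to induce isomorphisms on $h\Lb_\lambda$, and match the weak equivalences of $\mathcal{K}$ with the Quillen-equivalence data. A secondary point of care, already visible in the \Third\ when $\exists_i$ is unwound for an infinite core cofibration, is the size issue flagged after \cref{language-model:construction}: the formulas produced may genuinely require the full $\Lcal_{\infty,\lambda}$-logic, which is why the theorems are stated for $\Lb_\lambda$ rather than a $\kappa$-bounded fragment.
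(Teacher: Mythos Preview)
For the \Third, your trivial-cofibration argument is exactly the paper's \cref{lemma:thrid_for_triv_cof}. The trivial-fibration half has a gap specific to \emph{weak} model categories: the factorization and lifting axioms of \cref{def:wms} apply only to arrows into fibrant objects, so your factorization $A \cofibration C \trivialfib B$ need not exist when $B$ is merely cofibrant, and even given such a $p$ you cannot lift to produce a section or a cylinder homotopy. The fix is easy --- first apply your trivial-cofibration case to $B \trivialcof B^\fib$, then factor $A \to B^\fib$; the middle object is bifibrant so everything goes through --- but the paper instead runs a Brown-style argument directly with a weak cylinder (\cref{cstr:weak_path_and_cylinder}), never needing the trivial-fibration case at all.

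For the \Fourth, your injectivity argument is the paper's \cref{induced-functor:injective}. The surjectivity sketch has two real gaps. First, the plain Reedy structure on your $\mathcal{K}$ from \cref{reedy-model:theorem} has \emph{levelwise} weak equivalences, so $\iota_\catN(FA) \to \iota_\catM(A)$, whose $\catM$-component is $\varnothing \to A$, is essentially never one; the paper uses a span-shaped diagram category $\catN^I_F$ (objects $Fa \to b \leftarrow c$) with a carefully constructed \emph{right Bousfield localization} of the Reedy structure (\cref{fcylinders:wms}, \cref{correspondences:wms}) to arrange this. Second, your justification that $\iota_\catM,\iota_\catN$ induce isomorphisms on $h\Lb_\lambda$ ``as in the injectivity step via their one-sided adjoints'' is the crux and is not established: injectivity alone gives one direction, and you would need the retractions $\pi_\catM,\pi_\catN$ also to be \emph{left} Quillen equivalences (so that injectivity applies to them too and the split-mono-between-injections trick yields a bijection), which again depends on having the localized structure. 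The paper's route is different: it isolates the class of \emph{Barton trivial fibrations} (extensible plus weakly conservative, \cref{barton-trivialfib:def}), proves by syntactic induction that extensibility suffices to lift formulas (\cref{trivial-fibration:surjective}), and then verifies that the projections from $\catN^I_F$ are Barton trivial fibrations. Your ``main obstacle'' paragraph correctly locates the difficulty, but the missing ingredient --- either the localization together with the retraction argument, or extensibility together with formula-lifting --- is where the actual work lies.
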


\begin{remark} Note that if $F: \catM \rightleftarrows \catN:G$ a Quillen equivalence between weak model categories and $B$ is a cofibrant object of $\catN$ which is not of the form $F(A)$ for $A \in \catM$, then one can still use the \Fourth\ to transfer a formula in $h\Lb(B)$ to a formula in $\catM$. We do this by first finding an object of the form $F(A)$ which is homotopically equivalent to $B$, which is always possible as $F$ is a Quillen equivalence, and then transferring our formula $\phi \in h\Lb(B)$ to a formula in $h\Lb(F(A))$ using the \Third. \end{remark}

\begin{observation} \label{bifibrant-semantic-relation}
  For any cofibrant object $\Gamma \in \catM$, $\phi,\psi\in \Lb_\lambda^\catM(\Gamma) $ we defined $\phi \approx \psi$ if and only if $|\phi|_X = |\psi|_X$ for all fibrant objects. However, note that if we take a cofibrant replacement $X^\cof$ of $X$, then by \cref{invariance-theorems} (\Second) we have, $X \vdash \phi(fv)$ if and only if $X^\cof \vdash \phi(v)$, where $f:X^\cof \trivialfib X$ and $v: \Gamma \to X^\cof$.

  Therefore, when testing the relation $\approx$, it is enough to use bifibrant objects. More precisely, define $\phi \approx_b \psi$ if $|\phi|_X = |\psi|_X$ for any bifibrant object $X$. Then \begin{center}$\phi \approx \psi$ if and only if $\phi \approx_b \psi$.\end{center}
\end{observation}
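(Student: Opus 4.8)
The plan is to prove the non-trivial implication; the converse is immediate since every bifibrant object is in particular fibrant, so $\phi \approx \psi$ trivially entails $\phi \approx_b \psi$. So assume $\phi \approx_b \psi$, fix an arbitrary fibrant object $X$, and aim to show $|\phi|_X = |\psi|_X$, that is, $X \vdash \phi(v) \Leftrightarrow X \vdash \psi(v)$ for every map $v : \Gamma \to X$.

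First I would produce a \emph{bifibrant} cofibrant replacement of $X$: factor the map $\emptyset \to X$ (a map out of a cofibrant object) as a cofibration followed by a trivial fibration $q : X^\cof \trivialfib X$. Then $X^\cof$ is cofibrant by construction, and since $q$ is in particular a fibration and $X$ is fibrant, the composite $X^\cof \to 1$ is a composite of fibrations, hence a fibration; so $X^\cof$ is fibrant, and therefore bifibrant. Next, given any $v : \Gamma \to X$, I would use that $\Gamma$ is cofibrant and $q$ is a trivial fibration to lift $v$ through $q$ to a map $\tilde v : \Gamma \to X^\cof$ with $q\tilde v = v$.

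Now I would apply the \Second\ of \cref{invariance-theorems} to the weak equivalence $q$ between the fibrant objects $X^\cof$ and $X$: for each $\rho \in \{\phi,\psi\}$ this gives $X^\cof \vdash \rho(\tilde v) \Leftrightarrow X \vdash \rho(v)$. Since $X^\cof$ is bifibrant, the hypothesis $\phi \approx_b \psi$ gives $X^\cof \vdash \phi(\tilde v) \Leftrightarrow X^\cof \vdash \psi(\tilde v)$. Chaining these three equivalences yields $X \vdash \phi(v) \Leftrightarrow X \vdash \psi(v)$, and since $v$ was arbitrary this shows $|\phi|_X = |\psi|_X$; as $X$ was an arbitrary fibrant object, $\phi \approx \psi$. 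The only mildly delicate point is the claim that a cofibrant replacement of a fibrant object is again fibrant, which rests on fibrations being closed under composition in a weak model category; beyond that the argument is a routine double application of the already-established Second invariance theorem, so I do not expect any real obstacle.
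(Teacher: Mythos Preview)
Your argument is correct and follows the same approach as the paper: take a bifibrant replacement of the fibrant object via a trivial fibration and transfer validity along it using the \Second. You have simply spelled out in full the details (existence of the factorization, fibrancy of $X^\cof$, lifting $v$) that the paper leaves implicit in the observation.
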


We now explain the construction of the map $h\Lb F_A: h\Lb_\lambda^\catM(A) \to h\Lb_\lambda^\catN(FA) $ mentioned in the \Fourth.

\begin{construction} \label{left-quillen:map-languages}
  The map $h\Lb F_A$ in the \Fourth\ is the map coming from $\Lb F_A : \Lb_\lambda^\catM(A) \to \Lb_\lambda^\catN(FA) $ constructed in \cref{cstr:Quillen_functor_acts_on_formulas}. It just comes from the fact that $\Lb_\lambda^\catM$ is the initial boolean algebra. Recall that it satisfies the formula:
  \[ G(X) \vdash \phi(v) \Leftrightarrow X \vdash F(\phi)(\tilde{v}). \]
  for any object $X \in \catN$, and cofibrant object $C \in \catM$, any map $v:C \to G(X)$ corresponding to $\tilde{v} : F(C) \to X$, and $\phi \in \Lb^\catM_\lambda(C)$.

  This immediately imply the following proposition that shows that the map $h\Lb_A$ mentioned in the \Fourth\ is well-defined.
\end{construction}

\begin{proposition}\label{induced-map:homotopy-bool-alg} For any Quillen adjunction $F: \catM \leftrightarrows \catN: G$ and $A \in \catM$ a cofibrant object, the map $F: \Lb_\lambda(A) \to \Lb_\lambda(FA)$ is compatible with the relation $\approx$ and induces a morphism of $\lambda$-boolean algebras
  \[ F: h\Lb_\lambda(A) \to h\Lb_\lambda(FA). \]
\end{proposition}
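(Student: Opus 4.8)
The plan is to deduce everything from the adjunction formula recalled in \cref{left-quillen:map-languages}, together with the fact that the right adjoint $G$ of a Quillen adjunction preserves fibrant objects (it sends fibrations to fibrations and, being a right adjoint, preserves the terminal object). The only genuine content is that the map $F \colon \Lb_\lambda^\catM(A) \to \Lb_\lambda^\catN(FA)$ of \cref{left-quillen:map-languages} is compatible with the semantic equivalence $\approx$; once this is established, the rest is formal, since this map is already a morphism of $\lambda$-boolean algebras by \cref{cstr:Quillen_functor_acts_on_formulas} (being a component of the comparison morphism $\alpha^F$ of \cref{action-functor:formulas}), and $\approx$ is a $\lambda$-boolean-algebra congruence on $\Lb_\lambda^\catN(FA)$ by \cref{def:semantical_equiv}, so the map descends to the quotients as a morphism of $\lambda$-boolean algebras.

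For the main step I would fix $\phi,\psi \in \Lb_\lambda^\catM(A)$ with $\phi \approx \psi$ and aim to prove $F(\phi) \approx F(\psi)$, i.e. $|F(\phi)|_Y = |F(\psi)|_Y$ for every fibrant $Y \in \catN$, which by definition of $|{-}|_Y$ means $Y \vdash F(\phi)(w) \Leftrightarrow Y \vdash F(\psi)(w)$ for every map $w \colon FA \to Y$. By the adjunction, $w$ corresponds bijectively to a map $v \colon A \to G(Y)$, and the formula of \cref{left-quillen:map-languages} gives
\[ Y \vdash F(\phi)(w) \Leftrightarrow G(Y) \vdash \phi(v), \qquad Y \vdash F(\psi)(w) \Leftrightarrow G(Y) \vdash \psi(v). \]
Since $Y$ is fibrant and $G$ is right Quillen, $G(Y)$ is fibrant in $\catM$, so the hypothesis $\phi \approx \psi$ applies to the object $G(Y)$ and yields $G(Y) \vdash \phi(v) \Leftrightarrow G(Y) \vdash \psi(v)$. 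Chaining the three equivalences gives $Y \vdash F(\phi)(w) \Leftrightarrow Y \vdash F(\psi)(w)$; as $Y$ and $w$ were arbitrary, $F(\phi) \approx F(\psi)$, and then $F$ induces a well-defined morphism of $\lambda$-boolean algebras $h\Lb_\lambda(A) \to h\Lb_\lambda(FA)$ as explained above.

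I do not expect a serious obstacle: the argument is essentially a one-line diagram chase once the adjunction formula and the preservation of fibrant objects are in place. The one point deserving a line of care is that $\approx$ is by definition tested against \emph{all} fibrant objects of $\catN$, and every $w \colon FA \to Y$ is accounted for through the adjunction bijection $w \leftrightarrow v$ (alternatively one could restrict to bifibrant $Y$ using \cref{bifibrant-semantic-relation}, though this is not needed). Finally, the same proof works verbatim for the weak Quillen adjunction variant of the footnote, since there $G$ is defined on and preserves fibrant objects and the formula of \cref{left-quillen:map-languages} is already stated only for fibrant $Y$.
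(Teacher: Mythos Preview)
Your proposal is correct and follows essentially the same approach as the paper: both use the adjunction formula from \cref{left-quillen:map-languages} together with the fact that $G$ preserves fibrant objects to transport the hypothesis $\phi \approx \psi$ through the chain $Y \vdash F(\phi)(w) \Leftrightarrow G(Y) \vdash \phi(v) \Leftrightarrow G(Y) \vdash \psi(v) \Leftrightarrow Y \vdash F(\psi)(w)$. Your write-up is slightly more explicit about why the induced map is a morphism of $\lambda$-boolean algebras, but the argument is the same.
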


\begin{proof}
  If $\phi$ and $\psi$ are semantically equivalent formulas in $\Lb_\lambda(A)$, then for any fibrant object $X \in \catN$, and a map $\tilde{v}:FA \to X$ corresponding to $v:A \to GX$ we  have
  \[  X \vdash F(\phi)(\tilde{v}) \Leftrightarrow  G(X) \vdash \phi(v) \Leftrightarrow G(X) \vdash \psi(v) \Leftrightarrow X \vdash F(\psi)(\tilde{v}) \]
  which shows that $F(\phi) \approx F(\psi)$ and concludes the proof.
\end{proof}

We are now ready prove the \Third. We start with a special case:

\begin{lemma}\label{lemma:thrid_for_triv_cof}
  Let $\Gamma,\Gamma' \in \catM^\cof$ and $\pi:\Gamma \trivialcof \Gamma'$ be a core trivial cofibration, then the induced map $h\Lb_\lambda^\catM(\Gamma) \to h\Lb_\lambda^\catM(\Gamma')$ is an isomorphism of $\lambda$-boolean algebras.
\end{lemma}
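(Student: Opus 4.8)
The map in the statement is the value on the morphism $\pi$ of the functor $\Lb^\catM_\lambda \colon \catM^\cof \to \bool_\lambda$ underlying the initial $\lambda$-boolean algebra over $\catM^\cof$, passed to the quotients by $\approx$; write it $\Lb^\catM_\lambda(\pi)\colon \Lb^\catM_\lambda(\Gamma) \to \Lb^\catM_\lambda(\Gamma')$. Since it is a morphism in $\bool_\lambda$ and the boolean structure on $h\Lb$ is the quotient one, it is enough to prove it is bijective. The plan rests on one elementary compatibility: for every fibrant $X$ and every $v \colon \Gamma' \to X$,
\[ X \vdash \bigl(\Lb^\catM_\lambda(\pi)\phi\bigr)(v) \ \Longleftrightarrow\ X \vdash \phi(v\circ\pi), \]
which follows from naturality of the interpretation $|-|_{\yoneda_X}\colon \Lb^\catM_\lambda \to \Pcal(\yoneda_X)$ (a morphism of $\lambda$-boolean algebras over $\catM^\cof$), together with the identification of $\Pcal(\yoneda_X)$ evaluated at $\pi$ with preimage along $(-)\circ\pi \colon \Hom(\Gamma',X) \to \Hom(\Gamma,X)$. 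In particular $\Lb^\catM_\lambda(\pi)$ sends $\approx$-equivalent formulas to $\approx$-equivalent ones, so the induced map on $h\Lb$ is well defined.

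For injectivity, assume $\Lb^\catM_\lambda(\pi)\phi \approx \Lb^\catM_\lambda(\pi)\psi$ and fix a fibrant $X$ and a map $v \colon \Gamma \to X$. Since $\pi$ is a core trivial cofibration and $X \fibration 1$ is a fibration between fibrant objects, $v$ lifts along $\pi$ to some $v' \colon \Gamma' \to X$ with $v'\circ\pi = v$; the compatibility above and the hypothesis then give
\[ X \vdash \phi(v) \Leftrightarrow X \vdash \bigl(\Lb^\catM_\lambda(\pi)\phi\bigr)(v') \Leftrightarrow X \vdash \bigl(\Lb^\catM_\lambda(\pi)\psi\bigr)(v') \Leftrightarrow X \vdash \psi(v). \]
Hence $|\phi|_X = |\psi|_X$ for all fibrant $X$, i.e. $\phi \approx \psi$.

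For surjectivity, given $\psi \in \Lb^\catM_\lambda(\Gamma')$ I will show $\Lb^\catM_\lambda(\pi)(\exists_\pi\psi) \approx \psi$, where $\exists_\pi \colon \Lb^\catM_\lambda(\Gamma') \to \Lb^\catM_\lambda(\Gamma)$ is the left adjoint of $\Lb^\catM_\lambda(\pi)$ supplied by the structure of $\lambda$-boolean algebra over the coclan $\catM^\cof$ (available since every morphism of $\catM^\cof$ counts as a cofibration). The inequality $\psi \leqslant \Lb^\catM_\lambda(\pi)(\exists_\pi\psi)$ is the unit of the adjunction and already holds in $\Lb^\catM_\lambda(\Gamma')$. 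For the reverse inclusion in $h\Lb$, fix a fibrant $X$ and $v' \colon \Gamma' \to X$ with $X \vdash \bigl(\Lb^\catM_\lambda(\pi)(\exists_\pi\psi)\bigr)(v')$. As $|-|_{\yoneda_X}$ preserves $\exists_\pi$ and $\exists_\pi$ is interpreted in $\Pcal(\yoneda_X)$ as direct image along $(-)\circ\pi$ (by \cref{lem:BC_Set}), this says exactly that there is $w \colon \Gamma' \to X$ with $X \vdash \psi(w)$ and $w\circ\pi = v'\circ\pi$. By the \first\ (\cref{invariance-theorems}) it now suffices to see that $w$ and $v'$ are homotopic, and for this I use a path object $X \xrightarrow{s} PX \xrightarrow{(d_0,d_1)} X\times X$ (a factorization of the diagonal, available since $X\times X$ is fibrant): the constant homotopy $s\circ(w\circ\pi) \colon \Gamma \to PX$ and the map $(w,v') \colon \Gamma' \to X\times X$ form a square with $\pi$ on the left and $(d_0,d_1)$ on the right, commuting because both composites into $X\times X$ equal $(w\circ\pi,\, v'\circ\pi)$ (here the hypothesis $w\circ\pi = v'\circ\pi$ is used); since $\pi$ is a core trivial cofibration and $(d_0,d_1)$ is a fibration between fibrant objects, this square admits a diagonal filler $H \colon \Gamma' \to PX$, which is a right homotopy from $w$ to $v'$, hence a homotopy as $\Gamma'$ is cofibrant and $X$ fibrant. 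Therefore $X \vdash \psi(v')$, which gives $\Lb^\catM_\lambda(\pi)(\exists_\pi\psi) \approx \psi$ and hence surjectivity; the inverse of the resulting bijection on $h\Lb$ is then $[\psi] \mapsto [\exists_\pi\psi]$, automatically a morphism of $\lambda$-boolean algebras.

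The only input that is not a diagram chase is the homotopy claim at the end — that two maps out of $\Gamma'$ which agree after precomposition with the core trivial cofibration $\pi$ are homotopic — and I expect this, together with the lifting of $v$ in the injectivity step, to be the point requiring care: both depend on the form of lifting guaranteed in a weak model category, namely core trivial cofibrations against fibrations between fibrant objects, which is precisely the mechanism already used in the proof of \cref{invariance-theorems}.
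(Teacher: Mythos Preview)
Your proof follows essentially the same strategy as the paper's: both use the adjoint $\exists_\pi$ as the candidate inverse on $h\Lb$, both rely on lifting along the core trivial cofibration $\pi$ against fibrant targets, and both invoke the \first\ after establishing that two lifts of the same map along $\pi$ are homotopic. Your organization differs only cosmetically --- you prove injectivity directly rather than via $\exists_\pi\pi^*\phi \approx \phi$, and you note that one inequality in $\pi^*\exists_\pi\psi \approx \psi$ is already the unit of the adjunction --- but the substance is the same.

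There is one small technical slip in your homotopy argument. You invoke a path object ``$X \xrightarrow{s} PX \xrightarrow{(d_0,d_1)} X\times X$ (a factorization of the diagonal, available since $X\times X$ is fibrant)'', but in a weak model category the factorization axioms (\cref{def:wms}~\ref{def:wms:trivcof-fib-facto}) only apply to maps with \emph{cofibrant} domain, so this factorization --- and in particular the section $s\colon X \to PX$ --- is not available for a merely fibrant $X$. The fix is immediate: by \cref{bifibrant-semantic-relation} it suffices to test $\approx$ on bifibrant $X$, and then your construction of the constant homotopy and the lifting square goes through verbatim. Alternatively you can use a weak path object (\cref{cstr:weak_path_and_cylinder}) and lift $w\circ\pi$ through the trivial fibration $TX \trivialfib X$ first. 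The paper's own proof sidesteps this by asserting the homotopy without writing out the path-object diagram, so your more explicit version actually surfaces a point the paper leaves implicit.
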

\begin{proof}
  Assume that $\pi:\Gamma \trivialcof \Gamma'$ is a core trivial cofibration. Since to define the language of $\catM$ we take the $\kappa$-clan $(\catM^\cof)^\op$, when constructing the language we get a covariant functor $\catM^\cof\to \bool_\lambda$. Therefore, we obtain a map $\pi^*: \Lb_\lambda^\catM(\Gamma) \to \Lb_\lambda^\catM(\Gamma')$ and its left adjoint $\exists_\pi:\Lb_\lambda^\catM(\Gamma') \to \Lb_\lambda^\catM(\Gamma) $, which furthermore descends to the adjoint pair $h \exists_\pi:h\Lb_\lambda^\catM(\Gamma') \rightleftarrows h\Lb_\lambda^\catM(\Gamma): h \pi^*$ between the $\lambda$-boolean algebras.

  We claim that $h\exists_\pi$ is the inverse for $h\pi^*$. It is enough to show that for any $\phi:\Lb_\lambda^\catM(\Gamma)$ and $\psi \in \Lb_\lambda^\catM(\Gamma')$ we have $\exists_\pi\pi^*(\phi) \approx \phi$ and $\pi^*\exists_\pi(\psi)\approx \psi$.

  Firstly, let $X \in \catM^\fib$ be a fibrant object and $x: \Gamma \to X$. Note that $x\in |\exists_\pi \psi|_X\subseteq \hom_\catM(\Gamma,X)$ if and only if there exists $x':\Gamma' \to X$ such that $ x' \in |\psi|_X \subseteq \hom_\catM(\Gamma',X)$ and that makes the following triangle commutative:
  % https://q.uiver.app/#q=WzAsMyxbMCwwLCJcXEdhbW1hIl0sWzAsMSwiXFxHYW1tYSciXSxbMSwwLCJYIl0sWzAsMSwiXFxwaSIsMl0sWzAsMiwieCJdLFsxLDIsIngnIiwyXV0=
\[\begin{tikzcd}
	\Gamma & X \\
	{\Gamma'.}
	\arrow["x", from=1-1, to=1-2]
	\arrow["\pi"',"\sim",hook, from=1-1, to=2-1]
	\arrow["{x'}"', from=2-1, to=1-2]
\end{tikzcd}\]
Since $X$ is fibrant, the map $x'$ always exists. Such $x'$ is not necessarily unique, however, in a situation in which we have two arrows
% https://q.uiver.app/#q=WzAsMyxbMCwwLCJcXEdhbW1hIl0sWzAsMSwiXFxHYW1tYSciXSxbMSwwLCJYIl0sWzAsMSwiXFxwaSIsMix7InN0eWxlIjp7InRhaWwiOnsibmFtZSI6Imhvb2siLCJzaWRlIjoidG9wIn19fV0sWzAsMiwieCJdLFsxLDJdLFsxLDIsIiIsMSx7Im9mZnNldCI6Mn1dXQ==
\[\begin{tikzcd}
	\Gamma & X \\
	{\Gamma'}
	\arrow["x", from=1-1, to=1-2]
	\arrow["\pi"',"\sim", hook, from=1-1, to=2-1]
	\arrow["y",from=2-1, to=1-2]
	\arrow["z"',shift right=2, from=2-1, to=1-2]
\end{tikzcd}\]
that make the triangle commutative, then using that $\pi$ is a trivial cofibration we see that $y$ and $z$ are homotopic. By the first invariant theorem (\cref{invariance-theorems}) we have $y \in |\psi|_X$ if and only if $z\in |\psi|_X$. Therefore, the existence of $x' \in |\psi|_X$ is independent of choices.

From here, the result is immediate: $x\in |\exists_\pi \pi^* \phi|_X$ if and only if there exists $x':\Gamma' \to X$ such that $x'\pi=x$ such that $X\vdash \phi(\pi^*x')$ \ie if and only $x \in |\phi|_X$. This shows that $|\exists_\pi \pi^* \phi|_X= |\phi|_X$ for any fibrant object. Conversely, for $y:\Gamma'\to X$ we have $y\in |\pi^*\exists_\pi \psi|$ if and only if there exists $z:\Gamma' \to X$ such that $z\pi=y\pi$ and $X \vdash \psi(z)$, which is equivalent to $y\in |\psi|_X$, showing that $|\exists_\pi\pi^*\psi|_X=|\psi|_X$. This concludes the proof that $h\exists_\pi$ is the inverse for $h\pi^*$.
\end{proof}

We are now ready to prove the \Third:

\begin{proof}[Proof of the \Third:] The idea is to use \cref{lemma:thrid_for_triv_cof} together with Brown's factorization lemma from \cite{brown1973abstract}, or rather an adaptation of it to the setting of weak model structures that we present now. If $f:X \to Y$ is a weak equivalence between cofibrant objects in a weak model category. In general we cannot form a cylinder object for $X$, but instead a ``weak cylinder'' for $X$, that is a diagram:
\[ \begin{tikzcd}
    X \coprod X \ar[d,hook] \ar[r,"\nabla"] &  X \ar[d,hook,"\sim"] \\
    IX \ar[r,"\sim"] & DX,
  \end{tikzcd}\]
we then take the pushout of this whole diagram by the map $X \to Y$, using either of the two canonical maps $X \to X \coprod X$:
  \begin{equation}
    \label{Weak-brown-diag}\begin{tikzcd}
    X \coprod Y \ar[d,hook] \ar[r,"(id{,}f)"] &  Y \ar[d,hook,"\sim"] \\
    IX\coprod_X Y \ar[r,"\sim"] & DX \coprod_X Y
  \end{tikzcd}
  \end{equation}
and by precomposing with the coproduct inclusion $X \to X \coprod Y$, we obtain a diagram:
\[\begin{tikzcd}
     X \ar[d,hook] \ar[r,"f"] &  Y \ar[d,hook,"\sim"] \\
    IX\coprod_X Y \ar[r,"\sim"] & DX \coprod_X Y
  \end{tikzcd}\]
three of the four maps here are weak equivalence, so it follows by the $2$-out-of-$3$ property that the left vertical map is also a weak equivalence, hence a trivial cofibration. Applying $h\Lb$ we obtain a diagram:
\[\begin{tikzcd}
     h\Lb(X)  &  h\Lb(Y) \ar[l,"f^*"]  \\
    h\Lb\left(IX\coprod_X Y \right) \ar[u]  & \ar[l] \ar[u] h\Lb\left( DX \coprod_X Y \right) 
  \end{tikzcd}\]
The two vertical arrows are bijections because of \cref{lemma:thrid_for_triv_cof}, so in order to show that $f^*$ is a bijection, it is enough to show that the bottom map is a bijection.
This bottom horizontal map fits into a commutative diagram:
\[\begin{tikzcd}
   &  Y  \ar[d,hook,"\sim"] \ar[dl,hook,"\sim"swap] \\
    IX\coprod_X Y \ar[r,"\sim"] & DX \coprod_X Y,
\end{tikzcd}\]
where the arrow $Y \to IX \coprod_X Y$ is obtained as the pushout:
\[\begin{tikzcd}
    X \ar[r] \ar[d,hook,"\sim"] \ar[dr,phantom,"\ulcorner"very near end] & Y \ar[d,hook,"\sim"] \\
    IX \ar[r] & IX \coprod_X Y 
  \end{tikzcd}\]
Applying the $h\Lb$ functor, we get a triangle:

\[\begin{tikzcd}
   &  h\Lb \left(Y\right)  \\
    h\Lb\left(IX\coprod_X Y\right)  \ar[ur] & h\Lb\left( DX \coprod_X Y \right) \ar[l] \ar[u]
  \end{tikzcd}\]
the two vertical and diagonal arrows are bijections because of \cref{lemma:thrid_for_triv_cof}, and so the third, horizontal, arrows also is, which concludes the proof.
  
\end{proof}

We can also show the injectivity part of the \Fourth.

\begin{lemma}\label{induced-functor:injective}
Let $F: \catM \rightleftarrows \catN:G$ a Quillen equivalence. Then, for any cofibrant object $\Gamma \in \catM$, the induced map $h\Lb F_\Gamma: h\Lb_\lambda^\catM(\Gamma) \to h\Lb_\lambda^\catN(F\Gamma)  $ is injective.  
\end{lemma}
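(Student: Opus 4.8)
The plan is to unwind the definition of $h\Lb F_\Gamma$ and reduce injectivity to a purely semantic statement that can be checked by transporting validity of formulas back and forth across the adjunction. Concretely, suppose $\phi,\psi \in \Lb_\lambda^\catM(\Gamma)$ satisfy $F(\phi) \approx F(\psi)$; we must show $\phi \approx \psi$. By \cref{bifibrant-semantic-relation} it suffices to check that $|\phi|_Y = |\psi|_Y$ for every bifibrant object $Y \in \catM$, i.e. that for every map $v : \Gamma \to Y$ one has $Y \vdash \phi(v) \Leftrightarrow Y \vdash \psi(v)$.

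The key point is to replace the $\catM$-object $Y$ by a fibrant object of $\catN$. Since $Y$ is cofibrant, $FY$ is cofibrant in $\catN$; choose a fibrant replacement, i.e. a trivial cofibration $FY \trivialcof X$ with $X$ fibrant. Its transpose under $F \dashv G$ is a map $w : Y \to GX$, and because $F \dashv G$ is a Quillen equivalence and $FY \to X$ is a weak equivalence from a cofibrant object to a fibrant object, this transpose $w$ is a weak equivalence; moreover $GX$ is fibrant because $G$ is a right Quillen functor. Since $Y$ is bifibrant, $w : Y \to GX$ is thus a weak equivalence between fibrant objects, and the \Second\ (\cref{invariance-theorems}) applies to it.

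Now fix $v : \Gamma \to Y$, set $u \coloneqq w \circ v : \Gamma \to GX$, and let $\tilde u : F\Gamma \to X$ be its transpose. By the \Second\ applied to $w$ we get $Y \vdash \phi(v) \Leftrightarrow GX \vdash \phi(u)$, and by the defining property of $F(\phi)$ recorded in \cref{left-quillen:map-languages} we get $GX \vdash \phi(u) \Leftrightarrow X \vdash F(\phi)(\tilde u)$. Since $X$ is fibrant and $F(\phi) \approx F(\psi)$, we have $X \vdash F(\phi)(\tilde u) \Leftrightarrow X \vdash F(\psi)(\tilde u)$; running the same two equivalences backwards with $\psi$ in place of $\phi$ yields $X \vdash F(\psi)(\tilde u) \Leftrightarrow GX \vdash \psi(u) \Leftrightarrow Y \vdash \psi(v)$. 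Chaining these equivalences gives $Y \vdash \phi(v) \Leftrightarrow Y \vdash \psi(v)$, so $\phi \approx_b \psi$ and hence $\phi \approx \psi$ by \cref{bifibrant-semantic-relation}. This proves $h\Lb F_\Gamma$ is injective.

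The only genuinely delicate step, and the one I would be careful about, is the production of the weak equivalence $w : Y \to GX$ in the weak model category setting: one needs that the derived unit of a Quillen equivalence is a weak equivalence at cofibrant objects, which comes from the characterization of Quillen equivalences in terms of the correspondence between weak equivalences $FA \to X$ and their transposes $A \to GX$ (for $A$ cofibrant and $X$ fibrant), together with the fact that $G$ preserves fibrant objects so that $GX$ is indeed fibrant and the \Second\ is applicable. Everything else is a formal chain of equivalences built from the two invariance inputs and the adjunction formula of \cref{left-quillen:map-languages}.
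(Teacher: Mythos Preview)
Your proof is correct and follows essentially the same strategy as the paper's: reduce to bifibrant test objects via \cref{bifibrant-semantic-relation}, produce a fibrant object on the $\catN$ side together with a weak equivalence on the $\catM$ side, and then chain the adjunction formula from \cref{left-quillen:map-languages} with the invariance theorems. The only cosmetic difference is that you construct the comparison explicitly as the derived unit $Y \to G((FY)^\fib)$ and invoke the \Second\ directly, whereas the paper phrases the same step by passing to the homotopy category and picking a bifibrant $\catN$-object whose image under $G$ is isomorphic to the given test object there.
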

\begin{proof}
  Let $\phi$ and $\psi$ be formulas in $\Lb_\lambda^\catM(\Gamma)$ such that $F(\phi) \approx F(\psi)$ \ie $F(\phi)$ and $F(\psi)$ are equal in $h\Lb_\lambda^\catN(F\Gamma)$. We must show that $\psi \approx \phi$. Alternatively, by \cref{bifibrant-semantic-relation} we can show that $\psi \approx_b \phi$. The Quillen equivalence induces an equivalence between homotopy categories $Ho(G):Ho(\catN^{\bif}) \to Ho(\catM^{\bif})$. Hence, there is a bifibrant object $Y\in \catN$ such that $ GY$ is isomorphic to $X$ in $Ho(\catM^{\bif})$.
  Given any $x:\Gamma \to X$, denote by $y: \Gamma \to GY$ any map such that the following triangle
  % https://q.uiver.app/#q=WzAsMyxbMSwwLCJYIl0sWzAsMCwiQSJdLFsxLDEsIkdZIl0sWzEsMiwieSIsMl0sWzEsMCwieCJdLFswLDIsIlxcY29uZyJdXQ==
\[\begin{tikzcd}
	A & X \\
	& GY
	\arrow["x", from=1-1, to=1-2]
	\arrow["y"', from=1-1, to=2-2]
	\arrow["\cong", from=1-2, to=2-2]
      \end{tikzcd}\]
    commutes in $\Ho(\catM^{\bif})$. Lastly, let $y': F\Gamma \to Y$ the transpose of $y$ via the Quillen adjunction. It follows from the first invariance theorem \cref{invariance-theorems} that $X \vdash \phi (x)$ if and only if $GY \vdash \phi(y)$. From \cref{induced-map:homotopy-bool-alg}, this is equivalent to $Y \vdash F(\psi)(y') $. By assumption $F(\phi) \approx F(\psi)$, so $Y \vdash F(\psi)(y')$. Again, this is $GY  \vdash \psi (y)$ and $X \vdash \psi (x)$. This establishes the equality $|\phi|_X=|\psi|_X$ for all $X \in \catM$ bifibrant, which proves $\psi \approx_b \phi$, and hence $\psi \approx \phi$. This concludes the proof of the statement.
  \end{proof}

  We now explain our strategy to prove the rest of \cref{invariance-theorems-34}, that is the surjectivity part of the \Fourth.

  In \cite{barton2019model}, Reid Barton constructs a model 2-category structure on the 2-category of simplicial model categories. The trivial fibrations satisfy a property, that Barton called ``extensible'' (see \cref{extensible-morphism:definition}). In this section, we introduce a version of these in the non-enriched case, and we call those functors \emph{Barton trivial fibrations}. In \cref{invariance-along-trivial-fibrations} we show that the result holds for Barton trivial fibrations. After that, the idea is to use the same strategy as for the proof of the \Third\ based on this modified Brown factorization lemma to conclude the result holds for general Quillen equivalences.  We could do this immediately for combinatorial simplicial model categories using Brown's lemma in Barton's model structure, but for the general case we give a direct proof of the existence of the appropriate diagram which is inspired by how it would be done in Barton's model structure, but without relying on it directly. This is done in \cref{quillen-equivalence:factorization} using \cref{path-objects}.

\subsection{Invariance along Barton trivial fibrations} \label{invariance-along-trivial-fibrations}

In this section we introduce a class of left Quillen functor that we call \emph{Barton trivial fibrations} as they are essentially a non-simplicial version of the trivial fibrations of the model structure constructed by Barton in \cite{barton2019model}, and we establish that \cref{third-invariance:thm} holds for these particular functors.

\begin{definition} \label{extensible-morphism:definition}
  Let $F: \catC \to \catD$ a morphism between $\kappa$-coclans. We say that $F$ is \emph{extensible} if for every object in $ X \in \catC$ and for any cofibration $ g: FX \cofibration Y \in \catD$ there exists $f: X \cofibration Z$ and an isomorphism $ F(Z) \cong Y$ making the obvious triangle commutative.

  Dually, $F: \catC \to \catD$ a morphism between $\kappa$-clans is \emph{extensible} if the induced map of $\kappa$-coclans $F^\op : \catC^\op \to \catD^\op$ is extensible.

In our setting, a functor $F:\catM \to \catN$ between weak model categories will be called extensible if the coclan morphism $F: \catM^\cof \to \catN^\cof$ is extensible.
\end{definition}

The terminology \emph{extensible} in the definition above for both clans and coclans, instead of ``extensible'' and ``co-extensible'', is simply because it is always clear whether it refers to cofibrations or fibrations. This is because, for example, when considering a morphism between clans the relevant structure that ought to be preserved is that related to fibrations. The name extensible from \cref{extensible-morphism:definition} is adapted from Reid Barton's PhD thesis \cite[Definition 8.3.1]{barton2019model}.

\begin{definition}
  A left Quillen functor $F:\catM \to \catN$ between weak model categories is called \emph{weakly conservative} if for any core cofibration $x\cofibration y \in \catM^\cof$ such that $h:Fx\trivialcof Fy$ is a trivial cofibration, the map $x\cofibration y$ is a trivial cofibration.
\end{definition}

The `weakly' part in the previous definition does not come from weak model categories, but rather from the fact that core trivial cofibrations are weak equivalences.

\begin{definition} \label{barton-trivialfib:def}
   Let $F: \catM \to \catN$ a left Quillen functor between weak model categories. We say that $F$ is a \emph{Barton trivial fibration} if it is extensible as a morphism between of the coclans $\catM^\cof$ and $\catN^\cof$ and weakly conservative.
 \end{definition}

 \begin{remark}
   Barton trivial fibrations which are also simplicial Quillen functors between combinatorial simplicial model categories are exactly the trivial fibrations in \cite{barton2019model} in the model 2-category of pre-model categories. As the reader might anticipate, the notion of fibration between (simplicial) model categories exists as well, but we will make no use of it.
 \end{remark}

 The reason why we isolate the two properties in the definition above is because the first one is well-behaved with respect to the language we constructed, see \cref{trivial-fibration:surjective}. In \cref{extensible-conservative:quillen-equiv}, we justify the ``trivial'' part of \cref{barton-trivialfib:def} by showing that an extensible and weakly conservative left Quillen functor is a left Quillen equivalence, to do this we need an intermediate result.
 
\begin{lemma} \label{lift-correspondence:lemma}
  Let be $F:\catM \to \catN$ a left Quillen functor which is extensible and weakly conservative. Suppose there are diagrams
  % https://q.uiver.app/#q=WzAsNyxbMCwwLCJBIl0sWzAsMSwiQiJdLFsxLDAsIkMiXSxbMiwwLCJGQSJdLFsyLDEsIkZCIl0sWzMsMCwiRkMiXSxbMywxLCJaIl0sWzAsMSwiaSIsMix7InN0eWxlIjp7InRhaWwiOnsibmFtZSI6Imhvb2siLCJzaWRlIjoidG9wIn19fV0sWzAsMiwiZiJdLFszLDQsIkZpIiwyLHsic3R5bGUiOnsidGFpbCI6eyJuYW1lIjoiaG9vayIsInNpZGUiOiJ0b3AifX19XSxbMyw1LCJGZiJdLFs0LDYsInUiLDJdLFs1LDYsIlxcc2ltIiwwLHsic3R5bGUiOnsidGFpbCI6eyJuYW1lIjoiaG9vayIsInNpZGUiOiJ0b3AifX19XV0=
\[\begin{tikzcd}
	A & C & FA & FC \\
	B && FB & Z
	\arrow["f", from=1-1, to=1-2]
	\arrow["i"', hook, from=1-1, to=2-1]
	\arrow["Ff", from=1-3, to=1-4]
	\arrow["Fi"', hook, from=1-3, to=2-3]
	\arrow["\sim","v"', hook, from=1-4, to=2-4]
	\arrow["u"', from=2-3, to=2-4]
      \end{tikzcd}\]
    in $\catM$ and $\catN$, respectively, where $C \in \catM^\bif$ and $Z\in \catN^\bif$ are bifibrant and the right square is commutative. Then, there exists $g:B \to C$ that makes the triangle commutative and such that in the diagram
    % https://q.uiver.app/#q=WzAsNCxbMCwwLCJGQSJdLFswLDEsIkZCIl0sWzEsMCwiRkMiXSxbMSwxLCJaIl0sWzAsMSwiRmkiLDIseyJzdHlsZSI6eyJ0YWlsIjp7Im5hbWUiOiJob29rIiwic2lkZSI6InRvcCJ9fX1dLFswLDIsIkZmIl0sWzEsMywidSIsMl0sWzIsMywiXFxzaW0iLDAseyJzdHlsZSI6eyJ0YWlsIjp7Im5hbWUiOiJob29rIiwic2lkZSI6InRvcCJ9fX1dLFsxLDIsIkZnIiwxXV0=
\[\begin{tikzcd}
	FA & FC \\
	FB & Z
	\arrow["Ff", from=1-1, to=1-2]
	\arrow["Fi"', hook, from=1-1, to=2-1]
	\arrow["\sim","v"', hook, from=1-2, to=2-2]
	\arrow["Fg"{description}, from=2-1, to=1-2]
	\arrow["u"', from=2-1, to=2-2]
      \end{tikzcd}\]
    the lower triangle commutes up to homotopy relative to $FA$.
  \end{lemma}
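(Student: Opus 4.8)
The plan is to produce the map $g:B \to C$ by using that $F$ is extensible to "lift" the pushout square in $\catN$ that exhibits the relationship between $FA,FB,FC$, and then to use that $C$ and $Z$ are bifibrant together with the weak model category axioms to arrange the homotopy-commutativity. First I would form, in $\catN$, the pushout $P \coloneqq FB \coprod_{FA} FC$ of the cofibration $Fi$ along $Ff$; this comes with a canonical map $P \to Z$ induced by $u$ and $v \circ Ff$ (these agree on $FA$ because the right square commutes). Since $F$ is a left Quillen functor it preserves this pushout when $i$ is a core cofibration, so $P \cong F\left(B \coprod_A C\right)$ (assuming $f:A\to C$ lands in $\catM^\cof$, which it does since $C$ is bifibrant); hence the natural cofibration $FB \cofibration P$ is the image under $F$ of the core cofibration $B \cofibration B\coprod_A C$.

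Next I would factor $P \to Z$ in $\catN$ as a core cofibration followed by a core trivial fibration; since $Z$ is bifibrant this is legitimate in a weak model category. Then I would apply extensibility of $F$: starting from the cofibration $B \cofibration Q$ in $\catM$ whose image (up to the iso above) is the composite $FB \cofibration P \cofibration (\text{the codomain of the factorization})$, extensibility produces an object $\widetilde{Q} \in \catM^\cof$ with $F(\widetilde{Q})$ isomorphic to that codomain and a cofibration $B \cofibration \widetilde{Q}$ over it. Because the codomain receives a trivial fibration from it to $Z$, and because we want to land in $C$, I would then compose: the trivial fibration $\widetilde{Q}' \to Z$ (image of something) together with a lift of $C \to Z$ — here is where bifibrancy of $C$ and $Z$ is used, to solve the lifting problem of the trivial fibration against the cofibrant object — and use weak conservativity to descend this back to $\catM$. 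The map $g:B \to C$ is then extracted as the composite $B \cofibration \widetilde{Q} \to C$, where the second map comes from a lift.

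The cleanest way to organize the last step is probably: having produced in $\catM$ a core cofibration $B \cofibration \widetilde{Q}$ with a map $\widetilde{Q} \to C$ making the triangle through $A$ commute strictly, one gets $g$ by composition, and the homotopy $Fg \sim $ (the desired diagonal) relative to $FA$ comes from the fact that both $FB \to P \to Z$ and $FB \xrightarrow{Fg} FC \xrightarrow{v} Z$ are two lifts in the same lifting problem against the trivial cofibration $v:FC \trivialcof Z$ (or against the cofibration $FA \cofibration FB$ into the fibrant-enough object), and lifts against trivial cofibrations into fibrant objects are unique up to homotopy relative to the domain — this is exactly the kind of fact used in the proof of \cref{lemma:thrid_for_triv_cof} and in \cref{invariance-theorems}.

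The main obstacle I expect is bookkeeping around \emph{which} factorizations and lifts exist in a \emph{weak} model category: unlike a Quillen model category, one can only factor maps with cofibrant domain as (core cofibration, core trivial fibration) when the codomain is fibrant, and one can only lift core cofibrations against core fibrations. So the delicate point is to make sure that at each stage the objects involved are cofibrant (to apply extensibility, which is about core cofibrations) and that the codomains are fibrant (to apply factorization) — this is why the hypothesis forces $C$ and $Z$ to be bifibrant. A secondary technical point is verifying that $F$ genuinely preserves the relevant pushout $FB\coprod_{FA}FC$; this is standard for a left Quillen functor applied to a pushout along a cofibration with cofibrant domain, but it needs to be invoked carefully. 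Once these are pinned down, the homotopy-relative-to-$FA$ conclusion follows formally from uniqueness of lifts up to homotopy, so that is not where the difficulty lies.
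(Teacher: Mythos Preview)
Your initial steps are right and match the paper: form the pushout $P = FB \coprod_{FA} FC \cong F(B\coprod_A C)$, then factor $P \to Z$ as a cofibration followed by a trivial fibration $P \cofibration Y \trivialfib Z$. The gap is in what you do next with extensibility. You propose to lift the composite cofibration $FB \cofibration Y$ to a cofibration $B \cofibration \widetilde{Q}$ in $\catM$. But if you lift starting from $B$, you lose all connection to $C$ in $\catM$: extensibility only gives you an object $\widetilde{Q}$ with $F\widetilde{Q}\cong Y$, and there is no reason for a map $\widetilde{Q} \to C$ (or $C \to \widetilde{Q}$) to exist in $\catM$. You assert such a map (``with a map $\widetilde{Q}\to C$ making the triangle through $A$ commute strictly'') but never explain where it comes from; ``use weak conservativity to descend this back to $\catM$'' is not an operation that produces maps, it only detects that a given cofibration is trivial.

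The paper's fix is to apply extensibility to the cofibration $F(B\coprod_A C) \cofibration Y$ instead, lifting it to $B\coprod_A C \cofibration D$ with $FD\cong Y$. Now $D$ automatically receives maps $h:C\to D$ and $k:B\to D$ in $\catM$ via the pushout legs. The map $Fh:FC\to FD\cong Y$ is a trivial cofibration (by 2-out-of-3, since $FC\trivialcof Z$ and $Y\trivialfib Z$), so weak conservativity makes $h:C\cofibration D$ a trivial cofibration in $\catM$. Because $C$ is bifibrant, $h$ admits a retraction $r:D\to C$ with $rh=\idx_C$, and one sets $g\coloneqq rk$. This is precisely where weak conservativity and the bifibrancy of $C$ are used, and it is the step your plan is missing.

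Your homotopy argument also does not work as stated: $u$ and $v\circ Fg$ are two extensions of $v\circ Ff:FA\to Z$ along the cofibration $Fi:FA\cofibration FB$, but extensions along a \emph{non}-trivial cofibration into a fibrant object need not be homotopic relative to the domain. The paper instead argues in $\Ho(FA/\catN)$: there $Fh\circ Fr = \idx$, so $Fh\circ Fg = Fh\circ Fr\circ Fk = Fk$, and composing with the trivial fibration $\rho:FD\trivialfib Z$ (which satisfies $\rho\circ Fh = v$ and $\rho\circ Fk = u$) gives $v\circ Fg = u$ in the relative homotopy category.
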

  \begin{proof}
    Since $F$ is left Quillen then we have $F(B\coprod_A C) \cong FB\coprod_{FA} FC$ and is cofibrant. Up to this isomorphism, we factor the map $F(B\coprod_A C) \to Z$ as $F(B\coprod_A C) \cofibration Y \trivialfib Z$. Since $F$ is extensible we can lift this cofibration to a cofibration $B\coprod_A C \cofibration D$ together with the isomorphism $FD \cong Y$ making the resulting triangle commutative, which also implies that $FD$ is bifibrant since $Y$ is. Furthermore, this produces a commutative diagram as on the left,
    % https://q.uiver.app/#q=WzAsOSxbMywwLCJGQyJdLFszLDEsIkZEIl0sWzQsMSwiWSJdLFs0LDAsIloiXSxbMCwwLCJBIl0sWzAsMSwiQiJdLFsxLDAsIkMiXSxbMSwxLCJCXFxjb3Byb2RfQUMiXSxbMiwyLCJEIl0sWzEsMiwiXFxjb25nIiwxXSxbMCwyLCIiLDEseyJzdHlsZSI6eyJ0YWlsIjp7Im5hbWUiOiJob29rIiwic2lkZSI6InRvcCJ9fX1dLFsyLDMsIlxcc2ltIiwyLHsic3R5bGUiOnsiaGVhZCI6eyJuYW1lIjoiZXBpIn19fV0sWzAsMywiXFxzaW0iLDAseyJzdHlsZSI6eyJ0YWlsIjp7Im5hbWUiOiJob29rIiwic2lkZSI6InRvcCJ9fX1dLFswLDEsIkZoIiwyLHsic3R5bGUiOnsidGFpbCI6eyJuYW1lIjoiaG9vayIsInNpZGUiOiJ0b3AifX19XSxbNiw3LCIiLDEseyJzdHlsZSI6eyJ0YWlsIjp7Im5hbWUiOiJob29rIiwic2lkZSI6InRvcCJ9fX1dLFs0LDYsImYiXSxbNCw1LCJpIiwyLHsic3R5bGUiOnsidGFpbCI6eyJuYW1lIjoiaG9vayIsInNpZGUiOiJ0b3AifX19XSxbNSw3XSxbNyw4LCJsIiwxLHsic3R5bGUiOnsidGFpbCI6eyJuYW1lIjoiaG9vayIsInNpZGUiOiJ0b3AifX19XSxbNiw4LCJoIiwxXSxbNSw4LCJrIiwxXSxbNywxLCJGIiwwLHsibGFiZWxfcG9zaXRpb24iOjYwLCJzaG9ydGVuIjp7InNvdXJjZSI6NDAsInRhcmdldCI6MjB9LCJzdHlsZSI6eyJ0YWlsIjp7Im5hbWUiOiJtYXBzIHRvIn19fV1d
\[\begin{tikzcd}
	A & C && FC & Z \\
	B & {B\coprod_AC} && FD & Y \\
	&& D
	\arrow["f", from=1-1, to=1-2]
	\arrow["i"', hook, from=1-1, to=2-1]
	\arrow[hook, from=1-2, to=2-2]
	\arrow["h"{description}, from=1-2, to=3-3]
	\arrow["\sim", hook, from=1-4, to=1-5]
	\arrow["Fh"', hook, from=1-4, to=2-4]
	\arrow[hook, from=1-4, to=2-5]
	\arrow[from=2-1, to=2-2]
	\arrow["k"{description}, from=2-1, to=3-3]
	\arrow["F"{pos=0.6}, shorten <=18pt, shorten >=9pt, maps to, from=2-2, to=2-4]
	\arrow["l"{description}, hook, from=2-2, to=3-3]
	\arrow["\cong"{description}, from=2-4, to=2-5]
	\arrow["\sim"', two heads, from=2-5, to=1-5]
\end{tikzcd}\]
    while the diagram on the right is the result of applying $F$, we introduce the name $\rho:FD \trivialfib Z$ for the evident resulting trivial fibration. We can use the 2-out-of-3 property of weak equivalences between cofibrant-fibrant objects to conclude that $ FC \cofibration Y$ is a weak equivalence, and hence a trivial cofibration. Since $F$ is weakly conservative, the map $C \cofibration D$ must be a weak equivalence too. Using that $C$ is bifibrant we can obtain a dashed arrow which is a homotopy inverse of $h$
% https://q.uiver.app/#q=WzAsNSxbMCwwLCJBIl0sWzAsMSwiQiJdLFsxLDEsIkQiXSxbMSwwLCJDIl0sWzIsMCwiQyJdLFswLDMsImYiXSxbMCwxLCJpIiwyLHsic3R5bGUiOnsidGFpbCI6eyJuYW1lIjoiaG9vayIsInNpZGUiOiJ0b3AifX19XSxbMSwyLCJrIiwyXSxbMywyLCJcXHNpbSIsMCx7InN0eWxlIjp7InRhaWwiOnsibmFtZSI6Imhvb2siLCJzaWRlIjoidG9wIn19fV0sWzMsNCwiSWQiXSxbMiw0LCJyIiwyLHsic3R5bGUiOnsiYm9keSI6eyJuYW1lIjoiZGFzaGVkIn19fV1d
\[\begin{tikzcd}
	A & C & C \\
	B & D,
	\arrow["f", from=1-1, to=1-2]
	\arrow["i"', hook, from=1-1, to=2-1]
	\arrow["Id", from=1-2, to=1-3]
	\arrow["\sim","h"', hook, from=1-2, to=2-2]
	\arrow["k"', from=2-1, to=2-2]
	\arrow["r"', dashed, from=2-2, to=1-3]
      \end{tikzcd}\]
    we can take $g\coloneqq rk$ to be a diagonal filler of the square. Observe that when we apply $F$ to the resulting diagram, it gives us the square and the diagonal in the diagram
% https://q.uiver.app/#q=WzAsNSxbMCwwLCJGQSJdLFswLDEsIkZCIl0sWzIsMCwiRkMiXSxbMiwxLCJGRCJdLFszLDEsIloiXSxbMSwyLCJGZyIsMV0sWzAsMSwiRmkiLDJdLFswLDIsIkZmIl0sWzEsMywiRmsiLDFdLFszLDQsIlxcc2ltIiwwLHsic3R5bGUiOnsiaGVhZCI6eyJuYW1lIjoiZXBpIn19fV0sWzIsNCwiXFxzaW0iLDAseyJzdHlsZSI6eyJ0YWlsIjp7Im5hbWUiOiJob29rIiwic2lkZSI6InRvcCJ9fX1dLFsxLDQsInUiLDIseyJjdXJ2ZSI6Mn1dLFsyLDMsIkZoIiwyLHsic3R5bGUiOnsidGFpbCI6eyJuYW1lIjoiaG9vayIsInNpZGUiOiJ0b3AifX19XV0=
\[\begin{tikzcd}
	FA && FC \\
	FB && FD & Z
	\arrow["Ff", from=1-1, to=1-3]
	\arrow["Fi"', from=1-1, to=2-1]
	\arrow["Fh"',"\sim", hook, from=1-3, to=2-3]
	\arrow["\sim","v"', hook, from=1-3, to=2-4]
	\arrow["Fg"{description}, from=2-1, to=1-3]
	\arrow["Fk"{description}, from=2-1, to=2-3]
	\arrow["u"', bend right, from=2-1, to=2-4]
	\arrow["\sim","\rho"', two heads, from=2-3, to=2-4]
\end{tikzcd}\]
where a priori the outer triangle involving $u$ is not commutative. However, we can realize this diagram in the homotopy category $\Ho(FA/\catN)$. So working in the homotopy category we have $hr=Id$ and $FhFr=Id$. By construction, we also get $Fg=FrFk$, therefore $FhFg=FhFrFk=Fk$ in the homotopy category, and $\rho:FD \trivialfib Z$ becoming an isomorphism implies $vFg=u$ up to homotopy relative to $FA$.
  \end{proof}

\begin{corollary} \label{extensible-conservative:quillen-equiv}
  Let $F:\catM \to \catN$ a left Quillen functor between weak model categories. Assume that $F:\catM^\cof \to \catN^\cof$ is extensible and weakly conservative, then $F$ is a left Quillen equivalence.
\end{corollary}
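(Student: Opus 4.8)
The plan is to show that $F$ induces an equivalence on homotopy categories of bifibrant objects, which by the standard characterisation of Quillen equivalences (valid also for weak model categories, by the theory recalled earlier) is exactly the assertion that $F$ is a left Quillen equivalence. Since $F$ preserves cofibrant objects, sending a bifibrant $A$ to a fibrant replacement $r_A\colon FA\trivialcof R(FA)$ (with $R(FA)$ bifibrant) and a map $g$ to the induced $R(Fg)$ (characterised up to homotopy by $R(Fg)\circ r_A = r_B\circ Fg$) defines a functor $\overline F\colon\Ho(\catM^\bif)\to\Ho(\catN^\bif)$, and it is enough to prove it essentially surjective, full and faithful. Essential surjectivity is the one step using extensibility directly: for $Y\in\catN^\bif$ the map $\emptyset_\catN=F(\emptyset_\catM)\cofibration Y$ is a cofibration, so extensibility lifts it to a cofibration $\emptyset_\catM\cofibration Z$ with $F(Z)\cong Y$; then $Z$ is cofibrant and $\overline F(Z)=R(FZ)$ is isomorphic to $Y$ in $\Ho(\catN^\bif)$.

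\textbf{Fullness.} Fix bifibrant $A,B\in\catM$ and a morphism $\varphi\colon R(FA)\to R(FB)$ of $\Ho(\catN^\bif)$ represented by an actual map $\varphi_0$. I apply \cref{lift-correspondence:lemma} to $i\colon\emptyset\cofibration A$ and $f\colon\emptyset\to B$ in $\catM$ (with $B$ bifibrant) and, on the $\catN$ side, to $Z\coloneqq R(FB)$, $v\coloneqq r_B\colon FB\trivialcof R(FB)$ and $u\coloneqq\varphi_0\circ r_A\colon FA\to R(FB)$; the required square commutes because every arrow in it factors through $\emptyset$. The lemma produces $g\colon A\to B$ with $r_B\circ Fg\simeq\varphi_0\circ r_A$, i.e.\ $R(Fg)\circ r_A\simeq\varphi_0\circ r_A$; since $r_A$ is a weak equivalence between cofibrant objects and $R(FB)$ is fibrant, precomposition with $r_A$ is injective on homotopy classes of maps into $R(FB)$, so $R(Fg)\simeq\varphi_0$ and $\overline F$ sends the class of $g$ to $\varphi$.

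\textbf{Faithfulness.} Let $g_0,g_1\colon A\to B$ in $\catM^\bif$ satisfy $R(Fg_0)\simeq R(Fg_1)$. Since $A$ is bifibrant, the fold $A\sqcup A\to A$ is a map from a cofibrant to a fibrant object, hence factors as a cylinder object $A\sqcup A\cofibration IA\trivialfib A$ with $IA$ cofibrant; by Ken Brown's lemma $F$ sends the weak equivalence $IA\trivialfib A$ between cofibrant objects to a weak equivalence, so $FA\sqcup FA\cofibration F(IA)\weakequiv FA$ is again a cylinder for $FA$. Unwinding $R(Fg_0)\simeq R(Fg_1)$ gives $r_BFg_0\simeq r_BFg_1\colon FA\to R(FB)$, and — $FA$ being cofibrant and $R(FB)$ fibrant — this homotopy may be realised by a map $\widetilde H\colon F(IA)\to R(FB)$ restricting to $(r_BFg_0,r_BFg_1)$ on $FA\sqcup FA$. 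Applying \cref{lift-correspondence:lemma} to $i\colon A\sqcup A\cofibration IA$, $f=(g_0,g_1)\colon A\sqcup A\to B$ in $\catM$ and, on the $\catN$ side, to $Z=R(FB)$, $v=r_B$, $u=\widetilde H$ (the square commutes by construction of $\widetilde H$) yields $g\colon IA\to B$ with $g|_{A\sqcup A}=(g_0,g_1)$, that is, a left homotopy witnessing $g_0\simeq g_1$.

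\textbf{Main obstacle.} The delicate point is faithfulness, and specifically the way \cref{lift-correspondence:lemma} must be fed there: its $\catN$-side input has to contain a \emph{trivial} cofibration $v$, so one cannot simply transport a cylinder or a homotopy found in $\catN$ back into $\catM$. The resolution is to build the cylinder for $A$ inside $\catM$ and then use Ken Brown's lemma to see that its $F$-image is still a cylinder in $\catN$; this is precisely what makes the given homotopy $R(Fg_0)\simeq R(Fg_1)$ fit the shape the lemma requires. The remaining verifications — the commutativity hypotheses of \cref{lift-correspondence:lemma} in each application, the existence of the relevant cylinders and fibrant replacements in the weak model category setting, and the translation of ``$\overline F$ is an equivalence'' into ``$F$ is a left Quillen equivalence'' — are routine with the machinery already developed.
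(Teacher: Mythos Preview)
Your proof is correct and follows essentially the same route as the paper's: both arguments establish that $F$ induces an equivalence on homotopy categories by checking essential surjectivity (via extensibility applied to $\emptyset\cofibration Y$), fullness, and faithfulness, with \cref{lift-correspondence:lemma} invoked in exactly the same way for the latter two steps. The only cosmetic differences are that you work throughout with bifibrant objects in $\catM$ and take fibrant replacements only in $\catN$, whereas the paper takes a fibrant replacement $C^{\fib}$ in $\catM$ during the faithfulness step; and you spell out the Ken Brown argument for why $F(IA)$ is again a cylinder, where the paper simply asserts that cylinders are preserved.
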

\begin{proof}
  We show directly that $F$ induces an equivalence of categories between the homotopy categories.
  
  Assume that $X \in \catN^\cof$ is cofibrant. Then we can use that $F$ is extensible for the cofibration $0 \cofibration X$ to obtain a cofibrant object $A \in \catM^\cof$ and an isomorphism $FA \cong X \in \catN$. This shows that the induced functor is essentially surjective.
  
  We now show that for $\Ho(\catM) \to \Ho(\catN)$ is full. Let $B,C \in \catM^\cof$ cofibrant objects. We could take a fibrant replacement $C^\fib$ and use this instead, so we can freely assume that $C$ is bifibrant. A map $FB \to FC \in \Ho(\catN)$ can be represented by a cospan
  \[
    FB \to (FC)^\fib \overset{\sim}{\hookleftarrow} FC \in \catN.
  \]
  Therefore, we can use \cref{lift-correspondence:lemma} to find a map $B \to C$ in $\Ho(\catM)$ which is in the preimage.

  Lastly, we see that the induced functor is faithful. Let $A,C \in \catM^\cof$ cofibrant and two maps $f,g:A\to C \in \catM$ which become equal in $\Ho(\catN)$ under the functor induced by $F$. This is just saying that the maps $F\bar f,F\bar g:FA \to F(C^\fib)$ are homotopic where $\bar f,\bar g: A\to C^\fib$ are maps in $\catM$. It will be enough to show that $\bar f$ and $\bar g$ are homotopic \ie there is a diagonal filler for the diagram
  % https://q.uiver.app/#q=WzAsMyxbMCwwLCJBXFxjb3Byb2QgQSJdLFsyLDAsIkNeXFxmaWIiXSxbMCwxLCJJQSJdLFswLDEsIihcXGJhciBmLFxcYmFyIGcpIl0sWzAsMiwiIiwyLHsic3R5bGUiOnsidGFpbCI6eyJuYW1lIjoiaG9vayIsInNpZGUiOiJ0b3AifX19XV0=
\[\begin{tikzcd}
	{A\coprod A} && {C^\fib} \\
	IA
	\arrow["{(\bar f,\bar g)}", from=1-1, to=1-3]
	\arrow[hook, from=1-1, to=2-1]
\end{tikzcd}\]
where $IA$ is a weak cylinder object for $A$. Since $F$ is a left Quillen functor, we can assume that cylinders are preserved. Furthermore, homotopies are independent of the choice of cylinders. We can express the homotopy between of $F\bar f$ and $F\bar g$ in $\catN$ as the commutative square
% https://q.uiver.app/#q=WzAsNCxbMCwwLCJGKEFcXGNvcHJvZCBBKSJdLFswLDEsIkYoSUEpIl0sWzIsMCwiRihCXlxcZmliKSJdLFsyLDEsIkYoQl5cXGZpYileXFxmaWIiXSxbMCwxLCIiLDAseyJzdHlsZSI6eyJ0YWlsIjp7Im5hbWUiOiJob29rIiwic2lkZSI6InRvcCJ9fX1dLFswLDIsIihGXFxiYXIgZixGXFxiYXIgZykiXSxbMiwzLCIiLDAseyJzdHlsZSI6eyJ0YWlsIjp7Im5hbWUiOiJob29rIiwic2lkZSI6InRvcCJ9fX1dLFsxLDMsImgiLDJdXQ==
\[\begin{tikzcd}
	{F(A\coprod A)} && {F(B^\fib)} \\
	{F(IA)} && {F(B^\fib)^\fib},
	\arrow["{(F\bar f,F\bar g)}", from=1-1, to=1-3]
	\arrow[hook, from=1-1, to=2-1]
	\arrow[hook,"\sim", from=1-3, to=2-3]
	\arrow["h"', from=2-1, to=2-3]
\end{tikzcd}\]
  where $h$ is the homotopy, and the fibrant replacement $F(C^\fib)^\fib$ is necessary since $F(C^\fib)$ is not fibrant as $F$ is only left Quillen. The assumptions of \cref{lift-correspondence:lemma} are now satisfied, so this produces a diagonal as on the left whose image fits on the right square up to homotopy:
% https://q.uiver.app/#q=WzAsNyxbMywwLCJGKEFcXGNvcHJvZCBBKSJdLFszLDEsIkYoSUEpIl0sWzUsMCwiRihDXlxcZmliKSJdLFs1LDEsIkYoQ15cXGZpYileXFxmaWIiXSxbMCwwLCJBXFxjb3Byb2QgQSJdLFsyLDAsIkNeXFxmaWIiXSxbMCwxLCJJQSJdLFswLDEsIiIsMCx7InN0eWxlIjp7InRhaWwiOnsibmFtZSI6Imhvb2siLCJzaWRlIjoidG9wIn19fV0sWzAsMiwiKEZcXGJhciBmLEZcXGJhciBnKSJdLFsyLDMsIlxcc2ltIiwwLHsic3R5bGUiOnsidGFpbCI6eyJuYW1lIjoiaG9vayIsInNpZGUiOiJ0b3AifX19XSxbMSwzLCJoIiwyXSxbNCw2LCIiLDAseyJzdHlsZSI6eyJ0YWlsIjp7Im5hbWUiOiJob29rIiwic2lkZSI6InRvcCJ9fX1dLFs0LDUsIihcXGJhciBmLFxcYmFyIGcpIl0sWzYsNSwiSCIsMl0sWzEsMiwiRkgiLDFdXQ==
\[\begin{tikzcd}
	{A\coprod A} && {C^\fib} & {F(A\coprod A)} && {F(C^\fib)} \\
	IA &&& {F(IA)} && {F(C^\fib)^\fib}
	\arrow["{(\bar f,\bar g)}", from=1-1, to=1-3]
	\arrow[hook, from=1-1, to=2-1]
	\arrow["{(F\bar f,F\bar g)}", from=1-4, to=1-6]
	\arrow[hook, from=1-4, to=2-4]
	\arrow["\sim", hook, from=1-6, to=2-6]
	\arrow["H"', from=2-1, to=1-3]
	\arrow["FH"{description}, from=2-4, to=1-6]
	\arrow["h"', from=2-4, to=2-6]
      \end{tikzcd}\]
    The above shows that $\Ho(\catM) \to \Ho(\catN)$ is faithful, concluding the proof that $F$ is a left Quillen equivalence.
\end{proof}

We now return to show the \Fourth\ for the case in which the functor is a Barton trivial fibration. First, we observe that extensible functors always induce a surjection between the languages of clans.

\begin{lemma} \label{trivial-fibration:surjective}
  Let $F:\catM \to \catN$ be an extensible morphism between $\kappa$-clans and $\Gamma \in \catM $. Then, any formula $\Phi \in \Lb_\lambda^\catN (F\Gamma)$ is the image under $F$ of a formula $\Phi_0 \in \Lb_\lambda^\catM(\Gamma)$.
\end{lemma}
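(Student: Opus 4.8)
The plan is to argue by structural induction on formulas, using a syntactic presentation of the target. By \cref{appendix-b} the $\kappa$-clan $\catN$ is equivalent to the contextual category $\catC_{T'}$ of a generalized $\kappa$-algebraic theory $T'$, so every element of $\Lb_\lambda^\catN(W)$ (for $W\in\catN$) is represented by a formula of $\Lcal^{T'}_\lambda(W)$, built up by the clauses of \cref{folgat}: $\top$, $\bot$, $\neg$, $\lambda$-small $\bigvee$ and $\bigwedge$, and $\exists_p$, $\forall_p$ along the generalized display map $p$ of a context extension. Recall from \cref{action-functor:formulas} that $F$ acting on formulas is the canonical morphism of $\lambda$-boolean algebras over $\catM$, $\alpha^F:\Lb_\lambda^\catM\to F^*\Lb_\lambda^\catN$, and that $F(\Phi_0)=\alpha^F_\Gamma(\Phi_0)$. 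For $\Delta\in\catM$ write $\mathrm{Im}(\Delta)\coloneqq\alpha^F_\Delta\bigl(\Lb_\lambda^\catM(\Delta)\bigr)\subseteq\Lb_\lambda^\catN(F\Delta)$; since $\alpha^F$ preserves $\neg$, $\lambda$-small $\bigvee$ and $\bigwedge$, commutes with $(Ff)^*$ for maps $f$ of $\catM$ (naturality), and commutes with $\exists_{F\pi}$ for fibrations $\pi$ of $\catM$, the collection $\mathrm{Im}$ is closed under all of these operations. All operations descend to the quotients $\Lb$ by \cref{rk:logical_operation_compatible}, so reasoning with representatives is harmless.

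I will prove, by induction on the formula $\Psi\in\Lcal^{T'}_\lambda(W)$, the statement $P(W,\Psi)$ for every $W\in\catN$ that is isomorphic to $F(Y)$ for some $Y\in\catM$: \emph{for every $\Delta\in\catM$ and every isomorphism $\theta:F\Delta\xrightarrow{\sim}W$ in $\catN$, one has $\theta^*\Psi\in\mathrm{Im}(\Delta)$}. Granting this, the lemma follows by taking $W=F\Gamma$, $\Delta=\Gamma$, $\theta=\mathrm{id}$: then $\Phi\in\mathrm{Im}(\Gamma)$, i.e.\ $\Phi=F(\Phi_0)$ for some $\Phi_0\in\Lb_\lambda^\catM(\Gamma)$. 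The cases $\Psi=\top,\bot$ are immediate ($\theta^*\Psi$ is again $\top$ or $\bot$); for $\Psi=\neg\Psi'$, $\Psi=\bigvee_i\Psi_i$, $\Psi=\bigwedge_i\Psi_i$ one applies the induction hypothesis to the subformulas in the same context $W$ with the same $\theta$ and uses closure of $\mathrm{Im}(\Delta)$; and $\forall_p\Psi'=\neg\exists_p\neg\Psi'$ reduces to the existential case.

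The essential case is $\Psi=\exists_p\Psi'$ with $p:Z\fibration W$ the generalized display map of a context extension and $\Psi'\in\Lcal^{T'}_\lambda(Z)$. Form the pullback $p':Z'\fibration F\Delta$ of $p$ along $\theta$; since $\theta$ is an isomorphism, the comparison map $\mu:Z'\xrightarrow{\sim}Z$ is one too. Because $F$ is extensible, the fibration $p':Z'\fibration F\Delta$ lifts to a fibration $\tilde p:\tilde Z\fibration\Delta$ in $\catM$ together with an isomorphism $\eta:F\tilde Z\xrightarrow{\sim}Z'$ with $p'\eta=F\tilde p$. In particular $Z\cong F\tilde Z$, so the induction hypothesis $P(Z,\Psi')$ applies with $\Delta$ replaced by $\tilde Z$ and $\theta$ replaced by $\mu\eta:F\tilde Z\xrightarrow{\sim}Z$, giving $\eta^*\mu^*\Psi'=(\mu\eta)^*\Psi'\in\mathrm{Im}(\tilde Z)$. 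Applying $\exists_{F\tilde p}$, under which $\mathrm{Im}$ is closed since $\tilde p$ is a fibration of $\catM$, yields $\exists_{F\tilde p}(\eta^*\mu^*\Psi')\in\mathrm{Im}(\Delta)$. Finally $\exists_{F\tilde p}=\exists_{p'}\exists_\eta$ and $\exists_\eta\eta^*=\mathrm{id}$ because $\eta$ is invertible, so $\exists_{F\tilde p}(\eta^*\mu^*\Psi')=\exists_{p'}(\mu^*\Psi')$; and by the Beck--Chevalley condition for the pullback square with vertical maps $p',p$ and horizontal maps $\mu,\theta$ we have $\exists_{p'}\mu^*=\theta^*\exists_p$, hence $\exists_{p'}(\mu^*\Psi')=\theta^*(\exists_p\Psi')=\theta^*\Psi$. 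Thus $\theta^*\Psi\in\mathrm{Im}(\Delta)$, completing the induction.

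The main obstacle is exactly this existential step: one must juggle the comparison isomorphism produced by extensibility and invoke Beck--Chevalley to rewrite the substituted existential $\theta^*(\exists_p\Psi')$ as a quantification along a fibration lifted from $\catM$. Keeping the induction hypothesis strong enough -- allowing an arbitrary comparison isomorphism $\theta$ rather than insisting that the ambient object be $F(\Delta)$ on the nose -- is what makes this bookkeeping close up; everything else is formal manipulation of a morphism of $\lambda$-boolean algebras over a clan.
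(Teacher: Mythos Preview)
Your proof is correct and follows essentially the same approach as the paper's: structural induction on formulas with a strengthened hypothesis carrying an isomorphism $\theta:F\Delta\cong W$, and extensibility used in the quantifier case to lift the relevant fibration back to $\catM$. Your treatment of the existential step is in fact somewhat cleaner than the paper's---you first pull $p$ back along $\theta$ and then invoke Beck--Chevalley explicitly, whereas the paper applies extensibility directly to a fibration into $F(\Gamma)$ and records the resulting commutative square of $\exists$'s without naming the condition---but this is a cosmetic difference in bookkeeping, not a different argument.
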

\begin{proof}
  Since every $\kappa$-clan is of the form $\C_T$ for some $T$ generalized $\kappa$-algebraic theory, it is enough to show the result is valid for the syntactic definition of language as in \cref{folgat}. We prove by induction on formulas $\Phi \in \Lb_\lambda^\catN(\Delta) $ that, given any context $\Gamma$ and $f:\Delta \cong F(\Gamma)$, there is a formula $\Phi_0 \in \Lb_\lambda^\catM(\Gamma)$ such that $f^*(F\Phi_0)=\Phi$.
  \begin{enumerate}
  \item When $\Phi=\top$ or $\Phi=\bot$, then this can clearly be lifted to $\top$ and $\bot$.
  \item If $\Phi= \neg \Psi$ or $\Phi= \bigvee_{i \in I} \Psi_i$ or $\Phi= \bigwedge_{i \in I} \Psi_i$ then it is also clear that $\Phi$ can be lifted. Indeed, we can simply use the inductive hypothesis to lift each $\Psi_i$ and then use the boolean algebra structure to conclude.
  \item Suppose that $\Phi$ is of the form $\exists_\pi \Psi$ or $\forall_\pi \Psi$ for some fibration $\pi: \Gamma' \fibration F(\Gamma)$. The formula $\Psi\in \Lb_\lambda^\catN(\Gamma')$, so $\Phi \in \Lb_\lambda^\catN(F\Gamma)$. Furthermore, we assume that $\Psi$ can be lifted. Since $F$ is extensible, there is a lift $\bar{\pi}: \bar{\Gamma'} \to \Gamma \in \catM$ of $\pi:\Gamma' \fibration F(\Gamma)$, which comes with an isomorphism $g: \Gamma' \cong F(\bar{\Gamma'})$ such that the following triangle commutes
    % https://q.uiver.app/#q=WzAsMyxbMCwxLCJGKFxcYmFye1xcR2FtbWEnfSkiXSxbMCwwLCJcXEdhbW1hJyJdLFsxLDAsIkYoXFxHYW1tYSkiXSxbMSwwLCJcXGNvbmciLDJdLFsxLDIsIlxccGkiLDAseyJzdHlsZSI6eyJoZWFkIjp7Im5hbWUiOiJlcGkifX19XSxbMCwyLCJGKFxcYmFyXFxwaSkiLDIseyJzdHlsZSI6eyJoZWFkIjp7Im5hbWUiOiJlcGkifX19XV0=
\[\begin{tikzcd}
	{\Gamma'} & {F(\Gamma)} \\
	{F(\bar{\Gamma'}).}
	\arrow["\pi", two heads, from=1-1, to=1-2]
	\arrow["\cong"',"g", from=1-1, to=2-1]
	\arrow["{F(\bar\pi)}"', two heads, from=2-1, to=1-2]
\end{tikzcd}\]
    Therefore, we get a commutative square as in the left below, and at the level of languages as on the right below
  % https://q.uiver.app/#q=WzAsOCxbMCwxLCJGKFxcYmFye1xcR2FtbWEnfSkiXSxbMCwwLCJcXEdhbW1hJyJdLFsxLDAsIlxcRGVsdGEiXSxbMSwxLCJGKFxcR2FtbWEpIl0sWzMsMCwiXFxMYl9cXGxhbWJkYV5cXGNhdE4oRihcXGJhclxcR2FtbWEnKSkiXSxbMywxLCJcXExiX1xcbGFtYmRhXlxcY2F0TihcXEdhbW1hJykiXSxbNCwxLCJcXExiX1xcbGFtYmRhXlxcY2F0TihcXERlbHRhKSJdLFs0LDAsIlxcTGJfXFxsYW1iZGFeXFxjYXROKEYoXFxHYW1tYSkpIl0sWzEsMCwiXFxjb25nIiwyXSxbMiwzLCJcXGNvbmciXSxbMSwyLCJcXHBpJyIsMCx7InN0eWxlIjp7ImhlYWQiOnsibmFtZSI6ImVwaSJ9fX1dLFswLDMsIkYoXFxiYXJcXHBpKSIsMix7InN0eWxlIjp7ImhlYWQiOnsibmFtZSI6ImVwaSJ9fX1dLFs0LDUsImdeKiIsMl0sWzcsNiwiZl4qIl0sWzQsNywiXFxleGlzdHNfe1xccGknfSJdLFs1LDYsIlxcZXhpc3RzX3tGKFxcYmFyXFxwaSl9IiwyXV0=
\[\begin{tikzcd}
	{\Gamma'} & \Delta && {\Lb_\lambda^\catN(F(\bar\Gamma'))} & {\Lb_\lambda^\catN(F(\Gamma))} \\
	{F(\bar{\Gamma'})} & {F(\Gamma)} && {\Lb_\lambda^\catN(\Gamma')} & {\Lb_\lambda^\catN(\Delta).}
	\arrow["{\pi'}", two heads, from=1-1, to=1-2]
	\arrow["\cong"',"g", from=1-1, to=2-1]
	\arrow["\cong","f"', from=1-2, to=2-2]
	\arrow["{\exists_{\pi'}}", from=1-4, to=1-5]
	\arrow["{g^*}"', from=1-4, to=2-4]
	\arrow["{f^*}", from=1-5, to=2-5]
	\arrow["{F(\bar\pi)}"', two heads, from=2-1, to=2-2]
	\arrow["{\exists_{F(\bar\pi)}}"', from=2-4, to=2-5]
\end{tikzcd}\]
    By assumption $\psi \in \Lb_\lambda^\catN(\Gamma')$ can be lifted. Hence, there is a formula $\Psi_0 \in \Lb_\lambda^\catM(\bar\Gamma')$ such that $g^*(F\Psi_0)=\Psi$. Using the right hand square above, one can see that $\exists_{\bar\pi}\Psi_0 $ is a lift for $\Phi$.
  \end{enumerate}
  This shows that the map is surjective.
\end{proof}

As an immediate consequence of \cref{trivial-fibration:surjective}, we can establish the \Fourth\ in the special case where $F:\catM \to \catN$ is a Barton trivial fibration. We will use this result to be able to establish \Fourth\ for the general case later on.

\begin{theorem} \label{trivial-fibration:invariance}
  Let $F: \catM \to \catN$ be a Barton trivial fibration between weak model categories. Then for any cofibrant $\Gamma \in \catM$ the induced map $h\Lb F_A: h\Lb_\lambda^\catM(\Gamma) \to h\Lb_\lambda^\catN(F\Gamma)$  is an isomorphism.
\end{theorem}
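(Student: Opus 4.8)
The plan is to assemble this statement from the three results already proved in this section: the surjectivity lemma \cref{trivial-fibration:surjective}, the fact \cref{extensible-conservative:quillen-equiv} that an extensible weakly conservative left Quillen functor is a left Quillen equivalence, and the injectivity lemma \cref{induced-functor:injective}. Concretely, since by \cref{barton-trivialfib:def} a Barton trivial fibration $F$ is a left Quillen functor whose underlying coclan morphism $\catM^\cof \to \catN^\cof$ is extensible and which is weakly conservative, \cref{extensible-conservative:quillen-equiv} tells us that $F$ is in fact a left Quillen equivalence, so it fits into a Quillen equivalence $F : \catM \rightleftarrows \catN : G$.

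For injectivity of $h\Lb F_\Gamma$ I would simply invoke \cref{induced-functor:injective}, which asserts precisely that for a Quillen equivalence and any cofibrant $\Gamma$, the induced map $h\Lb F_\Gamma : h\Lb_\lambda^\catM(\Gamma) \to h\Lb_\lambda^\catN(F\Gamma)$ is injective. For surjectivity I would pass to the clans underlying the coclans $\catM^\cof$ and $\catN^\cof$: recall that $\Lb_\lambda^\catM$ is by definition the language of the coclan $\catM^\cof$, i.e.\ the initial $\lambda$-boolean algebra over the clan $(\catM^\cof)^\op$, and likewise for $\catN$. Because $F : \catM^\cof \to \catN^\cof$ is extensible as a coclan morphism, the induced clan morphism $(\catM^\cof)^\op \to (\catN^\cof)^\op$ is extensible in the sense of \cref{extensible-morphism:definition}, so \cref{trivial-fibration:surjective} applies and shows that $\Lb F_\Gamma : \Lb_\lambda^\catM(\Gamma) \to \Lb_\lambda^\catN(F\Gamma)$ is surjective. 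Composing with the quotient map $\Lb_\lambda^\catN(F\Gamma) \twoheadrightarrow h\Lb_\lambda^\catN(F\Gamma)$ and using that this composite factors through $h\Lb_\lambda^\catM(\Gamma)$ (which is exactly the content of \cref{induced-map:homotopy-bool-alg} and \cref{left-quillen:map-languages}), we conclude that $h\Lb F_\Gamma$ is surjective as well. A bijective morphism of $\lambda$-boolean algebras is an isomorphism, which gives the theorem.

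The main point that needs care — the closest thing to an obstacle here — is the bookkeeping of variance: \cref{trivial-fibration:surjective} is stated for a morphism of $\kappa$-clans, whereas $F$ is most naturally a morphism of coclans, so one must check that the extensibility hypothesis transports correctly under $(-)^\op$, which is precisely how \cref{extensible-morphism:definition} defines extensibility for clans in terms of coclans, and that the formula-lift produced by \cref{trivial-fibration:surjective} is literally a preimage under the comparison map $\alpha_F$ of \cref{cstr:Quillen_functor_acts_on_formulas} that defines $\Lb F_\Gamma$. Once this identification is in place the argument is purely formal, and everything else is an immediate citation of the lemmas above.
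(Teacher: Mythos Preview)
Your proposal is correct and follows essentially the same route as the paper: invoke \cref{induced-functor:injective} for injectivity (using \cref{extensible-conservative:quillen-equiv} to know $F$ is a left Quillen equivalence, a step the paper leaves implicit) and \cref{trivial-fibration:surjective} for surjectivity, then observe that surjectivity passes to the quotient $h\Lb$. Your extra care about the clan/coclan variance and the identification of the comparison map with $\alpha_F$ is sound bookkeeping that the paper simply omits.
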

\begin{proof}
  By the previous \cref{induced-functor:injective} we know that $h\Lb F_\Gamma: h\Lb_\lambda^\catM(\Gamma) \to h\Lb_\lambda^\catN(F\Gamma)$ is injective. Next we can use \cref{trivial-fibration:surjective} by observing that this surjectivity also descends to the level of $h\Lb F_\Gamma: h\Lb_\lambda^\catM(\Gamma) \to h\Lb_\lambda^\catN(F\Gamma)$.
\end{proof}

Since our next goal is to prove \fourth, with \cref{trivial-fibration:invariance} at hand, we simply need to reduce our problem to the case in which we have Barton trivial fibrations. The constructions to come are essentially the necessary steps for this reduction process.

\subsection{Path objects for weak model categories} \label{path-objects}

The next step is to build some sort of ``path object'' for (weak) model category so that we can emulate Brown Factorization lemma to factor a general Quillen equivalence into a retract of a Barton trivial fibration followed by a Barton fibration. Ideally, we would want for a model category $\catM$, we would like to build a diagram of left Quillen functors
\[\catM \to P\catM \to \catM \times \catM \]
where the maps $P\catM \to \catM$ are Barton trivial fibrations, and then try to use it to follow the proof of Brown's Factorization Lemma. Unfortunately, that is not going to be quite possible: we will not be able to construct a map $\catM \to P\catM$. Instead, similarly to the proof of the \third, we will construct, a diagram of the form
\[
  \begin{tikzcd}
    R\catM \ar[d,"p"] \ar[r] & P\catM \ar[d] \\
    \catM \ar[r] & \catM \times \catM
  \end{tikzcd}
\]
where the arrow $p$ is a Barton trivial fibration. This will turn out to be sufficient to build our desired Brown style factorization. The weak model categories $R\catM$ and $P\catM$ will be constructed as certain category of functors $\catM^J$ and $\catM^I$, equipped with certain localization of the Reedy model structure. So we get a diagram
  % https://q.uiver.app/#q=WzAsNCxbMCwwLCJcXGNhdE1eSiJdLFswLDEsIlxcY2F0TSJdLFsxLDAsIlxcY2F0TV5JIl0sWzEsMSwiXFxjYXRNXFx0aW1lc1xcY2F0TSJdLFswLDEsIlxcc2ltIiwyLHsic3R5bGUiOnsiaGVhZCI6eyJuYW1lIjoiZXBpIn19fV0sWzIsM10sWzAsMl0sWzEsM11d
\[\begin{tikzcd}
	{\catM^J} & {\catM^I} \\
	\catM & {\catM\times\catM}
	\arrow[from=1-1, to=1-2]
	\arrow["\sim"', two heads, from=1-1, to=2-1]
	\arrow[from=1-2, to=2-2]
	\arrow[from=2-1, to=2-2]
      \end{tikzcd}\]
where the arrow on the left and the two maps $\catM^I \to \catM$ induced by the projections are Barton trivial fibrations. More precisely, the construction we do takes as input a left Quillen equivalence $F: \catM \to \catN$ between weak model categories and produces a diagram
    % https://q.uiver.app/#q=WzAsNCxbMCwwLCJcXGNhdE1eSiJdLFswLDEsIlxcY2F0TSJdLFsxLDAsIlxcY2F0Tl9GXkkiXSxbMSwxLCJcXGNhdE5cXHRpbWVzXFxjYXROIl0sWzAsMSwiXFxzaW0iLDIseyJzdHlsZSI6eyJoZWFkIjp7Im5hbWUiOiJlcGkifX19XSxbMiwzXSxbMCwyXSxbMSwzXV0=
\[\begin{tikzcd}
	{\catM^J} & {\catN_F^I} \\
	\catM & {\catN\times\catM}
	\arrow[from=1-1, to=1-2]
	\arrow["\sim"', two heads, from=1-1, to=2-1]
	\arrow[from=1-2, to=2-2]
	\arrow[from=2-1, to=2-2]
      \end{tikzcd}\]
    where again the arrow on the left and the two maps $\catN_F^I \to \catN$ induced by the projections are Barton trivial fibrations. Hence, the first diagram is a particular case when $F=Id_\catM$.
    \begin{remark}
      The core idea of the outline above is already present in the proof of the \third. This can be seen as the analogue (or rather a dual) of the diagram (\ref{Weak-brown-diag}) that appears in the proof of the \Third, and it will play the exact same role. In both cases, the idea is to obtain some sort of Brown factorization.
    \end{remark}

    The bulk of the work lies in endowing the categories $\catM^I$ and $\catM^J$ with the correct weak model structures. This can be summarized as follows: We start with the Reedy weak model structure on the category $\catM^J$, or $\catN^I$, and perform a ``right Bousfield localization'' to obtain our desired models.

    \begin{remark}
 The weak model structure on $\catN^I$ encodes a pair of objects $A,B$ in $\catN$ with a ``correspondence'' between them; that is, a homotopy equivalence encoded by a cofibration $A \coprod B \to C$ where both maps $A \to C$ and $B \to C$ are trivial cofibrations. The weak model structure we obtain on $\catM^J$ encodes objects $X$ in $\catM$ equipped with a (weak) cylinder object, so that we can send such an object $X$ with a cylinder $IX$ to the correspondence $X \coprod X \to IX$.
    \end{remark}
  
  \subsubsection{Weak model for objects with weak cylinders}
  
  We start by fixing a weak model category $\catM$ and let $ J $ be the category
  \[\begin{tikzcd}
     a \ar[r,"i",shift left] \ar[r,"j"',shift right] & b  \ar[r,"k"] & c
   \end{tikzcd}\]
 such that $ ki=kj $. Consider the degree function making $J$ into a direct category, $ \deg(a)=0,\; \deg(b)=1,\; \deg(c)=2 $. Our first goal is to prove:

 \begin{theorem} \label{model-paths:wms}
  The category of diagrams $\catM^J$ has a weak model structure where:
  \begin{enumerate}
  \item A map between diagrams $X \to Y$ is a cofibration if
    \begin{enumerate}
    \item It is a Reedy cofibration,
    \item $Y_a \sqcup_{X_a} X_c \trivialcof Y_c$ and $Y_b \sqcup_{X_b} X_c \trivialcof Y_c$ are trivial cofibrations in $\catM$.
    \end{enumerate}
  \item Fibrations are level-wise fibrations.
  \end{enumerate}
\end{theorem}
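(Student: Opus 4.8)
The plan is to obtain this weak model structure as a right Bousfield localization — carried out by hand — of the Reedy weak model structure on $\catM^J$, which exists since $J$ is a direct category and $\catM$ a weak model category (see \cref{appendix-c}). Because $J$ is direct, all matching objects are terminal, so the Reedy fibrations of $\catM^J$ are exactly the level-wise fibrations; a right localization keeps the fibrations, hence keeps the anodyne cofibrations (the left complement of the level-wise fibrations, i.e. the Reedy anodyne cofibrations), and replaces only the weak factorization system $(\text{Reedy cofibrations},\text{Reedy anodyne fibrations})$ by a new one $(\mathcal C,\mathcal{AF})$. Morally the localization is taken at the free diagrams $L_a(A)=(A\rightrightarrows A\coprod A\to A)$ for $A$ cofibrant in $\catM$, so that the cofibrant objects become precisely objects of $\catM$ equipped with a weak cylinder — with $X_b$ the cylinder and $X_c$ the target of the contraction, exactly as in the weak Brown factorization used for the \Third\ — as indicated in the remark preceding the theorem. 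The real content is to show that the resulting class $\mathcal C$ of cofibrations is exactly the one described in the statement.

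Concretely, I would build the $(\mathcal C,\mathcal{AF})$-factorization of a map $f\colon X\to Y$ in $\catM^J$ by a three-stage Reedy induction over the degrees of $J$, using \emph{different} factorizations of $\catM$ at different stages: at $a$ and $b$ one factors the relative latching maps $X_a\to Y_a$ and $(Y_a\coprod Y_a)\sqcup_{X_a\coprod X_a}X_b\to Y_b$ as a cofibration followed by an anodyne fibration, whereas at $c$ one must instead produce $Z_c$ with a fibration $Z_c\to Y_c$ for which \emph{both} $Z_a\sqcup_{X_a}X_c\to Z_c$ and $Z_b\sqcup_{X_b}X_c\to Z_c$ are trivial cofibrations. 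This is the ``anodyne over $c$'' step; it takes a small sub-induction — factor $Z_b\sqcup_{X_b}X_c\to Y_c$, then the remaining map out of $Z_a\sqcup_{X_a}X_c$, as anodyne cofibrations followed by fibrations, and assemble, using $ki=kj$ to see that $Z_a\sqcup_{X_a}X_c\to Z_b\sqcup_{X_b}X_c$ is obtained from the cofibration $Z_a\sqcup_{X_a}X_b\to Z_b$ by pushout. A standard retract-and-lifting argument then shows that $(\mathcal C,\mathcal C^{\pitchfork})$ is a weak factorization system, that $\mathcal C$ is exactly the stated class, and that $\mathcal C^{\pitchfork}$ consists of the level-wise fibrations which are moreover relative anodyne fibrations at $a$ and $b$; in particular $\mathcal C^{\pitchfork}$ is contained in the level-wise fibrations, so the two weak factorization systems satisfy the inclusion required of a weak premodel structure.

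It then remains to verify the compatibility axiom of a weak model category: that any map from a $\mathcal C$-cofibrant object to a level-wise fibrant object admits both a cofibration/anodyne-fibration factorization (supplied by the system just built) and an anodyne-cofibration/fibration factorization (supplied by the unchanged Reedy system) — which requires knowing that a Reedy anodyne cofibration with $\mathcal C$-cofibrant source is again in $\mathcal C$, i.e. that for a Reedy anodyne cofibration $X\to Y$ the maps $Y_a\sqcup_{X_a}X_c\to Y_c$ and $Y_b\sqcup_{X_b}X_c\to Y_c$ are trivial cofibrations. I expect the main obstacle to be exactly this degree-$c$ bookkeeping — both here and in the factorization above — since it is where the particular shape of $J$ (a parallel pair followed by a coequalizing map) and the fact that we have localized ``only in degree $c$'' must be exploited: one unwinds the relative latching map $L_cY\sqcup_{L_cX}X_c\to Y_c$, with $L_cX=\mathrm{coeq}(i,j\colon X_a\rightrightarrows X_b)$, and uses $ki=kj$ to factor the two maps of condition (b) through it as composites of anodyne cofibrations and weak equivalences. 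Once this compatibility is in place the remaining weak model category axioms follow formally, and inspecting the three degrees yields the announced description of the cofibrations and of the cofibrant objects.
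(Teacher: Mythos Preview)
There is a genuine gap in your factorization at degree $c$, and it is not bookkeeping. After your stages $a$ and $b$ you have $Z_a\trivialfib Y_a$ and $Z_b\trivialfib Y_b$, hence $Z_a\simeq Y_a$ and $Z_b\simeq Y_b$. But for $X\to Z$ to satisfy condition~(b) with $X$ Loc-cofibrant, the trivial cofibrations $X_a\trivialcof X_c$ and $X_b\trivialcof X_c$ push out to $Z_a\trivialcof Z_a\sqcup_{X_a}X_c$ and $Z_b\trivialcof Z_b\sqcup_{X_b}X_c$, and composing with~(b) gives $Z_a\trivialcof Z_c$ and $Z_b\trivialcof Z_c$; so necessarily $Z_a\simeq Z_b$. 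Your construction can therefore only succeed when $Y_a\simeq Y_b$, which a merely level-wise fibrant $Y$ need not satisfy. Concretely, in $\topo$ take $X=\emptyset$ and $Y=(\ast\rightrightarrows S^{1}\to\ast)$: your recipe produces $Z_a\simeq\ast$ and $Z_b\simeq S^{1}$, and no $Z_c$ can receive trivial cofibrations from both. For the same reason your description of $\mathcal C^{\pitchfork}$ as ``level-wise fibrations which are Reedy anodyne at $a$ and $b$'' is wrong: such a map onto this $Y$ would again force the contradiction $\ast\simeq S^{1}$.

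The paper's route is entirely different and is built around this obstruction; it never attempts a degree-by-degree factorization. To factor $X\to Y$ it first replaces $Y$ by a fibrant object $Y'$ in the \emph{opposite} Reedy structure $K$ on $J$ --- so that $Y'_a\fibration Y'_b\times_{Y'_c}Y'_b$ --- and then shows by a direct lifting argument (\cref{cofacycfib1}) that the constant diagram at $\lim Y'$ admits a Loc-trivial fibration to $Y'$; the fibration $Y'_a\fibration Y'_b\times_{Y'_c}Y'_b$ is exactly what makes this lifting go through. A Reedy cofibrant replacement $W$ of the constant diagram is then Loc-cofibrant (its structure maps are weak equivalences by $2$-out-of-$3$), and the resulting Loc-trivial fibration $W\trivialfib Y'$ is transported back to one $LY\trivialfib Y$ over the original $Y$ via pullback together with a homotopy-invariance-of-lifting lemma (\cref{liftequiv}). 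Only then is $X\to LY$ factored in the Reedy structure, and the observation that a Reedy cofibration between Loc-cofibrant objects is automatically a Loc-cofibration (\cref{cofibration-loc-eq-cofibration}) finishes the argument. The detour through $K$-fibrant replacement is precisely the device that repairs $Y_a\not\simeq Y_b$.
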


\begin{remark}
  The theorem above make reference to Reedy cofibrations, therefore we must justify first that $\catM^J$ carries the Reedy weak model structure. Fortunately, this has been addressed in \cref{reedy-model:theorem}.
\end{remark}

\begin{notation}
  For the sake of clarity, we denote by $\catM^J_{Reedy}$ when referring to the Reedy weak model structure and $\catM^J_{Loc}$ for the weak model structure of \cref{model-paths:wms}. Of course, \textit{a priori,} we have yet to prove that the latter is indeed a weak model structure. Therefore, whenever we say, for example, that a map $f:X \to Y$ is a cofibration we just mean that $f$ satisfies the corresponding condition of \cref{model-paths:wms}.
\end{notation}

We will justify that the following construction, which is simply the conditions of the theorem, is the correct one.
 
 \begin{observation}\label{trivialcofibrations-between-cofibrants}
   One can verify that in this new model structure, the core fibrations and core trivial cofibrations coincide with the ones in the Reedy weak model structure (see \cref{localized-triv-cof:lem}).

   The reader might suspect that this is not a fortuitous coincidence, these suspicions are well justified. As we mentioned, what we have done is a right Bousfield localization of a Reedy weak model structure on $\catM^J$. Such localizations are studied in \cite{henry2023combinatorial} in the case when $\catM$ is a combinatorial (accessible) weak model category. Due to the lack of a general theorem that justifies the existence of these localizations producing a weak model category, we verify all required conditions by hand. 
 \end{observation}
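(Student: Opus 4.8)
The statement separates into a claim about the fibrations and a claim about the trivial cofibrations, which I would handle independently. The fibration half is essentially definitional: since $J$ is a direct category — every non-identity arrow strictly raises degree — all matching objects of diagrams in $\catM^J$ are terminal, so the Reedy fibrations of $\catM^J_{Reedy}$ are exactly the level-wise fibrations. By \cref{model-paths:wms} the fibrations of $\catM^J_{Loc}$ are by definition the level-wise fibrations as well. Hence the two weak model structures carry literally the same class of fibrations, and therefore the same fibrant objects (the level-wise fibrant diagrams) and the same core fibrations.

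For the trivial cofibrations I would first unwind which objects are cofibrant in $\catM^J_{Loc}$: applying condition (b) of \cref{model-paths:wms} to the map $\emptyset \to X$ shows that $X$ is localized-cofibrant precisely when it is Reedy-cofibrant and the two maps $X_a \trivialcof X_c$ and $X_b \trivialcof X_c$ are trivial cofibrations in $\catM$; in particular every localized-cofibrant object is Reedy-cofibrant and all of $X_a, X_b, X_c$ are cofibrant. The goal is then to show that for a map $f \colon X \to Y$ between localized-cofibrant objects, being a localized trivial cofibration is equivalent to being a Reedy (level-wise) trivial cofibration. Granting the two facts (i) that between localized-cofibrant objects the localized weak equivalences agree with the level-wise weak equivalences — the standard feature of a right Bousfield localization, that weak equivalences between colocal objects are unchanged — and (ii) that a Reedy cofibration between localized-cofibrant objects which is a level-wise weak equivalence automatically satisfies condition (b), the equivalence follows formally: a localized cofibration is a Reedy cofibration by definition, and combining this with (i), (ii), and the fact that a level-wise weak equivalence is in particular a localized weak equivalence yields both implications. (As a cross-check one may instead invoke the lifting characterization available in a weak model category, namely that a map between cofibrant objects is a core trivial cofibration iff it lifts against every core fibration, and then appeal to the coincidence of core fibrations established above.)

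The verification of (ii) is where the actual work lies, and I expect it to be the main obstacle. Given such an $f$, consider the comparison map $Y_a \sqcup_{X_a} X_c \to Y_c$ (and symmetrically its $b$-version). Since $f$ is a Reedy cofibration between Reedy-cofibrant objects, each level map $X_j \to Y_j$ is a cofibration, and being a level-wise weak equivalence it is a trivial cofibration; together with $X_a \trivialcof X_c$ and $Y_a \trivialcof Y_c$ coming from localized-cofibrancy of $X$ and $Y$, all the relevant maps are trivial cofibrations between cofibrant objects. Thus $X_c \to Y_a \sqcup_{X_a} X_c$ is a pushout of the trivial cofibration $X_a \to Y_a$, hence itself a trivial cofibration, and two-out-of-three against $X_c \to Y_c$ forces $Y_a \sqcup_{X_a} X_c \to Y_c$ to be a weak equivalence. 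The genuinely delicate point is that this comparison map is moreover a cofibration: this requires identifying it as a partial latching map at the object $c$ and unwinding the latching object $L_c$ of $J$, whose latching category at $c$ is the parallel pair $a \rightrightarrows b$ induced by the relation $ki = kj$, so that condition (b) is exhibited as a consequence of the Reedy cofibration condition on $f$ refined by the cofibrancy hypotheses on $X$ and $Y$. Once this bookkeeping is carried out, the two classes of core trivial cofibrations coincide, which is the content promised by \cref{localized-triv-cof:lem}.
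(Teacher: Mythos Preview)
Your treatment of the fibration half is correct and matches the paper: $J$ is direct, so Reedy fibrations are levelwise fibrations, which is exactly the definition in $\catM^J_{Loc}$.

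For the trivial cofibrations, however, there is a genuine circularity. You assume (i) --- that between localized-cofibrant objects the localized weak equivalences agree with the levelwise ones --- as ``the standard feature of a right Bousfield localization''. But at this point in the paper the weak model structure on $\catM^J_{Loc}$ has not yet been constructed; indeed the Observation itself stresses that no general localization theorem is available here, and the proof of \cref{model-paths:wms} proceeds via the criteria of \cite[2.3.3]{henry20weak}, which \emph{takes the content of \cref{localized-triv-cof:lem} as input} to build the (trivial cofibration, fibration) factorization in \cref{factorization-trivial-cofibration-fibration}. So there is no independent notion of ``localized weak equivalence'' to compare against, and invoking (i) presupposes what is to be established. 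Your parenthetical lifting-characterization cross-check is circular for the same reason, and in any case that characterization is delicate in the weak model setting since the factorization axioms only apply to maps from cofibrant to fibrant objects.

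The paper sidesteps the issue entirely: \cref{localized-triv-cof:lem} is the purely elementary statement that a Reedy trivial cofibration out of a localized-cofibrant object lands in a localized-cofibrant object, with no mention of localized weak equivalences. Separately, \cref{cofibration-loc-eq-cofibration} shows that any Reedy cofibration between localized-cofibrant objects already satisfies condition (b) --- \emph{without} the levelwise weak equivalence hypothesis you impose in (ii) --- by factoring the comparison map $X_c \sqcup_{X_a} Y_a \to Y_c$ through the full latching map $X_c \sqcup_{L_cX} L_cY \cofibration Y_c$. This is precisely the ``bookkeeping'' you defer, and carrying it out is what makes the argument go through; the weak equivalence part then follows by $2$-out-of-$3$ just as you describe.
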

 
 We examine the  class of cofibrations. For a diagram $ X\in \catM^J $, the latching objects are $ L_aX=\emptyset $, $ L_bX= X_a\sqcup X_a $ and $ L_cX= X_b\sqcup_{X_a} X_b$. These are cofibrant in $\catM$. Then a map $ f:X\to Y $ being a cofibration means that $ X_a\cofibration Y_a $,
		\[
		X_b\sqcup_{X_a\sqcup X_a} (Y_a\sqcup Y_a) \cofibration Y_b \text{ and }
		X_c\sqcup_{(X_b\sqcup_{X_a} X_b)} (Y_b\sqcup_{Y_a} Y_b) \cofibration Y_c \]
		are cofibrations in $ \catM $, and additionally $ Y_a\sqcup_{X_a} X_c \trivialcof Y_c $ and $ Y_b\sqcup_{X_b} X_c \trivialcof Y_c $ are trivial cofibrations in $ \catM $.
		
		Therefore, a diagram $ Y \in \catM^J$ is \emph{cofibrant} if $ Y_a $ is a cofibrant object in $ \catM $,
		\[
		Y_a\sqcup Y_a \cofibration Y_b \text{ and }
		Y_b\sqcup_{Y_a} Y_b \cofibration Y_c \]
              are cofibrations, and additionally $ Y_a \trivialcof Y_c $ and $ Y_b \trivialcof Y_c $ are trivial cofibrations. Spelling out the second Reedy condition gives us the following commutative diagram:
              \[
		\begin{tikzcd}
			\emptyset \ar[r,hook] \ar[d,hook] \ar[rd,phantom,"\ulcorner", very near end] & Y_a \ar[d,hook] \ar[rdd,bend left] &  \\
		Y_a \ar[r,hookrightarrow] \ar[drr,bend right] & Y_a \sqcup Y_a \ar[rd,hook]& \\
		 & & Y_b
		\end{tikzcd}
		\]
		This says that both maps
                $ \begin{tikzcd}
		Y_a \ar[r,"Yi",shift left] \ar[r,"Yj"',shift right] & Y_b
              \end{tikzcd} $ are cofibrations. We can use this on the following diagram 
              \[
		\begin{tikzcd}
		Y_a \ar[r,hook] \ar[d,hook] \ar[rd,phantom,"\ulcorner", very near end] & Y_b \ar[d,hook] \ar[rdd,bend left] &  \\
		Y_b \ar[r,hookrightarrow] \ar[drr,bend right] & Y_b \sqcup_{Y_a} Y_b \ar[rd,hook] & \\
		& & Y_c
		\end{tikzcd}
		\]
		to conclude that $ Y_b\cofibration Y_c $ is a cofibration. Of course this is in principle not necessary since we also have $ Y_b \trivialcof Y_c $ is a trivial cofibration, but the novel aspect is that this follows only from Reedy cofibrancy. We also have a trivial cofibration $ Y_a\trivialcof Y_c $, by the two-out-of-three property the maps $ \begin{tikzcd}
		Y_a \ar[r,"Yi",shift left] \ar[r,"Yj"',shift right] & Y_b
              \end{tikzcd} $ are trivial cofibrations. We collect the above in the following:
              \begin{remark}
                If $ Y $ is cofibrant then we obtain the following diagram:
		\[
		\begin{tikzcd}
		 Y_a\sqcup Y_a \ar[d,hook] \ar[r,"\triangledown"] & Y_a \ar[d,"\sim",hook] \\
		 Y_b \ar[r,"\sim",hook] & Y_c.
		\end{tikzcd}
              \]
              This is just to say that cofibrant diagrams of $\catM^J_{Loc}$ encode objects of $\catM$ for which a weak cylinder exists in the sense of \cref{cstr:weak_path_and_cylinder}.
              \end{remark}

              We reiterate that our goal is to show that the category of diagrams $\catM^J_{Loc}$ has a weak model structure on it, where the cofibrations are the ones as specified in \cref{model-paths:wms}. We begin by showing the following lemmas which are expected results in the theory of right Bousfield localizations.

               \begin{lemma} \label{cofibration-loc-eq-cofibration}
        Let $X,Y \in \catM^J_{Loc}$ cofibrant. Then, a map $X \to Y$ is a cofibration in $\catM^J_{Loc}$ if and only if it is a cofibration in $\catM^J_{Reedy}$.
      \end{lemma}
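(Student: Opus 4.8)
The forward implication needs no hypothesis: by condition~(a) of \cref{model-paths:wms} every cofibration of $\catM^J_{Loc}$ is in particular a Reedy cofibration. So the content of the lemma is the converse, and the plan is to run the argument that is standard for right Bousfield localizations. Let $f\colon X\to Y$ be a Reedy cofibration with $X$ and $Y$ cofibrant in $\catM^J_{Loc}$; we must show that $f$ also satisfies condition~(b), i.e.\ that $Y_a\sqcup_{X_a}X_c\to Y_c$ and $Y_b\sqcup_{X_b}X_c\to Y_c$ are trivial cofibrations of $\catM$.

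First I would factor $f$ in $\catM^J_{Loc}$ as $X\xrightarrow{\,i\,}Z\xrightarrow{\,p\,}Y$ with $i$ a cofibration of $\catM^J_{Loc}$ and $p$ having the right lifting property against all cofibrations of $\catM^J_{Loc}$ (only the weak factorization system generated by these cofibrations is needed here, not the full weak model structure of \cref{model-paths:wms}). Then $Z$ is cofibrant in $\catM^J_{Loc}$, being the composite $\emptyset\to X\xrightarrow{i}Z$ of cofibrations of $\catM^J_{Loc}$, and $i$ is in particular a Reedy cofibration. The crucial step is to check that $p$ is a \emph{Reedy} trivial fibration: it is a levelwise fibration, and a levelwise fibration which is an $\catM^J_{Loc}$-weak equivalence between objects cofibrant in $\catM^J_{Loc}$ is automatically a Reedy weak equivalence --- this is the (co)local analogue of the fact that an equivalence between bifibrant objects is an honest equivalence, and it is here that cofibrancy of $Z$ and $Y$ in $\catM^J_{Loc}$ is genuinely used. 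Granting this, $f$ --- being a Reedy cofibration --- lifts against the Reedy trivial fibration $p$; lifting in the square whose top edge is $i$ and whose bottom edge is $\mathrm{id}_Y$ yields $r\colon Y\to Z$ with $r f=i$ and $p r=\mathrm{id}_Y$, which exhibits $f$ as a retract of $i$. Since cofibrations of $\catM^J_{Loc}$ are closed under retracts, $f$ is one, as desired.

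The only non-formal ingredient is the italicized claim about $p$, which I expect to be the main obstacle; it should follow from the description of $\catM^J_{Loc}$-weak equivalences together with the lifting property defining cofibrancy of $Y$ (produce a section of $p$, then compare). As a fallback one can instead verify condition~(b) for $f$ directly: the \emph{weak equivalence} half of ``$Y_a\sqcup_{X_a}X_c\trivialcof Y_c$'' and ``$Y_b\sqcup_{X_b}X_c\trivialcof Y_c$'' is easy, following by two-out-of-three from the trivial cofibrations $Y_a\trivialcof Y_a\sqcup_{X_a}X_c$ and $Y_b\trivialcof Y_b\sqcup_{X_b}X_c$ (pushouts of the trivial cofibrations $X_a\trivialcof X_c$, $X_b\trivialcof X_c$, which hold since $X$ is $\catM^J_{Loc}$-cofibrant, along the cofibrations $f_a$, $f_b$) together with the trivial cofibrations $Y_a\trivialcof Y_c$, $Y_b\trivialcof Y_c$ coming from cofibrancy of $Y$; the \emph{cofibration} half is more delicate and would be an iterated pushout argument using cofibrancy of the structure maps $X_a\rightrightarrows X_b\to X_c$ (Reedy cofibrancy of $X$) together with cofibrancy of the relative latching maps of $f$ and the explicit latching objects $L_bX=X_a\sqcup X_a$, $L_cX=X_b\sqcup_{X_a}X_b$. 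I would present the factor-and-retract argument as the main proof, since it is shorter and is the route suggested by the phrase ``expected results in the theory of right Bousfield localizations''.
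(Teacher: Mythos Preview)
Your main ``factor-and-retract'' argument has a genuine gap. At this point in the development, no (cofibration, trivial fibration) factorization for $\catM^J_{Loc}$ is available: \cref{factorization-cofibration-trivial-fibration} is proved \emph{after} this lemma, and even there only factors maps with \emph{fibrant} codomain, whereas here $Y$ is merely assumed cofibrant. Nor can you simply invoke ``the weak factorization system generated by these cofibrations'': $\catM$ is an arbitrary weak model category, so no small object argument is on hand. Even granting a factorization $X\xrightarrow{i}Z\xrightarrow{p}Y$, your ``crucial step'' is not established: having the right lifting property against $\catM^J_{Loc}$-cofibrations (a \emph{proper subclass} of Reedy cofibrations) does not by itself make $p$ a Reedy trivial fibration, and you acknowledge this remains an obstacle.

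Your fallback, on the other hand, is exactly what the paper does. The weak-equivalence half is precisely the paper's 2-out-of-3 argument from the pushout squares. For the cofibration half the paper makes your ``iterated pushout argument'' explicit in one line: the Reedy latching condition at $c$ says $X_c\sqcup_{L_cX}L_cY \cofibration Y_c$ is a cofibration, and since $L_cX=X_b\sqcup_{X_a}X_b$ and $L_cY=Y_b\sqcup_{Y_a}Y_b$, each of $X_c\sqcup_{X_a}Y_a$ and $X_c\sqcup_{X_b}Y_b$ maps by a cofibration into $X_c\sqcup_{L_cX}L_cY$ (these are pushouts of the cofibrations $X_a\hookrightarrow Y_a\hookrightarrow L_cY$ and $X_b\hookrightarrow Y_b\hookrightarrow L_cY$ along the structure maps into $X_c$), so the desired maps factor as composites of cofibrations. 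You should present this direct argument as the proof, not as a fallback.
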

      \begin{proof}
        We only prove the interesting direction; assume that $X,Y$ are cofibrant in $\catM_{Loc}^J$ and that $X \to Y \in \catM^J_{Reedy}$ is a Reedy cofibration. Remains to show that \[
          X_c\sqcup_{X_a} Y_a \to Y_c \text{ and }  X_c\sqcup_{X_b} Y_b \to Y_c
        \]
        are trivial cofibrations. The fact that the maps are weak equivalences follows by applying the 2-out-of-3 property to the diagrams:
\[\begin{tikzcd}
	{X_a} & {Y_a} && {X_b} & {Y_b} \\
	{X_c} & {X_c\sqcup_{X_a} Y_a} && {X_c} & {X_c\sqcup_{X_b} Y_b} \\
	&& {Y_c} &&& {Y_c}
	\arrow[hook, from=1-1, to=1-2]
	\arrow["\sim", hook, from=1-1, to=2-1]
	\arrow["\ulcorner"{description, pos=0.9}, draw=none, from=1-1, to=2-2]
	\arrow["\sim", hook, from=1-2, to=2-2]
	\arrow["\sim", bend left, hook, from=1-2, to=3-3]
	\arrow[hook, from=1-4, to=1-5]
	\arrow["\sim", hook, from=1-4, to=2-4]
	\arrow["\ulcorner"{description, pos=0.9}, draw=none, from=1-4, to=2-5]
	\arrow["\sim", hook, from=1-5, to=2-5]
	\arrow["\sim", bend left, hook, from=1-5, to=3-6]
	\arrow[hook, from=2-1, to=2-2]
	\arrow[hook, from=2-1, to=3-3]
	\arrow[dashed, from=2-2, to=3-3]
	\arrow[hook, from=2-4, to=2-5]
	\arrow[hook, from=2-4, to=3-6]
	\arrow[dashed, from=2-5, to=3-6]
      \end{tikzcd}\]
    The vertical maps $X_a \trivialcof X_c$, $X_b \trivialcof X_c$, $Y_a \trivialcof Y_c$ and $Y_b \trivialcof Y_c$, are trivial cofibrations since $X$ and $Y$ are cofibrant in $\catM^J_{Loc}$. Remains to see that they are cofibrations. From the Reedy condition we have that the map $X_c\sqcup_{L_c X} L_cY \cofibration Y_c$ is a cofibration, and observe that the domains of the maps $X_c\sqcup_{X_a} Y_a \to Y_c$ and $X_c\sqcup_{X_b} Y_b \to Y_c$ are contained in the colimit $X_c\sqcup_{L_c X} L_cY$. Therefore, the maps factor as composition of cofibrations
    \[
      X_c\sqcup_{X_a} Y_a \cofibration X_c\sqcup_{L_c X} L_cY \cofibration Y_c \text{ and }
      X_c\sqcup_{X_b} Y_b \cofibration X_c\sqcup_{L_c X} L_cY \cofibration Y_c,
    \]
    which concludes the proof.
      \end{proof}
              
              \begin{lemma} \label{localized-triv-cof:lem}
                Let $X\in \catM^J_{Loc}$ cofibrant and $X\to Z \in \catM^J_{Reedy}$ a Reedy trivial cofibration. Then $Z$ is cofibrant in $\catM^J_{Loc}$. Furthermore, $X \to Z$ is a trivial cofibration in $\catM^J_{Loc}$.
      \end{lemma}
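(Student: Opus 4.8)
The plan is to verify the two assertions by bootstrapping from the hypotheses, using only that pushouts of (trivial) cofibrations are (trivial) cofibrations and the two-out-of-three property for weak equivalences, together with the standard description of the Reedy structure on the direct category $J$ (see \cref{reedy-model:theorem}): a Reedy trivial cofibration $X\to Z$ is a level-wise trivial cofibration, and its relative latching maps are trivial cofibrations in $\catM$ — explicitly, the relative latching map at $a$ is $X_a\trivialcof Z_a$, at $b$ it is $X_b\sqcup_{X_a\sqcup X_a}(Z_a\sqcup Z_a)\trivialcof Z_b$, and at $c$ it is $X_c\sqcup_{X_b\sqcup_{X_a}X_b}(Z_b\sqcup_{Z_a}Z_b)\trivialcof Z_c$ (recall $L_aZ=\emptyset$, $L_bZ=Z_a\sqcup Z_a$, $L_cZ=Z_b\sqcup_{Z_a}Z_b$).

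First I would show $Z$ is cofibrant in $\catM^J_{Loc}$, i.e. verify the five conditions characterising $\catM^J_{Loc}$-cofibrancy of a diagram (spelled out after \cref{model-paths:wms}). The object $Z_a$ is cofibrant because $X_a$ is and $X_a\trivialcof Z_a$. For $Z_a\sqcup Z_a\cofibration Z_b$, factor it as $Z_a\sqcup Z_a\to X_b\sqcup_{X_a\sqcup X_a}(Z_a\sqcup Z_a)\to Z_b$, where the first map is a pushout of the cofibration $X_a\sqcup X_a\cofibration X_b$ (cofibrancy of $X$) and the second is the relative latching map of $X\to Z$ at $b$, a cofibration; similarly $Z_b\sqcup_{Z_a}Z_b\cofibration Z_c$ factors through $X_c\sqcup_{L_cX}L_cZ$, since $Z_b\sqcup_{Z_a}Z_b\to X_c\sqcup_{L_cX}L_cZ$ is a pushout of the cofibration $X_b\sqcup_{X_a}X_b\cofibration X_c$ and $X_c\sqcup_{L_cX}L_cZ\to Z_c$ is the relative latching map at $c$. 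Finally, $Z_a\to Z_c$ and $Z_b\to Z_c$ are cofibrations: e.g. $Z_a\to Z_c$ is the composite $Z_a\cofibration Z_b\to Z_b\sqcup_{Z_a}Z_b\cofibration Z_c$ of cofibrations (the middle one a pushout), and these composites recover the structure maps $Zk\circ Zi=Zk\circ Zj$ because $ki=kj$; they are weak equivalences by two-out-of-three applied to the naturality squares relating them to $X_a\trivialcof X_c$, $X_b\trivialcof X_c$ and the level-wise weak equivalences $X_a\to Z_a$, $X_b\to Z_b$, $X_c\to Z_c$; hence they are trivial cofibrations. This establishes $\catM^J_{Loc}$-cofibrancy of $Z$.

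For the second claim, $X\to Z$ is a Reedy cofibration by hypothesis and both $X$ and $Z$ are $\catM^J_{Loc}$-cofibrant (the latter by the previous paragraph), so by \cref{cofibration-loc-eq-cofibration} the map $X\to Z$ is a cofibration in $\catM^J_{Loc}$; and it is a level-wise weak equivalence, hence a weak equivalence of $\catM^J_{Loc}$ (the localization has the same, level-wise, fibrations, so its weak equivalences between cofibrant objects agree with the Reedy ones), so it is a trivial cofibration in $\catM^J_{Loc}$.

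The only real difficulty is bookkeeping: keeping the relative latching objects of $J$ straight and checking that the displayed composites of pushouts of cofibrations genuinely reassemble into the structure maps $Zi$, $Zj$, $Zk$ of $Z$ — there is no conceptual obstacle. A secondary point of care is that, since $\catM^J_{Loc}$ is only being verified to be a weak model structure at this stage, ``trivial cofibration in $\catM^J_{Loc}$'' must be read as ``$\catM^J_{Loc}$-cofibration that is a level-wise weak equivalence'', which is the expected notion between cofibrant objects and involves no circularity.
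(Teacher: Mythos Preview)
Your proposal is correct and follows essentially the same approach as the paper: verify Reedy cofibrancy of $Z$, then the additional trivial-cofibration conditions via two-out-of-three, and deduce the second claim from \cref{cofibration-loc-eq-cofibration} together with level-wise triviality. The only difference is that where the paper re-derives $X_b\trivialcof Z_b$, $L_cX\trivialcof L_cZ$ and $X_c\trivialcof Z_c$ by explicit pushout chasing, you invoke the general fact (\cref{cor:core_cof_are_levelwise}) that core Reedy trivial cofibrations are level-wise trivial cofibrations and apply two-out-of-three directly to the naturality squares---this is a slight streamlining but not a different idea.
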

      \begin{proof}
     Since $ X \trivialcof Z $ is a Reedy trivial cofibration, then $ X_a\trivialcof Z_a $, $X_b\sqcup_{X_a\sqcup X_a} (Z_a\sqcup Z_a) \trivialcof Z_b$ and $ X_c\sqcup_{(X_b\sqcup_{X_a} X_b)} (Z_b\sqcup_{Z_a} Z_b) \trivialcof Z_c $ are trivial cofibrations. We then obtain the following diagram:
    	\[
    	\begin{tikzcd}
    	X_a\sqcup X_a \ar[d,"\sim", hook] \ar[r] \ar[rd,phantom, "\ulcorner", very near end] & X_b \ar[d,"\sim",hook] \ar[rdd,bend left]&  \\
    	Z_a\sqcup Z_a \ar[r]   & \bullet \ar[rd, "\sim",hook] & \\
    	\text{} &  & Z_b
    	\end{tikzcd}
    	\]
    	 This shows that $ X_b\trivialcof Z_b $ is a trivial cofibration. Since $ X $ is cofibrant then all the maps in the diagram
    	\[\begin{tikzcd}
    	X_a \ar[r,shift left] \ar[r,shift right] & X_b  \ar[r] & X_c
      \end{tikzcd}\]
    are trivial cofibrations. Consider the commutative diagram where the back and front faces are pushouts
    	\[
    	\begin{tikzcd}
    	 X_a \ar[rrdd,phantom,"\ulcorner",very near end] \ar[rr,"\sim",hook] \ar[dd,"\sim",hook] \ar[rd,"\sim",hook] &  & X_b \ar[rd,"\sim",hook] \ar[dd,"\sim",near start,hook] &  \\
    	  & Z_a \ar[rr,"\sim",near start,hook] \ar[dd,"\sim",near start,hook] \ar[rrdd,phantom,"\ulcorner",very near end] &  & Z_b \ar[dd,"\sim",hook] \\
    	 X_b \ar[rr,"\sim",near start,hook] \ar[rd,"\sim",hook] &  & X_b\sqcup_{X_a}X_b \ar[rd] &  \\
    	  & Z_b \ar[rr,"\sim",hook] &  & Z_b\sqcup_{Z_a}Z_b,
    	\end{tikzcd}
    	\]
    	which, by the two-out-of-three, shows that $ X_b\sqcup_{X_a}X_b \trivialcof Z_b\sqcup_{Z_a}Z_b$ is a trivial cofibration. Remains to prove that $ Z_b\trivialcof Z_c $ is a trivial cofibration. The pushout
        \[
        \begin{tikzcd}
        X_b\sqcup_{X_a} X_b \ar[d,"\sim", hookrightarrow] \ar[r] \ar[rd,phantom, "\ulcorner", very near end] & X_c \ar[d,"\sim",hook] \ar[rdd, bend left] & \\
        Z_b\sqcup_{Z_a} Z_b \ar[r]  & \bullet \ar[rd, "\sim",hookrightarrow] &  \\
        &  & Z_c 
        \end{tikzcd}
        \]
        shows that $ X_c\trivialcof Z_c $ is a trivial cofibration. Note that $ Z $ is Reedy cofibrant, hence $ Z_b\cofibration Z_c $ is a cofibration. By the two-out-of-three property, we can conclude that $ Z_b\trivialcof Z_c $ is indeed a trivial cofibration. The above says that $ Z  $ is cofibrant.

        The second part is also true, since $X\to Z$ is a level-wise weak equivalence.
      \end{proof}

      \begin{corollary} \label{factorization-trivial-cofibration-fibration}
        Any map between diagrams $ f:X \to Y $, where $X$ is a cofibrant diagram $ X $ and $Y$ is a fibrant diagram in $ \catM^J_{Loc} $, can be factored as a trivial cofibration followed by a fibration.
      \end{corollary}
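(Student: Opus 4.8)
The plan is to build the factorization in the Reedy weak model structure $\catM^J_{Reedy}$ and then recognize each half of it as a map of the required kind in $\catM^J_{Loc}$, using \cref{localized-triv-cof:lem}. First I would record two facts that make this transfer possible. Every cofibration of $\catM^J_{Loc}$ is in particular a Reedy cofibration (\cref{model-paths:wms}), so the cofibrant diagram $X$ is Reedy cofibrant. And since $J$ is a direct category, all matching objects in $\catM^J$ are terminal, so Reedy fibrations coincide exactly with the level-wise fibrations; hence the fibrant diagram $Y$ is Reedy fibrant, and conversely a Reedy fibration is precisely a fibration in the sense of \cref{model-paths:wms}.

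Next I would apply the factorization axiom of the weak model category $\catM^J_{Reedy}$ (a weak model structure by \cref{reedy-model:theorem}) to the map $f : X \to Y$: since $X$ is Reedy cofibrant and $Y$ is Reedy fibrant, $f$ admits a factorization
\[ X \trivialcof Z \fibration Y \]
in which $X \to Z$ is a Reedy trivial cofibration and $Z \to Y$ is a Reedy fibration. It then remains to reinterpret this factorization inside $\catM^J_{Loc}$. Because $X$ is cofibrant in $\catM^J_{Loc}$ and $X \to Z$ is a Reedy trivial cofibration, \cref{localized-triv-cof:lem} guarantees that $Z$ is cofibrant in $\catM^J_{Loc}$ and that $X \to Z$ is a trivial cofibration in $\catM^J_{Loc}$. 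On the other side, $Z \to Y$, being a Reedy fibration, is a level-wise fibration, hence a fibration in $\catM^J_{Loc}$ by definition. Thus $X \trivialcof Z \fibration Y$ is the desired factorization.

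There is essentially no hard step here: the argument is just the composition of the ambient Reedy factorization with the two lemmas already established. The one point to keep in mind is that the cofibrancy of $X$ is genuinely used — it is exactly the hypothesis that allows \cref{localized-triv-cof:lem} to promote the Reedy trivial cofibration produced by the factorization to a $\catM^J_{Loc}$ trivial cofibration while keeping the intermediate object $\catM^J_{Loc}$-cofibrant; without it the ``localized'' half of the factorization would be uncontrolled. It is also worth recording explicitly the elementary observation that, for the direct category $J$, Reedy fibrations and level-wise fibrations agree, since this is what makes the codomain condition translate cleanly between $\catM^J_{Reedy}$ and $\catM^J_{Loc}$.
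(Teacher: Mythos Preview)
Your proof is correct and follows essentially the same approach as the paper's: factor $f$ in $\catM^J_{Reedy}$, then use \cref{localized-triv-cof:lem} to promote the Reedy trivial cofibration to one in $\catM^J_{Loc}$, while the Reedy fibration is already a level-wise fibration and hence a fibration in $\catM^J_{Loc}$. You have simply been more explicit than the paper about why $X$ is Reedy cofibrant and $Y$ is Reedy fibrant (via the observation that $J$ is direct so Reedy fibrations coincide with level-wise ones), which is a helpful clarification.
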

      \begin{proof}
        We factor $ f:X \to Y $ in $ \catM^J_{Reedy}$ to obtain $X \trivialcof Z \fibration Y$. $Z \fibration Y$ is also a fibration in $ \catM^J_{Loc} $ as is it is level-wise. Finally, $X \trivialcof Z \in \catM^J_{Loc}$ by the previous \cref{localized-triv-cof:lem}.
      \end{proof}

      %%%%%%%%%%%%%%%%%%%%

      For the factorization of a diagram map $f:X \to Y$ in $\catM^J$, with $X$ cofibrant and $Y$ fibrant, into a cofibration followed by a trivial fibration we will need an auxiliary class of diagrams.

      \begin{construction} \label{homotopy-limit:cons}
        Denote by $ K $ the category $ J $ with the opposite Reedy structure given above (the degree function reversed). We endow $ \catM^K $ with the Reedy model structure. Then a diagram $ Y\in\catM^K_{Reedy} $ is fibrant if $ Y_c\display 1 $, $ Y_b \display Y_c $ and $ Y_a \display Y_b\times_{Y_c} Y_b $ are fibrations in $ \catM $. In this situation $ Y_b $ is also fibrant.
      \end{construction}

      The limit of a diagram $Y \in \catM^K$ is simply the equalizer $ Eq(Y_i,Y_j) $. Note that the following pullback also computes the limit of $ Y $:
	\[
	\begin{tikzcd}
		P \ar[r] \ar[d,twoheadrightarrow] \ar[rd,phantom,"\lrcorner",very near start] & Y_a \ar[d,twoheadrightarrow] \\
		Y_b \ar[r] & Y_b\times_{Y_c} Y_b.
	\end{tikzcd}
	\]
	From this we conclude that $ \Lim Y $ is a fibrant object of $ \catM $ if $Y \in \catM^K_{Reedy}$ is fibrant, and letting $ Z $ to denote the constant diagram at $ \Lim Y $ then this comes with a diagram map $ Z \to Y $ of the following form
	\[
	\begin{tikzcd}
	 \Lim Y \ar[d,twoheadrightarrow] \ar[r,shift left] \ar[r,shift right] & \Lim Y \ar[d,twoheadrightarrow] \ar[r] & \Lim Y \ar[d,twoheadrightarrow]\\
	 Y_a \ar[r,shift left] \ar[r,shift right] & Y_b  \ar[r] & Y_c
	\end{tikzcd}
	\]
	where all top arrows are identities. Finally, note that $ Y $ being fibrant in $ \catM^K_{Reedy} $ implies that both maps $ \begin{tikzcd}
	Y_a \ar[r,shift left] \ar[r,shift right] & Y_b
	\end{tikzcd} $ are fibrations. This can be deduced from the following diagram:
	\[
	 \begin{tikzcd}
	  Y_a \ar[rrd,bend left] \ar[ddr,bend right] \ar[rd,twoheadrightarrow] & & \\
	   & Y_b\times_{Y_c} Y_b \ar[r,twoheadrightarrow] \ar[d,twoheadrightarrow] & Y_b \ar[d,twoheadrightarrow] \\
	   & Y_b \ar[r,twoheadrightarrow] & Y_c
	 \end{tikzcd}
	\]
	
        \begin{observation}
          Recall that the fibrations in $\catM^J_{Loc}$ are the level-wise fibrations. Since $ Z \in \catM^K$ is point-wise fibrant then it is Reedy fibrant in $ \catM^J_{Loc} $. Similarly, $ Y $ is Reedy fibrant in $ \catM^K_{Reedy} $, in particular, implies that is object-wise fibrant, so it is fibrant in $ \catM^J_{Loc}$. We will use this diagram $Z$ throughout this section.
        \end{observation}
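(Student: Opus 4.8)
The plan is to reduce the entire observation to the single fact that, in $\catM^J_{Loc}$, fibrant objects are exactly the object-wise fibrant diagrams. Indeed, by \cref{model-paths:wms} the fibrations of $\catM^J_{Loc}$ are by definition the level-wise fibrations, and the terminal object of $\catM^J$ is the constant diagram at the terminal object of $\catM$ (limits in a diagram category being computed pointwise). Hence a diagram $W \in \catM^J$ is fibrant in $\catM^J_{Loc}$ precisely when the map $W \to 1$ is a level-wise fibration, that is, precisely when each component $W_a$, $W_b$, $W_c$ is a fibrant object of $\catM$. So both halves of the statement amount to checking object-wise fibrancy, and I would carry out this reduction first.

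For the diagram $Z$ this is then immediate: every component of $Z$ equals $\Lim Y$, which we have just shown to be a fibrant object of $\catM$ (it is obtained as the displayed pullback of fibrations whose corners are fibrant). Thus $Z$ is object-wise fibrant and therefore fibrant in $\catM^J_{Loc}$.

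For the diagram $Y$, I would unwind the Reedy fibrancy hypothesis recorded in \cref{homotopy-limit:cons}: $Y$ being fibrant in $\catM^K_{Reedy}$ means that $Y_c \display 1$, $Y_b \display Y_c$ and $Y_a \display Y_b \times_{Y_c} Y_b$ are all fibrations in $\catM$. From this I deduce object-wise fibrancy step by step, using only that fibrations are closed under composition and stable under pullback: $Y_c$ is fibrant since $Y_c \display 1$; $Y_b$ is fibrant as the composite $Y_b \display Y_c \display 1$; the fibre product $Y_b \times_{Y_c} Y_b$ is fibrant because $Y_b \times_{Y_c} Y_b \display Y_b$ is a pullback of $Y_b \display Y_c$ and composes with $Y_b \display 1$; and finally $Y_a$ is fibrant as the composite $Y_a \display Y_b \times_{Y_c} Y_b \display 1$. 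Hence each component of $Y$ is fibrant in $\catM$, so $Y$ is fibrant in $\catM^J_{Loc}$.

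There is essentially no genuine obstacle here; the statement is a definitional unwinding. The one point requiring a little care is that at this stage $\catM^J_{Loc}$ has not yet been established as a full weak model structure, so I would be careful to invoke only the \emph{definition} of its fibrations (level-wise) rather than any model-category axiom, and to order the deductions for $Y$ so that $Y_b \times_{Y_c} Y_b$ is known to be fibrant before concluding that $Y_a$ is.
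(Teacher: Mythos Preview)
Your proposal is correct and follows essentially the same route as the paper: the observation in the paper carries its own justification, and the surrounding text has already recorded that $\Lim Y$ is fibrant, that $Y_b$ is fibrant, and that $Y_a \display Y_b \times_{Y_c} Y_b$ together with the pullback projections gives fibrancy of $Y_a$, so both the paper and you reduce everything to ``fibrant in $\catM^J_{Loc}$ means object-wise fibrant'' and then verify object-wise fibrancy for $Z$ and $Y$ in the obvious way. Your added remark that one should rely only on the \emph{definition} of the fibrations at this stage (since the weak model structure on $\catM^J_{Loc}$ is not yet established) is a nice point of hygiene that the paper leaves implicit.
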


\begin{lemma} \label{cofacycfib1}
	The map $ Z \to Y $ from above is a trivial fibration in $\catM^J_{Loc}$.
      \end{lemma}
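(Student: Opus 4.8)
The plan is to check directly that $Z\to Y$ satisfies the defining right lifting property of a trivial fibration in $\catM^J_{Loc}$, namely that it admits a diagonal filler against every core cofibration $i\colon A\cofibration B$ of $\catM^J_{Loc}$ (both $Z$ and $Y$ being fibrant, this is the relevant formulation). The first step is a translation into $\catM$. The relation $ki=kj$ makes the two maps $a\to c$ in $J$ coincide, so $c$ is a terminal object of $J$ and the colimit functor $\catM^J\to\catM$ is evaluation at $c$; since $Z$ is the constant diagram at $\Lim Y$, a map $B\to Z$ is thus the same datum as a map $B_c\to\Lim Y$. Using the description $\Lim Y=Y_a\times_{Y_b\times_{Y_c}Y_b}Y_b$ from \cref{homotopy-limit:cons}, and that $i_c\colon A_c\cofibration B_c$ is a cofibration with $A_c,B_c$ cofibrant, a lifting problem for $Z\to Y$ against $i$ unwinds to the following data in $\catM$: a map $\bar\alpha\colon A_c\to\Lim Y$ and the components $\beta_a,\beta_b,\beta_c$ of a diagram map $\beta\colon B\to Y$, subject to the compatibilities forced by commutativity of the square. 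We must produce $\bar\delta\colon B_c\to\Lim Y$ with $\bar\delta i_c=\bar\alpha$ whose post-composite $B\to Z\to Y$ is $\beta$.

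I would construct $\bar\delta$ by a two-step Reedy-type argument --- which is precisely what the auxiliary category $K$ and its Reedy structure are designed for. Since $Y$ is fibrant in $\catM^K_{Reedy}$, the maps $Y_b\fibration Y_c$ and $Y_a\fibration Y_b\times_{Y_c}Y_b$ are fibrations with fibrant codomains; and since $i$ is a cofibration of $\catM^J_{Loc}$, the canonical maps $B_b\sqcup_{A_b}A_c\to B_c$ and $B_a\sqcup_{A_a}A_c\to B_c$ are core trivial cofibrations between cofibrant objects, by \cref{cofibration-loc-eq-cofibration} and \cref{localized-triv-cof:lem} together with the description of the cofibrations and cofibrant objects of $\catM^J_{Loc}$ recorded above. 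First, lift the map $B_b\sqcup_{A_b}A_c\to Y_b$ assembled from $\beta_b$ and the $Y_b$-component of $\bar\alpha$ along $Y_b\fibration Y_c$, with bottom edge $\beta_c\colon B_c\to Y_c$; this yields $p_b\colon B_c\to Y_b$ restricting to $\beta_b$ and to $\bar\alpha$ and satisfying $Y_k p_b=\beta_c$. Second, lift the map $B_a\sqcup_{A_a}A_c\to Y_a$ assembled from $\beta_a$ and the $Y_a$-component of $\bar\alpha$ along $Y_a\fibration Y_b\times_{Y_c}Y_b$, with bottom edge given by $p_b$ into both factors; lifting against $Y_a\fibration Y_b\times_{Y_c}Y_b$ rather than against $Y_i$ and $Y_j$ separately is exactly what forces the resulting $p_a\colon B_c\to Y_a$ to equalise $Y_i$ and $Y_j$, so that $p_a$ factors through the equaliser $\Lim Y\hookrightarrow Y_a$; that factorisation is $\bar\delta$, and the identities built into the two lifts give $\bar\delta i_c=\bar\alpha$ and $q\circ(B\to Z)=\beta$.

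The main obstacle is verifying that the second lifting square genuinely commutes --- that the partial map $B_a\to Y_a$ composed with $Y_a\to Y_b\times_{Y_c}Y_b$ agrees on the nose with the bottom edge $(p_b,p_b)$ restricted along $B_a\to B_c$. This has to be deduced from the first-step identities, the diagram relations $Y_kY_i=Y_kY_j$ in $Y$, the relation $ki=kj$ in $J$ (which makes the two $a\to c$ composites in $B$ equal, so that the relevant partial map is already forced to be ``diagonal'' after the identifications provided by $i$), and the compatibilities coming from the original commuting square; this is the one genuinely non-formal piece of book-keeping. There is also a preliminary matter to dispatch: that $i_c$ is a cofibration and that the two comparison maps above are trivial cofibrations between cofibrant objects --- both immediate from the definition of $\catM^J_{Loc}$-cofibration and the analysis of cofibrant diagrams carried out earlier in this subsection. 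Alternatively, one could produce the lift in a single stroke as a Reedy lift in $\catM^K_{Reedy}$ after a fibrant replacement, transported through the equality of categories $\catM^J=\catM^K$; I expect the two-step version to be the cleaner write-up.
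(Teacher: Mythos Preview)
Your overall plan mirrors the paper's: translate the lifting problem into producing a map $B_c\to\Lim Y$ and build it by two successive lifts in $\catM$. Step~1 is fine. Step~2, however, does not go through, and the difficulty you flag as ``the main obstacle'' is not a bookkeeping exercise---it is a genuine obstruction that your listed identities do not resolve.

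On the $B_a$-summand of $B_a\sqcup_{A_a}A_c$ the two composites around your second square are
\[
(Y_i,Y_j)\circ\beta_a=(\beta_b B_i,\,\beta_b B_j)
\quad\text{(top--right, by naturality of $\beta$)}
\]
and
\[
(p_b,p_b)\circ B(ki)=(p_b B_k B_i,\,p_b B_k B_i)=(\beta_b B_i,\,\beta_b B_i)
\quad\text{(bottom--left, using $p_bB_k=\beta_b$).}
\]
These agree only if $\beta_b B_i=\beta_b B_j$, equivalently $Y_i\beta_a=Y_j\beta_a$, i.e.\ $\beta_a$ already factors through the equalizer $\Lim Y\hookrightarrow Y_a$. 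The relation $ki=kj$ gives $B_kB_i=B_kB_j$ in $B$, but it says nothing about $\beta_b$ coequalizing $B_i,B_j$; none of the ``first-step identities'' or naturality squares you invoke supply this. So your square need not commute, and choosing the diagonal $(p_b,p_b)$ as the bottom edge is precisely what kills the argument.

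The paper's second lift is organized differently at exactly this point. Rather than working from the small object $B_a\sqcup_{A_a}A_c$ with a single target map $\beta_a$, the paper first manufactures \emph{three} auxiliary lifts $l_i,l_j\colon B_b\to Y_a$ (one for each of the parallel arrows $i,j$) and $l_k\colon B_c\to Y_b$, and then lifts along the larger trivial cofibration $B_b\sqcup_{B_a}B_b\trivialcof B_c$ against $Y_a\fibration Y_b\times_{Y_c}Y_b$ with top map built from $(l_i,l_j)$. The two distinct lifts $l_i,l_j$ absorb the asymmetry between $Y_i$ and $Y_j$; your diagonal bottom edge $(p_b,p_b)$ forces a symmetry on the data that is simply not there. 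If you want to salvage your approach you must replace the object $B_a\sqcup_{A_a}A_c$ in Step~2 by one that sees both $B_i$ and $B_j$ separately, as the paper does.
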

      \begin{proof}
        We show that the map has the right lifting property against any cofibration $A \cofibration B \in \catM_{Loc}^J$. First, assume that $ A=\emptyset $, and $B$ is a cofibrant object in $ \catM^J_{Loc} $ and $ Y $ a fibrant diagram in $ \catM^K_{Reedy} $. We consider the lifting problem in $ \catM^J_{Loc}$:
	\[
	\begin{tikzcd}
	\emptyset \ar[r] \ar[d,hook] & Z \ar[d] \\
	B \ar[r] & Y
	\end{tikzcd}
	\]	
	From the discussion above we obtain the following commutative diagram:	
	\[
	\begin{tikzcd}
	B_a \ar[r,shift left,hook,"\sim"] \ar[r,shift right,"\sim"',hook] \ar[d] & B_b  \ar[r,"\sim",hook] \ar[d] & B_c \ar[d] \\
	Y_a \ar[r,shift left,twoheadrightarrow] \ar[r,shift right,twoheadrightarrow] & Y_b  \ar[r,twoheadrightarrow] & Y_c
	\end{tikzcd}
	\]
	Thus, we obtain the following lifts:
	\[
	\begin{tikzcd}[sep=small]
	 B_a \ar[d,hook,"\sim","Bi"'] \ar[r] & Y_a \ar[d,twoheadrightarrow,"Yi"] \\
	 B_b \ar[r] \ar[ru,dashrightarrow,"l_i"'] & Y_b
	\end{tikzcd} \quad
	\begin{tikzcd}[sep=small]
	B_a \ar[d,hook,"\sim","Bj"'] \ar[r] & Y_a \ar[d,twoheadrightarrow,"Yj"] \\
	B_b \ar[r] \ar[ru,dashrightarrow,"l_j"'] & Y_b
	\end{tikzcd} \quad
	\begin{tikzcd}[sep=small]
	B_b \ar[d,hook,"\sim","Bk"'] \ar[r] & Y_b \ar[d,twoheadrightarrow,"Yk"] \\
	B_c \ar[r] \ar[ru,dashrightarrow,"l_k"'] & Y_c
	\end{tikzcd}
	\]
	Using this we can construct the following commutative diagram:	
	\[
	\begin{tikzcd}
	B_a \ar[r,hook,"\sim"] \ar[d,hook,"\sim"] \ar[rd,phantom,"\ulcorner",very near end] & B_b \ar[d,hook,"\sim"] \ar[rd,"l_j"] &  &  \\
	B_b \ar[r,hook,"\sim"] \ar[rd,"B_k"'] & B_b\sqcup_{B_a} B_b \ar[d,hook,"\sim"] \ar[r] & Y_a \ar[rd,"Y_j"] \ar[d,twoheadrightarrow] &  \\
	 & B_c \ar[rd,"l_k"'] \ar[r] & Y_b\sqcup_{Y_c} Y_b \ar[rd,phantom,"\lrcorner",very near start] \ar[d,twoheadrightarrow] \ar[r,twoheadrightarrow] & Y_b \ar[d,twoheadrightarrow] \\
	 &  & Y_b \ar[r,twoheadrightarrow] & Y_c 
	\end{tikzcd}
	\]
	where the middle trivial cofibration and fibration come from $ B $ being cofibrant in $ \catM^J_{Loc} $ and $ Y $ being fibrant in $ \catM^K_{Reedy} $ respectively. Then there exist a map $ B_c \overset{r}{\to} Y_a $ that fits in the diagram. Furthermore, we readily see from the diagram that $ Y_jr=l_k=Y_ir $. Therefore, there is a unique arrow $ B_c \overset{t}{\to} Eq(Y_i,Y_j)=\Lim Y $ making the obvious triangle commutative. By taking the appropriate compositions with the map $ t $ we can construct a diagram map $ B \to Z $ such that is a solution to the lifting problem.
        
	For the general case
	\[
	\begin{tikzcd}
	A \ar[r] \ar[d,hook] & Z \ar[d] \\
	B \ar[r] & Y
	\end{tikzcd}
	\]
	one can play the same game, the only change is that the diagram is a bit more involved.
\end{proof}

The diagram $Z$ from \cref{homotopy-limit:cons} is not necessarily Reedy cofibrant, but it is almost cofibrant in $\catM^J_{Loc}$ as the maps in it are trivial cofibrations. The only missing part is that $\lim Y$ is not cofibrant in $\catM$. In order to obtain cofibrant diagram in $\catM^J_{Loc}$, we include the following result.

\begin{lemma} \label{acycfib2}
  If $Y \in \catM^K_{Reedy}$ is fibrant then there exists a trivial fibration $W \fibration Y \in \catM^J_{Loc}$ with $W \in \catM^J_{Loc}$ cofibrant.
\end{lemma}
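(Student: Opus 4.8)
The plan is to reduce \cref{acycfib2} to \cref{cofacycfib1} by carrying out a cofibrant replacement ``by hand''. Write $P \coloneqq \Lim Y$; as recalled just before \cref{cofacycfib1}, $P$ is a fibrant object of $\catM$, and the constant diagram $Z$ at $P$ comes with a trivial fibration $Z \fibration Y$ in $\catM^J_{Loc}$ --- meaning a map with the right lifting property against every cofibration of $\catM^J_{Loc}$. Since such maps are closed under composition, it is enough to produce a cofibrant diagram $W \in \catM^J_{Loc}$ together with a trivial fibration $W \fibration Z$ in $\catM^J_{Loc}$; then $W \fibration Z \fibration Y$ will be the map we want.

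First I would build an auxiliary cofibrant diagram $W_0$. Choose a cofibrant replacement $\pi\colon C \trivialfib P$ of $P$ (factor $\emptyset \to P$), so $C$ is cofibrant, and then, using that $C$ is cofibrant, pick a weak cylinder on $C$ as in \cref{cstr:weak_path_and_cylinder}: a cofibration $C \coprod C \cofibration IC$ together with trivial cofibrations $\sigma\colon C \trivialcof DC$ and $\delta\colon IC \trivialcof DC$ compatible with the fold map, so that the two legs $\iota_0,\iota_1\colon C \to IC$ satisfy $\delta\iota_0 = \delta\iota_1 = \sigma$. The diagram $C \rightrightarrows IC \to DC$ is then a $J$-diagram $W_0$, and by the explicit description of cofibrant objects of $\catM^J_{Loc}$ given after \cref{model-paths:wms}, $W_0$ is cofibrant in $\catM^J_{Loc}$.

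Next I would construct a morphism $g\colon W_0 \to Z$. Its $a$-component is $\pi$; since $\sigma$ is a core trivial cofibration and $P \fibration 1$ is a core fibration, lifting $\pi$ against $\sigma$ yields a map $g_c\colon DC \to P$ with $g_c\sigma = \pi$, and I set the $b$-component to be $g_b \coloneqq g_c\delta$. Because $Z$ has identity transition maps and $\delta\iota_0 = \delta\iota_1 = \sigma$, the triple $(\pi, g_b, g_c)$ assembles into a map of $J$-diagrams $g$. A short $2$-out-of-$3$ chain --- $\pi$ is a weak equivalence, hence so is $g_c$ (from $g_c\sigma = \pi$), hence so is $g_b = g_c\delta$ --- shows $g$ is a levelwise weak equivalence, i.e. a weak equivalence of $\catM^J_{Reedy}$.

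To finish, factor $g$ in $\catM^J_{Reedy}$ (a weak model category by \cref{reedy-model:theorem}, and the $(\text{cofibration},\text{trivial fibration})$-factorization of $g$ is available because $W_0$ is cofibrant) as $W_0 \cofibration W \trivialfib Z$; by $2$-out-of-$3$ the map $W_0 \to W$ is a Reedy trivial cofibration, so \cref{localized-triv-cof:lem} makes $W$ cofibrant in $\catM^J_{Loc}$. Being a Reedy trivial fibration, $W \fibration Z$ has the right lifting property against all Reedy cofibrations, hence against the smaller class of cofibrations of $\catM^J_{Loc}$, so it is a trivial fibration of $\catM^J_{Loc}$; composing with $Z \fibration Y$ completes the argument. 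I expect the only genuine subtlety to be the construction of $g$ --- extending the weak equivalence $\pi$ over the weak cylinder using fibrancy of $P$ --- together with the bookkeeping that ``trivial fibration in $\catM^J_{Loc}$'' refers to right lifting against $\catM^J_{Loc}$-cofibrations, which is exactly what lets Reedy trivial fibrations serve here.
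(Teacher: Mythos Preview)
Your argument has a gap at the claim that $W_0 = (C \rightrightarrows IC \to DC)$ is cofibrant in $\catM^J_{Loc}$. Cofibrancy there requires in particular Reedy cofibrancy, and the Reedy condition at $c$ demands that the latching map $L_cW_0 \to DC$ be a cofibration. For your $W_0$ this map is induced on both sides by the \emph{same} arrow $\delta\colon IC \to DC$ (because the $J$-structure forces $\delta\iota_0=\delta\iota_1$), so it is a codiagonal-type map and there is no reason for it to be a cofibration --- in simplicial sets with $IC=C\times\Delta^1$ and $DC$ a fibrant replacement of $C$, it is not even a monomorphism. The weak-cylinder data give you $C\coprod C\cofibration IC$ and, with your extra care, trivial cofibrations $C\trivialcof DC$ and $IC\trivialcof DC$, but not this latching condition; the remark after \cref{model-paths:wms} only says that $\catM^J_{Loc}$-cofibrant diagrams \emph{produce} weak cylinders, not the converse. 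Without Reedy cofibrancy of $W_0$, neither the $(\text{cofibration},\text{trivial fibration})$-factorization of $g$ in the weak model category $\catM^J_{Reedy}$ nor the appeal to \cref{localized-triv-cof:lem} is justified.

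The paper avoids this with a shorter route that makes $W_0$ unnecessary: take $W$ directly as a Reedy cofibrant replacement of $Z$, which exists because $Z$ is levelwise fibrant and hence Reedy fibrant. The resulting $W\trivialfib Z$ is a levelwise weak equivalence; since the structure maps of $Z$ are identities, $2$-out-of-$3$ makes the structure maps of $W$ weak equivalences, and Reedy cofibrancy of $W$ makes them cofibrations, hence trivial cofibrations. So $W$ is already $\catM^J_{Loc}$-cofibrant, and composing with $Z\trivialfib Y$ finishes.
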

\begin{proof}
  Since $Y$ is fibrant in $\catM^K_{Reedy}$, then it is fibrant in $\catM^J_{Loc}$ as these are level-wise fibrant. Similarly, $Z$ from \cref{homotopy-limit:cons} is fibrant in $\catM^J_{Loc}$, which also comes with a trivial fibration $Z \trivialfib Y$ by \cref{cofacycfib1}. We can take a Reedy cofibrant replacement $W \trivialfib Z$. Since this last map is in particular a level-wise weak equivalence, it implies that the maps in $W$ are weak equivalences. By 2-out-of-3 property, the maps in $W$ are trivial cofibrations. This makes $W$ a cofibrant replacement in $\catM^J$ of $Y$ by composing the trivial fibrations $W\trivialfib Z \trivialfib Y$.
  \end{proof}

  Before giving the factorization, we need a technical result that follows from the next lemma.

  \begin{remark} \label{slice:wms}
    From \cite[2.1.11 Proposition]{henry20weak}, if $A \in \catM$ is cofibrant then the coslice category $A/\catM$ inherits a weak model structure from $\catM$ where a map in $A/\catM$ is cofibration, fibration and weak equivalences if it is one in $\catM$. Dually, one induces a weak model structure on the slice $\catM/Y$ if $Y$ is fibrant.
  \end{remark}

  \begin{construction} \label{double-slice:construction}
    Consider a map $f:A \to Y$ in $\catM$ where $A$ is cofibrant and $Y$ is fibrant. Consider $A/\catM$ with the weak model structure described in the previous \cref{slice:wms}. 

    The map $f: A \to Y$ allows us to see $Y$ as an object in $A/\catM$, which is fibrant as $Y$ is fibrant in $\catM$.  So, we can take the slice $(A/\catM)/Y$. Objects of $(A/\catM)/Y$ are factorizations of the form
  % https://q.uiver.app/#q=WzAsMyxbMCwwLCJBIl0sWzEsMSwiWSJdLFswLDEsIlciXSxbMCwxXSxbMCwyXSxbMiwxXV0=
\[\begin{tikzcd}
	A \\
	W & Y.
	\arrow[from=1-1, to=2-1]
	\arrow[from=1-1, to=2-2,"f"]
	\arrow[from=2-1, to=2-2]
      \end{tikzcd}\]
    
  Let two objects in this category
% https://q.uiver.app/#q=WzAsNyxbMCwwLCJBIl0sWzAsMSwiQiJdLFsxLDEsIlkiXSxbMywwLCJBIl0sWzQsMSwiWSJdLFs0LDAsIlgiXSxbMiwwLCJcXHRleHR7IGFuZCB9Il0sWzAsMV0sWzEsMl0sWzAsMiwiZiJdLFszLDVdLFs1LDRdLFszLDQsImYiLDJdXQ==
\[\begin{tikzcd}
	A && {\text{ and }} & A & X \\
	B & Y &&& Y
	\arrow[from=1-1, to=2-1]
	\arrow["f", from=1-1, to=2-2]
	\arrow[from=1-4, to=1-5]
	\arrow["f"', from=1-4, to=2-5]
	\arrow[from=1-5, to=2-5]
	\arrow[from=2-1, to=2-2]
\end{tikzcd}\]
    which we refer to as $B$ and $X$. A map from $B$ to $X$ is a diagonal filler of the resulting commutative square:
    % https://q.uiver.app/#q=WzAsNCxbMCwwLCJBIl0sWzAsMSwiQiJdLFsxLDEsIlkiXSxbMSwwLCJYIl0sWzAsMV0sWzEsMl0sWzAsM10sWzMsMl0sWzEsM11d
\[\begin{tikzcd}
	A & X \\
	B & Y
	\arrow[from=1-1, to=1-2]
	\arrow[from=1-1, to=2-1]
	\arrow[from=1-2, to=2-2]
	\arrow[from=2-1, to=1-2,dashed]
	\arrow[from=2-1, to=2-2]
\end{tikzcd}\]

A cofibrant object in $(A/\catM)/Y$ is one in which the first map is a cofibration in $\catM$, and a fibrant object when the last map is a fibration \ie
% https://q.uiver.app/#q=WzAsNyxbMCwwLCJBIl0sWzAsMSwiQiJdLFsxLDEsIlkiXSxbMywwLCJBIl0sWzQsMSwiWSJdLFs0LDAsIlgiXSxbMiwwLCJcXHRleHR7IGFuZCB9Il0sWzAsMSwiIiwwLHsic3R5bGUiOnsidGFpbCI6eyJuYW1lIjoiaG9vayIsInNpZGUiOiJ0b3AifX19XSxbMSwyXSxbMCwyLCJmIl0sWzMsNV0sWzUsNCwiIiwwLHsic3R5bGUiOnsiaGVhZCI6eyJuYW1lIjoiZXBpIn19fV0sWzMsNCwiZiIsMl1d
\[\begin{tikzcd}
	A && {\text{ and }} & A & X \\
	B & Y &&& Y
	\arrow[hook, from=1-1, to=2-1]
	\arrow["f", from=1-1, to=2-2]
	\arrow[from=1-4, to=1-5]
	\arrow["f"', from=1-4, to=2-5]
	\arrow[two heads, from=1-5, to=2-5]
	\arrow[from=2-1, to=2-2]
      \end{tikzcd}\]
    respectively. Also note that the category $(A/\catM)/Y$ coincides with $A/(\catM/Y)$, both as categories and as model categories.
    \end{construction}

    \begin{observation} \label{ho-adjunction:weak}
      \cite[2.4.3 Proposition]{henry20weak} observed that the Quillen adjunction descends to the homotopy categories: If $F: \catC \rightleftarrows \catD:G$ is a Quillen pair,  then we obtain a natural isomorphism $$\Ho(\catC^{\bif})(W,G(Z))\cong \Ho(\catD^{\bif})(F(W),Z)$$ of the homotopy categories.

      The category $\Ho(\catC^{\bif})$ is the localization of the subcategory of bifibrant objects at trivial (co)fibrations. This is the content of \cite[2.2.6 Theorem]{henry20weak}, which also proves that there are equivalences $$\Ho(\catC^\cof) \cong \Ho(\catC^{\bif}) \cong \Ho(\catC^\fib)$$ where the first category is the localization of $\catC^\cof$ at trivial cofibrations, and  the second is the localization of $\catC^\fib$ at trivial fibrations. Therefore, up to these equivalences of categories, we say that $\Ho(F): \Ho(\catC^\cof) \to \Ho(\catD^\cof)$ and $\Ho(G):\Ho(\catD^\fib) \to \Ho(\catC^\fib)$ are ``adjoint''.
    \end{observation}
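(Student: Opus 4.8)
The plan is to establish the statement above --- that is, to reprove, in the language of this paper, the facts recalled from \cite{henry20weak} --- in three stages: (i) give a concrete model for each of $\Ho(\catC^{\cof})$, $\Ho(\catC^{\bif})$, $\Ho(\catC^{\fib})$ as a category whose morphisms are honest homotopy classes of maps; (ii) check that the comparison functors between them, induced by fibrant and cofibrant replacement, are equivalences; (iii) transport the strict adjunction $F \dashv G$ through these identifications to obtain the natural isomorphism $\Ho(\catC^{\bif})(W,G(Z)) \cong \Ho(\catD^{\bif})(F(W),Z)$, and hence the adjointness of $\Ho(F)$ and $\Ho(G)$.

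For stage (i) I would work with the weak cylinder and weak path objects already used in the paper (\cref{cstr:weak_path_and_cylinder}): for a cofibrant object $W$ a weak cylinder $W \coprod W \cofibration IW \trivialcof DW$, and dually for a fibrant object $Z$ a weak path object. One defines left-homotopy of parallel maps out of a cofibrant object, and right-homotopy of parallel maps into a fibrant object, via such diagrams. The key lemma is that on bifibrant objects these two relations coincide, are equivalence relations compatible with composition, and that a parallel pair of maps between bifibrant objects becomes equal in the localization at core trivial cofibrations (resp. core trivial fibrations) if and only if it is homotopic: the ``if'' direction is immediate, since a homotopy is literally a map out of $IW$ whose composites with the two legs $IW \to W$ --- which are core trivial fibrations, hence inverted --- are the two given maps; the ``only if'' direction is the usual straightening argument, using that core trivial cofibrations between bifibrant objects admit homotopy sections, so any zig-zag collapses. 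This identifies $\Ho(\catC^{\bif})$ with bifibrant objects and homotopy classes, and the same argument applied after a one-sided replacement does the same for $\Ho(\catC^{\cof})$ and $\Ho(\catC^{\fib})$; in particular, for $X$ cofibrant and $Y$ fibrant the set of homotopy classes $[X,Y]$ computes the hom-set in all three models.

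Stage (ii) is then formal: the inclusion $\catC^{\bif} \hookrightarrow \catC^{\cof}$ is essentially surjective on homotopy categories because a fibrant replacement $X \trivialcof X^{\fib}$ is a core trivial cofibration, hence an isomorphism in $\Ho(\catC^{\cof})$, and it is fully faithful because for $X$ cofibrant and $Y$ bifibrant the maps $X \to Y$ in $\Ho(\catC^{\cof})$ are already homotopy classes of actual maps (no replacement of the fibrant target is needed); the case of $\catC^{\fib}$ is dual. For stage (iii): since $F$ is left Quillen it preserves cofibrant objects, colimits and core trivial cofibrations, so applying it to a weak cylinder $W \coprod W \cofibration IW \trivialcof DW$ of a cofibrant $W$ yields a weak cylinder of $FW$; hence the strict adjunction bijection $\Hom_\catD(FW,Z) \cong \Hom_\catC(W,GZ)$ carries the left-homotopy relation on the left (witnessed by $F(IW)$) to that on the right (which, $W$ being cofibrant, agrees with right-homotopy), so it descends to a bijection $[FW,Z] \cong [W,GZ]$. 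As $FW$ is cofibrant and $Z$ fibrant, $[FW,Z]$ is the hom-set of $\Ho(\catD^{\bif})$ via the stage-(ii) identifications; as $W$ is cofibrant and $GZ$ fibrant, $[W,GZ]$ is the hom-set of $\Ho(\catC^{\bif})$; naturality in $W$ and $Z$ is inherited from that of the strict adjunction. Combining with stage (ii) exhibits $\Ho(F) \colon \Ho(\catC^{\cof}) \to \Ho(\catD^{\cof})$ and $\Ho(G) \colon \Ho(\catD^{\fib}) \to \Ho(\catC^{\fib})$ as adjoint up to the equivalences.

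The main obstacle is the weak-model-category bookkeeping: cylinder and path objects need not exist, only their weak versions, and a (weak) Quillen functor is only guaranteed to preserve \emph{core} (co)fibrations and to behave well on maps from cofibrant to fibrant objects. So one must phrase the homotopy relations only on the hom-sets where they are well behaved, check that preservation of core trivial cofibrations already suffices for $F$ to preserve weak cylinders of cofibrant objects, and verify that every cofibrant/fibrant replacement inserted along the way is compatible with naturality in both variables. Each step is routine in isolation; threading them together so that the final bijection is genuinely natural is where the care lies.
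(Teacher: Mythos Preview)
The paper does not prove this statement: it is an \emph{observation} that merely recalls results from \cite{henry20weak} (specifically Proposition~2.4.3 and Theorem~2.2.6 there), with no accompanying proof in the present paper. So there is no ``paper's own proof'' to compare against.

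Your proposal is a reasonable sketch of how one would reprove those cited results from scratch, and the strategy is the standard one: model the homotopy categories via homotopy classes using weak cylinders/paths, show the replacement functors induce equivalences, and transport the adjunction bijection through by checking $F$ preserves weak cylinders of cofibrant objects. The outline is correct and matches what is done in \cite{henry20weak}. One small point of care in your stage (iii): you need $FW$ bifibrant (not merely cofibrant) and $GZ$ bifibrant for the hom-sets to literally live in $\Ho(\catD^{\bif})$ and $\Ho(\catC^{\bif})$; you implicitly absorb this into the stage-(ii) equivalences, which is fine, but it is worth making explicit that the identification $[FW,Z] = \Ho(\catD^{\bif})(FW,Z)$ goes through a fibrant replacement of $FW$ (and dually for $GZ$), and that naturality survives this.
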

  
  \begin{lemma}\label{liftequiv}
	For all $ i: A \cofibration B$ and $ i': A' \cofibration B'$ cofibrations between cofibrant objects, for all $ p:X \fibration Y $ fibration between fibrant objects, if there is a commutative diagram:
	\[
	\begin{tikzcd}
	A \ar[r,"\sim","k"'] \ar[d,hookrightarrow,"i"'] & A' \ar[d,hookrightarrow,"i'"] \\
	B \ar[r,"\sim","l"'] & B'
	\end{tikzcd}
	\]
	then  $ i\pitchfork p $ if and only if $ i'\pitchfork p $. The dual statement also holds: For all $ i: A \cofibration B$ core cofibrations, for all $ p:X \fibration Y $ and $ p':X' \fibration Y' $ fibrations between fibrant objects, if there is a commutative diagram:
	\[
	\begin{tikzcd}
	X \ar[r,"\sim","m"'] \ar[d,twoheadrightarrow,"p"'] & X' \ar[d,twoheadrightarrow,"p'"] \\
	Y \ar[r,"\sim","n"'] & Y'
	\end{tikzcd}
	\]
	then  $ i\pitchfork p $ if and only if $ i\pitchfork p' $. 
      \end{lemma}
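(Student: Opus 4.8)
The statement is a compatibility of the lifting relation $\pitchfork$ with weak equivalences between (co)fibrant objects, and the natural tool is the characterization of $\Ho(\catM^\cof)$ and $\Ho(\catM^\fib)$ from \cite[2.2.6]{henry20weak} recalled in \cref{ho-adjunction:weak}, together with the standard observation that for weak model categories a lifting problem of a core cofibration against a core fibration has a solution if and only if the induced map on hom-sets in the appropriate homotopy category is surjective (more precisely, if and only if a certain square of hom-sets is a weak pullback). The plan is to first establish this ``lifting $=$ surjectivity in $\Ho$'' reformulation, and then observe that the two commutative squares in the hypothesis become isomorphisms in $\Ho(\catM^\cof)$ (resp.\ $\Ho(\catM^\fib)$), so the surjectivity statement is visibly transported across them.

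\medskip

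First I would prove the first assertion. Fix $p : X \fibration Y$ a core fibration. The key claim is: for a core cofibration $i : A \cofibration B$, we have $i \pitchfork p$ if and only if for every commutative square
\[
\begin{tikzcd}
  A \ar[r,"a"] \ar[d,hook,"i"swap] & X \ar[d,twoheadrightarrow,"p"] \\
  B \ar[r,"b"swap] & Y
\end{tikzcd}
\]
a diagonal filler exists --- but by \cite[2.2.6]{henry20weak} this can be tested ``up to the homotopy category'': since $A,B$ are cofibrant and $X,Y$ are fibrant, and $\Ho(\catM^{\bif})$ is obtained by localizing at trivial (co)fibrations, one checks that the solvability of the square only depends on the image of $i$ in $\Ho(\catM^\cof)$ and of $p$ in $\Ho(\catM^\fib)$. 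Concretely, given the square of the hypothesis, $k$ and $l$ are weak equivalences between cofibrant objects, hence become isomorphisms in $\Ho(\catM^\cof)$; so the morphism $i$ and the morphism $i'$ represent the same arrow of $\Ho(\catM^\cof)$ up to pre/post-composition with isomorphisms. Any lifting problem for $i'$ against $p$ can then be precomposed with $k$ and $l$ to give one for $i$, a solution to which --- again using that homotopic maps out of a cofibrant object into a fibrant one give the same lifting data, via \cref{invariance-theorems} or directly from the cylinder/path object machinery in \cite{henry20weak} --- yields a solution to the original problem for $i'$; and symmetrically, using homotopy inverses of $k,l$. This gives $i \pitchfork p \Leftrightarrow i' \pitchfork p$.

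\medskip

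The second assertion is the Eckmann--Hilton dual and I would prove it by the dual argument: fix a core cofibration $i : A \cofibration B$, and given the square relating $p : X \fibration Y$ and $p' : X' \fibration Y'$ with $m,n$ weak equivalences between fibrant objects, these become isomorphisms in $\Ho(\catM^\fib)$, so solvability of lifting problems of $i$ against $p$ is transported to solvability against $p'$. The only mild subtlety is that when we push a lifting problem for $i$ against $p'$ back along $m,n$ and a homotopy inverse, the resulting square for $p$ need not strictly commute --- it commutes only up to homotopy --- so one must use that a diagonal filler exists for a square that commutes up to homotopy (fibrant codomain, cofibrant domain of $i$) provided it exists for an honestly commuting one; this is exactly the kind of homotopy-invariance of lifting properties established in \cite{henry20weak} (and is implicit in the proof of \cref{lift-correspondence:lemma} above).

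\medskip

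\textbf{Main obstacle.} The real work is not in the transport across isomorphisms of $\Ho$ but in pinning down precisely the statement ``$i \pitchfork p$ is detected in the homotopy category'' for \emph{weak} model categories, where one cannot freely factor arbitrary maps and where many model-categorical manipulations are restricted to maps from cofibrant to fibrant objects. I expect the cleanest route is to not invoke a general ``detection'' principle but instead to argue directly: given the hypothesis square and a lifting problem, build the transported lifting problem, solve it, and then carefully check --- using weak cylinder objects for $A$ and the behaviour of homotopies under composition with $i,k,l$ --- that the transported solution solves the original problem up to the needed homotopy, and finally correct it to an honest solution using that $p$ (resp.\ $p'$) is a fibration between fibrant objects. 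Keeping track of which homotopies are relative to $A$ (as in \cref{lift-correspondence:lemma}) is the delicate bookkeeping step.
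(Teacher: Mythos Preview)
Your outline has a genuine gap at precisely the point you yourself flag in ``Main obstacle'': given a filler $d:B\to X$ for the precomposed square $(f'k,g'l)$, you assert this ``yields a solution to the original problem for $i'$'', but never say how. Appealing to homotopy inverses of $k,l$ produces only a homotopy-commutative diagonal, and rectifying it to a strict filler is exactly the hard step in a weak model category, where you cannot freely factor maps. Your citations do not close it: \cref{lift-correspondence:lemma} uses extensibility and weak-conservativity of a left Quillen functor $F$ in an essential way (to lift cofibrations and detect triviality), neither of which is available here, and \cref{invariance-theorems} is about validity of formulas in the language, not about lifting properties. The vague claim that ``solvability of the square only depends on the image of $i$ in $\Ho(\catM^\cof)$'' is exactly what needs to be proved, and $\Ho(\catM^\cof)$ is not the right homotopy category for this---it forgets the square $(f,g)$ entirely.

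The paper sidesteps all of this with a device you do not use. Rather than working in $\Ho(\catM^\cof)$ and transporting lifting problems by hand, it encodes a \emph{single} lifting square $(f:A\to X,\,g:B\to Y)$ as a pair of objects in the \emph{double slice} weak model category $A/(\catM/Y)$ of \cref{double-slice:construction}: there $B$ is cofibrant (via $i,g$), $X$ is fibrant (via $f,p$), and a diagonal filler is literally a morphism $B\to X$. Thus ``filler exists'' becomes ``$\Hom_{\Ho(A/(\catM/Y))}(B,X)\neq\emptyset$''. The weak equivalence $k$ then induces a weak Quillen \emph{equivalence} $P_k: A/(\catM/Y)\rightleftarrows A'/(\catM/Y):U_k$ by \cite[2.4.2~Example]{henry20weak}; combined with the computation that $P_kB\to B'$ is a weak equivalence (a pushout plus $2$-out-of-$3$), this gives a bijection $\Hom_{\Ho(A'/(\catM/Y))}(B',X)\cong\Hom_{\Ho(A/(\catM/Y))}(B,U_kX)$. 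All the rectification you anticipate is absorbed into the standard fact (\cref{ho-adjunction:weak}) that a map in $\Ho$ from a cofibrant to a fibrant object is represented by an honest map---no relative-homotopy bookkeeping is required.
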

      \begin{proof}
        We prove the first part of the lemma, the second part is dual. We have the following commutative squares
        % https://q.uiver.app/#q=WzAsMTIsWzAsMCwiQSJdLFswLDEsIkIiXSxbMSwwLCJBJyJdLFsxLDEsIkInIl0sWzIsMCwiQSJdLFsyLDEsIkIiXSxbMywwLCJYIl0sWzMsMSwiWSJdLFs0LDAsIkEnIl0sWzQsMSwiQiciXSxbNSwwLCJYIl0sWzUsMSwiWSJdLFswLDEsImkiLDIseyJzdHlsZSI6eyJ0YWlsIjp7Im5hbWUiOiJob29rIiwic2lkZSI6InRvcCJ9fX1dLFsyLDMsImknIiwwLHsic3R5bGUiOnsidGFpbCI6eyJuYW1lIjoiaG9vayIsInNpZGUiOiJ0b3AifX19XSxbMCwyLCJrIl0sWzEsMywibCIsMl0sWzQsNSwiaSIsMix7InN0eWxlIjp7InRhaWwiOnsibmFtZSI6Imhvb2siLCJzaWRlIjoidG9wIn19fV0sWzYsNywicCIsMCx7InN0eWxlIjp7ImhlYWQiOnsibmFtZSI6ImVwaSJ9fX1dLFs1LDcsImciLDJdLFs0LDYsImYiXSxbOCw5LCJpJyIsMix7InN0eWxlIjp7InRhaWwiOnsibmFtZSI6Imhvb2siLCJzaWRlIjoidG9wIn19fV0sWzEwLDExLCJwIiwwLHsic3R5bGUiOnsiaGVhZCI6eyJuYW1lIjoiZXBpIn19fV0sWzksMTEsImcnIiwyXSxbOCwxMCwiZiciXV0=
\[\begin{tikzcd}
	A & {A'} & A & X & {A'} & X \\
	B & {B'} & B & Y & {B'} & Y
	\arrow["k","\sim"', from=1-1, to=1-2]
	\arrow["i"', hook, from=1-1, to=2-1]
	\arrow["{i'}", hook, from=1-2, to=2-2]
	\arrow["f", from=1-3, to=1-4]
	\arrow["i"', hook, from=1-3, to=2-3]
	\arrow["p", two heads, from=1-4, to=2-4]
	\arrow["{f'}", from=1-5, to=1-6]
	\arrow["{i'}"', hook, from=1-5, to=2-5]
	\arrow["p", two heads, from=1-6, to=2-6]
	\arrow["l"',"\sim", from=2-1, to=2-2]
	\arrow["g"', from=2-3, to=2-4]
	\arrow["{g'}"', from=2-5, to=2-6]
\end{tikzcd}\]
The proof relies heavily on \cref{double-slice:construction}: The middle square above corresponds to a pair of objects $B,X$ in a double slice category $A/\catM/Y$, and a diagonal filler witnessing that $ i \pitchfork p $ is a map in this double slice category.

We start with the induced weak model structure on the slice $\catM/Y$. Note that from \cite[2.4.2 Example]{henry20weak} the weak equivalence $k:A \to A'$ induces a weak Quillen equivalence $P_k: A/(\catM/Y) \leftrightarrows A'/(\catM/Y):U_k$. Observe that $B$, $B'$ are cofibrant and $Y$ is fibrant. In what follows we leave $Y$ implicit as we work in the slice $(A/\catM)/Y$, here we use that $(A/\catM)/Y=A/(\catM/Y)$ from \cref{double-slice:construction}.

The functor $P_k$ takes a cofibration $A \cofibration C$ along $k:A \to A'$, while $U_k$ precomposes with $k$. Using the following diagram, since $P_kB$ is cofibrant, by the 2-out-of-3 property
    % https://q.uiver.app/#q=WzAsNSxbMCwwLCJBIl0sWzAsMSwiQiJdLFsxLDAsIkEnIl0sWzEsMSwiUCJdLFsyLDIsIkInIl0sWzAsMSwiaSIsMix7InN0eWxlIjp7InRhaWwiOnsibmFtZSI6Im1vbm8ifX19XSxbMCwyLCJrIl0sWzEsMywiXFxzaW0iLDJdLFsyLDMsIiIsMCx7InN0eWxlIjp7InRhaWwiOnsibmFtZSI6Im1vbm8ifX19XSxbMCwzLCJcXGxyY29ybmVyIiwxLHsibGFiZWxfcG9zaXRpb24iOjkwLCJzdHlsZSI6eyJib2R5Ijp7Im5hbWUiOiJub25lIn0sImhlYWQiOnsibmFtZSI6Im5vbmUifX19XSxbMiw0LCIiLDIseyJzdHlsZSI6eyJ0YWlsIjp7Im5hbWUiOiJtb25vIn19fV0sWzEsNCwiXFxzaW0iLDIseyJjdXJ2ZSI6MX1dLFszLDQsIiIsMix7InN0eWxlIjp7ImJvZHkiOnsibmFtZSI6ImRhc2hlZCJ9fX1dXQ==
\[\begin{tikzcd}
	A & {A'} \\
	B & P_kB \\
	&& {B'}
	\arrow["k", from=1-1, to=1-2]
	\arrow["i"', hook, from=1-1, to=2-1]
	\arrow["\lrcorner"{description, pos=0.9}, draw=none, from=1-1, to=2-2]
	\arrow[hook, from=1-2, to=2-2]
	\arrow[hook, from=1-2, to=3-3]
	\arrow["\sim"', from=2-1, to=2-2]
	\arrow["\sim"', bend right, from=2-1, to=3-3]
	\arrow[dashed, from=2-2, to=3-3]
      \end{tikzcd}\]
    we see that there is a weak equivalence $P_kB \overset{\sim}{\to} B' $, this implies they are isomorphic in $\Ho(A'/(\catM/Y))$. We have:
    \begin{align*}
      \Hom_{\Ho(A'/(\catM/Y))}(B',X) &\cong \Hom_{\Ho(A'/(\catM/Y))}(P_k(B),X) \\
                                     &\cong \Hom_{\Ho(A/(\catM/Y))}(B, U_k(X)) \\
                                     &\cong \Hom_{\Ho(A/(\catM/Y))}(B,X).
    \end{align*}
    The first isomorphism follows from $B' \cong P_k(B)$ in $\Ho(A'/(\catM/Y))$, the second is the weak Quillen adjunction $P_k \dashv U_k$ applied to the cofibrant object $B \in (A/\catM)/Y$ and the fibrant object $X \in (A'/\catM)/Y$. We crucially use \cref{ho-adjunction:weak}, so the second isomorphism is really up to some equivalence of categories.

    Now we use $ \Hom_{\Ho(A'/(\catM/Y))}(B',X) \cong \Hom_{\Ho(A/(\catM/Y))}(B,X)$ to conclude. First, recall that a diagonal filler of
    % https://q.uiver.app/#q=WzAsNCxbMCwwLCJBIl0sWzAsMSwiQiJdLFsxLDEsIlkiXSxbMSwwLCJYIl0sWzAsMSwiIiwwLHsic3R5bGUiOnsidGFpbCI6eyJuYW1lIjoiaG9vayIsInNpZGUiOiJ0b3AifX19XSxbMywyLCIiLDAseyJzdHlsZSI6eyJoZWFkIjp7Im5hbWUiOiJlcGkifX19XSxbMSwyXSxbMCwzXV0=
\[\begin{tikzcd}
	A & X \\
	B & Y
	\arrow[from=1-1, to=1-2]
	\arrow[hook, from=1-1, to=2-1]
	\arrow[two heads, from=1-2, to=2-2]
	\arrow[from=2-1, to=2-2]
      \end{tikzcd}\]
    is the same as a map $B \to X $ in $A/\catM/Y$, and similarly for $B'$ and $X$. Assume that $i \pitchfork p$, this give us a map $B \to X$ in $\Ho(A/\catM/Y)$. Using the isomorphism, we have a map $B' \to X$ in $\Ho(A'/\catM/Y)$, from which we can select a representative of the homotopy class, which implies that $i'\pitchfork p$. Similarly, we get that $i'\pitchfork p$ implies $i \pitchfork p$.
      \end{proof}

\begin{lemma}\label{factorization-cofibration-trivial-fibration}
	Let $ X \to Y $ be a map in $ \catM^J $ with $ X $ cofibrant and $ Y $ fibrant. Then such a map can be factored as a cofibration followed by a trivial fibration.
\end{lemma}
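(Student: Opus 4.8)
The plan is to emulate the short proof of \cref{factorization-trivial-cofibration-fibration}, but a naive Reedy factorization is not enough here. If we factor $f\colon X\to Y$ as $X\cofibration V\trivialfib Y$ in $\catM^J_{Reedy}$, then $V\trivialfib Y$ is automatically a trivial fibration in $\catM^J_{Loc}$ — it is level-wise a trivial fibration, hence has the right lifting property against all Reedy cofibrations, in particular against the core cofibrations of $\catM^J_{Loc}$ — but $X\cofibration V$ need not be a cofibration of $\catM^J_{Loc}$, because $V$ need not satisfy the extra trivial-cofibration conditions of \cref{model-paths:wms}; these fail precisely because the structure maps $Y_a\to Y_c$, $Y_b\to Y_c$ of the diagram $Y$ need not be weak equivalences. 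By \cref{cofibration-loc-eq-cofibration} the only thing we have to arrange is that the \emph{codomain} of a Reedy factorization be cofibrant in $\catM^J_{Loc}$, and for that it is enough to replace $Y$ by a diagram all of whose structure maps are weak equivalences.

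So the first step is to replace the target. Using that $Y$ is level-wise fibrant, take a level-wise cofibrant replacement and then factor the structure maps of the resulting diagram as (trivial cofibration)(fibration) to produce a diagram $\hat Y$ that is fibrant in $\catM^K_{Reedy}$ in the sense of \cref{homotopy-limit:cons}, together with a zig-zag of level-wise weak equivalences — i.e. weak equivalences of $\catM^J_{Loc}$ — connecting $Y$ and $\hat Y$. To $\hat Y$ we may apply \cref{acycfib2} to obtain a diagram $W$, cofibrant in $\catM^J_{Loc}$, together with a trivial fibration $W\trivialfib\hat Y$ in $\catM^J_{Loc}$; note that $W$ is then bifibrant in $\catM^J_{Loc}$ and, crucially, all the structure maps of $W$ are trivial cofibrations, as already observed in the proof of \cref{acycfib2} (and \cref{cofacycfib1} underlies that observation).

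Next, lift $f$ through the zig-zag and through $W\trivialfib\hat Y$, repeatedly using that $X$ is cofibrant, to obtain a map $g\colon X\to W$ which represents $f$ up to weak equivalence of the target. Factor $g$ in $\catM^J_{Reedy}$ as $X\overset{j}{\cofibration}V\overset{p}{\trivialfib}W$. Since the structure maps of $W$ are weak equivalences and $p$ is a level-wise trivial fibration, all the objects in sight are bifibrant and the $2$-out-of-$3$ property forces the structure maps of $V$ to be weak equivalences; Reedy cofibrancy of $V$ makes them cofibrations as well, so $V$ is cofibrant in $\catM^J_{Loc}$. By \cref{cofibration-loc-eq-cofibration}, $j$ is then a cofibration in $\catM^J_{Loc}$, and $p$ (being level-wise a trivial fibration) is a trivial fibration there; composing with $W\trivialfib\hat Y$ gives a factorization of $X\to\hat Y$ of the desired shape. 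Finally, transport this factorization back across the weak equivalences relating $W$, $\hat Y$ and $Y$ — working in the slice weak model structures of \cref{double-slice:construction} and invoking \cref{liftequiv}, together with the first invariance theorem \cref{invariance-theorems}, to see that the relevant lifting properties are preserved — to produce the desired factorization of $f$ itself as a cofibration followed by a trivial fibration in $\catM^J_{Loc}$.

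The main obstacle is this last transport step: the zig-zag relating $Y$ to the well-behaved targets $\hat Y$ and $W$ points the ``wrong way'' for a naive composition, so one cannot simply paste the factorization of $X\to W$ onto a map $W\to Y$. The real content is to show, via the slice constructions of \cref{double-slice:construction} and the invariance of lifting properties under weak equivalence of the fibration (\cref{liftequiv}), that a factorization of $X\to W$ into a core cofibration followed by a trivial fibration yields one of $X\to Y$ with the same formal properties; equivalently, that ``factor as core cofibration then trivial fibration'' is stable, up to the equivalence that matters, under replacing the fibrant target by a weakly equivalent one. Everything else is a bookkeeping exercise with Reedy (co)fibrations in $\catM^J$ and $\catM^K$ and the already-established \cref{localized-triv-cof:lem,cofibration-loc-eq-cofibration}.
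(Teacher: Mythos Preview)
Your overall strategy is close to the paper's, and steps (1)--(6) in your sketch mirror its argument closely. But the final ``transport'' step, which you yourself flag as the main obstacle, is a genuine gap. You say that \cref{liftequiv} and the slice construction of \cref{double-slice:construction} will let you move a factorization of $X\to W$ (or of $X\to\hat Y$) across a zig-zag of weak equivalences to a factorization of $X\to Y$, but \cref{liftequiv} does not produce factorizations: it compares lifting properties of two \emph{already given} fibrations related by a commutative square of weak equivalences. It cannot by itself manufacture a map over the original $Y$. Your invocation of the first invariance theorem (\cref{invariance-theorems}) is also off; that result is about validity of formulas in the homotopy language and plays no role here.

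The paper closes this gap with one concrete move you are missing. After reducing to $Y$ Reedy cofibrant (so the replacement $Y'\trivialfib Y$ goes the right way and can simply be post-composed), it takes a single $\catM^K_{Reedy}$-fibrant replacement $Y\trivialcof Y'$---no zig-zag needed---and then \emph{pulls back} the trivial fibration $W\trivialfib Y'$ of \cref{acycfib2} along $Y\trivialcof Y'$:
\[\begin{tikzcd}
	LY \ar[r,"\sim"] \ar[d,two heads] & W \ar[d,two heads] \\
	Y \ar[r,hook,"\sim"'] & Y'.
\end{tikzcd}\]
This produces an honest fibration $LY\to Y$ over the original target, and \cref{liftequiv} (applied to the square of weak equivalences just built) now shows that $LY\to Y$ inherits the right lifting property against core cofibrations from $W\to Y'$, i.e.\ is a trivial fibration in $\catM^J_{Loc}$. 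From here one lifts $X\to Y$ to $X\to LY$, factors in $\catM^J_{Reedy}$ as $X\cofibration X'\trivialfib LY$, checks that $X'$ is cofibrant in $\catM^J_{Loc}$ (since $LY$ has structure maps weak equivalences and $X'\trivialfib LY$ is level-wise), applies \cref{cofibration-loc-eq-cofibration}, and composes with $LY\trivialfib Y$. The pullback is precisely the mechanism that turns your informal ``transport'' into an actual construction over $Y$.
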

\begin{proof}
  Observe first that $Y$ can be assumed to be Reedy cofibrant in $\catM^J$. Indeed, we can simply take a Reedy cofibrant replacement $Y' \trivialfib Y$, and instead use the dashed arrow
  % https://q.uiver.app/#q=WzAsNCxbMCwxLCJYIl0sWzAsMCwiMCJdLFsxLDAsIlknIl0sWzEsMSwiWSJdLFsyLDMsIlxcc2ltIiwwLHsic3R5bGUiOnsiaGVhZCI6eyJuYW1lIjoiZXBpIn19fV0sWzAsM10sWzEsMCwiIiwyLHsic3R5bGUiOnsidGFpbCI6eyJuYW1lIjoiaG9vayIsInNpZGUiOiJ0b3AifX19XSxbMSwyLCIiLDAseyJzdHlsZSI6eyJ0YWlsIjp7Im5hbWUiOiJob29rIiwic2lkZSI6InRvcCJ9fX1dLFswLDIsIiIsMSx7InN0eWxlIjp7ImJvZHkiOnsibmFtZSI6ImRhc2hlZCJ9fX1dXQ==
\[\begin{tikzcd}
	0 & {Y'} \\
	X & Y.
	\arrow[hook, from=1-1, to=1-2]
	\arrow[hook, from=1-1, to=2-1]
	\arrow["\sim", two heads, from=1-2, to=2-2]
	\arrow[dashed, from=2-1, to=1-2]
	\arrow[from=2-1, to=2-2]
      \end{tikzcd}\]
    Under this assumption, $Y$ is point-wise cofibrant, whence Reedy cofibrant in $\catM^K$. Therefore, we can take a fibrant replacement in $\catM^K$, $Y \trivialcof Y'$. Using \cite[Corollary 2.4.4]{henry20weak} equivalences are preserved under pullbacks along fibrations, so we get the pullback square
    % https://q.uiver.app/#q=WzAsNCxbMSwwLCJXIl0sWzEsMSwiWSciXSxbMCwxLCJZIl0sWzAsMCwiTFkiXSxbMiwxLCJcXHNpbSIsMCx7InN0eWxlIjp7InRhaWwiOnsibmFtZSI6Imhvb2siLCJzaWRlIjoidG9wIn19fV0sWzAsMSwiIiwyLHsic3R5bGUiOnsiaGVhZCI6eyJuYW1lIjoiZXBpIn19fV0sWzMsMiwiIiwwLHsic3R5bGUiOnsiaGVhZCI6eyJuYW1lIjoiZXBpIn19fV0sWzMsMCwiXFxzaW0iXV0=
\[\begin{tikzcd}
	LY & W \\
	Y & {Y'.}
	\arrow["\sim", from=1-1, to=1-2]
	\arrow[two heads, from=1-1, to=2-1]
	\arrow[two heads, from=1-2, to=2-2]
	\arrow["\sim", hook, from=2-1, to=2-2]
      \end{tikzcd}\]
    Furthermore, we know from \cref{acycfib2} that $W \fibration Y'$ is a trivial fibration in $\catM^J$. Therefore, it has the right lifting property against any cofibration between cofibrant objects in $\catM^J$. We can use \cref{liftequiv} to conclude that $LY \fibration Y$ satisfies the same property, \ie it is a trivial fibration in $\catM^J$. Since $X$ is cofibrant, we obtain a lift
    % https://q.uiver.app/#q=WzAsNCxbMCwwLCIwIl0sWzAsMSwiWCJdLFsxLDAsIkxZIl0sWzEsMSwiWSJdLFswLDEsIiIsMCx7InN0eWxlIjp7InRhaWwiOnsibmFtZSI6Imhvb2siLCJzaWRlIjoidG9wIn19fV0sWzIsMywiXFxzaW0iLDAseyJzdHlsZSI6eyJoZWFkIjp7Im5hbWUiOiJlcGkifX19XSxbMCwyLCIiLDEseyJzdHlsZSI6eyJ0YWlsIjp7Im5hbWUiOiJob29rIiwic2lkZSI6InRvcCJ9fX1dLFsxLDNdLFsxLDIsIiIsMSx7InN0eWxlIjp7ImJvZHkiOnsibmFtZSI6ImRhc2hlZCJ9fX1dXQ==
\[\begin{tikzcd}
	0 & LY \\
	X & Y.
	\arrow[hook, from=1-1, to=1-2]
	\arrow[hook, from=1-1, to=2-1]
	\arrow["\sim", two heads, from=1-2, to=2-2]
	\arrow[dashed, from=2-1, to=1-2]
	\arrow[from=2-1, to=2-2]
      \end{tikzcd}\]
The map $X \to LY$ can be factored in the Reedy model structure $\catM^J$ as $X \cofibration X' \trivialfib LY$. The diagram $X'$ is cofibrant in $\catM^J$ since it is equivalent to the cofibrant diagram $LY$, and $X$ is cofibrant by assumption. Therefore, it follows from \cref{localized-triv-cof:lem} that the Reedy cofibration $X \cofibration X'$ is a cofibration in the model $\catM^J$. This gives us the desired factorization in $\catM^J$, $X \cofibration X' \trivialfib Y$.
\end{proof}

All the previous work can be summarized in the following proof of \cref{model-paths:wms}. This proves that the category of diagrams $\catM^J$ has a weak model structure with the specified cofibrations and fibrations, which, as explained above, encodes objects with a weak cylinder object. We remark that our proof will show that the conditions of \cite[2.1.10 Definition]{henry20weak} are satisfied instead of \cref{def:wms}. The reason is for this is that in \cref{model-paths:wms} we do not have an explicit class of weak equivalences. More precisely, we will use \cite[2.3.3 Proposition]{henry20weak} which gives some alternative criteria to obtain a weak model structure in this sense.

\begin{proof}(\cref{model-paths:wms})
  Note first that we have the Reedy weak model structure on $\catM^J$ by virtue of \cref{reedy-model:theorem}. Also, the existence of initial and terminal diagrams is clear. We must justify that the class of (co)fibrations form a class of (co)fibrations in $\catM^J$. For fibrations, since these are level-wise, it is immediate that: the terminal diagram is fibrant, any isomorphism with fibrant codomain is a fibration, the class is closed under compositions, and stable under pullbacks along maps between fibrant objects.

  The dual conditions must be verified for the class of cofibrations. That the initial diagram is cofibrant it is immediate to verify. To see other stability conditions, we observe these are true for $\catM^J_{Reedy}$. In addition, for stability under isomorphisms we use repeatedly that maps in $\catM$ isomorphic to trivial cofibration are also trivial cofibrations. This simply because the new condition we added involves the requirement that certain maps trivial cofibrations. Stability under pushouts follows from the stability in $\catM^J_{Reedy}$ and the fact that trivial cofibrations in the weak model $\catM$ are pushout stable.

  The factorization of a map $f:X \to Y$, where $X$ is cofibrant and $Y$ is fibrant, into a cofibration followed by a trivial fibration is the content of \cref{factorization-cofibration-trivial-fibration}.

  The factorization of a map $f:X \to Y$, where $X$ is cofibrant and $Y$ is fibrant, into a trivial cofibration followed by a fibration is guaranteed by \cref{factorization-trivial-cofibration-fibration}.

  In order to conclude, we use \cite[2.3.3 Proposition]{henry20weak}. For which we need to verify that a cofibration $X \to Y \in \catM^J_{Loc}$ with $X$ cofibrant and $Y$ fibrant admit a relative strong cylinder object. Firstly, we know that the map admits a relative cylinder object in $\catM^J_{Reedy}$:
  % https://q.uiver.app/#q=WzAsMyxbMCwwLCJZXFxjb3Byb2RfWFkiXSxbMSwwLCJZIl0sWzAsMSwiSV9YWSJdLFswLDIsIiIsMCx7InN0eWxlIjp7InRhaWwiOnsibmFtZSI6Imhvb2siLCJzaWRlIjoidG9wIn19fV0sWzAsMV0sWzIsMV1d
\[\begin{tikzcd}
	{Y\coprod_XY} & Y \\
	{I_XY}
	\arrow[from=1-1, to=1-2]
	\arrow[hook, from=1-1, to=2-1]
	\arrow[from=2-1, to=1-2]
      \end{tikzcd}\]
    with $Y \cofibration Y \coprod_X Y \cofibration I_XY$ a Reedy trivial cofibration. Since $Y$ is cofibrant in $\catM^J_{Loc}$ we can use \cref{localized-triv-cof:lem} to conclude that $I_XY$ is also cofibrant in $\catM^J_{Loc}$, and that the map $Y\to I_XY$ is a trivial cofibration in $\catM^J_{Loc}$. Now we have cofibrant objects $Y \coprod_X Y$, $I_XY$ in $\catM^J_{Loc}$ and a Reedy cofibration between them, so we use \cref{cofibration-loc-eq-cofibration} to conclude it is actually a cofibration in $\catM^J_{Loc}$. This gives us the relative cylinder objects.

  Finally, the 2-out-of-3 property for trivial cofibrations between bifibrant objects follows using that $\catM^J_{Reedy}$ is a weak model category, so the property is true in this Reedy weak model structure. By which we mean that the property is true for the underlying Reedy trivial cofibrations between bifibrant objects of $\catM^J_{Loc}$. \Cref{localized-triv-cof:lem} allows us to conclude that such Reedy trivial cofibrations are indeed trivial cofibrations in $\catM^J_{Loc}$. Now \cite[2.3.3 Proposition]{henry20weak} allows us to conclude that $\catM^J_{Loc}$, with the specified classes of maps, is a weak model category.
\end{proof}

\subsubsection{Weak model on correspondences}

Next, we consider another diagram category $I$:
\[ 0 \to 2 \leftarrow 1 \]
Where $\deg(0) = \deg(1) = 0$ and $\deg(2) = 1$.  Similarly to the previous section, we construct a ``right Bousfield localization'' of the Reedy weak model structure on $\catN^I$.

\begin{theorem} \label{correspondences:wms}
  There is a weak model structure $\catN^I_{Loc}$ on the category of diagrams $\catN^I$ obtained from the Reedy weak model structure $\catN^I_{Reedy}$, where:
  \begin{enumerate}
  \item A map between diagrams $X \to Y$ is a cofibration if
    \begin{enumerate}
    \item It is a Reedy cofibration,
    \item $X_2 \sqcup_{X_1} Y_1 \trivialcof Y_2 $ and $X_2 \sqcup_{X_0} Y_0 \trivialcof Y_2$ are trivial cofibrations in $\catN$.
    \end{enumerate}
  \item Fibrations are level-wise fibrations.
  \end{enumerate}
\end{theorem}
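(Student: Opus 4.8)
The plan is to follow, step by step, the proof of \cref{model-paths:wms}, of which this is the mirror companion: $\catN^I_{Loc}$ will encode a pair of objects together with a correspondence between them, just as $\catM^J_{Loc}$ encodes an object together with a weak cylinder. First one records that $\catN^I$ carries the Reedy weak model structure by \cref{reedy-model:theorem}. Since $I$ is a direct category with $\deg(0)=\deg(1)=0$ and $\deg(2)=1$, the latching objects are $L_0X=L_1X=\emptyset$ and $L_2X=X_0\sqcup X_1$, while all matching objects are terminal; hence Reedy fibrations are exactly the level-wise fibrations, which matches condition (2). A Reedy cofibration $X\to Y$ is thus a pair of cofibrations $X_0\cofibration Y_0$, $X_1\cofibration Y_1$ together with $X_2\sqcup_{X_0\sqcup X_1}(Y_0\sqcup Y_1)\cofibration Y_2$, and the localized cofibrations add the two trivial-cofibration conditions. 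One then checks that the two classes form classes of (co)fibrations in $\catN^I$: for fibrations this is immediate from level-wiseness; for cofibrations the stability under isomorphism, pushout and composition follows from the corresponding statements in $\catN^I_{Reedy}$ together with the facts that trivial cofibrations of $\catN$ are stable under pushout and that maps isomorphic to a trivial cofibration are trivial cofibrations.

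Next come the two factorizations for a map $f:X\to Y$ with $X$ cofibrant and $Y$ fibrant. For the factorization into a trivial cofibration followed by a fibration, one proves the exact analogue of \cref{localized-triv-cof:lem}: if $X$ is $\catN^I_{Loc}$-cofibrant and $X\trivialcof Z$ is a Reedy trivial cofibration, then $Z$ is $\catN^I_{Loc}$-cofibrant and $X\to Z$ is a trivial cofibration there. The argument is the same diagram chase, now using that $X_2\sqcup_{X_0}Z_0\trivialcof Z_2$ and $X_2\sqcup_{X_1}Z_1\trivialcof Z_2$ are trivial cofibrations because they are pushouts of Reedy trivial cofibrations along the structure maps, together with $2$-out-of-$3$. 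Factoring $f$ in $\catN^I_{Reedy}$ as $X\trivialcof Z\fibration Y$ then gives the required factorization, the map $Z\fibration Y$ being level-wise hence a fibration of $\catN^I_{Loc}$. One also records the analogue of \cref{cofibration-loc-eq-cofibration}: between $\catN^I_{Loc}$-cofibrant objects a Reedy cofibration is automatically a $\catN^I_{Loc}$-cofibration, since the domains of $X_2\sqcup_{X_0}Y_0\to Y_2$ and $X_2\sqcup_{X_1}Y_1\to Y_2$ sit inside $X_2\sqcup_{L_2X}L_2Y$ and the weak-equivalence part follows by $2$-out-of-$3$.

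The main obstacle, exactly as in the $J$-case, is the factorization into a cofibration followed by a trivial fibration. Here one mirrors \cref{factorization-cofibration-trivial-fibration}: introduce the inverse category $K_I$, namely $I$ with the degree function reversed, so that (as in \cref{homotopy-limit:cons}) a Reedy-fibrant $Y\in\catN^{K_I}$ has $Y_2\display 1$, $Y_0\display Y_2$ and $Y_1\display Y_2$ fibrations, and $\Lim Y = Y_0\times_{Y_2}Y_1$ is fibrant in $\catN$. The constant diagram $Z$ at $\Lim Y$ receives a map $Z\to Y$ which is level-wise a fibration, and one proves the analogue of \cref{cofacycfib1}, that $Z\to Y$ is a trivial fibration of $\catN^I_{Loc}$, by solving lifting problems against $\catN^I_{Loc}$-cofibrations using the fibrancy of $Y\in\catN^{K_I}$ and the universal property of the pullback $Y_0\times_{Y_2}Y_1$. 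A Reedy cofibrant replacement produces, as in \cref{acycfib2}, a trivial fibration $W\fibration Y$ with $W$ cofibrant in $\catN^I_{Loc}$; then, assuming $Y$ Reedy cofibrant (after a Reedy cofibrant replacement), forming the pullback of a fibrant replacement $Y\trivialcof Y'$ in $\catN^{K_I}$ along $W\fibration Y'$ and invoking \cref{liftequiv} shows that the resulting map $LY\fibration Y$ is again a trivial fibration of $\catN^I_{Loc}$; lifting $0\to LY$ along it against $0\cofibration X$ and Reedy-factoring $X\to LY$ yields the desired $X\cofibration X'\trivialfib Y$.

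Finally one concludes with \cite[2.3.3 Proposition]{henry20weak}: a relative strong cylinder object for a cofibration between bifibrant objects is obtained from a Reedy relative cylinder using the $I$-analogues of \cref{localized-triv-cof:lem} and \cref{cofibration-loc-eq-cofibration}, and $2$-out-of-$3$ for trivial cofibrations between bifibrant objects holds because such maps are Reedy trivial cofibrations and $\catN^I_{Reedy}$ is a weak model category. The only genuinely new verification relative to the $J$-case is that the pullback $Y_0\times_{Y_2}Y_1$ (rather than an equalizer) interacts correctly with the localized cofibrations, and this is the place where the diagram chases must be watched most carefully.
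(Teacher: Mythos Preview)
Your proposal is correct and follows essentially the same approach as the paper: the paper explicitly states that the proof of \cref{correspondences:wms} is a ``carbon copy'' of that of \cref{model-paths:wms}, and proceeds by establishing the $I$-analogues of \cref{cofibration-loc-eq-cofibration}, \cref{localized-triv-cof:lem}, \cref{homotopy-limit:cons}, \cref{cofacycfib1}, \cref{acycfib2} and \cref{factorization-cofibration-trivial-fibration} (with $\Lim Y = Y_0\times_{Y_2}Y_1$ replacing the equalizer), then concluding via \cite[2.3.3 Proposition]{henry20weak}. Your outline matches this step for step, including the correct identification of the $K'$-fibrancy conditions and the pullback as the limit.
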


It will be useful to have in mind that for an object $X \in \catN^I$ we have $L_0X=0$ and $L_1X=X_0\sqcup X_1$. So a map $X \to Y$ is a Reedy cofibration if the maps $X_0 \cofibration Y_0$, $X_1 \cofibration Y_1$ and $(Y_0\sqcup Y_1) \sqcup_{(X_0\sqcup X_1)} X_2 \cofibration Y_2$ are cofibrations.

\begin{observation}
Unwinding the definitions, a diagram $X \in \catN^I_{Loc}$ is cofibrant if both maps $X_0 \trivialcof X_2$ and $X_1 \trivialcof X_2$ are trivial cofibrations.  
\end{observation}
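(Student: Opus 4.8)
The plan is to unwind \cref{correspondences:wms} applied to the unique map out of the initial object of $\catN^I_{Loc}$. The initial diagram is the constant diagram at $\emptyset$ (all three components equal to $\emptyset$ and all structure maps the identity of $\emptyset$), so a diagram $X$ is cofibrant precisely when the map $\emptyset \to X$ from this initial diagram is a cofibration in $\catN^I_{Loc}$. Thus I would simply read off what conditions (a) and (b) of \cref{correspondences:wms} say for this particular map, using the latching computations $L_0 X = L_1 X = \emptyset$ and $L_2 X = X_0 \sqcup X_1$ recorded just above the statement.

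First I would treat the Reedy condition (a). For the map $\emptyset \to X$ the relative latching maps are $\emptyset \to X_0$, $\emptyset \to X_1$, and $L_2 X = X_0 \sqcup X_1 \to X_2$, so (a) says exactly that $X_0$ and $X_1$ are cofibrant in $\catN$ and that $X_0 \sqcup X_1 \cofibration X_2$ is a cofibration. Next I would treat the localization condition (b). Instantiating the definition of cofibration for a map with source the initial diagram and target $X$, the pushouts appearing in (b) collapse over $\emptyset$: one has $\emptyset \sqcup_{\emptyset} X_1 = X_1$ and $\emptyset \sqcup_{\emptyset} X_0 = X_0$. Hence (b) reduces precisely to the assertion that the two structure maps $X_0 \trivialcof X_2$ and $X_1 \trivialcof X_2$ of the diagram are trivial cofibrations in $\catN$, which is the content highlighted by the Observation.

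Combining the two, $X$ is cofibrant if and only if $X_0, X_1$ are cofibrant, $X_0 \sqcup X_1 \cofibration X_2$ is a cofibration, and $X_0 \trivialcof X_2$, $X_1 \trivialcof X_2$ are trivial cofibrations. To match the Observation I would then observe that, in the weak model category setting, (core) trivial cofibrations are maps between cofibrant objects, so the two trivial-cofibration conditions already subsume the cofibrancy of $X_0$, $X_1$ (and of $X_2$). These two conditions are exactly the genuinely new requirements imposed by the right Bousfield localization, perfectly parallel to the conditions $Y_a \trivialcof Y_c$ and $Y_b \trivialcof Y_c$ in the $\catM^J_{Loc}$ analysis.

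The step I expect to be the main obstacle is the careful bookkeeping of which clauses are implied by which: while the trivial-cofibration conditions recover the cofibrancy of the vertices, they do \emph{not} on their own force the Reedy latching map $X_0 \sqcup X_1 \cofibration X_2$ to be a cofibration (for instance, taking all three objects equal to a single point with identity structure maps yields trivial cofibrations whose fold map $X_0 \sqcup X_1 \to X_2$ is not a monomorphism). So the precise characterization must retain this one Reedy cofibrancy clause, and the Observation is best read as recording the additional trivial-cofibration conditions that the localization imposes on top of Reedy cofibrancy. Getting this reconciliation right, rather than any nontrivial computation, is where the care is needed.
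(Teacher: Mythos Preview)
Your unwinding is exactly what the paper intends: the Observation is stated without proof, as a direct consequence of instantiating \cref{correspondences:wms} at the map from the initial diagram, and your computation of the relative latching maps and the collapsed pushouts in condition~(b) is correct.

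Your final paragraph is also well taken. The paper's phrasing is slightly loose: read literally as an ``if and only if'', the Observation would be false for the reason you give (the fold map $X_0 \sqcup X_1 \to X_2$ need not be a cofibration even when both legs are trivial cofibrations). The parallel passage for $\catM^J_{Loc}$ is more careful and does list the Reedy clauses explicitly before isolating the extra trivial-cofibration conditions. So the Observation should indeed be read as recording the additional conditions imposed by the localization on top of Reedy cofibrancy, and you are right to flag that this Reedy clause must be retained.
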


The proof of the theorem is completely analogous to \cref{model-paths:wms}. We state the lemmas necessary for this and only comment on the proofs when adequate. 

\begin{lemma} \label{cofibration-loc-eq-cofibration-2}
        Let $X,Y \in \catN^I_{Loc}$ cofibrant. Then, a map $X \to Y$ is a cofibration in $\catN^I_{Loc}$ if and only if it is a cofibration in $\catN^I_{Reedy}$.
      \end{lemma}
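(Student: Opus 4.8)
The plan is to follow the proof of \cref{cofibration-loc-eq-cofibration} essentially verbatim, transported to the diagram shape $I = (0 \to 2 \leftarrow 1)$. One implication is trivial: by the definition in \cref{correspondences:wms}, every cofibration of $\catN^I_{Loc}$ is in particular a Reedy cofibration, so it is a cofibration in $\catN^I_{Reedy}$. The content is therefore entirely in the converse: given $X,Y$ cofibrant in $\catN^I_{Loc}$ and a map $f\colon X \to Y$ that is merely a Reedy cofibration, I must check the two additional conditions of \cref{correspondences:wms}, namely that $X_2 \sqcup_{X_1} Y_1 \to Y_2$ and $X_2 \sqcup_{X_0} Y_0 \to Y_2$ are trivial cofibrations in $\catN$.

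For the ``weak equivalence'' part I would argue by $2$-out-of-$3$. Since $X$ is $\catN^I_{Loc}$-cofibrant the map $X_1 \trivialcof X_2$ is a trivial cofibration, so its pushout $Y_1 \to X_2 \sqcup_{X_1} Y_1$ along the cofibration $X_1 \cofibration Y_1$ is again a trivial cofibration; on the other hand $Y_1 \trivialcof Y_2$ is a trivial cofibration because $Y$ is $\catN^I_{Loc}$-cofibrant. Applying $2$-out-of-$3$ to the factorization $Y_1 \to X_2 \sqcup_{X_1} Y_1 \to Y_2$ then shows that $X_2 \sqcup_{X_1} Y_1 \to Y_2$ is a weak equivalence, and the same argument with the roles of $0$ and $1$ interchanged handles $X_2 \sqcup_{X_0} Y_0 \to Y_2$.

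For the ``cofibration'' part I would use the Reedy cofibration hypothesis at the object $2$: the relative latching map $X_2 \sqcup_{X_0 \sqcup X_1}(Y_0 \sqcup Y_1) \to Y_2$ is a cofibration in $\catN$. Then I factor
\[ X_2 \sqcup_{X_1} Y_1 \longrightarrow (X_2 \sqcup_{X_1} Y_1) \sqcup_{X_0} Y_0 = X_2 \sqcup_{X_0 \sqcup X_1}(Y_0 \sqcup Y_1) \longrightarrow Y_2, \]
where the first map is the pushout of the cofibration $X_0 \cofibration Y_0$, hence a cofibration, and the second is the relative latching map, also a cofibration; the composite is therefore a cofibration. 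Symmetrically, $X_2 \sqcup_{X_0} Y_0 \to Y_2$ factors through the same middle object as a pushout of $X_1 \cofibration Y_1$ followed by the relative latching map. Combining this with the previous paragraph yields that both maps are trivial cofibrations, which is exactly the remaining condition of \cref{correspondences:wms}, so $f$ is a cofibration in $\catN^I_{Loc}$.

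I do not expect a genuine obstacle here: the only point requiring a little care, exactly as in the $J$-case, is the bookkeeping identification $(X_2 \sqcup_{X_1} Y_1)\sqcup_{X_0} Y_0 = X_2 \sqcup_{X_0 \sqcup X_1}(Y_0 \sqcup Y_1)$ of iterated pushouts with the relative latching object at $2$, but this is a routine colimit manipulation. The only structural facts used are pushout-stability of trivial cofibrations and the $2$-out-of-$3$ property, both available in the weak model category $\catN$.
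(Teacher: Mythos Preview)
Your proposal is correct and follows essentially the same approach as the paper's proof, which simply refers back to \cref{cofibration-loc-eq-cofibration} and sketches that weak equivalence comes from cofibrancy of $X,Y$ plus $2$-out-of-$3$, and that cofibrancy comes from factoring through the relative latching object at $2$. Your write-up is in fact more explicit than the paper's terse version, correctly identifying the iterated pushout $(X_2 \sqcup_{X_1} Y_1)\sqcup_{X_0} Y_0$ with $X_2 \sqcup_{X_0\sqcup X_1}(Y_0\sqcup Y_1)$ and noting that all objects involved are cofibrant so that pushout-stability and $2$-out-of-$3$ apply in the weak model category $\catN$.
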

      \begin{proof}
        Just as in \cref{cofibration-loc-eq-cofibration} we only prove the interesting direction; assume that $X,Y$ are cofibrant in $\catN_{Loc}^I$ and that $X \to Y \in \catN^I_{Reedy}$ is a Reedy cofibration. Remains to show that \[
          X_2\sqcup_{X_0} Y_0 \to Y_2 \text{ and }  X_2\sqcup_{X_1} Y_1 \to Y_2
        \]
        are trivial cofibrations. Again, the fact that the maps are weak equivalences follow from $X,Y$ being cofibrant and the 2-out-of-3 property. To see that they are cofibrations we can use the Reedy condition just as in \cref{cofibration-loc-eq-cofibration}.
      \end{proof}

      \begin{lemma} \label{localized-triv-cof:lem-2}
                Let $X\in \catN^I_{Loc}$ cofibrant and $X\to Z \in \catN^I_{Reedy}$ a Reedy trivial cofibration. Then $Z$ is cofibrant in $\catN^I_{Loc}$. Furthermore, $X \to Z$ is a trivial cofibration in $\catN^I_{Loc}$.
              \end{lemma}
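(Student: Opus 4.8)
The plan is to mirror the proof of \cref{localized-triv-cof:lem} almost verbatim, replacing the shape $J$ by the shape $I\colon 0\to 2\leftarrow 1$; in fact this case is marginally simpler, since the object $2$ receives degree-raising maps directly from both degree-zero objects $0$ and $1$, so no commutative cube of pushouts is needed. First I would unpack the hypothesis that $X\trivialcof Z$ is a Reedy trivial cofibration: recalling that $L_0X=L_1X=\emptyset$ and $L_2X=X_0\sqcup X_1$, this means exactly that $X_0\trivialcof Z_0$ and $X_1\trivialcof Z_1$ are trivial cofibrations in $\catN$, and that the relative latching map $(Z_0\sqcup Z_1)\sqcup_{X_0\sqcup X_1}X_2\trivialcof Z_2$ is a trivial cofibration. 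Note also that $Z$ is Reedy cofibrant, being the codomain of a Reedy cofibration out of the Reedy cofibrant object $X$.

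Next I would prove that $X_2\trivialcof Z_2$ is a trivial cofibration. The map $X_0\sqcup X_1\to Z_0\sqcup Z_1$ is a coproduct of the trivial cofibrations $X_0\trivialcof Z_0$ and $X_1\trivialcof Z_1$ between cofibrant objects, hence itself a trivial cofibration; therefore its pushout $X_2\to (Z_0\sqcup Z_1)\sqcup_{X_0\sqcup X_1}X_2$ is a trivial cofibration, and composing with the relative latching map yields $X_2\trivialcof Z_2$. Now, since $X$ is cofibrant in $\catN^I_{Loc}$, the maps $X_0\trivialcof X_2$ and $X_1\trivialcof X_2$ are trivial cofibrations. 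Applying $2$-out-of-$3$ to the two composites $X_0\trivialcof X_2\trivialcof Z_2$ and $X_0\trivialcof Z_0\to Z_2$ shows that $Z_0\to Z_2$ is a weak equivalence; and it is a cofibration because $Z$ is Reedy cofibrant, being the composite $Z_0\hookrightarrow Z_0\sqcup Z_1\cofibration Z_2$. Hence $Z_0\trivialcof Z_2$, and symmetrically $Z_1\trivialcof Z_2$, so $Z$ is cofibrant in $\catN^I_{Loc}$.

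For the ``furthermore'' clause, $X\to Z$ is a Reedy cofibration between two objects now known to be cofibrant in $\catN^I_{Loc}$, hence a cofibration in $\catN^I_{Loc}$ by \cref{cofibration-loc-eq-cofibration-2}; being also a level-wise weak equivalence, it is a trivial cofibration in $\catN^I_{Loc}$. I do not expect any genuine obstacle here: the argument is a direct transcription of \cref{localized-triv-cof:lem}, and the only point needing a little attention is correctly identifying the latching objects of $I$, after which the pushout and $2$-out-of-$3$ bookkeeping is routine.
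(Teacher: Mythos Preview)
Your proposal is correct and follows essentially the same route as the paper: both arguments show $Z$ is cofibrant in $\catN^I_{Loc}$ by first observing that $X\to Z$ is a levelwise trivial cofibration and then applying $2$-out-of-$3$ together with Reedy cofibrancy of $Z$. The only difference is that the paper obtains the levelwise statement by citing \cref{cor:core_cof_are_levelwise}, whereas you compute it directly from the latching data for the shape $I$; your hands-on computation is what that corollary specializes to here.
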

              \begin{proof}
                The difficult part is to show that $Z$ is cofibrant. Since $X \to Z$ is a Reedy trivial cofibration, then by \cref{cor:core_cof_are_levelwise} we have it is a levelwise trivial cofibration. Then $Z$ is cofibrant by the 2-out-of-3 property.
      \end{proof}

              \begin{corollary} \label{factorization-trivial-cofibration-fibration-2}
        Any map between diagrams $ f:X \to Y $, where $X$ is a cofibrant diagram and $Y$ is a fibrant diagram in $ \catN^I_{Loc} $, can be factored as a trivial cofibration followed by a fibration.
      \end{corollary}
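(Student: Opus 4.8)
The plan is to follow verbatim the template of the proof of \cref{factorization-trivial-cofibration-fibration}, substituting $\catN^I$ for $\catM^J$ throughout. First I would record that $\catN^I_{Reedy}$ is a weak model category (by the Reedy existence statement \cref{reedy-model:theorem}) and that $I = (0 \to 2 \leftarrow 1)$ carries a \emph{direct} Reedy structure --- both non-identity maps are degree-raising --- so that all matching objects are trivial and hence ``Reedy-fibrant'' coincides with ``level-wise fibrant''. In particular, if $Y$ is fibrant in $\catN^I_{Loc}$, i.e.\ level-wise fibrant, then it is Reedy-fibrant. Given $f: X \to Y$ with $X$ cofibrant in $\catN^I_{Loc}$ and $Y$ fibrant in $\catN^I_{Loc}$, the factorization axiom of the Reedy weak model structure then supplies
\[ X \trivialcof Z \fibration Y \]
in $\catN^I_{Reedy}$, with the left map a Reedy trivial cofibration and the right map a Reedy fibration.

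The second step is to check that both halves of this factorization live in $\catN^I_{Loc}$. The map $Z \fibration Y$ is a level-wise fibration in $\catN$ (Reedy fibrations between direct-category diagrams are level-wise), hence a fibration in $\catN^I_{Loc}$ by the very definition in \cref{correspondences:wms}. For the left map, $X$ is cofibrant in $\catN^I_{Loc}$ and $X \to Z$ is a Reedy trivial cofibration, so \cref{localized-triv-cof:lem-2} applies and yields that $Z$ is cofibrant in $\catN^I_{Loc}$ and that $X \to Z$ is a trivial cofibration there. Composing the two gives the required factorization of $f$ in $\catN^I_{Loc}$ as a trivial cofibration followed by a fibration.

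I do not expect any genuine obstacle: all the real work has already been absorbed into \cref{localized-triv-cof:lem-2}, which (via \cref{cor:core_cof_are_levelwise}) reduces to the observation that a Reedy trivial cofibration out of a $\catN^I_{Loc}$-cofibrant object is level-wise, so the extra trivial-cofibration conditions of \cref{correspondences:wms} are forced by two-out-of-three. The only point worth making explicit in the write-up is the identification of $\catN^I_{Loc}$-fibrant objects with Reedy-fibrant ones, which is what allows the Reedy factorization axiom (stated only for maps from cofibrant to fibrant objects) to be invoked in the first step; this is immediate from the directness of $I$.
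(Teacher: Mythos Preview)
Your proposal is correct and follows exactly the paper's approach: the paper's proof simply says ``proceed as in \cref{factorization-trivial-cofibration-fibration} by first taking the factorization in $\catN^I_{Reedy}$'', which is precisely the argument you have spelled out. Your explicit observation that $I$ is direct (so Reedy-fibrant coincides with level-wise fibrant, allowing the Reedy factorization axiom to apply) is a useful clarification that the paper leaves implicit.
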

      \begin{proof}
        Now that we have \cref{localized-triv-cof:lem-2}, we can proceed as in \cref{factorization-trivial-cofibration-fibration} by first taking the factorization in $\catN^I_{Reedy}$.
      \end{proof}

      \begin{construction} \label{homotopy-limit:cons-2}
        Denote by $ K' $ the category $ I $ with the opposite Reedy structure given above (the degree function reversed). We endow $ \catN^{K'} $ with the Reedy model structure. Then a diagram $ Y\in\catN^{K'}_{Reedy} $ is fibrant if $ Y_2\display 1 $, $ Y_0 \fibration Y_2 $ and $ Y_1 \fibration  Y_2 $ are fibrations in $ \catN $.

        In this situation we can see that $\lim Y= Y_0 \times_{Y_2} Y_1$ and is fibrant in $\catN$. We can again take a $Z \in \catN^I$ to be the correspondence with constant value $ \lim Y$. So it comes with a map $Z \to Y$.
      \end{construction}

      \begin{lemma} \label{cofacycfib1-2}
	The map $ Z \to Y $ from above is a trivial fibration in $\catN^I_{Loc}$.
      \end{lemma}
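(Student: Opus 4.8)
The plan is to reproduce, essentially verbatim, the argument of \cref{cofacycfib1}, the proof being if anything a little easier here since $\lim Y = Y_0\times_{Y_2}Y_1$ is an ordinary pullback rather than an equalizer built from one. Concretely, I would show that $Z\to Y$ has the right lifting property against every cofibration $A\cofibration B$ of $\catN^I_{Loc}$ (treating first $A=\emptyset$ and then the general case), which exhibits it as a trivial fibration.

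For the case $A=\emptyset$: let $B$ be cofibrant in $\catN^I_{Loc}$ equipped with a map $B\to Y$, where $Y$ is fibrant in $\catN^{K'}_{Reedy}$. Cofibrancy of $B$ makes $B_0\trivialcof B_2$ and $B_1\trivialcof B_2$ trivial cofibrations, and fibrancy of $Y$ makes $Y_0\fibration Y_2$ and $Y_1\fibration Y_2$ fibrations (with all four objects cofibrant, resp. fibrant). Since $Z$ is the constant diagram at $\lim Y=Y_0\times_{Y_2}Y_1$ with identity transition maps, a lift $B\to Z$ compatible with $B\to Y$ is exactly a single map $B_2\to Y_0\times_{Y_2}Y_1$, i.e. a pair $(c_0:B_2\to Y_0,\ c_1:B_2\to Y_1)$ with $(Y_0\fibration Y_2)c_0=(Y_1\fibration Y_2)c_1$, whose restrictions along $B_0\cofibration B_2$ and $B_1\cofibration B_2$ and whose composites with the projections recover the given components of $B\to Y$. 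One produces $c_0$ as a diagonal filler of the square with left edge the core trivial cofibration $B_0\trivialcof B_2$, right edge the core fibration $Y_0\fibration Y_2$, top $B_0\to Y_0$ and bottom $B_2\to Y_2$; this square commutes because $B\to Y$ is a map of diagrams. One gets $c_1$ the same way using $B_1\trivialcof B_2$ and $Y_1\fibration Y_2$. Finally $(Y_0\to Y_2)c_0$ and $(Y_1\to Y_2)c_1$ both equal the given map $B_2\to Y_2$, so the two fillers agree over $Y_2$ and $(c_0,c_1)$ defines the required lift.

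For the general case $A\cofibration B$, I would run the same argument incorporating the latching data: by clause (b) of \cref{correspondences:wms}, the maps $A_2\sqcup_{A_0}B_0\trivialcof B_2$ and $A_2\sqcup_{A_1}B_1\trivialcof B_2$ are trivial cofibrations, and lifting them against $Y_0\fibration Y_2$ and $Y_1\fibration Y_2$ (with the given data $A\to Z$ supplying the $A_2$-part of the source and $B\to Y$ the $B_0$-, resp. $B_1$-part) yields $c_0:B_2\to Y_0$ and $c_1:B_2\to Y_1$; as before these agree over $Y_2$, so they assemble into a map $B_2\to\lim Y$ and hence into the desired filler $B\to Z$. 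Therefore $Z\to Y$ is a trivial fibration in $\catN^I_{Loc}$. The only step demanding a little care is the existence of the fillers $c_0,c_1$ as maps of $\catN$ in a merely weak model category; as in \cref{cofacycfib1} this is where one uses that $Y$ is fibrant — so the $Y_i\fibration Y_2$ are honest fibrations between fibrant objects, against which core trivial cofibrations lift — together with the fact that the sources $A_2\sqcup_{A_i}B_i$ are cofibrant. The compatibility of the two fillers over $Y_2$ is automatic and requires no further input.
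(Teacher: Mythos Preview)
Your proposal is correct and follows essentially the same approach as the paper, which simply notes that the argument of \cref{cofacycfib1} carries over with the diagrams being even simpler. Your unwinding of that argument in the $\catN^I$ setting is accurate: the pullback description $\lim Y = Y_0\times_{Y_2}Y_1$ makes the construction of the lift reduce to producing the two diagonal fillers $c_0,c_1$ against the core fibrations $Y_i\fibration Y_2$, and your handling of the relative case via the trivial cofibrations $A_2\sqcup_{A_i}B_i\trivialcof B_2$ from clause~(b) of \cref{correspondences:wms} is exactly right.
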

      \begin{proof}
        The same idea as in \cref{cofacycfib1} carries over here. The diagrams are even simpler.
      \end{proof}

      \begin{lemma} \label{acycfib2-2}
  If $Y \in \catN^{K'}_{Reedy}$ is fibrant then there exists a trivial fibration $W \fibration Y \in \catN^I_{Loc}$ with $W \in \catN^I_{Loc}$ cofibrant.
\end{lemma}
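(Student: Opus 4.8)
The plan is to transcribe, almost verbatim, the proof of \cref{acycfib2} into the present diagram category, using the correspondence-specific ingredients \cref{homotopy-limit:cons-2} and \cref{cofacycfib1-2} in place of their $\catM^J$-counterparts. First I would record that a fibrant object $Y$ of $\catN^{K'}_{Reedy}$ is in particular level-wise fibrant, hence fibrant in $\catN^I_{Loc}$, whose fibrations are by definition the level-wise ones. Then, associated to $Y$ there is the constant correspondence $Z$ on $\lim Y = Y_0 \times_{Y_2} Y_1$ from \cref{homotopy-limit:cons-2}; since $\lim Y$ is fibrant in $\catN$, the diagram $Z$ is level-wise fibrant and therefore fibrant in $\catN^I_{Loc}$, and by \cref{cofacycfib1-2} the canonical comparison map $Z \trivialfib Y$ is a trivial fibration in $\catN^I_{Loc}$.

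Next I would take a Reedy cofibrant replacement $W \trivialfib Z$ in $\catN^I_{Reedy}$. The one point that requires checking is that $W$ is cofibrant not merely in $\catN^I_{Reedy}$ but in $\catN^I_{Loc}$, i.e.\ that the two structure maps $W_0 \to W_2$ and $W_1 \to W_2$ are trivial cofibrations. Reedy cofibrancy already makes them cofibrations: the latching object of $2$ in $\catN^I$ is $W_0 \sqcup W_1$, so $W_0 \sqcup W_1 \cofibration W_2$ is a cofibration and each leg $W_i \cofibration W_2$ is one. For the ``trivial'' part, note that $W \trivialfib Z$ is a level-wise weak equivalence while the two structure maps of the constant diagram $Z$ are identities; hence two applications of $2$-out-of-$3$ give that $W_0 \to W_2$ and $W_1 \to W_2$ are weak equivalences, so they are trivial cofibrations and $W$ is cofibrant in $\catN^I_{Loc}$.

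Finally, composing the two trivial fibrations $W \trivialfib Z \trivialfib Y$ produces a trivial fibration $W \fibration Y$ in $\catN^I_{Loc}$ with $W$ cofibrant, which is exactly the assertion. I do not expect a genuine obstacle: the argument is a direct analogue of \cref{acycfib2}, and the only things to be careful about---that $\catN^I_{Loc}$-fibrations are level-wise, that \cref{cofacycfib1-2} applies to $Z \to Y$, and that the latching-object computation in $\catN^I_{Reedy}$ reduces Reedy cofibrancy of $W$ to $W_0 \sqcup W_1 \cofibration W_2$ being a cofibration---are all already available.
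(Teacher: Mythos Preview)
Your proposal is correct and follows exactly the approach the paper intends: the paper's own proof simply says ``The argument of \cref{acycfib2} applies here too,'' and you have faithfully transcribed that argument into the $\catN^I$ setting using \cref{homotopy-limit:cons-2} and \cref{cofacycfib1-2}. Your extra detail on the latching-object computation for $W$ (showing $W_0 \sqcup W_1 \cofibration W_2$ gives each leg as a cofibration) is a helpful expansion of what the original \cref{acycfib2} leaves implicit.
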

\begin{proof}
  The argument of \cref{acycfib2} applies here too.
\end{proof}

\begin{lemma}\label{factorization-cofibration-trivial-fibration-2}
	Let $ X \to Y $ be a map in $ \catN^I $ with $ X $ cofibrant and $ Y $ fibrant. Then such a map can be factored as a cofibration followed by a trivial fibration.
\end{lemma}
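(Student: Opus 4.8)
The plan is to replay, almost verbatim, the proof of \cref{factorization-cofibration-trivial-fibration}, now carried out in $\catN^I$ and using the correspondence analogues of the auxiliary results, namely \cref{cofibration-loc-eq-cofibration-2}, \cref{localized-triv-cof:lem-2}, \cref{cofacycfib1-2} and \cref{acycfib2-2}, together with the general lifting criterion \cref{liftequiv}. First I would reduce to the case in which $Y$ is Reedy cofibrant: choosing a Reedy cofibrant replacement $Y' \trivialfib Y$ in $\catN^I$ and lifting the map $X \to Y$ along it (possible since $X$, being $\catN^I_{Loc}$-cofibrant, is in particular Reedy cofibrant), it is enough to factor $X \to Y'$. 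With $Y$ Reedy cofibrant it is level-wise cofibrant, hence Reedy cofibrant for the opposite Reedy structure $K'$ of \cref{homotopy-limit:cons-2}; so I can take a Reedy fibrant replacement $Y \trivialcof \widetilde{Y}$ in $\catN^{K'}$.

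Next, \cref{acycfib2-2} supplies a trivial fibration $W \fibration \widetilde{Y}$ in $\catN^I_{Loc}$ with $W$ cofibrant. Form the pullback of $W \fibration \widetilde{Y}$ along $Y \trivialcof \widetilde{Y}$; since weak equivalences are preserved under pullback along fibrations (\cite[Corollary 2.4.4]{henry20weak}), the resulting fibration $LY \fibration Y$ receives from $W$ a level-wise weak equivalence $LY \overset{\sim}{\to} W$. As $W \fibration \widetilde{Y}$ has the right lifting property against every cofibration between cofibrant objects of $\catN^I_{Loc}$, \cref{liftequiv} transfers this lifting property to $LY \fibration Y$, so $LY \fibration Y$ is a trivial fibration in $\catN^I_{Loc}$. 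Using that $X$ is cofibrant, lift $X \to Y$ through it to a map $X \to LY$, and factor the latter in $\catN^I_{Reedy}$ as $X \cofibration X' \trivialfib LY$. The diagram $X'$ is Reedy cofibrant and level-wise weakly equivalent to the $\catN^I_{Loc}$-cofibrant object $LY$, hence cofibrant in $\catN^I_{Loc}$; \cref{cofibration-loc-eq-cofibration-2} then upgrades the Reedy cofibration $X \cofibration X'$ to a cofibration of $\catN^I_{Loc}$. Composing gives the required factorization $X \cofibration X' \trivialfib Y$.

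I do not expect a genuine obstacle: each step mirrors the corresponding step for $\catM^J$, and all the correspondence-specific inputs --- the formula $\lim Y = Y_0 \times_{Y_2} Y_1$ and the level-wise descriptions of cofibrant and fibrant diagrams --- are already recorded around \cref{homotopy-limit:cons-2}. The one place needing a little care is, exactly as in the $\catM^J$ case, verifying that $X'$ is cofibrant in $\catN^I_{Loc}$ and not merely Reedy cofibrant, which is precisely what \cref{localized-triv-cof:lem-2} and \cref{cofibration-loc-eq-cofibration-2} are there to handle.
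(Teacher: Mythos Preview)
Your proposal is correct and follows essentially the same route as the paper's proof, which is itself just a terse reference back to \cref{factorization-cofibration-trivial-fibration} with the correspondence analogues substituted in. You spell out slightly more detail (and cite \cref{cofibration-loc-eq-cofibration-2} rather than \cref{localized-triv-cof:lem-2} for the final upgrade of $X \cofibration X'$, which is arguably the cleaner reference), but the argument is the same.
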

\begin{proof}
  We have all ingredients to proceed as in \cref{factorization-cofibration-trivial-fibration}. Firstly, we can assume that $Y$ is Reedy cofibrant in $\catN^I$ and we can take a fibrant replacement in $\catN^{K}$. So we can construct the following pullback square:
  % https://q.uiver.app/#q=WzAsNCxbMSwwLCJXIl0sWzEsMSwiWSciXSxbMCwxLCJZIl0sWzAsMCwiTFkiXSxbMiwxLCJcXHNpbSIsMCx7InN0eWxlIjp7InRhaWwiOnsibmFtZSI6Imhvb2siLCJzaWRlIjoidG9wIn19fV0sWzAsMSwiIiwyLHsic3R5bGUiOnsiaGVhZCI6eyJuYW1lIjoiZXBpIn19fV0sWzMsMiwiIiwwLHsic3R5bGUiOnsiaGVhZCI6eyJuYW1lIjoiZXBpIn19fV0sWzMsMCwiXFxzaW0iXSxbMywxLCIiLDAseyJzdHlsZSI6eyJuYW1lIjoiY29ybmVyIn19XV0=
\[\begin{tikzcd}
	LY & W \\
	Y & {Y'.}
	\arrow["\sim", from=1-1, to=1-2]
	\arrow[two heads,"\sim"', from=1-1, to=2-1]
	\arrow["\lrcorner"{anchor=center, pos=0.125}, draw=none, from=1-1, to=2-2]
	\arrow[two heads,"\sim"', from=1-2, to=2-2]
	\arrow["\sim", hook, from=2-1, to=2-2]
      \end{tikzcd}\]
Then we can obtain a map $X \to LY$. Factoring this map as $X \cofibration X' \trivialfib LY$, the first map is moreover a cofibration in $\catN^I_{Loc}$ in view of \cref{localized-triv-cof:lem-2}. This produces the factorization $X \cofibration X' \trivialfib Y$.
\end{proof}

The proof of \cref{correspondences:wms} is a carbon copy from the one of \cref{model-paths:wms}, the lemmas of this section provide us with all the required steps.

\subsubsection{Projections are Barton trivial fibrations}

 \begin{lemma} \label{first-projection:extensible}
   The functor $ \catN^I \to \catN$ such that $A \to B \leftarrow C \in \catN^I \mapsto A \in \catN$, is extensible. Also, the functor $ \catN^I \to \catN$ such that $A \to B \leftarrow C \in \catN^I \mapsto C \in \catN$ is extensible.
 \end{lemma}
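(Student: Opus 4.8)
The plan is to prove extensibility by an explicit construction, deducing the second statement from the first via the automorphism of $I$ interchanging the objects $0$ and $1$: it carries $\catN^I$ to itself, preserves the model structure $\catN^I_{Loc}$ (the conditions of \cref{correspondences:wms} being symmetric in $0$ and $1$), and exchanges the two projections. So fix a cofibrant object $X = (X_0 \to X_2 \leftarrow X_1)$ of $\catN^I_{Loc}$ — equivalently $X_0,X_1$ are cofibrant, $X_0 \sqcup X_1 \cofibration X_2$ is a core cofibration, and both $X_0 \trivialcof X_2$ and $X_1 \trivialcof X_2$ are core trivial cofibrations — together with a core cofibration $g : X_0 \cofibration Y$ in $\catN$; we must build a cofibrant $Z$ of $\catN^I_{Loc}$ and a core cofibration $f : X \cofibration Z$ with $F(Z) = Z_0 \cong Y$ and $F(f) = g$.

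First I would set $Z_0 := Y$. The key subtlety is that there is no map $X_0 \to X_1$, so one cannot directly transport $g$ across the span formed by the two core trivial cofibrations $X_0 \trivialcof X_2$ and $X_1 \trivialcof X_2$; to rigidify it, choose a fibrant replacement $j : X_2 \trivialcof \hat X_2$ and put $s := j\circ(X_0 \trivialcof X_2) : X_0 \trivialcof \hat X_2$, a core trivial cofibration. Define $Z_1 := \hat X_2 \sqcup_{X_0} Y$, the pushout of $g$ along $s$: then $\hat X_2 \cofibration Z_1$ is a core cofibration, $Y \trivialcof Z_1$ is a core trivial cofibration (pushout of the core trivial cofibration $s$ along the core cofibration $g$, by left properness of weak model categories), the composite $X_1 \trivialcof X_2 \xrightarrow{j} \hat X_2 \cofibration Z_1$ realizes $X_1 \cofibration Z_1$ as a core cofibration, and the pushout square forces the two composites $X_0 \to Z_1$ (through $Y$ and through $\hat X_2$) to coincide, which is needed for the squares of $f$ to commute.

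It remains to construct $Z_2$, with structure maps making $Z = (Y \to Z_2 \leftarrow Z_1)$ cofibrant, together with $f : X \cofibration Z$. One cannot simply take $Z_2 := X_2 \sqcup_{X_0 \sqcup X_1} (Y \sqcup Z_1)$: this choice makes the relative latching map of $f$ at the object $2$ an isomorphism (so the Reedy part of the core-cofibration condition is automatic) and makes $X_2 \cofibration Z_2$ and $Y \sqcup Z_1 \cofibration Z_2$ core cofibrations, but $Z$ then fails to be cofibrant because the structure map $Y \to Z_2$ is only a pushout of $g$, hence need not be a weak equivalence. The fix is to enlarge this pushout $Q := X_2 \sqcup_{X_0\sqcup X_1}(Y \sqcup Z_1) = (X_2\sqcup_{X_0}Y) \sqcup_{X_1} Z_1$ by a relative-weak-cylinder construction (available in the relevant coslices by \cref{slice:wms} and \cref{cstr:weak_path_and_cylinder}): using the core trivial cofibrations $Y \trivialcof X_2\sqcup_{X_0}Y$ and $Z_1 \trivialcof X_2\sqcup_{X_1}Z_1$ into the two factors of $Q$, one attaches a weak cylinder to $Q$ so as to obtain $Z_2 \supseteq Q$ in which both $Y$ and $Z_1$ become core trivial subcofibrations, while $Q \cofibration Z_2$ stays a core cofibration (so the relative latching map of $f$ remains a core cofibration). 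One finally checks the remaining trivial-cofibration conditions of \cref{correspondences:wms} for $f$, namely $X_2\sqcup_{X_0}Y \trivialcof Z_2$ and $X_2\sqcup_{X_1}Z_1 \trivialcof Z_2$, using left properness, pushout-stability of core trivial cofibrations, and $2$-out-of-$3$; and $F(Z) = Y$, $F(f) = g$ hold by construction. I expect this enlargement of $Q$ to be the main obstacle: one must glue on exactly enough to make both legs of the correspondence trivial cofibrations, so that $Z$ is $\catN^I_{Loc}$-cofibrant, without destroying the latching cofibration condition, and the naive candidates all fail because inserting $Y$ along the non-equivalence $g$ alters the homotopy type of $Q$.
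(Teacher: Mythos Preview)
Your reduction to a single projection via the evident automorphism of $I$ is fine, and your choice $Z_0 = Y$ is the right start. The construction of $Z_1$ as a pushout is also reasonable (though the fibrant replacement $\hat X_2$ plays no real role --- everything you claim about $Z_1$ works just as well with $X_2$ in place of $\hat X_2$, and the phrase ``left properness'' should simply be ``pushout-stability of core trivial cofibrations'').

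The genuine gap is the construction of $Z_2$. Your phrase ``attach a weak cylinder to $Q$'' is not a definition, and the natural reading fails. Concretely: take $\catN = \sset$, $X_0 = X_1 = X_2 = \ast$, and $g : \ast \cofibration Y$ with $Y$ two points. Then $Z_1 \cong Y$, the latching pushout is $Q = Y \vee Y$ (three points), and the two maps $Y \rightrightarrows Q$ --- the structural inclusion $\iota_0$ and the composite $Y \trivialcof Z_1 \to Q$ --- agree on the basepoint but differ on the other point. Gluing a weak cylinder $IY$ to $Q$ along $Y \sqcup Y \to Q$ produces an $S^1$ at the basepoint, so neither $Y \to Z_2$ is an equivalence. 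One can try a \emph{relative} cylinder for $X_0 \cofibration Y$, but the two maps $\iota_0, \iota_1 : Y \to Q$ do not even agree on $X_0$ in general (trace $X_0 \to X_2 \to Q$ versus $X_0 \to \hat X_2 \to Z_1 \to Q$: these differ because the pushout $X_2 \sqcup_{X_1} Z_1$ identifies only along $X_1$). So there is no evident cylinder to attach.

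The paper's proof sidesteps this entirely by organizing everything around a single fibrant target. One takes the fibrant replacement $x^{\fib}$ of $Z_0$, lifts the map $a \to x^{\fib}$ along the trivial cofibration $a \trivialcof b$, composes to obtain $c \to x^{\fib}$, and then defines $Z_1 = z$ by \emph{factoring} $c \to x^{\fib}$ as $c \cofibration z \trivialfib x^{\fib}$ rather than by a pushout. With this choice every piece of the latching object $P$ ($= Q$ in your notation) maps compatibly to $x^{\fib}$, so one can factor $P \to x^{\fib}$ as $P \cofibration y \trivialfib x^{\fib}$ and set $Z_2 = y$. Both $x \to y$ and $z \to y$ are then automatically weak equivalences by $2$-out-of-$3$, since $x, z, y$ all map to $x^{\fib}$ by weak equivalences. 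The missing idea in your argument is precisely this common fibrant target: it replaces the delicate and non-obvious gluing you anticipated with a single (cofibration, trivial fibration) factorization.
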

 \begin{proof}
   Let $A \coloneqq a \trivialcof b \lefttrivialcofib c  \in \catN^I_{Loc}$ be a cofibrant diagram and $x \in \catN^\cof$ a cofibrant object and a cofibration $a \cofibration x$. We take the fibrant replacement of $x$ and consider the pushout as indicated below, and we obtain a solution to the lifting problem on the right:
   % https://q.uiver.app/#q=WzAsNixbMSwwLCJhIl0sWzEsMSwiYiJdLFsyLDAsIngiXSxbMCwxLCJjIl0sWzIsMSwiYlxcc3FjdXBfYSB4Il0sWzMsMCwieF57ZmlifSJdLFszLDEsIlxcc2ltIiwwLHsic3R5bGUiOnsidGFpbCI6eyJuYW1lIjoiaG9vayIsInNpZGUiOiJ0b3AifX19XSxbMCwxLCJcXHNpbSIsMix7InN0eWxlIjp7InRhaWwiOnsibmFtZSI6Imhvb2siLCJzaWRlIjoidG9wIn19fV0sWzAsMiwiIiwwLHsic3R5bGUiOnsidGFpbCI6eyJuYW1lIjoiaG9vayIsInNpZGUiOiJ0b3AifX19XSxbMSw0XSxbMiw0LCJcXHNpbSIsMCx7InN0eWxlIjp7InRhaWwiOnsibmFtZSI6Imhvb2siLCJzaWRlIjoidG9wIn19fV0sWzAsNCwiXFxscmNvcm5lciIsMSx7ImxhYmVsX3Bvc2l0aW9uIjo5MCwic3R5bGUiOnsiYm9keSI6eyJuYW1lIjoibm9uZSJ9LCJoZWFkIjp7Im5hbWUiOiJub25lIn19fV0sWzIsNSwiXFxzaW0iLDAseyJzdHlsZSI6eyJ0YWlsIjp7Im5hbWUiOiJob29rIiwic2lkZSI6InRvcCJ9fX1dLFs0LDUsIiIsMCx7InN0eWxlIjp7ImJvZHkiOnsibmFtZSI6ImRhc2hlZCJ9fX1dXQ==
\[\begin{tikzcd}
	& a & x & {x^{fib}} \\
	c & b & {b\sqcup_a x}
	\arrow[hook, from=1-2, to=1-3]
	\arrow["\sim"', hook, from=1-2, to=2-2]
	\arrow["\lrcorner"{description, pos=0.9}, draw=none, from=1-2, to=2-3]
	\arrow["\sim", hook, from=1-3, to=1-4]
	\arrow["\sim", hook, from=1-3, to=2-3]
	\arrow["\sim", hook, from=2-1, to=2-2]
	\arrow[from=2-2, to=2-3]
	\arrow[dashed, from=2-3, to=1-4]
      \end{tikzcd}\]
    The resulting map $c \to x^{fib}$ can be factored as $ c \cofibration z \trivialfib x^{fib}$. We can take further pushouts
    % https://q.uiver.app/#q=WzAsOSxbMSwwLCJhIl0sWzEsMSwiYiJdLFsyLDAsIngiXSxbMCwxLCJjIl0sWzIsMSwiYlxcc3FjdXBfYSB4Il0sWzMsMCwieF57ZmlifSJdLFswLDIsInoiXSxbMSwyLCJ6XFxzcWN1cF9jIGIiXSxbMiwyLCJQIl0sWzMsMSwiXFxzaW0iLDAseyJzdHlsZSI6eyJ0YWlsIjp7Im5hbWUiOiJob29rIiwic2lkZSI6InRvcCJ9fX1dLFswLDEsIlxcc2ltIiwyLHsic3R5bGUiOnsidGFpbCI6eyJuYW1lIjoiaG9vayIsInNpZGUiOiJ0b3AifX19XSxbMCwyLCIiLDAseyJzdHlsZSI6eyJ0YWlsIjp7Im5hbWUiOiJob29rIiwic2lkZSI6InRvcCJ9fX1dLFsxLDQsIiIsMCx7InN0eWxlIjp7InRhaWwiOnsibmFtZSI6Imhvb2siLCJzaWRlIjoidG9wIn19fV0sWzIsNCwiXFxzaW0iLDAseyJzdHlsZSI6eyJ0YWlsIjp7Im5hbWUiOiJob29rIiwic2lkZSI6InRvcCJ9fX1dLFswLDQsIlxcbHJjb3JuZXIiLDEseyJsYWJlbF9wb3NpdGlvbiI6OTAsInN0eWxlIjp7ImJvZHkiOnsibmFtZSI6Im5vbmUifSwiaGVhZCI6eyJuYW1lIjoibm9uZSJ9fX1dLFsyLDUsIlxcc2ltIiwwLHsic3R5bGUiOnsidGFpbCI6eyJuYW1lIjoiaG9vayIsInNpZGUiOiJ0b3AifX19XSxbNCw1XSxbMyw2LCIiLDIseyJzdHlsZSI6eyJ0YWlsIjp7Im5hbWUiOiJob29rIiwic2lkZSI6InRvcCJ9fX1dLFs2LDcsIlxcc2ltIiwyLHsic3R5bGUiOnsidGFpbCI6eyJuYW1lIjoiaG9vayIsInNpZGUiOiJ0b3AifX19XSxbMSw3LCIiLDIseyJzdHlsZSI6eyJ0YWlsIjp7Im5hbWUiOiJob29rIiwic2lkZSI6InRvcCJ9fX1dLFszLDcsIlxcbHJjb3JuZXIiLDEseyJsYWJlbF9wb3NpdGlvbiI6OTAsInN0eWxlIjp7ImJvZHkiOnsibmFtZSI6Im5vbmUifSwiaGVhZCI6eyJuYW1lIjoibm9uZSJ9fX1dLFs3LDgsIiIsMSx7InN0eWxlIjp7InRhaWwiOnsibmFtZSI6Imhvb2siLCJzaWRlIjoidG9wIn19fV0sWzQsOCwiIiwxLHsic3R5bGUiOnsidGFpbCI6eyJuYW1lIjoiaG9vayIsInNpZGUiOiJ0b3AifX19XSxbMSw4LCJcXGxyY29ybmVyIiwxLHsibGFiZWxfcG9zaXRpb24iOjkwLCJzdHlsZSI6eyJib2R5Ijp7Im5hbWUiOiJub25lIn0sImhlYWQiOnsibmFtZSI6Im5vbmUifX19XSxbNiw1LCIiLDEseyJjdXJ2ZSI6NX1dXQ==
\[\begin{tikzcd}
	& a & x & {x^{fib}} \\
	c & b & {b\sqcup_a x} \\
	z & {z\sqcup_c b} & P.
	\arrow[hook, from=1-2, to=1-3]
	\arrow["\sim"', hook, from=1-2, to=2-2]
	\arrow["\lrcorner"{description, pos=0.9}, draw=none, from=1-2, to=2-3]
	\arrow["\sim", hook, from=1-3, to=1-4]
	\arrow["\sim", hook, from=1-3, to=2-3]
	\arrow["\sim", hook, from=2-1, to=2-2]
	\arrow[hook, from=2-1, to=3-1]
	\arrow["\lrcorner"{description, pos=0.9}, draw=none, from=2-1, to=3-2]
	\arrow[hook, from=2-2, to=2-3]
	\arrow[hook, from=2-2, to=3-2]
	\arrow["\lrcorner"{description, pos=0.9}, draw=none, from=2-2, to=3-3]
	\arrow[from=2-3, to=1-4]
	\arrow[hook, from=2-3, to=3-3]
	\arrow[bend right=70,"\sim"',twoheadrightarrow, from=3-1, to=1-4]
	\arrow["\sim"', hook, from=3-1, to=3-2]
	\arrow[hook, from=3-2, to=3-3]
      \end{tikzcd}\]
    There is a map $P \to x^{fib}$ which we can factor as $P \cofibration y \trivialfib x^{fib}$, and the resulting diagram we get
\[\begin{tikzcd}
	& a & x & {x^{fib}} \\
	c & b & {b\sqcup_a x} & y \\
	z & {z\sqcup_c b} & P
	\arrow[hook, from=1-2, to=1-3]
	\arrow["\sim"', hook, from=1-2, to=2-2]
	\arrow["\lrcorner"{description, pos=0.9}, draw=none, from=1-2, to=2-3]
	\arrow["\sim", hook, from=1-3, to=1-4]
	\arrow["\sim", hook, from=1-3, to=2-3]
	\arrow["\sim", hook, from=2-1, to=2-2]
	\arrow[hook, from=2-1, to=3-1]
	\arrow["\lrcorner"{description, pos=0.9}, draw=none, from=2-1, to=3-2]
	\arrow[hook, from=2-2, to=2-3]
	\arrow[hook, from=2-2, to=3-2]
	\arrow["\lrcorner"{description, pos=0.9}, draw=none, from=2-2, to=3-3]
	\arrow[from=2-3, to=1-4]
	\arrow[hook, from=2-3, to=3-3]
	\arrow["\sim"', two heads, from=2-4, to=1-4]
	% \arrow[bend right=70, from=3-1, to=1-4]
	\arrow["\sim"', hook, from=3-1, to=3-2]
	\arrow[hook, from=3-2, to=3-3]
	\arrow[hook, from=3-3, to=2-4]
\end{tikzcd}\]
Furthermore, there is a map $b \sqcup_a x \to y $ which is a cofibration as it is the composite of the two cofibrations. Using the 2-out-of-3 property repeatedly, one concludes that the map $z \sqcup_c b \to y$ is a trivial cofibration. Thus, we have constructed the cofibrant object $ X \coloneqq z \trivialcof y \lefttrivialcofib x \in \catN^I_{Loc}$. The induced map $A \to X$ is a level-wise cofibration. The maps $b \sqcup_a x \to y$ and $b \sqcup_a z \to y$ are trivial cofibrations.

Remains to show that $A \to X$ is a Reedy cofibration. We already have that $a \to x$ and $c \to z$ are cofibrations. We now need to show that the induced map
% https://q.uiver.app/#q=WzAsNSxbMCwwLCJhXFxzcWN1cCBjIl0sWzAsMSwieFxcc3FjdXAgeiJdLFsxLDAsImIiXSxbMSwxLCIoeCBcXHNxY3VwIHopXFxzcWN1cF97YSBcXHNxY3VwIGN9IGIiXSxbMiwyLCJ5Il0sWzIsMywiIiwwLHsic3R5bGUiOnsidGFpbCI6eyJuYW1lIjoiaG9vayIsInNpZGUiOiJ0b3AifX19XSxbMCwyXSxbMCwxLCIiLDIseyJzdHlsZSI6eyJ0YWlsIjp7Im5hbWUiOiJob29rIiwic2lkZSI6InRvcCJ9fX1dLFsxLDNdLFsxLDQsIiIsMCx7ImN1cnZlIjoxfV0sWzIsNCwiIiwwLHsiY3VydmUiOi0xfV0sWzMsNCwiIiwwLHsic3R5bGUiOnsiYm9keSI6eyJuYW1lIjoiZGFzaGVkIn19fV1d
\[\begin{tikzcd}
	{a\sqcup c} & b \\
	{x\sqcup z} & {(x \sqcup z)\sqcup_{a \sqcup c} b} \\
	&& y
	\arrow[from=1-1, to=1-2]
	\arrow[hook, from=1-1, to=2-1]
	\arrow[hook, from=1-2, to=2-2]
	\arrow[bend left=30, from=1-2, to=3-3]
	\arrow[from=2-1, to=2-2]
	\arrow[bend right=20, from=2-1, to=3-3]
	\arrow[dashed, from=2-2, to=3-3]
\end{tikzcd}\]
is a cofibration. By diagram chasing, one can show that the diagram
% https://q.uiver.app/#q=WzAsNCxbMCwwLCJhIFxcc3FjdXAgYyJdLFsxLDAsImIiXSxbMCwxLCJ4IFxcc3FjdXAgeiJdLFsxLDEsIih6XFxzcWN1cF9jIGIpXFxzcWN1cF9iIChiXFxzcWN1cF9hIHgpIl0sWzAsMl0sWzAsMV0sWzEsM10sWzIsM11d
\[\begin{tikzcd}
	{a \sqcup c} & b \\
	{x \sqcup z} & {(z\sqcup_c b)\sqcup_b (b\sqcup_a x)}
	\arrow[from=1-1, to=1-2]
	\arrow[from=1-1, to=2-1]
	\arrow[from=1-2, to=2-2]
	\arrow[from=2-1, to=2-2]
      \end{tikzcd}\]
    commutes. One shows that the bottom right corner computes the pushout of the span. Using that the map $P \cofibration y$ is a cofibration one concludes that $(x \sqcup) \sqcup_{a\sqcup c}b \to y$ is also a cofibration. This concludes the proof that $A \to X$ is a Reedy core cofibration in $\catN^I$. Therefore, it must a cofibration. We summarize our construction with the following diagram:
    % https://q.uiver.app/#q=WzAsOCxbMCwyLCJhIl0sWzEsMiwieCJdLFswLDEsImIiXSxbMCwwLCJjIl0sWzEsMSwieSJdLFsxLDAsInoiXSxbMCwzLCJhIl0sWzEsMywieCJdLFswLDEsIiIsMCx7InN0eWxlIjp7InRhaWwiOnsibmFtZSI6Imhvb2siLCJzaWRlIjoidG9wIn19fV0sWzMsMiwiXFxzaW0iLDAseyJzdHlsZSI6eyJ0YWlsIjp7Im5hbWUiOiJob29rIiwic2lkZSI6InRvcCJ9fX1dLFswLDIsIlxcc2ltIiwyLHsic3R5bGUiOnsidGFpbCI6eyJuYW1lIjoiaG9vayIsInNpZGUiOiJib3R0b20ifX19XSxbMSw0LCJcXHNpbSIsMix7InN0eWxlIjp7InRhaWwiOnsibmFtZSI6Imhvb2siLCJzaWRlIjoiYm90dG9tIn19fV0sWzUsNCwiXFxzaW0iLDAseyJzdHlsZSI6eyJ0YWlsIjp7Im5hbWUiOiJob29rIiwic2lkZSI6InRvcCJ9fX1dLFszLDUsIiIsMSx7InN0eWxlIjp7InRhaWwiOnsibmFtZSI6Imhvb2siLCJzaWRlIjoidG9wIn19fV0sWzIsNCwiIiwxLHsic3R5bGUiOnsidGFpbCI6eyJuYW1lIjoiaG9vayIsInNpZGUiOiJ0b3AifX19XSxbNiw3LCIiLDEseyJzdHlsZSI6eyJ0YWlsIjp7Im5hbWUiOiJob29rIiwic2lkZSI6InRvcCJ9fX1dLFswLDYsIiIsMCx7Im9mZnNldCI6LTUsInN0eWxlIjp7InRhaWwiOnsibmFtZSI6Im1hcHMgdG8ifX19XV0=
\[\begin{tikzcd}
	c & z \\
	b & y \\
	a & x \\
	a & x
	\arrow[hook, from=1-1, to=1-2]
	\arrow["\sim", hook, from=1-1, to=2-1]
	\arrow["\sim", hook, from=1-2, to=2-2]
	\arrow[hook, from=2-1, to=2-2]
	\arrow["\sim"', hook', from=3-1, to=2-1]
	\arrow[hook, from=3-1, to=3-2]
	\arrow[shift left=8, maps to, from=3-1, to=4-1]
	\arrow["\sim"', hook', from=3-2, to=2-2]
	\arrow[hook, from=4-1, to=4-2]
\end{tikzcd}\]
This cofibration is a (strict) lift of $a \cofibration x$, showing that the functor $\catN^I \to N$ is an extensible functor. The second part of the lemma is analogous.
\end{proof}

\begin{observation} \label{lift-equivalences:obs}
  Note that in the previous \cref{first-projection:extensible}, using 2-out-of-3 property, if we start with a trivial cofibration $a \trivialcof x$ then we obtain a level-wise equivalence between cofibrant objects in $\catN^I_{Loc}$. We conclude that the projections are weakly conservative.
\end{observation}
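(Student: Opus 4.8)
The plan is to prove the two assertions in turn: first that the map $A\cofibration X$ produced in \cref{first-projection:extensible} is level-wise a weak equivalence whenever its input component is a trivial cofibration, and then to read off weak conservativity of the two projections from the behaviour of cofibrations in $\catN^I_{Loc}$. Recall the set-up of that construction: starting from a cofibrant diagram $A=(a\trivialcof b\lefttrivialcofib c)$ and a cofibration $a\cofibration x$, one builds a cofibrant diagram $X$ whose value at $0$ is $x$, together with a (Reedy) cofibration $A\cofibration X$ whose three components are the given $a\cofibration x$ and the maps $b\cofibration y$, $c\cofibration z$ manufactured in the proof. Cofibrancy of $A$ and $X$ in $\catN^I_{Loc}$ means exactly that $a\trivialcof b$, $c\trivialcof b$, $x\trivialcof y$ and $z\trivialcof y$ are trivial cofibrations. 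I now add the hypothesis that the input $a\trivialcof x$ is a trivial cofibration.

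The level-wise statement is then two applications of two-out-of-three. First I would treat the apex: in the naturality square for $A\cofibration X$ along the arrow $0\to 2$ of $I$, with edges $a\trivialcof x$, $a\trivialcof b$, $x\trivialcof y$ and $b\to y$, the composite $a\to x\to y$ is a weak equivalence, and since it coincides with $a\to b\to y$ where $a\trivialcof b$ is a weak equivalence, two-out-of-three gives that $b\to y$ is a weak equivalence. Next I would use the naturality square along $1\to 2$, with edges $c\to z$, $c\trivialcof b$, $z\trivialcof y$ and $b\to y$: now $c\to b\to y$ is a weak equivalence (both factors are), and as it equals $c\to z\to y$ with $z\trivialcof y$ a weak equivalence, two-out-of-three yields that $c\to z$ is a weak equivalence. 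Hence all three components are weak equivalences, so $A\cofibration X$ is a level-wise weak equivalence between cofibrant objects of $\catN^I_{Loc}$.

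For the conclusion I would observe that this argument used nothing about the construction beyond the cofibrancy of source and target and the fact that the component at $0$ is a weak equivalence; thus for \emph{any} core cofibration $A\cofibration X$ between cofibrant objects of $\catN^I_{Loc}$ with $P_0(A\cofibration X)=(a\trivialcof x)$ a trivial cofibration, the same two-out-of-three reasoning makes $A\cofibration X$ level-wise a weak equivalence. It then remains to upgrade ``core cofibration plus level-wise weak equivalence'' to ``trivial cofibration of $\catN^I_{Loc}$'': by \cref{cofibration-loc-eq-cofibration-2} such a map is a Reedy cofibration between Reedy cofibrant objects, a Reedy cofibration that is level-wise a weak equivalence is a Reedy trivial cofibration (via the Reedy theory of \cref{reedy-model:theorem} and the level-wise characterization behind \cref{cor:core_cof_are_levelwise}), and finally \cref{localized-triv-cof:lem-2} turns any Reedy trivial cofibration out of a cofibrant object into a trivial cofibration of $\catN^I_{Loc}$. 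This establishes that $P_0$ is weakly conservative, and the argument for the projection $(a\to b\leftarrow c)\mapsto c$ is identical, since positions $0$ and $1$ of $I$ play symmetric roles.

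The step I expect to be the main obstacle is precisely this last upgrade, passing from a level-wise weak equivalence to a trivial cofibration inside $\catN^I_{Loc}$: the localized structure carries no independently given class of weak equivalences, so one cannot apply two-out-of-three directly at the diagram level. The remedy is to route everything through the Reedy structure, transporting back and forth with \cref{cofibration-loc-eq-cofibration-2} and \cref{localized-triv-cof:lem-2}; the delicate point is checking that the relative latching maps of our core cofibration are genuine trivial cofibrations, which is where the Reedy characterization of level-wise weak equivalences between cofibrant objects must be invoked.
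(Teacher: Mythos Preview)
Your proposal is correct and follows the same two-out-of-three idea that the paper sketches; in fact you are more careful than the paper on both points it elides. The paper phrases the first sentence as if it concerned only the specific lift built in \cref{first-projection:extensible}, but, as you observe, the argument uses nothing beyond cofibrancy of source and target together with triviality of one component, so it applies to an arbitrary core cofibration in $\catN^I_{Loc}$. Likewise the paper simply writes ``We conclude that the projections are weakly conservative'' without explaining the passage from ``level-wise weak equivalence plus core cofibration'' to ``trivial cofibration in $\catN^I_{Loc}$''; your route through \cref{cofibration-loc-eq-cofibration-2}, the Reedy characterization in \cref{cor:core_cof_are_levelwise}, and \cref{localized-triv-cof:lem-2} is exactly the mechanism the paper uses elsewhere (see the end of the proof of \cref{model-paths:wms}) and is the intended justification here.
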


\begin{corollary} \label{first-projection:trivial-fibration}
  The functor $ \catN^I \to \catN$ such that $A \to B \leftarrow C \in \catN^I \mapsto A \in \catN$, is a Barton trivial fibration. Also, the functor $ \catN^I \to \catN$ such that $A \to B \leftarrow C \in \catN^I \mapsto C \in \catN$, is a Barton trivial fibration.
\end{corollary}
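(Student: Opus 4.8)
The plan is to assemble the two preceding results, \cref{first-projection:extensible} and \cref{lift-equivalences:obs}, together with the routine observation that the two projections are left Quillen functors, and then to invoke the definition of a Barton trivial fibration, \cref{barton-trivialfib:def}. Write $\mathrm{ev}_0,\mathrm{ev}_1:\catN^I_{Loc}\to\catN$ for the two projections, sending a correspondence $A\to B\leftarrow C$ to $A$ and to $C$ respectively.

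First I would record that $\mathrm{ev}_0$ and $\mathrm{ev}_1$ are left adjoints: each is an evaluation functor on a diagram category, hence admits a right adjoint given by the right Kan extension along the inclusion $\{0\}\hookrightarrow I$ (resp. $\{1\}\hookrightarrow I$); the values of these Kan extensions only involve the terminal object of $\catN$ and the input object, so no limits beyond the terminal object are needed and the adjoint is well defined in a merely weak model category. Next, $\mathrm{ev}_0$ preserves cofibrations: a cofibration of $\catN^I_{Loc}$ is in particular a Reedy cofibration, and since the latching object of any $X\in\catN^I$ at the object $0$ is initial, the Reedy condition says precisely that $X_0\cofibration Y_0$ is a cofibration in $\catN$; the same remark at the object $1$ handles $\mathrm{ev}_1$. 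Finally, $\mathrm{ev}_0$ and $\mathrm{ev}_1$ preserve core trivial cofibrations: by the analysis in the proof of \cref{localized-triv-cof:lem-2} (which uses \cref{cor:core_cof_are_levelwise}), a trivial cofibration between cofibrant objects of $\catN^I_{Loc}$ is a levelwise trivial cofibration in $\catN$, hence so is its image under either projection. Thus both projections are left Quillen functors between weak model categories.

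It then remains only to quote the two earlier results: \cref{first-projection:extensible} shows that $\mathrm{ev}_0$ and $\mathrm{ev}_1$ are extensible as morphisms of the coclans of cofibrant objects, and \cref{lift-equivalences:obs} shows that they are weakly conservative. By \cref{barton-trivialfib:def}, these three facts together are exactly the assertion that $\mathrm{ev}_0$ and $\mathrm{ev}_1$ are Barton trivial fibrations. I do not expect any genuine obstacle here: all the substantive content has already been established in \cref{first-projection:extensible} and \cref{lift-equivalences:obs}, and the only point requiring a moment's thought is the verification that the projections are left Quillen, which is immediate from the Reedy description of the (trivial) cofibrations of $\catN^I_{Loc}$.
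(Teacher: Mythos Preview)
Your proposal is correct and follows essentially the same approach as the paper: cite \cref{first-projection:extensible} for extensibility, \cref{lift-equivalences:obs} for weak conservativity, and verify that the projections are left Quillen. The paper dismisses the left Quillen verification as ``straightforward to see that it preserve cofibrations and trivial cofibrations,'' whereas you spell it out via the Reedy latching description and the existence of right adjoints by Kan extension; this extra detail is correct but not a genuinely different route.
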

\begin{proof}
  We saw in \cref{first-projection:extensible} that the projections are extensible and from \cref{lift-equivalences:obs} that is weakly conservative. It is also straightforward to see that it preserve cofibrations and trivial cofibrations.
\end{proof}

 We now want to see that any left Quillen functor $F:\catM \to \catN$ part of a Quillen equivalence between weak model categories admits a Brown-like factorization. To this end, consider the following:

 \begin{construction} \label{fcylinders:wms}
   We define the category of diagrams
 $$ \catN_F^I \coloneqq \{Fa \to b \leftarrow c | a \in \catM^\cof, \, b,c \in \catN  \}.  $$
 The weak model structure on this category is similar to that of $\catN^I$, the only difference is that $X \to Y$ is a cofibration if $X_b \sqcup_{FX_a} FY_a \to Y_b$ is a trivial cofibration.
 \end{construction}
 
 When $F$ is the identity functor we recover $\catN^I$ from \cref{correspondences:wms}. A cofibrant object in  $\catN_F^I$ is a diagram of the form \[
   \begin{tikzcd}
     Fa \ar[r,"\sim",hook] & b & c. \ar[l,"\sim"',hook'] 
   \end{tikzcd}
 \]

 \begin{observation}
   With the set up above, it follows from \cref{first-projection:trivial-fibration} that the projection $\pi_1: \catN_F^I \to \catM$, sending each diagram $Fa \to b \leftarrow c$ to $a$, is a Barton trivial fibration.
 \end{observation}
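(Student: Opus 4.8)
The plan is to check the three conditions defining a Barton trivial fibration (\cref{barton-trivialfib:def}) for $\pi_1$ by transporting along $F$ the corresponding facts about the projection $\pi_0\colon\catN^I\to\catN$, $(A\to B\leftarrow C)\mapsto A$, established in \cref{first-projection:extensible}, \cref{lift-equivalences:obs} and \cref{first-projection:trivial-fibration}. That $\pi_1$ is a left Quillen functor will be immediate from the definition of the weak model structure on $\catN_F^I$ in \cref{fcylinders:wms}: the component at the $\catM$-valued vertex $a$ of a cofibration (resp. trivial cofibration) of $\catN_F^I$ is by construction a cofibration (resp. trivial cofibration) of $\catM$, $\pi_1$ preserves the initial diagram, and on cofibrant objects $\pi_1$ has the right adjoint $a\mapsto(Fa\to 1\leftarrow 1)$, which sends level-wise fibrations to level-wise fibrations. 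So the real content is extensibility and weak conservativity.

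The device I would use is the forgetful functor $F^I\colon\catN_F^I\to\catN^I$, $(Fa\to b\leftarrow c)\mapsto(Fa\to b\leftarrow c)$, which sits in a commuting square with $\pi_1$, $F$ and $\pi_0$. First I would record two elementary compatibility facts. Since $F$ is left Quillen and preserves the initial object together with the pushouts used to form latching objects, a diagram of $\catN_F^I$ is cofibrant if and only if its $F^I$-image is cofibrant in $\catN^I$ and its $a$-component is cofibrant in $\catM$; and, comparing the definition of cofibration in \cref{fcylinders:wms} clause by clause with that in \cref{correspondences:wms} (the Reedy latching condition at $b$, together with the two trivial-cofibration conditions $X_b\sqcup_{X_c}Y_c\trivialcof Y_b$ and $X_b\sqcup_{FX_a}FY_a\trivialcof Y_b$, are literally the same on the two sides, and the remaining Reedy conditions on $f$ are: $X_a\cofibration Y_a$ in $\catM$ versus $FX_a\cofibration FY_a$ in $\catN$, and $X_c\cofibration Y_c$), a morphism of $\catN_F^I$ is a cofibration precisely when its $F^I$-image is a cofibration of $\catN^I$ and its $a$-component is a cofibration of $\catM$. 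The analogue for trivial cofibrations follows since trivial cofibrations between cofibrant objects are level-wise, exactly as in the proof of \cref{localized-triv-cof:lem-2}.

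For extensibility I would take $X=(Fa\to b\leftarrow c)$ cofibrant in $\catN_F^I$ and a core cofibration $g\colon a\cofibration y$ in $\catM^\cof$, apply $F$ to get a cofibration $Fg\colon Fa\cofibration Fy$ between cofibrant objects of $\catN$, and feed the cofibrant diagram $F^I X$ and $Fg$ into the extensibility of $\pi_0$ (\cref{first-projection:extensible}); this produces a cofibration $F^I X\cofibration Z'$ in $\catN^I$ whose $\pi_0$-component is exactly $Fg$, so $Z'$ has the form $(Fy\to b'\leftarrow c')$ and is therefore $F^I Z$ for a cofibrant $Z\in\catN_F^I$ with $\pi_1 Z=y$. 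The induced arrow $X\to Z$ has $F^I$-image the cofibration $F^I X\cofibration Z'$ and $a$-component $g$, hence is a core cofibration by the second compatibility fact, and it is a strict lift of $g$. Weak conservativity is handled the same way: if $X\cofibration Y$ is a core cofibration of $\catN_F^I$ with $\pi_1 X\trivialcof\pi_1 Y$ a trivial cofibration of $\catM$, then $F$ makes $FX_a\to FY_a$ a trivial cofibration of $\catN$, so by the weak conservativity of $\pi_0$ (part of \cref{first-projection:trivial-fibration}, via \cref{lift-equivalences:obs}) the map $F^I X\cofibration F^I Y$ is a trivial cofibration of $\catN^I$, hence level-wise a trivial cofibration; combined with $X_a\trivialcof Y_a$ this exhibits $X\cofibration Y$ as a level-wise trivial cofibration, hence a trivial cofibration of $\catN_F^I$.

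I expect the only subtle point to be the term-by-term comparison of the two notions of cofibration: one must observe that the latching object of a diagram of $\catN_F^I$ at $b$ is built from $F$ of the $a$-component, so it coincides with the latching object of the $F^I$-image, and that the extra cofibrancy requirement "$a\in\catM^\cof$" carried by $\catN_F^I$ is precisely what the $\pi_1$-component controls. Once this bookkeeping is pinned down, the argument needs no factorization or lifting construction beyond those already contained in \cref{first-projection:extensible}.
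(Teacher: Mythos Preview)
Your proposal is correct and is essentially what the paper has in mind. The paper states this as an \emph{Observation} with no proof beyond ``it follows from \cref{first-projection:trivial-fibration}'', so there is no detailed argument to compare against; your use of the forgetful functor $F^I\colon\catN_F^I\to\catN^I$ is simply a clean way to make precise the intended remark that the construction of \cref{first-projection:extensible} goes through verbatim once one applies $F$ to the given cofibration $a\cofibration y$ in $\catM$. The bookkeeping you isolate---that a map in $\catN_F^I$ is a (trivial) cofibration iff its $a$-component is one in $\catM$ and its $F^I$-image is one in $\catN^I$---is exactly the content hidden in the phrase ``the only difference'' in \cref{fcylinders:wms}, and your derivation of weak conservativity from that of $\pi_0$ is correct (one could also argue it directly by $2$-out-of-$3$ as in \cref{lift-equivalences:obs}, using that $F$ preserves trivial cofibrations).
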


 To show that the projection from $\pi_2: \catN_F^I\to\catN$ sending each diagram $Fa \to b \leftarrow c$ to $c \in\catN$ is a trivial fibration we make use of the following:

 \begin{lemma} \label{transpose:factorization}
   Let $F:\catM \to \catN $ be a left Quillen equivalence between weak model categories. For any objects $x \in \catM^\cof$, $y \in \catN^\fib$ and a map $f: Fx \to y$ there exists an object $z\in \catM^\cof$ such that $f$ factors as
   \[
     \begin{tikzcd}
       Fx \ar[rr, "f"] \ar[rd,hook]& & y \\
       & Fz. \ar[ru,"\sim"' ]&
     \end{tikzcd}
   \]
 \end{lemma}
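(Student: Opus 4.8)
The plan is to reduce the statement to the factorization axiom of $\catM$ via the adjoint functor. Let $G$ be the right adjoint of $F$. Since $y$ is fibrant and $G$ is right Quillen, $Gy$ is fibrant in $\catM$, and the map $f: Fx \to y$ corresponds under the adjunction to a map $\tilde f: x \to Gy$. Now $x$ is cofibrant and $Gy$ is fibrant, so the factorization axiom of the weak model category $\catM$ does apply to $\tilde f$ (this is precisely the situation in which factorizations are guaranteed in a weak model category). Hence I would write $\tilde f = p \circ i$ with $i: x \cofibration z$ a cofibration --- so that $z$ is cofibrant --- and $p: z \trivialfib Gy$ a trivial fibration.

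Next I would apply $F$. Since $F$ is a left Quillen functor and $i$ is a core cofibration, $Fi: Fx \cofibration Fz$ is a cofibration. Composing $Fp: Fz \to FGy$ with the counit $\varepsilon_y: FGy \to y$ produces a map $q \coloneqq \varepsilon_y \circ Fp : Fz \to y$, and by the triangle identities of the adjunction one has $q \circ Fi = f$. Thus $f$ factors as $Fx \cofibration Fz \overset{q}{\to} y$ through the cofibrant object $z$, which is exactly the shape required by the lemma.

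It then remains to check that $q$ is a weak equivalence. Here I would invoke that $F$ is a Quillen \emph{equivalence}: for $z$ cofibrant and $y$ fibrant, $q: Fz \to y$ is a weak equivalence if and only if its adjoint transpose $z \to Gy$ is one. But the adjoint transpose of $q = \varepsilon_y \circ Fp$ is precisely $p: z \to Gy$, which is a trivial fibration and in particular a weak equivalence. Therefore $q$ is a weak equivalence, and $Fx \cofibration Fz \overset{\sim}{\to} y$ is the desired factorization.

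I do not expect a genuine obstacle here; the only points requiring care are the weak-model-category caveats, namely that the factorization axiom and the Quillen equivalence criterion are only available for maps out of a cofibrant object into a fibrant one and that a left Quillen functor only preserves core cofibrations --- but each step above is set up so that these hypotheses are met.
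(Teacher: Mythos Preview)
Your proof is correct and follows essentially the same route as the paper: transpose $f$ across the adjunction, factor the transpose in $\catM$ as a cofibration followed by a trivial fibration, and then use the Quillen-equivalence criterion (a map $Fz \to y$ with $z$ cofibrant and $y$ fibrant is a weak equivalence iff its transpose $z \to Gy$ is) to conclude that the second map is an equivalence. Your version is in fact slightly more explicit about the counit and triangle identities than the paper's ``by naturality'' remark.
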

 \begin{proof}
   We know that there is an isomorphism $$\varphi:\Hom_\catN(Fx,y) \simeq \Hom_\catM(x,Gy):\varphi^{-1}$$ given by the Quillen adjunction, natural in $x\in\catM^\cof$ and $y \in \catN^\fib$. Recall from \cite[2.4.3 Proposition]{henry20weak} that $F: \catM^\cof \to \catN^\cof$ and $G: \catN^\fib \to \catM^\fib$  preserve equivalences. Take $\varphi f$ the adjoint transpose of $f$. We can take a factorization \[
     \begin{tikzcd}
       & z \ar[rd,"\sim"',"s",two heads]  &  \\
       x \ar[ru,hook,"r"] \ar[rr,"\varphi f"'] & & Gy
     \end{tikzcd}
   \]
   By naturality, one checks that $f=\varphi^{-1}s F r$ where $Fr$ is a cofibration. Since the Quillen pair is an equivalence, we deduce from \cite[2.4.5 Proposition (i)]{henry20weak} that $\varphi^{-1}s$ is an equivalence.
 \end{proof}

 \begin{corollary} \label{second-projection:trivial-fibration}
   Let $F: \catM \rightleftarrows \catN:G $ be a Quillen equivalence. Then the projection $\pi_2:\catN_F^I \to \catN $sending each diagram $ Fa \to b \leftarrow c$ to $c \in\catN$ is a Barton trivial fibration.
 \end{corollary}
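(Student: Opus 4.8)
The plan is to verify that $\pi_2 \colon \catN_F^I \to \catN$ is a left Quillen functor which is weakly conservative and extensible as a morphism of coclans, so that it is a Barton trivial fibration by \cref{barton-trivialfib:def}. Left Quillenness is immediate: the latching object of $1 \in I$ is initial, so the $1$--component of any Reedy cofibration is simply a cofibration, and cofibrations of $\catN_F^I$ are in particular Reedy cofibrations; likewise core trivial cofibrations of $\catN_F^I$ are levelwise trivial cofibrations, by the evident analogue of \cref{localized-triv-cof:lem-2}, so their $1$--components are trivial cofibrations. Weak conservativity follows the pattern of \cref{lift-equivalences:obs}: if $X \to Y$ is a core cofibration of $\catN_F^I$ whose $1$--component $X_1 \cofibration Y_1$ is a trivial cofibration, then cofibrancy of $X$ and $Y$ gives $FX_0 \trivialcof X_2$, $X_1 \trivialcof X_2$, $FY_0 \trivialcof Y_2$ and $Y_1 \trivialcof Y_2$, and two applications of the $2$--out--of--$3$ property force first $X_2 \to Y_2$ and then $FX_0 \to FY_0$ to be weak equivalences; hence $X \to Y$ is a levelwise, thus Reedy, trivial cofibration and therefore a trivial cofibration of $\catN_F^I$ by (the $\catN_F^I$ form of) \cref{localized-triv-cof:lem-2}.

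The substantive part is extensibility. Let $X = (Fa \trivialcof b \lefttrivialcofib c)$ be a cofibrant object of $\catN_F^I$ and $g \colon c \cofibration x$ a cofibration in $\catN^\cof$; I would build a cofibrant $Z = (Fa' \trivialcof b' \lefttrivialcofib x)$ with a cofibration $X \to Z$ of $\catN_F^I$ whose $1$--component is $g$. The idea is to run the construction of the proof of \cref{first-projection:extensible}, but applied to the $1$--slot rather than the $0$--slot (these two objects of $I$ play symmetric roles in $\catN^I$, whereas in $\catN_F^I$ the $0$--slot is the constrained one): pass to a fibrant replacement $x \trivialcof x^{\fib}$; form $b \sqcup_c x$ with a map $b \sqcup_c x \to x^{\fib}$, using that $c \trivialcof b$ is a trivial cofibration and $x^{\fib}$ is fibrant; factor the composite $Fa \to b \sqcup_c x \to x^{\fib}$; form the iterated pushout $P$, equip it with a map to $x^{\fib}$, and factor that as $P \cofibration b' \trivialfib x^{\fib}$; and assemble $Z$ together with its evident map from $X$. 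Exactly as in \cref{first-projection:extensible}, repeated use of the $2$--out--of--$3$ property shows that all the relevant maps into $b'$ (from $x$, from $Fa'$, and from the two latching pushouts) are trivial cofibrations, so that $Z$ is cofibrant and $X \to Z$ satisfies the Reedy and localized cofibration conditions with the correct $1$--component.

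The only step that genuinely departs from \cref{first-projection:extensible}, and the main obstacle, is the factorization of $Fa \to b \sqcup_c x \to x^{\fib}$: in $\catN^I$ one would take an arbitrary (cofibration, trivial fibration) factorization, but in $\catN_F^I$ the $0$--slot of $Z$ must retain the form $F(\,\cdot\,)$ for a cofibrant object of $\catM$. This is precisely what \cref{transpose:factorization} supplies, using that $F$ is a left Quillen equivalence: since $a \in \catM^\cof$ and $x^{\fib} \in \catN^\fib$, this composite factors as $Fa \cofibration Fa' \weakequiv x^{\fib}$ with $a' \in \catM^\cof$ (and $a \cofibration a'$ a cofibration of $\catM$, so $Fa \cofibration Fa'$ is a cofibration of $\catN$). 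Inspecting \cref{first-projection:extensible}, the only property of the analogous map $z \trivialfib x^{\fib}$ used in the subsequent bookkeeping is that it is a weak equivalence, which our $Fa' \weakequiv x^{\fib}$ also is, so the rest of the argument — in particular all the remaining $2$--out--of--$3$ checks — carries over verbatim. This establishes extensibility, and with it the corollary.
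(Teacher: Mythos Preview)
Your approach is essentially the same as the paper's: both reduce to the proof of \cref{first-projection:extensible} and isolate \cref{transpose:factorization} as the one extra ingredient needed to keep the $0$--slot in the image of $F$. The paper reaches the map to factor slightly differently---it first fibrantly replaces the diagram, uses a homotopy inverse of $c \trivialcof b$ to produce $Fa \to c$, and factors that---whereas you stay closer to the pushout diagram of \cref{first-projection:extensible} and factor $Fa \to x^{\fib}$ directly. Your route is arguably cleaner since it avoids the homotopy-inverse detour, and your observation that only the weak-equivalence property of the second factor is used downstream is correct.

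There is one genuine omission in your weak conservativity argument. You deduce that $FX_0 \to FY_0$ is a weak equivalence in $\catN$ and then claim $X \to Y$ is levelwise trivial. But the $0$--slot of $\catN_F^I$ lives in $\catM$, so ``levelwise'' requires $X_0 \to Y_0$ to be a weak equivalence in $\catM$, not merely $FX_0 \to FY_0$ in $\catN$. This follows because $F$ is a left Quillen equivalence and hence reflects weak equivalences between cofibrant objects (via the induced equivalence of homotopy categories), but you should state this step explicitly; without it the inference is unjustified.
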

 \begin{proof}
   We show that in a situation as in the diagram
   % https://q.uiver.app/#q=WzAsNixbMCwyLCJjIl0sWzEsMiwiXFx0ZXh0e30iXSxbMCwxLCJiIl0sWzAsMCwiRmEiXSxbMCwzLCJjIl0sWzEsMywieSJdLFswLDEsIiIsMCx7InN0eWxlIjp7ImJvZHkiOnsibmFtZSI6Im5vbmUifSwiaGVhZCI6eyJuYW1lIjoibm9uZSJ9fX1dLFszLDIsIlxcc2ltIiwwLHsic3R5bGUiOnsidGFpbCI6eyJuYW1lIjoiaG9vayIsInNpZGUiOiJ0b3AifX19XSxbMCwyLCJcXHNpbSIsMix7InN0eWxlIjp7InRhaWwiOnsibmFtZSI6Imhvb2siLCJzaWRlIjoiYm90dG9tIn19fV0sWzQsNSwiIiwxLHsic3R5bGUiOnsidGFpbCI6eyJuYW1lIjoiaG9vayIsInNpZGUiOiJ0b3AifX19XSxbMCw0LCIiLDAseyJvZmZzZXQiOi01LCJzdHlsZSI6eyJ0YWlsIjp7Im5hbWUiOiJtYXBzIHRvIn19fV1d
\[\begin{tikzcd}
	Fa \\
	b \\
	c & {\text{}} \\
	c & z,
	\arrow["\sim", hook, from=1-1, to=2-1]
	\arrow["\sim"', hook', from=3-1, to=2-1]
	\arrow[draw=none, from=3-1, to=3-2]
	\arrow[maps to, from=3-1, to=4-1]
	\arrow[hook, from=4-1, to=4-2]
      \end{tikzcd}\]
    there is a cofibrant object over $z$ that projects onto $c \cofibration z$. By taking a fibrant replacement, we can assume that the diagram is point-wise fibrant. From \cite[2.2.3 Proposition]{henry20weak} there exists a homotopy inverse of $c \trivialcof b$, this give us a map $ Fa \to c$. Using \cref{transpose:factorization} this last map can be factored as $Fa \cofibration Fx \overset{\sim}{\to} c$. The rest of the proof continues as in \cref{first-projection:trivial-fibration}.
 \end{proof}

 \begin{theorem}\label{quillen-equivalence:factorization}
   Given $F: \catM \rightleftarrows \catN$ be a left Quillen equivalence between weak model categories. Then, we have a diagram of week model categories
% https://q.uiver.app/#q=WzAsNCxbMCwwLCJcXGNhdE1eSiJdLFswLDEsIlxcY2F0TSJdLFsxLDEsIlxcY2F0TVxcdGltZXNcXGNhdE4iXSxbMSwwLCJcXGNhdE5eSV9GIl0sWzAsMywiRyJdLFswLDEsIkIiLDJdLFsxLDIsIihJZF9cXGNhdE0sRikiLDJdLFszLDIsIihcXHBpXzEsXFxwaV8yKSJdXQ==
\[\begin{tikzcd}
	{\catM^J} & {\catN^I_F} \\
	\catM & {\catM\times\catN,}
	\arrow["H", from=1-1, to=1-2]
	\arrow["B"', from=1-1, to=2-1]
	\arrow["{(\pi_1,\pi_2)}", from=1-2, to=2-2]
	\arrow["{(Id_\catM,F)}"', from=2-1, to=2-2]
\end{tikzcd}\]
   where $\pi_1$ and $\pi_2$ are Barton trivial fibrations.
 \end{theorem}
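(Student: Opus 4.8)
The plan is to put together what has already been established and to supply the two remaining functors $B$ and $H$. The weak model structure $\catM^J_{Loc}$ is provided by \cref{model-paths:wms}, and the weak model structure on $\catN^I_F$ by \cref{fcylinders:wms} --- its verification being, as noted there, a line-by-line copy of the proof of \cref{correspondences:wms}, itself a copy of \cref{model-paths:wms}. The projection $\pi_1\colon\catN^I_F\to\catM$, $(Fa\to b\leftarrow c)\mapsto a$, is a Barton trivial fibration by the observation following \cref{fcylinders:wms} (which reduces it to \cref{first-projection:trivial-fibration} and \cref{lift-equivalences:obs}), and $\pi_2\colon\catN^I_F\to\catN$, $(Fa\to b\leftarrow c)\mapsto c$, is a Barton trivial fibration by \cref{second-projection:trivial-fibration}; this last point is where the hypothesis that $F$ belongs to a Quillen \emph{equivalence} rather than just a left Quillen functor enters. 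So it remains to construct $B$ and $H$, to see that they are (weak) left Quillen functors, and to check the square commutes.

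I would take $B\colon\catM^J\to\catM$ to be evaluation at the minimal object $a$ of $J$. It has a right adjoint (right Kan extension), it preserves core cofibrations because the Reedy condition at $a$ already gives $Y_a\cofibration Y'_a$, and it preserves core trivial cofibrations because, by \cref{cofibration-loc-eq-cofibration} and \cref{cor:core_cof_are_levelwise}, core trivial cofibrations of $\catM^J_{Loc}$ are levelwise. For $H\colon\catM^J\to\catN^I_F$ I would set
\[
 H\bigl(Y_a\rightrightarrows Y_b\to Y_c\bigr)\;:=\;\bigl(FY_a\xrightarrow{FY_i}FY_b\xleftarrow{FY_j}FY_a\bigr),
\]
that is: forget $Y_c$ and the map $Y_b\to Y_c$, restrict along the functor $I\to(a\rightrightarrows b)$ sending $0$ and $1$ to $a$ (along the legs $i$ and $j$ respectively) and $2$ to $b$, and apply $F$ levelwise. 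For $Y$ cofibrant in $\catM^J_{Loc}$ the object $Y_a$ is cofibrant, so $H(Y)$ is a legitimate object of $\catN^I_F$; and since each of the three steps has a right adjoint (two Kan extensions, and the levelwise right adjoint of $F$), $H$ has one as well. The square then commutes on the nose on objects and morphisms: $\pi_1\circ H=B$ and $\pi_2\circ H=F\circ B$, so $(\pi_1,\pi_2)\circ H=(B,F\circ B)=(Id_\catM,F)\circ B$.

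The point I expect to require the most care --- and the only one that is not purely formal --- is that $H$ is a (weak) left Quillen functor, i.e.\ that it carries core cofibrations of $\catM^J_{Loc}$ to core cofibrations of $\catN^I_F$ and core trivial cofibrations to core trivial cofibrations. As $F$ is left Quillen and preserves colimits, $H$ visibly preserves the Reedy part of the cofibration condition; for the extra ``localization'' clauses of \cref{fcylinders:wms} one has to verify that, for a core cofibration $Y\to Y'$ of $\catM^J_{Loc}$, the comparison maps $Y_b\sqcup_{Y_a}Y'_a\to Y'_b$ (one for each leg of $Y_a\rightrightarrows Y_b$) are \emph{trivial} cofibrations, so that applying $F$ gives the trivial cofibrations demanded in $\catN^I_F$. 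This holds because in a cofibrant diagram of $\catM^J_{Loc}$ the two maps $Y_a\rightrightarrows Y_b$ are trivial cofibrations, so the claim follows from $2$-out-of-$3$ and the pushout-stability of trivial cofibrations --- exactly the computation already made, in the proof of \cref{cofibration-loc-eq-cofibration}, to show that a Reedy cofibration between cofibrant objects of $\catM^J_{Loc}$ is a cofibration there. The core-trivial-cofibration case is then immediate, as such maps are levelwise and $F$ preserves levelwise trivial cofibrations. With $B$ and $H$ recognised as (weak) left Quillen functors, the commuting square with $\pi_1$ and $\pi_2$ Barton trivial fibrations is precisely the statement of the theorem.
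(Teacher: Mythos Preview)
Your proposal is correct and follows the same construction as the paper: $B$ is evaluation at $a$ and $H$ sends $Y_a\rightrightarrows Y_b\to Y_c$ to $FY_a\to FY_b\leftarrow FY_a$ along the two legs, with the Barton-trivial-fibration claims for $\pi_1,\pi_2$ deferred to \cref{first-projection:trivial-fibration} and \cref{second-projection:trivial-fibration}. You supply more detail than the paper does---in particular the verification that $H$ is left Quillen via the $2$-out-of-$3$ argument for $Y_b\sqcup_{Y_a}Y'_a\to Y'_b$---which the paper's proof leaves implicit.
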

 \begin{proof}
   The work we have done produces a diagram as on the left below, and the action of the functors on objects is spelled out on the right:
   % https://q.uiver.app/#q=WzAsOCxbMiwwLCJYX2FcXHJpZ2h0cmlnaHRhcnJvd3MgWF9iIFxcdG8gWF9jIl0sWzIsMSwiWF9hIl0sWzMsMCwiRlhfYSBcXHJpZ2h0cmlnaHRhcnJvd3MgRlhfYiJdLFszLDEsIihYX2EsRlhfYSkiXSxbMCwwLCJcXGNhdE1eSiJdLFswLDEsIlxcY2F0TSJdLFsxLDEsIlxcY2F0TVxcdGltZXNcXGNhdE4iXSxbMSwwLCJcXGNhdE5eSV9GIl0sWzAsMSwiQiIsMix7InN0eWxlIjp7InRhaWwiOnsibmFtZSI6Im1hcHMgdG8ifX19XSxbMCwyLCJHIiwwLHsic3R5bGUiOnsidGFpbCI6eyJuYW1lIjoibWFwcyB0byJ9fX1dLFsyLDMsIiIsMix7InN0eWxlIjp7InRhaWwiOnsibmFtZSI6Im1hcHMgdG8ifX19XSxbMSwzLCIiLDAseyJzdHlsZSI6eyJ0YWlsIjp7Im5hbWUiOiJtYXBzIHRvIn19fV0sWzQsNywiRyJdLFs0LDUsIkIiLDJdLFs1LDYsIihJZF9cXGNhdE0sRikiLDJdLFs3LDYsIihcXHBpXzEsXFxwaV8yKSJdXQ==
\[\begin{tikzcd}
	{\catM^J} & {\catN^I_F} & {X_a\rightrightarrows X_b \to X_c} & {FX_a \rightrightarrows FX_b} \\
	\catM & {\catM\times\catN} & {X_a} & {(X_a,FX_a)}
	\arrow["H", from=1-1, to=1-2]
	\arrow["B"', from=1-1, to=2-1]
	\arrow["{(\pi_1,\pi_2)}", from=1-2, to=2-2]
	\arrow["H", maps to, from=1-3, to=1-4]
	\arrow["B"', maps to, from=1-3, to=2-3]
	\arrow[maps to, from=1-4, to=2-4]
	\arrow["{(Id_\catM,F)}"', from=2-1, to=2-2]
	\arrow[maps to, from=2-3, to=2-4]
\end{tikzcd}\]
We have shown in \cref{first-projection:trivial-fibration} and \cref{second-projection:trivial-fibration} that both projections are Barton trivial fibrations.
\end{proof}

It will be essential to highlight that there is a diagonal functor which is a Barton trivial fibration, making the lower triangle commutative.

\begin{corollary}\label{factorization:corollary}
  Let $F:\catM \to \catM$ be a left Quillen equivalence. There exists a Barton trivial fibration $P:\catN_F^I \to \catM$.
\end{corollary}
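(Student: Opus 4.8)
The plan is to take $P$ to be the first projection $\pi_1\colon\catN_F^I\to\catM$ introduced in \cref{fcylinders:wms}, the functor sending a diagram $Fa\to b\leftarrow c$ to the object $a\in\catM$. This is precisely the diagonal of the square of \cref{quillen-equivalence:factorization}: unwinding the descriptions of $H$ and $B$ given there one has $P\circ H=B$, so $P$ makes the relevant lower triangle of that square commute, which is the property worth recording for the Brown-style argument that follows. The only thing that really needs to be written down is the verification that the extensibility/weak-conservativity argument of \cref{first-projection:extensible} and \cref{lift-equivalences:obs}, carried out there for $\catN^I$, transports to the $F$-twisted category $\catN_F^I$ (the Observation preceding \cref{transpose:factorization} already asserts this, but only by reference to the untwisted case).

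First I would recall the shape of cofibrant objects of $\catN_F^I$: diagrams $Fa\trivialcof b\lefttrivialcofib c$ with $a\in\catM^\cof$ and $b,c\in\catN^\cof$. For extensibility of $\pi_1$, I would take such a cofibrant object and a core cofibration $a\cofibration x$ in $\catM$, apply $F$ (which, being left Quillen, preserves cofibrations and the pushouts appearing in the construction) to get a cofibration $Fa\cofibration Fx$ in $\catN$, and then run the construction of \cref{first-projection:extensible} inside $\catN$ starting from the $\catN^I$-diagram $Fa\to b\leftarrow c$ and this cofibration. That construction produces a cofibrant object of $\catN^I$ whose first leg is literally $Fx$; since $x\in\catM^\cof$ this object lies in $\catN_F^I$, and the comparison map projects under $\pi_1$ to the given cofibration $a\cofibration x$. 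This is exactly extensibility of $\pi_1\colon\catN_F^I\to\catM$ in the sense of \cref{extensible-morphism:definition}.

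For weak conservativity I would argue directly: given a core cofibration $X\cofibration Y$ in $\catN_F^I$, say $X=(Fa\to b\leftarrow c)$ and $Y=(Fa'\to b'\leftarrow c')$, with $a\trivialcof a'$ in $\catM$, one gets $Fa\trivialcof Fa'$ since $F$ is left Quillen, and then two applications of the $2$-out-of-$3$ property (using that $Fa\trivialcof b$, $c\trivialcof b$, $Fa'\trivialcof b'$, $c'\trivialcof b'$ hold by cofibrancy) show $b\to b'$ and $c\to c'$ are weak equivalences; being moreover levelwise cofibrations, $X\cofibration Y$ is a levelwise trivial cofibration and hence a trivial cofibration in $\catN_F^I$. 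Since $\pi_1$ manifestly preserves cofibrations and core trivial cofibrations, this together with the previous paragraph shows $P=\pi_1$ is a Barton trivial fibration in the sense of \cref{barton-trivialfib:def}.

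I do not expect a real obstacle: the substantive work — the weak model structures on $\catM^J$ and $\catN_F^I$, the extensibility construction of \cref{first-projection:extensible}, and \cref{transpose:factorization} for the second projection — is already in hand, and \cref{factorization:corollary} merely isolates the diagonal Barton trivial fibration that the Brown-style factorization argument for the \Fourth\ will feed into \cref{trivial-fibration:invariance}. The single point requiring a moment's care, and it is minor, is checking that the output of the extensibility construction stays inside $\catN_F^I$, i.e.\ that its first leg is genuinely of the form $F(-)$; this is the one place where $\catN_F^I$ behaves differently from the untwisted $\catN^I$, and it is exactly resolved by the hypothesis that $F$ is left Quillen.
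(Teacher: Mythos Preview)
Your proposal is correct and takes the same approach as the paper: $P$ is the first projection $\pi_1\colon\catN_F^I\to\catM$, which is a Barton trivial fibration. The paper's proof is terser, simply invoking \cref{quillen-equivalence:factorization} (where $\pi_1$ being a Barton trivial fibration is already recorded via the Observation after \cref{fcylinders:wms}), whereas you re-verify extensibility and weak conservativity for the $F$-twisted case; your more explicit argument is sound but redundant given what has already been established.
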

\begin{proof}
  \Cref{quillen-equivalence:factorization} can be further specialized to a diagram
  % https://q.uiver.app/#q=WzAsNCxbMCwwLCJcXGNhdE1eSiJdLFswLDEsIlxcY2F0TSJdLFsxLDAsIlxcY2F0Tl5JX0YiXSxbMSwxLCJcXGNhdE4iXSxbMSwzLCJGIiwyXSxbMiwzLCJcXHBpXzIiXSxbMCwxXSxbMCwyXV0=
\[\begin{tikzcd}
	{\catM^J} & {\catN^I_F} \\
	\catM & \catM
	\arrow[from=1-1, to=1-2]
	\arrow[from=1-1, to=2-1]
	\arrow["{\pi_1}", from=1-2, to=2-2]
	\arrow["Id_\catM"', from=2-1, to=2-2]
      \end{tikzcd}\]
    from which we see that there is a functor $P:\catN^I_F\to \catM$. This is an immediate consequence of \cref{quillen-equivalence:factorization}.
\end{proof}

 \subsection{Proof of main theorem} \label{proof-third-invariance}

\begin{theorem}\label{p-third-invariance:thm}
Let $F: \catM \rightleftarrows \catN:G$ a Quillen equivalence. Then, for any cofibrant object $A \in \catM$. The induced map $h\Lb F_A: h\Lb_\lambda^\catM(A) \to h\Lb_\lambda^\catN(FA)  $ is an isomorphism.
\end{theorem}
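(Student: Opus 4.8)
The plan is to mimic the Brown-factorization argument used to prove the \Third, but one categorical level up: the role played there by the pushout diagram (\ref{Weak-brown-diag}) inside $\catM^\cof$ is now played by the square of weak model categories produced in \cref{quillen-equivalence:factorization}, and the functor $h\Lb$ of \cref{lemma:thrid_for_triv_cof} is replaced by the assignment sending a weak model category to its language together with the action of left Quillen functors on formulas. We already know from \cref{induced-functor:injective} that $h\Lb F_A$ is injective, but the argument below will in fact give bijectivity directly. A preliminary point I would record first is the (routine) \emph{functoriality} of the action on formulas: if $G:\catM\to\catN$ and $G':\catN\to\catP$ are composable left Quillen functors and $C\in\catM$ is cofibrant, then $h\Lb(G'G)_C = h\Lb G'_{GC}\circ h\Lb G_C$, which is immediate from the uniqueness clause in the universal property used in \cref{action-functor:formulas} to define these maps.

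Next I would recall from \cref{quillen-equivalence:factorization} and \cref{factorization:corollary} that for the left Quillen equivalence $F$ we have a commutative square of weak model categories and left Quillen functors
\[\begin{tikzcd}
\catM^J \ar[d,"B"'] \ar[r,"H"] & \catN^I_F \ar[d,"{(\pi_1,\pi_2)}"] \\
\catM \ar[r,"{(Id_\catM,F)}"'] & \catM\times\catN
\end{tikzcd}\]
in which $B$, $\pi_1$ and $\pi_2$ are Barton trivial fibrations. Projecting the commutativity of the square onto the two factors gives the two identities $\pi_1\circ H = B$ and $\pi_2\circ H = F\circ B$ of left Quillen functors, which are the only structural facts about the square that the argument uses.

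Now fix the cofibrant object $A\in\catM$. Since $B$ is in particular extensible, the cofibration from the initial object to $A$ lifts along $B$ to a cofibration from the initial object to some cofibrant $\tilde A\in\catM^J$ with $B(\tilde A)\cong A$ (concretely, $\tilde A$ is $A$ equipped with a weak cylinder object). Applying \cref{trivial-fibration:invariance} to each of the three Barton trivial fibrations $B$, $\pi_1$, $\pi_2$ shows that the maps
\[ h\Lb B_{\tilde A}\colon h\Lb_\lambda^{\catM^J}(\tilde A)\xrightarrow{\ \sim\ } h\Lb_\lambda^{\catM}(B\tilde A)\cong h\Lb_\lambda^{\catM}(A), \]
\[ h\Lb (\pi_1)_{H\tilde A}\colon h\Lb_\lambda^{\catN^I_F}(H\tilde A)\xrightarrow{\ \sim\ } h\Lb_\lambda^{\catM}(\pi_1 H\tilde A)=h\Lb_\lambda^{\catM}(B\tilde A), \]
\[ h\Lb (\pi_2)_{H\tilde A}\colon h\Lb_\lambda^{\catN^I_F}(H\tilde A)\xrightarrow{\ \sim\ } h\Lb_\lambda^{\catN}(\pi_2 H\tilde A)=h\Lb_\lambda^{\catN}(FB\tilde A)\cong h\Lb_\lambda^{\catN}(FA) \]
are isomorphisms of $\lambda$-boolean algebras. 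By functoriality and $\pi_1\circ H=B$ we have $h\Lb B_{\tilde A}=h\Lb(\pi_1)_{H\tilde A}\circ h\Lb H_{\tilde A}$, and since the first two maps are isomorphisms, $h\Lb H_{\tilde A}$ is an isomorphism. By functoriality applied to both sides of $\pi_2\circ H=F\circ B$ we get $h\Lb F_{B\tilde A}\circ h\Lb B_{\tilde A}=h\Lb(\pi_2)_{H\tilde A}\circ h\Lb H_{\tilde A}$; the right-hand side is a composite of isomorphisms and $h\Lb B_{\tilde A}$ is an isomorphism, so $h\Lb F_{B\tilde A}$ is an isomorphism, and hence so is $h\Lb F_A$ (it is conjugate to $h\Lb F_{B\tilde A}$ via the isomorphism $B\tilde A\cong A$, using naturality of the construction of \cref{cstr:Quillen_functor_acts_on_formulas}). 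This proves the \Fourth, completing the proof of \cref{invariance-theorems-34}.

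The substance of the theorem really lies in the machinery it rests on, namely the Barton-trivial-fibration factorization of a Quillen equivalence (\cref{quillen-equivalence:factorization}) and the invariance of $h\Lb$ along Barton trivial fibrations (\cref{trivial-fibration:invariance}); given those, the proof is a short diagram chase. Within the argument itself the only things that demand care are verifying that $\pi_1\circ H=B$ and $\pi_2\circ H=F\circ B$ at the level of functors, and the functoriality of $G\mapsto h\Lb G$ together with its compatibility with isomorphisms; I expect no genuine obstacle there beyond bookkeeping.
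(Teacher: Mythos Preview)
Your proof is correct and follows essentially the same approach as the paper: both reduce to \cref{trivial-fibration:invariance} via the Brown-style square of \cref{quillen-equivalence:factorization} and then chase through the resulting isomorphisms. The paper's version is terser---it invokes injectivity from \cref{induced-functor:injective} and then argues surjectivity by chasing through $(h\Lb\pi_2)$ and $(h\Lb P)$ at a constant correspondence $C_{FA}\in\catN^I_F$---whereas your explicit use of the functoriality $h\Lb(G'G)=h\Lb G'\circ h\Lb G$ and the lift $\tilde A\in\catM^J$ makes the identification of the resulting isomorphism with $h\Lb F_A$ more transparent.
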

 
\begin{proof}
  Recall from \cref{induced-functor:injective} that for any cofibrant object $A$ the induced map $h\Lb F_A$ is injective. Remains to show that it is surjective.
  Using \cref{factorization:corollary}, we obtain a diagram
  % https://q.uiver.app/#q=WzAsNCxbMCwwLCJcXGNhdE1eSiJdLFswLDEsIlxcY2F0TSJdLFsxLDAsIlxcY2F0Tl5JX0YiXSxbMSwxLCJcXGNhdE4iXSxbMSwzLCJGIiwyXSxbMiwzLCJcXHBpXzIiXSxbMCwxXSxbMCwyXSxbMiwxLCJQIiwxXV0=
\[\begin{tikzcd}
	{\catM^J} & {\catN^I_F} \\
	\catM & \catN
	\arrow[from=1-1, to=1-2]
	\arrow[from=1-1, to=2-1]
	\arrow["P"{description}, from=1-2, to=2-1]
	\arrow["{\pi_2}", from=1-2, to=2-2]
	\arrow["F"', from=2-1, to=2-2]
      \end{tikzcd}\]
    where $P$ is a Barton trivial fibration. $P: \catN_F^I \to \catM$ induces, for any cofibrant object $X\in \catN_F^I$, an isomorphism $(h\Lb \pi_1)_X:h\Lb_\lambda^{\catN_F^I}(X) \to h\Lb_\lambda^\catM(\pi_1X) $. Indeed, this follows from \cref{trivial-fibration:invariance}. Similarly, the map $(h\Lb \pi_2)_X:h\Lb_\lambda^{\catN_F^I}(X) \to h\Lb_\lambda^\catN(\pi_2X) $ is an isomorphism of $\lambda$-boolean algebras. For $A\in \catM^\cof$ cofibrant we can get a correspondence in $C_{FA}\in \catN^I_F$ with all objects $FA$ and maps the identities. We can conclude that $h\Lb F_A$ is surjective by chasing through the maps $(h\Lb \pi_2)_{C_A}$ and $(h\Lb P)_{C_A}$ which we already know are isomorphisms.
\end{proof}

It is an immediate that:

\begin{corollary}
  For any Quillen equivalence $F: \catM \rightleftarrows \catN:G$. The functors $ Ho(F) \circ h\Lb_\lambda^\catM $ and $h\Lb_\lambda^\catN:Ho(\catN) \to \bool_\lambda$ are naturally isomorphic via $h\Lb F$.
\end{corollary}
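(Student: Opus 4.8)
The plan is to deduce this corollary directly from the \Fourth\ (\cref{p-third-invariance:thm}), using the \Third\ (\cref{third-invariance:thm}) only to make sense of both sides of the claimed isomorphism as functors defined on the homotopy categories. First I would record that the language functor $h\Lb_\lambda^\catM\colon \catM^\cof\to\bool_\lambda$ sends core trivial cofibrations to isomorphisms; this is exactly \cref{lemma:thrid_for_triv_cof}. By \cite[2.2.6 Theorem]{henry20weak} the homotopy category $\Ho(\catM)$ is, up to equivalence, the localization of $\catM^\cof$ at the core trivial cofibrations, so $h\Lb_\lambda^\catM$ factors uniquely through $\Ho(\catM)$. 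The same holds for $h\Lb_\lambda^\catN$, and hence for the composite $\catM^\cof\xrightarrow{F}\catN^\cof\xrightarrow{h\Lb_\lambda^\catN}\bool_\lambda$, since $F$ is left Quillen and therefore preserves core trivial cofibrations; thus this composite also factors through $\Ho(\catM)$, the induced functor being $h\Lb_\lambda^\catN\circ\Ho(F)$, with $\Ho(F)$ the derived functor of $F$.

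Next I would observe that the maps $h\Lb F_A\colon h\Lb_\lambda^\catM(A)\to h\Lb_\lambda^\catN(FA)$ from \cref{left-quillen:map-languages} are the components of a natural transformation $h\Lb F$ of functors $\catM^\cof\to\bool_\lambda$, from $h\Lb_\lambda^\catM$ to $h\Lb_\lambda^\catN\circ F$: naturality over $\catM^\cof$ is immediate, because by \cref{induced-map:homotopy-bool-alg} this transformation is induced by the morphism of $\lambda$-boolean algebras over $\catM^\cof$ produced in \cref{action-functor:formulas}, and a morphism of boolean algebras over a clan is in particular a natural transformation. Since both $h\Lb_\lambda^\catM$ and $h\Lb_\lambda^\catN\circ F$ invert core trivial cofibrations, the transformation $h\Lb F$ descends to a natural transformation between the induced functors $h\Lb_\lambda^\catM$ and $h\Lb_\lambda^\catN\circ\Ho(F)$ on $\Ho(\catM)$: the only point needing an argument is naturality with respect to the formal inverse of a core trivial cofibration $s\colon A\cofibration B$, and this is forced because the images of $s$ and of $Fs$ under the respective language functors are isomorphisms (again by \cref{lemma:thrid_for_triv_cof}, using that $F$ preserves core trivial cofibrations), so the naturality square for $s$ can be inverted.

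Finally, by \cref{p-third-invariance:thm} every component $h\Lb F_A$ is an isomorphism of $\lambda$-boolean algebras, so the descended transformation $h\Lb F$ is a natural isomorphism; transporting along the equivalences $\Ho(\catM^\cof)\simeq\Ho(\catM)$ and $\Ho(\catN^\cof)\simeq\Ho(\catN)$ and reindexing along $\Ho(F)$, this says precisely that $h\Lb_\lambda^\catN\circ\Ho(F)$ and $h\Lb_\lambda^\catM$ are naturally isomorphic via $h\Lb F$, which is the assertion of the corollary. I expect no genuine obstacle here: the whole statement is formal given the \Third\ and the \Fourth\ together with the universal property of localization, the substantive input being the already-proved \cref{p-third-invariance:thm}, and the only bookkeeping being the descent of $h\Lb F$ from $\catM^\cof$ to $\Ho(\catM)$.
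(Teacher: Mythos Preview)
Your proposal is correct and matches the paper's approach: the paper states this corollary as ``immediate'' from \cref{p-third-invariance:thm} and gives no further argument, so what you have written is a careful unpacking of exactly that one-line deduction. The descent of $h\Lb_\lambda^\catM$ to $\Ho(\catM)$ via \cref{lemma:thrid_for_triv_cof} and the localization description of $\Ho(\catM^\cof)$, together with naturality of $h\Lb F$ from \cref{action-functor:formulas}/\cref{induced-map:homotopy-bool-alg}, are precisely the routine checks the paper is implicitly invoking.
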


%%%%%%%%%%%%%%%%%%%%%%%%%%%%%%%% 

%%% Local Variables:
%%% mode: latex
%%% TeX-master: "main"
%%% End:

        \appendix

        % Appendix A: Infinitary cartmell theories

        \section{Infinitary Cartmell theories} \label{appendix-a}

We introduce a generalization of \textit{Cartmell theories}, also known as \textit{generalized algebraic theories}, Cartmell \cite{cartmell1978}. This is straightforward and most of the proofs will be omitted since they are similar to those in \cite{cartmell1978}. In very few cases we will need to provide new proofs. We claim no originality other than the generalization itself.
We begin by recalling some definitions given in \textit{Ibid}. We assume to have a set of variables $V$ whose size is $\aleph_0$ and an alphabet $A$. Informally, a \emph{Cartmell generalized algebraic theory} consists of:
\begin{enumerate}[i)]
	\item A set $S$, called the set of \textit{sort symbols},
	\item A set $O$, called the set of \textit{operation symbols},
	\item An introductory rule for each sort symbol,
	\item An introductory rule for each operation symbol,
	\item A set of axioms.
\end{enumerate}

To understand our generalization let us examine the previous definition in more detail, for this we need some preliminary notions. An \emph{expression} is a finite sequence of $A\cup V\cup \{(\}\cup \{)\}\cup \{,\}$. Inductively:
\begin{enumerate}[i)]
	\item Elements of $V$ and $A$ are expressions,
	\item If $f\in A$ and $e_1,\, e_2,...,e_n$ are expressions, then $f(e_1,\, e_2,...,\,e_n)$ is an expression.
\end{enumerate}

The set of expressions is denoted by $E$. This is simply to say that an expression is a finite string taken from the set $A\cup V\cup \{(\}\cup \{)\}\cup \{,\}$. A \emph{premise} is a finite (possibly empty) sequence of $V\times E$. A \emph{conclusion} is an n-tuple of expressions, i.e. any element of $E^n$ for some $n\in \N$. Finally, a \emph{rule} is given by a premise $P$ and a conclusion $C$. Rules are written as: $ P \vdash C $. This intends to convey the idea that under the premise $P$, the conclusion $C$ is a valid expression.
Whenever $P$ is a premise we will write $x_1:\Delta_1,\, x_2:\Delta_2,...,x_n:\Delta_n$. For a conclusion, this is slightly more involved since we differentiate depending on the size of the tuple. For example, if we have a 1-tuple $\Delta$, then we write $\Delta \type$. We favour the notation ``:'' from type theory instead of the set theoretic one ``$\epsilon$'' used by Cartmell. Furthermore, we will take advantage of conventions and notation from type theory.

The most important definition we will need to change is that of a \textit{context}. In a Cartmell theory, a \emph{context} is the premise such that a rule

\[
x_1:\Delta_1,\, x_2:\Delta_2(x_1),...,x_n:\Delta_n(x_1,x_2,\cdots,x_{n-1}) \vdash \Delta(x_1,x_2,\cdots, x_n) \, \type 
\]

is a \textit{derived rule}.

The only difference between Cartmell theories and infinitary Cartmell theories is that in we allow infinitely many variables in the contexts. Just as any Cartmell theory gives rise to a contextual category, the same is true for the infinitary case with the appropriate generalized version of a contextual category.

\subsection{Generalized algebraic theories}

In this section, we give the formal definition of an infinitary Cartmell theory. We follow Cartmell \cite{cartmell1978} to develop the theory; however, there will be some instances where a change has to be made. We could say that by changing in the definition every instance of ``finite'' by ``size strictly less than $\kappa$'' we get the correct notion, this is indeed the case. We carve out the definition with a fair amount of detail, since the applications we have in mind benefit from having an explicit syntax. The technicalities and motivations for introducing a generalized algebraic in the following way are presented in Cartmell \cite{cartmell1978}.

From now on, we fix a regular cardinal $\kappa$, unless otherwise stated, all other ordinals mentioned will be strictly smaller than $\kappa$. \\
Let $V$ be a set such that $|V|=\kappa$, this set will be called the set of \emph{variables}. We make an additional assumption on this set: Its elements have \emph{canonical names}, this is $V=\{x_\alpha \}_{\alpha<\kappa}$. This is also known as an \emph{enumeration}. This is a minor assumption that allows to change variables. Otherwise, we would need to prove a result similar to \cite[Corollary, pp 1.32]{cartmell1978}\footnote{This result states that under the substitution property the derived rules are stable under substitution of variables by another variables}.
Let $A$ be any set, which as before is called \emph{alphabet}. Following \cite{cartmell1978} we define inductively the collection of \emph{expressions} $ A^* $ over the alphabet $ A $. An expression is any $ \lambda $-sequence of $ A\cup V \cup \{(\}\cup \{)\}\cup \{,\} $ subject to:
\begin{enumerate}[i)]
	\item If $x_\alpha\in V$ then $x_\alpha \in A^*$,
	\item If $ F\in A $ then $ F \in A^* $,
	\item If $F\in A$ and $\{e_{\alpha}\}_{\alpha<\lambda}\subseteq A^*$ then $F(e_{\alpha})_{\alpha<\lambda} \in A^*$.
\end{enumerate}

A \emph{premise} is any $ \lambda $-sequence of $ V\times A^* $. We will usually write premises as $ \{x_\alpha:\Delta_\alpha\}_{\alpha<\lambda}$ where $ x_\alpha $ are variables and $ \Delta_\alpha $ are expressions for $ \alpha<\lambda $. Suppose we have a premise $ \Gamma $, or later a \textit{context}, and we need an extra premise (or \textit{context}), according to our variable numbering, formally, we must write $\Gamma, \, \{x_\alpha:\Delta_\alpha\}_{\lambda\leq\alpha<\mu} $, where $ \lambda $ represents the number of variables in $ \Gamma $. This is clearly a problem when the expression complexity increases. In order to avoid overloading the notation, we choose to reset the variable counting to only the essential variables in use. Under this convention, we will write $\Gamma, \, \{x_\alpha:\Delta_\alpha\}_{\alpha<\lambda} $ instead. We will freely assume that $ \Gamma $ is a premise unless otherwise specified.

\begin{definition}\label{judgment1}
	A \emph{judgment} is an expression over the alphabet $ A $ that has one of the following forms:
	\begin{enumerate}
		\item Type judgment: $ \Gamma \vdash \Delta\, \type. $
		\item Element judgment: $\Gamma \vdash t:\Delta.$
		\item Type equality judgment: $\Gamma \vdash \Delta\equiv \Delta'.$
		\item Term equality judgment: $\Gamma \vdash t\equiv_\Delta t'. $
	\end{enumerate}
	where $ \Gamma $ is a premise.
\end{definition}

Given a premise $\Gamma$, $\{e_{\alpha}\}_{\alpha<\lambda}$ expression and $\{x_{\alpha}\}_{\alpha<\lambda}$ variables then the new expression
\[\Gamma[e_{\alpha}|x_{\alpha}]_{\alpha<\lambda}\]
it is obtained by simultaneously changing the variables in $\Gamma$ by the expressions. This process, unsurprisingly, is called \emph{substitution} of variables. Along with the infinitary substitutions, we will also allow operations to have possibly infinite arity. This is made explicit:
\begin{definition}
	A $\kappa$-\emph{pretheory} $T$ consists of the following data:
	\begin{enumerate}[i)]
		\item A set $S$, called the set of \textit{sort symbols},
		\item A set $O$, called the set of \textit{operation symbols},
		\item For each sort symbol $B$, a judgment of the form:
		\[
		\{x_\alpha:\Delta_\alpha\}_{\alpha<\lambda} \vdash B(x_{\alpha})_{\alpha<\lambda}\, \type
		\]
		where $\lambda$ is some ordinal strictly smaller than $\kappa$,
		\item For each operator symbol $F$, a judgment:
		\[
		\{x_\alpha:\Delta_\alpha\}_{\alpha<\lambda} \vdash F(x_\alpha)_{\alpha<\lambda}:\Delta
		\]
		where $\lambda$ is an ordinal strictly smaller than $\kappa$,
		\item A set of judgments, each of which is either a type equality judgment or a term equality judgment, listed in \cref{judgment1}. This is the set of \emph{axioms} of the $\kappa$-pretheory.
	\end{enumerate}
\end{definition}

The following definitions are of inductive nature:

\begin{definition}\label{contextdefinition}
	\begin{enumerate}
		\item A premise $\{x_\alpha:\Delta_\alpha\}_{\alpha<\lambda}$ is a \emph{context} if the judgment   
		\[ \{x_\beta:\Delta_\beta\}_{\beta<\alpha} \vdash \Delta_{\alpha} \, \type \]
		
		is a \textit{derived judgment} of $T$ for every $\alpha<\lambda$. Whenever we want to specify that a premise $ \Gamma $ is a context we will write $ \vdash \Gamma\, \context $.
		
		\item The judgment
		\[ \{x_\alpha : \Delta_\alpha \}_{\alpha<\lambda} \vdash \Delta \,\type \]
		is a \emph{well-formed judgment} of $ T $ if and only if $\{x_\alpha : \Delta_\alpha \}_{\alpha<\lambda}$ is a context.
		
		\item The judgment
		\[\{x_\alpha:\Delta_\alpha\}_{\alpha<\lambda} \vdash t:\Delta \]
		is \emph{well-formed} if and only if 
		\[ \{x_\alpha : \Delta_\alpha \}_{\alpha<\lambda} \vdash \Delta \, \type \] is a \textit{derived judgment} of $ T $.
	\end{enumerate}
\end{definition}

\begin{definition}\label{derivedrules}
	Let $T$ be a $\kappa$-pretheory. The set of \emph{derived judgments} of $T$ are the ones that can be derived from the following list of rules:
	\begin{enumerate}
		\item \[\inferrule{\Gamma \vdash A\, \type}{\Gamma\vdash A \equiv A}\]
		\label{drule1}
		
		\item  \[\inferrule{\Gamma \vdash t:A}{\Gamma\vdash t\equiv_A t}\]
		\label{drule2}
		
		\item \[ \inferrule{\Gamma\vdash A_1\equiv A_2}{\Gamma \vdash A_2\equiv A_1} \] \label{drule3}
		
		\item \[\inferrule{\Gamma\vdash t_1\equiv_A t_2}{\Gamma \vdash t_2\equiv_A t_1} \]
		\label{drule4}
		
		\item \[\inferrule{\Gamma \vdash A_1\equiv A_2 \\ \Gamma\vdash A_2\equiv A_3 }{\Gamma \vdash A_1\equiv A_3 }\]
		\label{drule5}
		
		\item \[ \inferrule{\Gamma \vdash t_1\equiv_A t_2 \\ \Gamma \vdash t_2\equiv_A t_3 }{\Gamma \vdash t_1\equiv_A t_3 }\]
		\label{drule6}
		
		\item \[\inferrule{\Gamma\vdash A_1\equiv A_2 \\ \Gamma \vdash t_1\equiv_{A_1}t_2  }{\Gamma\vdash  t_2\equiv_{A_2} t_1 }\]
		\label{drule8}
		
		\item \[\inferrule{\Gamma \vdash A_1\equiv A_2 \\ \Gamma \vdash t:A_1}{\Gamma \vdash t:A_2}\]
                \label{drule9}
       
		\item 
		\[\inferrule{\Gamma,\, \{ x_\delta:A_\delta \}_{\delta<\beta<\lambda} \vdash A_\beta\, \type }{\Gamma,\, \{ x_\alpha:A_\alpha \}_{\alpha<\lambda} \vdash x_\alpha:A_\alpha }\]
		\label{drule10}
		
		\item For any $B$ sort symbol with a well-formed introduction type judgment:
			  \[
			  \inferrule{ \{x_\alpha:A_\alpha\}_{\alpha<\lambda} \vdash B(x_\lambda) \,\type, \quad \vdash \Gamma \, \context, \quad \Gamma \vdash t_\alpha:B[t_{\alpha}|x_{\alpha}]}{\Gamma \vdash B(t_\lambda)\,\type}
			  \]    	
			  \label{drule11} 		   
		\item For any $F$ operator symbol with a well-formed introduction type element judgment:
		     \[
		    \inferrule{\Gamma,\, \{ x_\alpha:\Delta_\alpha \}_{\alpha<\lambda} \vdash F(x_\lambda):\Delta, \quad \Gamma \vdash t_\alpha:\Delta_\alpha[t_\alpha|x_\alpha] }{\Gamma,\, \{t_\alpha:\Delta_\alpha[t_\alpha|x_\alpha]\}_{\alpha<\lambda}\vdash F(t_\lambda):\Delta[t_\lambda\mid x_\lambda]}
		   \]
                   \label{drule12}
		
		\item \[ \inferrule{\vdash \Gamma\, \context \\ \{x_\alpha:\Delta_\alpha\}_{\alpha<\lambda} \vdash \Delta\equiv \Delta' \\\\ \quad \Gamma, \, t_\alpha:\Delta_\alpha[t_\beta\mid x_\beta]_{\beta<\alpha}, \, t_\alpha':\Delta_\alpha'[t_\beta'\mid x_\beta]_{\beta<\alpha} \vdash t_\alpha\equiv_{\Delta_\alpha[t_\beta|x_\beta]_{\beta<\alpha}}t_\alpha'}{\Gamma,\, \{t_\alpha:\Delta_\alpha[t_\beta\mid x_\beta]_{\beta<\alpha}\}_{\alpha<\lambda},\, \{t_\alpha':\Delta_\alpha'[t_\beta'\mid x_\beta]_{\beta<\alpha}\}_{\alpha<\lambda} \\\\ \vdash \Delta[t_\alpha|x_\alpha]_{\alpha<\lambda} \equiv \Delta'[t_\alpha'|x_\alpha]_{\alpha<\lambda}}
		\] 
		\label{drule13}

		\item \[
		\inferrule{\vdash \Gamma\, \context \\ \{x_\alpha:\Delta_\alpha\}_{\alpha<\lambda} \vdash t\equiv_\Delta t' \\\\ \quad \Gamma,\,  s_\alpha:\Delta_\alpha[s_\beta\mid x_\beta]_{\beta<\alpha},\, s_\alpha':\Delta_\alpha[s_\beta'\mid x_\beta]_{\beta<\alpha} \vdash s_\alpha \equiv_{\Delta_\alpha[s_\beta'\mid x_\beta]_{\beta<\alpha}}s_\alpha'}{\Gamma,\, \{s_\alpha:\Delta_\alpha[s_\beta\mid x_\beta]_{\beta<\alpha}\}_{\alpha<\lambda},\, \{s_\alpha':\Delta_\alpha[s_\beta'\mid x_\beta]_{\beta<\alpha}\}_{\alpha<\lambda} \\\\ \vdash t[s_\alpha\mid x_\alpha]_{\alpha<\lambda}\equiv_{\Delta[s_\alpha\mid x_\alpha]_{\alpha<\lambda}}t'[s_\alpha'\mid x_\alpha]_{\alpha<\lambda} }
		\]
		\label{drule14}
		
		\item If $ \{x_\alpha:\Delta_\alpha\}_{\alpha<\lambda} \vdash \Delta \equiv \Delta'$ is an axiom then \[
		\inferrule{ \{x_\alpha:\Delta_\alpha\}_{\alpha<\lambda} \vdash \Delta \,\type \\ \{x_\alpha:\Delta_\alpha\}_{\alpha<\lambda} \vdash \Delta'\,\type,}{ \{x_\alpha:\Delta_\alpha\}_{\alpha<\lambda} \vdash \Delta\equiv \Delta' }
		\]
		\item If $\{x_\alpha:\Delta_\alpha\}_{\alpha<\lambda} \vdash t\equiv_\Delta t'$ is an axiom then \[
		\inferrule{ \{x_\alpha:\Delta_\alpha\}_{\alpha<\lambda} \vdash t:\Delta  \\ \{x_\alpha:\Delta_\alpha\}_{\alpha<\lambda} \vdash t':\Delta }{ \{x_\alpha:\Delta_\alpha\}_{\alpha<\lambda} \vdash t \equiv_\Delta t' }
		\]    
	\end{enumerate}
\end{definition}   

We are now ready for the following:

\begin{definition}
	A $\kappa$-pretheory $T$ is \emph{well-formed} if all its rules are well-formed. A \emph{generalized $\kappa$-algebraic theory} is a well-formed $\kappa$-pretheory.
\end{definition}

\begin{remark}
	Observe that a generalized algebraic theory as defined by Cartmell \cite{cartmell1978} is the same as an $\omega$-generalized algebraic theory in our sense.
\end{remark}

%\subsection{Examples of \texorpdfstring{$ \kappa $}{Lg}-algebraic theories}

We introduce an important example of $ \kappa $-algebraic theories.

\begin{example} \label{gatcategories}
	Let $ Cat $ denote the $ \omega $-algebraic theory defined in the following way:
	
	\begin{enumerate}
		\item Type of objects: $ \vdash \, \obtyp \, \type $.
		\item Type of morphisms: $ x : \obtyp, \,y : \obtyp \vdash \, \homtyp(x,y) \, \type $.
		\item Composition operation: $ x : \obtyp, \, y : \obtyp, \, z : \obtyp, \, f:\homtyp(x,y), \, g:\homtyp(y,z) \vdash g\circ f:\homtyp(x,z) $.
		\item Identity operator: $ x: \obtyp \vdash \, \idx_x:\homtyp(x,x) $.
	\end{enumerate}
	
	Subject to the following axioms:
	
	\begin{center}
		$
		% left identity
		\inferrule{x : \obtyp, \, y:\obtyp, \, f:\homtyp(x,y)}{
			\idx_y\circ f\equiv f } \quad\quad
		% left identity
		\inferrule{ x : \obtyp ,\, y:\obtyp,f:\homtyp(x,y)}{f\circ\idx_x\equiv f}
		$ \\
		$
		\inferrule{
			x :\obtyp, \, y : :\obtyp, \, z :\obtyp, \, w :\obtyp, \, f:\homtyp(x,y),g:\homtyp(y,z),h:\homtyp(z,w)}{(h\circ g)\circ f\equiv h\circ (g\circ f)}
		$
	\end{center}
\end{example}

\subsection{Substitution property}

Let $T$ be a generalized $\kappa$-algebraic theory. Recall that given $\Delta$, $\{t_{\alpha}\}_{\alpha<\lambda}$ expressions and $\{x_{\alpha}\}_{\alpha<\lambda}$ variables, then the new expression $\Delta[e_\alpha|x_\alpha]_{\alpha<\lambda}$ denotes the substitution of variables by the expressions.

\begin{definition}
	Let $ \{x_\alpha:\Delta_\alpha\}_{\alpha<\lambda} \vdash \Delta $ be a derived judgment of $T$. We say that this judgment has the \emph{substitution property} if for every $\vdash \Gamma \, \context$ and expressions $\{t_\alpha\}_{\alpha<\lambda}$, such that for all $\alpha<\lambda$
	\[ \Gamma,\, \{t_\beta:\Delta_\beta[t_\gamma|x_\gamma]_{\gamma<\beta}\}_{\beta<\alpha} \vdash t_\alpha:\Delta_\alpha[t_\beta|x_\beta]_{\beta<\alpha} \]
	are derived rules, then
	\[  \Gamma \vdash \Delta[t_\alpha|x_\alpha]_{\alpha<\lambda} \]
	is a derived rule of $T$.
\end{definition}

In \cite{cartmell1978} it is proven that all derived judgment of a generalized algebraic theory satisfy the substitution property. This is done through a series of results that can be generalized to our setting. The proofs are omitted since they are the same as in the original reference.

\begin{lemma}
	If $\{x_\alpha:\Delta_\alpha\}_{\alpha<\lambda} \vdash \Delta $ is a derived judgment of $ T $, then the variables that appear in $ \Delta $ is a subset of $ \{x_\alpha\}_{\alpha<\lambda} $
\end{lemma}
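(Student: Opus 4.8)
The statement to prove is: if $\{x_\alpha:\Delta_\alpha\}_{\alpha<\lambda} \vdash \Delta$ is a derived judgment of $T$, then the variables occurring in $\Delta$ form a subset of $\{x_\alpha\}_{\alpha<\lambda}$. The natural approach is induction on the derivation of the judgment, using the inductive characterization of derived judgments from \cref{derivedrules}. That is, I would set up the induction so that the statement is proved simultaneously for all four kinds of judgments (type, element, type-equality, term-equality): in each case, ``the variables occurring in the right-hand side(s) are among the context variables $\{x_\alpha\}_{\alpha<\lambda}$.'' First I would fix notation: for an expression $e$, write $\mathrm{var}(e)$ for the (set of) variables appearing in the $\lambda$-sequence $e$, extended in the obvious way to judgments.

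The core of the argument is a case check over the rules \ref{drule1}--\ref{drule14} of \cref{derivedrules}. The reflexivity, symmetry, transitivity and type-conversion rules (\ref{drule1}--\ref{drule9}) are immediate, since they do not introduce any new expressions: the variables in the conclusion are among those of the premises, which by the inductive hypothesis lie in the context. Rule \ref{drule10} (the variable rule) is trivial, as the conclusion $\Gamma,\{x_\alpha:A_\alpha\}_{\alpha<\lambda}\vdash x_\alpha:A_\alpha$ has $x_\alpha$ explicitly in the context, and $\mathrm{var}(A_\alpha)$ is controlled by the inductive hypothesis applied to the premise. The substitution-closure rules \ref{drule13}, \ref{drule14} and the axiom rules are similar once one observes that substitution $\Delta[t_\alpha|x_\alpha]_{\alpha<\lambda}$ can only introduce variables already appearing in some $t_\alpha$, and each $t_\alpha$ is a term in the (new) context by hypothesis.

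The genuinely substantive cases are the type-formation and term-formation rules \ref{drule11} and \ref{drule12}, where one applies a sort symbol $B$ or operation symbol $F$ to a family of terms $\{t_\alpha\}$. Here the expression built is $B(t_\lambda)$ or $F(t_\lambda)$, whose variables are exactly $\bigcup_\alpha \mathrm{var}(t_\alpha)$; since each premise $\Gamma \vdash t_\alpha : \Delta_\alpha[t_\beta|x_\beta]$ is a derived judgment with context $\Gamma$, the inductive hypothesis gives $\mathrm{var}(t_\alpha)\subseteq \mathrm{var}(\Gamma)$, and hence the conclusion's variables lie in $\Gamma$ as required. The one point requiring care --- and the main obstacle --- is that \ref{drule11} and \ref{drule12} refer to \emph{well-formed} introduction judgments for $B$ and $F$; so strictly speaking the induction should be carried out under the standing assumption that $T$ is a generalized $\kappa$-algebraic theory, i.e. well-formed, so that the introduction judgments themselves are derived from this same list of rules and the inductive hypothesis can legitimately be invoked on them as well. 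One should therefore phrase the induction as a simultaneous induction on the height of derivations ranging over \emph{all} derived judgments of $T$ (including those used to witness well-formedness), which makes the argument go through cleanly. The remaining work --- checking that substitution of expressions into an expression cannot create variables out of thin air --- is a routine structural induction on expressions that I would state as a sub-lemma and not belabor.
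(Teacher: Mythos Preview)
Your proposal is correct and follows the standard approach: induction on derivations, checking each rule of \cref{derivedrules}. The paper itself does not spell out the argument at all --- it simply cites \cite[Lemma~1, Section~1.7]{cartmell1978} --- so you have in fact supplied more detail than the paper does, but the underlying strategy (Cartmell's) is the same as yours. Your concern about rules~\ref{drule11} and~\ref{drule12} is slightly overcautious: since $B$ and $F$ are bare alphabet symbols, $\mathrm{var}(B(t_\alpha)_{\alpha<\lambda}) = \bigcup_\alpha \mathrm{var}(t_\alpha)$ holds purely syntactically, and the inductive hypothesis on the premises $\Gamma \vdash t_\alpha : \ldots$ suffices without needing to invoke anything about the introduction judgment itself.
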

\begin{proof}
	See \cite[Lemma 1, Section 1.7]{cartmell1978}.
\end{proof}

\begin{lemma}
	\begin{enumerate}
		\item The premise of a derived judgment is a context.
		\item If $ \vdash \{x_\alpha: \Delta_\alpha \}_{\alpha<\lambda} \context $ then for $ \alpha<\lambda $, we have $$ \{x_\beta:\Delta_\beta\}_{\beta<\alpha} \vdash \Delta_\alpha \, \type$$
	\end{enumerate}
\end{lemma}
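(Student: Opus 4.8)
The plan is to prove the two items separately: the second is essentially a restatement of the definition of a context, while the first goes by induction on the structure of derivations, inspecting each rule of \cref{derivedrules}. For item (2), assume $\vdash \{x_\alpha:\Delta_\alpha\}_{\alpha<\lambda}\, \context$. By \cref{contextdefinition}(1) this means precisely that for every $\alpha<\lambda$ the judgment $\{x_\beta:\Delta_\beta\}_{\beta<\alpha} \vdash \Delta_\alpha\, \type$ is a derived judgment of $T$, which is exactly the asserted conclusion, so nothing further is needed; the point of isolating it is only to have a convenient reference later.

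For item (1), I would argue by induction on the height of a derivation of a judgment $\mathcal{J}$, showing that the premise $\Gamma$ occurring in $\mathcal{J}$ is a context. The rules of \cref{derivedrules} fall into three groups. The reflexivity, symmetry, transitivity and conversion rules (\ref{drule1}--\ref{drule9}), the congruence rules (\ref{drule13}, \ref{drule14}), and the two axiom rules all have the property that the premise of the conclusion either coincides with (a sub-premise of) the premise of one of the hypotheses — so the inductive hypothesis applies — or is a premise literally appearing in an axiom of $T$, in which case well-formedness of $T$ (every rule of a generalized $\kappa$-algebraic theory is well-formed, hence has a context as premise) does the job. For the variable rule (\ref{drule10}) the conclusion has premise $\Gamma,\{x_\alpha:A_\alpha\}_{\alpha<\lambda}$, while the hypotheses assert $\Gamma,\{x_\delta:A_\delta\}_{\delta<\beta}\vdash A_\beta\, \type$ for all $\beta<\lambda$; by the inductive hypothesis each of these premises is a context, and collecting these witnesses exhibits $\Gamma,\{x_\alpha:A_\alpha\}_{\alpha<\lambda}$ as a context via \cref{contextdefinition}(1).

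The substantive cases are the introduction rules \ref{drule11} and \ref{drule12}. For \ref{drule11} the conclusion $\Gamma\vdash B(t_\lambda)\, \type$ has premise $\Gamma$ and $\vdash\Gamma\, \context$ is among the hypotheses, so this is immediate. For \ref{drule12} the conclusion has premise $\Gamma,\{t_\alpha:\Delta_\alpha[t_\beta\mid x_\beta]_{\beta<\alpha}\}_{\alpha<\lambda}$; by induction $\Gamma$ is a context, and one must verify that for each $\alpha<\lambda$ the judgment $\Gamma,\{t_\beta:\Delta_\beta[\cdots]\}_{\beta<\alpha}\vdash \Delta_\alpha[t_\beta\mid x_\beta]_{\beta<\alpha}\, \type$ is derivable. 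Here one uses well-formedness of the operation-introduction rule for $F$: its premise $\{x_\alpha:\Delta_\alpha\}_{\alpha<\lambda}$ is a context, hence $\{x_\beta:\Delta_\beta\}_{\beta<\alpha}\vdash\Delta_\alpha\, \type$, and substituting the $t_\beta$ yields the wanted judgment. I expect this last step to be the main obstacle, since it is exactly an instance of stability of derivability under substitution — the substitution property, which is proved only afterwards. Following \cite{cartmell1978}, the resolution is to run a single simultaneous induction on derivation height establishing items (1), (2) and the substitution property together; the infinitary generalization of this bookkeeping is routine, the only change being that contexts, arities and substitutions are now indexed by ordinals below $\kappa$ rather than by natural numbers, so all the finiteness arguments of \cite{cartmell1978} are replaced by appeals to regularity of $\kappa$.
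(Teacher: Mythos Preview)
Your proposal is correct and follows exactly the approach of \cite[Lemma 2, Section 1.7]{cartmell1978}, which is all the paper's own proof does: it cites that reference without further detail. Your identification of the circularity issue for rule~\ref{drule12} (needing substitution before it is officially proved) and your resolution via a simultaneous induction is precisely how Cartmell handles it; the paper implicitly inherits this by deferring to the original.
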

\begin{proof}
	See \cite[Lemma 2, Section 1.7]{cartmell1978}.
\end{proof}

\begin{theorem}
	Every derived judgment of a generalized $\kappa$-algebraic theory has the substitution property.
\end{theorem}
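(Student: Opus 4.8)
The plan is to follow Cartmell's finitary argument \cite{cartmell1978} step by step, replacing ordinary induction by well-founded (transfinite) induction wherever the finiteness of contexts, of derivation trees, or of operation arities was used. Concretely, I would argue by well-founded induction on the derivation tree witnessing that $\{x_\alpha:\Delta_\alpha\}_{\alpha<\lambda}\vdash\Delta$ is a derived judgment of $T$; such a tree may now branch $\lambda$-fold (for instance through \ref{drule11} and \ref{drule12}), but it is still a well-founded tree, so the induction is legitimate. The two preliminary lemmas just recorded — that the variables of $\Delta$ lie among $\{x_\alpha\}_{\alpha<\lambda}$, and that every proper prefix of a context is again a context with each $\Delta_\alpha$ a type over its own prefix — are used throughout. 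One auxiliary fact I would establish first, by a routine induction on the (well-founded, though possibly $\kappa$-long) structure of expressions, is the substitution-composition identity $e[t'_\beta\mid y_\beta]_{\beta}[t_\alpha\mid x_\alpha]_{\alpha} = e[\,t'_\beta[t_\alpha\mid x_\alpha]_{\alpha}\mid y_\beta\,]_{\beta}$ (when the $y_\beta$ do not occur among the $x_\alpha$); this is exactly what lets one push substitutions through the sort- and operation-application rules.

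With this in hand, the inductive step splits on the last rule of the derivation. The structural and equality-closure rules \ref{drule1}--\ref{drule9} are immediate, since the conclusion's context coincides with each premise's context: one applies the inductive hypothesis to the premises and re-assembles the same rule. Rule \ref{drule10} is the crux of the base case: the conclusion $\{x_\alpha:A_\alpha\}_{\alpha}\vdash x_\alpha:A_\alpha$, once substituted, reads $\Gamma\vdash t_\alpha:A_\alpha[t_\beta\mid x_\beta]_{\beta<\alpha}$, which is literally one of the hypotheses imposed on the family $\{t_\alpha\}_{\alpha<\lambda}$. For \ref{drule11} and \ref{drule12} one uses the substitution-composition identity to rewrite $B(t'_\lambda)[t_\alpha\mid x_\alpha] = B(t'_\lambda[t_\alpha\mid x_\alpha])$, and likewise for an operation symbol $F$, and then combines the inductive hypothesis applied to the introduction judgment for $B$ (resp. $F$) with the side premises $\Gamma\vdash t_\alpha:\dots$. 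The congruence rules \ref{drule13}, \ref{drule14} are treated the same way, and the two axiom rules at the end of \cref{derivedrules} follow by applying the inductive hypothesis to the derivations of the two type/term judgments appearing as their premises and then invoking \ref{drule13}/\ref{drule14} with the given instantiation.

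The genuinely new difficulty — and where I expect to spend the most care — is making sense of, and proving well-formed, the substituted judgment $\Gamma\vdash\Delta[t_\alpha\mid x_\alpha]_{\alpha<\lambda}$ when $\lambda$ is a limit ordinal. By \cref{contextdefinition}, for this to even be a candidate derived judgment the premise $\Gamma,\{t_\alpha:\Delta_\alpha[t_\beta\mid x_\beta]_{\beta<\alpha}\}_{\alpha<\lambda}$ must be a well-formed context, whereas the hypotheses supply the element judgments $\Gamma,\{t_\beta:\dots\}_{\beta<\alpha}\vdash t_\alpha:\Delta_\alpha[\dots]$ only one $\alpha$ at a time. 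I would therefore run, inside the main induction, a secondary transfinite induction on $\alpha\leqslant\lambda$ showing that each prefix $\Gamma,\{t_\beta:\Delta_\beta[\dots]\}_{\beta<\alpha}$ is a context: at a successor $\alpha+1$ one applies the main inductive hypothesis to the strict sub-derivation of $\{x_\beta:\Delta_\beta\}_{\beta<\alpha}\vdash\Delta_\alpha\type$ (a proper subtree) to learn that $\Delta_\alpha[t_\beta\mid x_\beta]_{\beta<\alpha}$ is a type over the prefix, so the one-step extension is still a context; at a limit $\alpha$ one invokes precisely the second preliminary lemma, that a premise all of whose proper prefixes are contexts is itself a context. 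The delicate point is the interleaving of the two inductions — one must check that each sub-derivation called upon is genuinely a proper subtree (so the outer well-founded induction is entitled to use it) and that the $t_\alpha$-hypotheses, being stated for all $\alpha<\lambda$ simultaneously, are available at every stage. Finally, regularity of $\kappa$ is what guarantees that all the substituted contexts and expressions remain of length $<\kappa$, hence legitimate contexts and expressions of $T$, while the fixed enumeration of the variable set $V$ handles any renaming needed to avoid capture, exactly as in \cite{cartmell1978}.
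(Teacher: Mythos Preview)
Your proposal is correct and follows essentially the same approach as the paper, which simply defers to Cartmell \cite[1.7]{cartmell1978}: induction on derivations for type and term judgments, while noting that for the two equality judgment forms the substitution property is built directly into the derivation rules themselves (rules \ref{drule13} and \ref{drule14}). Your write-up is a more careful elaboration of the infinitary bookkeeping (well-founded derivation trees, the limit-ordinal case, regularity of $\kappa$) that the paper leaves implicit.
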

\begin{proof}
	The same proof as in \cite[1.7]{cartmell1978} applies. This goes by proving that each judgment has the substitution property. For the last two judgments in \cref{judgment1}, this is a consequence of rules (\ref{drule12}) and (\ref{drule13}) in \cref{derivedrules}. While for the first two it is done by induction on the derivations. It is shown that each derivation rule of \cref{derivedrules} preserve the substitution property. 
\end{proof}

This result has similar consequences of those in \cite{cartmell1978}. The proofs are analogous or the same. For us, it is only relevant to know that our generalized $\kappa$-algebraic theories are well-defined. That is:

\begin{proposition} \label{well-formed-type-intro:prop}
	The derived judgments of a generalized $\kappa$-algebraic theory are well-formed.
\end{proposition}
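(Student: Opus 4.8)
The plan is to mirror Cartmell's original argument almost verbatim, since the statement --- that every derived judgment of a generalized $\kappa$-algebraic theory is well-formed --- is exactly \cite[1.7]{cartmell1978} with ``finite'' everywhere replaced by ``of size $<\kappa$''. First I would recall what has to be shown: a $\kappa$-pretheory is well-formed if each of its sort-introduction and operation-introduction rules is well-formed in the sense of \cref{contextdefinition}, and a generalized $\kappa$-algebraic theory is by definition a well-formed $\kappa$-pretheory; the claim is that this hypothesis propagates, i.e. \emph{every} derived judgment $\Gamma \vdash J$ has $\Gamma$ a context (for type/term judgments) and, for term, type-equality and term-equality judgments, the associated type judgment derivable. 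So the proof is an induction over the derivation rules of \cref{derivedrules}.

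The key steps, in order: (1) Invoke the three preparatory lemmas/theorem already available in the excerpt --- the ``variables appearing in $\Delta$ are among $\{x_\alpha\}_{\alpha<\lambda}$'' lemma, the ``premise of a derived judgment is a context, and the subcontexts are well-formed type judgments'' lemma, and the substitution-property theorem --- as the three facts that make the induction go through. (2) Set up the induction on the length of a derivation of a judgment $J$, with induction hypothesis: every judgment derived by a strictly shorter derivation is well-formed. (3) Go through the rules of \cref{derivedrules} case by case. The reflexivity/symmetry/transitivity rules (\ref{drule1})--(\ref{drule8}) and the coercion rule (\ref{drule9}) are immediate because they do not change the premise $\Gamma$ and, where a type judgment is needed, it is supplied by the premise of the rule via the induction hypothesis. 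The variable rule (\ref{drule10}) is handled by the second preparatory lemma: a subcontext of a context is a context and yields the required type judgment $\{x_\beta:\Delta_\beta\}_{\beta<\alpha}\vdash \Delta_\alpha\,\type$. The two axiom-instantiation rules at the end are well-formed precisely because a generalized $\kappa$-algebraic theory was assumed well-formed, so the premise contexts of its axioms are contexts. (4) The genuinely substantive cases are the sort-introduction rule (\ref{drule11}), the operation-introduction rule (\ref{drule12}), and the two congruence-of-substitution rules (\ref{drule13})--(\ref{drule14}): here the conclusion's premise is obtained by a (possibly infinitary) substitution into the premise appearing in the rule, and to see that this is still a context one applies the substitution-property theorem to the derived judgments listed in the hypotheses of the rule, together with the induction hypothesis that those hypotheses are themselves well-formed. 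This is exactly where the generalization from finite to $\kappa$-small sequences has to be checked not to break anything: substitution is now simultaneous over a $<\kappa$-indexed family, but regularity of $\kappa$ guarantees the relevant derivations and contexts still have length $<\kappa$, so the inductive bookkeeping survives.

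The main obstacle --- really the only place requiring care rather than transcription --- is step (4): verifying that the infinitary substitution rules preserve well-formedness. In Cartmell's finite setting one can, if needed, peel off one variable at a time; in the $\kappa$-ary setting one must instead argue directly with the simultaneous-substitution version of the substitution-property theorem, and check that the transfinite sequence of hypotheses $\Gamma,\{t_\beta:\Delta_\beta[\dots]\}_{\beta<\alpha}\vdash t_\alpha:\Delta_\alpha[\dots]$, each well-formed by induction, can be assembled into the single conclusion context. Since the substitution-property theorem has already been established in the excerpt in exactly this generality, and since $\kappa$ regular ensures no length-$\kappa$ pathologies, this goes through; but it is the step I would write out in full rather than defer to \cite{cartmell1978}. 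All the remaining cases I would dispatch in a sentence each, noting they are identical to the finitary argument.

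\begin{proof}
The proof is by induction on the length of a derivation, following \cite[Section 1.7]{cartmell1978}; we indicate only the points where the infinitary generalization matters. By the previous lemmas, the premise of any derived judgment is a context, and for a context $\{x_\alpha:\Delta_\alpha\}_{\alpha<\lambda}$ one has $\{x_\beta:\Delta_\beta\}_{\beta<\alpha}\vdash \Delta_\alpha\,\type$ derivable for every $\alpha<\lambda$; this already handles the variable rule (\ref{drule10}) and, combined with the well-formedness hypothesis on the axioms of $T$, the two axiom-instantiation rules. Rules (\ref{drule1})--(\ref{drule9}) do not alter the premise and the required type judgments are among their hypotheses, so they preserve well-formedness by the induction hypothesis. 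For the introduction rules (\ref{drule11}), (\ref{drule12}) and the congruence rules (\ref{drule13}), (\ref{drule14}), the premise of the conclusion is obtained from the premise of the rule by a simultaneous substitution along a $\kappa$-small family of terms; applying the substitution property (valid by the theorem above for every derived judgment) to the hypotheses of the rule --- which are well-formed by the induction hypothesis --- shows that this premise is again a context, and likewise produces the associated type judgment where one is required. Regularity of $\kappa$ ensures that all the contexts and derivations involved remain of length $<\kappa$, so the induction closes. Hence every derived judgment of a generalized $\kappa$-algebraic theory is well-formed.
\end{proof}
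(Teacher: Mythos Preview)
Your proposal is correct and follows exactly the same approach as the paper: induction on derivations, deferring to Cartmell's original argument. The paper's own proof is in fact a single sentence (``Again, by induction on the derivations \cite[pp.~1.33]{cartmell1978}''), so your sketch is considerably more detailed than what appears there; in particular, your explicit identification of rules (\ref{drule11})--(\ref{drule14}) as the cases where the infinitary substitution property is invoked, and your remark on regularity of $\kappa$, are useful elaborations that the paper leaves implicit.
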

\begin{proof}
	Again, by induction on the derivations \cite[pp.~1.33]{cartmell1978}.
\end{proof}

Both the statement and proof of the next lemma are the same as The Derivation Lemma \cite[pp.~1.34]{cartmell1978}. The proof does not rely on the context size.

\begin{lemma}\label{derivationlemma}
	\begin{enumerate}
		\item Every derived type judgment of $ T $ is of the form $$ \{x_\beta:\Omega_\beta\}_{\beta<\mu} \vdash A(t_\alpha)_{\alpha<\lambda} $$ for some type symbol $ A $ with introductory rule
		$$ \{x_\alpha:\Delta_\alpha \}_{\alpha<\lambda} \vdash A(x_\alpha)_{\alpha<\lambda} \, \type $$
		and $ \{t_\alpha\}_{\alpha<\lambda} $ are expressions such that for all $ \alpha<\lambda $ the rule $$ \{ x_\beta: \Omega_\beta \}_{\beta<\mu} \vdash t_\alpha:\Delta_\alpha[t_\delta\mid x_\delta ]_{\delta<\alpha}. $$
		\item Every term element judgment of $ T $ is of the form  \[\{x_\beta:\Omega_\beta\}_{\beta<\mu} \vdash x_\beta: \Omega \]
		for some $ x_\beta $ and such that $ \{x_\beta:\Omega_\beta\}_{\beta<\mu} \vdash \Omega_\beta\equiv \Omega $, or is of the form \[ \{x_\beta:\Omega_\beta\}_{\beta<\mu} \vdash f(t_\alpha)_{\alpha<\lambda} : \Omega \] for some operator symbol $ f $ of $ T $ with introductory judgment of the form \[ \{x_\alpha:\Delta_\alpha \}_{\alpha<\lambda} \vdash f(x_\alpha)_{\alpha<\lambda}: \Delta\] such that for each $ \alpha<\lambda $ the rules   \[ \{x_\beta:\Omega_\beta\}_{\beta<\mu} \vdash t_\alpha: \Delta_\alpha[t_\delta\mid x_\delta]_{\delta<\alpha}\] and \[ \{x_\beta:\Omega_\beta\}_{\beta<\mu} \vdash \Delta[t_\alpha\mid x_\alpha]_{\alpha<\lambda} \equiv \Omega \] are derived rules of $ T $.
	\end{enumerate}
      \end{lemma}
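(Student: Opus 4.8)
The plan is to argue by induction on the length of a derivation in the system of \cref{derivedrules}, with a case analysis on the rule applied last, exactly as in Cartmell's proof \cite[pp.~1.34]{cartmell1978}. The point worth emphasising is that the generalisation to possibly infinite contexts and to infinitary sort and operation symbols is invisible to this argument: neither the derivation rules nor the substitution side conditions $[t_\delta\mid x_\delta]_{\delta<\alpha}$ make any use of finiteness, so Cartmell's argument transcribes without change. The only preliminary one needs is the classification of the rules of \cref{derivedrules} by the shape of their conclusion among the four forms of \cref{judgment1}: the type-introduction rule \cref{drule11} is the unique rule whose conclusion is a type judgment $\Gamma\vdash\Delta\,\type$; the conversion rule \cref{drule9}, the variable rule \cref{drule10} and the operator-application rule \cref{drule12} are the only rules whose conclusion is a term judgment $\Gamma\vdash t:\Delta$; every other rule, together with the two axiom rules, concludes a type- or term-equality judgment.

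Granting this, part (1) requires no induction at all: a derived type judgment must be the conclusion of an instance of \cref{drule11}, and that conclusion is exactly $\{x_\beta:\Omega_\beta\}_{\beta<\mu}\vdash A(t_\alpha)_{\alpha<\lambda}\,\type$ for a sort symbol $A$ with introduction rule $\{x_\alpha:\Delta_\alpha\}_{\alpha<\lambda}\vdash A(x_\alpha)_{\alpha<\lambda}\,\type$, subject to $\vdash\{x_\beta:\Omega_\beta\}_{\beta<\mu}\,\context$ and $\{x_\beta:\Omega_\beta\}_{\beta<\mu}\vdash t_\alpha:\Delta_\alpha[t_\delta\mid x_\delta]_{\delta<\alpha}$ for every $\alpha<\lambda$, which is precisely the asserted shape. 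In particular the type-introduction judgments of $T$ themselves need no separate treatment: they are recovered as the instances of \cref{drule11} with $\Gamma=\{x_\alpha:\Delta_\alpha\}_{\alpha<\lambda}$ and $t_\alpha=x_\alpha$.

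For part (2) the induction is genuine but very short. If the last rule is the variable rule \cref{drule10}, the conclusion has the first of the two listed forms, with the required equality $\{x_\beta:\Omega_\beta\}_{\beta<\mu}\vdash\Omega_\beta\equiv\Omega$ furnished by reflexivity of type equality \cref{drule1}; if it is the operator-application rule \cref{drule12}, the conclusion has the second form, the type-equality side condition again being reflexivity. If it is the conversion rule \cref{drule9}, so $\Gamma\vdash t:A_2$ is obtained from $\Gamma\vdash A_1\equiv A_2$ and $\Gamma\vdash t:A_1$, one applies the induction hypothesis to $\Gamma\vdash t:A_1$: either $t$ is a variable $x_\beta$ with $\Gamma\vdash\Omega_\beta\equiv A_1$, or $t=f(t_\lambda)$ with $\Gamma\vdash\Delta[t_\alpha\mid x_\alpha]_{\alpha<\lambda}\equiv A_1$, and composing either of these with $\Gamma\vdash A_1\equiv A_2$ by transitivity of type equality \cref{drule5} shows that $\Gamma\vdash t:A_2$ retains the required form. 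Throughout one uses freely, as elsewhere in this subsection, that the premise of a derived judgment is a context and that derived judgments are well-formed. I do not anticipate any real obstacle: the whole content is the classification of conclusions above plus one transitivity step in the conversion case. The only thing demanding care is purely clerical --- keeping the substitution indices $[t_\delta\mid x_\delta]_{\delta<\alpha}$ correctly aligned when reading the side conditions off \cref{drule11} and \cref{drule12}, and using the enumeration convention $V=\{x_\alpha\}_{\alpha<\kappa}$ to rename bound variables so that the introduction rule of $A$ (resp.\ of $f$) can be matched against the ambient context.
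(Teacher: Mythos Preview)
Your proposal is correct and follows the same approach as the paper, whose proof consists of the single line ``This follows from \cref{derivedrules} (\ref{drule11}) and (\ref{drule12}).'' You have simply unpacked this into a complete case analysis, correctly identifying that rule~\ref{drule11} is the only rule concluding a type judgment and that rules~\ref{drule9}, \ref{drule10}, \ref{drule12} are the only ones concluding a term judgment; your treatment of the conversion case via transitivity is the one nontrivial step the paper leaves implicit.
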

      \begin{proof}
        This follows from \cref{derivedrules} (\ref{drule11}) and (\ref{drule12}).
      \end{proof}

\subsection{Equivalence relation on judgments}

Throughout this section we work in a generalized $\kappa$-algebraic theory.  We first introduce a relation that allows us to identify contexts which express the same meaning, but differ on the variables that are used in them \cite[1.13]{cartmell1978}.

There is a relation defined on the judgments of the generalized $\kappa$-algebraic theory $T$.

\begin{definition}
	Let $ \{x_\alpha:\Delta_\alpha\}_{\alpha<\lambda}\vdash \Delta_\lambda \, \type$ and $\{x_\beta:\Omega_\beta\}_{\beta<\mu}\vdash \Omega_\mu \,\type $ be two type judgments of $ T $. We say that
	\[\{x_\alpha:\Delta_\alpha\}_{\alpha<\lambda}\vdash \Delta_\lambda \, \type  \approx \{x_\beta:\Omega_\beta\}_{\beta<\mu}\vdash \Omega_\mu \, \type\]
	if either:
	\begin{enumerate}
		\item Both ordinals are successor such that $\lambda=\mu=\nu+1$ and for all $\alpha \leq \nu$ we have
		\[\{x_\delta:\Delta_\delta\}_{\delta<\alpha} \vdash \Delta_\alpha \equiv \Omega_\alpha\]
		is a derived rule of $ T $.
		\item Both ordinals are limit ordinals with $ \lambda=\mu $ and for any successor ordinal $ \nu+1<\lambda $ we have \[\{x_\alpha:\Delta_\alpha\}_{\alpha<\nu}\vdash \Delta_\nu \, \type  \approx \{x_\beta:\Omega_\beta\}_{\beta<\nu}\vdash \Omega_\nu \, \type.\]
	\end{enumerate}
\end{definition}

\begin{lemma}\label{equivalenceofcontexts}
	The relation $\approx$ is an equivalence relation on type judgments of the theory $ T $.
\end{lemma}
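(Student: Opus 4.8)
\textbf{Proof proposal for \cref{equivalenceofcontexts}.}

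The plan is to verify the three properties of an equivalence relation---reflexivity, symmetry, and transitivity---by transfinite induction on the length of the contexts involved, mirroring the inductive structure of the definition of $\approx$ itself. The base case is when both contexts have successor length $\nu+1$, so that the relation unfolds to a list of type-equality judgments $\{x_\delta:\Delta_\delta\}_{\delta<\alpha} \vdash \Delta_\alpha \equiv \Omega_\alpha$ for $\alpha \leq \nu$; here we can simply invoke the reflexivity, symmetry, and transitivity rules for type equality, namely \cref{derivedrules} rules \ref{drule1}, \ref{drule3}, and \ref{drule5}. The limit-ordinal case then reduces, by definition, to the same statement for all smaller successor ordinals, which is handled by the induction hypothesis.

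First I would prove reflexivity: given a type judgment $\{x_\alpha:\Delta_\alpha\}_{\alpha<\lambda}\vdash \Delta_\lambda \, \type$, I need $\{x_\alpha:\Delta_\alpha\}_{\alpha<\lambda}\vdash \Delta_\lambda \, \type \approx \{x_\alpha:\Delta_\alpha\}_{\alpha<\lambda}\vdash \Delta_\lambda \, \type$. When $\lambda$ is a successor, each required judgment $\{x_\delta:\Delta_\delta\}_{\delta<\alpha} \vdash \Delta_\alpha \equiv \Delta_\alpha$ is derivable from \cref{derivedrules}\ref{drule1} applied to the well-formed type judgment $\{x_\delta:\Delta_\delta\}_{\delta<\alpha} \vdash \Delta_\alpha \, \type$ (which is a derived judgment because the premise of a derived judgment is a context, so each $\Delta_\alpha$ is a type in the appropriate prefix context). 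When $\lambda$ is a limit, apply the induction hypothesis at every smaller successor ordinal. Symmetry is entirely analogous, using \cref{derivedrules}\ref{drule3} in the successor case and the induction hypothesis in the limit case. For transitivity, suppose the first context is $\approx$ to the second and the second is $\approx$ to the third; in the successor case the three contexts all have the same length $\nu+1$ and the underlying prefix contexts agree (since a type-equality judgment $\Gamma \vdash \Delta \equiv \Delta'$ has a single premise $\Gamma$, equality of the two sides forces the premises to match), so we can chain the equalities $\{x_\delta:\Delta_\delta\}_{\delta<\alpha} \vdash \Delta_\alpha \equiv \Omega_\alpha$ and $\{x_\delta:\Delta_\delta\}_{\delta<\alpha} \vdash \Omega_\alpha \equiv \Xi_\alpha$ via \cref{derivedrules}\ref{drule5}; the limit case again defers to the induction hypothesis.

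The main subtlety---though it is more bookkeeping than a genuine obstacle---is making sure the induction is well-founded and that in the successor case the three contexts really do share identical prefixes, so that the transitivity rule for type equality applies on the nose. This is where one must be careful: the relation $\approx$ as stated only compares judgments of equal length whose contexts are built from equal variables (the enumeration of $V$ by canonical names, assumed at the start of the appendix, is precisely what guarantees this rigidity), and the type-equality judgments used in the definition all have one and the same premise context, so $\Delta_\alpha \equiv \Omega_\alpha$ and $\Omega_\alpha \equiv \Xi_\alpha$ are genuinely composable. Once this is observed, the proof is a routine transfinite induction using rules \ref{drule1}, \ref{drule3}, \ref{drule5} of \cref{derivedrules}, exactly as in \cite[1.13]{cartmell1978}, and I would simply remark that the argument is identical to the finitary case since nothing in it depends on the contexts being finite.
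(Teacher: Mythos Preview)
Your overall plan---transfinite induction on the context length, invoking rules \ref{drule1}, \ref{drule3}, \ref{drule5} of \cref{derivedrules}---is the right one, and it is essentially what the paper defers to when it cites \cite[1.13]{cartmell1978}. The paper's own proof is a one-liner: it declares the result ``immediate'' because of the canonical enumeration of variables, and otherwise points to Cartmell.

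However, there is a genuine gap in your treatment of symmetry and transitivity. You claim that ``the underlying prefix contexts agree'' and that ``the type-equality judgments used in the definition all have one and the same premise context,'' and you attribute this to the canonical naming of variables. That is not correct. Canonical names guarantee that the \emph{variables} $x_\delta$ coincide, but the \emph{types} $\Delta_\delta$ and $\Omega_\delta$ need not be syntactically equal---indeed, the whole point of $\approx$ is to identify contexts whose types are only provably equal, not literally identical. Concretely, the hypothesis $\Omega \approx \Xi$ gives you $\{x_\delta:\Omega_\delta\}_{\delta<\alpha} \vdash \Omega_\alpha \equiv \Xi_\alpha$, whereas to apply rule~\ref{drule5} you need this judgment in the context $\{x_\delta:\Delta_\delta\}_{\delta<\alpha}$. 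These are different premises, so the equalities are \emph{not} directly composable as you assert.

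The fix is to build the transport between $\approx$-related prefix contexts into the induction: at stage $\alpha$ you already know the shorter contexts are $\approx$-related, and one shows (using the substitution-style rules, essentially the content of \cref{remarktransitivityofproofs}) that a derived judgment in one such context is also derivable in the other. Only then can rules~\ref{drule3} and~\ref{drule5} be applied in a common context. This is precisely the inductive bookkeeping Cartmell carries out, and it is not bypassed merely by fixing variable names.
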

\begin{proof}	
	This is an immediate result since we have assumed canonical names for variables. Otherwise, we could repeat the argument as in \cite[1.13]{cartmell1978}.
	
	%The fact that this is reflexive follows from the principle of derivation from \cref{derivedrules} since in general $\Delta_\alpha\equiv \Delta_\alpha[x_\alpha|x_\alpha]$ for any $\alpha\leq \lambda$.
	
	%For symmetry, suppose that \[\{x_\alpha:\Delta_\alpha\}_{\alpha<\lambda}\vdash \Delta_\lambda \type  \approx \{y_\beta:\Omega_\beta\}_{\beta<\mu}\vdash \Omega_\mu \type.\] We need to show that $\{y_\delta:\Omega_\delta\}_{\delta<\alpha}\vdash \Omega_\alpha\equiv \Delta_\alpha[y_\alpha|x_\alpha]$ for all $\alpha\leq \lambda$.
	
	%By transfinite induction on $\alpha.$ The same proof from Lemma 1 in (\cite{cartmell1978}, 1.13) applies. The inductive step assumes that the property holds for any $ \delta<\alpha $, it is proven then that the property also holds for $ \alpha $. Therefore, the fact that $ \alpha $ is not necessarily a finite ordinal makes no difference in the argument.
	
	%The same argument of Lemma 1 in (\cite{cartmell1978}, 1.13) proves that $ \approx $ is a transitive relation. This concludes the proof that $ \approx $ is an equivalence relation.
	
\end{proof}

\begin{definition}
	Let $\{x_\alpha:\Delta_\alpha\}_{\alpha<\lambda}$ and $\{x_\beta:\Omega_\beta\}_{\beta<\mu}$ be two contexts. We say that
	\[\{x_\alpha:\Delta_\alpha\}_{\alpha<\lambda} \approx \{x_\beta:\Omega_\beta\}_{\beta<\mu}\]
	if and only if $ \lambda=\mu $ and for all $ \alpha<\lambda $
	\[\{x_\delta:\Delta_\delta\}_{\delta<\alpha}\vdash \Delta_\alpha  \, \type  \approx \{x_\gamma:\Omega_\gamma\}_{\gamma<\alpha}\vdash \Omega_\alpha \, \type\]
\end{definition}

It follows that this induces an equivalence relation on contexts. 

\begin{definition}	
	We say that \[ \{x_\alpha:\Delta_\alpha\}_{\alpha<\lambda}\vdash t:\Delta \approx  \{x_\beta:\Omega_\beta\}_{\beta<\mu}\vdash s:\Omega\] if and only if $ \{x_\alpha:\Delta_\alpha\}_{\alpha<\lambda}\vdash \Delta \,\type \approx  \{x_\beta:\Omega_\beta\}_{\beta<\mu}\vdash \Omega \,\type $ and $ \{x_\alpha:\Delta_\alpha\}_{\alpha<\lambda}\vdash t\equiv s  $.
\end{definition}

\begin{remark} \label{remarktransitivityofproofs}
	Let $\{x_\alpha:\Delta_\alpha\}_{\alpha<\lambda}$ and $\{x_\beta:\Omega_\beta\}_{\beta<\mu}$ be two contexts. Assume further that \[\{x_\alpha:\Delta_\alpha\}_{\alpha<\lambda} \approx \{x_\beta:\Omega_\beta\}_{\beta<\mu}.\] Then for all derived rules \[ \{x_\beta:\Omega_\beta\}_{\beta<\mu} \vdash \Omega, \]
	the rule \[ \{x_\alpha:\Delta_\alpha\}_{\alpha<\lambda} \vdash \Omega\]
	is also a derived rule.
\end{remark}

Regardless of its simplicity, this remark is useful in the next:

\begin{corollary}
	The relation $ \approx $ is an equivalence relation on judgments of the form $ \{x_\beta:\Delta_\beta\}_{\beta<\mu} \vdash t:\Delta.$
\end{corollary}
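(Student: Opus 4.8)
The statement to prove is that $\approx$ is an equivalence relation on judgments of the form $\{x_\beta:\Delta_\beta\}_{\beta<\mu} \vdash t:\Delta$. The plan is to reduce each of the three properties---reflexivity, symmetry, transitivity---to the corresponding property already established for the relation $\approx$ on type judgments (\cref{equivalenceofcontexts}) together with the fact that $\equiv$ on terms is an equivalence relation (rules \ref{drule2}, \ref{drule4}, \ref{drule6} of \cref{derivedrules}). Recall that by definition $\{x_\alpha:\Delta_\alpha\}_{\alpha<\lambda}\vdash t:\Delta \approx \{x_\beta:\Omega_\beta\}_{\beta<\mu}\vdash s:\Omega$ means both that the underlying type judgments satisfy $\{x_\alpha:\Delta_\alpha\}_{\alpha<\lambda}\vdash \Delta \,\type \approx \{x_\beta:\Omega_\beta\}_{\beta<\mu}\vdash \Omega \,\type$ and that $\{x_\alpha:\Delta_\alpha\}_{\alpha<\lambda}\vdash t\equiv s$. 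So each property for the term-judgment relation is a conjunction of a type-judgment statement and a term-equality statement.

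First, reflexivity: given a derived judgment $\Gamma \vdash t:\Delta$, its underlying type judgment $\Gamma\vdash\Delta\,\type$ is a derived judgment (by well-formedness, \cref{well-formed-type-intro:prop}, or more directly since the term judgment is well-formed it comes with $\Gamma \vdash \Delta\,\type$), hence $\Gamma\vdash\Delta\,\type \approx \Gamma\vdash\Delta\,\type$ by reflexivity of $\approx$ on type judgments; and $\Gamma\vdash t\equiv t$ by rule \ref{drule2}. So $\Gamma\vdash t:\Delta \approx \Gamma\vdash t:\Delta$. Symmetry is equally direct: if $\Gamma\vdash t:\Delta \approx \Gamma'\vdash s:\Omega$ then $\Gamma\vdash\Delta\,\type \approx \Gamma'\vdash\Omega\,\type$, hence $\Gamma'\vdash\Omega\,\type \approx \Gamma\vdash\Delta\,\type$ by symmetry of $\approx$ on type judgments; and $\Gamma\vdash t\equiv s$, so also $\Gamma'\vdash s\equiv t$ by rule \ref{drule4}---but one must check that this last rule is applied in the context $\Gamma'$ rather than $\Gamma$; this is where \cref{remarktransitivityofproofs} enters, since $\Gamma\approx\Gamma'$ implies any derived rule in one context is derivable in the other, so $\Gamma\vdash t\equiv s$ transfers to $\Gamma'\vdash t\equiv s$ and then rule \ref{drule4} gives $\Gamma'\vdash s\equiv t$.

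For transitivity, suppose $\Gamma_1\vdash t_1:\Delta_1 \approx \Gamma_2\vdash t_2:\Delta_2$ and $\Gamma_2\vdash t_2:\Delta_2 \approx \Gamma_3\vdash t_3:\Delta_3$. The type-judgment half follows from transitivity of $\approx$ on type judgments (\cref{equivalenceofcontexts}). For the term-equality half, we have $\Gamma_1\vdash t_1\equiv t_2$ and $\Gamma_2\vdash t_2\equiv t_3$; since $\Gamma_1\approx\Gamma_2$, \cref{remarktransitivityofproofs} lets us transfer $\Gamma_2\vdash t_2\equiv t_3$ to $\Gamma_1\vdash t_2\equiv t_3$, and then transitivity of term equality (rule \ref{drule6}) gives $\Gamma_1\vdash t_1\equiv t_3$. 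Hence $\Gamma_1\vdash t_1:\Delta_1 \approx \Gamma_3\vdash t_3:\Delta_3$.

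\textbf{Main obstacle.} The only real subtlety---hence the step I would be most careful with---is the bookkeeping of \emph{which context} each term-equality judgment lives in. The relation $\approx$ on term judgments allows the two sides to use genuinely different (though $\approx$-related) contexts, and the derivation rules for $\equiv$ (\ref{drule4}, \ref{drule6}) only combine judgments with the \emph{same} context. So each application of symmetry or transitivity of $\equiv$ must be preceded by a context-transfer step justified by \cref{remarktransitivityofproofs}. Once one is disciplined about always transferring into a fixed common context before invoking the term-equality rules, the proof is routine; indeed it is essentially identical to (and no harder than) the finitary argument in \cite{cartmell1978}, and the context size plays no role, so I would simply note that the proof is as in \textit{loc. cit.}.
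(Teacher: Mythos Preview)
Your proposal is correct and follows essentially the same approach as the paper: both reduce each property to the corresponding one for $\approx$ on type judgments together with the term-equality rules \ref{drule2}, \ref{drule4}, \ref{drule6}, using \cref{remarktransitivityofproofs} to handle the context-transfer bookkeeping that you rightly flag as the only subtlety. The paper's write-up is terser (it dispatches transitivity in a single sentence), but the underlying argument is identical to yours.
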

\begin{proof}
	Reflexivity is a consequence of \ref{drule2} from \cref{derivedrules}. Assume that $ \{x_\alpha:\Delta_\alpha\}_{\alpha<\lambda} \vdash t:\Delta \approx  \{x_\alpha:\Omega_\alpha\}_{\alpha<\lambda} \vdash s:\Omega$. Hence, the contexts satisfy $ \{x_\alpha:\Delta_\alpha\}_{\alpha<\lambda}\approx  \{x_\alpha:\Omega_\alpha\}_{\alpha<\lambda}$. Applying the symmetry of the relation $ \approx $ to contexts, and using \cref{remarktransitivityofproofs}, we see that $ \{x_\alpha:\Omega_\alpha\}_{\alpha<\lambda} \vdash t\equiv s $. Then we must have $ \{x_\alpha:\Omega_\alpha\}_{\alpha<\lambda} \vdash s:\Delta $ and $ \{x_\alpha:\Omega_\alpha\}_{\alpha<\lambda} \vdash \Omega \equiv \Delta $.  We can apply \ref{drule4} from \cref{derivedrules} to conclude that $ \{x_\alpha:\Omega_\alpha\}_{\alpha<\lambda} \vdash s\equiv t$, thus proving symmetry. Transitivity is a straightforward application of \cref{remarktransitivityofproofs}.
\end{proof}

\begin{definition}
	A \emph{morphism} between contexts $$ \langle t_\beta\rangle_{\beta<\mu}: \{x_\alpha:\Delta_\alpha\}_{\alpha<\lambda} \to \{x_\beta:\Omega_\beta\}_{\beta<\mu} $$ is $ \mu $-sequence of terms $ \{t_\beta\}_{\beta<\mu} $ such that for all $ \beta<\mu $ we have $$ \{x_\alpha:\Delta_\alpha\}_{\alpha<\lambda} \vdash t_\beta : \Omega_\beta[t_\gamma|x_\gamma]_{\gamma<\beta}.$$
\end{definition}

Just as in the finite case, with the substitution as composition and the obvious identity, it can be shown that contexts form a category with morphisms as defined above. This is called the \emph{category of realizations} of the theory $ T$.
The composition of \[ \langle t_\beta\rangle_{\beta<\mu}: \{x_\alpha:\Delta_\alpha\}_{\alpha<\lambda} \to \{x_\beta:\Omega_\beta\}_{\beta<\mu} \] and \[ \langle s_\delta\rangle_{\delta<\nu}: \{x_\beta:\Omega_\beta\}_{\beta<\mu} \to \{x_\delta:\Omega_\delta'\}_{\delta<\nu} \]
is the map $$ \langle s_\delta\rangle_{\delta<\nu} \circ \langle t_\beta \rangle_{\beta<\mu}: \{x_\alpha:\Delta_\alpha\}_{\alpha<\lambda} \to \{x_\delta:\Omega_\delta'\}_{\delta<\nu} $$
defined as the sequence $ \langle s_\delta[\langle t_\beta| x_\beta \rangle_{\beta<\mu}] \rangle_{\delta<\nu}.$

Using the previous relation $ \approx $ on contexts and rules we induce one on morphisms between contexts.
If we have morphisms $$ \langle t_\beta\rangle_{\beta<\mu}: \{x_\alpha:\Delta_\alpha\}_{\alpha<\lambda} \to \{x_\beta:\Omega_\beta\}_{\beta<\mu} \text{ and } \langle t_\beta'\rangle_{\beta<\mu}:\{x_\alpha:\Delta_\alpha'\}_{\alpha<\lambda} \to \{x_\beta:\Omega_\beta'\}_{\beta<\mu} $$ 
Then $$ \langle t_\beta\rangle_{\beta<\mu}\approx \langle t_\beta'\rangle_{\beta<\mu} $$ if and only if $$ \{x_\beta:\Omega_\beta\}_{\beta<\mu}\approx \{x_\beta':\Omega_\beta'\}_{\beta<\mu} $$ and for all $ \gamma<\mu $ $$ \{x_\beta:\Delta_\beta\}_{\beta<\mu} \vdash t_\gamma: \Omega_\gamma[t_{\gamma'}|x_{\gamma'}]_{\gamma'<\gamma} \approx \{x_\beta:\Delta_\beta'\}_{\beta<\mu} \vdash t_\gamma': \Omega_\gamma'[t_{\gamma'}'|x_{\gamma'}]_{\gamma'<\gamma}.$$

Unfolding the definition this means that $$ \{x_\beta:\Delta_\beta\}_{\beta<\mu} \vdash \Omega_\gamma[t_{\gamma'}|x_{\gamma'}]_{\gamma'<\gamma} \, \type \approx \{x_\beta:\Delta_\beta'\}_{\beta<\mu} \vdash \Omega_\gamma'[t_{\gamma'}'|x_{\gamma'}]_{\gamma'<\gamma} \,\type $$ and that $ \{x_\beta:\Delta_\beta\}_{\beta<\mu} \vdash t_\gamma\equiv t_\gamma' $ for all $ \gamma < \mu. $

The following remarks are results from \cite{cartmell1978} whose proofs are completely similar. However, it is important to make them explicit, since they imply that we can define a composition operation of equivalence classes of morphisms between contexts.

\begin{remark} \label{lemma3}
	Let $ \langle t_\beta\rangle_{\beta<\mu}: \{x_\alpha:\Delta_\alpha\}_{\alpha<\lambda} \to \{x_\beta:\Omega_\beta\}_{\beta<\mu} $ and $ \langle t_\beta'\rangle_{\beta<\mu}: \{x_\alpha:\Delta_\alpha\}_{\alpha<\lambda} \to \{x_\beta:\Omega_\beta'\}_{\beta<\mu} $ two morphisms between contexts with $ \langle t_\beta\rangle_{\beta<\mu} \approx \langle t_\beta'\rangle_{\beta<\mu}  $.
	\begin{enumerate}
		\item If $ \{x_\beta:\Omega_\beta\}_{\beta<\mu} \vdash \Omega \, \type $ and $ \{x_\beta:\Omega_\beta'\}_{\beta<\mu} \vdash \Omega' \, \type $ are derived judgment of the theory such that 
		$$ \{x_\beta:\Omega_\beta,\, x_\mu:\Omega\}_{\beta<\mu} \approx \{x_\beta:\Omega_\beta',\, x_\mu:\Omega'\}_{\beta<\mu} $$ 
		then 
		$$ \{x_\alpha:\Delta_\alpha,\,x_\mu:\Omega[t_\beta|x_\beta]_{\beta<\mu}\}_{\alpha<\lambda} \approx \{x_\alpha:\Delta_\alpha', \, x_\mu:\Omega'[t_\beta'|x_\beta']_{\beta<\mu} \}_{\alpha<\lambda} $$
		
		This follows by unwinding the relation $ \approx $ and applying the principle \ref{drule13} in \cref{derivedrules}. This simply means that we can extend contexts by a fresh variable. Moreover, there is a more general result: 
		
		For all $ \varepsilon > 0 $, if $ \{x_\beta:\Omega_\beta\}_{\beta<\mu+\varepsilon} $ and $ \{x_\beta:\Omega_\beta'\}_{\beta<\mu+\varepsilon}$ are contexts then
		$$ \{x_\alpha:\Delta_\alpha,\,x_\beta:\Omega_\beta[t_\gamma|x_\gamma]_{\gamma<\beta}\}_{\substack{\alpha<\lambda, \\ \mu \leq \beta <\mu+\varepsilon}} \approx \{x_\alpha:\Delta_\alpha', \, x_\beta:\Omega_\beta'[t_\gamma'|x_\gamma]_{\gamma<\beta} \}_{\substack{\alpha<\lambda, \\ \mu \leq \beta <\mu+\varepsilon}} $$

		\item \label{item2} If $ \{x_\beta:\Omega_\beta\}_{\beta<\mu} \vdash s:\Omega $ and $ \{x_\beta:\Omega_\beta'\}_{\beta<\mu} \vdash s':\Omega' $ are derived judgment such that $$ \{x_\beta:\Omega_\beta\}_{\beta<\mu} \vdash s \equiv_\Omega s' .$$ Then $$ \{x_\alpha:\Delta_\alpha\}_{\alpha<\lambda} \vdash s[t_\beta|x_\beta]_{\beta<\mu} \equiv_{\Omega[t_\beta|x_\beta]_{\beta<\mu}} s'[t_\beta'|x_\beta]_{\beta<\mu}. $$
		Observe that the principle \ref{drule14} from \cref{derivedrules} implies this result.
		
	\end{enumerate}
\end{remark}

\begin{remark} \label{lemma4}
	\begin{enumerate}
		\item Let $ \langle t_\beta\rangle_{\beta<\mu}: \{x_\alpha:\Delta_\alpha\}_{\alpha<\lambda} \to \{x_\beta:\Omega_\beta\}_{\beta<\mu} $ be a morphism between two contexts. If $$ \{x_\alpha:\Delta_\alpha\}_{\alpha<\lambda} \approx \{x_\alpha':\Delta_\alpha'\}_{\alpha<\lambda} \text{ and } \{x_\beta:\Omega_\beta\}_{\beta<\mu} \approx \{x_\beta':\Omega_\beta'\}_{\beta<\mu} $$ then $ \langle t_\beta\rangle_{\beta<\mu}: \{x_\alpha':\Delta_\alpha'\}_{\alpha<\lambda} \to \{x_\beta':\Omega_\beta'\}_{\beta<\mu} $ is also a morphism between these contexts.
		
		\item If we have a context $ \{x_\alpha:\Delta_\alpha\}_{\alpha<\lambda+1} $ and $ \{x_\alpha:\Delta_\alpha\}_{\alpha<\lambda}\approx \{x_\alpha':\Delta_\alpha'\}_{\alpha<\lambda} $ then we can extend the context $ \{x_\alpha':\Delta_\alpha'\}_{\alpha<\lambda} $ to $ \{x_\alpha':\Delta_\alpha'\}_{\alpha<\lambda+1} $ such that $ x_\alpha':\Delta_\alpha' $ is $ x_\lambda:\Delta_\lambda.$
	\end{enumerate}
	
\end{remark}

\begin{remark} \label{lemma5}
	Let $ \langle t_\beta\rangle_{\beta<\mu+1}: \{x_\alpha:\Delta_\alpha\}_{\alpha<\lambda} \to \{x_\beta:\Omega_\beta\}_{\beta<\mu+1} $ and $ \langle s_\beta\rangle_{\beta<\mu}: \{x_\alpha:\Delta_\alpha\}_{\alpha<\lambda} \to \{x_\beta:\Omega_\beta\}_{\beta<\mu} $ be morphisms between contexts. Then we have a morphism $$ \langle s_\beta\rangle_{\beta<\mu+1}: \{x_\alpha:\Delta_\alpha\}_{\alpha<\lambda} \to \{x_\beta:\Omega_\beta\}_{\beta<\mu+1} $$ where $ s_\mu\equiv t_\mu$, and such that $ \{s_\beta\}_{\beta<\mu+1}\approx \{t_\beta\}_{\beta<\mu+1} $.
\end{remark}

\subsection{The category of generalized \texorpdfstring{$ \kappa $}{Lg}-algebraic theories}

We construct a category where the objects are generalized $\kappa$-algebraic theories with maps \textit{interpretations}. This is analogous to the category that Cartmell constructs in \cite[1.11]{cartmell1978}, all the results can be copied from there to our setting. Since we work with different theories, the alphabets, expressions and rules are marked accordingly. If $ T $ is a theory then these sets are denoted $ Alp(T), \, Exp(T),\, Rul(T) $ respectively.

Let $ T $ and $ T' $ two generalized $\kappa$-algebraic theories. Let $ I:Alp(T) \to Exp(T') $ be a function. Using this function, we can define a \emph{preinterpretation} $ \widetilde{I}: Exp(T) \to Exp(T') $ by induction on the construction of expressions:

\begin{enumerate}
	\item If $ x\in V $ $$ \widetilde{I}(x):=x, $$
	\item If $ F\in Alp(T) $ $$ \widetilde{I}(F):=I(F), $$
	\item If $ L\in Alp(T) $ is an alphabet symbol and $ \{t_\alpha\}_{\alpha<\lambda} $ are expressions $$ \widetilde{I}(L(t_\alpha)_{\alpha<\lambda}):= I(L)( \widetilde{I}(t_\alpha))_{\alpha<\lambda}. $$
\end{enumerate}

\begin{definition} \label{interpretation}
	Given a preinterpretation $ \widetilde{I} $ we define a new function $ \widehat{I}:Rul(T)\to Rul(T') $.
	
	\begin{enumerate}
		\item $ \widehat{I}(\Gamma \vdash \Delta\, \type):= \widetilde{I}(\Gamma) \vdash \widetilde{I}(\Delta)\, \type $
		\item $\widehat{I}(\Delta \vdash t:\Delta):= \widetilde{I}(\Delta)\vdash \widetilde{I} (t):\widetilde{I}(\Delta)$
		\item $\widehat{I}(\Delta,\,\Delta' \vdash \Delta\equiv \Delta'):= \widetilde{I}(\Delta),\,\widetilde{I}(\Delta') \vdash \widetilde{I}(\Delta)\equiv \widetilde{I}(\Delta').$
		\item $\widehat{I}(\Delta,\, t,t':\Delta \vdash t\equiv_\Delta t'):=  \widetilde{I}(\Delta),\, \widetilde{I}(t),\widetilde{I}(t'):\widetilde{I}(\Delta) \vdash \widetilde{I}(t)\equiv_{\widetilde{I}(\Delta)} \widetilde{I}(t')$.
	\end{enumerate}
	
	This function is an \emph{interpretation} from $ T $ into $ T' $ if all introductory judgments and axioms of $ T $ are sent to derived rules of $ T' $, we will simply denote this as $ I: T\to T' $.
\end{definition}

Just as in \cite{cartmell1978} it is possible to prove that:

\begin{lemma}\label{lemma1112}
	If $ I $ is an interpretation from $ T $ to $ T' $, then it preserves the derived judgments of the theory $ T $.
\end{lemma}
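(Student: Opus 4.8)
The plan is to prove, by well-founded induction on derivations, the bookkeeping statement that for every judgment $J$ of $T$ derivable via the rules of \cref{derivedrules}, the judgment $\widehat I(J)$ is derivable in $T'$. A derivation in $T$ is a well-founded tree whose nodes are instances of the schemas of \cref{derivedrules}, where the branching over all $\alpha<\lambda$ in the sort- and operation-symbol rules \ref{drule11} and \ref{drule12} may be infinite; since such trees are well-founded, induction on them is legitimate, and it suffices to check that each schema is stable under $\widehat I$: if the images under $\widehat I$ of the premises are derivable in $T'$, then so is the image of the conclusion.

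First I would isolate the one substantive ingredient, a substitution lemma for the preinterpretation: for every expression $e$ of $T$, every $\lambda<\kappa$, variables $(x_\alpha)_{\alpha<\lambda}$ and expressions $(t_\alpha)_{\alpha<\lambda}$ of $T$,
\[ \widetilde I\big(e[t_\alpha\mid x_\alpha]_{\alpha<\lambda}\big) \;=\; \widetilde I(e)\big[\widetilde I(t_\alpha)\mid x_\alpha\big]_{\alpha<\lambda}. \]
This is proved by induction on the construction of $e$: a substituted variable $x_\alpha$ on the left becomes $\widetilde I(t_\alpha)$, matching the right, a variable not among the $x_\alpha$ is fixed by both sides, and the compound case $e=L(e_\beta)_{\beta<\mu}$ uses the third defining clause of $\widetilde I$ together with the inductive hypothesis and the fact that substitution commutes with the formation of compound expressions. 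The lemma then extends componentwise to contexts, to the four kinds of judgments of \cref{judgment1}, and to morphisms of contexts.

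Granting this, the schemas of \cref{derivedrules} are checked in turn, and almost all are immediate from the componentwise definition of $\widehat I$ in \cref{interpretation}: the equality-structural rules \ref{drule1}--\ref{drule9} and the variable rule \ref{drule10} translate verbatim, and the two axiom schemas are handled using precisely the hypothesis (part of \cref{interpretation}) that $I$ carries the axioms of $T$ to derived judgments of $T'$. The sort- and operation-symbol rules \ref{drule11}, \ref{drule12} are where the hypotheses on $I$ and the substitution lemma combine: by hypothesis $\widehat I$ of the introductory judgment $\{x_\alpha:\Delta_\alpha\}_{\alpha<\lambda}\vdash B(x_\alpha)_{\alpha<\lambda}\,\type$ (resp. the operation one) is a derived judgment of $T'$; applying to it the simultaneous substitution of the interpreted arguments $\widetilde I(t_\alpha)$ --- legitimate because every generalized $\kappa$-algebraic theory, in particular $T'$, satisfies the substitution property established earlier --- and rewriting by the substitution lemma yields exactly $\widehat I$ of the conclusion. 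The substitution-of-equals schemas \ref{drule13}, \ref{drule14} are treated identically, invoking the corresponding congruence rules of $T'$ in place of the substitution property.

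The step I expect to be the only real obstacle is the infinitary bookkeeping: one must verify that the $\lambda$-indexed simultaneous substitutions occurring on the $T$-side and on the $T'$-side range over the same variables and are genuinely simultaneous, so that the substitution lemma applies without a hidden reindexing, that the variable-renumbering convention of \cref{appendix-a} does not interfere, and that the induction is understood as well-founded (not on a finite height) because of the infinite branching in \ref{drule11}, \ref{drule12}. Once this is set up, the argument is the evident $\kappa$-analogue of the one in \cite[1.11]{cartmell1978}, and the componentwise form of the substitution lemma shows in addition that $I$ preserves morphisms of contexts, so that it induces a functor between the categories of realizations.
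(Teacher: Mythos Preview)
Your proposal is correct and follows essentially the same approach as the paper: induction on derivations, with the substitution identity $\widetilde I(e[t_\alpha\mid x_\alpha]_{\alpha<\lambda})=\widetilde I(e)[\widetilde I(t_\alpha)\mid x_\alpha]_{\alpha<\lambda}$ isolated as the key technical ingredient (this is exactly the paper's \cref{lemma1111}, stated immediately after and proved by the same induction on expressions you outline). The paper only spells out the case of rule~\ref{drule14} as an illustration and defers the rest to Cartmell, whereas you sketch the full case analysis and are more explicit about the well-foundedness of the infinitary derivation trees, but the structure of the argument is the same.
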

\begin{proof}
	From Lemma 2 \cite[pp 1.52]{cartmell1978}. To illustrate how this is done, we show that the derived judgment \cref{derivedrules} (\ref{drule14}) it is preserved by $ I $. Consider the derived judgment
	\[
	\inferrule{\vdash \Gamma\, \context \\ \{x_\alpha:\Delta_\alpha\}_{\alpha<\lambda} \vdash t\equiv_\Delta t' \\\\ \quad \Gamma,\,  s_\alpha:\Delta_\alpha[s_\beta\mid x_\beta]_{\beta<\alpha},\, s_\alpha':\Delta_\alpha[s_\beta'\mid x_\beta]_{\beta<\alpha} \vdash s_\alpha \equiv_{\Delta_\alpha[s_\beta'\mid x_\beta]_{\beta<\alpha}}s_\alpha'}{\Gamma,\, \{s_\alpha:\Delta_\alpha[s_\beta\mid x_\beta]_{\beta<\alpha}\}_{\alpha<\lambda},\, \{s_\alpha':\Delta_\alpha[s_\beta'\mid x_\beta]_{\beta<\alpha}\}_{\alpha<\lambda} \\\\ \vdash t[s_\alpha\mid x_\alpha]_{\alpha<\lambda}\equiv_{\Delta[s_\alpha\mid x_\alpha]_{\alpha<\lambda}}t'[s_\alpha'\mid x_\alpha]_{\alpha<\lambda} }
	\]
	in the theory $ T $. We may assume that the context $ \Gamma $ is of the form $ \{x_\beta:\Omega_\beta\}_{\beta<\mu} $, so we get 
	\[
	\inferrule{\vdash \{x_\beta:\Omega_\beta\}_{\beta<\mu} \, \context \\ \{x_\alpha:\Delta_\alpha\}_{\alpha<\lambda} \vdash t\equiv_\Delta t' \\\\ \quad \{x_\beta:\Omega_\beta\}_{\beta<\mu},\,  s_\alpha:\Delta_\alpha[s_\beta\mid x_\beta]_{\beta<\alpha},\, s_\alpha':\Delta_\alpha[s_\beta'\mid x_\beta]_{\beta<\alpha} \vdash s_\alpha \equiv_{\Delta_\alpha[s_\beta'\mid x_\beta]_{\beta<\alpha}}s_\alpha'}{\{x_\beta:\Omega_\beta\}_{\beta<\mu},\, \{s_\alpha:\Delta_\alpha[s_\beta\mid x_\beta]_{\beta<\alpha}\}_{\alpha<\lambda},\, \{s_\alpha':\Delta_\alpha[s_\beta'\mid x_\beta]_{\beta<\alpha}\}_{\alpha<\lambda} \\\\ \vdash t[s_\alpha\mid x_\alpha]_{\alpha<\lambda}\equiv_{\Delta[s_\alpha\mid x_\alpha]_{\alpha<\lambda}}t'[s_\alpha'\mid x_\alpha]_{\alpha<\lambda} }
	\]
	Applying the $ I $ to the hypothesis and by \cref{lemma1111} we obtain the following derivations in $ T'. $
	\begin{itemize}
		\item $ \vdash \{x_\beta:\widetilde{I}(\Omega_\beta)\}_{\beta<\mu} \, \context $,
		\item $ \{x_\alpha:\widetilde{I}(\Delta_\alpha)\}_{\alpha<\lambda} \vdash \widetilde{I}(t)\equiv_\Delta \widetilde{I}(t') $,
		\item $ \{x_\beta:\widetilde{I}(\Omega_\beta)\}_{\beta<\mu},\;  s_\alpha:\widetilde{I}(\Delta_\alpha)[\widetilde{I}(s_\beta)\mid x_\beta]_{\beta<\alpha},\;\widetilde{I}( s_\alpha'):\widetilde{I}(\Delta_\alpha)[\widetilde{I}(s_\beta')\mid x_\beta]_{\beta<\alpha} \vdash \widetilde{I}(s_\alpha) \equiv_{\widetilde{I}(\Delta_\alpha)[\widetilde{I}(s_\beta')\mid x_\beta]_{\beta<\alpha}}\widetilde{I}(s_\alpha'). $
	\end{itemize}
	
	We have all the requirements to use \cref{derivedrules} (\ref{drule14}) for the theory $ T' $. Thus,
	\[
	\inferrule{\vdash \{x_\beta:\widetilde{I}(\Omega_\beta)\}_{\beta<\mu} \, \context \\ \{x_\alpha:\widetilde{I}(\Delta_\alpha)\}_{\alpha<\lambda} \vdash \widetilde{I}(t)\equiv_\Delta \widetilde{I}(t') \\\\ \quad \{x_\beta:\widetilde{I}(\Omega_\beta)\}_{\beta<\mu},\;  s_\alpha:\widetilde{I}(\Delta_\alpha)[\widetilde{I}(s_\beta)\mid x_\beta]_{\beta<\alpha},\;\widetilde{I}( s_\alpha'):\widetilde{I}(\Delta_\alpha)[\widetilde{I}(s_\beta')\mid x_\beta]_{\beta<\alpha} \\\\ \vdash \widetilde{I}(s_\alpha) \equiv_{\widetilde{I}(\Delta_\alpha)[\widetilde{I}(s_\beta')\mid x_\beta]_{\beta<\alpha}}\widetilde{I}(s_\alpha')}{\{x_\beta:\widetilde{I}(\Omega_\beta)\}_{\beta<\mu},\, \{\widetilde{I}(s_\alpha):\widetilde{I}(\Delta_\alpha)[\widetilde{I}(s_\beta)\mid x_\beta]_{\beta<\alpha}\}_{\alpha<\lambda},\, \{\widetilde{I}(s_\alpha'):\widetilde{I}(\Delta_\alpha)[\widetilde{I}(s_\beta')\mid x_\beta]_{\beta<\alpha}\}_{\alpha<\lambda} \\\\ \vdash \widetilde{I}(t)[\widetilde{I}(s_\alpha)\mid x_\alpha]_{\alpha<\lambda}\equiv_{\widetilde{I}(\Delta)[\widetilde{I}(s_\alpha)\mid x_\alpha]_{\alpha<\lambda}}\widetilde{I}(t')[\widetilde{I}(s_\alpha')\mid x_\alpha]_{\alpha<\lambda} }
	\]
	is a derived rule of $ T' $. Therefore, the rule is preserved by the interpretation $ I $.
	
\end{proof}

The following lemma fills the gap:

\begin{lemma} \label{lemma1111}
	If $ I $ is an interpretation of $ T $ into $ T' $ and we have expressions $ f $ and $ \{t_\alpha\}_{\alpha<\lambda} $ on the alphabet $ A_T $, then $$ \widetilde{I}(f[t_\alpha\mid x_\alpha]_{\alpha<\lambda})= \widetilde{I}(f)[\widetilde{I}(t_\alpha)\mid x_\alpha]_{\alpha<\lambda} .$$
\end{lemma}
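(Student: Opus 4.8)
\textbf{Proof plan for \cref{lemma1111}.} The statement is a purely syntactic compatibility: the preinterpretation $\widetilde{I}$ commutes with simultaneous substitution. The plan is to prove this by structural induction on the expression $f$, following exactly the three clauses of the inductive definition of $\widetilde{I}$ and of expressions over the alphabet $A_T$.

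First I would treat the base cases. If $f = x_\beta$ is a variable: if $x_\beta$ is among the $x_\alpha$ being substituted, say $\beta = \alpha_0$, then $f[t_\alpha \mid x_\alpha]_{\alpha<\lambda} = t_{\alpha_0}$, so $\widetilde{I}(f[t_\alpha\mid x_\alpha]) = \widetilde{I}(t_{\alpha_0})$; on the other side, $\widetilde{I}(f) = x_{\alpha_0}$ by definition of $\widetilde{I}$ on variables, and substituting gives $\widetilde{I}(t_{\alpha_0})$ as well. If $x_\beta$ is not among the substituted variables, both sides equal $x_\beta$ (using that $\widetilde{I}$ fixes variables, and that substitution is the identity on variables outside its domain). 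If $f = F \in A_T$ is a bare alphabet symbol, then substitution does nothing (there are no variables in $F$), so the left side is $\widetilde{I}(F) = I(F)$, and the right side is $\widetilde{I}(F)$ with a vacuous substitution, i.e. $I(F)$ — here one should note that the $x_\alpha$ do not occur in $I(F)$, which holds because $\widetilde I$ only ever produces expressions built from the original expressions $t_\alpha$ and fresh structure, but more simply because a preinterpretation of a constant symbol is a fixed expression and the bound-variable conventions ensure no clash; I would phrase this carefully using the canonical-names convention from \cref{appendix-a}.

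The inductive step is the main case: $f = L(e_\gamma)_{\gamma<\mu}$ for $L \in A_T$ and expressions $\{e_\gamma\}_{\gamma<\mu}$. Here substitution distributes over the arguments, $f[t_\alpha\mid x_\alpha]_{\alpha<\lambda} = L(e_\gamma[t_\alpha\mid x_\alpha]_{\alpha<\lambda})_{\gamma<\mu}$, and then by definition of $\widetilde{I}$ on compound expressions,
\[
\widetilde{I}\big(L(e_\gamma[t_\alpha\mid x_\alpha])_{\gamma<\mu}\big) = I(L)\big(\widetilde{I}(e_\gamma[t_\alpha\mid x_\alpha])\big)_{\gamma<\mu}.
\]
Applying the induction hypothesis to each $e_\gamma$, this equals $I(L)\big(\widetilde{I}(e_\gamma)[\widetilde{I}(t_\alpha)\mid x_\alpha]\big)_{\gamma<\mu}$, which is precisely $\big(I(L)(\widetilde{I}(e_\gamma))_{\gamma<\mu}\big)[\widetilde{I}(t_\alpha)\mid x_\alpha]_{\alpha<\lambda} = \widetilde{I}(f)[\widetilde{I}(t_\alpha)\mid x_\alpha]_{\alpha<\lambda}$, again using that substitution distributes over the arguments of a compound expression.

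The only genuine subtlety — and the step I expect to need the most care — is the induction itself: expressions can now be $\lambda$-sequences for $\lambda$ an arbitrary ordinal below $\kappa$, so the ``structural induction on expressions'' is really a transfinite induction on the well-founded subexpression relation, and one must check that this relation is indeed well-founded in the infinitary setting (it is, since each $e_\gamma$ is a proper initial-segment-indexed constituent and the construction of $A_T^*$ is itself inductive). Apart from that, this is the same proof as in Cartmell \cite{cartmell1978}, and I would remark as much rather than belabor the routine verification.
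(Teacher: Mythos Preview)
Your proposal is correct and follows essentially the same approach as the paper: structural induction on the expression $f$, with the compound case $f = L(e_\gamma)_{\gamma<\mu}$ as the main step, reducing to the induction hypothesis on the $e_\gamma$ and the fact that substitution distributes over arguments. The paper's proof is in fact terser than yours --- it only writes out the compound case explicitly and cites Cartmell for the rest --- so your treatment of the base cases and your remark on the well-foundedness of the subexpression relation in the infinitary setting are welcome additions rather than deviations.
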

\begin{proof}
	This is done by induction on the length of $ f $ in \cite[Lemma 1, pp. 1.52]{cartmell1978}. The interesting case is when $ f= F(e_\beta)_{\beta<\mu} $ for some $ F $ in the alphabet and expressions $ \{e_\beta\}_{\beta<\mu} $. We assume inductively the result true for the expressions $ \{e_\beta\}_{\beta<\mu} $. Then we have:
	\begin{align*}
		\widetilde{I}(f[t_\alpha\mid x_\alpha]_{\alpha<\lambda}) &= \widetilde{I}(F(e_\beta[t_\alpha\mid x_\alpha]_{\alpha<\lambda})_{\beta<\mu} ) \\
		&= I(F)(\widetilde{I}(e_\beta[t_\alpha\mid x_\alpha]_{\alpha<\lambda}) )_{\beta<\mu} \\
		&= I(F)(\widetilde{I}(e_\beta)[\widetilde{I}(t_\alpha)\mid x_\alpha]_{\alpha<\lambda})_{\beta<\mu}, \text{ by induction hypothesis}\\
		&= I(F)(\widetilde{I}(e_\beta))_{\beta<\mu}[\widetilde{I}(t_\alpha)\mid x_\alpha]_{\alpha<\lambda} \\
		&= \widetilde{I}(F(e_\beta)_{\beta<\mu})[\widetilde{I}(t_\alpha)\mid x_\alpha]_{\alpha<\lambda} \\
		&= \widetilde{I}(f)[\widetilde{I}(t_\alpha)\mid x_\alpha]_{\alpha<\lambda}
	\end{align*}
\end{proof}

There is also a notion of composition of interpretations: If $ I:S\to T $ and $ J: T \to U $ are interpretations, then there is an interpretation $ J\circ I:S\to U $ that is defined in the obvious way. It is also easy to infer what is the identity for this composition. A crucial result to define these compositions is:

\begin{lemma}
	If $ I:S\to T $ and $ J: T \to U $ are interpretations then $ \widetilde{J \circ I}(e)= \widetilde{J}(\widetilde{I}(e)) $
\end{lemma}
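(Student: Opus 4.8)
The statement to prove is the compositionality of preinterpretations: if $I\colon S\to T$ and $J\colon T\to U$ are interpretations, then $\widetilde{J\circ I}(e) = \widetilde J(\widetilde I(e))$ for every expression $e\in Exp(S)$. This is the analogue of the corresponding lemma in Cartmell \cite{cartmell1978}, and the proof is a routine structural induction on the construction of the expression $e$. The plan is to first recall that $J\circ I$ is, by definition, the preinterpretation associated to the function $Alp(S)\to Exp(U)$ given by $F\mapsto \widetilde J(I(F))$, and then verify the identity case by case following the three clauses defining $Exp(S)$.

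\textbf{Key steps.} First, the base cases. If $e = x$ is a variable, then $\widetilde{J\circ I}(x) = x = \widetilde J(x) = \widetilde J(\widetilde I(x))$ directly from clause (1) of the definition of a preinterpretation. If $e = F$ is an alphabet symbol of $S$, then $\widetilde{J\circ I}(F) = (J\circ I)(F) = \widetilde J(I(F)) = \widetilde J(\widetilde I(F))$, using clause (2) and the definition of the composite interpretation. Second, the inductive step: suppose $e = F(e_\alpha)_{\alpha<\lambda}$ for $F\in Alp(S)$ and expressions $\{e_\alpha\}_{\alpha<\lambda}$, and assume inductively that $\widetilde{J\circ I}(e_\alpha) = \widetilde J(\widetilde I(e_\alpha))$ for all $\alpha<\lambda$. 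Then we compute, using clause (3) repeatedly together with \cref{lemma1111} (which tells us how $\widetilde J$ interacts with the ``application'' constructor $I(F)(-)_{\alpha<\lambda}$ once we know $I(F)$ is itself built up from $Alp(T)$ symbols and variables):
\begin{align*}
  \widetilde{J\circ I}(F(e_\alpha)_{\alpha<\lambda})
  &= (J\circ I)(F)\big(\widetilde{J\circ I}(e_\alpha)\big)_{\alpha<\lambda} \\
  &= \widetilde J(I(F))\big(\widetilde J(\widetilde I(e_\alpha))\big)_{\alpha<\lambda} \\
  &= \widetilde J\big(I(F)(\widetilde I(e_\alpha))_{\alpha<\lambda}\big) \\
  &= \widetilde J\big(\widetilde I(F(e_\alpha)_{\alpha<\lambda})\big),
\end{align*}
where the second equality uses the inductive hypothesis and the definition of $J\circ I$, the third equality uses \cref{lemma1111} applied to the interpretation $J$ and the expression $I(F)$ substituted into by the $\widetilde I(e_\alpha)$ (more precisely, $\widetilde J$ applied to an application-expression distributes over the arguments), and the last equality is clause (3) for $\widetilde I$.

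\textbf{Main obstacle.} There is no serious obstacle here; the only point requiring a moment's care is the third equality in the display above. One must observe that $\widetilde J\big(G(t_\alpha)_{\alpha<\lambda}\big) = \widetilde J(G)\big(\widetilde J(t_\alpha)\big)_{\alpha<\lambda}$ holds not just when $G$ is a single alphabet symbol (clause (3)) but, by a straightforward secondary induction on the length of the expression $G$, whenever $G$ is an arbitrary expression of $T$ — exactly the content that \cref{lemma1111} is designed to package (with $G = I(F)$). Once this is in hand, the computation above goes through verbatim. I would state the proof as ``by induction on the length of $e$, the only nontrivial case being the application constructor, which follows by \cref{lemma1111}'', and leave the routine verification to the reader, exactly as the excerpt does for its analogous lemmas.
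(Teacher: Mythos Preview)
Your proposal is correct and takes essentially the same approach as the paper: the paper's proof is simply ``by induction on the expression $e$'' with a reference to Cartmell, and you have supplied exactly that induction in detail, correctly identifying \cref{lemma1111} as the ingredient needed for the application case.
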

\begin{proof}
	This is by induction of the expression $ e $ see \cite[Lemma 3, pp. 1.55]{cartmell1978}.
\end{proof}

We can define the category $ \kgat $ of $ \kappa $-generalized algebraic theories. There is an equivalence relation on interpretations between two theories $ T $ and $ T' $. If $ I,\, J:T\to T' $ are two interpretations, then $ I\approx J $ if an only if for every rule $ r\in R_U $ we have $ I(r)\approx J(r) $ in the theory $ T' $.

\begin{lemma} \label{lemma11114}
	If $ I $ and $ J $ are interpretations from $ T $ to $ T' $ such that $ I\approx J$ then for all type and element judgments $ \mathcal{J} $ of $ U $, $ \widehat{I}(\mathcal{J})\approx \widehat{J}(\mathcal{J})$ in $ T' .$
\end{lemma}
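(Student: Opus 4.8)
The plan is to prove this by induction on the derivation of $\mathcal{J}$ in the source theory, run in parallel for the two interpretations. By \cref{lemma1112} both $\widehat{I}$ and $\widehat{J}$ carry derived judgments to derived judgments of $T'$, so $\widehat{I}(\mathcal{J})$ and $\widehat{J}(\mathcal{J})$ are legitimate derived judgments and the only content is that they are related by $\approx$. The two tools that do all the work are \cref{lemma1111} (the preinterpretations $\widetilde{I}$ and $\widetilde{J}$ commute with simultaneous substitution of expressions for variables) and the structural remarks \cref{lemma3}, \cref{lemma4} and \cref{lemma5}, together with \cref{remarktransitivityofproofs}, which record how $\approx$ interacts with context extension and substitution. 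For the base of the induction one uses the hypothesis $I \approx J$ directly: the introductory judgments of the sort and operation symbols and the axioms of $T$ are exactly the rules $r$ for which $I(r) \approx J(r)$, and every derivation bottoms out in an application of \ref{drule11}, \ref{drule12} or one of the two axiom rules of \cref{derivedrules} to such an $r$.

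For the inductive step I would go through the list in \cref{derivedrules}. The first observation is that for a type judgment the relation $\widehat{I}(\mathcal{J}) \approx \widehat{J}(\mathcal{J})$ says precisely that $\widetilde{I}$ and $\widetilde{J}$ produce the same context up to derivable type-equality and the same conclusion up to derivable type-equality (likewise with term-equality for element judgments). Hence the purely logical rules \ref{drule1}--\ref{drule10} and \ref{drule13}--\ref{drule14} are handled uniformly: since derivable $\equiv$-judgments are themselves closed under all the rules of \cref{derivedrules}, if the images of the premises are $\approx$ then so are the images of the conclusion — one transports the $\equiv$-witnesses supplied by the inductive hypothesis through the same rule, using \cref{remarktransitivityofproofs} to move them between $\approx$-equivalent contexts. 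The genuinely non-formal cases are the sort and operation rules \ref{drule11} and \ref{drule12}, where one substitutes terms $t_\alpha$ into the introductory judgment of $B$ or $F$. There one rewrites $\widetilde{I}(B(t_\lambda)) = \widetilde{I}(B)(\widetilde{I}(t_\alpha))_{\alpha<\lambda}$ via \cref{lemma1111}, applies the inductive hypothesis to the element judgments $\Gamma \vdash t_\alpha : \Delta_\alpha[t_\beta\mid x_\beta]_{\beta<\alpha}$ to learn that the context morphisms $\langle\widetilde{I}(t_\alpha)\rangle$ and $\langle\widetilde{J}(t_\alpha)\rangle$ are $\approx$-equivalent, and then invokes the substitution-compatibility of $\approx$ from \cref{lemma3} and \cref{lemma5} to conclude $\widetilde{I}(B(t_\lambda)) \equiv \widetilde{J}(B(t_\lambda))$ in the common image context, and symmetrically for $F$.

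The main obstacle I expect is exactly this last point: the bookkeeping around simultaneous substitution, namely verifying that $\langle\widetilde{I}(t_\alpha)\rangle$ and $\langle\widetilde{J}(t_\alpha)\rangle$ are $\approx$-equivalent as context morphisms (which needs the inductive hypothesis componentwise together with the fact that $\approx$-equivalent contexts share their derived rules, \cref{remarktransitivityofproofs} and \cref{lemma4}) and that substituting $\approx$-equivalent morphisms into $\approx$-equivalent judgments yields $\approx$-equivalent judgments. Once this substitution lemma for $\approx$ is secured the induction is routine. The argument is the direct transcription of Cartmell's proof in \cite[\S 1.11, Lemma~4]{cartmell1978} to the present $\kappa$-infinitary setting; passing to infinite contexts changes nothing, since at limit ordinals $\approx$ is defined to agree with all proper initial segments and the inductive hypothesis there is applied to those segments.
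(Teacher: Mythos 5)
Your proposal is correct and follows essentially the same route as the paper, which simply defers to Cartmell (the paper cites \cite[Lemma~1, Section~1.14]{cartmell1978}); you have spelled out the induction on derivations that this citation stands for, with the key cases (\ref{drule11}, \ref{drule12}) handled via \cref{lemma1111} and the substitution-compatibility of $\approx$ from \cref{lemma3}--\cref{lemma5}, exactly as in the finitary original, and the limit-ordinal stages dispatched by the definition of $\approx$ on initial segments. The only cosmetic points are that your Cartmell reference differs from the paper's, and that a fully written-out induction would also need to carry a companion hypothesis for the derived $\equiv$-judgments (on which $\approx$ is not defined), but neither affects the correctness of the argument.
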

\begin{proof}
	See \cite[Lemma 1, Section 1.14]{cartmell1978}.
\end{proof}

Then \cref{lemma11114} implies that the compositions as given is well-defined. Finally, in order to get the correct morphisms, we need to know that the equivalence relation on interpretations is compatible with the composition. Another advantageous consequence is that this it gives us criteria to establish whether two interpretations are equivalent.

\begin{corollary} \label{corollary2114}
	If $ I $ and $ J $ are interpretations from $ T $ to $ T' $ then $ I\approx J$ if and only if for any type element judgment $ r $, $ \widehat{I}(r)\approx \widehat{J}(r) $.
\end{corollary}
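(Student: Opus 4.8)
The statement to prove is \cref{corollary2114}: for interpretations $I, J : T \to T'$, we have $I \approx J$ if and only if $\widehat{I}(r) \approx \widehat{J}(r)$ for every type or element judgment $r$ of $T$.

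\medskip

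The plan is to extract this from \cref{lemma11114} (stated in the excerpt as the lemma preceding \cref{corollary2114}, labeled \texttt{lemma11114}) together with the \emph{definition} of the relation $\approx$ on interpretations. Recall that $I \approx J$ was defined to mean: for every rule $r \in R_T$ (i.e.\ every introductory judgment and axiom in the presentation of $T$), $\widehat{I}(r) \approx \widehat{J}(r)$ in $T'$. So one direction — the ``only if'' read with ``for all type/element judgments'' — is exactly \cref{lemma11114}: if $I \approx J$ then $\widehat I(\mathcal J) \approx \widehat J(\mathcal J)$ for \emph{all} type and element judgments $\mathcal J$ of $T$ (not merely the ones in the presentation). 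The other direction is trivial: if $\widehat I (r) \approx \widehat J(r)$ holds for \emph{all} type and element judgments $r$ of $T$, then in particular it holds for the introductory type and element judgments of the sort and operation symbols of $T$, which are among the defining data.

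\medskip

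So concretely I would argue as follows. First I would record the forward implication: assume $I \approx J$. By definition this gives $\widehat I(r) \approx \widehat J(r)$ for each defining rule $r$ of $T$, and then \cref{lemma11114} upgrades this to all type and element judgments $r$ of $T$; this is precisely what is needed. Second, for the converse, assume $\widehat I(r) \approx \widehat J(r)$ for every type or element judgment $r$ of $T$. The introductory judgments for sort symbols are type judgments and the introductory judgments for operation symbols are element judgments, so the hypothesis applies to them directly. For the equality axioms of $T$ (type equality or term equality judgments), one uses that $\approx$ was only ever \emph{defined} on type and element judgments, and that a derived rule of $T'$ of equality form is automatically of the required shape; checking that $\widehat I$ and $\widehat J$ send each equality axiom to derived equality rules of $T'$ that are ``$\approx$-related'' reduces, via the definition of $\approx$ on type/element judgments and \cref{drule13} and \cref{drule14} from \cref{derivedrules}, to the already-established agreement on the underlying type and element judgments of the contexts involved. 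Hence $I \approx J$.

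\medskip

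The only genuinely delicate point — and the place I would expect to spend real care rather than routine bookkeeping — is the treatment of the equality axioms in the converse direction, since the relation $\approx$ is not literally defined on equality judgments; one must phrase ``$\widehat I(\text{axiom}) \approx \widehat J(\text{axiom})$'' in terms of the induced type/element judgments and check that the hypothesis (agreement on \emph{all} type and element judgments of $T$, which includes all the subcontexts and constituent types occurring inside the axioms) suffices. This is handled exactly as in the proof of \cref{lemma1112}: applying the interpretations commutes with substitution (\cref{lemma1111}), so $\widehat I$ and $\widehat J$ of an equality axiom are equality rules over $\approx$-related contexts and types, and the derivation rules \cref{drule13}, \cref{drule14} then close the argument. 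Everything else is a direct unwinding of definitions, mirroring \cite[Corollary, Section~1.14]{cartmell1978}.
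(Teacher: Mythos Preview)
You have misread the statement. In this paper's terminology a ``type element judgment'' is specifically an element judgment of the form $\Gamma \vdash t:\Delta$ (see its explicit use in \cref{lemma7236} and in the naturality proofs immediately following \cref{corollary2114}), not ``a type judgment or an element judgment''. The corollary therefore asserts that $I \approx J$ is equivalent to agreement on element judgments \emph{alone}, which is strictly stronger in the backward direction than the statement you argue for.

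This misreading produces a genuine gap in your converse. You claim that ``the introductory judgments for sort symbols are type judgments \dots\ so the hypothesis applies to them directly'', but the hypothesis only gives agreement on element judgments, so nothing applies directly to sort introductory rules. The missing ingredient is exactly what the paper's one-line proof invokes: item (3) of \cref{contextdefinition}. From a type judgment $\{x_\alpha:\Delta_\alpha\}_{\alpha<\lambda} \vdash \Delta\,\type$ one passes (via \cref{derivedrules}~\ref{drule10}) to the element judgment $\{x_\alpha:\Delta_\alpha\}_{\alpha<\lambda},\, x:\Delta \vdash x:\Delta$; the hypothesis gives $\approx$ on the latter, and unwinding the definition of $\approx$ for element judgments then yields $\approx$ on the underlying type judgment and context. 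Your careful discussion of the equality axioms is, by contrast, not where the work lies: the paper's proof never mentions them, and $\approx$ is in any case only defined on type and element judgments.
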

\begin{proof}
	This follows from \cref{lemma11114} and (3) of \cref{contextdefinition}.
\end{proof}

\begin{corollary}
	If $ I $ and $ J $ are interpretations from $ T $ to $ T' $ and $ I' $ and $ J' $ are interpretations from $ T' $ to $ T'' $ then from $ I\approx J $ and $ I'\approx J' $ we conclude that $ I'\circ I\approx J'\circ J $.
\end{corollary}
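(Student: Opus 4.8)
The plan is to reduce the statement to \cref{corollary2114}, which characterizes the relation $\approx$ on interpretations in terms of its action on type and element judgments: it suffices to prove that for every type or element judgment $r$ of $T$ one has $\widehat{I'\circ I}(r)\approx\widehat{J'\circ J}(r)$ in $T''$, and then quote \cref{corollary2114} in the ``if'' direction to conclude $I'\circ I\approx J'\circ J$.

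First I would record the functoriality of the ``hat'' construction: for composable interpretations $K:S\to T$, $K':T\to U$ one has $\widehat{K'\circ K}=\widehat{K'}\circ\widehat{K}$ as functions $Rul(S)\to Rul(U)$. This is immediate from \cref{interpretation} together with the already-established identity $\widetilde{K'\circ K}(e)=\widetilde{K'}(\widetilde{K}(e))$ on expressions: one applies it clause by clause to the four shapes of judgment and to the subexpressions appearing inside them. In particular $\widehat{I'\circ I}(r)=\widehat{I'}(\widehat{I}(r))$ and $\widehat{J'\circ J}(r)=\widehat{J'}(\widehat{J}(r))$.

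Next I would isolate two facts about how the hat construction interacts with the relation $\approx$ on judgments (as opposed to on interpretations). First, a single interpretation preserves $\approx$: if $\mathcal{J}_1\approx\mathcal{J}_2$ are type (resp. element) judgments of $T$ and $K:T\to U$ is an interpretation, then $\widehat{K}(\mathcal{J}_1)\approx\widehat{K}(\mathcal{J}_2)$; this follows from \cref{lemma1112}, which says $K$ carries derived judgments to derived judgments — in particular the type-equality and term-equality judgments witnessing $\mathcal{J}_1\approx\mathcal{J}_2$ — by unwinding the ordinal-indexed inductive definition of $\approx$. Second, if $K\approx K'$ then $\widehat{K}(\mathcal{J})\approx\widehat{K'}(\mathcal{J})$ for every type or element judgment $\mathcal{J}$; this is exactly \cref{lemma11114}.

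With these in hand the corollary is a short chain. Since $I\approx J$, \cref{corollary2114} gives $\widehat{I}(r)\approx\widehat{J}(r)$ in $T'$ for each type or element judgment $r$ of $T$. Applying the first fact to the interpretation $I'$ yields $\widehat{I'}(\widehat{I}(r))\approx\widehat{I'}(\widehat{J}(r))$ in $T''$, and applying the second fact to $I'\approx J'$ and the judgment $\widehat{J}(r)$ yields $\widehat{I'}(\widehat{J}(r))\approx\widehat{J'}(\widehat{J}(r))$ in $T''$. Composing via transitivity of $\approx$ on judgments (\cref{equivalenceofcontexts} and its element-judgment counterpart) gives
\[ \widehat{I'\circ I}(r)=\widehat{I'}(\widehat{I}(r))\approx\widehat{I'}(\widehat{J}(r))\approx\widehat{J'}(\widehat{J}(r))=\widehat{J'\circ J}(r), \]
so that \cref{corollary2114} delivers $I'\circ I\approx J'\circ J$. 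The only mildly delicate point is the first fact: one must check that preservation of derived judgments really does transport the transfinite definition of $\approx$ through the context, but this is a routine induction on the ordinal length of the context with no ideas beyond those already used in \cite{cartmell1978}.
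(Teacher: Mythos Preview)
Your proof is correct and essentially unpacks the standard argument that the paper defers to \cite[pp.~1.72]{cartmell1978}. The paper gives no details beyond the citation, so your decomposition via \cref{corollary2114}, functoriality of $\widehat{(\cdot)}$, preservation of $\approx$ by a single interpretation (from \cref{lemma1112}), and \cref{lemma11114} is exactly the natural route and matches what one finds in Cartmell's thesis.
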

\begin{proof}
	\cite[pp. 1.72]{cartmell1978}.
\end{proof}

The category $ \kgat $ has morphisms equivalence classes of interpretations \cite[pp. 1.72]{cartmell1978}.

\subsection{Construction and properties of the syntactic category \texorpdfstring{$ \C_T $}{Lg}} \label{syntacticcat}

Let $ T $ be a generalized $\kappa$-algebraic theory. The category $ \C_T $ has the following data:

\begin{itemize}
	\item Objects: Equivalence classes of contexts under the relation $ \approx $. If $ \{x_\alpha:\Delta_\alpha\}_{\alpha<\lambda} $ is a context then the object in $ \C_T $ is denoted $ [\{x_\alpha:\Delta_\alpha\}_{\alpha<\lambda}] $.
	\item Morphisms: A morphism between $ [\{x_\alpha:\Delta_\alpha\}_{\alpha<\lambda}] $ and $ [\{x_\beta:\Omega_\mu\}_{\beta<\mu}] $ is the equivalence class of a map $$ \langle t_\beta\rangle_{\beta<\mu}: \{x_\alpha:\Delta_\alpha\}_{\alpha<\lambda} \to \{x_\beta:\Omega_\beta\}_{\beta<\mu} $$ induced by the relation $ \approx. $ We denote this set by $$ \hom_{\C_T}([\{x_\alpha:\Delta_\alpha\}_{\alpha<\lambda}], [\{x_\beta:\Omega_\mu\}_{\beta<\mu}]). $$
	\item Composition: This is induced by the composition of maps between contexts. This is again well-defined in view of \ref{item2} of \cref{lemma3}.
	\item Identity: For a context $ \{x_\alpha:\Delta_\alpha\}_{\alpha<\lambda} $ its identity is the equivalence class of the obvious map $ \langle x_\alpha \rangle_{\alpha<\lambda}$.
	
\end{itemize}  

\begin{remark}
	The category $ \C_T $ has a unique object $ 1:=[\emptyset] $, the equivalence class of the empty context. Note that this is a terminal object.
\end{remark}

\begin{remark} \label{remarkdisplay}
	Let $ [\{x_\alpha:\Delta_\alpha\}_{\alpha<\lambda}] $ be an object of $ \C_T $. Then for any $ \mu< \lambda $ we get a morphism $ [\langle x_\beta \rangle_{\beta<\mu} ]: [\{x_\alpha:\Delta_\alpha\}_{\alpha<\lambda}] \to [\{x_\beta:\Delta_\beta\}_{\beta<\mu}]$.
	Indeed, since $ \{x_\alpha:\Delta_\alpha\}_{\alpha<\lambda} $ is a context then for any $ \beta<\lambda $ we have $ \{x_\delta:\Delta_\delta\}_{\delta<\delta}\vdash \Delta_\beta \; \type$. Therefore, it follows from (\cref{derivedrules}, \ref{drule10}) that $ \{x_\alpha:\Delta_\alpha\}_{\alpha<\lambda} \vdash x_\alpha:\Delta_\alpha $ for all $ \alpha<\lambda $. In particular this is true for all $ \beta<\mu $, which gives the morphism above.
	
	Following the same argument, if $ \nu<\mu $, then we also have a map $ [\langle x_\gamma \rangle_{\gamma<\nu} ]: [\{x_\beta:\Delta_\beta\}_{\beta<\mu}] \to [\{x_\gamma:\Delta_\gamma\}_{\gamma<\nu}]. $ Furthermore, we get a commutative diagram:	
	% https://q.uiver.app/#q=WzAsMyxbMCwwLCJbXFx7eF9cXGFscGhhOlxcRGVsdGFfXFxhbHBoYVxcfV97XFxhbHBoYTxcXGthcHBhfV0iXSxbMiwwLCJbXFx7eF9cXGJldGE6XFxEZWx0YV9cXGJldGFcXH1fe1xcYmV0YTxcXG11fV0iXSxbMiwxLCJbXFx7eF9cXGdhbW1hOlxcRGVsdGFfXFxnYW1tYVxcfV97XFxnYW1tYTxcXG51fV0iXSxbMCwxLCJbXFxsYW5nbGUgeF9cXGJldGEgXFxyYW5nbGVfe1xcYmV0YTxcXG11fSBdIl0sWzEsMiwiW1xcbGFuZ2xlIHhfXFxnYW1tYSBcXHJhbmdsZV97XFxnYW1tYTxcXG51fSBdIl0sWzAsMiwiW1xcbGFuZ2xlIHhfXFxnYW1tYSBcXHJhbmdsZV97XFxnYW1tYTxcXG51fSBdIiwyXV0=
	\[\begin{tikzcd}
		{[\{x_\alpha:\Delta_\alpha\}_{\alpha<\lambda}]} && {[\{x_\beta:\Delta_\beta\}_{\beta<\mu}]} \\
		&& {[\{x_\gamma:\Delta_\gamma\}_{\gamma<\nu}]}
		\arrow["{[\langle x_\beta \rangle_{\beta<\mu} ]}", from=1-1, to=1-3]
		\arrow["{[\langle x_\gamma \rangle_{\gamma<\nu} ]}", from=1-3, to=2-3]
		\arrow["{[\langle x_\gamma \rangle_{\gamma<\nu} ]}"', from=1-1, to=2-3]
	\end{tikzcd}\]

      \begin{remark} \label{display:generalized}
        Since these morphisms are somewhat canonical we will use the notation $``\display" $, and whenever we use this arrow for a morphism it must be assumed that such map is of this form. These morphisms are called display, which is Cartmell's terminology. In contrast, our `display' maps can be of arbitrary length, which we will often refer to as \emph{generalized display} maps.
      \end{remark}
      
      Suppose there is a context $ [\{x_\alpha:\Delta_\alpha\}_{\alpha<\lambda+\varepsilon}] $ with $ \varepsilon \geq 0 $. Then we can consider an $ \varepsilon $-indexed sequence of display morphisms:
	% https://q.uiver.app/#q=WzAsNCxbMSwwLCJbXFx7eF9cXGFscGhhOlxcRGVsdGFfXFxhbHBoYVxcfV97XFxhbHBoYTxcXGthcHBhKzJ9XSJdLFsyLDAsIltcXHt4X1xcYWxwaGE6XFxEZWx0YV9cXGFscGhhXFx9X3tcXGFscGhhPFxca2FwcGErMX1dIl0sWzMsMCwiW1xce3hfXFxhbHBoYTpcXERlbHRhX1xcYWxwaGFcXH1fe1xcYWxwaGE8XFxrYXBwYX1dIl0sWzAsMCwiXFx0ZXh0e30iXSxbMCwxLCIiLDEseyJzdHlsZSI6eyJoZWFkIjp7Im5hbWUiOiJlcGkifX19XSxbMSwyLCIiLDEseyJzdHlsZSI6eyJoZWFkIjp7Im5hbWUiOiJlcGkifX19XSxbMywwLCJcXGNkb3RzIiwxLHsic3R5bGUiOnsiYm9keSI6eyJuYW1lIjoibm9uZSJ9LCJoZWFkIjp7Im5hbWUiOiJub25lIn19fV1d
	\[\begin{tikzcd}
		{\text{}} & {[\{x_\alpha:\Delta_\alpha\}_{\alpha<\lambda+2}]} & {[\{x_\alpha:\Delta_\alpha\}_{\alpha<\lambda+1}]} & {[\{x_\alpha:\Delta_\alpha\}_{\alpha<\lambda}]}
		\arrow[two heads, from=1-2, to=1-3]
		\arrow[two heads, from=1-3, to=1-4]
		\arrow["\cdots"{description}, draw=none, from=1-1, to=1-2]
	\end{tikzcd}\]
	
	Also, there is a display map $ [\{x_\alpha:\Delta_\alpha\}_{\alpha<\lambda+\varepsilon}] \display [\{x_\alpha:\Delta_\alpha\}_{\alpha<\lambda}] $. This display morphism will be by definition the composition for the sequence. If $ \varepsilon=0 $, then this map is simply the identity. We also get a factorization of the map $ [\{x_\alpha:\Delta_\alpha\}_{\alpha<\lambda}] \display 1 $ via display maps for any $ \lambda \geq 0 $.
\end{remark}

\begin{observation} \label{limitcontext}
	From the previous \cref{remarkdisplay} we can observe that if $ \lambda $ is a limit ordinal then $ [\{x_\alpha:\Delta_\alpha\}_{\alpha<\lambda}] $ is the limit of the sequence
	% https://q.uiver.app/#q=WzAsNCxbMSwwLCJbXFx7eF8xOlxcRGVsdGFfMSx4XzI6XFxEZWx0YV8yXFx9XSJdLFsyLDAsIltcXHt4XzE6XFxEZWx0YV8xXFx9XSJdLFszLDAsIjEiXSxbMCwwLCJcXHRleHR7fSJdLFswLDEsIiIsMSx7InN0eWxlIjp7ImhlYWQiOnsibmFtZSI6ImVwaSJ9fX1dLFsxLDIsIiIsMSx7InN0eWxlIjp7ImhlYWQiOnsibmFtZSI6ImVwaSJ9fX1dLFszLDAsIlxcY2RvdHMiLDEseyJzdHlsZSI6eyJib2R5Ijp7Im5hbWUiOiJub25lIn0sImhlYWQiOnsibmFtZSI6Im5vbmUifX19XV0=
	\[\begin{tikzcd}
		{\text{}} & {[\{x_1:\Delta_1,x_2:\Delta_2\}]} & {[\{x_1:\Delta_1\}]} & 1.
		\arrow[two heads, from=1-2, to=1-3]
		\arrow[two heads, from=1-3, to=1-4]
		\arrow["\cdots"{description}, draw=none, from=1-1, to=1-2]
	\end{tikzcd}
	\]
	If there is another context $ [\{x_\delta:\Gamma_\delta \}_{\delta<\gamma} ] $ and maps \[ [\langle t_\beta \rangle_{\beta<\alpha} ]:[\{x_\delta:\Gamma_\delta \}_{\delta<\gamma} ] \to [\{x_\beta:\Delta_\beta\}_{\beta<\alpha}]  \]
	for all $ \alpha<\lambda $ then we can simply take the map
	\[[\langle t_\alpha \rangle_{\alpha<\lambda} ]: [\{x_\delta:\Gamma_\delta \}_{\delta<\gamma} ]  \to [\{x_\alpha:\Delta_\alpha\}_{\alpha<\lambda}]. \]
	This can be shown to be the cone map (which is unique). This verifies our claim.
\end{observation}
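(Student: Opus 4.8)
The plan is to check directly that the context $A_\lambda := [\{x_\alpha:\Delta_\alpha\}_{\alpha<\lambda}]$, together with the display projections $\pi_\alpha := [\langle x_\beta\rangle_{\beta<\alpha}] : A_\lambda \display A_\alpha$ (where $A_\alpha := [\{x_\beta:\Delta_\beta\}_{\beta<\alpha}]$), is a limit cone over the chain indexed by $\alpha<\lambda$ whose transition maps are the display maps $A_{\alpha'}\display A_\alpha$ for $\alpha<\alpha'<\lambda$. The cone identity, that $\pi_\alpha$ equals the composite of $\pi_{\alpha'}$ with the transition map $A_{\alpha'}\display A_\alpha$, is exactly the commuting triangle recorded in \cref{remarkdisplay}: composing $[\langle x_\beta\rangle_{\beta<\alpha}]$ after $[\langle x_\beta\rangle_{\beta<\alpha'}]$ substitutes each $x_\beta$ by itself. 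So the only thing to prove is the universal property: for an arbitrary compatible cone there is a unique mediating morphism.

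Let $\Gamma := [\{x_\delta:\Gamma_\delta\}_{\delta<\gamma}]$ carry maps $f_\alpha:\Gamma\to A_\alpha$ with $(A_{\alpha'}\display A_\alpha)\circ f_{\alpha'} = f_\alpha$ for $\alpha<\alpha'$, and fix for each $\alpha$ a representative context morphism $g_\alpha = \langle t^{(\alpha)}_\beta\rangle_{\beta<\alpha}$ of $f_\alpha$. Unwinding the composition formula and the relation $\approx$ on parallel context morphisms (using the congruence rules \cref{derivedrules}(\ref{drule13})--(\ref{drule14}) to discharge the type-$\approx$ clauses), the compatibility condition amounts to $\{x_\delta:\Gamma_\delta\}_{\delta<\gamma}\vdash t^{(\alpha')}_\beta \equiv t^{(\alpha)}_\beta$ for all $\beta<\alpha<\alpha'<\lambda$. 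Since $\lambda$ is a limit ordinal we have $\beta+1<\lambda$ for every $\beta<\lambda$, so I set $t_\beta := t^{(\beta+1)}_\beta$. I then verify that $\langle t_\beta\rangle_{\beta<\lambda}$ is a genuine context morphism $\{x_\delta:\Gamma_\delta\}_{\delta<\gamma}\to\{x_\alpha:\Delta_\alpha\}_{\alpha<\lambda}$: the defining condition of $g_{\beta+1}$ at its last slot gives $\vdash t_\beta:\Delta_\beta[t^{(\beta+1)}_{\beta'}\mid x_{\beta'}]_{\beta'<\beta}$, and the compatibility clause (applied to the indices $\beta'+1 \le \beta < \beta+1$) gives $\vdash t^{(\beta+1)}_{\beta'}\equiv t^{(\beta'+1)}_{\beta'} = t_{\beta'}$ for each $\beta'<\beta$; by the substitution property established above together with \cref{derivedrules}(\ref{drule13}) and (\ref{drule9}) one gets $\Delta_\beta[t^{(\beta+1)}_{\beta'}\mid x_{\beta'}]_{\beta'<\beta} \equiv \Delta_\beta[t_{\beta'}\mid x_{\beta'}]_{\beta'<\beta}$ and hence $\vdash t_\beta:\Delta_\beta[t_{\beta'}\mid x_{\beta'}]_{\beta'<\beta}$, which is precisely well-formedness of the $\beta$-th component.

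Setting $f := [\langle t_\beta\rangle_{\beta<\lambda}]:\Gamma\to A_\lambda$, one computes $\pi_\alpha\circ f = [\langle t_\beta\rangle_{\beta<\alpha}]$, and for $\beta<\alpha$ (so $\beta+1\le\alpha$) the compatibility clause gives $\vdash t^{(\alpha)}_\beta\equiv t^{(\beta+1)}_\beta = t_\beta$, whence $\langle t_\beta\rangle_{\beta<\alpha}\approx g_\alpha$ and $\pi_\alpha\circ f = f_\alpha$. For uniqueness, if $f' = [\langle s_\beta\rangle_{\beta<\lambda}]$ also satisfies $\pi_{\beta+1}\circ f' = f_{\beta+1}$ for all $\beta$, then $[\langle s_{\beta'}\rangle_{\beta'<\beta+1}] = [\langle t^{(\beta+1)}_{\beta'}\rangle_{\beta'<\beta+1}]$ forces $\vdash s_\beta\equiv t^{(\beta+1)}_\beta = t_\beta$ for every $\beta<\lambda$, so $\langle s_\beta\rangle_{\beta<\lambda}\approx\langle t_\beta\rangle_{\beta<\lambda}$ and $f' = f$.

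The one place needing genuine care, and the expected main obstacle, is the passage in the second paragraph. The cone compatibility lives in $\C_T$, so it delivers only \emph{provable} equalities between the chosen representatives, not syntactic ones; assembling the single family $(t_\beta)_{\beta<\lambda}$ and checking that its $\beta$-th entry is well-typed over the substitution built from the earlier entries is exactly where the substitution property and the equality-congruence rules \cref{derivedrules}(\ref{drule13})--(\ref{drule14}), (\ref{drule9}) are used. Choosing $t_\beta := t^{(\beta+1)}_\beta$ is what lets us sidestep making a globally coherent choice of representatives all at once: the successor truncation $g_{\beta+1}$ pins down slot $\beta$, and compatibility reconciles the different choices up to the provable equality that is all the relation $\approx$ sees. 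Everything else --- the cone identity, the computation $\pi_\alpha\circ f = f_\alpha$, and uniqueness --- is a mechanical unwinding of the definitions of composition, identity, and $\approx$ recalled just above.
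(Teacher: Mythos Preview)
Your proof is correct and follows exactly the approach the paper sketches: the paper's ``proof'' is entirely contained in the observation itself and simply asserts that given a compatible cone with legs $[\langle t_\beta\rangle_{\beta<\alpha}]$ one takes $[\langle t_\alpha\rangle_{\alpha<\lambda}]$ as the mediating map, remarking only that ``this can be shown to be the cone map (which is unique).'' You have supplied the details the paper omits, and in particular you correctly flag and resolve the one genuine subtlety the paper's notation glosses over --- that a priori the representatives $t^{(\alpha)}_\beta$ depend on $\alpha$, and one must use the compatibility equalities together with \cref{derivedrules}(\ref{drule9}),(\ref{drule13}) to show the diagonal choice $t_\beta := t^{(\beta+1)}_\beta$ assembles into a well-typed context morphism.
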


Using \cref{remarkdisplay} we can define a function:
% https://q.uiver.app/#q=WzAsMixbMCwwLCJcXG51Ok9iKFxcQ19UKSJdLFsxLDAsIlxcbGFtYmRhIl0sWzAsMV1d
\[\begin{tikzcd}
	{\nu:Ob(\C_T)} & \kappa
	\arrow[from=1-1, to=1-2]
\end{tikzcd}\]
as $ \nu([\{x_\alpha:\Delta_\alpha\}_{\alpha<\lambda}]):=\lambda$. We call this the \emph{length function}. We can use $ \nu $ to construct a filtration on the objects of $ \C_T $: we define $$ Ob_\lambda(\C_T):= \nu^{-1}(\lambda)$$ then $ Ob(\C_T)=\coprod_{\lambda<\kappa}Ob_\lambda(\C_T) $, and so if $ \alpha\leq\beta $ then $ Ob_\alpha(\C_T) \subseteq Ob_\beta(\C_T) $. Furthermore, if $ p:A\display B $ is a display morphism, then $ \nu(B)\leq \nu(A) $. 
For $ \alpha<\beta $ there are functions $$ \pi_\beta: Ob_\beta(\C_T)\to Ob_\alpha(\C_T)$$ that are defined in the obvious way. Additionally, $ 1\in Ob_0(\C_T) $ is unique.

The proof of the following lemma is the same as in \cite{cartmell1978}.

\begin{lemma} \label{pullbacksyntactic}
	The pullback of a display map along arbitrary morphisms in $ \C_T $ exists, and it is also display.
\end{lemma}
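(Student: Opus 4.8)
This is the standard fact (in the finitary case it is in \cite[1.10]{cartmell1978}) that the display maps of the syntactic contextual category are stable under pullback, and the proof is by induction on the length of the display map. The plan is as follows.

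First I would reduce to the case of a display map of length one, i.e. a map of the form
\[ p : [\{x_\alpha:\Delta_\alpha\}_{\alpha<\lambda+1}] \display [\{x_\alpha:\Delta_\alpha\}_{\alpha<\lambda}] \]
that forgets a single variable. Indeed, a general display map is by \cref{remarkdisplay} a $\kappa$-small composite of such length-one display maps (and for limit steps the object is the limit of the tower of the earlier ones, by \cref{limitcontext}); since pullbacks compose, and since the limit of a $\kappa$-small chain of pullbacks of the pieces is the pullback of the limit, it suffices to handle the length-one case and check that the pulled-back tower is again (a composite of) display maps --- the transfinite bookkeeping here is routine and I would only sketch it.

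Second, for the length-one case, suppose we are given an arbitrary morphism $f = [\langle t_\beta\rangle_{\beta<\lambda}] : [\{x_\gamma:\Gamma_\gamma\}_{\gamma<\mu}] \to [\{x_\alpha:\Delta_\alpha\}_{\alpha<\lambda}]$. Using the substitution property (every derived judgment of $T$ has it) applied to $\{x_\alpha:\Delta_\alpha\}_{\alpha<\lambda} \vdash \Delta_\lambda\,\type$, we obtain that $\{x_\gamma:\Gamma_\gamma\}_{\gamma<\mu} \vdash \Delta_\lambda[t_\beta\mid x_\beta]_{\beta<\lambda}\,\type$ is a derived judgment, so that $\{x_\gamma:\Gamma_\gamma\}_{\gamma<\mu},\; y:\Delta_\lambda[t_\beta\mid x_\beta]_{\beta<\lambda}$ is a context, and it comes with an obvious display map $p'$ forgetting $y$ and an obvious morphism $f'$ to $[\{x_\alpha:\Delta_\alpha\}_{\alpha<\lambda+1}]$ given by $\langle t_\beta, y\rangle$. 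I would then check that the square
\[
\begin{tikzcd}
{[\{x_\gamma:\Gamma_\gamma\}_{\gamma<\mu},\, y:\Delta_\lambda[t_\beta\mid x_\beta]]} \ar[r,"f'"] \ar[d,->>,"p'"swap] & {[\{x_\alpha:\Delta_\alpha\}_{\alpha<\lambda+1}]} \ar[d,->>,"p"] \\
{[\{x_\gamma:\Gamma_\gamma\}_{\gamma<\mu}]} \ar[r,"f"swap] & {[\{x_\alpha:\Delta_\alpha\}_{\alpha<\lambda}]}
\end{tikzcd}
\]
is a pullback in $\C_T$. Commutativity is immediate from the definition of composition by substitution. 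For the universal property: a cone consists of a context $\Theta$ with morphisms $g:\Theta\to[\{x_\gamma:\Gamma_\gamma\}_{\gamma<\mu}]$ and $h:\Theta\to[\{x_\alpha:\Delta_\alpha\}_{\alpha<\lambda+1}]$ agreeing after projecting; since $p$ just forgets the last variable, $h$ is precisely the data of $p\circ h = f\circ g$ together with one extra term $s$ with $\Theta \vdash s : \Delta_\lambda[t_\beta\circ g \mid x_\beta]$, which is exactly the extra datum needed to lift $g$ along $p'$. Uniqueness is the same observation read backwards. Everything is a direct unwinding of the category-of-realizations definitions, together with \cref{remarktransitivityofproofs} and \cref{lemma3} to see that the constructions respect the relation $\approx$, hence descend to $\C_T$.

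\textbf{Main obstacle.} The conceptual content is essentially nil --- the length-one case is a formal computation. The only genuinely delicate point is the transfinite/limit part of the induction: one must verify that pulling back a $\kappa$-small tower of display maps termwise yields again a tower whose limit is the required pullback, which uses that limits of $\kappa$-small chains commute with the pullbacks being formed and that the intermediate objects built by iterated substitution are honestly contexts (again by the substitution property, applied now at limit stages). Handling the variable-renaming convention consistently across the tower --- so that the ``reset'' enumeration of variables in \cref{remarkdisplay} does not cause the $\approx$-classes to mismatch --- is the one place where care, rather than insight, is needed; I would dispatch it by invoking the canonical-names assumption on $V$ together with \cref{lemma4}.
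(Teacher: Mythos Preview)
Your proposal is correct and follows essentially the same approach as the paper: reduce to length-one display maps, construct the pullback by substitution and verify its universal property directly, then handle the general case by iterating along the tower and taking the limit at limit stages. The paper's proof is organized as an explicit induction on the length of the display map rather than as a reduction step followed by bookkeeping, but the content is the same.
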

\begin{proof}
	We use induction over the context length. Assume we have the following diagram in $ \C_T $:
	% https://q.uiver.app/#q=WzAsMyxbMiwwLCJbXFx7eF9cXGJldGE6XFxPbWVnYV9cXGJldGFcXH1fe1xcYmV0YTxcXG11K1xcdmFyZXBzaWxvbn1dIl0sWzIsMSwiW1xce3hfXFxiZXRhOlxcT21lZ2FfXFxiZXRhXFx9X3tcXGJldGE8XFxtdX1dIl0sWzAsMSwiW1xce3hfXFxhbHBoYTpcXERlbHRhX1xcYWxwaGFcXH1fe1xcYWxwaGE8XFxrYXBwYX1dIl0sWzAsMSwiW1xcbGFuZ2xlIHhfXFxiZXRhIFxccmFuZ2xlX3tcXGJldGE8XFxtdX1dIiwwLHsic3R5bGUiOnsiaGVhZCI6eyJuYW1lIjoiZXBpIn19fV0sWzIsMSwiW1xcbGFuZ2xlIHRfXFxiZXRhXFxyYW5nbGVfe1xcYmV0YTxcXG11fV0iLDJdXQ==
	\[\begin{tikzcd}
		&& {[\{x_\beta:\Omega_\beta\}_{\beta<\mu+1}]} \\
		{[\{x_\alpha:\Delta_\alpha\}_{\alpha<\lambda}]} && {[\{x_\beta:\Omega_\beta\}_{\beta<\mu}]}
		\arrow["{[\langle x_\beta \rangle_{\beta<\mu}]}", two heads, from=1-3, to=2-3]
		\arrow["{[\langle t_\beta\rangle_{\beta<\mu}]}"', from=2-1, to=2-3]
	\end{tikzcd}\]
	Then the pullback is given using \cref{lemma3}, and the context is 
	\[ [\{x_\alpha:\Delta_\alpha,\, x_\mu:\Omega_\mu[t_\beta\mid x_\beta]_{\beta<\mu} \}_{\substack{\alpha<\lambda}}]. \]
	Therefore we have a commutative square
	\begin{equation} \label{pullback1}
		% https://q.uiver.app/#q=WzAsNCxbMiwwLCJbXFx7eF9cXGJldGE6XFxPbWVnYV9cXGJldGFcXH1fe1xcYmV0YTxcXG11KzF9XSJdLFsyLDEsIltcXHt4X1xcYmV0YTpcXE9tZWdhX1xcYmV0YVxcfV97XFxiZXRhPFxcbXV9XSJdLFswLDEsIltcXHt4X1xcYWxwaGE6XFxEZWx0YV9cXGFscGhhXFx9X3tcXGFscGhhPFxcbGFtYmRhfV0iXSxbMCwwLCJbXFx7eF9cXGFscGhhOlxcRGVsdGFfXFxhbHBoYSxcXCwgeF9cXG11OlxcT21lZ2FfXFxtdVt0X1xcYmV0YVxcbWlkIHhfXFxiZXRhXV97XFxiZXRhPFxcbXV9IFxcfV97XFxzdWJzdGFja3tcXGFscGhhPFxcbGFtYmRhfX1dIl0sWzAsMSwiW1xcbGFuZ2xlIHhfXFxiZXRhIFxccmFuZ2xlX3tcXGJldGE8XFxtdX1dIiwwLHsic3R5bGUiOnsiaGVhZCI6eyJuYW1lIjoiZXBpIn19fV0sWzIsMSwiW1xcbGFuZ2xlIHRfXFxiZXRhXFxyYW5nbGVfe1xcYmV0YTxcXG11fV0iLDJdLFszLDIsIltcXGxhbmdsZSB4X1xcYWxwaGEgXFxyYW5nbGVfe1xcYWxwaGE8XFxsYW1iZGF9XSIsMix7InN0eWxlIjp7ImhlYWQiOnsibmFtZSI6ImVwaSJ9fX1dLFszLDAsIltcXGxhbmdsZSB0X1xcYmV0YSx4X1xcbXVcXHJhbmdsZV97XFxiZXRhPFxcbXV9XSJdXQ==
		\begin{tikzcd}
			{[\{x_\alpha:\Delta_\alpha,\, x_\mu:\Omega_\mu[t_\beta\mid x_\beta]_{\beta<\mu} \}_{\substack{\alpha<\lambda}}]} && {[\{x_\beta:\Omega_\beta\}_{\beta<\mu+1}]} \\
			{[\{x_\alpha:\Delta_\alpha\}_{\alpha<\lambda}]} && {[\{x_\beta:\Omega_\beta\}_{\beta<\mu}]}
			\arrow["{[\langle x_\beta \rangle_{\beta<\mu}]}", two heads, from=1-3, to=2-3]
			\arrow["{[\langle t_\beta\rangle_{\beta<\mu}]}"', from=2-1, to=2-3]
			\arrow["{[\langle x_\alpha \rangle_{\alpha<\lambda}]}"', two heads, from=1-1, to=2-1]
			\arrow["{[\langle t_\beta,x_\mu\rangle_{\beta<\mu}]}", from=1-1, to=1-3]
		\end{tikzcd}
	\end{equation}
	Note that by definition the left vertical morphism is also display. If there is another commutative square
	% https://q.uiver.app/#q=WzAsNCxbMywwLCJbXFx7eF9cXGJldGE6XFxPbWVnYV9cXGJldGFcXH1fe1xcYmV0YTxcXG11KzF9XSJdLFszLDEsIltcXHt4X1xcYmV0YTpcXE9tZWdhX1xcYmV0YVxcfV97XFxiZXRhPFxcbXV9XSJdLFswLDEsIltcXHt4X1xcYWxwaGE6XFxEZWx0YV9cXGFscGhhXFx9X3tcXGFscGhhPFxcbGFtYmRhfV0iXSxbMCwwLCJbXFx7eF9cXHpldGE6XFxHYW1tYV9cXHpldGFcXH1fe1xcemV0YTxcXHhpfV0iXSxbMCwxLCJbXFxsYW5nbGUgeF9cXGJldGEgXFxyYW5nbGVfe1xcYmV0YTxcXG11fV0iLDAseyJzdHlsZSI6eyJoZWFkIjp7Im5hbWUiOiJlcGkifX19XSxbMiwxLCJbXFxsYW5nbGUgdF9cXGJldGFcXHJhbmdsZV97XFxiZXRhPFxcbXV9XSIsMl0sWzMsMiwiW1xcbGFuZ2xlIGZfXFxhbHBoYSBcXHJhbmdsZV97XFxhbHBoYTxcXGxhbWJkYX1dIiwyXSxbMywwLCJbXFxsYW5nbGUgZ19cXGJldGEgXFxyYW5nbGVfe1xcYmV0YTxcXG11KzF9XSJdXQ==
	\[\begin{tikzcd}
		{[\{x_\zeta:\Gamma_\zeta\}_{\zeta<\xi}]} &&& {[\{x_\beta:\Omega_\beta\}_{\beta<\mu+1}]} \\
		{[\{x_\alpha:\Delta_\alpha\}_{\alpha<\lambda}]} &&& {[\{x_\beta:\Omega_\beta\}_{\beta<\mu}],}
		\arrow["{[\langle x_\beta \rangle_{\beta<\mu}]}", two heads, from=1-4, to=2-4]
		\arrow["{[\langle t_\beta\rangle_{\beta<\mu}]}"', from=2-1, to=2-4]
		\arrow["{[\langle f_\alpha \rangle_{\alpha<\lambda}]}"', from=1-1, to=2-1]
		\arrow["{[\langle g_\beta \rangle_{\beta<\mu+1}]}", from=1-1, to=1-4]
	\end{tikzcd}\]
	the map
	\[  [\langle f_\alpha,g_\mu \rangle_{\alpha<\lambda}]:[\{x_\zeta:\Gamma_\zeta\}_{\zeta<\xi}] \to [\{x_\alpha:\Delta_\alpha,\, x_\mu:\Omega_\mu[t_\beta\mid x_\beta]_{\beta<\mu} \}_{\substack{\alpha<\lambda}}] \]
	shows that the square (\ref{pullback1}) is the pullback.\\
	Next, assume that we have a diagram
	% https://q.uiver.app/#q=WzAsMyxbMiwwLCJbXFx7eF9cXGJldGE6XFxPbWVnYV9cXGJldGFcXH1fe1xcYmV0YTxcXG11K1xcdmFyZXBzaWxvbn1dIl0sWzIsMSwiW1xce3hfXFxiZXRhOlxcT21lZ2FfXFxiZXRhXFx9X3tcXGJldGE8XFxtdX1dIl0sWzAsMSwiW1xce3hfXFxhbHBoYTpcXERlbHRhX1xcYWxwaGFcXH1fe1xcYWxwaGE8XFxrYXBwYX1dIl0sWzAsMSwiW1xcbGFuZ2xlIHhfXFxiZXRhIFxccmFuZ2xlX3tcXGJldGE8XFxtdX1dIiwwLHsic3R5bGUiOnsiaGVhZCI6eyJuYW1lIjoiZXBpIn19fV0sWzIsMSwiW1xcbGFuZ2xlIHRfXFxiZXRhXFxyYW5nbGVfe1xcYmV0YTxcXG11fV0iLDJdXQ==
	\[\begin{tikzcd}
		&& {[\{x_\beta:\Omega_\beta\}_{\beta<\mu}]} \\
		{[\{x_\alpha:\Delta_\alpha\}_{\alpha<\lambda}]} && {[\{x_\beta:\Omega_\beta\}_{\beta<\nu}]}
		\arrow["{[\langle x_\beta \rangle_{\beta<\mu}]}", two heads, from=1-3, to=2-3]
		\arrow["{[\langle t_\beta\rangle_{\beta<\nu}]}"', from=2-1, to=2-3]
	\end{tikzcd}\]
	where $ \mu $ is a limit ordinal and $ \mu > \nu $. We simplify the notation as follows:
	% https://q.uiver.app/#q=WzAsMyxbMSwwLCJCX1xcbXUiXSxbMSwxLCJCX1xcbnUiXSxbMCwxLCJBX1xcbGFtYmRhIl0sWzIsMSwiXFxsYW5nbGUgdF9cXGJldGEgXFxyYW5nbGVfe30iLDJdLFswLDEsIiIsMCx7InN0eWxlIjp7ImhlYWQiOnsibmFtZSI6ImVwaSJ9fX1dXQ==
	\[\begin{tikzcd}
		& {B_\mu} \\
		{A_\lambda} & {B_\nu}
		\arrow["{\langle t_\beta \rangle_{\beta<\nu}}"', from=2-1, to=2-2]
		\arrow[two heads, from=1-2, to=2-2]
	\end{tikzcd}
	\]
	Assume that the factorization of the map $ B_\mu \display B_\nu $ is of the form $$ \dots \display B_{\nu+2} \display B_{\nu+1} \display B_\nu $$ and therefore $ B_\mu $ is the limit (obtained similarly as in \cref{limitcontext} and \cref{remarkdisplay}). Then we can take the successive pullback
	% https://q.uiver.app/#q=WzAsOCxbMiwzLCJCX3tcXG51fSJdLFsyLDIsIkJfe1xcbnUrMX0iXSxbMiwxLCJCX3tcXG51KzJ9Il0sWzIsMCwiQl97XFxtdX0iXSxbMCwzLCJBX1xcbGFtYmRhIl0sWzAsMiwiZl4qQl97XFxudSsxfSJdLFswLDEsInEoZixCX3tcXG51KzF9KV4qQl97XFxudSsyfSJdLFswLDAsImZeKkJfXFxtdSJdLFs0LDAsImYiLDJdLFsxLDAsIiIsMCx7InN0eWxlIjp7ImhlYWQiOnsibmFtZSI6ImVwaSJ9fX1dLFsyLDEsIiIsMCx7InN0eWxlIjp7ImhlYWQiOnsibmFtZSI6ImVwaSJ9fX1dLFszLDIsIlxcdmRvdHMiLDEseyJzdHlsZSI6eyJib2R5Ijp7Im5hbWUiOiJub25lIn0sImhlYWQiOnsibmFtZSI6Im5vbmUifX19XSxbNSwxLCJxKGYsQl97XFxudSsxfSkiXSxbNSw0LCIiLDIseyJzdHlsZSI6eyJoZWFkIjp7Im5hbWUiOiJlcGkifX19XSxbNiw1LCIiLDAseyJzdHlsZSI6eyJoZWFkIjp7Im5hbWUiOiJlcGkifX19XSxbNiwyLCJxKHEoZixCX3tcXG51KzF9KSxCX3tcXG51KzJ9KSJdLFs3LDMsInEoZixCX1xcbXUpIl0sWzcsNiwiXFx2ZG90cyIsMSx7InN0eWxlIjp7ImJvZHkiOnsibmFtZSI6Im5vbmUifSwiaGVhZCI6eyJuYW1lIjoibm9uZSJ9fX1dLFs1LDAsIlxcbHJjb3JuZXIiLDEseyJsYWJlbF9wb3NpdGlvbiI6MCwic3R5bGUiOnsiYm9keSI6eyJuYW1lIjoibm9uZSJ9LCJoZWFkIjp7Im5hbWUiOiJub25lIn19fV0sWzYsMSwiXFxscmNvcm5lciIsMSx7ImxhYmVsX3Bvc2l0aW9uIjowLCJzdHlsZSI6eyJib2R5Ijp7Im5hbWUiOiJub25lIn0sImhlYWQiOnsibmFtZSI6Im5vbmUifX19XSxbNywwLCJcXGxyY29ybmVyIiwxLHsibGFiZWxfcG9zaXRpb24iOjAsInN0eWxlIjp7ImJvZHkiOnsibmFtZSI6Im5vbmUifSwiaGVhZCI6eyJuYW1lIjoibm9uZSJ9fX1dXQ==
	\begin{equation} \label{limitpullback}
		\begin{tikzcd}
			{f^*B_\mu} && {B_{\mu}} \\
			{q(f,B_{\nu+1})^*B_{\nu+2}} && {B_{\nu+2}} \\
			{f^*B_{\nu+1}} && {B_{\nu+1}} \\
			{A_\lambda} && {B_{\nu}}
			\arrow["f"', from=4-1, to=4-3]
			\arrow[two heads, from=3-3, to=4-3]
			\arrow[two heads, from=2-3, to=3-3]
			\arrow["\vdots"{description}, draw=none, from=1-3, to=2-3]
			\arrow["{q(f,B_{\nu+1})}", from=3-1, to=3-3]
			\arrow[two heads, from=3-1, to=4-1]
			\arrow[two heads, from=2-1, to=3-1]
			\arrow["{q(q(f,B_{\nu+1}),B_{\nu+2})}", from=2-1, to=2-3]
			\arrow["{q(f,B_\mu)}", from=1-1, to=1-3]
			\arrow["\vdots"{description}, draw=none, from=1-1, to=2-1]
			\arrow["\lrcorner"{description, pos=0}, draw=none, from=3-1, to=4-3]
			\arrow["\lrcorner"{description, pos=0}, draw=none, from=2-1, to=3-3]
			\arrow["\lrcorner"{description, pos=0}, draw=none, from=1-1, to=4-3]
		\end{tikzcd}
	\end{equation}
	where at each successor stage it is given as before, $ f:= \langle t_\beta \rangle_{\beta<\nu}$, the context $$ f^*B_\mu:= [\{x_\alpha:\Delta_\alpha,\, x_\beta:\Omega_\beta[t_\delta\mid x_\delta ]_{\delta<\beta}\}_{\substack{\alpha<\lambda \\ \nu < \beta<\mu}}] $$
	is the limit of the sequence on the left-hand side, with the obvious display maps to each object in the sequence, and $$ q(f,B_\mu):= [\langle t_\beta,\, x_\gamma \rangle_{\substack{\beta< \nu<\gamma<\mu}}].$$
	This makes the outer rectangle in (\ref{limitpullback}) commutative. Moreover, the map $ q(f,B_\mu) $ is the unique cone map induced by the family of maps $$ \{[\langle t_\beta,\, x_\gamma \rangle_{\substack{\beta<\nu<\gamma<\delta}}]: f^*B_\mu \to B_\delta\}_{\nu<\delta<\mu}. $$
\end{proof}

Using the same notation as in the lemma above, we have:
\begin{remark}
	\begin{enumerate}
		\item If $ f=Id_{B_\nu} $ then $ (Id_{B_\nu})^*B_\mu=B_\mu $ and $ q(Id_{B_\nu},B_\mu)=Id_{B_\mu} $.
		\item For a diagram
		% https://q.uiver.app/#q=WzAsNCxbMiwwLCJBIl0sWzIsMSwiQiJdLFsxLDEsIkMiXSxbMCwxLCJEIl0sWzIsMSwiZiJdLFszLDIsImciXSxbMCwxLCJwIiwwLHsic3R5bGUiOnsiaGVhZCI6eyJuYW1lIjoiZXBpIn19fV1d
		\[\begin{tikzcd}
			&& A \\
			D & C & B,
			\arrow["f", from=2-2, to=2-3]
			\arrow["g", from=2-1, to=2-2]
			\arrow["p", two heads, from=1-3, to=2-3]
		\end{tikzcd}\]
		we have that $ g^*(f^*(A))= (fg)^*(A) $ and $ q(fg,A)=q(f,A)(g,f^*A). $
	\end{enumerate}
\end{remark}

We will refer to the category $ \C_T $ as the \emph{syntactic category} associated to the generalized $\kappa$-algebraic theory $ T $.

\begin{observation} \label{explicitpullbacks}
	We note that \cref{pullbacksyntactic} give us an explicit construction of pullbacks in $ \C_T $, as well as the pullback of the maps and an explicit description of $ q(f,B_\mu)$.
      \end{observation}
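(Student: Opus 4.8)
The final statement is \cref{explicitpullbacks}, an observation that simply records what \cref{pullbacksyntactic} and its proof have already established. As an observation rather than a theorem with a formal proof obligation, the task is to articulate clearly what explicit data the preceding lemma furnishes, so the plan is essentially to read off and assemble that data from the proof of \cref{pullbacksyntactic} rather than to prove anything new.

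The plan is to point to the two cases treated in the proof of \cref{pullbacksyntactic} and extract the closed-form descriptions they provide. First I would recall the successor case: given a display map $[\langle x_\beta\rangle_{\beta<\mu}]:[\{x_\beta:\Omega_\beta\}_{\beta<\mu+1}]\display[\{x_\beta:\Omega_\beta\}_{\beta<\mu}]$ and an arbitrary morphism $f=[\langle t_\beta\rangle_{\beta<\mu}]$ out of $[\{x_\alpha:\Delta_\alpha\}_{\alpha<\lambda}]$, the pullback object is explicitly $[\{x_\alpha:\Delta_\alpha,\,x_\mu:\Omega_\mu[t_\beta\mid x_\beta]_{\beta<\mu}\}_{\alpha<\lambda}]$, the left vertical leg is the generalized display map $[\langle x_\alpha\rangle_{\alpha<\lambda}]$, and the gap/transpose map is $q(f,B_{\mu+1})=[\langle t_\beta,x_\mu\rangle_{\beta<\mu}]$. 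Then I would recall the limit case, where for a limit ordinal $\mu>\nu$ the pullback $f^*B_\mu$ is given explicitly as $[\{x_\alpha:\Delta_\alpha,\,x_\beta:\Omega_\beta[t_\delta\mid x_\delta]_{\delta<\beta}\}_{\alpha<\lambda,\,\nu<\beta<\mu}]$, obtained as the limit of the tower of successor-stage pullbacks, and the map $q(f,B_\mu)=[\langle t_\beta,x_\gamma\rangle_{\beta<\nu<\gamma<\mu}]$ is the unique cone map into this limit.

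The substance of the observation is therefore just that these formulas are genuinely explicit: every context, every morphism, and every gap map appearing in the construction of the pullback is written down syntactically in terms of the original data $(\Delta_\alpha)$, $(\Omega_\beta)$, and the components $(t_\beta)$ of $f$, with substitution as the only operation involved. I would emphasise that no existence-by-abstract-nonsense is invoked anywhere in \cref{pullbacksyntactic}; the pullback and its universal map are produced by hand at each stage, so one can always compute them concretely. Since this is an observation and the content is entirely contained in the already-given proof, there is no real obstacle — the only care needed is to state which formulas correspond to $f^*B_\mu$, the two projection legs, and $q(f,B_\mu)$ so that later sections (for instance \cref{cstr:f*}, which substitutes along context morphisms into the types appearing in a pullback of a generalized display map) can cite these explicit expressions directly rather than reconstructing them.
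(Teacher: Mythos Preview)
Your proposal is correct and matches the paper's treatment: the observation carries no proof in the paper, and your plan to simply read off the explicit successor-stage and limit-stage formulas for $f^*B_\mu$, the display projection, and $q(f,B_\mu)$ from the proof of \cref{pullbacksyntactic} is exactly the intended content.
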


      We finish this section by characterizing the display maps in the category $\C_T$. This result says that display maps are somehow generic. We start with a preparatory result.

      \begin{lemma} \label{display:pullback-axiom}
     Let $T$ be a generalized $\kappa$-algebraic theory and $\C_T$ its syntactic $\kappa$-contextual category. Assume that there is a $ f: \Delta \to \Gamma $, then any display map $ B \display \Delta$ of length 1 can be obtained as a pullback of the form
   % https://q.uiver.app/#q=WzAsNCxbMCwwLCJCIl0sWzAsMSwiXFxEZWx0YSJdLFsxLDEsIlxcR2FtbWEiXSxbMSwwLCJcXEdhbW1hJyJdLFszLDIsIiIsMCx7InN0eWxlIjp7ImhlYWQiOnsibmFtZSI6ImVwaSJ9fX1dLFswLDNdLFswLDEsIiIsMix7InN0eWxlIjp7ImhlYWQiOnsibmFtZSI6ImVwaSJ9fX1dLFsxLDIsImYiLDJdLFswLDIsIiIsMSx7InN0eWxlIjp7Im5hbWUiOiJjb3JuZXIifX1dXQ==
\[\begin{tikzcd}
	B & {\Gamma'} \\
	\Delta & \Gamma
	\arrow[two heads, from=1-2, to=2-2]
	\arrow[from=1-1, to=1-2]
	\arrow[two heads, from=1-1, to=2-1]
	\arrow["f"', from=2-1, to=2-2]
	\arrow["\lrcorner"{anchor=center, pos=0.125}, draw=none, from=1-1, to=2-2]
      \end{tikzcd}\]
    where $\Gamma' \display \Gamma$ is of length 1.
\end{lemma}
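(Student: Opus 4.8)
The plan is to obtain the statement directly from the syntactic description of length-$1$ display maps in $\C_T$, with the Derivation Lemma (\cref{derivationlemma}) as the essential input and the explicit formula for pullbacks in $\C_T$ from \cref{pullbacksyntactic} (see also \cref{explicitpullbacks}) used to write down the square.

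First I would unwind the data. Writing $\Delta \equiv [\{x_\alpha:\Delta_\alpha\}_{\alpha<\lambda}]$, a display map $B \display \Delta$ of length $1$ is, up to the equivalence relation $\approx$ on contexts, one of the form $B \equiv [\{x_\alpha:\Delta_\alpha,\,x_\lambda:A\}_{\alpha<\lambda}] \display \Delta$ for a derived type judgment $\Delta \vdash A\,\type$; this is \cref{remarkdisplay} together with the construction of $\C_T$. By the Derivation Lemma, $A$ is of the form $B_0(t_\beta)_{\beta<\nu}$, where $B_0$ is a sort symbol of $T$ with introductory rule $\{z_\beta:D_\beta\}_{\beta<\nu} \vdash B_0(z_\beta)_{\beta<\nu}\,\type$ and where, for every $\beta<\nu$, the judgment $\Delta \vdash t_\beta:D_\beta[t_\delta\mid z_\delta]_{\delta<\beta}$ is derivable. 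Hence $\langle t_\beta\rangle_{\beta<\nu}$ is a morphism $\Delta \to \Gamma_0$ of $\C_T$, where $\Gamma_0 \equiv [\{z_\beta:D_\beta\}_{\beta<\nu}]$, and the type introduction of $B_0$ itself furnishes a length-$1$ display $p\colon \Gamma_0' \display \Gamma_0$ with $\Gamma_0' \equiv [\{z_\beta:D_\beta,\,z_\nu:B_0(z_\beta)_{\beta<\nu}\}_{\beta<\nu}]$.

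Second I would check that $B \display \Delta$ is the pullback of $p$ along $\langle t_\beta\rangle_{\beta<\nu}$: this is nothing but the computation of pullbacks in the successor case (target context of length $\nu+1$) from the proof of \cref{pullbacksyntactic}, which yields exactly the context $[\{x_\alpha:\Delta_\alpha,\,x_\lambda:A\}_{\alpha<\lambda}] = B$ up to $\approx$, with comparison map $\langle t_\beta, x_\lambda\rangle_{\beta<\nu}\colon B \to \Gamma_0'$. In particular this shows, as a by-product, that the pullback of a length-$1$ display along an arbitrary morphism of $\C_T$ is again a length-$1$ display. To produce the square in the statement for a prescribed $f\colon\Delta\to\Gamma$, one takes $\Gamma' \display \Gamma$ to be the pullback of $p$ along the map $\Gamma \to \Gamma_0$ through which $\langle t_\beta\rangle_{\beta<\nu}$ factors, and pastes the two pullback squares over $f$; the left-hand square and the resulting map $\Gamma'\display\Gamma$ are again length-$1$ displays by the same successor-case computation, and the pasting lemma gives that the outer rectangle is a pullback.

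The step I expect to be the real obstacle is none of these individually, but the bookkeeping around them: one must ensure that all substitutions are carried out simultaneously so that \cref{lemma3}, \cref{lemma4} and \cref{lemma5} apply and the morphisms between contexts are well defined, and that every identification above is asserted only up to the relations $\approx$ on contexts and on context morphisms rather than literally. The assumption of canonical variable names (\cref{equivalenceofcontexts}) is what keeps this under control, just as in \cite{cartmell1978}.
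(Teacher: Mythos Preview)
Your first two paragraphs are the paper's argument: the paper also opens by saying ``this is simply a reformulation of \cref{derivationlemma}'' and then reads off the pullback square from the explicit description in \cref{pullbacksyntactic}. In the paper's proof and in its only use of the lemma (\cref{generalized-display:limit-display}), the data $\Gamma$, $f$, and $\Gamma'\display\Gamma$ are \emph{produced} by the Derivation Lemma --- $\Gamma$ is the introductory context of the sort symbol, $f=\langle t_\beta\rangle$, and $\Gamma'=(\Gamma,B_0)$ --- rather than prescribed in advance. Under that existential reading your argument is already complete at the end of your second paragraph, and it agrees with the paper.

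Your third paragraph, where you try to accommodate a genuinely \emph{prescribed} $f\colon\Delta\to\Gamma$, has a real gap: you assert that $\langle t_\beta\rangle_{\beta<\nu}\colon\Delta\to\Gamma_0$ factors through $\Gamma$, but nothing in the setup supplies such a factorization, and in general none exists. Worse, with $f$ arbitrary the statement itself is false --- take $\Gamma$ to be the empty context, $\Delta=(x{:}X)$, and $B=(x{:}X,\,y{:}Y(x))$ for a sort $Y$ honestly dependent on $x$; then no length-$1$ display over the empty context pulls back to $B\display\Delta$ along the unique map. So read the hypothesis ``assume that there is an $f\colon\Delta\to\Gamma$'' as part of the data to be exhibited (this is how the paper uses the lemma), drop the attempted factorization, and stop after your second paragraph.
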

\begin{proof}
  This is simply a reformulation of \cref{derivationlemma}. Assume that $$f=[\langle t_\beta \rangle_{\beta<\mu}]:[\{x_\alpha:\Delta_\alpha\}_{\alpha<\lambda}] \to [\{x_\beta:\Gamma_\beta\}_{\beta<\mu}]. $$ Therefore, when the display map is of the form $$ [\{x_\alpha:\Delta_\alpha\}_{\alpha<\lambda+1}] \display [\{x_\alpha:\Delta_\alpha\}_{\alpha<\lambda}].$$ We can construct the square
  % https://q.uiver.app/#q=WzAsNCxbMCwwLCJbXFx7eF9cXGFscGhhOlxcRGVsdGFfXFxhbHBoYVxcfV97XFxhbHBoYTxcXGxhbWJkYSsxfV0iXSxbMCwxLCJbXFx7eF9cXGFscGhhOlxcRGVsdGFfXFxhbHBoYVxcfV97XFxhbHBoYTxcXGxhbWJkYX1dIl0sWzIsMSwiW1xce3g6XFxHYW1tYV9cXGJldGFcXH1fe1xcYmV0YTxcXG11fV0iXSxbMiwwLCJbXFx7eDpcXEdhbW1hX1xcYmV0YSx4X1xcbGFtYmRhOlxcRGVsdGFfXFxsYW1iZGFcXH1fe1xcYmV0YTxcXG11fV0iXSxbMywyLCIiLDAseyJzdHlsZSI6eyJoZWFkIjp7Im5hbWUiOiJlcGkifX19XSxbMCwzLCJcXGxhbmdsZSB0X1xcYmV0YSx4X1xcbGFtYmRhIFxccmFuZ2xlX3tcXGJldGE8XFxtdX0iXSxbMCwxLCIiLDIseyJzdHlsZSI6eyJoZWFkIjp7Im5hbWUiOiJlcGkifX19XSxbMSwyLCJcXGxhbmdsZSB0X1xcYmV0YSBcXHJhbmdsZV97XFxiZXRhPFxcbXV9IiwyXV0=
\[\begin{tikzcd}
	{[\{x_\alpha:\Delta_\alpha\}_{\alpha<\lambda+1}]} && {[\{x:\Gamma_\beta,x_\lambda:\Delta_\lambda\}_{\beta<\mu}]} \\
	{[\{x_\alpha:\Delta_\alpha\}_{\alpha<\lambda}]} && {[\{x:\Gamma_\beta\}_{\beta<\mu}]}.
	\arrow[two heads, from=1-3, to=2-3]
	\arrow["{\langle t_\beta,x_\lambda \rangle_{\beta<\mu}}", from=1-1, to=1-3]
	\arrow[two heads, from=1-1, to=2-1]
	\arrow["{\langle t_\beta \rangle_{\beta<\mu}}"', from=2-1, to=2-3]
      \end{tikzcd}\]
    Since for all $\beta<\mu$, $x_\beta$ does not occur in $\Delta_\lambda$ we have that $\Delta_\lambda[t_\beta|x_\beta]_{\beta<\mu}\equiv \Delta_\lambda $. Hence, it follows from the construction of pullbacks in $\C_T$ (\cref{pullbacksyntactic}) that the square above is indeed a pullback diagram.
  \end{proof}
      
  We are ready to give the full description of display maps.
  
  \begin{proposition} \label{generalized-display:limit-display}
    Every display map $B \display \Delta$ in $\C_T$ is a limit of a $\kappa$-small tower $V:\lambda \to \C_T$ where for each limit ordinal $\beta < \lambda$
        \[V(\beta) = \Lim_{\alpha < \beta } V(\alpha) \]
and the map $V(\alpha+1) \to V(\alpha)$ is a pullback of a length one display map of the form $(\Gamma,A) \display \Gamma $ where $\Gamma \vdash A \, \type$ is a type axiom of the theory $T$.
\end{proposition}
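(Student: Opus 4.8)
The plan is to unwind the explicit syntactic description of display maps recorded in \cref{remarkdisplay} and \cref{limitcontext}, and then to iterate the single-step pullback description of \cref{display:pullback-axiom}. First I would fix a display map $p : B \display \Delta$ in $\C_T$. By \cref{remarkdisplay}, after resetting variable names, we may write $\Delta \equiv [\{x_\alpha : \Delta_\alpha\}_{\alpha < \mu}]$ and $B \equiv [\{x_\alpha : \Delta_\alpha\}_{\alpha < \mu + \varepsilon}]$ for some ordinal $\varepsilon$, with $p$ the canonical projection forgetting the variables $x_\beta$ for $\mu \leqslant \beta < \mu + \varepsilon$. Put $V(\beta) \coloneqq [\{x_\alpha : \Delta_\alpha\}_{\alpha < \mu + \beta}]$ for $\beta \leqslant \varepsilon$, with the canonical display maps as transition maps, and let $\lambda \coloneqq \varepsilon + 1$ unless $\varepsilon$ is a nonzero limit ordinal, in which case $\lambda \coloneqq \varepsilon$. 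Then $V$ restricts to a diagram $V \colon \lambda \to \C_T$ with $V(0) = \Delta$, and its limit is $V(\varepsilon) = B$: when $\varepsilon$ is $0$ or a successor this holds because $\varepsilon$ is then the maximum of $\lambda$, and when $\varepsilon$ is a nonzero limit it is \cref{limitcontext}. In all cases the limit projection $\Lim V \to V(0)$ is exactly $p$. Since the length of the context $B$ is $< \kappa$ and $\kappa$ is regular, $\lambda < \kappa$, so $V$ is a $\kappa$-small tower.

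It then remains to check the two clauses. For a limit ordinal $\beta < \lambda$ one has $\beta < \varepsilon$, and $V(\beta) = [\{x_\alpha : \Delta_\alpha\}_{\alpha < \mu + \beta}] = \Lim_{\alpha < \beta} V(\alpha)$ again by \cref{limitcontext}. For a successor step, the map $V(\alpha + 1) \display V(\alpha)$ forgets the single variable $x_{\mu + \alpha}$ of type $\Delta_{\mu + \alpha}$, and the judgment $\{x_\beta : \Delta_\beta\}_{\beta < \mu + \alpha} \vdash \Delta_{\mu + \alpha}\,\type$ is derived; so by the Derivation Lemma \cref{derivationlemma} there is a type symbol $A$ of $T$, with type axiom $\Gamma \vdash A(x_\gamma)_{\gamma < \nu}\,\type$ where $\Gamma \equiv [\{x_\gamma : \Omega_\gamma\}_{\gamma < \nu}]$, together with expressions $(t_\gamma)_{\gamma < \nu}$ such that $\Delta_{\mu + \alpha} \equiv A(t_\gamma)_{\gamma < \nu}$ and $\{x_\beta : \Delta_\beta\}_{\beta < \mu + \alpha} \vdash t_\gamma : \Omega_\gamma[t_\delta \mid x_\delta]_{\delta < \gamma}$. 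The tuple $(t_\gamma)_{\gamma < \nu}$ defines a morphism $f \colon V(\alpha) \to \Gamma$, and by the explicit construction of pullbacks in \cref{pullbacksyntactic} the pullback of the length-one display map $(\Gamma, A) \display \Gamma$ — where $(\Gamma,A) \equiv [\{x_\gamma : \Omega_\gamma\}_{\gamma < \nu}, x_\nu : A(x_\gamma)_{\gamma < \nu}]$ — along $f$ is the context $[\{x_\beta : \Delta_\beta\}_{\beta < \mu + \alpha}, x_{\mu + \alpha} : A(t_\gamma)_{\gamma < \nu}] = V(\alpha + 1)$, with display map $V(\alpha+1) \display V(\alpha)$; this is precisely \cref{display:pullback-axiom} applied to $f$. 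This completes the verification.

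The whole argument is a bookkeeping exercise on top of \cref{derivationlemma}, \cref{pullbacksyntactic} and \cref{display:pullback-axiom}; I do not expect a genuine obstacle. The only point that needs a moment of care is the ordinal arithmetic: choosing $\lambda$ according to whether $\varepsilon$ is $0$, a successor, or a nonzero limit ordinal, making sure $V(0) = \Delta$ and $\Lim V = B$, and matching the clause $V(\beta) = \Lim_{\alpha < \beta} V(\alpha)$ with the way limits of towers of generalized display maps are computed in \cref{limitcontext}. Once that is set up, everything else is a direct citation of the results already proved earlier in this appendix.
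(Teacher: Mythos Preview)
Your proposal is correct and follows essentially the same approach as the paper: decompose the display map via its length as in \cref{remarkdisplay}, handle limit stages via \cref{limitcontext}, and at each successor stage invoke \cref{display:pullback-axiom} (equivalently, the Derivation Lemma \cref{derivationlemma} together with the explicit pullback of \cref{pullbacksyntactic}) to exhibit the step as a pullback of a type-axiom display map. The paper's proof is terser and leaves the ordinal bookkeeping implicit, whereas you spell out the case distinction on $\varepsilon$; but the argument is the same.
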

\begin{proof}
Each display map in $\C_T$ has a length $\lambda$. Just as in \cref{remarkdisplay} it admits a decomposition into display maps. It will be enough to prove the second claim, and this follows by an inductive argument in conjunction with the previous \cref{display:pullback-axiom}. The inductive step provides us with the required map $f:V(\alpha) \to \Gamma $ in \cref{display:pullback-axiom}.
\end{proof}

      %%%%%%%%%%%%%%%%%%%%%%%%%%%%5

%%% Local Variables:
%%% mode: latex
%%% TeX-master: "main"
%%% End:

        % Appendix B: Contextual categories and cartmell theories

        \section{Contextual categories and Cartmell theories}\label{appendix-b}

This section is the most relevant part. We will show that from the syntax of a generalized $\kappa$-algebraic theory we can construct a category, called $\kappa$-\textit{contextual category}, which we now introduce.

\subsection{\texorpdfstring{$\kappa$}{Lg}-contextual categories}

The discussion in \cref{syntacticcat} on the properties of the syntactic category $ \C_T $ can be summarized with the next definition, which is the natural generalization of Cartmell's \cite{cartmell1978} or \cite{kapulkinlumsdaine2018}. We present our definition in the same way as in the latter reference. Recall that $ \kappa $ is a regular cardinal.

\begin{definition}\label{contextualcat}
	A category $ \catC $ is said to be a $ \kappa $-\emph{contextual category} if:
	\begin{enumerate}
		\item The objects of $ \catC $ have grading $ Ob(\catC)=\coprod_{\lambda<\kappa}Ob_\lambda (\catC) $. This grading determines the \emph{height} of any object $ B \in \catC $, which we write as $ ht(B) $.
		\item There is a terminal object $ 1\in \catC $, it is unique up to equality and has height 0.
		\item There is a wide subcategory $ Dis(\catC) $ with distinguished maps $ ``\display'' $ called \emph{display morphisms},
		\item The subcategory $ Dis(\catC) $ is closed under transfinite compositions: if we have	
		% https://q.uiver.app/#q=WzAsNSxbNCwwLCJCX1xcbXUiXSxbMywwLCJCX3tcXG11KzF9Il0sWzIsMCwiQl97XFxtdSsyfSJdLFsxLDAsIkJfe1xcbXUrM30iXSxbMCwwLCJcXGNkb3RzIl0sWzIsMSwiIiwwLHsic3R5bGUiOnsiaGVhZCI6eyJuYW1lIjoiZXBpIn19fV0sWzEsMCwiIiwwLHsic3R5bGUiOnsiaGVhZCI6eyJuYW1lIjoiZXBpIn19fV0sWzMsMiwiIiwwLHsic3R5bGUiOnsiaGVhZCI6eyJuYW1lIjoiZXBpIn19fV0sWzQsMywiIiwwLHsic3R5bGUiOnsiaGVhZCI6eyJuYW1lIjoiZXBpIn19fV1d
		\[\begin{tikzcd}
			\cdots & {B_{3}} & {B_{2}} & {B_{1}} & {B_0}
			\arrow[two heads, from=1-3, to=1-4]
			\arrow[two heads, from=1-4, to=1-5]
			\arrow[two heads, from=1-2, to=1-3]
			\arrow[two heads, from=1-1, to=1-2]
		\end{tikzcd}\]
		a $ \lambda $-sequence of display maps, then there is a unique object $ B $ in $ Dis(\catC) $ with height $ \lambda $ and for each $ \mu\leq \lambda $ a display map $ B \display B_\mu $ such that for any $ \alpha<\lambda $ we have a factorization
		% https://q.uiver.app/#q=WzAsMyxbMCwwLCJCIl0sWzIsMCwiQl8wIl0sWzEsMSwiQl9cXGFscGhhIl0sWzAsMiwiIiwwLHsic3R5bGUiOnsiaGVhZCI6eyJuYW1lIjoiZXBpIn19fV0sWzAsMSwiIiwyLHsic3R5bGUiOnsiaGVhZCI6eyJuYW1lIjoiZXBpIn19fV0sWzIsMSwiIiwwLHsic3R5bGUiOnsiaGVhZCI6eyJuYW1lIjoiZXBpIn19fV1d
		\[\begin{tikzcd}[row sep=tiny]
			B && {B_0} \\
			& {B_\alpha}
			\arrow[two heads, from=1-1, to=2-2]
			\arrow[two heads, from=1-1, to=1-3]
			\arrow[two heads, from=2-2, to=1-3]
		\end{tikzcd}\]

		\item The inclusion functor preserves $i:Dis(\catC) \hookrightarrow \catC$ transfinite compositions.
		
		\item  If $ A \display B $ is an arrow in $ Dis(\catC) $ then $ B \in Ob_\mu(\catC) $ and $ A \in Ob_\lambda(\catC) $ for some ordinals $ \lambda,\, \mu $ with $ \mu\leq\lambda $.
		
		\item For any object $ A \in Ob_\lambda(\catC) $ and any $ \mu\leq\lambda $ there exists a unique object $ B \in Ob_\mu(\catC) $ and a unique display map $ A \display B $. The \emph{length} of this display map is the unique ordinal $ \alpha $ such that $ \lambda=\mu + \alpha $, is such situation, we write $ lt(p) $.
		
		\item For any $ A\in Ob_\lambda(\catC) $, a map $ A \display B $ and any map $ f:C \to B $ there is a  pullback square
		% https://q.uiver.app/#q=WzAsNCxbMSwwLCJBIl0sWzEsMSwiXFxwaV9cXG11KEEpIl0sWzAsMSwiQyJdLFswLDAsImZeKkEiXSxbMiwxLCJmIiwyXSxbMCwxLCJwIiwwLHsic3R5bGUiOnsiaGVhZCI6eyJuYW1lIjoiZXBpIn19fV0sWzMsMiwiZl4qcCIsMix7InN0eWxlIjp7ImhlYWQiOnsibmFtZSI6ImVwaSJ9fX1dLFszLDAsInEoZixBKSJdLFszLDEsIlxcbHJjb3JuZXIiLDEseyJsYWJlbF9wb3NpdGlvbiI6MCwic3R5bGUiOnsiYm9keSI6eyJuYW1lIjoibm9uZSJ9LCJoZWFkIjp7Im5hbWUiOiJub25lIn19fV1d
		\[\begin{tikzcd}
			{f^*A} & A \\
			C & B
			\arrow["f"', from=2-1, to=2-2]
			\arrow["p", two heads, from=1-2, to=2-2]
			\arrow["{f^*p}"', two heads, from=1-1, to=2-1]
			\arrow["{q(f,A)}", from=1-1, to=1-2]
			\arrow["\lrcorner"{description, pos=0}, draw=none, from=1-1, to=2-2]
		\end{tikzcd}\]
		called \emph{canonical pullback} of $ A $ along $ f $, and we require $ lt(f^*p)=lt(p) $.
		
		\item Canonical pullbacks are strictly functorial: for ordinals with $ \mu\leq\lambda $, $ A\in Ob_\lambda(\catC) $
		\begin{enumerate}
			\item If $ f=id_B $ then $ id_B^*A=A $ and $ q(id_B,A)=id_A $.
			\item For a diagram
			% https://q.uiver.app/#q=WzAsNSxbMiwwLCJBIl0sWzIsMSwiXFxwaV9cXG11KEEpIl0sWzEsMSwiQyJdLFswLDEsIkQiXSxbMywxXSxbMiwxLCJmIl0sWzMsMiwiZyJdLFswLDEsInAiLDAseyJzdHlsZSI6eyJoZWFkIjp7Im5hbWUiOiJlcGkifX19XV0=
			\[\begin{tikzcd}
				&& A \\
				D & C & B, & {}
				\arrow["f", from=2-2, to=2-3]
				\arrow["g", from=2-1, to=2-2]
				\arrow["p", two heads, from=1-3, to=2-3]
			\end{tikzcd}\]
			we have that $ g^*(f^*(A))= (fg)^*(A) $ and $ q(fg,A)=q(f,A)q(g,f^*A). $
		\end{enumerate}
              \item Given display maps $p:A \display B $ and $q:B \to C$ and any $f: X \to C$, in the diagram
                % https://q.uiver.app/#q=WzAsNixbMiwwLCJBIl0sWzIsMSwiQiJdLFsyLDIsIkMiXSxbMCwyLCJYIl0sWzAsMSwiZl4qQiJdLFswLDAsInEoZixCKV4qQSJdLFswLDEsInAiLDAseyJzdHlsZSI6eyJoZWFkIjp7Im5hbWUiOiJlcGkifX19XSxbMSwyLCJyIiwwLHsic3R5bGUiOnsiaGVhZCI6eyJuYW1lIjoiZXBpIn19fV0sWzMsMiwiZiIsMl0sWzQsMywiZl4qciIsMix7InN0eWxlIjp7ImhlYWQiOnsibmFtZSI6ImVwaSJ9fX1dLFs0LDEsInEoZixCKSIsMl0sWzUsMCwicShxKGYsQiksQSkiXSxbNSw0LCJxKGYsQileKnAiLDJdLFs0LDIsIiIsMix7InN0eWxlIjp7Im5hbWUiOiJjb3JuZXIifX1dLFs1LDEsIiIsMix7InN0eWxlIjp7Im5hbWUiOiJjb3JuZXIifX1dXQ==
\[\begin{tikzcd}
	{q(f,B)^*A} && A \\
	{f^*B} && B \\
	X && C,
	\arrow["{q(q(f,B),A)}", from=1-1, to=1-3]
	\arrow["{q(f,B)^*p}"',two heads, from=1-1, to=2-1]
	\arrow["\lrcorner"{anchor=center, pos=0.125}, draw=none, from=1-1, to=2-3]
	\arrow["p", two heads, from=1-3, to=2-3]
	\arrow["{q(f,B)}"', from=2-1, to=2-3]
	\arrow["{f^*r}"', two heads, from=2-1, to=3-1]
	\arrow["\lrcorner"{anchor=center, pos=0.125}, draw=none, from=2-1, to=3-3]
	\arrow["r", two heads, from=2-3, to=3-3]
	\arrow["f"', from=3-1, to=3-3]
\end{tikzcd}\]
we have that $f^*r \circ(q(f,B)^*p)=f^*(r\circ p)$ and $q(q(f,B),A)=q(f,A)$.
	\end{enumerate}
\end{definition}

\begin{remark}
	We use the term ``display map'' in a rather different way to Cartmell. For us, a display map can have any height, and it is only bounded by the regular cardinal $ \kappa $. 
\end{remark}

We have already seen one example of such a category.

\begin{corollary} \label{syntactic:contextual}
	For any generalized $\kappa$-algebraic theory $ T $ the syntactic category $ \C_T $ is a $ \kappa $-contextual category.
\end{corollary}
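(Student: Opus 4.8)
The plan is to verify, one clause at a time, that the category $\C_T$ constructed in \cref{syntacticcat} satisfies each of the ten conditions of \cref{contextualcat}; since essentially every clause has already been extracted from the syntax in \cref{appendix-a}, the proof is mostly a matter of matching clauses to the lemmas and observations proved there. First I would recall the basic set-up: the objects of $\C_T$ are $\approx$-equivalence classes of contexts, the length function $\nu$ provides the grading $Ob(\C_T)=\coprod_{\lambda<\kappa}Ob_\lambda(\C_T)$ (clause (1)), the empty context $1=[\emptyset]$ is the unique object of height $0$ and is terminal in $\C_T$ (clause (2), via \cref{remarkdisplay}), and the maps $[\langle x_\beta\rangle_{\beta<\mu}]$ singled out in \cref{display:generalized} form the wide subcategory $Dis(\C_T)$ of display morphisms (clause (3)); closure of $Dis(\C_T)$ under composition of two display maps is immediate from the commuting triangle in \cref{remarkdisplay}, and clause (6)—that a display map never raises height—is visible directly from the constraint $\mu\le\lambda$ in the definition of $[\langle x_\beta\rangle_{\beta<\mu}]\colon[\{x_\alpha:\Delta_\alpha\}_{\alpha<\lambda}]\to[\{x_\beta:\Delta_\beta\}_{\beta<\mu}]$.

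Next I would handle the transfinite-composition clauses (4) and (5) and the rigidity clause (7). The content of clause (4)—that a $\lambda$-chain of display maps has a unique display limit of height $\lambda$ with compatible display projections—is exactly \cref{limitcontext}, where the limit is computed to be the context $[\{x_\alpha:\Delta_\alpha\}_{\alpha<\lambda}]$ with the evident display projections and factorizations; the same observation shows the inclusion $Dis(\C_T)\hookrightarrow\C_T$ preserves these limits (clause (5)). Clause (7), the existence and uniqueness of a display map from a given object of height $\lambda$ onto the prescribed object of height $\mu\le\lambda$ together with the value $\alpha$ of its length (with $\lambda=\mu+\alpha$), follows from the filtration on $Ob(\C_T)$ induced by $\nu$ together with the fact that $\approx$ is rigid on contexts once the variable enumeration is fixed—two contexts agreeing up to the canonical renaming are literally equal as objects—so that the truncated context $[\{x_\beta:\Delta_\beta\}_{\beta<\mu}]$ and the display map onto it are uniquely determined.

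Finally, clauses (8)–(10) concern canonical pullbacks and their strict functoriality. Clause (8)—existence of a canonical pullback of a display map along an arbitrary map, preserving length—is precisely \cref{pullbacksyntactic}, which moreover records explicit formulas for $f^*A$ and for $q(f,A)$ (see also \cref{explicitpullbacks}); the strictness clauses (9) and (10) then reduce to substitution computations using those formulas. Concretely, $q(\mathrm{id}_B,A)=\mathrm{id}_A$ and $q(fg,A)=q(f,A)\,q(g,f^*A)$ are the remark following \cref{pullbacksyntactic}, while the composite-display compatibility $f^*r\circ(q(f,B)^*p)=f^*(r\circ p)$ and $q(q(f,B),A)=q(f,A)$ of clause (10) are obtained by unfolding the explicit descriptions and using that substitution of expressions is strictly associative, i.e. \cref{lemma1111} together with the elementary substitution facts collected in \cref{lemma3}, \cref{lemma4} and \cref{lemma5}. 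I expect the only genuinely delicate point to be checking that these identities hold strictly—on the nose, not merely up to $\approx$—as a $\kappa$-contextual category demands; but because the variable enumeration is canonical and substitution is literally associative and unital, this is a routine (if slightly tedious) verification rather than a real obstacle. I would therefore present the argument as a clause-by-clause checklist against \cref{contextualcat}, citing in each case the corresponding result of \cref{appendix-a}, and flag only the strictness in clauses (9)–(10) as warranting a brief explicit computation.
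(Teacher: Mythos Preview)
Your proposal is correct and follows exactly the paper's approach: the paper's proof is the single line ``This is done throughout \cref{syntacticcat}'', and you have simply made explicit the clause-by-clause matching that this sentence summarizes.
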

\begin{proof}
	This is done throughout \cref{syntacticcat}.
\end{proof}

\begin{remark} \label{convention1}
	It follows from \cref{contextualcat} that for any object $ B\in \catC $ the map $ B \display 1 $ can be decomposed as a transfinite composition of display maps
	\[ B_\lambda \display \dots \display B_1 \display 1.
	\]
	The length of the decomposition above is given by the degree of $ B $. This is what \cite{cartmell1978} calls the tree structure of the category. Whenever we refer to objects in a $ \kappa $-contextual category as above, we will emphasize its height by writing $ B_\lambda $.
	Likewise, we will denote the display maps as $ p_\alpha: B_\lambda \display B_\alpha $ for each $ \alpha<\lambda $.
\end{remark}

The following lemma is a consequence of \cref{contextualcat} and \cref{convention1}.

\begin{lemma}
	Let $ B\in Ob_\lambda(\catC) $ such that $ \lambda $ is a limit ordinal. Then $ B $ itself is a limit object in
	$\catC$. 
\end{lemma}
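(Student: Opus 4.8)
The plan is to recognise $B$ as the transfinite composite of its own canonical tower of display maps, and then to feed this into the two clauses of \cref{contextualcat} that govern such composites. Concretely, I would argue as follows.

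First I would unwind \cref{convention1}. Since $\mathrm{ht}(B)=\lambda$, condition (7) of \cref{contextualcat} gives, for every $\alpha<\lambda$, a unique object $B_\alpha$ of height $\alpha$ together with a unique display map $p_\alpha\colon B\display B_\alpha$, and for $\alpha<\beta<\lambda$ a unique display map $p_{\alpha\beta}\colon B_\beta\display B_\alpha$. Because $Dis(\catC)$ is a wide subcategory it is closed under composition, so $p_{\alpha\beta}\circ p_\beta$ is again a display map $B\display B_\alpha$; the uniqueness of such a map forces $p_{\alpha\beta}\circ p_\beta=p_\alpha$. Hence $(B_\alpha)_{\alpha<\lambda}$ with the transition maps $p_{\alpha\beta}$ is a $\lambda$-sequence of display maps, and $B$, equipped with the projections $p_\mu$ (and $p_\lambda:=\mathrm{id}_B$), is a cone over it satisfying exactly the factorisation condition recorded in clause (4) of \cref{contextualcat}.

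Next I would invoke clause (4): the transfinite composite of this $\lambda$-sequence is characterised as the unique object of height $\lambda$ carrying compatible display maps down to all the $B_\mu$. Since $B$ is precisely such an object, $B$ must coincide, on the nose, with this transfinite composite. Finally clause (5) says the inclusion $Dis(\catC)\hookrightarrow\catC$ preserves transfinite composites, i.e. the cone $(p_\mu\colon B\display B_\mu)_{\mu<\lambda}$ is already a limit cone in $\catC$. Therefore $B=\Lim_{\alpha<\lambda}B_\alpha$, so $B$ is a limit object, matching the description used in \cref{limitcontext} for the syntactic category $\C_T$.

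The only genuinely delicate point — and the step I would be most careful about — is the identification of the given $B$ with the abstract transfinite composite supplied by clause (4), as opposed to merely obtaining a comparison morphism $B\to\Lim_{\alpha<\lambda}B_\alpha$. Here one must use that the uniqueness clause in (4) quantifies over all objects of height $\lambda$ bearing a compatible family of display projections, together with the fact that condition (7) equips our $B$ with exactly one such family. Everything else is routine bookkeeping with the grading and with the uniqueness of display maps, so no calculation is needed beyond what is sketched above.
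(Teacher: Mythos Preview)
Your proposal is correct and follows essentially the same route as the paper: build the canonical tower of display maps using \cref{convention1}/condition~(7), identify $B$ with the transfinite composite via the uniqueness clause in axiom~(4), and then invoke axiom~(5) to conclude that the cone is a limit cone in $\catC$. Your treatment of the identification step is more careful than the paper's, but the underlying argument is the same.
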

\begin{proof}
	From \cref{remarkdisplay} we obtain a sequence
	\[\begin{tikzcd}
		  \cdots & {B_{3}} & {B_{2}} & {B_{1}} & {1.}
		\arrow[two heads, from=1-3, to=1-4]
		\arrow[two heads, from=1-4, to=1-5]
		\arrow[two heads, from=1-2, to=1-3]
		\arrow[two heads, from=1-1, to=1-2]
	\end{tikzcd}\]
	It follows from Axiom 4 of \cref{contextualcat} that $B$ must be the limit of
	the sequence. Finally, we use that the inclusion $ Dis(\catC) \to \catC $ preserve limits.
\end{proof}

\begin{definition}
	Let $ \catC, \, \catD $ contextual categories. A functor $ F: \catC \to \catD $ it is called a \emph{contextual functor} if it satisfies the following conditions:
	\begin{enumerate}
		\item $ F(Ob_\lambda(\catC))\subseteq Ob_\lambda(\catD) $ for all $ \lambda<\kappa $,
		\item $ F $ restricts to a functor $ Dis(\catC)\to Dis(\catD) $,
		\item $ F $ preserves canonical pullbacks up to equality, meaning that for any square in $ \catC $
		\[\begin{tikzcd}
			{f^*A} & A \\
			C & B
			\arrow["f"', from=2-1, to=2-2]
			\arrow["p", two heads, from=1-2, to=2-2]
			\arrow["{f^*p}"', two heads, from=1-1, to=2-1]
			\arrow["{q(f,A)}", from=1-1, to=1-2]
			\arrow["\lrcorner"{description, pos=0}, draw=none, from=1-1, to=2-2]
		\end{tikzcd}\]
		we have $ F(f^*A)=(Ff)^*(FA) $ and $ F(q(f,A))=q(Ff,FA) $.
		
	\end{enumerate}
\end{definition}

Since the degree of each object is preserved by a $ \kappa $-contextual functor, it makes sense to denote $ F(A_\lambda):=F(A)_\lambda $ for $ A_\lambda\in \catC $. Another piece of notation we can introduce is from the functor $ F:Dis(\catC) \to Dis(\catD) $. Since any display map $ p_\alpha:A_\lambda \display A_\alpha $ is sent to a display map  $ F(p_\alpha):F(A)_\lambda \display F(A)_\alpha $, and the degrees are preserved, we agree to omit $ F $ on these maps.

Contextual functors are the morphisms of the category of $ \kappa$-contextual categories, which we will denote it as $ \kcon $.

\subsection{Interlude: categorical facts} \label{interlude}

We collect and recall some categorical facts about general $ \kappa $-contextual categories.

\begin{proposition}[The slice $ \kappa $-contextual category]
	Let $ \catC $ be a $ \kappa $-contextual category. For any object $ B\in Ob_\mu(\catC) $ there is a $ \kappa $-contextual category which is a full subcategory of the slice $ \catC_{/B} $ which has objects display maps $ A \display B $ where $ A \in Ob_\lambda(\catC) $ with $ \lambda \geq \mu $. 
\end{proposition}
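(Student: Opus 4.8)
The plan is to equip the indicated full subcategory of $\catC_{/B}$ with a contextual structure transported from $\catC$ along the restriction of the forgetful functor $\catC_{/B} \to \catC$. Write $\mu = ht(B)$. First I would fix the grading: an object of the subcategory is a display map $p : A \display B$, and since $ht(A) \geq \mu$ there is a unique ordinal $\alpha$ with $ht(A) = \mu + \alpha$; declare the height of $(A,p)$ to be $\alpha$, so that heights in the subcategory biject with heights $\geq \mu$ in $\catC$. The terminal object is $(B, \idx_B)$, which is terminal in the whole slice and has height $0$; it is the unique object of height $0$ because an object of subcategory-height $0$ is a display map $p : A \display B$ with $ht(A) = \mu$, and by Axiom~7 of \cref{contextualcat} the only display map out of $A$ landing again in height $\mu$ is $\idx_A$ (identities are display), forcing $A = B$ and $p = \idx_B$.

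Next I would declare the display morphisms: a morphism $f : (A,p) \to (A',p')$ over $B$ is a display morphism of the subcategory iff $f : A \to A'$ is a display morphism of $\catC$. The key observation making this coherent is that such an $f$ is \emph{automatically} a morphism over $B$ whenever it is a display map in $\catC$: indeed $p' \circ f$ and $p$ are both display maps $A \to B$ (the former because $Dis(\catC)$ is closed under composition), hence equal by the uniqueness clause of Axiom~7. Consequently the display morphisms of the subcategory are exactly the display morphisms of $\catC$ between objects of the subcategory, with no residual ``over $B$'' condition, and the verification of the $Dis$-axioms — closure under composition and identities, closure under transfinite composition, the height/length bookkeeping — is inherited verbatim from $\catC$ through the bijection $\alpha \mapsto \mu + \alpha$, using repeatedly that a composite $A \display A_0 \display B$ of display maps is again a display map, so that transfinite composites computed in $\catC$ land back in the subcategory.

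For the pullback structure I would take canonical pullbacks in the subcategory to be those of $\catC$: given a display morphism $(A,p) \display (A'',p'')$ (so $A \display A''$ is display in $\catC$) and any $f : (C,r) \to (A'',p'')$, the canonical pullback $f^*A \display C$ of \cref{contextualcat} composed with $r : C \display B$ is a display map $f^*A \display B$, giving an object of the subcategory, while $q(f,A)$ becomes a morphism over $B$ by the same uniqueness argument; since pullbacks in a slice are computed in the base, this square is a pullback in $\catC_{/B}$. The length condition $lt(f^*p) = lt(p)$, the strict functoriality of canonical pullbacks, and the compatibility clause relating iterated pullbacks (the last axiom of \cref{contextualcat}) all transfer directly. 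The remaining uniqueness statements — of the display map of a prescribed height out of a given object, and of limits of $\kappa$-small chains of display maps — reduce to the corresponding statements in $\catC$ once one notes that the subcategory-height of $(A,p)$ determines $ht(A)$.

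The verification is essentially routine: every clause of \cref{contextualcat} for the subcategory follows from the same clause for $\catC$. The only mildly delicate point, and the one I would state explicitly, is the automatic compatibility over $B$ of display morphisms noted above; it is what makes $Dis$ of the subcategory a genuine sub-collection of $Dis(\catC)$, and without it neither the transfinite-composition axiom nor the pullback axioms would obviously remain inside $\catC_{/B}$.
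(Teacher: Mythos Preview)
Your proposal is correct and follows the same approach as the paper, which dismisses the proof as ``completely formal'' with the sole remark that pullbacks of display maps are display maps. You have simply fleshed out the formal verification in detail, and your emphasis on display morphisms being automatically over $B$ (via the uniqueness clause of Axiom~7) is a useful observation the paper leaves implicit.
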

Since we will rarely use categories other than $ \kappa $-contextual categories, we will employ the slice notation $ \catC_{/B} $ for the category from the previous proposition. 

\begin{proof}
	The proof is completely formal. The important fact to remember is that the pullback of a display map is also a display map.
\end{proof}

It is a well known fact that the pasting of two pullbacks give us a pullback, in our case consider the following diagram:
% https://q.uiver.app/#q=WzAsOCxbMiwzLCJCX3tcXG51fSJdLFsyLDIsIkJfe1xcbnUrMX0iXSxbMiwxLCJCX3tcXG51KzJ9Il0sWzIsMCwiQl97XFxtdX0iXSxbMCwzLCJBX1xcbGFtYmRhIl0sWzAsMiwiZl4qQl97XFxudSsxfSJdLFswLDEsInEoZixCX3tcXG51KzF9KV4qQl97XFxudSsyfSJdLFswLDAsImZeKkJfXFxtdSJdLFs0LDAsImYiLDJdLFsxLDAsIiIsMCx7InN0eWxlIjp7ImhlYWQiOnsibmFtZSI6ImVwaSJ9fX1dLFsyLDEsIiIsMCx7InN0eWxlIjp7ImhlYWQiOnsibmFtZSI6ImVwaSJ9fX1dLFszLDIsIlxcdmRvdHMiLDEseyJzdHlsZSI6eyJib2R5Ijp7Im5hbWUiOiJub25lIn0sImhlYWQiOnsibmFtZSI6Im5vbmUifX19XSxbNSwxLCJxKGYsQl97XFxudSsxfSkiXSxbNSw0LCIiLDIseyJzdHlsZSI6eyJoZWFkIjp7Im5hbWUiOiJlcGkifX19XSxbNiw1LCIiLDAseyJzdHlsZSI6eyJoZWFkIjp7Im5hbWUiOiJlcGkifX19XSxbNiwyLCJxKHEoZixCX3tcXG51KzF9KSxCX3tcXG51KzJ9KSJdLFs3LDMsInEoZixCX1xcbXUpIl0sWzcsNiwiXFx2ZG90cyIsMSx7InN0eWxlIjp7ImJvZHkiOnsibmFtZSI6Im5vbmUifSwiaGVhZCI6eyJuYW1lIjoibm9uZSJ9fX1dLFs1LDAsIlxcbHJjb3JuZXIiLDEseyJsYWJlbF9wb3NpdGlvbiI6MCwic3R5bGUiOnsiYm9keSI6eyJuYW1lIjoibm9uZSJ9LCJoZWFkIjp7Im5hbWUiOiJub25lIn19fV0sWzYsMSwiXFxscmNvcm5lciIsMSx7ImxhYmVsX3Bvc2l0aW9uIjowLCJzdHlsZSI6eyJib2R5Ijp7Im5hbWUiOiJub25lIn0sImhlYWQiOnsibmFtZSI6Im5vbmUifX19XSxbNywwLCJcXGxyY29ybmVyIiwxLHsibGFiZWxfcG9zaXRpb24iOjAsInN0eWxlIjp7ImJvZHkiOnsibmFtZSI6Im5vbmUifSwiaGVhZCI6eyJuYW1lIjoibm9uZSJ9fX1dXQ==
\[\begin{tikzcd}
	{f^*B_\mu} && {B_{\mu}} \\
	{q(f,B_{\nu+1})^*B_{\nu+2}} && {B_{\nu+2}} \\
	{f^*B_{\nu+1}} && {B_{\nu+1}} \\
	{A_\lambda} && {B_{\nu}}
	\arrow["f"', from=4-1, to=4-3]
	\arrow[two heads, from=3-3, to=4-3]
	\arrow[two heads, from=2-3, to=3-3]
	\arrow["\vdots"{description}, draw=none, from=1-3, to=2-3]
	\arrow["{q(f,B_{\nu+1})}", from=3-1, to=3-3]
	\arrow[two heads, from=3-1, to=4-1]
	\arrow[two heads, from=2-1, to=3-1]
	\arrow["{q(q(f,B_{\nu+1}),B_{\nu+2})}", from=2-1, to=2-3]
	\arrow["{q(f,B_\mu)}", from=1-1, to=1-3]
	\arrow["\vdots"{description}, draw=none, from=1-1, to=2-1]
	\arrow["\lrcorner"{description, pos=0}, draw=none, from=3-1, to=4-3]
	\arrow["\lrcorner"{description, pos=0}, draw=none, from=2-1, to=3-3]
	\arrow["\lrcorner"{description, pos=0}, draw=none, from=1-1, to=4-3]
\end{tikzcd}\]
Then if $ \mu $ is a limit ordinal, the object $ B_\mu $ is the limit of the sequence on the right-hand side. Thus, $ f^*B_\mu $ is the limit of the sequence on the left-hand side. Note that pairwise we have $ q(f,B_{\nu+1})^*B_{\nu+2}=f^*B_{\nu+2} $ and $ q(f,B_{\mu+2})=q(q(f,B_{\mu+1}),B_{\mu+2}).$ \\
If $ f:A_\lambda \to B_\nu $ and $ p_\nu:B_\mu \display B_\nu $ is a display map with $ \mu=\nu+1 $, using the universal property of the pullback, we can construct the following diagram:
% https://q.uiver.app/#q=WzAsNSxbMiwxLCJCX3tcXG11fSJdLFsyLDIsIkJfXFxudSJdLFsxLDIsIkFfXFxsYW1iZGEiXSxbMSwxLCIocF9cXG51IGYpXipCX3tcXG11fSJdLFswLDAsIkFfXFxsYW1iZGEiXSxbMiwxLCJwX1xcbnUgZiIsMl0sWzAsMSwicF9cXG51IiwwLHsic3R5bGUiOnsiaGVhZCI6eyJuYW1lIjoiZXBpIn19fV0sWzMsMiwiKHBfXFxudSBmKV4qcF9cXG51IiwxLHsic3R5bGUiOnsiaGVhZCI6eyJuYW1lIjoiZXBpIn19fV0sWzMsMF0sWzMsMSwiXFxscmNvcm5lciIsMSx7ImxhYmVsX3Bvc2l0aW9uIjowLCJzdHlsZSI6eyJib2R5Ijp7Im5hbWUiOiJub25lIn0sImhlYWQiOnsibmFtZSI6Im5vbmUifX19XSxbNCwwLCJmIiwxXSxbNCwyLCJJZF97QV9cXGxhbWJkYX0iLDFdLFs0LDMsIlxcZGVsdGFfZiIsMSx7InN0eWxlIjp7ImJvZHkiOnsibmFtZSI6ImRhc2hlZCJ9fX1dXQ==
\[\begin{tikzcd}
	{A_\lambda} \\
	& {(p_\nu f)^*B_{\mu}} & {B_{\mu}} \\
	& {A_\lambda} & {B_\nu.}
	\arrow["{p_\nu f}"', from=3-2, to=3-3]
	\arrow["{p_\nu}", two heads, from=2-3, to=3-3]
	\arrow["{(p_\nu f)^*p_\nu}"{description}, two heads, from=2-2, to=3-2]
	\arrow[from=2-2, to=2-3]
	\arrow["\lrcorner"{description, pos=0}, draw=none, from=2-2, to=3-3]
	\arrow["f"{description}, from=1-1, to=2-3,bend left]
	\arrow["{Id_{A_\lambda}}"{description}, from=1-1, to=3-2,bend right]
	\arrow["{\delta_{f}^\nu}"{description}, dashed, from=1-1, to=2-2]
\end{tikzcd}\]
The map $ \delta_{f}^\nu $ makes both triangles commutative. We will focus on the fact that $ ((f_\nu)^*p_\nu)\delta_{f}^\nu=Id_{A_\lambda}$, where $ f_\nu=p_\nu f $.
Assume that we have a map $ p: B_\mu \display B_\nu $ with $ \mu $ a limit ordinal, in particular the length of $ p $ is a limit ordinal. Then a map $ f: A_\lambda \to B_\mu $ is determinate by a family of maps $ \{f_\gamma: A_\lambda \to B_\gamma\} $. Then we obtain:
% https://q.uiver.app/#q=WzAsOSxbMyw0LCJCX3tcXG51fSJdLFszLDMsIkJfe1xcbnUrMX0iXSxbMywyLCJCX3tcXG51KzJ9Il0sWzMsMSwiQl97XFxtdX0iXSxbMSw0LCJBX1xcbGFtYmRhIl0sWzEsMywiZl4qQl97XFxudSsxfSJdLFsxLDIsInEoZixCX3tcXG51KzF9KV4qQl97XFxudSsyfSJdLFsxLDEsImZeKkJfXFxtdSJdLFswLDAsIkFfXFxsYW1iZGEiXSxbNCwwLCJmX1xcbnUiLDJdLFsxLDAsIiIsMCx7InN0eWxlIjp7ImhlYWQiOnsibmFtZSI6ImVwaSJ9fX1dLFsyLDEsIiIsMCx7InN0eWxlIjp7ImhlYWQiOnsibmFtZSI6ImVwaSJ9fX1dLFszLDIsIlxcdmRvdHMiLDEseyJzdHlsZSI6eyJib2R5Ijp7Im5hbWUiOiJub25lIn0sImhlYWQiOnsibmFtZSI6Im5vbmUifX19XSxbNSwxLCJxKGYsQl97XFxudSsxfSkiXSxbNSw0LCIiLDIseyJzdHlsZSI6eyJoZWFkIjp7Im5hbWUiOiJlcGkifX19XSxbNiw1LCIiLDAseyJzdHlsZSI6eyJoZWFkIjp7Im5hbWUiOiJlcGkifX19XSxbNiwyLCJxKHEoZixCX3tcXG51KzF9KSxCX3tcXG51KzJ9KSJdLFs3LDMsInEoZixCX1xcbXUpIl0sWzcsNiwiXFx2ZG90cyIsMSx7InN0eWxlIjp7ImJvZHkiOnsibmFtZSI6Im5vbmUifSwiaGVhZCI6eyJuYW1lIjoibm9uZSJ9fX1dLFs1LDAsIlxcbHJjb3JuZXIiLDEseyJsYWJlbF9wb3NpdGlvbiI6MCwic3R5bGUiOnsiYm9keSI6eyJuYW1lIjoibm9uZSJ9LCJoZWFkIjp7Im5hbWUiOiJub25lIn19fV0sWzYsMSwiXFxscmNvcm5lciIsMSx7ImxhYmVsX3Bvc2l0aW9uIjowLCJzdHlsZSI6eyJib2R5Ijp7Im5hbWUiOiJub25lIn0sImhlYWQiOnsibmFtZSI6Im5vbmUifX19XSxbNywwLCJcXGxyY29ybmVyIiwxLHsibGFiZWxfcG9zaXRpb24iOjAsInN0eWxlIjp7ImJvZHkiOnsibmFtZSI6Im5vbmUifSwiaGVhZCI6eyJuYW1lIjoibm9uZSJ9fX1dLFs4LDQsIklkIiwyLHsiY3VydmUiOjN9XSxbOCwzLCJmIiwwLHsiY3VydmUiOi0zfV0sWzgsNywiXFxkZWx0YV9mIiwxLHsic3R5bGUiOnsiYm9keSI6eyJuYW1lIjoiZGFzaGVkIn19fV0sWzMsMCwicCIsMCx7ImN1cnZlIjotM31dXQ==
\[\begin{tikzcd}
	{A_\lambda} \\
	& {f^*B_\mu} && {B_{\mu}} \\
	& {q(f,B_{\nu+1})^*B_{\nu+2}} && {B_{\nu+2}} \\
	& {f^*B_{\nu+1}} && {B_{\nu+1}} \\
	& {A_\lambda} && {B_{\nu},}
	\arrow["{f_\nu}"', from=5-2, to=5-4]
	\arrow[two heads, from=4-4, to=5-4]
	\arrow[two heads, from=3-4, to=4-4]
	\arrow["\vdots"{description}, draw=none, from=2-4, to=3-4]
	\arrow["{q(f,B_{\nu+1})}", from=4-2, to=4-4]
	\arrow[two heads, from=4-2, to=5-2]
	\arrow[two heads, from=3-2, to=4-2]
	\arrow["{q(q(f,B_{\nu+1}),B_{\nu+2})}", from=3-2, to=3-4]
	\arrow["{q(f,B_\mu)}", from=2-2, to=2-4]
	\arrow["\vdots"{description}, draw=none, from=2-2, to=3-2]
	\arrow["\lrcorner"{description, pos=0}, draw=none, from=4-2, to=5-4]
	\arrow["\lrcorner"{description, pos=0}, draw=none, from=3-2, to=4-4]
	\arrow["\lrcorner"{description, pos=0}, draw=none, from=2-2, to=5-4]
	\arrow["Id"', bend right, from=1-1, to=5-2]
	\arrow["f", bend left, from=1-1, to=2-4]
	\arrow["{\delta_f^\nu}"{description}, dashed, from=1-1, to=2-2]
	\arrow["p", bend left, from=2-4, to=5-4]
\end{tikzcd}\]
where the map $ \delta_f^\nu $ is given as the family of maps $ (\delta_f^\nu)_\gamma $, each given by an intermediate pullback square in the diagram above.

\begin{notation}
	If the situation above, for $ f:A_\lambda \to B_\mu $ we denote
	\[\Gamma(B_\nu^\mu):=\{h: A_\lambda \to (p_\nu f)^*B_\mu \mid ((p_\nu f)^*p_\nu)h=Id_{A_\lambda} \}. \]
	We can consider a more general case, if $ A_\lambda \in Ob_\lambda(\catC) $ and $ B_\mu \in Ob_\mu(\catC)$ with $ \lambda<\mu $, then there is a unique display map $ p:B_\mu \display A_\lambda $. We set
	\[
	\Gamma(B_\lambda^\mu):=\{s: A_\lambda \to B_\mu \mid ps=Id_{A_\lambda} \}
	\]
	for this situation as well, since the object $ A_\lambda $ will be inferred from the context.
\end{notation}

If the contextual category is $ \C_T $, then recalling \cref{pullbacksyntactic}, we can give an explicit description of the map $ \delta_f^\nu $.

\begin{lemma} \label{descriptiondelta}
	Assume that $ f:=[\langle t_\beta \rangle_{\beta<\nu}]: [\{x_\alpha:A_\alpha\}_{\alpha<\lambda}] \to [\{x_\beta:B_\beta\}_{\beta<\nu}] $ and there is a display map $ p:[\{x_\beta:B_\beta\}_{\beta<\mu}] \display [\{x_\beta:B_\beta\}_{\beta<\nu}] $, then $ \delta_f^\nu= [\langle x_\alpha, t_\beta \rangle_{\substack{\alpha<\lambda \\ \nu<\beta<\mu }}]. $
\end{lemma}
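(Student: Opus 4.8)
The plan is to read off $\delta_f^\nu$ directly from its defining universal property, feeding in the explicit formula for canonical pullbacks in $\C_T$ established in \cref{pullbacksyntactic} (and summarised in \cref{explicitpullbacks}). Write $f = [\langle t_\beta\rangle_{\beta<\mu}] : A_\lambda \to B_\mu$ and let $f_\nu := p \circ f = [\langle t_\beta\rangle_{\beta<\nu}] : A_\lambda \to B_\nu$ be its truncation, so that $\delta_f^\nu$ is the comparison map into the canonical pullback $f_\nu^* B_\mu$. By \cref{pullbacksyntactic}, this pullback is the context $[\{x_\alpha : A_\alpha,\, x_\beta : B_\beta[t_\delta\mid x_\delta]_{\delta<\nu}\}_{\alpha<\lambda,\,\nu\leqslant\beta<\mu}]$, the display map $f_\nu^* p$ is the generalized display $[\langle x_\alpha\rangle_{\alpha<\lambda}]$, and the comparison map $q(f_\nu, B_\mu)$ is $[\langle t_\beta, x_\gamma\rangle_{\beta<\nu,\,\nu\leqslant\gamma<\mu}]$.

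First I would settle the length-one case $\mu = \nu + 1$. Here $\delta_f^\nu$ is, by definition, the unique morphism $A_\lambda \to f_\nu^* B_\mu$ with $(f_\nu^* p)\circ\delta_f^\nu = \mathrm{id}_{A_\lambda}$ and $q(f_\nu, B_\mu)\circ\delta_f^\nu = f$. Such a morphism is a list $[\langle s_\alpha, s_\nu\rangle_{\alpha<\lambda}]$, and since composition in $\C_T$ is substitution, the first equation reads $s_\alpha \equiv x_\alpha$ for all $\alpha < \lambda$, whereupon the second equation becomes $[\langle t_\beta, s_\nu\rangle_{\beta<\nu}] = [\langle t_\beta\rangle_{\beta<\mu}]$ and forces $s_\nu \equiv t_\nu$. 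It then remains only to check that $[\langle x_\alpha, t_\nu\rangle_{\alpha<\lambda}]$ is a well-formed morphism into $f_\nu^* B_\mu$, i.e.\ that $t_\nu$ has type $B_\nu[t_\delta\mid x_\delta]_{\delta<\nu}$ after substituting the identity for the $x_\alpha$; this is precisely the $\nu$-th clause of the statement that $f$ is a morphism into $B_\mu$, combined with the substitution property (the derived rules of \cref{derivedrules}).

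Next I would pass to arbitrary $\mu$ by transfinite induction on the length of $p$, mirroring how $f_\nu^* B_\mu$, $f_\nu^* p$ and $q(f_\nu, B_\mu)$ are assembled in the proof of \cref{pullbacksyntactic}. At a successor step the previous paragraph applies verbatim to the last display map of the tower; at a limit step $f_\nu^* B_\mu$ is the limit of the tower already built (as in \cref{limitcontext}), $\delta_f^\nu$ is the cone map it induces, and each of its components coincides with the one pinned down at an earlier stage — so altogether $\delta_f^\nu = [\langle x_\alpha, t_\beta\rangle_{\alpha<\lambda,\,\nu\leqslant\beta<\mu}]$, which is the list asserted in the statement.

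I do not anticipate a real obstacle: the only delicate point is keeping the two nested substitutions apart — the substitution $[t_\delta\mid x_\delta]_{\delta<\nu}$ already baked into the types appearing in $f_\nu^* B_\mu$, versus the substitution of the components of $\delta_f^\nu$ itself — and confirming that their composite is $[t_\delta\mid x_\delta]_{\delta<\beta}$, so that well-formedness of $\delta_f^\nu$ collapses to $f$ being a morphism. That is bookkeeping rather than mathematics.
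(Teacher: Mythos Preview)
Your proposal is correct and follows the same route as the paper: induction on $\mu$ using the explicit description of canonical pullbacks from \cref{pullbacksyntactic}. The paper's own proof is a single sentence to this effect; you have simply filled in the details, and along the way you silently repaired the statement (taking $f$ to have components $\langle t_\beta\rangle_{\beta<\mu}$ into $B_\mu$, and indexing the answer by $\nu\leqslant\beta<\mu$), which is indeed what is needed for the formula to type-check and to be used in \cref{corollary4234}.
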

\begin{proof}
	This follows by induction on $ \mu $ and the explicit construction of pullbacks from \cref{pullbacksyntactic}.
\end{proof}

In certain situations, the property above characterizes the map $ \delta_f^\nu $.

\begin{lemma}
	If $ [\{x_\beta:B_\beta\}_{\beta<\mu}] $ is an object of $ \C_T $ and $ \nu<\mu $ then $ f\in \Gamma(B_\nu^\mu) $ if and only if $ f=[\langle x_\beta, t_\gamma \rangle_{\substack{\beta<\nu<\gamma<\mu}}] $, where for all $ \nu<\gamma<\mu$, the rule $ \{x_\beta:B_\beta\}_{\beta<\nu}, \{t_{\gamma'}:B_{\gamma'}\}_{\gamma'<\gamma}\vdash t_\gamma:B_\gamma $ is a derived rule.
\end{lemma}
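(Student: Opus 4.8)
The plan is to unwind the definition of $\Gamma(B_\nu^\mu)$ entirely in terms of the combinatorics of $\C_T$: a section is a $\approx$-class of sequences of terms, composition in $\C_T$ is substitution, and the display map involved is a projection. First, recall from \cref{remarkdisplay} that the unique display map $p : [\{x_\beta : B_\beta\}_{\beta<\mu}] \display [\{x_\beta : B_\beta\}_{\beta<\nu}]$ is represented by the projection sequence $\langle x_\beta \rangle_{\beta < \nu}$, so an element $f \in \Gamma(B_\nu^\mu)$ is by definition the $\approx$-class of a sequence $\langle s_\gamma \rangle_{\gamma < \mu}$ with $\{x_\beta : B_\beta\}_{\beta < \nu} \vdash s_\gamma : B_\gamma[s_{\gamma'}\mid x_{\gamma'}]_{\gamma' < \gamma}$ for every $\gamma < \mu$, and with $p \circ f = Id$. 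Since composition is substitution, $p \circ f$ is represented by $\langle s_\beta \rangle_{\beta < \nu}$, while $Id$ is represented by $\langle x_\beta \rangle_{\beta < \nu}$; hence $p \circ f = Id$ says precisely that $\langle s_\beta \rangle_{\beta<\nu} \approx \langle x_\beta \rangle_{\beta<\nu}$, which unwinds (via the definition of $\approx$ on morphisms and \cref{lemma3}) to $\{x_\delta : B_\delta\}_{\delta<\nu} \vdash s_\beta \equiv x_\beta$ for all $\beta < \nu$.

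Next I would replace the chosen representative of $f$ by a better one without leaving the $\approx$-class. Applying \cref{lemma5} transfinitely — iterating the replacement of a single component by a provably equal one, and taking the obvious sequence at limit stages — the provable equalities $s_\beta \equiv x_\beta$ for $\beta < \nu$ let us pass to a representative whose first $\nu$ components are literally the variables $x_\beta$ and whose remaining components are terms $t_\gamma$, $\nu \leqslant \gamma < \mu$; by \cref{lemma4} this is still a morphism. Thus $f = [\langle x_\beta, t_\gamma \rangle_{\beta<\nu<\gamma<\mu}]$. For this representative, the defining typing condition at an index $\gamma \geqslant \nu$ reads $\{x_\beta : B_\beta\}_{\beta<\nu} \vdash t_\gamma : B_\gamma[t_{\gamma'}\mid x_{\gamma'}]_{\nu\leqslant\gamma'<\gamma}$, the substitution acting only on the variables of index $\geqslant\nu$. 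By the substitution theorem for generalized $\kappa$-algebraic theories, together with renaming of variables (available since variables carry canonical names), this is equivalent to the derived rule $\{x_\beta:B_\beta\}_{\beta<\nu}, \{t_{\gamma'}:B_{\gamma'}\}_{\gamma'<\gamma}\vdash t_\gamma:B_\gamma$ of the statement, the context on the left being $\{x_\beta:B_\beta\}_{\beta<\gamma}$ with the variables of index $\geqslant\nu$ renamed to the $t_{\gamma'}$. This proves the forward implication.

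For the converse, suppose $f = [\langle x_\beta, t_\gamma \rangle_{\beta<\nu<\gamma<\mu}]$ with the indicated rules derivable. Reversing the translation above via the substitution theorem shows that the sequence $\langle x_\beta, t_\gamma \rangle$ satisfies the typing conditions required of a morphism $[\{x_\beta:B_\beta\}_{\beta<\nu}] \to [\{x_\beta:B_\beta\}_{\beta<\mu}]$, so $f$ is a well-defined morphism. Composing with the projection $p$ amounts to discarding all but the first $\nu$ components and substituting, which returns $\langle x_\beta \rangle_{\beta<\nu}$; hence $p \circ f = Id$ and $f \in \Gamma(B_\nu^\mu)$. (Alternatively, one can observe that $f$ is exactly of the shape produced by \cref{descriptiondelta} and invoke the explicit pullback computation of \cref{pullbacksyntactic}.)

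I expect the only real friction to be the bookkeeping in the middle step: making the transfinite replacement of components precise at limit stages, and pinning down the identification of the substituted-type context implicit in the definition of a $\C_T$-morphism with the extended context $\{x_\beta:B_\beta\}_{\beta<\nu}, \{t_{\gamma'}:B_{\gamma'}\}_{\gamma'<\gamma}$ appearing in the statement. Both are routine consequences of the substitution theorem and of the fact that $\approx$ absorbs variable renaming, so no genuinely new idea beyond the machinery of \cref{appendix-a} is required.
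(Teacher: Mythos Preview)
The paper does not actually supply a proof of this lemma: it is stated immediately after \cref{descriptiondelta} with the remark ``In certain situations, the property above characterizes the map $\delta_f^\nu$'' and then used without further justification. Your proposal is correct and fills in precisely the argument the paper leaves implicit: unwind $\Gamma(B_\nu^\mu)$ as sections of the projection $p=[\langle x_\beta\rangle_{\beta<\nu}]$, observe that $p\circ f=Id$ forces the first $\nu$ components of any representative to be provably equal to the variables, normalize via (iterated) \cref{lemma5}, and read off the typing conditions for the remaining components. The converse is the direct check you describe, and your parenthetical remark that this is exactly the shape produced by \cref{descriptiondelta} is the link the paper is gesturing at.

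One small comment: your phrase ``variables of index $\geqslant\nu$ renamed to the $t_{\gamma'}$'' is a bit loose, since the $t_{\gamma'}$ are terms rather than variables. What is really going on is the paper's notational convention (already visible in rules \ref{drule13}--\ref{drule14} of \cref{derivedrules}) of writing $\{t_{\gamma'}:B_{\gamma'}\}$ in context position to record that the $t_{\gamma'}$ satisfy the successive typing constraints; unwound, this is exactly the morphism condition $\{x_\beta:B_\beta\}_{\beta<\nu}\vdash t_\gamma:B_\gamma[t_{\gamma'}\mid x_{\gamma'}]_{\nu\leq\gamma'<\gamma}$ you derive. No change in the argument is needed, only in the wording.
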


The next result follows from the previous lemma, and is used in \cref{definitionfunctorepsilon}.

\begin{lemma} \label{lemma3220}
	Let $ A_\lambda $, $ B_\mu $ objects of $ \catC $ and for each $ \beta<\mu$ we have maps $ r_{\beta+1} \in \Gamma( r_\beta^* \cdots r_1^*p^* B_{\beta+1} ) $ then there exists a unique sequence of maps $ \{g_\beta:A_\lambda \to B_\beta \}_{\beta<\mu} $ such that for all $ \beta<\mu $ we have $ p_\beta g_{\beta+1}= g_\beta $ and $ \delta_{g_\beta}=r_\beta $.
\end{lemma}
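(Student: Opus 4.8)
\textbf{Proof plan for \cref{lemma3220}.}

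The plan is to build the sequence $\{g_\beta : A_\lambda \to B_\beta\}_{\beta < \mu}$ by transfinite recursion on $\beta < \mu$, using at each successor step the composite of the $\delta$-section construction from \cref{interlude} with the given map $r_\beta$, and at limit stages the fact (established just before this lemma) that an object of limit height is the limit of its display tower, so that a compatible family of maps into the $B_\gamma$ for $\gamma < \beta$ determines a unique map into $B_\beta$. Concretely, start with $g_0 \colonequals $ the unique map $A_\lambda \to 1 = B_0$ (or the given structure map, depending on the framing of ``$B_\mu \display A_\lambda$''), which is forced. Given $g_\beta : A_\lambda \to B_\beta$ already constructed, the iterated canonical pullbacks $r_\beta^* \cdots r_1^* p^* B_{\beta+1}$ sit over $A_\lambda$ via a display map, and the hypothesis says $r_{\beta+1}$ is a section of that display map; composing $r_{\beta+1}$ with the canonical map $q(\dots)$ from the top of the pullback tower down to $B_{\beta+1}$ produces $g_{\beta+1} : A_\lambda \to B_{\beta+1}$. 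One then checks $p_\beta g_{\beta+1} = g_\beta$ directly from the commutativity of the pullback squares, and checks $\delta_{g_\beta}^{\,\beta} = r_\beta$ by comparing the defining universal property of $\delta$ (that $(f^*p)\delta_f = \mathrm{id}$ together with $q \circ \delta_f = f$) against the way $g_{\beta+1}$ was built out of $r_{\beta+1}$.

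For the limit step, suppose $\beta < \mu$ is a limit ordinal and $g_\gamma$ has been defined and is compatible (i.e. $p_{\gamma'} g_\gamma = g_{\gamma'}$ for $\gamma' < \gamma$) for all $\gamma < \beta$. By the lemma preceding this one, $B_\beta$ (as an object of height a limit, fibered over $A_\lambda$ — or rather $r_{<\beta}^*p^*B_\beta$ pulled back over $A_\lambda$) is the limit of the tower $(B_\gamma)_{\gamma < \beta}$ along its display maps, and the inclusion $Dis(\catC) \hookrightarrow \catC$ preserves this limit; hence the compatible cone $(g_\gamma)_{\gamma<\beta}$ induces a unique $g_\beta : A_\lambda \to B_\beta$ with $p_\gamma g_\beta = g_\gamma$. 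The condition $\delta_{g_\beta}^{\,\beta} = r_\beta$ at a limit is likewise checked componentwise: by the description of $\delta_f$ at limit height (given as a family of maps $(\delta_f)_\gamma$ through the intermediate pullback squares in \cref{interlude}), it suffices that each component agrees, which reduces to the successor-stage verification already done.

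Uniqueness is handled by the same recursion run in parallel: if $\{g_\beta'\}$ is another such family, then $g_0' = g_0$ is forced, and at a successor stage the condition $\delta_{g_\beta'}^{\,\beta} = r_\beta = \delta_{g_\beta}^{\,\beta}$ together with $\delta$ being (by its universal property) a monomorphism onto its image — more precisely, together with $q(g_\beta, B_{\beta+1}) \circ \delta_{g_\beta}^{\,\beta} = g_{\beta+1}$ reconstructing $g_{\beta+1}$ from $\delta$ — forces $g_{\beta+1}' = g_{\beta+1}$; at a limit stage uniqueness of the cone map does the rest. I expect the main obstacle to be purely bookkeeping: keeping the iterated-pullback notation $r_\beta^* \cdots r_1^* p^* B_{\beta+1}$ coherent as $\beta$ varies, and making precise the claim that $\delta$ and the canonical pullback projection $q(f, B)$ are mutually inverse in the relevant sense (so that ``$g_{\beta+1} \leftrightarrow r_{\beta+1}$'' really is a bijection). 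This is exactly the content of the identities $(f^*p)\delta_f = \mathrm{id}$ and $q(f,B)\delta_f = f$ recorded in \cref{interlude}, so no genuinely new ingredient is needed; the proof is a careful transfinite induction citing \cref{descriptiondelta} (in the syntactic case, or its abstract analogue) to pin down the components.
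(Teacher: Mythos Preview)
Your transfinite-recursion plan is correct and is the natural categorical proof; the paper does not actually give a proof of this lemma beyond the remark ``follows from the previous lemma'' together with the diagram illustrating the first two stages of the iterated pullback, so your proposal is more detailed than what appears there. The paper's one-line justification leans on the preceding syntactic characterization of $\Gamma(B_\nu^\mu)$ in $\C_T$ (sections correspond to tuples of terms), from which the sequence $g_\beta$ is read off directly as $[\langle t_1,\dots,t_\beta\rangle]$, whereas you argue purely from the contextual-category axioms via pullback squares and the limit property at limit heights; your route has the advantage of applying verbatim in an arbitrary $\kappa$-contextual category $\catC$, not just $\C_T$, which is how the lemma is actually stated.
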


Some words about the previous lemma are in order. The expression $ r_\beta^* \cdots r_1^*p^* B_{\beta+1}$ can be illustrated by the first two steps:

% https://q.uiver.app/#q=WzAsMTAsWzEsMiwiMSJdLFsxLDEsIkJfMSJdLFswLDIsIkFfXFxsYW1iZGEiXSxbMCwxLCJwXipCXzEiXSxbMSwwLCJCXzIiXSxbMCwwLCJwXipCXzIiXSxbMywyLCJwXipCXzEiXSxbMiwyLCJBX1xcbGFtYmRhIl0sWzMsMSwicF4qQl8yIl0sWzIsMSwicl8xXipwXipCXzIiXSxbMSwwLCIiLDAseyJzdHlsZSI6eyJoZWFkIjp7Im5hbWUiOiJlcGkifX19XSxbMiwwLCJwIiwyXSxbMywyXSxbMywxXSxbNCwxLCIiLDAseyJzdHlsZSI6eyJoZWFkIjp7Im5hbWUiOiJlcGkifX19XSxbNSwzXSxbNSw0XSxbNSwxLCJcXGxyY29ybmVyIiwxLHsibGFiZWxfcG9zaXRpb24iOjAsInN0eWxlIjp7ImJvZHkiOnsibmFtZSI6Im5vbmUifSwiaGVhZCI6eyJuYW1lIjoibm9uZSJ9fX1dLFszLDAsIlxcbHJjb3JuZXIiLDEseyJsYWJlbF9wb3NpdGlvbiI6MCwic3R5bGUiOnsiYm9keSI6eyJuYW1lIjoibm9uZSJ9LCJoZWFkIjp7Im5hbWUiOiJub25lIn19fV0sWzIsMywicl8xIiwwLHsiY3VydmUiOi0yfV0sWzcsNiwicl8xIiwyXSxbOCw2XSxbOSw3XSxbOSw4XSxbOSw2LCJcXGxyY29ybmVyIiwxLHsibGFiZWxfcG9zaXRpb24iOjAsInN0eWxlIjp7ImJvZHkiOnsibmFtZSI6Im5vbmUifSwiaGVhZCI6eyJuYW1lIjoibm9uZSJ9fX1dLFs3LDksInJfMiIsMCx7ImN1cnZlIjotMn1dXQ==
\[\begin{tikzcd}
	{p^*B_2} & {B_2} \\
	{p^*B_1} & {B_1} & {r_1^*p^*B_2} & {p^*B_2} \\
	{A_\lambda} & 1 & {A_\lambda} & {p^*B_1}
	\arrow[two heads, from=2-2, to=3-2]
	\arrow["p"', from=3-1, to=3-2]
	\arrow[from=2-1, to=3-1]
	\arrow[from=2-1, to=2-2]
	\arrow[two heads, from=1-2, to=2-2]
	\arrow[from=1-1, to=2-1]
	\arrow[from=1-1, to=1-2]
	\arrow["\lrcorner"{description, pos=0}, draw=none, from=1-1, to=2-2]
	\arrow["\lrcorner"{description, pos=0}, draw=none, from=2-1, to=3-2]
	\arrow["{r_1}", bend left, from=3-1, to=2-1]
	\arrow["{r_1}"', from=3-3, to=3-4]
	\arrow[from=2-4, to=3-4]
	\arrow[from=2-3, to=3-3]
	\arrow[from=2-3, to=2-4]
	\arrow["\lrcorner"{description, pos=0}, draw=none, from=2-3, to=3-4]
	\arrow["{r_2}", bend left, from=3-3, to=2-3]
\end{tikzcd}\]

\subsection{The equivalence between \texorpdfstring{$\kgat$}{Lg} and \texorpdfstring{$\kcon$}{Lg}}

\subsubsection{The functor \texorpdfstring{$ \C: \kgat \to \kcon $}{Lg}}

To establish this equivalence of categories, we first define a functor $ \C: \kgat \to \kcon $ using the construction of \cref{syntacticcat}. The proof again comes from \cite[Section~2.4.1]{cartmell1978}. We register all preliminary results needed to define this functor, however again we omit the proofs since they are similar to the original ones given by Cartmell.

On objects $ \C: \kgat \to \kcon $ is defined as $ \C_T $ for $ T $ a generalized $\kappa$-algebraic theory. For a map $ [I]:T \to T' $ between theories, we need a functor $ \C(I): \C_T \to \C_{T'} $:
\begin{enumerate}
	\item On objects; $ \C(I)([\{x_\alpha:\Delta_\alpha\}_{\alpha<\lambda}]):=[\{x_\alpha:\widetilde{I}( \Delta_\alpha)\}_{\alpha<\lambda}] $,
	\item On morphisms: If $ [\langle t_\beta \rangle_{\beta<\mu}]: [\{x_\alpha:\Delta_\alpha\}_{\alpha<\lambda}] \to  [\{x_\beta:\Delta_\beta\}_{\beta<\mu}]$ then $ \C(I)([\langle t_\beta \rangle_{\beta<\mu}]):= [\langle \widetilde{I} (\langle t_\beta \rangle_{\beta<\mu})] $.
\end{enumerate}

If there is an interpretation $ J $ in the equivalence class $ [I] $, then by \cref{lemma11114} any rule $r$ of $ T $ we get  $ \widehat{I}(r)\approx \widehat{J}(r) $. Therefore, it follows that the definition of $ \C(I) $ does not depend on the representative of $ [I] $.

It remains to verify that $ \C(I) $ is indeed a contextual functor. Firstly, it is essential to verify that it is well-defined.

\begin{lemma}
	Let $ [I]: T \to T' $ be a map in $ \kgat $ then the following hold:
	\begin{enumerate}
		\item The interpretation $ I $ preserves contexts: If $ \{x_\alpha:\Delta_\alpha \}_{\alpha<\lambda} $ is a context in the theory $ T $ then $ \{x_\alpha:\widetilde{I}(\Delta_\alpha)\}_{\alpha<\lambda} $ is a context in the theory $ T' $.
		
		\item The interpretation $ I $ preserves the equivalence relation $ \approx $ between contexts: If $ \{x_\alpha:\Delta_\alpha \}_{\alpha<\lambda} $ and $ \{x_\alpha:\Omega_\alpha \}_{\alpha<\lambda} $ are contexts in the theory $ U $ with $ \{x_\alpha:\Delta_\alpha \}_{\alpha<\lambda}\approx \{x_\alpha:\Omega_\alpha \}_{\alpha<\lambda} $ then $ \{x_\alpha:\widetilde{I}(\Delta_\alpha) \}_{\alpha<\lambda}\approx \{x_\alpha:\widetilde{I}(\Omega_\alpha) \}_{\alpha<\lambda} $.
		
		\item The interpretation $ I $ preserves morphisms between contexts: If $ \langle t_\beta \rangle_{\beta<\mu}\} : \{x_\alpha:\Delta_\alpha \}_{\alpha<\lambda} \to \{x_\beta:\Omega_\beta \}_{\beta<\mu} $ is a morphism between contexts in the theory $ T $ then $ \langle \widetilde{I}( t_\beta) \rangle_{\beta<\mu}\} : \{x_\alpha:\widetilde{I}(\Delta_\alpha) \}_{\alpha<\lambda} \to \{x_\beta:\widetilde{I}(\Omega_\beta) \}_{\beta<\mu}  $ is a morphism between contexts in the theory $ T' $.
		
		\item \label{itemiv} The interpretation $ I $ preserves the equivalence relation $ \approx $ between morphisms of contexts: If $\langle s_\beta\rangle_{\beta<\mu}, \,\langle t_\beta \rangle_{\beta<\mu} : \{x_\alpha:\Delta_\alpha \}_{\alpha<\lambda} \to \{x_\beta:\Omega_\beta \}_{\beta<\mu} $ are morphisms between contexts in the theory $ T $ with $ \langle s_\beta\rangle_{\beta<\mu} \approx \langle t_\beta \rangle_{\beta<\mu} $ then $ \langle \widetilde{I}(s_\beta)\rangle_{\beta<\mu} \approx \langle \widetilde{I}(t_\beta) \rangle_{\beta<\mu} $.		
	\end{enumerate}
\end{lemma}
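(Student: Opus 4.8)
The plan is to reduce all four statements to the two facts already at hand: that an interpretation preserves derived judgments (\cref{lemma1112}) and that $\widetilde{I}$ commutes with substitution (\cref{lemma1111}). Beyond that the argument is pure bookkeeping, and in each item one simply follows the shape of the relevant inductive definition---of a context in \cref{contextdefinition}, and of the relation $\approx$ on type judgments, contexts, and morphisms. For item (1) I would argue directly from the definition of a context: if $\{x_\alpha:\Delta_\alpha\}_{\alpha<\lambda}$ is a context of $T$, then for every $\alpha<\lambda$ the judgment $\{x_\beta:\Delta_\beta\}_{\beta<\alpha}\vdash \Delta_\alpha\,\type$ is a derived judgment of $T$; applying $\widehat{I}$ (as in \cref{interpretation}) and invoking \cref{lemma1112}, the judgment $\{x_\beta:\widetilde{I}(\Delta_\beta)\}_{\beta<\alpha}\vdash \widetilde{I}(\Delta_\alpha)\,\type$ is derived in $T'$. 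As this holds for every $\alpha<\lambda$, the premise $\{x_\alpha:\widetilde{I}(\Delta_\alpha)\}_{\alpha<\lambda}$ is a context of $T'$; the only thing to note is that $\widehat{I}$ does not change the length of a premise.

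For item (2) I would induct on $\lambda$, mirroring the inductive definition of $\approx$ on type judgments. In the successor case $\lambda=\nu+1$, the hypothesis furnishes, for every $\alpha\le\nu$, a derived type-equality judgment $\{x_\delta:\Delta_\delta\}_{\delta<\alpha}\vdash \Delta_\alpha\equiv\Omega_\alpha$ of $T$; applying $\widehat{I}$ and \cref{lemma1112} yields $\{x_\delta:\widetilde{I}(\Delta_\delta)\}_{\delta<\alpha}\vdash \widetilde{I}(\Delta_\alpha)\equiv \widetilde{I}(\Omega_\alpha)$ in $T'$, which together with item (1) is exactly the data needed for $\{x_\alpha:\widetilde{I}(\Delta_\alpha)\}_{\alpha<\lambda}\approx\{x_\alpha:\widetilde{I}(\Omega_\alpha)\}_{\alpha<\lambda}$. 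In the limit case one applies the inductive hypothesis to each proper successor truncation; since $\widehat{I}$ preserves lengths, the limit-ordinal clause of the definition transfers verbatim.

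For item (3), recall that $\langle t_\beta\rangle_{\beta<\mu}$ being a morphism means that for each $\beta<\mu$ the judgment $\{x_\alpha:\Delta_\alpha\}_{\alpha<\lambda}\vdash t_\beta:\Omega_\beta[t_\gamma\mid x_\gamma]_{\gamma<\beta}$ is derived in $T$. Applying $\widehat{I}$ and \cref{lemma1112} gives $\{x_\alpha:\widetilde{I}(\Delta_\alpha)\}_{\alpha<\lambda}\vdash \widetilde{I}(t_\beta):\widetilde{I}\bigl(\Omega_\beta[t_\gamma\mid x_\gamma]_{\gamma<\beta}\bigr)$ in $T'$, and by \cref{lemma1111} the type on the right equals $\widetilde{I}(\Omega_\beta)[\widetilde{I}(t_\gamma)\mid x_\gamma]_{\gamma<\beta}$; combining this with item (1) for the source and target premises, we get precisely that $\langle \widetilde{I}(t_\beta)\rangle_{\beta<\mu}$ is a morphism $\{x_\alpha:\widetilde{I}(\Delta_\alpha)\}_{\alpha<\lambda}\to\{x_\beta:\widetilde{I}(\Omega_\beta)\}_{\beta<\mu}$ of $T'$.

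Finally, for item (4) I would unwind the definition of $\approx$ on morphisms into its constituent derived judgments---the type-equality judgments witnessing $\{x_\alpha:\Delta_\alpha\}_{\alpha<\lambda}\vdash\Omega_\gamma[s_{\gamma'}\mid x_{\gamma'}]_{\gamma'<\gamma}\equiv\Omega_\gamma[t_{\gamma'}\mid x_{\gamma'}]_{\gamma'<\gamma}$ (through the successor/limit structure) and the term-equality judgments $\{x_\alpha:\Delta_\alpha\}_{\alpha<\lambda}\vdash s_\gamma\equiv t_\gamma$---and push each through $\widehat{I}$ using \cref{lemma1112}, rewriting the substituted expressions by means of \cref{lemma1111} and appealing to items (2) and (3) for the ambient contexts and morphisms. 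I expect the only delicate point in the whole argument to be notational rather than mathematical: ensuring that the substituted expressions still match after $\widetilde{I}$ is applied, which is exactly what \cref{lemma1111} guarantees. This is why there is no genuine obstacle here and the paper may safely defer to the corresponding argument in \cite{cartmell1978}.
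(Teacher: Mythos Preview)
Your proposal is correct and follows essentially the same approach as the paper: both reduce everything to \cref{lemma1111} (commutation of $\widetilde{I}$ with substitution) and \cref{lemma1112} (preservation of derived judgments), and the paper's proof even singles out item~(4) as its worked example, just as you do. Your write-up is in fact more detailed than the paper's, which simply states that each item is a consequence of these two lemmas and then sketches item~(4).
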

\begin{proof}
	The proof of each statement is consequence of \cref{lemma1111} or \cref{lemma1112}. Our enumeration of variables give us a notation simplification of the proof given by \cite{cartmell1978}.\\
	For example, to prove \ref{itemiv}; we have by assumption that $ \{x_\alpha:\Delta_\alpha\}_{\alpha<\lambda} \vdash t_\gamma \equiv_{\Omega_\gamma[t_\beta\mid x_\beta]_{\beta<\gamma} } s_\gamma $ for all $ 0<\gamma\leq \mu. $ Therefore, since the interpretation preserves this rule o$ T $ we get that $ \{x_\alpha:\widetilde{I}(\Delta_\alpha)\}_{\alpha<\lambda} \vdash \widetilde{I}(t_\gamma) \equiv_{\widetilde{I}(\Omega_\gamma)[\widetilde{I}(t_\beta)\mid x_\beta]_{\beta<\gamma} } \widetilde{I}(s_\gamma) $ for all $ 0<\gamma\leq \mu$. This exactly establishes $ \langle \widetilde{I}(s_\beta)\rangle_{\beta<\mu} \approx \langle \widetilde{I}(t_\beta) \rangle_{\beta<\mu} $.
\end{proof}

We have seen that the definition of $ \C(I) $ give us the correct objects and morphisms. Now we show that it is indeed a contextual functor.

\begin{lemma}
	Let $ I:T\to T' $ be a morphism in $ \kgat $. Then the map $ \C(I):\C_T \to \C_{T'} $ is a contextual functor.
\end{lemma}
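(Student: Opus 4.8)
The plan is to verify directly the three defining conditions of a contextual functor for $\C(I)$, leaning on the explicit descriptions of objects, display maps, and canonical pullbacks in a syntactic category that were established in \cref{syntacticcat}.

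First I would check that the grading is preserved: by construction $\C(I)$ sends the class $[\{x_\alpha:\Delta_\alpha\}_{\alpha<\lambda}]$ to $[\{x_\alpha:\widetilde{I}(\Delta_\alpha)\}_{\alpha<\lambda}]$, and since the preinterpretation $\widetilde{I}$ fixes every variable it does not alter the length of a context; hence $\C(I)(Ob_\lambda(\C_T))\subseteq Ob_\lambda(\C_{T'})$ for all $\lambda<\kappa$. For the same reason $\C(I)$ carries display maps to display maps: a display map in $\C_T$ is a (possibly transfinite) composite of maps of the form $[\langle x_\beta\rangle_{\beta<\mu}]$, and $\widetilde{I}(x_\beta)=x_\beta$, so its image is again of that shape. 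Combined with the previous lemma — which records that $\C(I)$ is a well-defined functor: it respects the equivalence relation $\approx$ on contexts and on morphisms, sends morphisms of contexts to morphisms of contexts, commutes with substitution (\cref{lemma1111}) and hence with composition, and clearly fixes identities — this shows $\C(I)$ restricts to a functor $Dis(\C_T)\to Dis(\C_{T'})$.

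The substance of the proof is preservation of canonical pullbacks. Take a display map $p:A\display B$ in $\C_T$ and a morphism $f:C\to B$. When $p$ has length one, \cref{pullbacksyntactic} (see also \cref{explicitpullbacks}) gives the canonical pullback explicitly: writing $f=[\langle t_\beta\rangle_{\beta<\mu}]$, $B=[\{x_\beta:\Omega_\beta\}_{\beta<\mu}]$, $A=[\{x_\beta:\Omega_\beta\}_{\beta<\mu+1}]$ and $C=[\{x_\alpha:\Delta_\alpha\}_{\alpha<\lambda}]$, one has $f^*A=[\{x_\alpha:\Delta_\alpha,\,x_\mu:\Omega_\mu[t_\beta\mid x_\beta]_{\beta<\mu}\}_{\alpha<\lambda}]$ and $q(f,A)=[\langle t_\beta,x_\mu\rangle_{\beta<\mu}]$. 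Applying $\C(I)$ and using that $\widetilde{I}$ commutes with substitution, the expression $\widetilde{I}(\Omega_\mu[t_\beta\mid x_\beta]_{\beta<\mu})$ equals $\widetilde{I}(\Omega_\mu)[\widetilde{I}(t_\beta)\mid x_\beta]_{\beta<\mu}$, so $\C(I)(f^*A)$ is literally the context produced by the same recipe from $\C(I)(f)$ and $\C(I)(A)$, i.e. $\C(I)(f^*A)=(\C(I)f)^*(\C(I)A)$, and similarly $\C(I)(q(f,A))=q(\C(I)f,\C(I)A)$. For a display map of successor length one iterates this step by step, and for a display map $p$ whose length is a limit ordinal one argues by transfinite induction: by \cref{remarkdisplay} and \cref{limitcontext} the source $A$ is the limit of the tower of its length-one truncations, $f^*A$ is the limit of the pulled-back tower (exactly the diagram built in the proof of \cref{pullbacksyntactic}), and $\C(I)$ — already known to preserve display maps and the length-one canonical pullbacks — carries this whole diagram to the corresponding limit diagram in $\C_{T'}$. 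Hence $\C(I)(f^*A)=(\C(I)f)^*(\C(I)A)$ and $\C(I)(q(f,A))=q(\C(I)f,\C(I)A)$ in the limit case too.

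The main obstacle is precisely this last point: making the transfinite bookkeeping of the limit case rigorous, i.e. checking that $\C(I)$ sends the explicit cone that \cref{pullbacksyntactic} constructs — together with its comparison maps $q(f,-)$ at every successor stage — to the cone built by the same construction in $\C_{T'}$, so that the two agree \emph{on the nose} rather than merely up to canonical isomorphism. Once that is established, the strict functoriality clauses for canonical pullbacks in the target $\kappa$-contextual category are automatically matched, since on both sides they are governed by the same substitution formulas, and no further verification is needed.
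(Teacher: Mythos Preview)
Your proposal is correct and follows the same approach as the paper: verify grading and display-map preservation directly from the definition, then check preservation of canonical pullbacks using the explicit substitution formula for $f^*A$ together with the fact that $\widetilde{I}$ commutes with substitution (\cref{lemma1111}). The only difference is that the paper handles display maps of arbitrary length $\varepsilon$ in one stroke via the uniform formula $f^*B_{\mu+\varepsilon}=[\{x_\alpha:\Delta_\alpha,\,x_\gamma:\Omega_\gamma[t_\beta\mid x_\beta]_{\beta<\mu}\}_{\alpha<\lambda,\ \mu\leq\gamma<\mu+\varepsilon}]$ from \cref{pullbacksyntactic}, so your case split into length one, successor iteration, and limit ordinals---and the attendant worry about the transfinite bookkeeping---is unnecessary.
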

\begin{proof}
	The map is a functor trivially. That it preserves the grading and restricts to a functor between the display subcategories $ Dis(\C_T)$ and $Dis(\C_{T'}) $ is also immediate. To prove it preserves canonical pullbacks, consider the following pullback square in the category $ \C_T $:
	\[% https://q.uiver.app/#q=WzAsNCxbMywwLCJbXFx7eF9cXGJldGE6XFxPbWVnYV9cXGJldGFcXH1fe1xcYmV0YTxcXG11K1xcdmFyZXBzaWxvbn1dIl0sWzMsMSwiW1xce3hfXFxiZXRhOlxcT21lZ2FfXFxiZXRhXFx9X3tcXGJldGE8XFxtdX1dIl0sWzAsMSwiW1xce3hfXFxhbHBoYTpcXERlbHRhX1xcYWxwaGFcXH1fe1xcYWxwaGE8XFxrYXBwYX1dIl0sWzAsMCwiW1xce3hfXFxhbHBoYTpcXERlbHRhX1xcYWxwaGEsXFwsIHhfXFxnYW1tYTpcXE9tZWdhX1xcZ2FtbWFbdF9cXGJldGFcXG1pZCB4X1xcYmV0YSBdX3tcXGJldGE8XFxtdX1cXH1fe1xcc3Vic3RhY2t7XFxhbHBoYTxcXGthcHBhLFxcXFwgXFxtdSBcXGxlcSBcXGdhbW1hPFxcbXUrXFx2YXJlcHNpbG9ufX1dIl0sWzAsMSwiW1xcbGFuZ2xlIHhfXFxiZXRhIFxccmFuZ2xlX3tcXGJldGE8XFxtdX1dIiwwLHsic3R5bGUiOnsiaGVhZCI6eyJuYW1lIjoiZXBpIn19fV0sWzIsMSwiW1xcbGFuZ2xlIHRfXFxiZXRhXFxyYW5nbGVfe1xcYmV0YTxcXG11fV0iLDJdLFszLDIsIltcXGxhbmdsZSB4X1xcYWxwaGEgXFxyYW5nbGVfe1xcYWxwaGE8XFxrYXBwYX1dIiwyXSxbMywwLCJbXFxsYW5nbGUgdF9cXGJldGEsXFwsIHhfXFxnYW1tYSBcXHJhbmdsZV97XFxzdWJzdGFja3tcXGJldGE8XFxtdSwgXFxcXCBcXG11XFxsZXFcXGdhbW1hPFxcbXUrXFx2YXJlcHNpbG9ufX1dIl1d
	\begin{tikzcd}
		{[\{x_\alpha:\Delta_\alpha,\, x_\gamma:\Omega_\gamma[t_\beta\mid x_\beta ]_{\beta<\mu}\}_{\substack{\alpha<\kappa,\\ \mu \leq \gamma<\mu+\varepsilon}}]} &&& {[\{x_\beta:\Omega_\beta\}_{\beta<\mu+\varepsilon}]} \\
		{[\{x_\alpha:\Delta_\alpha\}_{\alpha<\kappa}]} &&& {[\{x_\beta:\Omega_\beta\}_{\beta<\mu}]}
		\arrow["{[\langle x_\beta \rangle_{\beta<\mu}]}", two heads, from=1-4, to=2-4]
		\arrow["{[\langle t_\beta\rangle_{\beta<\mu}]}"', from=2-1, to=2-4]
		\arrow["{[\langle x_\alpha \rangle_{\alpha<\kappa}]}"', from=1-1, to=2-1]
		\arrow["{[\langle t_\beta,\, x_\gamma \rangle_{\substack{\beta<\mu, \\ \mu\leq\gamma<\mu+\varepsilon}}]}", from=1-1, to=1-4]
	\end{tikzcd}
	\]
	Then a straightforward computation, using the definition of $ \C(I)$, shows that this is sent to a pullback square in the category $ \C_{T'}.$
\end{proof}

\begin{corollary} \label{C-functor-kcon-kgat:corollary}
	There is a functor $ \C:\kgat\to \kcon $.
\end{corollary}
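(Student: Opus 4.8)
The plan is to assemble the pieces already established in the preceding paragraphs, since the corollary is essentially a packaging statement. On objects we set $\C(T) := \C_T$, which is a $\kappa$-contextual category by \cref{syntactic:contextual}. On morphisms, to an equivalence class $[I] : T \to T'$ we have already associated the functor $\C(I) : \C_T \to \C_{T'}$ built from the preinterpretation $\widetilde I$, acting on objects by $[\{x_\alpha : \Delta_\alpha\}_{\alpha<\lambda}] \mapsto [\{x_\alpha : \widetilde I(\Delta_\alpha)\}_{\alpha<\lambda}]$ and on morphisms by applying $\widetilde I$ componentwise to the defining tuple of terms. The lemma immediately preceding the corollary shows each $\C(I)$ is a contextual functor, and \cref{lemma11114} shows that $\C(I)$ does not depend on the representative $I$ of the class $[I]$. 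So the only thing left is to check the two functoriality equations.

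First I would verify $\C(\mathrm{id}_T) = \mathrm{id}_{\C_T}$. The identity of $T$ in $\kgat$ is represented by the inclusion $I : Alp(T) \hookrightarrow Exp(T)$, and a glance at the inductive clauses defining $\widetilde{(-)}$ shows that the associated preinterpretation $\widetilde I$ is the identity map on $Exp(T)$. Hence $\C(I)$ fixes every context and every tuple of terms, i.e. it is the identity functor on $\C_T$. Second, for composable classes $[I] : T \to T'$ and $[J] : T' \to T''$, I would invoke the lemma stating $\widetilde{J \circ I}(e) = \widetilde J(\widetilde I(e))$ for every expression $e$: applying this to each type $\Delta_\alpha$ occurring in a context gives $\C(J\circ I)\big([\{x_\alpha : \Delta_\alpha\}_{\alpha<\lambda}]\big) = \C(J)\big(\C(I)([\{x_\alpha : \Delta_\alpha\}_{\alpha<\lambda}])\big)$, and applying it to each term $t_\beta$ occurring in a morphism yields the analogous identity on morphisms. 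Thus $\C(J\circ I) = \C(J)\circ\C(I)$ at the level of interpretations; compatibility of this identity with the passage to equivalence classes is precisely the content of the corollary stating that $I \approx J$ and $I' \approx J'$ imply $I'\circ I \approx J'\circ J$, so the equation descends to $\kgat$.

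I do not expect any genuine obstacle here: the statement is a formal consequence of the constructions and lemmas already in hand. The only point that needs a moment of care is bookkeeping at the level of equivalence classes — both of interpretations on the source side and of the relation $\approx$ on contexts and context morphisms on the target side — but this is exactly what \cref{lemma11114} and the composition‑compatibility corollary provide. Once those are cited, functoriality of $\C : \kgat \to \kcon$ follows, completing the proof.
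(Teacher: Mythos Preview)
Your proposal is correct and is essentially the argument implicit in the paper, which states the corollary without proof immediately after the lemma that $\C(I)$ is a contextual functor. You have simply made explicit the functoriality checks (identity and composition) that the paper leaves to the reader, invoking exactly the right ingredients: the lemma $\widetilde{J\circ I} = \widetilde J \circ \widetilde I$ on expressions for composition, and the compatibility of $\approx$ with composition to pass to equivalence classes.
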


\subsubsection{The functor \texorpdfstring{$ U : \kcon \to \kgat $}{Lg}} \label{functor-contextual-to-gat}

We now turn to construct a functor that associates a generalized $\kappa$-algebraic theory $ U(\catC) $ to each $ \kappa $-contextual category $ \catC $. This is part of \cite[Section 2.4]{cartmell1978}. We will use the notation introduced in \cref{convention1}. This means we identify each object by its height, say $ B_\lambda $, and write display maps as $ p_\alpha:B_\lambda \display B_\alpha $ if $ \lambda>0 $ and $ \alpha<\lambda $. If $ \alpha=0 $ then $ B_0=1 $ the terminal object. A morphism $ f:A_\lambda \to B_\mu $ is trivial when $ B_\mu $ is trivial, \ie $ \mu=0 $.

\begin{definition} \label{definitiongatfromcat}
	We define $ U(\catC) \in \kappa $-GAT as:
	\begin{enumerate}
		\item For each non-trivial object $ B_{\mu} $ with $ \mu=\lambda+1 $, there is a type symbol $ \overline{B_\mu} $ with the introductory rule:
		$ \{x_\beta: \overline{B_\beta}\}_{\beta<\mu} \vdash \overline{B_\mu}(x_\beta)_{\beta<\mu} \, \type$. The notation emphasizes the fact that $ \overline{B_\mu} $ depends on the indicated variables.
		
		\item If $ f: A_\lambda \to B_{\mu} $ is morphism of $ \catC $  with $ \mu=\nu+1 $, we get an operator symbol $ \overline{f} $. It has the introductory rule:
		
		\begin{itemize}
			\item If $ f: A_\lambda \to B_{\mu+1}$, let $ \rho_\mu:B_{\mu+1} \display B_\mu $ be the display map. Then the operator symbol has introductory rule:
			\[ \{x_\alpha:\overline{A}_\alpha\}_{\alpha<\lambda} \vdash \overline{f}(x_\alpha)_{\alpha<\lambda} :  \overline{(\rho_\mu f)^* B_{\mu+1}}(x_\alpha)_{\alpha<\lambda}. \]
		\end{itemize}
		This does not clash with the notation from the previous point since it always refer to an object of $ \catC $ and in this case refers to a map.
		
	\end{enumerate}
	
	Subject to the following axioms in $ U(\catC) $:
	
	\begin{enumerate}
		\item Let $ A_\lambda,\, B_\mu,\, C_{\nu+1} $ be objects of $ \catC $ and maps $ f: A_\lambda \to B_\mu, \, g: B_\mu \to C_{\nu+1} $:
		\[
		\{x_\alpha:\overline{A}_\alpha\}_{\alpha<\lambda} \vdash \overline{g f}(x_\alpha)_{\alpha<\lambda} \equiv_{\overline{(p_\nu g f)^*C_{\nu+1}}(x_\alpha)_{\alpha<\lambda}} \overline{g} \big( \overline{p_\beta f}(x_\alpha)_{\alpha<\lambda} \big)_{\beta<\mu}.
		\]
		
		\item Let $ B_\mu $ be a non-trivial object of $ \catC $. For each $ \delta<\mu $ we have
		\[
		\{x_\beta:\overline{B}_\beta\}_{\beta<\mu} \vdash \overline{p_\delta}(x_\beta)_{\beta<\mu}\equiv_{\overline{B}_\delta(x_\beta)_{\beta<\delta}} x_\delta.
		\]
		
		\item Let $ A_\lambda,\, B_{\mu+1}$ objects of $ \catC $ and a map $ f: A_\lambda \to B_{\mu}$ then
		\[
		\{x_\alpha:\overline{A}_\alpha\}_{\alpha<\lambda} \vdash \overline{f^*B_{\mu+1}}(x_\alpha)_{\alpha<\lambda} \equiv \overline{B_{\mu+1}}\big( \overline{p_\beta f}(x_\alpha)_{\alpha<\lambda}\big)_{\beta<\mu}
		\]
		and
		\[
		\{x_\alpha:\overline{A}_\alpha,\,x_\delta:\overline{f^*B_{\mu+1}}(x_\alpha)_{\alpha<\lambda} \}_{\alpha<\lambda} \vdash \overline{q(f,B_{\mu+1})}(x_\alpha,x_\delta)_{\alpha<\lambda} \equiv_{\overline{f^*B_\mu}(x_\alpha)_{\alpha<\lambda}} x_\delta.
		\]
	\end{enumerate}
\end{definition}

\begin{observation}
	It is immediate to observe that $ U(\catC) $ as defined is a $ \kappa $-pretheory. We have type and operator symbols introduced by the type and type element judgments respectively. Note that the list of axioms we provided are well-formed rules. This is because the premise of each axiom is by definition a context.
\end{observation}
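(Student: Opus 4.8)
The plan is to unwind \cref{definitiongatfromcat} and check, piece by piece, that the data assembled there is precisely the data of a $\kappa$-pretheory, and then that the three families of equality axioms listed there are well-formed rules in the sense of \cref{contextdefinition}. First I would pin down the two index sets: the set $S$ of sort symbols is $\{\overline{B}\mid B\in\catC\text{ of successor height}\}$ and the set $O$ of operation symbols is $\{\overline{f}\mid f\text{ a morphism of }\catC\text{ whose codomain has successor height}\}$; these are genuine sets because $\catC$ is small (and where it is not, one works in a larger universe, as is done elsewhere in the paper for size reasons). For a sort symbol $\overline{B_\mu}$ with $\mu=\lambda+1$, the prescribed introductory judgment $\{x_\beta:\overline{B_\beta}\}_{\beta<\mu}\vdash\overline{B_\mu}(x_\beta)_{\beta<\mu}\, \type$ is a type judgment, and its premise has length $\mu<\kappa$ because, by the grading axiom of \cref{contextualcat}, every object of a $\kappa$-contextual category has height strictly below $\kappa$. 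Similarly, for a morphism $f:A_\lambda\to B_{\mu+1}$ with display map $\rho_\mu:B_{\mu+1}\display B_\mu$, the prescribed judgment $\{x_\alpha:\overline{A}_\alpha\}_{\alpha<\lambda}\vdash\overline{f}(x_\alpha)_{\alpha<\lambda}:\overline{(\rho_\mu f)^*B_{\mu+1}}(x_\alpha)_{\alpha<\lambda}$ is a term judgment with premise of length $\lambda<\kappa$ and a single expression as conclusion. The first two families of axioms are term-equality judgments, and the third family consists of one type-equality and one term-equality judgment, so each axiom has one of the two admissible shapes. This exhausts the requirements, so $U(\catC)$ is a $\kappa$-pretheory.

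For the second assertion I would verify that every premise occurring among the axioms of \cref{definitiongatfromcat} is a context, which is what well-formedness of those rules amounts to. The crux is the auxiliary claim that, for any object $A_\lambda$ of $\catC$, the premise $\{x_\alpha:\overline{A}_\alpha\}_{\alpha<\lambda}$ is a context of $U(\catC)$; granting this, each axiom premise---which is either such a premise, or one of these extended by a single variable of type $\overline{f^*B_{\mu+1}}(\dots)$, or a concatenation arising from a composable pair $A_\lambda\to B_\mu\to C_{\nu+1}$---is again a context, and the types decorating the variables on both sides of each $\equiv$ are correctly typed. I would prove the auxiliary claim by transfinite induction on $\lambda$: at successor stages one invokes \cref{derivedrules} (\ref{drule11}) together with the substitution property, using the third family of axioms of \cref{definitiongatfromcat} to rewrite $\overline{(\rho_\mu f)^*B_{\mu+1}}$ as the appropriate substitution instance of $\overline{B_{\mu+1}}$; at limit stages one uses that the tree-decomposition of \cref{convention1} introduces no fresh type symbol and that limit-height objects are limits (\cref{limitcontext}).

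The hard part, if anything is hard here, is exactly that transfinite induction: the only genuinely non-formal bookkeeping is handling the tree structure of $\catC$ at limit ordinals---where no new variable or type symbol appears---and matching the canonical-pullback data of $\catC$ with substitution of terms in $U(\catC)$ through the third family of axioms. Once this is in place, everything else is a direct check of shapes against the definitions of \cref{contextdefinition} and of a $\kappa$-pretheory, which is why the statement is recorded only as an observation.
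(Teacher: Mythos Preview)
Your proposal is correct and spells out what the paper records as immediate. The paper offers no proof beyond the phrase ``by definition a context,'' and your plan to verify the pretheory data piece by piece and then show the axiom premises are contexts by transfinite induction on the height is exactly the right unpacking.

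That said, the successor step of your induction is simpler than you make it. To show that $\{x_\alpha:\overline{A}_\alpha\}_{\alpha<\lambda}$ is a context, at each successor $\alpha$ you need $\{x_\beta:\overline{A}_\beta\}_{\beta<\alpha}\vdash\overline{A}_\alpha(x_\beta)_{\beta<\alpha}\,\type$ to be derived; but this is \emph{literally} the introductory rule of the sort symbol $\overline{A}_\alpha$ in $U(\catC)$, so once the induction hypothesis says its premise is a context, rule~(\ref{drule11}) with the identity substitution $t_\beta=x_\beta$ (together with rule~(\ref{drule10}) to type the variables) already gives the derivation. You do not need the substitution property or the third family of axioms here: those enter later, from \cref{lemma1231} onward, when one compares $\overline{(\rho_\mu f)^*B_{\mu+1}}$ with a substitution instance of $\overline{B_{\mu+1}}$, but that comparison is irrelevant to the bare claim that the axiom premises are contexts. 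The same remark handles the extended premise in axiom~(3): $f^*B_{\mu+1}$ is itself an object of $\catC$ of height $\lambda+1$ whose tower to $1$ passes through $A_\lambda$, so its own type-introduction axiom already has premise $\{x_\alpha:\overline{A}_\alpha\}_{\alpha<\lambda}$. This is precisely why the paper is content to say ``by definition.''
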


\begin{remark}
	If $ f: A_\lambda \to B_\mu $ is a map in $ \catC $, where $ \mu $ is a limit ordinal, \ie $ B_\mu $ is a limit object, then we get a family of maps $ \{f_\nu: A_\lambda \to B_\nu \}_{\nu<\mu} $. Therefore, the associated operator $ \overline{f} $ is uniquely determined by the family of operators $ \overline{f_\nu}, $ for which in this case we can assume that $ \nu $ is a successor ordinal. 
\end{remark}

If $ F:\catC \to \catD $ is a functor between $ \kappa $-contextual categories, then we need an interpretation $ U(F):U(\catC) \to U(\catD) $;
\begin{enumerate}
	\item For an object $ A_\lambda $, the interpretation is defined as \[ U(F)(\overline{A_\lambda}):= \overline{FA_\lambda}(x_\alpha)_{\alpha<\lambda} .\]
	\item For a morphism $ f:A_\lambda \to B_{\mu+1} $, the operator $ \overline{f} $ is interpreted as \[ U(F)(\overline{f}):= \overline{F(f)}(x_\alpha)_{\alpha<\lambda}. \]
        \end{enumerate}

The next step is to prove that this is indeed a map between the generalized $\kappa$-algebraic theories, this is done in \cite[pp 2.29]{cartmell1978}. For this, it is enough to show that rules and axioms of $ U(\catC) $ are sent to rules of $ U(\catD) $. The functoriality of $ U : \kappa\text{-CON} \to \kappa\text{-GAT} $ is also immediate from its definition. This is tested on each type and operator symbol. It is then enough to take the equivalence class $ [U(F)] $.

\subsubsection{The natural isomorphism \texorpdfstring{$ U\circ \C \cong Id_{\kappa\text{-}GAT} $}{Lg}}

For each $T \in \kappa $-GAT we want to define an interpretation $ [\varphi_T]: T \to U(\C_T)$, we do this by defining a preinterpretation $\varphi_T: Exp(T) \to Exp(U(\C_T)) $:

\begin{enumerate}
	\item If $ \Delta $ is a type symbol of $ T $ with introduction rule
	\[ \{x_\alpha: \Delta_\beta\}_{\beta<\mu} \vdash \Delta(x_\beta)_{\beta<\mu} \,\type\]
	then \[ \varphi_T(\Delta):= \overline{[\{x_\beta: \Delta_\beta,\, x_\delta:\Delta(x_\beta)_{\beta<\mu}\}_{\beta<\mu}]}(x_\beta)_{\beta<\mu} \]
	\item If $ f $ is an operator symbol with introductory rule
	\[ \{x_\alpha: \Delta_\beta\}_{\beta<\mu} \vdash f(x_\beta)_{
		\beta<\mu}:\Delta, \]
	then
	\[
	\varphi_T(f):=  \overline{ [ \langle x_\beta, f(x_\beta)_{\beta<\mu}\rangle_{\beta<\mu} ] }(x_\beta)_{\beta<\mu},
	\]
	where $  \langle x_\beta, f(x_\beta)_{\beta<\mu}\rangle_{\beta<\mu} $ is the morphism $ \{x_\alpha: \Delta_\beta\}_{\beta<\mu} \to \{x_\alpha: \Delta_\beta, x_\delta:\Delta \}_{\beta<\mu}$.
\end{enumerate}

We proceed to verify that as defined $ \varphi_T:T\to U(\C_T) $ is an interpretation as defined. This is a crucial point in the proof, so we spell out some details in \cref{isinterpreation}. The results below are the technical steps towards it.

\begin{lemma}\label{lemma1231}
	If $ \catC $ is a contextual category, objects $ A_\lambda,\, B_\mu$ and $ f:  A_\lambda \to B_\mu$ is map with $ \mu=\nu+1 $ (in particular it is non-trivial), then the rule
	\[
	\{x_\alpha:\overline{A}_\alpha(x_\gamma)_{\gamma<\alpha} \}_{\alpha<\lambda} \vdash \overline{f}(x_\alpha)_{\alpha<\lambda} : \overline{B_\mu}\big( \overline{p_\beta\circ f}(x_\alpha)_{\alpha< \lambda }\big)_{\beta<\mu}
	\]
	is a derived rule of $ U(\catC) $.
\end{lemma}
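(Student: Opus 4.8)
The plan is to exhibit the required derivation by reconciling two a priori different types for the term $\overline f(x_\alpha)_{\alpha<\lambda}$. First I would record that, writing $\mu=\nu+1$ and letting $\rho_\nu\colon B_\mu\display B_\nu$ denote the last display map in the canonical tower of $B_\mu$, we have $\rho_\nu=p_\nu$ and, by uniqueness of display maps in a contextual category, $p_\beta=p_\beta^{B_\nu}\circ\rho_\nu$ for every $\beta<\nu$; in particular $p_\beta\circ f=p_\beta^{B_\nu}\circ(\rho_\nu f)$ as morphisms of $\catC$. By \cref{definitiongatfromcat}, the introductory rule for the operator symbol $\overline f$ attached to $f\colon A_\lambda\to B_\mu$ is exactly
\[
\{x_\alpha:\overline A_\alpha\}_{\alpha<\lambda}\vdash\overline f(x_\alpha)_{\alpha<\lambda}\colon\overline{(\rho_\nu f)^*B_\mu}(x_\alpha)_{\alpha<\lambda},
\]
which is therefore a derived judgment of $U(\catC)$. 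So the whole problem reduces to reconciling the type $\overline{(\rho_\nu f)^*B_\mu}(x_\alpha)_{\alpha<\lambda}$ appearing here with the type $\overline{B_\mu}\bigl(\overline{p_\beta f}(x_\alpha)_{\alpha<\lambda}\bigr)_{\beta<\mu}$ appearing in the statement.

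Next I would derive the type equality
\[
\{x_\alpha:\overline A_\alpha\}_{\alpha<\lambda}\vdash\overline{(\rho_\nu f)^*B_\mu}(x_\alpha)_{\alpha<\lambda}\equiv\overline{B_\mu}\bigl(\overline{p_\beta f}(x_\alpha)_{\alpha<\lambda}\bigr)_{\beta<\mu}.
\]
This is an instance of the pullback-compatibility axiom of $U(\catC)$ (the third axiom in \cref{definitiongatfromcat}) applied to the morphism $\rho_\nu f\colon A_\lambda\to B_\nu$ together with the object $B_\mu$ lying over $B_\nu$: that axiom yields $\overline{(\rho_\nu f)^*B_\mu}\equiv\overline{B_\mu}\bigl(\overline{p_\beta(\rho_\nu f)}\bigr)_\beta$, and the identity $p_\beta^{B_\nu}\circ\rho_\nu=p_\beta^{B_\mu}$ in $\catC$ shows that each operator symbol $\overline{p_\beta(\rho_\nu f)}$ is literally $\overline{p_\beta f}$. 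The composition and projection axioms of \cref{definitiongatfromcat} (items 1 and 2) are then used to check that the partial substitution occurring on the right-hand side is well typed, exactly in the style of the proof of \cref{derivationlemma}; when $B_\nu$ has successor height one may equivalently obtain this type equality by induction on $\mu$, reducing the claim for $B_\mu$ to the one for $B_\nu$. Since the left-hand type is derivable (it is the type declared in the well-formed introductory rule of $\overline f$), the equality judgment, and hence the underlying type judgment on the right, is derivable.

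Finally I would apply the type-coercion rule \ref{drule9} of \cref{derivedrules}, which from $\Gamma\vdash t\colon A_1$ and $\Gamma\vdash A_1\equiv A_2$ produces $\Gamma\vdash t\colon A_2$, taking $\Gamma=\{x_\alpha:\overline A_\alpha\}_{\alpha<\lambda}$, $t=\overline f(x_\alpha)_{\alpha<\lambda}$, $A_1=\overline{(\rho_\nu f)^*B_\mu}(x_\alpha)_{\alpha<\lambda}$ and $A_2=\overline{B_\mu}\bigl(\overline{p_\beta f}(x_\alpha)_{\alpha<\lambda}\bigr)_{\beta<\mu}$. This produces precisely the asserted derived rule. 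I expect the main obstacle to be entirely notational: matching the index ranges and the partial-substitution conventions so that the cited instance of the pullback axiom lines up on the nose; the mathematical content is just the axioms of $U(\catC)$ together with the bookkeeping of display maps in a contextual category.
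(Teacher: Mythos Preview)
Your proposal is correct and follows essentially the same approach as the paper: invoke the introductory rule for $\overline f$ to get $\overline f(x_\alpha)_{\alpha<\lambda}:\overline{(\rho_\nu f)^*B_\mu}(x_\alpha)_{\alpha<\lambda}$, use axiom~3 of \cref{definitiongatfromcat} (applied to $\rho_\nu f$) to obtain the type equality, and then apply the conversion rule~\ref{drule9}. The paper's proof is considerably terser---it simply cites the axiom and the derivation rule---while you spell out the identification $p_\beta^{B_\nu}\circ\rho_\nu=p_\beta^{B_\mu}$ and the well-typedness check, but the substance is identical.
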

\begin{proof}
	We have the axiom
	\[
	\{x_\alpha:\overline{A}_\alpha\}_{\alpha<\lambda} \vdash \overline{f^*B_\mu}(x_\alpha)_{\alpha<\lambda} \equiv \overline{B_\mu}\big( \overline{p_\beta\circ f}(x_\alpha)_{\alpha<\lambda}\big)_{\beta<\mu}
	\]
	for $ U(\catC) $ and the derivation rule for $ \kappa $-GAT
	\[\inferrule{\Gamma \vdash A_1\equiv A_2 \\  t:A_1}{\Gamma \vdash t:A_2}.\]
	These put together give us the result.
\end{proof}

\begin{lemma}
	Let $ \catC $ a $ \kappa $-contextual category, objects $ \{A_\alpha\}_{\alpha<\lambda},$ $ \{B_\beta\}_{\beta<\mu+1},$ $ \{C_\gamma\}_{\gamma < \varepsilon} $ and a commutative diagram
	% https://q.uiver.app/#q=WzAsNCxbMCwwLCJDX1xcbnUiXSxbMCwxLCJBX1xcbGFtYmRhIl0sWzEsMSwiQl9cXGJldGEiXSxbMSwwLCJCX1xcbXUiXSxbMSwyLCJmX1xcYmV0YSIsMl0sWzMsMiwicF9cXGJldGEiLDAseyJzdHlsZSI6eyJoZWFkIjp7Im5hbWUiOiJlcGkifX19XSxbMCwzLCJsX1xcYmV0YSJdLFswLDEsImtfXFxiZXRhIiwyXV0=
	\[\begin{tikzcd}
		{C_\varepsilon} & {B_{\mu+1}} \\
		{A_\lambda} & {B_\mu.}
		\arrow["{f}"', from=2-1, to=2-2]
		\arrow["{p}", two heads, from=1-2, to=2-2]
		\arrow["{l}", from=1-1, to=1-2]
		\arrow["{k}"', from=1-1, to=2-1]
	\end{tikzcd}\]
	If $ h:C_\varepsilon \to f^*B_{\mu+1} $ is the unique map given by the pullback, then the rule
	\[
	\{x_\gamma:\overline{C_\gamma}(x_\delta)_{\delta<\gamma}\}_{\gamma<\varepsilon} \vdash \overline{h}(x_\gamma)_{\gamma<\varepsilon} \equiv_{\overline{(f k)^*B_{\mu+1}}(x_\gamma)_{\gamma<\varepsilon}} \overline{l}(x_\gamma)_{\gamma<\varepsilon}
	\]
	is a derived rule of $ U(\catC). $
\end{lemma}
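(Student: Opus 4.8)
The plan is to derive the equation purely formally inside $U(\catC)$ from the axioms of \cref{definitiongatfromcat}, using \cref{lemma1231} for the typing and the derived rules of \cref{derivedrules} for the logical manipulations. Write $\pi := f^{*}p : f^{*}B_{\mu+1}\display A_\lambda$ and $q := q(f,B_{\mu+1}) : f^{*}B_{\mu+1}\to B_{\mu+1}$, so that the canonical pullback square reads $p\circ q = f\circ\pi$ and $h$ is the unique map with $\pi\circ h = k$ and $q\circ h = l$. All judgments below live in the context $\overline C := \{x_\gamma : \overline{C_\gamma}(x_\delta)_{\delta<\gamma}\}_{\gamma<\varepsilon}$.

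First I would check that the asserted judgment is well formed, i.e.\ that $\overline h(x_\gamma)_{\gamma<\varepsilon}$ and $\overline l(x_\gamma)_{\gamma<\varepsilon}$ both carry the type $\overline{(fk)^{*}B_{\mu+1}}(x_\gamma)_{\gamma<\varepsilon}$ over $\overline C$. For $\overline l$ this is \cref{lemma1231} applied to $l$, which gives the type $\overline{(p\circ l)^{*}B_{\mu+1}}(x_\gamma)$, together with $p\circ l = f\circ k$ from commutativity of the given square. For $\overline h$ it is \cref{lemma1231} applied to $h$ (whose display map is $\pi$), giving the type $\overline{(\pi\circ h)^{*}(f^{*}B_{\mu+1})}(x_\gamma)$; now $\pi\circ h = k$, and $k^{*}(f^{*}B_{\mu+1}) = (f\circ k)^{*}B_{\mu+1}$ is an identity of objects of $\catC$ by strict functoriality of canonical pullbacks (Axiom 9(b) of \cref{contextualcat}), so the two type symbols literally coincide. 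This step is essentially bookkeeping.

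The substance is the equality itself. I would apply Axiom 1 of \cref{definitiongatfromcat} — functoriality of $\overline{(-)}$ along composites — to the factorisation $l = q\circ h$ (legitimate since $B_{\mu+1}$, the target of $q$, is a successor object as that axiom requires). This rewrites $\overline l(x_\gamma) = \overline{q\circ h}(x_\gamma)$ as $\overline q$ applied to the operators $\overline{p_\beta\circ h}(x_\gamma)$ coming from the projections $p_\beta$ of the tree decomposition of $f^{*}B_{\mu+1}$. Because $\pi\circ h = k$, every argument $p_\beta\circ h$ below the top stage reduces to the corresponding projection of $k$, while the top (``fibre'') argument is, up to the identity $p_{\mathrm{top}} = \mathrm{id}$, just $h$ itself. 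I would then invoke the second clause of Axiom 3 of \cref{definitiongatfromcat}, which says precisely that $\overline{q(f,B_{\mu+1})}$ acts as the projection onto its last argument, i.e.\ $\overline{q(f,B_{\mu+1})}(x_\alpha,x_\delta)\equiv x_\delta$; substituting the operators above into this identity collapses the right-hand side to $\overline h(x_\gamma)$. Chaining the two resulting equalities by transitivity of $\equiv$ (derived rule \ref{drule6}) then yields $\overline h(x_\gamma)_{\gamma<\varepsilon}\equiv_{\overline{(fk)^{*}B_{\mu+1}}(x_\gamma)}\overline l(x_\gamma)_{\gamma<\varepsilon}$, as claimed.

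The delicate point — and where I would spend most of the effort — is the ordinal bookkeeping: aligning the index ranges of the arguments produced by Axiom 1 with the variables of the context of $f^{*}B_{\mu+1}$ appearing in Axiom 3, and treating uniformly the cases where $A_\lambda$, $B_\mu$ or $C_\varepsilon$ have limit height (so ``$\overline{p_\beta(-)}$'' denotes a family of operators rather than a single one, as in the remark following \cref{definitiongatfromcat}). This is where the ``routine'' verification concentrates, but it is exactly parallel to Cartmell's finitary treatment and introduces no new ideas; the substitution lemmas (\cref{lemma1111}, \cref{descriptiondelta}, \cref{lemma3220}) are the tools that make precise both the reduction of $p_\beta\circ h$ to a projection of $k$ and the identification of the last argument of $\overline q$ with $h$.
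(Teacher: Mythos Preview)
Your approach is correct and is exactly the Cartmell argument that the paper defers to (the paper's own proof reads ``the same as \cite[Lemma~2, pp.~2.32]{cartmell1978} using \cref{lemma1231}''): expand $\overline{l}=\overline{q\circ h}$ via Axiom~1 of \cref{definitiongatfromcat}, then collapse $\overline q$ to its last argument via the second clause of Axiom~3, with \cref{lemma1231} supplying the typing. Your caveat about the ordinal bookkeeping is apt---the paper's own indexing is loose (compare the $\beta<\mu$ in Axiom~1 with the $\beta\leq\mu$ in the definition of $\eta_\catC$ on morphisms), but your reading that the top argument of $\overline q$ is $\overline h$ is the intended one.
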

\begin{proof}
	The proof is the same as \cite[Lemma 2~pp. 2.32]{cartmell1978} using \cref{lemma1231}.
\end{proof}

\begin{lemma}\label{lemma2232}
	Let $ \catC $ a $ \kappa $-contextual category, objects $ \{A_\alpha\}_{\alpha<\lambda},$ $ \{B_\beta\}_{\beta<\mu},$ $ \{C_\gamma\}_{\gamma < \varepsilon} $ and for $ 0<\nu<\mu $ a commutative diagram
	% https://q.uiver.app/#q=WzAsNCxbMCwwLCJDX1xcbnUiXSxbMCwxLCJBX1xcbGFtYmRhIl0sWzEsMSwiQl9cXGJldGEiXSxbMSwwLCJCX1xcbXUiXSxbMSwyLCJmX1xcYmV0YSIsMl0sWzMsMiwicF9cXGJldGEiLDAseyJzdHlsZSI6eyJoZWFkIjp7Im5hbWUiOiJlcGkifX19XSxbMCwzLCJsX1xcYmV0YSJdLFswLDEsImtfXFxiZXRhIiwyXV0=
	\[\begin{tikzcd}
		{C_\varepsilon} & {B_{\mu}} \\
		{A_\lambda} & {B_\nu.}
		\arrow["{f}"', from=2-1, to=2-2]
		\arrow["{p_\nu}", two heads, from=1-2, to=2-2]
		\arrow["{l_\nu}", from=1-1, to=1-2]
		\arrow["{k_\nu}"', from=1-1, to=2-1]
	\end{tikzcd}\]
	If $ h_\nu:C_\varepsilon \to f^*B_{\mu} $ is the unique map given by the pullback, then the rule
	\[
	\{x_\gamma:\overline{C_\gamma}(x_\delta)_{\delta<\gamma}\}_{\gamma<\varepsilon} \vdash \overline{h_\nu}(x_\gamma)_{\gamma<\varepsilon} \equiv_{\overline{(f k_\nu)^*B_{\mu}}(x_\gamma)_{\gamma<\varepsilon}} \overline{l_\nu}(x_\gamma)_{\gamma<\varepsilon}
	\]
	is a derived rule of $ U(\catC). $
\end{lemma}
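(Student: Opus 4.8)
The plan is to prove the lemma by transfinite induction on the length $\delta$ of the display map $p_\nu\colon B_\mu\display B_\nu$ --- equivalently, by downward induction on $\nu$ --- the base case $\delta=1$ (i.e. $\mu=\nu+1$) being exactly the preceding lemma, which itself rests on \cref{lemma1231}. In the inductive step one distinguishes whether $\delta$ is a successor or a limit ordinal; both cases are reduced to smaller values of $\delta$.

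Suppose $\delta=\delta_0+1$, so that $\mu=\mu_0+1$ with $\mu_0:=\nu+\delta_0$ and $p_\nu$ factors in $\catC$ as $B_\mu\overset{\rho}{\display}B_{\mu_0}\overset{\pi}{\display}B_\nu$ with $\rho$ of length $1$ and $\pi$ of length $\delta_0$. Setting $l_{\mu_0}:=\rho\circ l_\nu$ gives $\pi\circ l_{\mu_0}=p_\nu\circ l_\nu=f\circ k_\nu$, hence a commutative square of the same shape over $\pi$ (with the same $k_\nu$, $f$); let $h_{\mu_0}\colon C_\varepsilon\to f^{*}B_{\mu_0}$ be the induced map. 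Applying the induction hypothesis to $\pi$ (of length $\delta_0<\delta$) yields
\[\{x_\gamma:\overline{C_\gamma}(x_\delta)_{\delta<\gamma}\}_{\gamma<\varepsilon}\vdash\overline{h_{\mu_0}}(x_\gamma)_{\gamma<\varepsilon}\equiv\overline{l_{\mu_0}}(x_\gamma)_{\gamma<\varepsilon}.\]
Using the strict functoriality of canonical pullbacks and the composite-display axiom (\cref{contextualcat}, axioms (8)--(10)), one identifies $f^{*}B_\mu$ with $q(f,B_{\mu_0})^{*}B_\mu$ and checks that $h_\nu$ is exactly the map induced by the commutative square
\[\begin{tikzcd}
C_\varepsilon \ar[r,"l_\nu"] \ar[d,"h_{\mu_0}"'] & B_\mu \ar[d,"\rho",two heads] \\
f^{*}B_{\mu_0} \ar[r,"{q(f,B_{\mu_0})}"'] & B_{\mu_0}
\end{tikzcd}\]
over the length-$1$ display map $\rho$. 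The base case (the preceding lemma), applied to this square, gives $\overline{h_\nu}\equiv\overline{l_\nu}$ over the type $\overline{(q(f,B_{\mu_0})\circ h_{\mu_0})^{*}B_\mu}$; since $q(f,B_{\mu_0})\circ h_{\mu_0}=l_{\mu_0}$ and $f\circ k_\nu=\pi\circ l_{\mu_0}$, the substitution axioms of \cref{definitiongatfromcat} identify this type, up to $U(\catC)$-provable type equality, with $\overline{(f k_\nu)^{*}B_\mu}$, so \cref{derivedrules}(\ref{drule9}) and transitivity \cref{derivedrules}(\ref{drule6}) give the desired judgment. When $\delta_0>1$ the conclusion is read as the conjunction of the level-wise term equalities: the induction hypothesis supplies the levels strictly between $\nu$ and $\mu_0$, the base case supplies the top level, and the two ranges concatenate, the bookkeeping being handled by \cref{derivedrules}(\ref{drule13}).

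If $\delta$ is a limit ordinal then $\mu=\nu+\delta$ is a limit, so by the discussion in \cref{interlude} both $B_\mu$ and $f^{*}B_\mu$ are limit objects, $B_\mu=\Lim_{\nu\leq\beta<\mu}B_\beta$ and $f^{*}B_\mu=\Lim_{\nu\leq\beta<\mu}f^{*}B_\beta$ with compatible display maps. Hence $l_\nu$ and $h_\nu$ are determined by their composites $l_\beta\colon C_\varepsilon\to B_\beta$, $h_\beta\colon C_\varepsilon\to f^{*}B_\beta$, and $\overline{l_\nu}$, $\overline{h_\nu}$ are the corresponding families $(\overline{l_\beta})$, $(\overline{h_\beta})$; for each $\nu<\beta<\mu$ the display $B_\beta\display B_\nu$ has length $<\delta$, so the induction hypothesis gives $\overline{h_\beta}\equiv\overline{l_\beta}$, and since $U(\catC)$-provable term equality of tuples is coordinatewise we conclude $\overline{h_\nu}\equiv\overline{l_\nu}$.

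The one genuinely delicate point is the successor step: one must verify, purely from the contextual-category axioms, that the map induced by the displayed square over $\rho$ really is $h_\nu$, and that the two type annotations $\overline{(f k_\nu)^{*}B_\mu}$ and $\overline{(q(f,B_{\mu_0})\circ h_{\mu_0})^{*}B_\mu}$ coincide up to $U(\catC)$-provable type equality. This amounts to a careful chase of canonical pullbacks along composites of display maps (\cref{contextualcat}(8)--(10)) together with the substitution axioms in \cref{definitiongatfromcat}; everything else is a routine application of the preceding lemma and the derivation rules for $\equiv$.
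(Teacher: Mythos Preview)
Your proof is correct and follows the same strategy as the paper: transfinite induction on the length of the display map $p_\nu$, with the base case being the preceding lemma and the limit case handled via the limit-object description of $B_\mu$. The paper's proof is extremely terse (three sentences) and in particular its phrase ``when it is a successor ordinal, this is the previous lemma'' elides exactly the decomposition you spell out---peeling off the top length-$1$ display, applying the induction hypothesis to the remaining length-$\delta_0$ part, and then invoking the base case for the top layer---together with the type-equality bookkeeping via the axioms of \cref{definitiongatfromcat} that you correctly flag as the delicate point.
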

\begin{proof}
	This by induction on the height of $ p_\nu $. When it is a successor ordinal, this is the previous lemma. When it is a limit ordinal $ B_\mu $ is a limit object, therefore the result reduces to the inductive hypothesis, which is the successor case again.
\end{proof}

Recall from \cref{interlude} we defined the set of maps $ \Gamma(B) $. It follows from the previous result that

\begin{corollary} \label{corollary3234}
	If $ \catC $ is a $ \kappa $-contextual category and $ f:A_\lambda \to B_\mu $ is a map in $ \catC $, then for all $ \nu<\mu $
	\[
	\{x_\alpha:A_\alpha(x_\delta)_{\delta<\alpha}\}_{\alpha<\lambda} \vdash \overline{\delta_f^\nu}(x_\alpha)_{\alpha<\lambda}\equiv \overline{f}(x_\alpha)_{\alpha<\lambda}.
	\]
	is a derived rule of $ U(\catC). $
\end{corollary}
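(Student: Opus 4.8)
The plan is to derive the corollary as an immediate special case of \cref{lemma2232}, the only work being to recognise $\delta_f^\nu$ as an instance of the comparison map ``$h_\nu$'' occurring there. First I would recall from \cref{interlude} what $\delta_f^\nu$ is: given $f\colon A_\lambda\to B_\mu$ and $\nu<\mu$, with $p_\nu\colon B_\mu\display B_\nu$ the display map, one forms the canonical pullback of $p_\nu$ along $p_\nu f$,
\[\begin{tikzcd}
	{(p_\nu f)^*B_\mu} & {B_\mu} \\
	{A_\lambda} & {B_\nu}
	\arrow["{q(p_\nu f,B_\mu)}", from=1-1, to=1-2]
	\arrow["{(p_\nu f)^*p_\nu}"', two heads, from=1-1, to=2-1]
	\arrow["{p_\nu}", two heads, from=1-2, to=2-2]
	\arrow["{p_\nu f}"', from=2-1, to=2-2]
\end{tikzcd}\]
and $\delta_f^\nu\colon A_\lambda\to(p_\nu f)^*B_\mu$ is the unique map induced by the cone $(\mathrm{id}_{A_\lambda},f)$ --- the cone condition $p_\nu\circ f=(p_\nu f)\circ\mathrm{id}_{A_\lambda}$ being trivial --- so that $(p_\nu f)^*p_\nu\circ\delta_f^\nu=\mathrm{id}_{A_\lambda}$ and $q(p_\nu f,B_\mu)\circ\delta_f^\nu=f$.

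Next I would instantiate \cref{lemma2232} with $C_\varepsilon:=A_\lambda$ (so $\varepsilon=\lambda$), with the bottom edge of its square taken to be $p_\nu f\colon A_\lambda\to B_\nu$, with top edge $l_\nu:=f\colon A_\lambda\to B_\mu$, and with left edge $k_\nu:=\mathrm{id}_{A_\lambda}$; the square commutes by the cone condition above. By the defining universal property, the map $h_\nu$ that \cref{lemma2232} produces is exactly $\delta_f^\nu$, and $f^*B_\mu$ (in the notation of that lemma, with ``$f$'' $=p_\nu f$) is $(p_\nu f)^*B_\mu$. The lemma then gives that
\[\{x_\alpha:\overline{A_\alpha}(x_\delta)_{\delta<\alpha}\}_{\alpha<\lambda} \vdash \overline{\delta_f^\nu}(x_\alpha)_{\alpha<\lambda}\equiv_{\overline{\big((p_\nu f)\,\mathrm{id}_{A_\lambda}\big)^*B_\mu}(x_\alpha)_{\alpha<\lambda}}\overline{f}(x_\alpha)_{\alpha<\lambda}\]
is a derived rule of $U(\catC)$; since $(p_\nu f)\circ\mathrm{id}_{A_\lambda}=p_\nu f$, the annotating type on $\equiv$ is $\overline{(p_\nu f)^*B_\mu}$, which by \cref{definitiongatfromcat} is the common type of the two operator symbols $\overline{\delta_f^\nu}$ and $\overline{f}$ in the context $\{x_\alpha:\overline{A_\alpha}\}_{\alpha<\lambda}$. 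Erasing the now redundant annotation yields precisely the asserted equation.

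I do not expect a genuine obstacle here: the mathematical content is entirely in \cref{lemma2232}, and what remains is the bookkeeping of the substitution above --- in particular, checking that the factorisation map of that lemma coincides with $\delta_f^\nu$, which is immediate from the two universal properties. Two edge cases deserve a sentence. When $\mu$ is a limit ordinal, $\delta_f^\nu$ is assembled componentwise from the maps $(\delta_f^\nu)_\gamma$ for $\nu<\gamma<\mu$ exactly as in \cref{interlude}, and \cref{lemma2232} is already proved for all $\mu$ (its proof handles the limit case by induction on the length of $p_\nu$), so nothing extra is needed. The case $\nu=0$, formally excluded by \cref{lemma2232}, is handled in the same manner, now with $p_0 f$ the terminal map $A_\lambda\to 1$, using the base step of the induction underlying \cref{lemma2232} (equivalently, a direct application of \cref{lemma1231} comparing the section $\delta_f^0$ with $f$ through the canonical pullback along $A_\lambda\to 1$).
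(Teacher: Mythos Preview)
Your proposal is correct and follows essentially the same approach as the paper, which simply asserts that the corollary ``follows from the previous result'' (\cref{lemma2232}); you have merely spelled out the instantiation $C_\varepsilon:=A_\lambda$, $k_\nu:=\mathrm{id}_{A_\lambda}$, $l_\nu:=f$, and (lemma's) $f:=p_\nu f$, which is exactly what is intended. Your remark on the $\nu=0$ edge case is a reasonable bit of extra care that the paper does not make explicit.
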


If we specialize \cref{corollary3234} to the syntactic $ \kappa $-contextual category of a generalized $\kappa$-algebraic theory $ T $, then

\begin{corollary} \label{corollary4234}
	Assume that $ \{x_\beta:B_\beta\}_{\beta<\mu} $ is a context, $ \nu<\mu $ and \[f_\nu:=[\langle t_\beta \rangle_{\beta<\nu}]:[\{x_\alpha:A_\alpha\}_{\alpha<\lambda}] \to  [\{x_\beta:B_\beta\}_{\beta<\nu}] \] a map in $ \C_T $ then
	\[
	\{x_\alpha: \overline{A_\alpha}(x_\gamma)_{\gamma<\alpha} \}_{\alpha<\lambda} \vdash \overline{[\langle x_\alpha, t_\varepsilon \rangle_{\substack{\alpha<\lambda \\ \nu \leq \varepsilon <\mu}}]} \equiv \overline{[\langle t_\beta, t_\varepsilon \rangle_{\beta<\nu\leq\varepsilon<\mu}]}
	\]
	is a derived rule of $ U(\C_T) $.
\end{corollary}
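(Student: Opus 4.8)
The plan is to obtain \cref{corollary4234} as the instance of \cref{corollary3234} for the $\kappa$-contextual category $\catC = \C_T$, once the diagonal section $\delta_f^\nu$ has been written out explicitly using the description of pullbacks along generalized display maps in $\C_T$. So the first step is to fix the relevant data. Write $\Gamma_A := [\{x_\alpha : A_\alpha\}_{\alpha<\lambda}]$ and $\Gamma_B := [\{x_\beta : B_\beta\}_{\beta<\mu}]$ for the two objects of $\C_T$ involved, let $p : \Gamma_B \display [\{x_\beta : B_\beta\}_{\beta<\nu}]$ be the generalized display map of length $\mu - \nu$, and let $f := [\langle t_\beta, t_\varepsilon\rangle_{\beta<\nu\leqslant\varepsilon<\mu}] : \Gamma_A \to \Gamma_B$ be the morphism that prolongs $f_\nu = [\langle t_\beta\rangle_{\beta<\nu}]$ by the terms $t_\varepsilon$ (so that $f$ is any $\Gamma_B$-valued point restricting to the given $f_\nu$; this is exactly the datum against which the statement is to be read). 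By the way composition and generalized display maps are computed in $\C_T$ (\cref{remarkdisplay}) one has $p \circ f = f_\nu$, so in the bookkeeping of the interlude $f_\nu$ is precisely the composite ``$p_\nu f$'' and $\delta_f^\nu$ is a well-defined section of $f_\nu^*p$.

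Second, I would invoke \cref{syntactic:contextual}, which tells us $\C_T$ is a $\kappa$-contextual category, and apply \cref{corollary3234} to $\C_T$, the morphism $f$, and the ordinal $\nu < \mu$. This yields that
\[
\{x_\alpha : \overline{A_\alpha}(x_\gamma)_{\gamma<\alpha}\}_{\alpha<\lambda} \vdash \overline{\delta_f^\nu}(x_\alpha)_{\alpha<\lambda} \;\equiv\; \overline{f}(x_\alpha)_{\alpha<\lambda}
\]
is a derived rule of $U(\C_T)$. By the definition of the operator symbols $\overline{(\cdot)}$ in \cref{definitiongatfromcat}, the term $\overline{f}$ is precisely $\overline{[\langle t_\beta, t_\varepsilon\rangle_{\beta<\nu\leqslant\varepsilon<\mu}]}$, which is exactly the right-hand side of the equation claimed in \cref{corollary4234}.

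Third, it remains only to identify $\delta_f^\nu$, and this is where \cref{descriptiondelta} does the work (equivalently, one traces through the explicit construction of canonical pullbacks along generalized display maps in \cref{pullbacksyntactic}): the pullback $f_\nu^*\Gamma_B$ is the context $\{x_\alpha : A_\alpha\}_{\alpha<\lambda} \cup \{x_\varepsilon : B_\varepsilon[t_\gamma \mid x_\gamma]_{\gamma<\nu}\}_{\nu\leqslant\varepsilon<\mu}$, and the section of $f_\nu^*p$ cut out by $f$ is the morphism $[\langle x_\alpha, t_\varepsilon\rangle_{\alpha<\lambda,\;\nu\leqslant\varepsilon<\mu}]$. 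Substituting this for $\delta_f^\nu$ in the displayed equation above produces the left-hand side of \cref{corollary4234}, and we are done.

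I do not anticipate any genuine obstacle: the statement is a mechanical specialization of \cref{corollary3234}. The only place where some care is needed is the bookkeeping of the variable re-indexing in passing from $\Gamma_B$ to the pullback $f_\nu^*\Gamma_B$ — namely that the ``new'' variables $x_\varepsilon$ ($\nu\leqslant\varepsilon<\mu$) carry the substituted types $B_\varepsilon[t_\gamma\mid x_\gamma]_{\gamma<\nu}$ — together with the (immediate) check that the prolonged morphism $f$ really does restrict to $f_\nu$ along $p$; both follow directly from \cref{pullbacksyntactic} and the description of display maps in \cref{remarkdisplay}.
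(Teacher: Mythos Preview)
Your proposal is correct and follows exactly the approach of the paper: apply \cref{corollary3234} to the $\kappa$-contextual category $\C_T$ and identify $\delta_f^\nu$ explicitly via \cref{descriptiondelta} (equivalently, via the pullback description in \cref{pullbacksyntactic}). Your write-up is in fact more careful than the paper's one-line proof in that you explicitly name the extended morphism $f = [\langle t_\beta, t_\varepsilon\rangle_{\beta<\nu\leqslant\varepsilon<\mu}]$ to $\Gamma_B$ and verify $p \circ f = f_\nu$, which clarifies the implicit datum (the $t_\varepsilon$ for $\nu\leqslant\varepsilon<\mu$) needed to make sense of both sides of the claimed equation.
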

\begin{proof}
	This follows from \cref{corollary3234} and the explicit description of $ \delta_{f_\nu}^\nu $ given in \cref{descriptiondelta}.
\end{proof}

\begin{lemma} \label{lemma5234}
	If $ A_\lambda, B_\mu $ are objects and $ f_\nu:A_\lambda \to B_\nu$, with $ \nu<\mu $, is a map in a $ \kappa $-contextual category $ \catC $, then:
	\begin{enumerate}
		\item The rule \[
		\{x_\alpha:\overline{A_\alpha}(x_\delta)_{\delta<\alpha}\}_{\alpha<\lambda} \vdash \overline{f_\nu^*B_\mu}(x_\alpha)_{\alpha<\lambda} \equiv \overline{B} (\delta_{(p_\gamma f)}^\gamma(x_\alpha)_{\alpha<\lambda} )_{\gamma<\nu}
		\]
		is a derived rule of $ U(\catC) $.
		\item If $ g:\Gamma(B_\nu^\mu) $ then the rule \[
		\{x_\alpha:\overline{A_\alpha}(x_\delta)_{\delta<\alpha}\}_{\alpha<\lambda} \vdash
		\overline{\delta_{gf}^\nu}(x_\alpha)_{\alpha<\lambda} \equiv \overline{\delta_g^\nu} ( \overline{\delta_{p_\gamma f}^\gamma}(x_\alpha)_{\alpha<\lambda} )_{\gamma<\nu}
		\]
		is a derived rule of $ U(\catC) $.
	\end{enumerate}
\end{lemma}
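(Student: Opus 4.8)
The plan is to prove parts~(1) and~(2) simultaneously, by transfinite induction on the ordinal $\mu$, following the finitary argument of \cite{cartmell1978} and supplying the limit steps that the $\kappa$-contextual setting forces. Throughout one argues entirely inside the generalized $\kappa$-algebraic theory $U(\catC)$, using its axioms from \cref{definitiongatfromcat} together with \cref{corollary3234} (which identifies $\overline{\delta^\nu_f}$ with $\overline f$ up to the equality judgment $\equiv$) and \cref{lemma2232}. In the base case $\mu=\nu+1$ the object $f_\nu^*B_\mu$ is the canonical pullback of the single display map $B_\mu\display B_\nu$ along $f_\nu$, so part~(1) is literally the first equation of axiom~(3) of \cref{definitiongatfromcat}, after rewriting each $\overline{p_\gamma f_\nu}$ as $\overline{\delta^\gamma_{p_\gamma f_\nu}}$ via \cref{corollary3234}; for part~(2), a $g\in\Gamma(B_\nu^\mu)$ is a section of $B_\mu\display B_\nu$, the maps $\delta^\nu_{gf_\nu}$ and $\delta^\nu_g$ unwind in terms of the comparison maps $q(-,B_\mu)$ through the defining identity $f_\nu^*p_\nu\circ\delta^\nu_{(-)}=\mathrm{id}$ recalled in \cref{interlude}, and the claim then drops out of the composition axiom~(1) of \cref{definitiongatfromcat}, the second equation of axiom~(3), and \cref{corollary3234}.

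For the successor step $\mu=\mu'+1$ with $\nu<\mu'$, the strict functoriality of canonical pullbacks (axioms~(9)--(10) of \cref{contextualcat}) yields a factorization $f_\nu^*B_\mu\display f_\nu^*B_{\mu'}\display A_\lambda$ whose upper display map is itself the canonical pullback of $B_\mu\display B_{\mu'}$ along the comparison map into $B_{\mu'}$. Applying the induction hypothesis (part~(1)) at $(\mu',\nu)$ to the lower stage and the base-case computation to the upper stage, and then gluing the two equalities with the composition axiom~(1), gives part~(1) at $(\mu,\nu)$; part~(2) is extracted from part~(1) and axiom~(1) in exactly the same manner.

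For the limit step, when $\mu$ is a limit ordinal the lemma that an object of limit height is a genuine limit gives $f_\nu^*B_\mu=\Lim_{\nu<\beta<\mu}f_\nu^*B_\beta$, and by the construction of $U(\catC)$ the ``type'' $\overline{f_\nu^*B_\mu}$ is precisely the compatible family of types $\overline{f_\nu^*B_\beta}$ indexed by the successor stages $\beta<\mu$, and similarly $\overline{B_\mu}$ and $\delta^\nu_f$ are the corresponding compatible families (see \cref{interlude} and \cref{appendix-a}). Hence the equalities in parts~(1) and~(2) at $\mu$ are, by definition, the families of the corresponding equalities at every successor $\beta$ with $\nu<\beta<\mu$, each of which is already supplied by the induction hypothesis; assembling them is exactly the clause defining the relation $\approx$ on type and term judgments at limit ordinals in \cref{appendix-a}.

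The only genuinely new point beyond the finitary argument is this last step: one must keep track of the fact that, when $f_\nu^*B_\mu$ has limit height, the notation ``$\overline{f_\nu^*B_\mu}\equiv\cdots$'' abbreviates a coherent family of type-equality judgments at the successor stages below it, and that the bookkeeping of the substitutions $\delta^\gamma_{p_\gamma f}$ for a limit value of $\nu$ matches the compatible-family description of $\delta^\nu_f$ from \cref{interlude}. The remaining manipulations are routine transcriptions of the $U(\catC)$-axioms and present no difficulty.
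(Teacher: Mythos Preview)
The paper does not supply a proof of this lemma at all: it is stated and immediately followed by \cref{corollary6235}, in keeping with the appendix's blanket remark that ``most of the proofs will be omitted since they are similar to those in \cite{cartmell1978}.'' Your proposal is exactly the expected generalization of Cartmell's finitary argument---transfinite induction on $\mu$, with the base case $\mu=\nu+1$ read off from axiom~(3) of \cref{definitiongatfromcat} together with \cref{corollary3234}, the successor step handled via the strict functoriality of canonical pullbacks, and the limit step reduced to the successor stages using that limit-height objects in a $\kappa$-contextual category are genuine limits of their towers. This is precisely the shape of proof the paper is gesturing at, and your identification of the limit step as the only genuinely new bookkeeping is accurate.
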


\begin{corollary} \label{corollary6235}
	If $ T $ is a generalized $\kappa$-algebraic theory, $ \{x_\beta:B_\beta\}_{\beta<\mu} $ is a context, $ \nu<\mu $ and \[f_\nu:=[\langle t_\beta \rangle_{\beta<\nu}]:[\{x_\alpha:A_\alpha\}_{\alpha<\lambda}] \to  [\{x_\beta:B_\beta\}_{\beta<\nu}] \] is a map in $ \C_T $
	then;
	\begin{enumerate}
		\item \[
		\inferrule{\{x_\alpha:\overline{A_\alpha}(x_\delta)_{\delta<\alpha}\}_{\alpha<\lambda} }{\overline{[\{x_\alpha, x_\gamma:B_\gamma[t_\delta|x_\delta]_{\delta<\gamma} \}_{\substack{\alpha<\lambda \\ \nu \leq \gamma<\mu}} ]}(x_\alpha)_{\alpha<\lambda} \equiv \overline{[\{x_\beta:B_\beta\}_{\beta<\nu}]}(\overline{g_\beta}(x_\alpha)_{\alpha<\lambda} )_{\beta<\nu}} 
		\]
		where for each $ \beta<\nu $ the map $ g_\beta:=[\langle x_\alpha,t_\beta \rangle_{\alpha<\lambda} ]. $
		\item If for all $ \gamma $, with $ \nu<\gamma<\mu$, the rule 
		\[ \{x_\beta:B_\beta\}_{\beta<\nu}, \{t_{\gamma'}:B_{\gamma'}\}_{\gamma'<\gamma}\vdash t_\gamma:B_\gamma
		\]
		is a derived rule then
		\[
		\{x_\alpha:\overline{A_\alpha}(x_\delta)_{\delta<\alpha}\}_{\alpha<\lambda} \vdash \overline{[\langle x_\alpha, t_\gamma[t_{\gamma'}\mid x_{\gamma'}]_{\gamma'<\gamma} \rangle_{\substack{\alpha<\lambda \\ \nu<\gamma<\mu }}]}  \equiv \overline{h}(\overline{g_\beta}(x_\alpha)_{\alpha<\lambda})_{\beta<\nu}
		\]
		where $ g_\beta $ is defined as in the previous point and $ h:=[\langle x_\beta, t_\gamma \rangle_{\substack{\beta<\nu \\ \nu<\gamma<\mu}} ]. $
	\end{enumerate}
\end{corollary}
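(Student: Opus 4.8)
The plan is to deduce the statement from \cref{lemma5234} by specializing to the syntactic $\kappa$-contextual category $\C_T$ and then rewriting every abstract datum occurring in \cref{lemma5234} in terms of the explicit syntactic descriptions obtained in \cref{syntacticcat}: the formula for canonical pullbacks in \cref{pullbacksyntactic} (see also \cref{explicitpullbacks}), the formula for the section maps $\delta^\gamma_\bullet$ in \cref{descriptiondelta}, and the already established identifications \cref{corollary3234} and \cref{corollary4234}. In this sense the corollary is to \cref{lemma5234} what \cref{corollary4234} is to \cref{corollary3234}.

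For part (1) I would put $A_\lambda = [\{x_\alpha:A_\alpha\}_{\alpha<\lambda}]$, $B_\nu = [\{x_\beta:B_\beta\}_{\beta<\nu}]$, and let $B_\mu = [\{x_\beta:B_\beta\}_{\beta<\mu}]$ with the canonical display map $B_\mu \display B_\nu$, and take $f = f_\nu$. By \cref{pullbacksyntactic} the canonical pullback $f_\nu^*B_\mu$ is exactly the context $[\{x_\alpha:A_\alpha,\, x_\gamma:B_\gamma[t_\delta\mid x_\delta]_{\delta<\gamma}\}_{\alpha<\lambda,\ \nu\leq\gamma<\mu}]$ appearing on the left-hand side of part (1), and by \cref{descriptiondelta}, for each $\gamma<\nu$ the map $\delta^\gamma_{p_\gamma f_\nu}$ (after restriction along the relevant display map) is the morphism $g_\beta = [\langle x_\alpha, t_\beta\rangle_{\alpha<\lambda}]$ of the statement, so that $\overline{\delta^\gamma_{p_\gamma f_\nu}}$ translates to $\overline{g_\beta}$. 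Feeding these identifications into \cref{lemma5234}(1), whose conclusion reads $\{x_\alpha:\overline{A_\alpha}(x_\delta)_{\delta<\alpha}\}_{\alpha<\lambda} \vdash \overline{f_\nu^*B_\mu}(x_\alpha)_{\alpha<\lambda} \equiv \overline{B_\mu}\big(\overline{\delta^\gamma_{p_\gamma f_\nu}}(x_\alpha)_{\alpha<\lambda}\big)_{\gamma<\nu}$ --- together with the defining axioms of $U(\C_T)$ from \cref{definitiongatfromcat} relating $\overline{f^*B}$ to $\overline{B}$ precomposed with the $\overline{p_\beta f}$, transported along the translation $\varphi_T$ --- gives precisely the derived rule asserted in part (1).

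For part (2) the first point is that the hypothesis ``for all $\gamma$ with $\nu<\gamma<\mu$ the rule $\{x_\beta:B_\beta\}_{\beta<\nu},\{t_{\gamma'}:B_{\gamma'}\}_{\gamma'<\gamma}\vdash t_\gamma:B_\gamma$ is derived'' is exactly the condition that $h := [\langle x_\beta, t_\gamma\rangle_{\beta<\nu,\ \nu<\gamma<\mu}]$ is a well-defined morphism $B_\nu \to B_\mu$, i.e.\ $h \in \Gamma(B_\nu^\mu)$ in the notation of \cref{interlude}. One then applies \cref{lemma5234}(2) to $g = h$ and $f = f_\nu$: by the substitution property of $T$ the composite $h f_\nu$ equals $[\langle x_\alpha, t_\gamma[t_{\gamma'}\mid x_{\gamma'}]_{\gamma'<\gamma}\rangle_{\alpha<\lambda,\ \nu<\gamma<\mu}]$, so $\delta^\nu_{h f_\nu}$ is the map whose translation appears on the left-hand side, and \cref{lemma5234}(2) rewrites $\overline{\delta^\nu_{h f_\nu}}$ as $\overline{\delta^\nu_h}\big(\overline{\delta^\gamma_{p_\gamma f_\nu}}(x_\alpha)_{\alpha<\lambda}\big)_{\gamma<\nu}$. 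Finally \cref{corollary3234}, \cref{corollary4234} and \cref{descriptiondelta} let us replace $\overline{\delta^\nu_h}$ by $\overline{h}$ and each $\overline{\delta^\gamma_{p_\gamma f_\nu}}$ by $\overline{g_\beta}$, which is the claimed rule.

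The main obstacle is the bookkeeping when $\mu$ (or $\nu$) is a limit ordinal. In that case $B_\mu$ is a limit object in $\C_T$, and the pullback $f_\nu^*B_\mu$, the comparison map $q(f_\nu,B_\mu)$, the expression $\overline{f_\nu^*B_\mu}$ and the maps $\delta^\nu_\bullet$ are all specified only cofinally, through the tower of intermediate display maps constructed in \cref{pullbacksyntactic}. So the displayed identities must first be checked degree by degree at the successor stages $\nu<\gamma<\mu$ --- where they reduce to \cref{lemma2232}, \cref{lemma1231}, and the successor case of \cref{descriptiondelta} --- and then assembled using that the objects in question are genuine limits and that $\varphi_T$, the construction $U$, and the substitution operation $[\,t_\delta\mid x_\delta\,]_{\delta<\gamma}$ all commute with that limit presentation. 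Keeping the nested substitutions $[t_{\gamma'}\mid x_{\gamma'}]_{\gamma'<\gamma}$ correctly indexed through this passage is the only delicate point; the rest is a routine transcription of \cref{lemma5234} into the syntax of $\C_T$.
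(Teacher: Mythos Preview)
Your approach is correct and is essentially the same as the paper's: the paper's proof is just ``This is a direct application of \cref{lemma5234}. We remark that the assumption of point (2) simply gives us an element of $\Gamma(B_\nu^\mu)$\ldots'', and you have simply unpacked this in more detail by invoking \cref{pullbacksyntactic}, \cref{descriptiondelta}, and \cref{corollary3234}/\cref{corollary4234} to translate the abstract quantities into their syntactic form.
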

\begin{proof}
	This is a direct application of \cref{lemma5234}. We remark that the assumption of point (2) simply gives us an element of $ \Gamma (B_\nu^\mu) $ and the map on the left depends on variables that, according to our convention, we leave implicit.
\end{proof}

The following lemma is key to prove that we have an interpretation $ \varphi_T: T \to U(\C_T) $, the results above are used to prove:

\begin{lemma} \label{lemma7236}
	If $ T $ is a generalized $\kappa$-algebraic theory then:
	\begin{enumerate}
		\item If $ \{x_\alpha:\Delta_\alpha\}_{\alpha<\lambda} \vdash \Delta \,\type $ is a type judgment of $ T $, then the rule \[ \{x_\alpha: \overline{A_\alpha}(x_\delta)_{\delta<\alpha}\}_{\alpha<\lambda} \vdash \overline{A}(x_\alpha)_{\alpha<\lambda+1} \equiv \widetilde{\varphi_T}(\Delta) \] is a derived rule of $ U(\C_T) $
		where $ A:=\{x_\alpha:\Delta_\alpha\}_{\alpha<\lambda+1} $ and $ A_\alpha:=\{x_\delta:\Delta_\delta\}_{\delta\leq\alpha} $.
		\item If $ \{x_\alpha:\Delta_\alpha\}_{\alpha<\lambda} \vdash t:\Delta$ is a type element judgment of $ T $, then the rule \[ \{x_\alpha: \overline{A_\alpha}(x_\delta)_{\delta<\alpha}\}_{\alpha<\lambda} \vdash \overline{\langle x_\alpha, t \rangle_{\substack{\alpha<\lambda}}}(x_\alpha)_{\alpha<\lambda+1} \equiv_{\overline{A}(x_\alpha)_{\alpha<\lambda}} \widetilde{\varphi_T}(t) \] is a derived rule of $ U(\C_T) $.
	\end{enumerate}
\end{lemma}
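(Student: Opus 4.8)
The plan is to prove both statements simultaneously by a single induction on the length $\lambda$ of the context $\{x_\alpha : \Delta_\alpha\}_{\alpha < \lambda}$, reducing to whether $\lambda$ is zero, a successor, or a limit ordinal. Throughout we write $A \coloneqq \{x_\alpha : \Delta_\alpha\}_{\alpha < \lambda+1}$ and $A_\alpha \coloneqq \{x_\delta : \Delta_\delta\}_{\delta \leqslant \alpha}$, so that statement (1) concerns the top type $\Delta_\lambda$ of the context $A$ and statement (2) concerns an arbitrary term $t$ in context $A_\lambda$ with a chosen type $\Delta$. The base case $\lambda = 0$ is immediate: $\overline{A}$ is the type symbol $\varphi_T(\Delta_0)$ by the very definition of $\varphi_T$ on type symbols, and for terms one unfolds the definition of $\varphi_T$ on operator symbols together with \cref{derivationlemma}, which tells us every term judgment with empty context is either a variable (impossible here) or of the form $F(t_\alpha)_{\alpha<\lambda}$ for an operator symbol $F$.

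\textbf{The successor step.} Suppose $\lambda = \mu + 1$. For statement (1), the type $\Delta_\lambda$ sits in context $A_\mu = \{x_\delta : \Delta_\delta\}_{\delta \leqslant \mu}$. By the Derivation Lemma (\cref{derivationlemma}(1)), $\Delta_\lambda$ is of the form $B(t_\alpha)_{\alpha < \nu}$ for some type symbol $B$ with introduction rule in context of length $\nu$, and the $t_\alpha$ are terms in context $A_\mu$ satisfying the appropriate substitution condition. The key is now to identify, inside $\C_T$, the display map corresponding to $\overline{A}(x_\alpha)_{\alpha < \lambda+1}$ with a pullback of the display map $[\{x_\beta : \Delta_\beta, x_\delta : B(x_\beta)\}] \display [\{x_\beta : \Delta_\beta\}]$ along the morphism $f = [\langle t_\alpha\rangle_{\alpha < \nu}]$; this is precisely \cref{display:pullback-axiom} (or directly \cref{pullbacksyntactic}). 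Applying $U$ to this pullback square produces axiom (3) of \cref{definitiongatfromcat}, which reads $\overline{f^* B_{\nu+1}} \equiv \overline{B_{\nu+1}}(\overline{p_\beta f})_{\beta < \nu}$ in $U(\catC)$, and then one rewrites the right-hand side using \cref{corollary4234}, \cref{corollary6235}, and the induction hypothesis applied to each of the (strictly shorter, or rather structurally simpler) term judgments $A_\mu \vdash t_\alpha : \cdots$ to see that $\overline{B_{\nu+1}}(\overline{p_\beta f})_{\beta < \nu}$ equals $\widetilde{\varphi_T}(B(t_\alpha)_{\alpha<\nu}) = \widetilde{\varphi_T}(\Delta_\lambda)$, using \cref{lemma1111} to commute $\widetilde{\varphi_T}$ with substitution. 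Statement (2) in the successor case is analogous but one peels off the last variable of the context and invokes the term introduction axiom (1) of \cref{definitiongatfromcat} together with \cref{lemma1231} and \cref{lemma5234}.

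\textbf{The limit step.} When $\lambda$ is a limit ordinal, the object $A$ is a limit object in $\C_T$ (this is \cref{limitcontext} / the Lemma following \cref{convention1}), so $\overline{A}$ is determined by the compatible family $\{\overline{A_{\nu+1}}\}_{\nu+1 < \lambda}$ of its successor truncations, and each of those is handled by the induction hypothesis; the equality $\overline{A}(x_\alpha)_{\alpha<\lambda+1} \equiv \widetilde{\varphi_T}(\Delta_\lambda)$ then follows by assembling the pieces, using that equivalence of type judgments $\approx$ is determined stagewise at limits (the limit clause in the definition of $\approx$) and that $\widetilde{\varphi_T}$ is defined compatibly with the truncation of contexts. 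The statement for terms at limit $\lambda$ is similar, since a term in a context of limit length only depends on boundedly many variables by the support lemma (\cref{derivedrules} first lemma of \S on substitution, i.e. ``the variables appearing in $\Delta$ form a subset of $\{x_\alpha\}_{\alpha<\lambda}$''), so it already lives in a successor-length subcontext where the induction hypothesis applies directly.

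\textbf{Main obstacle.} The genuinely delicate point is the bookkeeping in the successor step: matching the iterated canonical pullback $f^*B_{\nu+1}$ and the maps $q(f, B_{\nu+1})$, $\delta_f^\nu$ on the categorical side with the iterated substitution $B[t_\alpha \mid x_\alpha]$ and the derived-rule manipulations on the syntactic side, so that the axioms of $U(\C_T)$ from \cref{definitiongatfromcat} combined with \cref{corollary3234}, \cref{corollary4234}, \cref{corollary6235} and \cref{lemma5234} actually produce exactly $\widetilde{\varphi_T}(\Delta)$ and not merely something $\approx$-equivalent up to a further argument. All the needed lemmas are already in place; the work is to chain them in the right order while keeping the variable conventions straight. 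This is exactly the content of \cite[pp.~2.35--2.37]{cartmell1978} in the finitary case, and the transfinite generalization introduces no new difficulty beyond replacing finite induction by induction on ordinals $< \kappa$ and appealing to the limit clauses already established.
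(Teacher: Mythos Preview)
Your induction is on the wrong variable. The paper (following Cartmell) proves this by induction on \emph{derivations}, not on the context length $\lambda$, and your scheme breaks in two places.

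In the successor step you invoke the induction hypothesis on the term judgments $A_\mu \vdash t_\alpha : \Delta_\alpha[t_\delta|x_\delta]_{\delta<\alpha}$ coming from the Derivation Lemma. But these judgments live in the \emph{same} context $A_\mu$ of length $\lambda$, not a shorter one; you even flag this yourself with the parenthetical ``(strictly shorter, or rather structurally simpler)''. Structural simplicity of the expression $t_\alpha$ relative to $\Delta$ is precisely what makes an induction on derivations (or on expression complexity) go through, but it is not what your stated induction hypothesis gives you. A nested induction (outer on $\lambda$, inner on expression structure) could be made to work, but that is a different argument from the one you set up.

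The limit step has a separate error: your claim that ``a term in a context of limit length only depends on boundedly many variables'' is false in the $\kappa$-infinitary setting. An operator symbol $F$ can have arity any $\nu < \kappa$, so $F(x_\alpha)_{\alpha<\lambda}$ with $\lambda$ a limit ordinal is a perfectly good term using cofinally many variables of the context; the support lemma you cite only says the variables used are among the $\{x_\alpha\}_{\alpha<\lambda}$, not that they form a bounded subset. So you cannot in general retreat to a successor-length subcontext. The clean fix is to abandon induction on $\lambda$ and argue, as the paper does, by induction on the derivation of the judgment: each derivation rule in \cref{derivedrules} is shown to preserve the conclusion, and the decomposition via \cref{derivationlemma} then unwinds $\widetilde{\varphi_T}$ along strictly smaller sub-derivations.
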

\begin{proof}
	The proof is by induction on the derivations, by showing that rule derivation preserves the properties above.
\end{proof}

The important result of this section is the following.

\begin{corollary} \label{isinterpreation}
	For every generalized $\kappa$-algebraic theory $ T $, the map $ \varphi_T: T \to U(\C_T) $ is an interpretation.
\end{corollary}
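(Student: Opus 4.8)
The statement to prove is \cref{isinterpreation}: for every generalized $\kappa$-algebraic theory $T$, the preinterpretation-induced map $\varphi_T : T \to U(\C_T)$ is an interpretation, i.e.\ its associated map on rules $\widehat{\varphi_T}$ sends every introductory judgment (for sort symbols and operator symbols) and every axiom of $T$ to a derived rule of $U(\C_T)$. The plan is to deduce this more or less immediately from \cref{lemma7236}, which is the real workhorse and has already been established (by induction on derivations) just above. So the proof is a short bookkeeping argument assembling \cref{lemma7236} with the definitions of $\varphi_T$ and of $U(\C_T)$ from \cref{definitiongatfromcat}.

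\textbf{Key steps, in order.} First I would recall what must be checked. An interpretation (\cref{interpretation}) is a function $\widehat{\varphi_T}$ on rules such that all introductory judgments and axioms of $T$ land among the derived judgments of $U(\C_T)$. So there are three families to verify: (i) sort-symbol introduction rules $\{x_\alpha:\Delta_\alpha\}_{\alpha<\lambda}\vdash \Delta(x_\alpha)_{\alpha<\lambda}\,\type$; (ii) operator-symbol introduction rules $\{x_\alpha:\Delta_\alpha\}_{\alpha<\lambda}\vdash f(x_\alpha)_{\alpha<\lambda}:\Delta$; and (iii) the type-equality and term-equality axioms of $T$. Second, for (i): apply part (1) of \cref{lemma7236} to the type judgment $\{x_\alpha:\Delta_\alpha\}_{\alpha<\lambda}\vdash \Delta(x_\alpha)_{\alpha<\lambda}\,\type$ (which is well-formed since $T$ is a generalized $\kappa$-algebraic theory, by \cref{well-formed-type-intro:prop}), getting that $\overline{A}(x_\alpha)_{\alpha<\lambda+1}\equiv\widetilde{\varphi_T}(\Delta)$ is derivable in $U(\C_T)$ with $A=[\{x_\alpha:\Delta_\alpha\}_{\alpha<\lambda+1}]$; but $\overline{A}(x_\beta)_{\beta<\lambda}$ is, by definition of $\varphi_T$ on sort symbols in \cref{definitiongatfromcat}, exactly $\widetilde{\varphi_T}$ applied to the sort judgment, so $\widehat{\varphi_T}$ of the sort introduction rule is derivable. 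Third, for (ii): apply part (2) of \cref{lemma7236} to the term judgment $\{x_\alpha:\Delta_\alpha\}_{\alpha<\lambda}\vdash f(x_\alpha)_{\alpha<\lambda}:\Delta$, noting that $\varphi_T(f)$ is defined in \cref{definitiongatfromcat} precisely as $\overline{[\langle x_\alpha, f(x_\alpha)_{\alpha<\lambda}\rangle]}(x_\alpha)_{\alpha<\lambda}$, so that the lemma's conclusion $\overline{\langle x_\alpha,t\rangle}\equiv_{\overline{A}}\widetilde{\varphi_T}(t)$ (with $t=f(x_\alpha)_{\alpha<\lambda}$) is exactly the derivability of $\widehat{\varphi_T}$ of the operator introduction rule, modulo rewriting the expected type along the already-derived equation from part (1). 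Fourth, for (iii): a type-equality axiom $\Gamma\vdash \Delta\equiv\Delta'$ of $T$ is, by \cref{derivedrules} rules (15)/(16), accompanied by the derivability of $\Gamma\vdash\Delta\,\type$ and $\Gamma\vdash\Delta'\,\type$; applying part (1) of \cref{lemma7236} to both sides gives $\widetilde{\varphi_T}(\Delta)\equiv\overline{A}\equiv\widetilde{\varphi_T}(\Delta')$ (using that $\Delta\equiv\Delta'$ forces $A$ and the corresponding object for $\Delta'$ to be the same object of $\C_T$, since equal types give equal contexts hence $\approx$-equal hence equal in $\C_T$), and transitivity of $\equiv$ in $U(\C_T)$ (\cref{derivedrules} rule (5)) yields $\widehat{\varphi_T}(\Gamma\vdash\Delta\equiv\Delta')$; the term-equality axioms are handled the same way using part (2) of \cref{lemma7236} and rules (6) and (8).

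\textbf{Where the difficulty lies.} The genuinely hard part is already done in \cref{lemma7236} and its supporting lemmas \cref{lemma1231} through \cref{corollary6235}: the induction on derivations showing that $\widetilde{\varphi_T}$ commutes with the type- and term-forming operations up to provable equality in $U(\C_T)$, together with the compatibility with substitution (which is where \cref{lemma5234} and \cref{corollary6235} enter, since $\widetilde{\varphi_T}$ of a substituted expression must be matched with a substitution of operators $\overline{g_\beta}$ in $U(\C_T)$). Assuming all of that, the proof of \cref{isinterpreation} itself is essentially a one-line invocation: ``by \cref{lemma7236} applied to the introductory type and term judgments, and to both sides of each equality axiom.'' The only mild subtlety to be careful about in writing it out is the identification, for an equality axiom $\Gamma\vdash\Delta\equiv\Delta'$, of the context object of $\C_T$ built from $\Delta$ with the one built from $\Delta'$ — this is fine precisely because $\Gamma\vdash\Delta\equiv\Delta'$ entails $\{x_\alpha:\Delta_\alpha\}\vdash\Delta\,\type\approx\{x_\alpha:\Delta_\alpha\}\vdash\Delta'\,\type$ in the sense of \cref{equivalenceofcontexts}, hence they are literally equal objects of $\C_T$, so the two instances of \cref{lemma7246}(1) target the same $\overline{A}$ and transitivity of $\equiv$ applies without obstruction.
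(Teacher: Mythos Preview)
Your proposal is correct and follows essentially the same approach as the paper: both invoke \cref{lemma7236} to handle each of the four judgment forms (type introduction, term introduction, type equality axiom, term equality axiom), with the paper being slightly more explicit about iterating the lemma across the premise context to show that $\{x_\alpha:\widetilde{\varphi_T}(\Delta_\alpha)\}_{\alpha<\lambda}$ is itself a context of $U(\C_T)$. Your discussion of the equality-axiom case (identifying the two context objects via $\approx$) is more detailed than the paper's terse ``also follows from \cref{lemma7236}'', but the underlying argument is the same.
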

\begin{proof}
	We see that the function $ \widehat{\varphi_T}:Rul(T) \to Rul(U(\C_T)) $ is well-defined. We start with a rule $ \mathcal{J} $ of $ T $ and show that $ \widehat{\varphi_T}(\mathcal{J}) $ is a rule of $ U(\C_T) $
	\begin{enumerate}
		\item Type judgment: Assume that $ \mathcal{J}:=\{x_\alpha:\Delta_\alpha\}_{\alpha<\lambda} \vdash \Delta\, \type $ is a rule of $ T $, from \cref{interpretation} it follows that \[\widehat{\varphi_T}(\mathcal{J})= \{x_\alpha:\widetilde{\varphi}(\Delta_\alpha)\}_{\alpha<\lambda} \vdash \widetilde{\varphi_T}(\Delta )\,\type.\]
		From \cref{lemma7236} we have for any $ \gamma<\lambda+1 $, the rule
		\[ \{x_\alpha: \overline{\Delta_\alpha}(x_\delta)_{\delta<\alpha}\}_{\alpha<\lambda} \vdash \overline{A_{\gamma+1}}(x_\alpha)_{\alpha<\gamma+1} \equiv \widetilde{\varphi_T}(\Delta_\gamma) \]
		is a derived rule of $ U(\C_T) $. Thus, the following is also a derived rule
		\[ \{x_\alpha: \widetilde{\varphi_T}(\Delta_\alpha)(x_\delta)_{\delta<\alpha}\}_{\alpha<\lambda} \vdash \overline{A_{\gamma+1}}(x_\alpha)_{\alpha<\lambda+1} \equiv \widetilde{\varphi_T}(\Delta).\]
		Then it must be the case that
		$ \{x_\alpha:\widetilde{\varphi}(\Delta_\alpha)\}_{\alpha<\lambda} \vdash \widetilde{\varphi_T}(\Delta )\,\type $ is a rule of $ U(\C_T) .$
		
		\item Element judgment: $\Gamma \vdash t:\Delta.$ This very similar to the previous rule.
		\item Type equality judgment: $\Gamma \vdash \Delta\equiv \Delta'.$ Also follows from \cref{lemma7236}.
		\item Term equality judgment: $\Gamma \vdash t\equiv_\Delta t'. $ The same argument works.
	\end{enumerate}
\end{proof}

\begin{corollary}
	For every generalized $\kappa$-algebraic theory $ T $, the map $ [\varphi_T]: T \to U(\C_T) $ is morphism in the category $ \kgat $.
\end{corollary}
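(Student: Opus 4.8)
The statement to prove is a corollary: for every generalized $\kappa$-algebraic theory $T$, the map $[\varphi_T] \colon T \to U(\C_T)$ is a morphism in the category $\kgat$. Given the infrastructure assembled just above, this is essentially a formal matter, and the plan is to deduce it directly from \cref{isinterpreation}.

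\begin{proof}
  By \cref{isinterpreation}, the function $\varphi_T \colon T \to U(\C_T)$ is an interpretation, meaning that its associated function $\widehat{\varphi_T} \colon Rul(T) \to Rul(U(\C_T))$ sends every introductory judgment and every axiom of $T$ to a derived rule of $U(\C_T)$. By the definition of the category $\kgat$ given above, its morphisms are precisely the equivalence classes, under the relation $\approx$ on interpretations, of interpretations between two generalized $\kappa$-algebraic theories. Hence the equivalence class $[\varphi_T]$ is, by construction, a morphism $T \to U(\C_T)$ in $\kgat$. (That $\varphi_T$ is well-defined as a preinterpretation is immediate from its definition on type and operator symbols, and that the $\approx$-class is independent of the chosen representative is automatic since we are taking the class of a single fixed interpretation.)
\end{proof}

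The only genuine content here is \cref{isinterpreation}, which was already established through the chain of lemmas \cref{lemma1231}--\cref{lemma7236}; the remaining work is the routine unpacking of the definition of $\kgat$. I therefore expect no real obstacle: the proof is a one-line consequence once \cref{isinterpreation} is in hand. If one wanted to be slightly more explicit, one could also remark that this corollary is the object-and-one-morphism fragment of the naturality of $\varphi$, i.e. it is the first half of the verification that $\varphi \colon Id_{\kgat} \Rightarrow U \circ \C$ is a natural transformation (the naturality squares being checked afterwards), but for the present statement nothing beyond \cref{isinterpreation} and the definition of morphisms in $\kgat$ is needed.
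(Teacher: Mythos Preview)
Your proposal is correct and matches the paper's treatment: the paper states this corollary without proof, as it is immediate from \cref{isinterpreation} together with the definition of morphisms in $\kgat$ as $\approx$-equivalence classes of interpretations.
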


Next, we will now show that $ [\varphi_{-}]:Id_{\kgat} \Rightarrow U\circ \C $ is a natural transformation.

\begin{lemma}
	Let $ T, \, T' $ two generalized $\kappa$-algebraic theories and $ I: T \to T' $ an interpretation between them. Then, we have  a commutative diagram
	% https://q.uiver.app/#q=WzAsNCxbMCwwLCJUIl0sWzAsMSwiVCciXSxbMSwwLCJVKFxcQ19UKSJdLFsxLDEsIlUoXFxDX3tUJ30pIl0sWzAsMSwiW0ldIiwyXSxbMCwyLCJbXFx2YXJwaGlfVF0iXSxbMiwzLCJVKFxcQyhJKSkiXSxbMSwzLCJbXFx2YXJwaGlfe1QnfV0iLDJdXQ==
	\[\begin{tikzcd}
		T & {U(\C_T)} \\
		{T'} & {U(\C_{T'})}.
		\arrow["{[I]}"', from=1-1, to=2-1]
		\arrow["{[\varphi_T]}", from=1-1, to=1-2]
		\arrow["{U(\C(I))}", from=1-2, to=2-2]
		\arrow["{[\varphi_{T'}]}"', from=2-1, to=2-2]
	\end{tikzcd}\]
\end{lemma}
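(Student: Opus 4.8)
The plan is to verify commutativity of the square by evaluating both composites on an arbitrary expression of $T$ (or on the generators of $T$, since both paths send $T$-judgments to $U(\C_{T'})$-judgments) and comparing the resulting expressions up to the equivalence relation $\approx$ on judgments of $U(\C_{T'})$. By \cref{corollary2114}, it suffices to check that for every type judgment and every type element judgment $r$ of $T$ we have $\widehat{U(\C(I)) \circ \varphi_T}(r) \approx \widehat{\varphi_{T'} \circ I}(r)$ in $U(\C_{T'})$; in fact it is cleanest to check this directly at the level of the preinterpretations $\widetilde{\,\cdot\,}$ on the type symbols and operator symbols of $T$, since both composite preinterpretations are determined inductively by their values on symbols of the alphabet of $T$ and both are compatible with the formation of compound expressions (this last compatibility is exactly the content of \cref{lemma1111} applied twice, together with the analogous formula for $U(\C(I))$).

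First I would spell out the two composites on a type symbol $\Delta$ of $T$ with introduction rule $\{x_\beta : \Delta_\beta\}_{\beta<\mu} \vdash \Delta(x_\beta)_{\beta<\mu}\,\type$. Along the top and then down, $\varphi_T$ sends $\Delta$ to $\overline{[\{x_\beta:\Delta_\beta,\, x_\delta:\Delta(x_\beta)_{\beta<\mu}\}_{\beta<\mu}]}(x_\beta)_{\beta<\mu}$, a type symbol of $U(\C_T)$ named by an object of $\C_T$, and then $U(\C(I))$ relabels that object by applying $\C(I)$, i.e.\ by applying $\widetilde{I}$ to all the $\Delta_\beta$ and to $\Delta(x_\beta)_{\beta<\mu}$. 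Going down and then along, $I$ sends $\Delta$ to the type $\widetilde{I}(\Delta)$ of $T'$, which by the Derivation Lemma (\cref{derivationlemma}) is of the form $\widehat{I}(\Delta)(t_\alpha)_{\alpha}$ for suitable substituted terms; then $\varphi_{T'}$ is applied, which on a type symbol of $T'$ produces the corresponding object of $\C_{T'}$, and on compound expressions is extended via \cref{lemma1111}. The point is that $\C(I)$ is defined precisely so that it sends the context $[\{x_\beta:\Delta_\beta,\,x_\delta:\Delta(x_\beta)_{\beta<\mu}\}_{\beta<\mu}]$ to $[\{x_\beta:\widetilde{I}(\Delta_\beta),\,x_\delta:\widetilde{I}(\Delta)(x_\beta)_{\beta<\mu}\}_{\beta<\mu}]$, and this is exactly the object of $\C_{T'}$ that names the type symbol $\varphi_{T'}(\widetilde{I}(\Delta))$ (after the Derivation Lemma bookkeeping identifying $\widetilde{I}(\Delta)$ as built from a $T'$-type symbol by substitution, and using \cref{lemma7236} to rewrite $\varphi_{T'}$ of a substituted type as the composite of $\varphi_{T'}$ of the pieces). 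The operator-symbol case is analogous, using the definition of $\C(I)$ on morphisms (apply $\widetilde{I}$ componentwise to $\langle x_\beta, f(x_\beta)_{\beta<\mu}\rangle$) together with the second clause of \cref{lemma7236} and \cref{corollary3234}/\cref{corollary6235} to handle the way $\varphi_{T'}$ interacts with substitution inside operator applications.

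Then I would promote this comparison from symbols to arbitrary expressions and judgments: since $\widetilde{U(\C(I)) \circ \varphi_T}$ and $\widetilde{\varphi_{T'} \circ I}$ agree on the alphabet of $T$ up to the derivable equalities of $U(\C_{T'})$, and both preinterpretations respect the inductive clause $f(e_\alpha)_{\alpha<\lambda} \mapsto (\text{image of }f)((\text{image of }e_\alpha))_{\alpha<\lambda}$, an induction on expression length (the same pattern as the proof of \cref{lemma1111}) shows the two images of any $T$-expression are provably equal in $U(\C_{T'})$; applying this to contexts and to the right-hand sides of judgments, and invoking \cref{remarktransitivityofproofs} to move between $\approx$-equivalent contexts, gives $\widehat{U(\C(I)) \circ \varphi_T}(r) \approx \widehat{\varphi_{T'} \circ I}(r)$ for every judgment $r$. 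Finally \cref{corollary2114} upgrades agreement on all type and element judgments to the statement $[U(\C(I))] \circ [\varphi_T] = [\varphi_{T'}] \circ [I]$ in $\kgat$, which is the desired commutativity.

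The main obstacle I anticipate is purely bookkeeping: keeping the substitution formulas straight when unwinding $\varphi_{T'}$ of a substituted type or term. The Derivation Lemma tells us $\widetilde I$ of a $T$-type is a $T'$-type symbol applied to substituted arguments, and $\varphi_{T'}$ is only defined directly on symbols, so one must repeatedly invoke \cref{lemma7236} (and its corollaries \cref{corollary3234}, \cref{corollary6235}) to rewrite $\widetilde{\varphi_{T'}}$ of a substitution as a substitution of $\widetilde{\varphi_{T'}}$-images, and then match these against the explicit description of $\C(I)$ on pullback squares and on the canonical display-map decompositions. All the needed lemmas are already in place, so the difficulty is organizational rather than conceptual; I would structure the write-up as ``agree on type symbols, agree on operator symbols, conclude by induction and \cref{corollary2114}'' and push the substitution identities into citations of the section's earlier corollaries.
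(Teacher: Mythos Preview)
Your approach is correct but takes a longer route than the paper's. You reduce to checking agreement on type and operator symbols of $T$, then extend by induction on expressions, invoking the Derivation Lemma to decompose $\widetilde I(\Delta)$ before you can apply $\varphi_{T'}$ to it. This works, but the inductive bookkeeping you flag as the ``main obstacle'' is exactly what the paper avoids.

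The paper instead applies \cref{corollary2114} to reduce to type element judgments (not symbols), and then invokes \cref{lemma7236} directly on an arbitrary type element judgment $\{x_\alpha:\Delta_\alpha\}_{\alpha<\lambda}\vdash t:\Delta_\lambda$. That lemma already says $\widehat{\varphi_T}$ of this judgment is $\approx$ to a canonical form built from objects $A_\alpha=[\{x_\delta:\Delta_\delta\}_{\delta\le\alpha}]$ and the morphism $[\langle x_\alpha,t\rangle_{\alpha<\lambda}]$ in $\C_T$; applying $U(\C(I))$ then just replaces each $A_\alpha$ by $\C(I)(A_\alpha)$ and the morphism by $\C(I)$ of it. On the other side, $\widehat I$ of the judgment is again a type element judgment of $T'$, so a \emph{second} application of \cref{lemma7236} (now in $T'$) puts $\widehat{\varphi_{T'}}\circ\widehat I$ into the same canonical form with $B_\alpha=[\{x_\delta:\widetilde I(\Delta_\delta)\}_{\delta\le\alpha}]$. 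The proof closes by observing $\C(I)(A_\alpha)=B_\alpha$ is literally the definition of $\C(I)$ on objects. No Derivation Lemma, no expression induction, no substitution identities beyond what \cref{lemma7236} already packages. What your approach buys is independence from \cref{lemma7236}: if that lemma were unavailable you would indeed need to argue symbol-by-symbol as you outline.
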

\begin{proof}
	We use \cref{corollary2114}. It will therefore be enough to test the commutativity of the diagram on type element judgments. Let $ \{x_\alpha:\Delta_\alpha\}_{\alpha<\lambda} \vdash t:\Delta_\lambda $ a type element judgment of $ T $. For any $ \alpha\leq\lambda $ we denote $ A_\alpha:=[\{x_\delta:\Delta_\delta\}_{\delta\leq\alpha}] $. It follows from \cref{lemma7236} that
	\[\widehat{\varphi_T}\left(\inferrule{\{x_\alpha:\Delta_\alpha\}_{\alpha<\lambda}}{t:\Delta_\lambda}\right) \approx \inferrule{\{ x_\alpha:\overline{A_\alpha} \}_{\alpha<\lambda}}{\overline{ [\langle x_\alpha,t \rangle_{\alpha<\lambda}] }: \overline{A_\lambda}(x_\alpha)_{\alpha<\lambda}}. \]
	We conclude that
	\[U(\C(I))\left(\widehat{\varphi_T}\left(\inferrule{\{x_\alpha:\Delta_\alpha\}_{\alpha<\lambda}}{t:\Delta_\lambda}\right)\right) \approx \inferrule{\{ x_\alpha:\overline{\C(I)(A_\alpha)} \}_{\alpha<\lambda}}{\overline{ \C(I)([\langle x_\alpha,t \rangle_{\alpha<\lambda}]) }: \overline{\C(I)(A_\lambda)}(x_\alpha)_{\alpha<\lambda}}. \]
	Looking at the other composition: we get
	\[
	\widehat{I}\left(\inferrule{\{x_\alpha:\Delta_\alpha\}_{\alpha<\lambda}}{t:\Delta_\lambda}\right) = \inferrule{\{x_\alpha:\widetilde{I}(\Delta_\alpha)\}_{\alpha<\lambda}}{\widetilde{I}(t):\widetilde{I}(\Delta_\lambda)}.
	\]
	A second use of \cref{lemma7236} give us that
	\[
	\widehat{\varphi_{T'}}\left(\widehat{I}\left(\inferrule{\{x_\alpha:\Delta_\alpha\}_{\alpha<\lambda}}{t:\Delta_\lambda}\right)\right) \approx\inferrule{\{ x_\alpha:\overline{B_\alpha} \}_{\alpha<\lambda}}{\overline{ [\langle x_\alpha,\widetilde{I}(t) \rangle_{\alpha<\lambda}] }: \overline{B_\lambda}(x_\alpha)_{\alpha<\lambda}},
	\]
	where for $ \alpha\leq\lambda $, $ B_\alpha:=[\{x_\delta:\widetilde{I}(\Delta_\delta)\}_{\delta\leq\alpha}] $. However, by definition we have $ \C(I)(A_\alpha)=B_\alpha $ for $ \alpha\leq\lambda $. This completes our verification.
\end{proof}

Remains to show that $ [\varphi_T] $ is an isomorphism, and natural in $ T $. We proceed to give an inverse $ \psi_T:U(\C_T) \to T $. Recall that a type symbol of $ U(\C_T) $ is of the form $ \overline{A_\lambda}= \overline{[\{x_\alpha:\Delta_\alpha\}_{\alpha<\lambda}]} $. If $ \lambda=\nu+1 $, then by choosing a representative of this equivalence class of the context we can define $ \psi_T(\overline{A_\lambda}):= \Delta_\nu $.

If $ \lambda $ is a limit ordinal, once we chose a representative, $ \Delta_\lambda=\{ x_\alpha:\Delta_\alpha\}_{\alpha<\lambda} $. Then we know that $ [ \Delta_\lambda]=\lim_{\alpha<\lambda} [\Delta_\alpha]  $ in $ \C_T $, and this limit is unique. In this case, the value of $ \psi_T $ is determined by non-limit ordinals $ \alpha<\lambda $, which are $ \psi_T(\overline{\Delta_\alpha})=\Delta_\alpha $. Therefore, we define $ \psi_T(\overline{[ \Delta_\lambda]}):=\Delta_\lambda $ for some choice of a representative of the equivalence class. However, note that the successor case determinate the limit case.

Operator symbols of $ U(\C_T) $ come from morphisms of $ \C_T $. Therefore, for a morphism $ \overline{f}:=[\langle t_\beta \rangle_{\beta<\mu}]:[\{x_\alpha:\Delta_\alpha\}_{\alpha<\lambda}] \to [\{x_\beta:\Omega_\beta\}_{\beta<\mu}] $ in order to define $ \psi_T $ on the associated operator, it is enough to assume that $ \mu $ is a successor ordinal. Firstly, we need to make choices for the contexts and morphism. However, the definition does not depend on these choices because of (1) from \cref{lemma4}. This allows us to define $ \psi_T $ as
\[
\psi_T(\overline{f}):=t_\mu
\]
where $ t_\mu : \Omega_\mu[t_\beta|x_\beta]_{\beta<\mu}$.

\begin{lemma} \label{lemma10244}
	The function $ \psi_T $ is an interpretation from $ U(\C_T) \to T $.
\end{lemma}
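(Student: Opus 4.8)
The statement to prove is \cref{lemma10244}: the function $\psi_T$ defined just above is an interpretation from $U(\C_T)$ to $T$.

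\textbf{Approach.} The plan is to verify the two conditions in \cref{interpretation}: first that $\psi_T$ (extended to the preinterpretation $\widetilde{\psi_T}$ and then to $\widehat{\psi_T}$) respects the inductive construction of expressions --- which is essentially forced by the definition --- and second, and crucially, that $\psi_T$ sends every type-introduction rule, every operator-introduction rule, and every axiom of $U(\C_T)$ to a derived rule of $T$. Since the type and operator symbols of $U(\C_T)$ are, by \cref{definitiongatfromcat}, indexed by objects and morphisms of $\C_T$, and since by \cref{syntacticcat} objects of $\C_T$ are equivalence classes of contexts of $T$ and morphisms are equivalence classes of context morphisms, each such symbol ``is'' a fragment of syntax of $T$. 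The content of the lemma is that the bookkeeping axioms of \cref{definitiongatfromcat} --- the composition axiom, the axiom $\overline{p_\delta}(x_\beta)_{\beta<\mu} \equiv x_\delta$, and the two axioms describing $\overline{f^*B_{\mu+1}}$ and $\overline{q(f,B_{\mu+1})}$ --- become, after applying $\psi_T$, statements that are already provable in $T$ from the substitution property (the main theorem of \cref{appendix-a}) together with the explicit description of pullbacks in $\C_T$ from \cref{pullbacksyntactic} and of the maps $\delta_f^\nu$ from \cref{descriptiondelta}.

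\textbf{Key steps.} First I would fix, once and for all, a choice of representative context for each object of $\C_T$ and a choice of representative morphism for each arrow, as is implicit in the definition of $\psi_T$ above, and recall from \cref{lemma4}(1) that the value of $\psi_T$ on operators is independent of these choices up to $\approx$; this is needed so that $\widehat{\psi_T}$ is well-defined on rules up to $\approx$, which by \cref{corollary2114} is exactly what is required. Second, the type-introduction rule for $\overline{B_\mu}$ with $\mu = \nu+1$, namely $\{x_\beta:\overline{B_\beta}\}_{\beta<\mu} \vdash \overline{B_\mu}(x_\beta)_{\beta<\mu}\,\type$, is sent by $\psi_T$ to $\{x_\beta:\Delta_\beta\}_{\beta<\mu} \vdash \Delta_\nu\,\type$ where $\{x_\beta:\Delta_\beta\}_{\beta\le\nu}$ is the chosen representative; this is a derived type judgment of $T$ precisely because that sequence is a context, by \cref{contextdefinition}. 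Third, the operator-introduction rule for $\overline{f}$ with $f: A_\lambda \to B_{\mu+1}$ is sent to $\{x_\alpha:\Delta_\alpha\}_{\alpha<\lambda}\vdash t_\mu : \Omega_\mu[t_\beta|x_\beta]_{\beta<\mu}$, which is a derived element judgment of $T$ because $\langle t_\beta\rangle_{\beta\le\mu}$ being a context morphism means exactly this; one must also check that $\widetilde{\psi_T}$ applied to the codomain type $\overline{(\rho_\mu f)^*B_{\mu+1}}$ yields $\Omega_\mu[t_\beta|x_\beta]_{\beta<\mu}$, which follows from \cref{pullbacksyntactic} (the pullback of the length-one display map $B_{\mu+1}\display B_\mu$ along $f$ has the type $\Omega_\mu[t_\beta|x_\beta]_{\beta<\mu}$ in the new position). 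Fourth, for each of the three families of axioms in \cref{definitiongatfromcat} I would apply $\widehat{\psi_T}$ and unwind: the composition axiom becomes the statement that simultaneous substitution can be performed componentwise (this is exactly the substitution property and the definition of composition of context morphisms in \cref{syntacticcat}); the axiom $\overline{p_\delta}(x_\beta)_{\beta<\mu}\equiv x_\delta$ becomes the trivial reflexivity $x_\delta \equiv x_\delta$ (since $\psi_T(\overline{p_\delta})$ is the $\delta$-th component $x_\delta$ of the identity-like projection morphism); and the two $f^*$-axioms become identities about the explicit pullback construction of \cref{pullbacksyntactic} --- $\overline{q(f,B_{\mu+1})}$ has $\psi_T$-value the variable in the new context slot, so the second axiom becomes a reflexivity, while the first becomes the already-noted computation $\Omega_\mu[t_\beta|x_\beta]_{\beta<\mu} \equiv \Omega_\mu[\text{components of }f \mid x_\beta]_{\beta<\mu}$, true by \cref{drule1} of \cref{derivedrules}. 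Throughout, when $\mu$ is a limit ordinal I would reduce to the successor case using \cref{limitcontext}, noting as remarked in \cref{definitiongatfromcat} that ``the successor case determines the limit case.''

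\textbf{Main obstacle.} The genuinely delicate point is not any single computation but the careful matching of $\widetilde{\psi_T}$ with the nested substitutions appearing in the codomains of the operator rules and in the axioms of \cref{definitiongatfromcat}: one has to make sure that applying $\widetilde{\psi_T}$ to an expression like $\overline{(\rho_\mu f)^*B_{\mu+1}}(x_\alpha)_{\alpha<\lambda}$ or $\overline{B_{\mu+1}}(\overline{p_\beta f}(x_\alpha)_{\alpha<\lambda})_{\beta<\mu}$ really does produce the same syntactic type of $T$, up to the judgmental equalities of $T$, as the direct substitution along the components of $f$. This is where one invokes \cref{lemma1111}-style commutation of preinterpretations with substitution (here for $\widetilde{\psi_T}$ rather than $\widetilde{I}$, but the same inductive argument applies), together with \cref{descriptiondelta} and \cref{corollary3234} describing $\delta_f^\nu$ explicitly. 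I expect the proof to be, like its counterpart \cref{isinterpreation} for $\varphi_T$, a straightforward but somewhat tedious induction, and I would organize it so that the substitution lemmas of \cref{appendix-a} and the explicit pullback formulas of \cref{pullbacksyntactic} do all the real work, leaving only routine verification for each of the finitely many rule schemas.
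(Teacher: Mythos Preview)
Your proposal is correct and follows the same approach as the paper: verify directly that $\widehat{\psi_T}$ sends each introductory rule and axiom of $U(\C_T)$ to a derived rule of $T$, reducing to the successor case and unwinding the definitions of $\psi_T$ and the explicit pullback formulas in $\C_T$. The paper's own proof is in fact much terser than yours --- it only spells out the type-introduction case (exactly your second key step) and then declares the operator case ``straightforward,'' without explicitly treating the three axiom schemas at all --- so your outline is, if anything, more complete than what the paper records.
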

\begin{proof}
	We need to check that rules and axioms are preserved by $ \psi_T $.
	It will be enough to deal with the case where $ \lambda=\nu+1 $. Suppose that $ \overline{A_\lambda} $ has
	\[
	\inferrule{\{x_\alpha:\overline{A_\alpha}(x_\delta)_{\delta<\alpha} \}_{\alpha<\nu}}{\overline{A_\nu}(x_\alpha)_{\alpha<\nu} \, \type }
	\]
	
	Furthermore, we assume that $ \{x_\alpha:\Delta_\alpha \}_{\alpha<\lambda} $ is such that $ A_\lambda=[\{x_\alpha:\Delta_\alpha \}_{\alpha<\lambda} ] $. By definition,
	\[
	\widehat{\psi_T}\left(\inferrule{\{x_\alpha:\overline{A_\alpha}(x_\delta)_{\delta<\alpha} \}_{\alpha<\nu}}{\overline{A_\lambda}(x_\alpha)_{\alpha<\lambda} \, \type }\right)=\inferrule{\{x_\alpha:\Delta_\alpha\}_{\alpha<\nu} }{\Delta_\nu \, \type}.
	\]
	This is obviously a derived rule of $ T $. Preservation of the rule for operator symbols is straightforward.
\end{proof}

\begin{lemma}
	For any generalized $\kappa$-algebraic theory $ T $ we have $ \psi_T\circ \varphi_T\approx Id_T $.
\end{lemma}
\begin{proof}
	From \cref{corollary2114} it is enough to verify the statement on type element judgments. Let $ \{x_\alpha:\Delta_\alpha\}_{\alpha<\lambda} \vdash t:\Delta_\lambda $ a type element judgment. For any $ \alpha\leq\lambda $ we denote $ A_\alpha:=[\{x_\delta:\Delta_\delta\}_{\delta\leq\alpha}] $. It follows from \cref{lemma7236} that
	\[\widehat{\varphi_T}\left(\inferrule{\{x_\alpha:\Delta_\alpha\}_{\alpha<\lambda}}{t:\Delta_\lambda}\right) \approx \inferrule{\{ x_\alpha:\overline{A_\alpha} \}_{\alpha<\lambda}}{\overline{ [\langle x_\alpha,t \rangle_{\alpha<\lambda}] }: \overline{A_\lambda}(x_\alpha)_{\alpha<\lambda}}. \]
	Hence \[\widehat{\psi_T}\left(\widehat{\varphi_T}\left(\inferrule{\{x_\alpha:\Delta_\alpha\}_{\alpha<\lambda}}{t:\Delta_\lambda}\right)\right) \approx \widehat{\psi_T}\left(\inferrule{\{ x_\alpha:\overline{A_\alpha} \}_{\alpha<\lambda}}{\overline{ [\langle x_\alpha,t \rangle_{\alpha<\lambda} ] }: \overline{A_\lambda}(x_\alpha)_{\alpha<\lambda}}\right) = \inferrule{\{x_\alpha:\Delta_\alpha\}_{\alpha<\lambda}}{t:\Delta_\lambda}. \]
\end{proof}

\begin{lemma}
	For any generalized $\kappa$-algebraic theory $ T $ we have $ \psi_T\circ \varphi_T\approx Id_T $.
\end{lemma}
\begin{proof}
	The proof is similar to the previous lemma. All the definitions and technical results have been established, especially \cref{lemma7236}.
\end{proof}

\begin{corollary}
	There is a natural isomorphism $Id_{\kgat} \Rightarrow U\circ \C $.
\end{corollary}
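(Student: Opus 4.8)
The plan is to assemble the pieces already established. For each generalized $\kappa$-algebraic theory $T$ we have the interpretation $\varphi_T : T \to U(\C_T)$, which \cref{isinterpreation} shows is a genuine interpretation and hence a morphism $[\varphi_T]$ in $\kgat$, and the preceding lemma shows the family $[\varphi_{-}]$ is natural, i.e.\ that it constitutes a natural transformation $Id_{\kgat} \Rightarrow U\circ \C$. So it only remains to check that each component $[\varphi_T]$ is an isomorphism in $\kgat$; once that is done, the family of componentwise inverses is automatically a natural transformation $U\circ \C \Rightarrow Id_{\kgat}$, and the two are mutually inverse natural isomorphisms.

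To see $[\varphi_T]$ is invertible I would use the candidate inverse $\psi_T : U(\C_T) \to T$ constructed above: on a type symbol $\overline{A_\lambda}$ with $\lambda = \nu + 1$, after choosing a representative context $\{x_\alpha:\Delta_\alpha\}_{\alpha<\lambda}$, it returns $\Delta_\nu$ (the limit-ordinal case being forced by the successor case since a limit context is the corresponding limit in $\C_T$), and on the operator symbol attached to a morphism $\overline{[\langle t_\beta\rangle_{\beta<\mu}]}$ with $\mu$ a successor ordinal it returns $t_\mu$. By \cref{lemma10244} this $\psi_T$ is an interpretation, and its independence from the choices of representatives up to $\approx$ follows from \cref{lemma4}, so $[\psi_T]$ is a well-defined morphism in $\kgat$. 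The two lemmas above then give $[\psi_T]\circ[\varphi_T] = [\psi_T\circ\varphi_T] = [Id_T]$ and, symmetrically, $[\varphi_T]\circ[\psi_T] = [Id_{U(\C_T)}]$ in $\kgat$ (recall that morphisms of $\kgat$ are $\approx$-classes of interpretations and that composition respects $\approx$ by the corollaries of \cref{lemma11114}, so ``inverse up to $\approx$'' is literally ``inverse''). Hence each $[\varphi_T]$ is an isomorphism.

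Finally I would record the elementary categorical fact that a natural transformation $\eta : F \Rightarrow G$ all of whose components are isomorphisms has componentwise inverses assembling into a natural transformation $G \Rightarrow F$ inverse to $\eta$; applying this to $\eta = [\varphi_{-}]$ produces the desired natural isomorphism $Id_{\kgat} \cong U\circ\C$, with inverse realized by $[\psi_{-}]$. I do not expect a genuine obstacle here: all the real content — that $\varphi_T$ and $\psi_T$ are interpretations, that they are mutually inverse up to $\approx$, and that $[\varphi_{-}]$ is natural — has already been discharged; the only thing to be careful about is the bookkeeping with the choices of representatives in the definition of $\psi_T$ and the observation that equality of morphisms in $\kgat$ is equality of $\approx$-classes.
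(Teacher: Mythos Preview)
Your proposal is correct and matches the paper's approach exactly: the paper's proof is the single line ``We have constructed $[\varphi_{-}]:Id_{\kgat} \Rightarrow U\circ \C$,'' which is precisely the assembly of pieces you describe (naturality from the preceding lemma, $\psi_T$ as the componentwise inverse via \cref{lemma10244} and the two $\approx$-inverse lemmas). You have simply spelled out more explicitly what the one-line proof is invoking.
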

\begin{proof}
	We have constructed $ [\varphi_{-}]:Id_{\kgat} \Rightarrow U\circ \C $.
      \end{proof}

\subsubsection{The natural isomorphism \texorpdfstring{$ \C \circ U \cong Id_{\kappa\text{-}CON} $}{Lg}}

In this section we aim to construct a natural isomorphism $ \eta: Id_{\kcon} \Rightarrow \C \circ U $. Let $ \catC $ be a $ \kappa $-contextual category. For this, we first construct a $ \kappa $-contextual functor $ \eta_\catC:\catC \to \C_{U(\catC)} $. Recall that if $ A_\lambda $ is an object in $ \catC $ then for any $ \alpha\leq \lambda $, we denote $ p_\alpha: A_\lambda \display A_\alpha $ as the canonical display map that exists. Then we can make the following definition:

\begin{enumerate}
	\item For $ \eta_\catC(1):=1 $.
	\item If $ A_\mu $ is an object with $ \mu=\lambda+1 $, then \[ \eta_\catC(A_\mu):=[ \{x_\alpha: \overline{A_\alpha}(x_\delta)_{\delta<\alpha} \}_{\alpha\leq \mu} ].\]
	\item For an object $ A_\lambda $, we define $ \eta_\catC(p_0):= \eta_\catC(p)_0 $ where $ \eta_\catC(p)_0: \eta_\catC(A) \display 1 $.
	\item If $ A_\lambda, B_\mu $ are non-trivial objects, with $ \mu $ a successor ordinal, and $ f: A_\lambda \to B_\mu $ is a morphism in $ \catC $, then
	\[ \eta_\catC(f):= [\langle \overline{p_\beta f}(x_\alpha)_{\alpha<\lambda} \rangle_{\beta\leq\mu}].\]
\end{enumerate}

We observe that if $ \mu $ is a limit ordinal, then any map $ f:A_\lambda \to B_\mu $ is determined by a family of maps $ \{f_\nu:A_\lambda \to B_\nu\}_{\nu<\mu} $. Thus, in order to define $ \eta $ on such map, it is enough to do it on ordinals $ \nu<\mu $ which we can assume to be successor ordinals. The map $ \eta(f) $ is the map induced by the family of maps $ \{\eta(f_\nu):\eta(A_\lambda) \to \eta(B_\nu)\}_{\nu<\mu} $. In conclusion, we simply need to prove properties of $ \eta $ for successor ordinals; the property for limit ordinals follows using the universal property of the limit object.

\begin{lemma} \label{etaisfunctor}
	For any $ \catC $, $ \eta_\catC:\catC \to \C_{U(\catC)} $ is a $\kappa$-contextual functor.
\end{lemma}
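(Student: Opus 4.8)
The plan is to verify the three defining conditions of a $\kappa$-contextual functor from \cref{contextualcat} for the map $\eta_\catC$ just defined: preservation of the grading $Ob_\lambda$, restriction to a functor $Dis(\catC)\to Dis(\C_{U(\catC)})$, and preservation of canonical pullbacks on the nose. Before that, one must of course check that $\eta_\catC$ is a functor at all, i.e. that it respects identities and composition. By the observation already recorded in the excerpt (a map into a limit object is determined by its components into the successor stages, and $\eta_\catC$ on a limit ordinal is defined as the induced cone map), it suffices throughout to treat the case where the codomain $B_\mu$ has successor height $\mu=\nu+1$; the limit case then follows by the universal property of the limit object $[\{x_\beta:\overline{B_\beta}\}_{\beta<\mu}]$ in $\C_{U(\catC)}$ (which is a limit by \cref{limitcontext}).

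First I would check functoriality. For identities, $\eta_\catC(id_{A_\lambda})$ is by definition $[\langle \overline{p_\beta}(x_\alpha)_{\alpha<\lambda}\rangle_{\beta\leq\lambda}]$, and by axiom (2) of \cref{definitiongatfromcat} we have $\overline{p_\delta}(x_\beta)_{\beta<\lambda}\equiv_{\overline{A_\delta}} x_\delta$, so this equivalence class is exactly the identity $[\langle x_\alpha\rangle_{\alpha<\lambda}]$ in $\C_{U(\catC)}$. For composition, given $f:A_\lambda\to B_\mu$ and $g:B_\mu\to C_{\nu+1}$, axiom (1) of \cref{definitiongatfromcat} gives $\overline{gf}(x_\alpha)_{\alpha<\lambda}\equiv \overline{g}\big(\overline{p_\beta f}(x_\alpha)_{\alpha<\lambda}\big)_{\beta<\mu}$, which unwinds precisely to the statement that substituting the components of $\eta_\catC(f)$ into the components of $\eta_\catC(g)$ yields the components of $\eta_\catC(gf)$ — i.e. $\eta_\catC(g)\circ\eta_\catC(f)=\eta_\catC(gf)$ in the syntactic category. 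Grading preservation is immediate from the definition: $\eta_\catC(A_\mu)$ has a context of length $\mu$, hence height $\mu$ in $\C_{U(\catC)}$.

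Next I would check that $\eta_\catC$ sends display maps to display maps. A display map $p_\alpha:A_\lambda\display A_\alpha$ is sent to $[\langle \overline{p_\beta p_\alpha}(x_\gamma)_{\gamma<\lambda}\rangle_{\beta\leq\alpha}]$; by axiom (2) of \cref{definitiongatfromcat} applied to the composites $p_\beta p_\alpha = p_\beta$, each component reduces to the variable $x_\beta$, so $\eta_\catC(p_\alpha)=[\langle x_\beta\rangle_{\beta<\alpha}]$, which is exactly a generalized display map in $\C_{U(\catC)}$ in the sense of \cref{display:generalized}. Finally, for preservation of canonical pullbacks: given $p:A_\lambda\display B_\mu$ and $f:C\to B_\mu$, one must show $\eta_\catC(f^*A)=(\eta_\catC f)^*(\eta_\catC A)$ and $\eta_\catC(q(f,A))=q(\eta_\catC f,\eta_\catC A)$. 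Using the explicit description of pullbacks in $\C_T$ from \cref{pullbacksyntactic} together with axiom (3) of \cref{definitiongatfromcat} — which states $\overline{f^*B_{\mu+1}}\equiv \overline{B_{\mu+1}}\big(\overline{p_\beta f}\big)_{\beta<\mu}$ and $\overline{q(f,B_{\mu+1})}\equiv x_\delta$ — one matches the context produced by $\eta_\catC(f^*A)$ with the canonical pullback context of \cref{pullbacksyntactic}, and the map $q$ with $[\langle t_\beta, x_\delta\rangle]$. An induction on the height/length of $p$, using the limit description of pullbacks along generalized display maps from the second half of the proof of \cref{pullbacksyntactic}, handles the general (non-unit-length) case.

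I expect the main obstacle to be the bookkeeping in the canonical-pullback condition: one has to carefully track the substitutions $[\,t_\beta\mid x_\beta\,]$ in the syntactic pullback formula and reconcile them, stage by stage, with the inductively-defined operators $\overline{p_\beta f}$, and the fact that canonical pullbacks in a $\kappa$-contextual category must agree \emph{strictly} (on the nose, not up to isomorphism) means one cannot be cavalier about choices of representatives of the $\approx$-equivalence classes. The axioms (1)–(3) of $U(\catC)$ in \cref{definitiongatfromcat} were, however, tailored precisely to make these equalities hold after passing to equivalence classes, so the work is bounded; \cref{lemma4}(1) guarantees that the definition of $\eta_\catC$ on morphisms is independent of the chosen representatives, which is what lets the whole argument go through at the level of $\C_{U(\catC)}$.
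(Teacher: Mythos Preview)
Your proposal is correct and follows essentially the same approach as the paper: both verify functoriality via axioms (1) and (2) of \cref{definitiongatfromcat} for composition and identities respectively, reduce display maps to variables via axiom (2), and establish preservation of canonical pullbacks from axiom (3) together with the explicit syntactic pullback formula of \cref{pullbacksyntactic}, handling limit ordinals by the universal property. The paper's write-up is terser but the structure and the key inputs are identical.
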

\begin{proof}
	First we verify that it is a functor. Since for any $ \alpha<\lambda $ we have $ \overline{p_\alpha}(x_\alpha)_{\alpha<\lambda}=x_\alpha $, then it is immediate to see that $ \eta_\catC $ preserves the identities.\\
	Assume we have non-trivial morphisms $ f: A_\lambda \to B_\mu $ and $ g:B_\mu \to C_\nu $, then
	\[
	\eta_\catC(gf)= [\langle \overline{p_\gamma g f}(x_\alpha)_{\alpha<\lambda} \rangle_{\beta\leq\nu}]
	\]
	From the first axiom in \cref{definitiongatfromcat} of $ U(\catC) $, it follows that the above must be $ \eta_\catC(g)\eta_\catC(f) $ whenever $ \mu $ and $ \nu $ are successor ordinals. When we have limits, it follows by the universal property. \\
	Now we must verify that it preserves display maps and canonical pullbacks. Both statements are direct consequences of the definitions. Furthermore, the proof from \cite{cartmell1978} works without mayor changes.\\
	For the preservation of pullbacks: We let $ f:A_\lambda \to B_{\mu+1} $ then
	\begin{align*}
		\eta_\catC(f^*B) &= [\langle x_\alpha: \overline{A_\delta}( x_\gamma )_{\gamma<\alpha}, \, x_\epsilon : \overline{f^*B_{\mu+1}}(x_\alpha)_{\alpha<\lambda} \rangle_{\alpha<\lambda} ] \\
		&= [\langle x_\alpha: \overline{A_\delta}( x_\gamma )_{\gamma<\alpha}, \, x_\epsilon : \overline{B_{\mu+1}}\big( \overline{p_\beta f}(x_\alpha)_{\alpha<\lambda}\big)_{\beta<\mu} \rangle_{\alpha<\lambda} ] \\
		&= [ \langle \overline{p_\beta f}(x_\alpha)_{\alpha<\lambda} \rangle_{\beta\leq \mu} ]^*[\langle x_\beta:\overline{B_\beta}(x_\gamma)_{\gamma<\beta} \rangle_{\beta\leq\mu}] \\
		&= \eta_\catC(f)^*\eta_\catC(B).
	\end{align*}
	For a display map of $ p_\nu: B_\mu \display B_\nu $ with height a successor ordinal, the same argument shows that the pullback along $ f_\nu:A_\lambda \to B_\nu  $ is preserved. When the height is a limit ordinal, we combine the previous case and the fact that in any $ \kappa $-contextual category canonical pullbacks are unique.
\end{proof}

\begin{lemma}
	Let $ \catC, \, \catC' $ be $ \kappa $-contextual categories and a contextual functor $ F:\catC \to \catC' $. Then the following diagram is commutative:
	% https://q.uiver.app/#q=WzAsNCxbMCwwLCJcXGNhdEMiXSxbMCwxLCJcXGNhdEMnIl0sWzEsMCwiXFxDX3tVKFxcY2F0Qyl9Il0sWzEsMSwiXFxDX3tVKFxcY2F0QycpfSJdLFsyLDMsIlxcQyhVKEYpKSJdLFswLDIsIlxcZXRhX3tcXGNhdEN9Il0sWzAsMSwiRiIsMl0sWzEsMywiXFxldGFfe1xcY2F0Qyd9IiwyXV0=
	\[\begin{tikzcd}
		\catC & {\C_{U(\catC)}} \\
		{\catC'} & {\C_{U(\catC')}}.
		\arrow["{\C(U(F))}", from=1-2, to=2-2]
		\arrow["{\eta_{\catC}}", from=1-1, to=1-2]
		\arrow["F"', from=1-1, to=2-1]
		\arrow["{\eta_{\catC'}}"', from=2-1, to=2-2]
	\end{tikzcd}\]	
\end{lemma}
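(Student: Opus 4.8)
The plan is to prove the naturality square commutes by checking equality of the two composite $\kappa$-contextual functors on objects and on morphisms, and since a $\kappa$-contextual functor is determined by its values on successor-height objects and morphisms with successor-height codomain (limit cases following from the universal property of the limit object, exactly as in the construction of $\eta_\catC$ and in \cref{etaisfunctor}), it suffices to verify the identities there. First I would unwind both composites on an object $A_{\lambda+1} \in \catC$. Going right then down: $\eta_\catC(A_{\lambda+1}) = [\{x_\alpha : \overline{A_\alpha}(x_\delta)_{\delta<\alpha}\}_{\alpha \leq \lambda+1}]$, and then $\C(U(F))$ applies the preinterpretation $\widetilde{U(F)}$ to each type expression, which by the definition of $U(F)$ sends $\overline{A_\alpha}$ to $\overline{F A_\alpha}(x_\delta)_{\delta<\alpha}$; so we get $[\{x_\alpha : \overline{FA_\alpha}(x_\delta)_{\delta<\alpha}\}_{\alpha \leq \lambda+1}]$. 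Going down then right: $F(A_{\lambda+1})$ has display decomposition with objects $F(A_\alpha)$ (since $F$ is contextual, it preserves heights and display maps), so $\eta_{\catC'}(F A_{\lambda+1}) = [\{x_\alpha : \overline{FA_\alpha}(x_\delta)_{\delta<\alpha}\}_{\alpha \leq \lambda+1}]$, which is literally the same object. For morphisms $f : A_\lambda \to B_{\mu+1}$, the right-then-down composite yields $\widetilde{U(F)}$ applied to $[\langle \overline{p_\beta f}(x_\alpha)_{\alpha<\lambda}\rangle_{\beta \leq \mu+1}]$, i.e. $[\langle \overline{F(p_\beta f)}(x_\alpha)_{\alpha<\lambda}\rangle_{\beta \leq \mu+1}]$; the down-then-right composite yields $\eta_{\catC'}(Ff) = [\langle \overline{p'_\beta (Ff)}(x_\alpha)_{\alpha<\lambda}\rangle_{\beta \leq \mu+1}]$ where $p'_\beta$ are the display maps of $FB_{\mu+1}$. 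Since $F$ is contextual it sends $p_\beta : B_{\mu+1} \display B_\beta$ to the display map $p'_\beta : FB_{\mu+1} \display FB_\beta$, so $F(p_\beta f) = p'_\beta (Ff)$, and the two expressions coincide.

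The one subtlety to be careful about is that these are equivalence classes under $\approx$ rather than strict expressions, and $\widetilde{U(F)}$ acts on representatives; so I would note that $\C(U(F))$ is well-defined on $\approx$-classes (this was already established when the functor $\C$ was defined), and that the choice of representative of the context does not matter because all the data involved ($\overline{A_\alpha}$, $\overline{p_\beta f}$) are canonically attached to the objects and display maps of $\catC$, which are themselves determined up to the strict equalities of a $\kappa$-contextual category. In other words, the verification is genuinely an equality of expressions once representatives are chosen compatibly, and compatibility is automatic because $F$ preserves the display structure strictly.

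The main (very mild) obstacle is purely bookkeeping: keeping the variable-indexing conventions from \cref{appendix-a} straight across the two composites, in particular making sure that the ``reset'' of variable counting does not introduce spurious mismatches, and handling the limit-ordinal case cleanly. For a limit ordinal $\mu$, a map $f : A_\lambda \to B_\mu$ is the map induced by the family $\{f_\nu : A_\lambda \to B_\nu\}_{\nu<\mu}$, and since $\eta_\catC$, $\eta_{\catC'}$, $F$, and $\C(U(F))$ all preserve the relevant limits (limit objects in $\kappa$-contextual categories, and the uniqueness of cone maps), the commutativity for $f$ reduces to commutativity for each $f_\nu$ with $\nu$ a successor, which is the case already handled. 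So I would phrase the proof as: (i) reduce to successor heights; (ii) compute both sides on objects and observe literal equality; (iii) compute both sides on morphisms and observe literal equality, using that $F$ is contextual; (iv) invoke the limit case as a formal consequence. I expect no substantive difficulty; this is a routine naturality check of the same flavour as the other naturality lemmas in \cref{appendix-b}, whose proofs the text defers to \cite{cartmell1978}.
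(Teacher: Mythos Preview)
Your proposal is correct and follows essentially the same approach as the paper: the paper's proof is a terse four-line computation on morphisms showing $\C(U(F))(\eta_\catC(f)) = [\langle \overline{F(p_\beta f)}(x_\alpha)_{\alpha<\lambda}\rangle_{\beta\leq\mu}] = [\langle \overline{p_\beta F(f)}(x_\alpha)_{\alpha<\lambda}\rangle_{\beta\leq\mu}] = \eta_{\catC'}(Ff)$, which is exactly your step~(iii). Your additional treatment of objects, limit ordinals, and the $\approx$-class subtleties is more thorough than what the paper writes out, but it is the same argument with the implicit bookkeeping made explicit.
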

\begin{proof}
	If $ f: A_\lambda \to B_\mu $ is a map in $ \catC $ then
	\begin{align*}
		\C(U(F))(\eta_{\catC}(f)) &= \C(U(F))([\langle \overline{p_\beta f}(x_\alpha)_{\alpha<\lambda} \rangle_{\beta\leq \mu} ]) \\
		&= [\langle \overline{F(p_\beta f)}(x_\alpha)_{\alpha<\lambda} \rangle_{\beta\leq \mu} ] \\
		&= [\langle \overline{p_\beta F(f)}(x_\alpha)_{\alpha<\lambda} \rangle_{\beta\leq \mu} ] \\
		&= \eta_{\catC'}(Ff).
	\end{align*}
\end{proof}

\begin{corollary}
	There is a natural transformation $ Id_{\kcon} \Rightarrow \C \circ U $.
\end{corollary}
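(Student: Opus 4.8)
The plan is to observe that every substantive step has already been carried out in the two lemmas immediately preceding, so that proving the corollary amounts to packaging that data into a single natural transformation. First I would recall that for each $\kappa$-contextual category $\catC$ we have explicitly constructed, object by object and morphism by morphism, a functor $\eta_\catC : \catC \to \C_{U(\catC)}$, with the definition on limit-height objects and morphisms into limit objects forced by universal properties from the successor case. The first lemma, \cref{etaisfunctor}, establishes that $\eta_\catC$ is in fact a $\kappa$-contextual functor: it preserves the grading, restricts to a functor between the display subcategories, and preserves canonical pullbacks strictly. Hence each $\eta_\catC$ is a genuine morphism of $\kcon$, and these are the candidate components of the desired natural transformation $Id_{\kcon} \Rightarrow \C \circ U$.

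Next I would invoke the lemma just above the corollary, which shows that for any contextual functor $F : \catC \to \catC'$ the square with edges $\eta_\catC$, $\eta_{\catC'}$, $F$, and $\C(U(F))$ commutes on the nose. That is precisely the naturality condition for the family $(\eta_\catC)_{\catC}$, so no further computation is needed: one concludes that $\eta := (\eta_\catC)_{\catC \in \kcon}$ is a natural transformation $Id_{\kcon} \Rightarrow \C \circ U$. This mirrors exactly the earlier corollary producing $[\varphi_{-}] : Id_{\kgat} \Rightarrow U \circ \C$ from its two preceding lemmas.

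I do not anticipate a real obstacle here, since the corollary is organisational rather than technical. The only points requiring a moment of care are to confirm that what \cref{etaisfunctor} verifies (grading preservation, display maps to display maps, strict preservation of canonical pullbacks) is exactly membership in $\kcon$, and that the commuting square supplied by the previous lemma is literal equality of contextual functors rather than a weaker coherence — both of which were arranged in the construction of $\eta_\catC$. With those observations in place the statement is immediate.
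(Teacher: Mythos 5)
Your proposal is correct and matches the paper exactly: the corollary is stated without proof precisely because it is the packaging of \cref{etaisfunctor} (each $\eta_\catC$ is a $\kappa$-contextual functor) with the immediately preceding lemma (the naturality square commutes strictly). Nothing further is needed.
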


It remains to show that this natural transformation is an isomorphism. For each $ \kappa $-contextual category $ \catC $ we construct a $ \kappa $-contextual functor
\[ \xi_{\catC}: \C_{U(\catC)} \to \catC \]
which is a two-sided inverse to $ \eta_{\catC} $.
From \cref{derivationlemma} we see that:
\begin{enumerate}
	\item Every derived type judgment of $ U(\catC) $ is of the form
	$$ \{x_\beta:\Omega_\beta\}_{\beta<\mu} \vdash \overline{A_\lambda} (t_\alpha)_{\alpha<\lambda} \,\type $$
	for some object $ A_\lambda $ of $ \catC $ where for $ \alpha\leq\lambda $ the rule $$ \{ x_\beta: \Omega_\beta \}_{\beta<\mu} \vdash t_\alpha:\overline{A_\alpha}[t_\delta\mid x_\delta ]_{\delta<\alpha}$$
	is a derived rule of $ U(\catC). $
	
	\item Every type element judgment of $U(\catC)$ is of the form
	\[ \{x_\beta:\Omega_\beta\}_{\beta<\mu} \vdash x_\beta: \Omega_\beta \]
	for some $ \beta<\mu $, or is of the form
	\[ \{x_\beta:\Omega_\beta\}_{\beta<\mu} \vdash \overline{f}(t_\alpha)_{\alpha<\lambda} : \Omega \] for some map $ f: A_\lambda \to B_\mu $ of $ \catC $ such that for each $ \alpha<\lambda $ the rules
	\[ \{x_\beta:\Omega_\beta\}_{\beta<\mu} \vdash t_\alpha: \overline{A_\alpha}[t_\delta\mid x_\delta]_{\delta<\alpha}\]
	and
	\[ \{x_\beta:\Omega_\beta\}_{\beta<\mu} \vdash \overline{B_\mu}(t_\beta)_{\beta<\mu} \equiv \Omega \]
	are derived rules of $ U(\catC) $.
\end{enumerate}

We may assume that $ \mu=\nu+1 $, the limit case will follow by induction. Let  $ \mathcal{R}_\catC $ be the set of type and element type judgments of $ U(\catC) $. Next, we define $ \mathcal{J}: \mathcal{R}_\catC \to  \catC $ inductively. First we get maps:

\begin{enumerate}
		\item A rule $ r_{\Omega_\mu}:= \{x_\beta:\Omega_\beta\}_{\beta<\mu} \vdash \Omega_\mu $ is sent an object $ \mathcal{J}(r_{\Omega_\mu}) \in\catC $.
	
	\item For any $ \alpha<\lambda $ the judgment $ r_{t_{\alpha}}:= \{x_\beta:\Omega_\beta\}_{\beta<\mu} \vdash t_{\alpha} : \overline{A_{\alpha}}[t_\delta|x_\delta]_{\delta< \alpha} $ is sent to a map $ \mathcal{J}(r_{t_\alpha}) $.
	
\end{enumerate}

The we can make the following definitions:

\begin{enumerate}
	\item $ \mathcal{J}(r_{A_\mu}):= ( \mathcal{J}(t_\alpha)_{\alpha<\lambda} )^*A_\mu $, \\
	where $ \mathcal{J}(t_\alpha)_{\alpha<\lambda} $ denotes the pullbacks as in \cref{lemma3220}.
	\item $ \mathcal{J}( \{x_\beta:\Omega_\beta\}_{\beta<\mu} \vdash \overline{f}(t_\alpha)_{\alpha<\lambda} : \Omega):= ( \mathcal{J}(t_\alpha)_{\alpha<\lambda} )^*\delta_f^\nu. $	
	\item $ \mathcal{J}(\{x_\beta:\Omega_\beta\}_{\beta<\mu} \vdash x_\beta: \Omega_\beta):= \delta_{p_\beta}^\beta$ where $ p_\beta:\mathcal{J}(r_{\Omega_\mu}) \to \mathcal{J}(r_{\Omega_\beta}) $.
\end{enumerate}

The burden of the proof falls into showing that the function $ \mathcal{J} $ is well-defined. The proof is by induction on the derived rules of $ U(\catC) $. We will focus on writing down the inductive hypothesis $ H $ as in \cite{cartmell1978} for this induction.
\begin{itemize}
	\item For rules $ r_{\Omega_\mu} $ of the form $ \{x_\beta:\Omega_\beta\}_{\beta<\mu} \vdash \Omega_\mu \, \type $ then $ H(r_{\Omega_\mu}) $ is either:
	
	\begin{enumerate}
		\item If the premise of $ r_{\Omega_\mu} $ is a non-empty context then $ H(r_{\Omega_\beta}) $ for all $ \beta<\mu $.
		\item If $ r_{\Omega_\mu} $ is the rule $ \vdash \Delta \, \type $ then $ ht(\mathcal{J}(r_{\Omega_\mu}))=1 $. Otherwise, for all $ \beta<\mu $ we have $ ht(\mathcal{J}(r_{\Omega_\beta}))<ht(\mathcal{J}(r_{\Omega_\mu})) $.
		\item For a map $ \langle t_\beta \rangle_{\beta<\mu}: \{x_\alpha:\Delta_\alpha\}_{\alpha<\lambda} \to \{x_\beta:\Omega_\beta\}_{\beta<\mu} $. If for each $ \beta+1<\mu $ we have $ \mathcal{J}(r_{t_\beta+1}) \in \Gamma (\mathcal{J}(r_{\Omega_{\beta+1}[t_\gamma|x_\gamma]_{\gamma\leq\beta}})) $ where
		$ r_{\Omega_{\beta+1}[t_\gamma|x_\gamma]_{\gamma\leq\beta}} $ is the rule $ \{x_\alpha : \Delta_\alpha \}_{\alpha<\lambda} \vdash \Omega_{\beta+1}[t_\gamma|x_\gamma]_{\gamma\leq\beta} \,\type $ then
		\[ \mathcal{J}(r_{\Omega_{\mu}[t_\beta|x_\beta]_{\beta<\mu}}) = (\mathcal{J}(t_\beta)_{\beta<\mu})^* \mathcal{J}(r_{\Omega_\mu}) \]
	\end{enumerate}
	
	\item For rules $ r_{t_\mu} $ of the form $ \{x_\beta:\Omega_\beta\}_{\beta<\mu} \vdash t_\mu:\Omega_\mu $ then $ H(r_{t_\mu}) $ is either:
	\begin{enumerate}
		
		\item $ H(r_{\Omega_\mu}) $.
		
		\item $ \mathcal{J}(r_{t_\mu}) \in \Gamma(\mathcal{J}(r_{\Omega_\mu})). $
		
		\item For a map $ \langle t_\beta \rangle_{\beta<\mu}: \{x_\alpha:\Delta_\alpha\}_{\alpha<\lambda} \to \{x_\beta:\Omega_\beta\}_{\beta<\mu} $. If for each $ \beta+1<\mu $ we have $ \mathcal{J}(r_{t_\beta+1}) \in \Gamma (\mathcal{J}(r_{\Omega_{\beta+1}[t_\gamma|x_\gamma]_{\gamma\leq\beta}})) $ then
		\[ \mathcal{J}(r_{t_{\mu}[t_\beta|x_\beta]_{\beta<\mu}}) = (\mathcal{J}(t_\beta)_{\beta<\mu})^* \mathcal{J}(r_{t_\mu}) \]
		where $ r_{t_{\mu}[t_\beta|x_\beta]_{\beta<\mu}} $ is the rule
		$ \{ x_\alpha:\Delta_\alpha\}_{\alpha<\lambda} \vdash t_\mu[t_\beta|x_\beta]_{\beta<\mu} : \Omega_\mu[t_\beta|x_\beta]_{\beta<\mu}. $
	\end{enumerate}
	
	\item For rules $ r_{\equiv} $ or of the form $ \{x_\alpha: \Delta_\alpha\}_{\alpha<\lambda} \vdash \Delta\equiv \Delta' $, the hypothesis $ H(r_\equiv) $ is either:
	\begin{enumerate}
		\item $ H(r_{\Delta'}) $ and $ \mathcal{J}(r_{\Delta})=\mathcal{J}(r_{\Delta'}) $.
		\item $ H(r_{\Delta}) $ and $ \mathcal{J}(r_{\Delta})=\mathcal{J}(r_{\Delta'}) $.
	\end{enumerate}
	
	\item For rules $ r_{\epsilon} $ or of the form $ \{x_\alpha: \Delta_\alpha\}_{\alpha<\lambda} \vdash t \equiv_\Delta t' $, the hypothesis $ H(r_\epsilon) $ is either:
	\begin{enumerate}
		\item $ H(r_t) $ and $ \mathcal{J}(r_{t})=\mathcal{J}(r_{t'}) $.
		
		\item $ H(r_{t'}) $ and $ \mathcal{J}(r_{t})=\mathcal{J}(r_{t'}) $.
	\end{enumerate}
	
\end{itemize}

\begin{lemma} \label{lemma11256}
	Let $ \{x_\beta:\Omega_\beta\}_{\beta<\mu} \vdash \Omega $ a rule such that $ H $ is satisfied. If $ \langle t_\beta \rangle_{\beta<\mu}: \{x_\alpha:\Delta_\alpha\}_{\alpha<\lambda} \to \{x_\beta:\Omega_\beta\}_{\beta<\mu} $ is a map such that $ H(r_{t_\beta}) $ for all $ \beta<\mu $ then $  H(\{x_\beta:\Omega_\beta\}_{\beta<\mu} \vdash \Omega[t_\beta|x_\beta]_{\beta<\mu}) $
\end{lemma}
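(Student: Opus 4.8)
The plan is to prove the statement by a transfinite induction on the derivation of the rule $\{x_\beta:\Omega_\beta\}_{\beta<\mu}\vdash\Omega$ in $U(\catC)$, splitting into exactly the cases of \cref{derivedrules} and closely following the corresponding argument in \cite{cartmell1978}. The hypothesis $H$ attached to a rule (as listed just before \cref{lemma11256}) always decomposes into three kinds of clauses: $H$ for the immediate sub-rules, a height bound on the associated object $\mathcal{J}(-)$, and a compatibility clause asserting that $\mathcal{J}$ intertwines substitution along a morphism whose components satisfy $H$ with the iterated canonical pullback of \cref{lemma3220}. What has to be shown is that all three clauses survive the substitution $\Omega\mapsto\Omega[t_\beta|x_\beta]_{\beta<\mu}$, given $H$ for $\{x_\beta:\Omega_\beta\}_{\beta<\mu}\vdash\Omega$ and $H(r_{t_\beta})$ for each $\beta<\mu$.

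First I would dispose of the base cases. For a variable judgment $\{x_\beta:\Omega_\beta\}_{\beta<\mu}\vdash x_\beta:\Omega_\beta$, the substituted judgment is one of the $r_{t_\beta}$, so its hypothesis is literally part of the data; one only has to match $\mathcal{J}$-values, which is immediate from $\mathcal{J}$ of a variable judgment being a $\delta$ map, together with \cref{descriptiondelta} and the pullback behaviour of $\delta$ maps from \cref{lemma5234}. For a type-symbol application $\{x_\beta:\Omega_\beta\}_{\beta<\mu}\vdash\overline{A_\nu}(s_\alpha)_{\alpha<\nu}\,\type$, one has $\mathcal{J}(r)=(\mathcal{J}(s_\alpha)_{\alpha<\nu})^{*}A_\nu$ in the sense of \cref{lemma3220}; substituting and invoking the strict functoriality of canonical pullbacks (axioms 9 and 10 of \cref{contextualcat}) yields $(\mathcal{J}(s_\alpha[t|x])_{\alpha<\nu})^{*}A_\nu=(\langle\mathcal{J}(t_\beta)\rangle_{\beta<\mu})^{*}\bigl((\mathcal{J}(s_\alpha)_{\alpha<\nu})^{*}A_\nu\bigr)$, which is precisely the compatibility clause, while the height bound survives because pulling back along a map does not change the height of an object. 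The operator-symbol case runs identically, with $\mathcal{J}(r)$ the map $(\mathcal{J}(s_\alpha)_{\alpha<\nu})^{*}\delta_f$ and with \cref{lemma5234}(2) governing how such $\delta$ maps pull back.

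For the inductive step the derivation rules for reflexivity, symmetry, transitivity, conversion, and the two substitution rules of \cref{derivedrules} are routine: $H$ for the conclusion is $H$ for the premises together with an equality of $\mathcal{J}$-values, and the former is supplied by the inductive hypothesis applied to the substituted premises, while the latter is stable because substitution respects equalities by the substitution property of \cref{derivedrules}. The one delicate point, which I expect to be the main obstacle, is the bookkeeping of the sequential pullbacks $(\mathcal{J}(t_\alpha)_{\alpha<\lambda})^{*}(-)$: one needs the ``composition law'' saying that pulling back first along $\langle t_\beta\rangle_{\beta<\mu}$ and then along a further morphism agrees with pulling back along the composite, compatibly with the maps $q(-,-)$ and the $\delta$ maps. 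This is purely formal from axioms 9--10 of \cref{contextualcat}, but the indexing at limit ordinals $\mu$ --- where $B_\mu$ is the limit of a tower and $\langle t_\beta\rangle$ is given levelwise --- requires threading the argument through the successor case exactly as in \cref{pullbacksyntactic} and \cref{etaisfunctor}, using uniqueness of canonical pullbacks in a $\kappa$-contextual category. The difficulty is therefore organizational rather than conceptual: making the transfinite induction respect the levelwise decomposition of morphisms uniformly.
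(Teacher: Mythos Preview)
Your plan misidentifies the induction principle. The paper (following Cartmell) argues by induction on $\mu$---the length of the target context of the substitution---together with a case split on which of the four forms of $H$ applies; it does not induct on a derivation of $\{x_\beta:\Omega_\beta\}_{\beta<\mu}\vdash\Omega$. This matters because \cref{lemma11256} is a helper for \cref{lemma265}, which is the result that establishes $H$ for every derived rule by induction on derivations. At the point of proving \cref{lemma11256} you are handed $H$ for the specific rule and for each $r_{t_\beta}$, but you are \emph{not} handed $H$ for the premises of any derivation of that rule. Your ``inductive step'' for reflexivity, symmetry, transitivity, conversion and the substitution rules therefore cannot invoke an inductive hypothesis on those premises without implicitly assuming \cref{lemma265}, which is circular.

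There is a related gap already in your base case for a type-symbol application $\overline{A_\nu}(s_\alpha)_{\alpha<\nu}$. You propose to compute $\mathcal{J}$ of the substituted rule by first computing each $\mathcal{J}(s_\alpha[t|x])$ and then taking the iterated pullback. But to identify $\mathcal{J}(s_\alpha[t|x])$ with $(\mathcal{J}(t_\beta)_{\beta<\mu})^*\mathcal{J}(r_{s_\alpha})$ you would need clause (3) of $H(r_{s_\alpha})$, and nothing in the hypotheses gives you $H$ for the sub-terms $s_\alpha$. The point you are missing is that clause (3) of $H$ for the \emph{original} rule already hands you $\mathcal{J}(r_{\Omega[t|x]}) = (\mathcal{J}(t_\beta)_{\beta<\mu})^*\mathcal{J}(r_\Omega)$ directly, with no need to descend into $\Omega$. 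One then verifies the three clauses of $H$ for the substituted rule: clause (2) is immediate since canonical pullback preserves length; clause (1) unwinds from the given $H(r_{t_\beta})$; and the delicate clause (3)---compatibility with a further substitution $\langle u_\alpha\rangle$---is where the induction on $\mu$ is actually used, reducing to the composite morphism and the strict functoriality axioms of \cref{contextualcat}. Your instincts about the composition law for iterated pullbacks and the levelwise handling of limit ordinals are correct; they belong to this step, not to an induction on derivations.
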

\begin{proof}
	By induction on $ \mu $ and treating all the different cases for $ H $. The proof in \cite[Lemma 11 pp.2.56]{cartmell1978} works here too.
\end{proof}

\begin{lemma} \label{lemma12263}
	\begin{enumerate}
		\item For any object $ A_\lambda \in \catC $, we have:
		\begin{enumerate}
			\item  $ A\lambda=\mathcal{J}( \{x_\alpha: \overline{A_\alpha}(x_\gamma)_{\gamma<\alpha} \}_{\alpha<\lambda} \vdash \overline{A_\lambda}(x_\alpha)_{\alpha<\lambda} \, \type ) $ .
			
			\item For all $ \alpha<\lambda $, $ \delta_{p_\alpha^\lambda}= \mathcal{J}( \{x_\alpha: \overline{A_\alpha}(x_\gamma)_{\gamma<\alpha} \}_{\alpha<\lambda} \vdash x_\alpha:\overline{A_\alpha}(x_\gamma)_{\gamma<\alpha} )$ where $ p_\alpha^\lambda:A_\lambda \display A_\alpha $.
		\end{enumerate}
		
		\item For any non-trivial object $ A_\lambda $ and $ f:A_\lambda \to B_{\mu+1}  $,
		$ \delta_f=\mathcal{J}( \{x_\alpha: \overline{A_\alpha}(x_\gamma)_{\gamma<\alpha} \}_{\alpha<\lambda} \vdash \overline{f}(x_\alpha)_{\alpha<\lambda} \overline{(p_\mu f)^*B}(x_\alpha)_{\alpha<\lambda} ) $ where $ p_\mu:B_{\mu+1} \display B_\mu. $
	\end{enumerate}
\end{lemma}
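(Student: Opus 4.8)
The plan is to prove (1)(a), (1)(b) and (2) simultaneously by transfinite induction on the height $\lambda$ of $A_\lambda$; this is the direct analogue of \cite[Lemma~12, pp.~2.63]{cartmell1978}, and it runs in parallel with the verification that $\mathcal{J}$ is well defined (i.e. that the invariant $H$ holds at every derived rule). Throughout one may invoke the inductive hypothesis for all objects of height $<\lambda$ and all maps into them, together with the substitution property of $H$ recorded in \cref{lemma11256}, the basic identities of \cref{interlude} (in particular $((f_\nu)^*p_\nu)\,\delta_f^\nu=\mathrm{id}$), and the strict functoriality of canonical pullbacks (item~(9) of \cref{contextualcat}). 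As usual the successor case is the substantive one: when $\lambda$ is a limit ordinal, $A_\lambda$ is the limit of its truncations $A_\beta$ ($\beta<\lambda$), the judgment $\overline{A_\lambda}(x_\alpha)_{\alpha<\lambda}\,\type$ is built from the introduction rules of the $\overline{A_{\beta+1}}$, and $\mathcal{J}$ was set up to be compatible with these limits, so the limit statements follow formally from the successor ones plus uniqueness of canonical pullbacks.

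For (1)(a) and (1)(b), write $\Gamma_\lambda\equiv\{x_\alpha:\overline{A_\alpha}(x_\gamma)_{\gamma<\alpha}\}_{\alpha<\lambda}$, so the rule in question is $r\equiv(\Gamma_\lambda\vdash\overline{A_\lambda}(x_\alpha)_{\alpha<\lambda}\,\type)$, in which the realizers of the context of $\overline{A_\lambda}$ are the variables $x_\alpha$ themselves. By the clause defining $\mathcal{J}$ on type judgments of the form $\overline{A_\lambda}(t_\alpha)_{\alpha<\lambda}$ one has $\mathcal{J}(r)=(\mathcal{J}(x_\alpha)_{\alpha<\lambda})^*A_\lambda$, where $\mathcal{J}(x_\alpha)_{\alpha<\lambda}$ is the tower of pullbacks from \cref{lemma3220} assembled from the values $\mathcal{J}(r_{x_\alpha})$ of the variable judgments $r_{x_\alpha}\equiv(\Gamma_\lambda\vdash x_\alpha:\overline{A_\alpha}(x_\gamma)_{\gamma<\alpha})$. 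An inner induction on $\beta\le\lambda$, using the inductive hypothesis at heights $<\lambda$, identifies $\mathcal{J}$ of the length-$\beta$ prefix of $\Gamma_\lambda$ with $A_\beta$ and the attendant display maps with $p_\beta^\lambda:A_\lambda\display A_\beta$; by the definition of $\mathcal{J}$ on variable judgments this forces $\mathcal{J}(r_{x_\alpha})=\delta_{p_\alpha^\lambda}$, which is (1)(b), and it also makes the composite $(\mathcal{J}(x_\alpha)_{\alpha<\lambda})$ equal to the identity of $A_\lambda$ by the cited identity of \cref{interlude}. Pulling $A_\lambda$ back along the identity returns $A_\lambda$ by item~(9)(a) of \cref{contextualcat}, giving (1)(a).

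For (2), with $r_f\equiv(\Gamma_\lambda\vdash\overline{f}(x_\alpha)_{\alpha<\lambda}:\overline{(p_\mu f)^*B}(x_\alpha)_{\alpha<\lambda})$, the clause defining $\mathcal{J}$ on operator-term judgments gives $\mathcal{J}(r_f)=(\mathcal{J}(x_\alpha)_{\alpha<\lambda})^*\delta_f^\mu$, and by (1) the realizer tower is the identity of $A_\lambda$, so $\mathcal{J}(r_f)=\delta_f^\mu=\delta_f$ by item~(9)(a) again; when $\mu+1$ is a limit, $\overline{f}$ is determined by the $\overline{p_\gamma f}$ with $\gamma$ a successor and $\delta_f$ is the induced cone map, so the claim follows from the successor instances plus uniqueness of canonical pullbacks. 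The only real difficulty is organizational: keeping the two nested inductions (on the height $\lambda$, and on the truncation index / derivation length) separate, and checking that $H$ genuinely propagates through the substitutions used to build $\mathcal{J}(x_\alpha)_{\alpha<\lambda}$ — this last point is exactly what \cref{lemma11256} provides, and beyond it no idea not already present in \cite{cartmell1978} is needed.
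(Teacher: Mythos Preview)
Your proposal is correct and follows the same approach as the paper, which simply defers to \cite[Lemma~12, pp.~2.63]{cartmell1978}; you have essentially written out Cartmell's argument in the $\kappa$-setting, with the transfinite induction on height, the identification of the realizer tower with the identity via \cref{lemma3220} and item~(9)(a) of \cref{contextualcat}, and the appeal to \cref{lemma11256} for propagation of $H$ all as expected.
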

\begin{proof}
	This is \cite[Lemma 12 pp.263]{cartmell1978}.
\end{proof}

\begin{lemma}\label{lemma265}
	Every derived rule of $ U(\catC) $ satisfies the hypothesis $ H $.
\end{lemma}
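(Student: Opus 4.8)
The plan is to prove the statement by structural induction over the list of derivation rules in \cref{derivedrules}, together with the base cases supplied by the sort‑ and operation‑introduction axioms and by the equation axioms of $U(\catC)$ from \cref{definitiongatfromcat}. The clauses defining the inductive hypothesis $H$ were chosen precisely so as to record how the partially defined assignment $\mathcal{J}$ interacts with each structural feature — the height filtration, the sets $\Gamma(-)$ of sections, and the behaviour of $\mathcal{J}$ under substitution — so that the bulk of the argument is a routine, if lengthy, case analysis. Throughout, I would reduce every statement about a rule whose conclusion lives over an object of limit height to the corresponding statement at successor heights: this reduction is legitimate because in a $\kappa$‑contextual category an object of limit height is a genuine limit of its tower of display maps (\cref{contextualcat}), and canonical pullbacks along a display map are unique, so a cone of ``successor‑level'' data assembles uniquely into the required map.

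First I would dispatch the purely logical rules. Reflexivity, symmetry and transitivity of the two equality judgments, together with rules \ref{drule8} and \ref{drule9}, are immediate from the second alternative in each clause of $H$ for equality judgments, namely that $\mathcal{J}$ agrees on the two sides; the variable rule \ref{drule10} is handled by the defining equation of $\mathcal{J}$ on variables, $\mathcal{J}(\{x_\beta:\Omega_\beta\}_{\beta<\mu}\vdash x_\beta:\Omega_\beta)=\delta_{p_\beta}^\beta$, together with the identity $((p_\beta)^*p_\beta)\,\delta_{p_\beta}^\beta=Id$ recorded in \cref{interlude}, which places this map in the required $\Gamma(-)$. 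The sort‑ and operation‑introduction rules \ref{drule11} and \ref{drule12} are exactly \cref{lemma12263}: that lemma computes $\mathcal{J}$ on the canonical judgments $\overline{A_\lambda}(x_\alpha)_{\alpha<\lambda}\,\type$ and $\overline{f}(x_\alpha)_{\alpha<\lambda}$ as $A_\lambda$ and $\delta_f$ respectively, and the substitution clause of $H$ is then supplied by \cref{lemma11256}. Finally, the two rules converting an axiom of $U(\catC)$ into a derived equality reduce to checking the axioms themselves.

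The main obstacle is therefore the verification that the three families of equation axioms of $U(\catC)$ listed in \cref{definitiongatfromcat} are compatible with $\mathcal{J}$, and that the substitution‑introducing rules \ref{drule13} and \ref{drule14} preserve $H$. For the substitution rules one again appeals to \cref{lemma11256}, which is itself proved by an inner induction on context length. For the equation axioms: the axiom $\overline{p_\delta}\equiv x_\delta$ is immediate from the definition of $\mathcal{J}$ on display maps; the composition axiom $\overline{gf}\equiv\overline{g}(\overline{p_\beta f})_{\beta<\mu}$ must be matched against the strict functoriality of canonical pullbacks, $q(fg,A)=q(f,A)\,q(g,f^*A)$, and against the ``double pullback'' axiom $f^*r\circ(q(f,B)^*p)=f^*(r\circ p)$, $q(q(f,B),A)=q(f,A)$ of \cref{contextualcat}; and the axioms relating $\overline{f^*B_{\mu+1}}$ with $\overline{B_{\mu+1}}(\overline{p_\beta f})_{\beta<\mu}$, respectively $\overline{q(f,B_{\mu+1})}$ with a variable, are exactly the statements that $\mathcal{J}$ sends these composite expressions to the canonical pullback and to its structure map. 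In each case the required identity in $\catC$ is one of the contextual‑category axioms or an easy consequence of them, so once the bookkeeping is set up correctly the checks go through. This step is the counterpart of the analogous lemma in \cite{cartmell1978}, the only genuinely new ingredient being the limit‑height reductions described above, which rely on the uniqueness of canonical pullbacks and on objects of limit height being limits of their display towers. Once \cref{lemma265} is in hand, $\mathcal{J}$ descends to the well‑defined contextual functor $\xi_\catC:\C_{U(\catC)}\to\catC$, and \cref{lemma12263} gives $\xi_\catC\circ\eta_\catC=\mathrm{Id}$ and $\eta_\catC\circ\xi_\catC=\mathrm{Id}$, completing the equivalence $\C\circ U\cong\mathrm{Id}_{\kcon}$.
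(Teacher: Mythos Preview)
Your proposal is correct and follows essentially the same approach as the paper: the paper's proof is literally a two-line deferral, ``This is by induction on derived rules of $U(\catC)$. Indeed, \cite[Lemma pp.2.65]{cartmell1978} shows that every derivation from \cref{derivedrules} preserves $H$.'' You have supplied a reasonable sketch of what that induction actually comprises --- the logical rules, the variable rule, the substituted sort/operator introductions via \cref{lemma12263} and \cref{lemma11256}, and the three families of equation axioms from \cref{definitiongatfromcat} --- together with the one genuinely new ingredient in the $\kappa$-setting, the reduction of limit-height statements to successor height via uniqueness of canonical pullbacks and the limit property of objects of limit height.
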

\begin{proof}
	This is by induction on derived rules of $ U(\catC) $. Indeed, \cite[Lemma pp.2.65]{cartmell1978} shows that every derivation from \cref{derivedrules} preserves $ H $.
\end{proof}

\begin{corollary}
	\begin{enumerate}
		\item For any type symbol $ \overline{A_\lambda} $ of the theory $ U(\catC) $ we have $ H(\{ x_\alpha: \overline{A_\alpha}(x_\gamma)_{\gamma<\alpha} \}_{\alpha<\lambda} \vdash \overline{A_\lambda}(x_\alpha)_{\alpha<\lambda} \, \type). $
		
		\item For every operator symbol $ \overline{f} $ in $ U(\catC) $ where $ f:A_\lambda \to B_{\mu+1} $ we have $ H(\{x_\alpha: \overline{A_\alpha}(x_\gamma)_{\gamma<\alpha}\}_{\alpha<\lambda} \vdash \overline{f}(x_\alpha)_{\alpha<\lambda} \overline{(p_\mu f)^*B}(x_\alpha)_{\alpha<\lambda}). $
	\end{enumerate}
\end{corollary}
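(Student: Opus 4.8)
The plan is to derive this corollary directly from \cref{lemma265}. First I would identify the two judgments appearing in (1) and (2) with the defining axioms of the theory $U(\catC)$: comparing with \cref{definitiongatfromcat}, the judgment in (1) is exactly the type-introduction axiom attached to the type symbol $\overline{A_\lambda}$ (after renaming the successor height, written $\mu=\lambda+1$ there, to $\lambda$ here), and the judgment in (2) is exactly the operator-introduction axiom attached to the operator symbol $\overline{f}$ associated to the map $f:A_\lambda\to B_{\mu+1}$.

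Next I would observe that these introductory axioms are in particular derived rules of $U(\catC)$. Indeed, applying rules~(\ref{drule11}) and~(\ref{drule12}) of \cref{derivedrules} to the identity context morphism $\langle x_\alpha\rangle_{\alpha<\lambda}$ — which is legitimate because $U(\catC)$ is a well-formed $\kappa$-pretheory, so its premise is a context, and because $\Gamma\vdash x_\alpha:\overline{A_\alpha}$ holds by rule~(\ref{drule10}) — recovers the introductory judgments themselves; equivalently one may simply invoke the fact used around \cref{well-formed-type-intro:prop} that the type- and operator-introduction axioms of a generalized $\kappa$-algebraic theory are always derived judgments.

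With this identification in hand the conclusion is immediate: by \cref{lemma265} every derived rule of $U(\catC)$ satisfies the inductive hypothesis $H$, hence so do the judgments in (1) and (2).

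There is no serious obstacle here beyond the mild bookkeeping of matching the index conventions of \cref{definitiongatfromcat} — where type symbols are attached only to non-trivial objects of successor height and operator symbols only to maps whose codomain has successor height — with the statement of the corollary. In particular, since the corollary quantifies only over genuine type and operator symbols of $U(\catC)$, there is nothing extra to verify at limit heights, the limit-height data being determined by the successor stages exactly as in \cref{remarkdisplay} and in the construction of $\eta_\catC$. This corollary is the last input needed to conclude that the function $\mathcal{J}$, and hence the contextual functor $\xi_\catC:\C_{U(\catC)}\to\catC$ built from it, is well-defined, which together with \cref{lemma12263} establishes that $\xi_\catC$ is a two-sided inverse to $\eta_\catC$.
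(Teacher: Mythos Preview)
Your proposal is correct and matches the paper's approach: the corollary is stated without proof in the paper because it is an immediate consequence of \cref{lemma265}, once one observes that the introductory axioms of $U(\catC)$ are themselves derived rules. Your additional remarks about recovering the axioms via rules~(\ref{drule11}) and~(\ref{drule12}) with the identity substitution, and about the height bookkeeping, are accurate and simply make explicit what the paper leaves implicit.
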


The foremost important result, which summarizes the above, is:
\begin{corollary} \label{corollary14272}
	\begin{enumerate}
		\item If $ \{x_\alpha: \Delta_\alpha\}_{\alpha<\lambda} $ is a context of the theory then for any $ \alpha<\delta < \lambda $ we have $ ht(r_{\Delta_\alpha})<ht(r_{\Delta_\beta}) $.
		
		\item If there is a map $ \langle t_\beta \rangle_{\beta<\mu}: \{x_\alpha:\Delta_\alpha\}_{\alpha<\lambda} \to \{x_\beta:\Omega_\beta\}_{\beta<\mu} $ then for each $ \beta<\mu $ we have $ \mathcal{J}(r_{t_\beta}) \in \Gamma (\mathcal{J}(r_{\Omega_{\beta}[t_\gamma|x_\gamma]_{\gamma<\beta}})) $ where
		$ r_{\Omega_{\beta}[t_\gamma|x_\gamma]_{\gamma<\beta}} $ is the rule $ \{x_\alpha : \Delta_\alpha \}_{\alpha<\lambda} \vdash \Omega_{\beta}[t_\gamma|x_\gamma]_{\gamma<\beta} \,\type $.
		
		\item If $ \{ x_\alpha:\Delta_\alpha\}_{\alpha<\lambda}\equiv \{ x_\alpha:\Delta_\alpha'\}_{\alpha<\lambda} $ then $ \mathcal{J}(r_{\Delta_\lambda})=\mathcal{J}(r_{\Delta_\lambda'}) $.
		
		\item If $ \langle t_\alpha \rangle_{\alpha<\lambda} \equiv \langle t_\alpha' \rangle_{\alpha<\lambda} $ then for each $ \beta<\mu $, $ \mathcal{J}(r_{t_\beta})= \mathcal{J}(r_{t_\beta'}) $.
	\end{enumerate}
\end{corollary}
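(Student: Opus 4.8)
The plan is to obtain all four statements as direct consequences of \cref{lemma265}, which asserts that every derived rule of $U(\catC)$ satisfies the inductive hypothesis $H$. Concretely, for each item one exhibits the relevant judgment of $U(\catC)$ as a \emph{derived} rule, invokes \cref{lemma265}, and then reads off the appropriate clause of $H$. No new construction is needed; everything substantive has already been done in \cref{lemma265}, \cref{lemma12263} and the preceding lemmas.

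First I would treat (1) and (2). If $\{x_\alpha:\Delta_\alpha\}_{\alpha<\lambda}$ is a context of $U(\catC)$, then by definition of ``context'' each judgment $r_{\Delta_\alpha}:\{x_\delta:\Delta_\delta\}_{\delta<\alpha}\vdash\Delta_\alpha\,\type$ is a derived type judgment, hence satisfies $H$; the height clause of $H$ for type judgments says precisely that $ht(\mathcal{J}(r_{\Delta_\delta}))<ht(\mathcal{J}(r_{\Delta_\alpha}))$ whenever $\delta<\alpha$ (reading $ht(r_{\Delta_\alpha})$ as $ht(\mathcal{J}(r_{\Delta_\alpha}))$), which is (1). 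Likewise, if $\langle t_\beta\rangle_{\beta<\mu}:\{x_\alpha:\Delta_\alpha\}_{\alpha<\lambda}\to\{x_\beta:\Omega_\beta\}_{\beta<\mu}$ is a morphism of contexts, then by definition each judgment $r_{t_\beta}:\{x_\alpha:\Delta_\alpha\}_{\alpha<\lambda}\vdash t_\beta:\Omega_\beta[t_\gamma\mid x_\gamma]_{\gamma<\beta}$ is a derived type-element judgment; \cref{lemma265} applies, and the clause of $H$ for element judgments asserting $\mathcal{J}(r_{t_\mu})\in\Gamma(\mathcal{J}(r_{\Omega_\mu}))$ yields $\mathcal{J}(r_{t_\beta})\in\Gamma(\mathcal{J}(r_{\Omega_\beta[t_\gamma\mid x_\gamma]_{\gamma<\beta}}))$ for every $\beta<\mu$, which is (2).

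For (3) and (4) I would unwind the equivalence relation $\approx$. If $\{x_\alpha:\Delta_\alpha\}_{\alpha<\lambda}\approx\{x_\alpha:\Delta'_\alpha\}_{\alpha<\lambda}$, then unfolding the definition of $\approx$ on contexts, and on the associated type judgments (in the successor case $\lambda=\nu+1$), the equality judgment $\{x_\delta:\Delta_\delta\}_{\delta<\lambda}\vdash\Delta_\lambda\equiv\Delta'_\lambda$ is a derived rule $r_\equiv$ of $U(\catC)$; by \cref{lemma265} the hypothesis $H(r_\equiv)$ holds, and each of its clauses forces $\mathcal{J}(r_{\Delta_\lambda})=\mathcal{J}(r_{\Delta'_\lambda})$, which is (3). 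Symmetrically, $\langle t_\alpha\rangle_{\alpha<\lambda}\approx\langle t'_\alpha\rangle_{\alpha<\lambda}$ produces, for each $\gamma<\mu$, a derived term-equality judgment $\{x_\beta:\Delta_\beta\}_{\beta<\lambda}\vdash t_\gamma\equiv t'_\gamma$, whose $H$-hypothesis gives $\mathcal{J}(r_{t_\gamma})=\mathcal{J}(r_{t'_\gamma})$, establishing (4).

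The one place that needs genuine care, and which I expect to be the main obstacle, is the bookkeeping at limit ordinals: $\mathcal{J}$, the hypothesis $H$, and the relation $\approx$ are all defined at successor stages and extended to limit stages (via the limit universal property for $\mathcal{J}$, and coinductively for $\approx$), so one must check that the reduction of (1)--(4) at a limit ordinal $\lambda$ (resp. $\mu$) to the successor stages below it is compatible with all three definitions. This is the same mild transfinite induction that recurs throughout \cref{syntacticcat} and is handled exactly as in Cartmell's finitary treatment; since the limit clauses of $H$ were phrased precisely so that the limit case follows from the successor cases, the argument is routine once the successor cases above are in place.
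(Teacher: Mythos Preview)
Your proposal is correct and matches the paper's approach exactly: the paper states this corollary without proof, treating it as an immediate summary of \cref{lemma265} (that every derived rule satisfies $H$), and your item-by-item reading off of the relevant clauses of $H$ is precisely what is intended. Your remark about the limit-ordinal bookkeeping is apt but, as you note, routine given how $H$ and $\mathcal{J}$ were set up.
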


We are almost ready to define a contextual functor $ \xi_\catC: \catC_{U(\catC)} \to \catC $. We only need the next:
\begin{observation} \label{definitionfunctorepsilon}
	Let $ \langle t_\beta \rangle_{\beta<\mu}: \{x_\alpha:\Delta_\alpha\}_{\alpha<\lambda} \to \{x_\beta:\Omega_\beta\}_{\beta<\mu} $ be a map, then there are maps $ \{g_\beta: \mathcal{J}(r_{\Delta_\lambda}) \to \mathcal{J}(r_{\Omega_\beta}) \}_{\beta<\mu} $ with $ \delta_{g_\beta}=\mathcal{J}(r_{t_beta}) $ and $ p g_{\beta+1}=g_\beta $. This is a consequence of \cref{corollary14272} and \cref{lemma3220}. Therefore, there exists a unique $ g: \mathcal{J}(r_{\Delta_\lambda}) \to \mathcal{J}(r_{\Omega_\mu}) $ such that for all $ \beta<\mu $ we have $ \delta_{p g}=\mathcal{J}(r_{t_\beta}) $ where $ p: \mathcal{J}(r_{\Delta_\lambda}) \to \mathcal{J}(r_{\Omega_\beta}). $
\end{observation}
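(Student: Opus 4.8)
The plan is to read the statement as the inductive-step bookkeeping that lets one later define $\xi_{\catC}$ on morphisms, and to deduce it by gluing the two cited results: \cref{corollary14272}(2) supplies the individual sections, and \cref{lemma3220} assembles them into a compatible family of maps.

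First I would fix notation in $\catC$. Put $A := \mathcal{J}(r_{\Delta_\lambda})$ and, for every $\beta \leqslant \mu$, $B_\beta := \mathcal{J}(r_{\Omega_\beta})$, where $B_\mu$ is the object produced by $\mathcal{J}$ from the full context $\{x_\beta:\Omega_\beta\}_{\beta<\mu}$. By \cref{corollary14272}(1) the heights $ht(r_{\Omega_\beta})$ are strictly increasing, so the $B_\beta$ together with the canonical display maps form a tower $\cdots \display B_1 \display B_0 = 1$, with $B_\beta = \Lim_{\gamma<\beta} B_\gamma$ at limit stages; in particular $B_\mu$ sits on top of this tower. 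This is exactly the ambient data required by \cref{lemma3220}, with base object $A$ and the tower $(B_\beta)_{\beta<\mu}$ over $1$.

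Next I would feed in the sections. Applying \cref{corollary14272}(2) to the context morphism $\langle t_\beta\rangle_{\beta<\mu}$ gives, for each $\beta<\mu$, an element $\mathcal{J}(r_{t_\beta}) \in \Gamma\bigl(\mathcal{J}(r_{\Omega_\beta[t_\gamma\mid x_\gamma]_{\gamma<\beta}})\bigr)$; and by the construction of $\mathcal{J}$ on contexts under substitution — encoded in the invariant $H$ of \cref{lemma265} (see also \cref{lemma11256}) — the object $\mathcal{J}(r_{\Omega_\beta[t_\gamma\mid x_\gamma]_{\gamma<\beta}})$ is precisely the iterated pullback of $B_\beta$ along the partial data already produced, i.e. the object written $r_{\beta-1}^*\cdots r_1^* p^* B_\beta$ in \cref{lemma3220}. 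Hence, after the standard re-indexing (the $\mathcal{J}(r_{t_\beta})$ play the role of the $r_\beta$), the hypotheses of \cref{lemma3220} are met, and that lemma yields a unique compatible family $g_\beta : A \to B_\beta$ ($\beta<\mu$) with $p\, g_{\beta+1}=g_\beta$ and $\delta_{g_\beta} = \mathcal{J}(r_{t_\beta})$. Finally I would extract the single map $g : A \to B_\mu = \mathcal{J}(r_{\Omega_\mu})$: when $\mu$ is a successor it is the top member of the family, and when $\mu$ is a limit ordinal it is the cone map into $B_\mu=\Lim_{\beta<\mu}B_\beta$ induced by the compatible family $(g_\beta)$. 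Its characterising property $\delta_{p g} = \mathcal{J}(r_{t_\beta})$ for every $\beta<\mu$ (with $p : B_\mu \display B_\beta$ the display map) follows from $\delta_{g_\beta}=\mathcal{J}(r_{t_\beta})$ together with the functoriality of $\delta$ under composition with display maps recorded in \cref{interlude}; uniqueness of $g$ is immediate, since a map into $B_\mu$ is determined by its composites with all $p : B_\mu \display B_\beta$ and each such composite is pinned down by its $\delta$.

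The routine parts here are genuinely routine; the one place needing care — and the main obstacle — is the ordinal bookkeeping: correctly matching the $\beta<\mu$-indexed family of \cref{lemma3220} against the successor/limit dichotomy (and the attendant off-by-one in the re-indexing of the $r_\beta$), and verifying that the iterated-pullback object appearing in \cref{lemma3220} really coincides with $\mathcal{J}(r_{\Omega_\beta[t_\gamma\mid x_\gamma]_{\gamma<\beta}})$, which is exactly the point at which the invariant $H$ and \cref{corollary14272} are doing the work.
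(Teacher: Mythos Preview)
Your proposal is correct and follows exactly the approach the paper indicates: the observation itself already states that the claim ``is a consequence of \cref{corollary14272} and \cref{lemma3220}'', and you have simply unpacked how these two results combine---\cref{corollary14272}(2) provides the sections $\mathcal{J}(r_{t_\beta})$ and \cref{lemma3220} assembles them into the compatible family $(g_\beta)$, with the limit/successor case analysis for extracting $g$. The paper offers no further proof beyond citing those two results, so your elaboration is faithful to (and more detailed than) the original.
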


\begin{definition}
	We define a function
	\[
	\xi_\catC: \catC_{U(\catC)} \to \catC
	\]
	by:
	\begin{enumerate}
		\item For an object $ [\{x_\alpha:\Delta_\alpha\}_{\alpha<\lambda}] \in 	   \catC_{U(\catC)}  $,
		\[\xi([\{x_\alpha:\Delta_\alpha\}_{\alpha<\lambda}]):=\mathcal{J}(r_{\Delta_\lambda}).\]
		
		\item For a morphism $ [\langle t_\beta \rangle_{\beta<\mu}]: [\{x_\alpha:\Delta_\alpha\}_{\alpha<\lambda}] \to [\{x_\beta:\Omega_\beta\}_{\beta<\mu}] $
		\[ \xi([\langle t_\beta \rangle_{\beta<\mu}]):=g,\]
		where $ g: \mathcal{J}(r_{\Delta_\lambda}) \to \mathcal{J}(r_{\Omega_\mu}) $ is the unique map from \cref{definitionfunctorepsilon}.
	\end{enumerate}
\end{definition}

\begin{lemma} \label{lemma15274}
	\begin{enumerate}
		\item If $ \{x_\alpha:\Delta_\alpha\}_{\alpha<\lambda} \vdash \Delta_\lambda \, \type $ is a derived rule of $ U(\catC) $ then for all $ \alpha\leq \lambda $, $ \{x_\gamma:\Delta_\gamma\}_{\gamma<\lambda} \vdash \Delta_\alpha \equiv \mathcal{J}(r_{\Delta_{\alpha}})(x_\gamma)_{\gamma<\alpha} $ is a derived rule of $ U(\catC) $.
		
		\item If $ \{x_\alpha:\Delta_\alpha\}_{\alpha<\lambda} \vdash t_\lambda: \Delta_\lambda $ is a derived rule of $ U(\catC) $ then $ \{x_\gamma:\Delta_\gamma\}_{\gamma<\lambda} \vdash t \equiv \mathcal{J}(r_{t_\lambda})(x_\alpha)_{\alpha<\lambda} $ is a derived rule of $ U(\catC) $.
	\end{enumerate}
\end{lemma}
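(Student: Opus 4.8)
The plan is to prove statements (1) and (2) simultaneously by induction on the derivation of the given judgment in $U(\catC)$, in the spirit of \cite[Lemma~15]{cartmell1978}. At every step we are allowed to use the hypothesis $H$, which by \cref{lemma265} is satisfied by \emph{every} derived rule of $U(\catC)$; $H$ already records that $\mathcal{J}$ is compatible with substitution (so that $\mathcal{J}(r_{\Delta[t_\beta\mid x_\beta]_{\beta<\mu}})=(\mathcal{J}(t_\beta)_{\beta<\mu})^{*}\mathcal{J}(r_\Delta)$, and analogously for element judgments) and that $\mathcal{J}$ sends both sides of an equality axiom to the same object of $\catC$. The shape of the judgments that can appear is controlled by the Derivation Lemma \cref{derivationlemma}.

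First I would settle the base cases, i.e.\ the type-introduction and operator-introduction axioms of $U(\catC)$. If $\overline{A_\lambda}$ is a type symbol, then by \cref{lemma12263}(1) the map $\mathcal{J}$ sends the introduction rule $\{x_\alpha:\overline{A_\alpha}(x_\gamma)_{\gamma<\alpha}\}_{\alpha<\lambda}\vdash \overline{A_\lambda}(x_\alpha)_{\alpha<\lambda}\,\type$ to the object $A_\lambda$ itself, whose associated type symbol is again $\overline{A_\lambda}$; hence the required equivalence is an instance of reflexivity (\cref{derivedrules}(\ref{drule1})). For an operator symbol $\overline{f}$ with $f:A_\lambda\to B_{\mu+1}$, \cref{lemma12263}(2) identifies $\mathcal{J}$ of its introduction rule with $\delta_f$, and \cref{corollary3234} gives $\overline{\delta_f}(x_\alpha)_{\alpha<\lambda}\equiv\overline{f}(x_\alpha)_{\alpha<\lambda}$, which is exactly the desired statement. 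The limit-ordinal variants of both claims reduce to the successor cases via the universal property of limits of $\kappa$-small towers of display maps (\cref{contextualcat}(4)) together with the uniqueness of canonical pullbacks.

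Next I would run through the derivation rules of \cref{derivedrules} and check that each propagates the two properties. The purely structural rules — reflexivity, symmetry and transitivity of $\equiv$, the variable rule (\ref{drule10}), and the conversion rules (\ref{drule8}), (\ref{drule9}) — are immediate from the inductive hypothesis together with \cref{derivedrules}(\ref{drule5}),(\ref{drule6}). For the two axiom-introduction rules, $\mathcal{J}$ already identifies the two sides of the axiom (this is part of $H$ for equality judgments), so the conclusion follows from the inductive hypothesis applied to either side and transitivity. The substantial cases are the substitution rules (\ref{drule11})--(\ref{drule14}): there one first rewrites $\mathcal{J}$ of the substituted judgment as $(\mathcal{J}(t_\beta)_{\beta<\mu})^{*}\mathcal{J}(r_\Delta)$ using \cref{lemma11256} and \cref{corollary14272}(2), and then converts this pullback-theoretic identity into a provable $\equiv$-statement in $U(\catC)$ by feeding the reconstructed data into the three axiom schemes of \cref{definitiongatfromcat} (the one identifying $\overline{f^{*}B}$ with $\overline{B}(\overline{p_\beta f})$, the one on $\overline{q(f,B)}$, and the ones on $\overline{p_\delta}$ and $\overline{gf}$), combined with the inductive hypothesis for the $t_\beta$.

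The main obstacle I anticipate is the bookkeeping in these substitution cases, and in particular checking that the passage through limit ordinals is harmless: when $\lambda$ is a limit ordinal, $\mathcal{J}(r_{\Delta_\lambda})$ is only defined by a universal property, while an $\equiv$-statement at a limit context is, by the definition of $\approx$ and of the derived rules, determined by its restrictions to all successor subcontexts. Combining this observation with the inductive hypothesis at the successor stages and \cref{lemma12263} closes the limit case and completes the induction, and hence the proof of both parts.
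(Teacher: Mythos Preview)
Your proposal is correct and follows exactly the approach the paper intends: the paper's own proof is nothing more than the reference ``See \cite[Lemma 15 pp.\ 2.74]{cartmell1978}'', and you have faithfully unpacked that argument, explicitly invoking the same Cartmell lemma and using the surrounding infrastructure (\cref{lemma265}, \cref{lemma12263}, \cref{corollary3234}, \cref{lemma11256}, \cref{corollary14272}, and the axioms of \cref{definitiongatfromcat}) in the expected way.
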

\begin{proof}
	See \cite[Lemma 15 pp. 2.74]{cartmell1978}.
\end{proof}

\begin{corollary}
	As functions, we have that $ \eta_\catC \xi_\catC= id_{\catC_{U(\catC)}} $ and $ \xi_\catC \eta_\catC=Id_\catC $ 
\end{corollary}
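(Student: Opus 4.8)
The plan is to verify the two equalities separately, on objects and on morphisms, by unwinding the definitions of $\eta_\catC$ and $\xi_\catC$ and then feeding the result into the structural lemmas already proved, principally \cref{lemma12263} and \cref{lemma15274}. As throughout this development, both statements reduce to the case of objects of successor height and of morphisms whose codomain has successor height: a limit object is the limit of its canonical tower of display maps (Axiom~(4) of \cref{contextualcat}, and \cref{limitcontext} on the syntactic side), all three functors involved preserve these limits, and so agreement at every successor stage forces agreement at the limit.

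For $\xi_\catC\circ\eta_\catC = Id_\catC$: on an object $A_\lambda$ we have $\eta_\catC(A_\lambda) = [\{x_\alpha:\overline{A_\alpha}(x_\gamma)_{\gamma<\alpha}\}_{\alpha\leqslant\lambda}]$, so $\xi_\catC(\eta_\catC(A_\lambda))$ is the $\catC$-object that $\mathcal{J}$ assigns to the defining type judgment of $\eta_\catC(A_\lambda)$, which is $A_\lambda$ by \cref{lemma12263}(1)(a); its canonical display maps onto the subobjects $\mathcal{J}(r_{\overline{A_\alpha}})$ are the $\delta$-maps of the $p^\lambda_\alpha$ by \cref{lemma12263}(1)(b), so the identification is one of $\kappa$-contextual objects on the nose, using the strict functoriality of canonical pullbacks (Axioms (9) and (10) of \cref{contextualcat}). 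On a morphism $f\colon A_\lambda\to B_{\mu+1}$ we have $\eta_\catC(f)=[\langle\overline{p_\beta f}(x_\alpha)_{\alpha<\lambda}\rangle_\beta]$, and by \cref{definitionfunctorepsilon} $\xi_\catC$ of this is the unique map $g\colon A_\lambda\to B_{\mu+1}$ with $\delta_{p_\beta g}=\mathcal{J}(r_{\overline{p_\beta f}})$ for all relevant $\beta$. But \cref{lemma12263}(2), applied at each successor stage $p_\beta f$, says precisely $\mathcal{J}(r_{\overline{p_\beta f}})=\delta_{p_\beta f}$, so $g=f$ by the uniqueness clause.

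For $\eta_\catC\circ\xi_\catC = Id_{\catC_{U(\catC)}}$: on an object $[\{x_\alpha:\Delta_\alpha\}_{\alpha<\lambda}]$, write $A_\alpha:=\mathcal{J}(r_{\Delta_\alpha})$ for its image under $\xi_\catC$ and the canonical subobjects thereof; then $\eta_\catC(\xi_\catC([\{x_\alpha:\Delta_\alpha\}_{\alpha<\lambda}]))=[\{x_\alpha:\overline{A_\alpha}(x_\gamma)_{\gamma<\alpha}\}_{\alpha<\lambda}]$, and \cref{lemma15274}(1) gives that $\{x_\gamma:\Delta_\gamma\}_{\gamma<\alpha}\vdash\Delta_\alpha\equiv\overline{A_\alpha}(x_\gamma)_{\gamma<\alpha}$ is a derived rule of $U(\catC)$ for every $\alpha\leqslant\lambda$; by the definition of the relation $\approx$ on contexts (\cref{equivalenceofcontexts}) the two contexts are $\approx$-equivalent, hence name the same object. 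On a morphism $[\langle t_\beta\rangle_{\beta<\mu}]$, $\xi_\catC$ yields a map $g$ with $\delta_{p_\beta g}=\mathcal{J}(r_{t_\beta})$, so $\eta_\catC(g)=[\langle\overline{p_\beta g}(x_\alpha)_{\alpha<\lambda}\rangle_\beta]$; since $\mathcal{J}(r_{t_\beta})=\delta_{p_\beta g}$, \cref{corollary3234} gives $\overline{p_\beta g}\equiv\overline{\mathcal{J}(r_{t_\beta})}$, while \cref{lemma15274}(2) gives $t_\beta\equiv\overline{\mathcal{J}(r_{t_\beta})}(x_\alpha)_{\alpha<\lambda}$; combining these component-wise with the type-level equivalence of the (co)domains already established yields $\langle t_\beta\rangle\approx\langle\overline{p_\beta g}(x_\alpha)_{\alpha<\lambda}\rangle$, so the two morphisms coincide in $\catC_{U(\catC)}$.

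The only genuinely delicate points are, first, the limit-ordinal bookkeeping---handled uniformly by the uniqueness of limit objects in a $\kappa$-contextual category and of limit contexts up to $\approx$---and, second, making the identifications coming from \cref{lemma12263} and from the definition of $\xi_\catC$ agree strictly rather than merely up to canonical isomorphism, which is forced by the functoriality axioms of \cref{contextualcat}. Neither requires an idea beyond those already set up, and the argument is the exact analogue of the corresponding passage in \cite{cartmell1978}.
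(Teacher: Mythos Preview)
Your proposal is correct and follows essentially the same approach as the paper: the paper itself gives no explicit proof here, only remarking that ``the results needed for this have been introduced throughout the section,'' and your argument unwinds exactly those results---\cref{lemma12263} for $\xi_\catC\eta_\catC=Id_\catC$ and \cref{lemma15274} together with \cref{corollary3234} for $\eta_\catC\xi_\catC=id_{\catC_{U(\catC)}}$---in the expected way.
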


The results needed for this have been introduced throughout the section. Using that we have a bijection and that $ \eta_{\catC} $ is already a functor, it follows:

\begin{corollary}
	The function $ \xi_\catC: \catC_{U(\catC)} \to \catC $ is a contextual functor.
\end{corollary}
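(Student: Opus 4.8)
The plan is to exploit the fact, established in the preceding corollary, that $\xi_\catC$ is, as a function on objects and on morphisms, a two‑sided inverse of $\eta_\catC$, which is itself a $\kappa$‑contextual functor by \cref{etaisfunctor}. In other words, $\eta_\catC$ is a functor that is bijective on objects and on morphisms, and $\xi_\catC$ is its inverse assignment; so the task reduces to two standard manoeuvres: first, the inverse of a bijective‑on‑objects‑and‑morphisms functor is automatically a functor; second, each of the three clauses defining a contextual functor is an equality that is \emph{preserved} by $\eta_\catC$, and since $\eta_\catC$ is invertible these equalities transport back along $\xi_\catC$. No genuinely new construction is needed — all the substantive work is already done in the construction of $\mathcal{J}$, in \cref{corollary14272}, and in the verification that $\eta_\catC\xi_\catC = \mathrm{id}$ and $\xi_\catC\eta_\catC = \mathrm{id}$.

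First I would record that $\xi_\catC$ is a functor. It preserves identities: for $B'\in\C_{U(\catC)}$, applying $\eta_\catC$ to both $\xi_\catC(\mathrm{id}_{B'})$ and $\mathrm{id}_{\xi_\catC B'}$ yields $\mathrm{id}_{B'}$ in each case (using $\eta_\catC\xi_\catC=\mathrm{id}$ and functoriality of $\eta_\catC$), whence equality follows from injectivity of $\eta_\catC$ on morphisms. Composition is handled identically. Next, the grading condition: $\eta_\catC$ sends $Ob_\lambda(\catC)$ into $Ob_\lambda(\C_{U(\catC)})$, and since $\eta_\catC$ is a bijection on all objects that never changes height, it restricts to a bijection $Ob_\lambda(\catC)\cong Ob_\lambda(\C_{U(\catC)})$ for every $\lambda<\kappa$, whose inverse is the restriction of $\xi_\catC$; hence $\xi_\catC(Ob_\lambda(\C_{U(\catC)}))\subseteq Ob_\lambda(\catC)$.

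For display maps: given a display map $p\colon A'\display B'$ in $\C_{U(\catC)}$, set $A=\xi_\catC A'$ and $B=\xi_\catC B'$, which lie in the appropriate graded pieces by the previous step, so by Axiom 7 of \cref{contextualcat} there is a unique display map $A\display B$ in $\catC$. Since $\eta_\catC$ is contextual it carries this map to the unique display map $A'\display B'$, i.e.\ to $p$; by injectivity of $\eta_\catC$ on morphisms, $\xi_\catC(p)=(A\display B)$ is a display map. Finally, for canonical pullbacks: given $g\colon X'\to B'$ and a display map $p\colon A'\display B'$ in $\C_{U(\catC)}$, put $h=\xi_\catC g$ and $A=\xi_\catC A'$. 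Because $\eta_\catC$ preserves canonical pullbacks we get $\eta_\catC(h^*A)=(\eta_\catC h)^*(\eta_\catC A)=g^*A'$ and $\eta_\catC(q(h,A))=q(\eta_\catC h,\eta_\catC A)=q(g,A')$; applying $\xi_\catC$ and using $\xi_\catC\eta_\catC=\mathrm{Id}_\catC$ gives $\xi_\catC(g^*A')=h^*A=(\xi_\catC g)^*(\xi_\catC A')$ and $\xi_\catC(q(g,A'))=q(\xi_\catC g,\xi_\catC A')$, as required.

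I do not expect a serious obstacle here; the difficulty, if any, is purely organizational — one must be careful that "bijective functor has functorial inverse" is spelled out at the level of the two underlying functions, and that $\eta_\catC$ is genuinely surjective onto the display maps of $\C_{U(\catC)}$, which is exactly where the uniqueness of display maps in Axiom 7 of \cref{contextualcat} is used. All the real content — well‑definedness of $\mathcal{J}$, independence of choices, and the inverse relations — has already been secured in \cref{corollary14272} and the corollary immediately preceding this statement.
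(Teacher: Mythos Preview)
Your proposal is correct and takes essentially the same approach as the paper: the paper's own proof is just the one-line observation that $\xi_\catC$ is the set-theoretic inverse of the bijective contextual functor $\eta_\catC$, from which the contextual-functor axioms for $\xi_\catC$ follow formally. You have simply spelled out in detail what the paper leaves implicit.
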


The main result that is of our interest is:

\begin{theorem} \label{synactic-forgetful-identiy}
	There is a natural isomorphism $ \C_{\_ } \circ U \cong Id_{\kcon} $.
\end{theorem}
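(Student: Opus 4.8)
The plan is to observe that the desired natural isomorphism has essentially already been assembled in the preceding material, so the proof is a matter of packaging. First I would recall the natural transformation $\eta : Id_{\kcon} \Rightarrow \C \circ U$ produced by \cref{etaisfunctor} together with the corollary that follows it, whose component at a $\kappa$-contextual category $\catC$ is the contextual functor $\eta_\catC : \catC \to \C_{U(\catC)}$. Since a natural transformation all of whose components are isomorphisms is automatically a natural isomorphism, and since inverting such an $\eta$ yields exactly the statement $\C \circ U \cong Id_{\kcon}$, the task reduces to checking that each $\eta_\catC$ is an isomorphism in $\kcon$.

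For that step I would invoke the contextual functor $\xi_\catC : \C_{U(\catC)} \to \catC$ built above from the assignment $\mathcal{J}$, together with the two corollaries preceding the statement asserting that $\eta_\catC \circ \xi_\catC = id_{\C_{U(\catC)}}$ and $\xi_\catC \circ \eta_\catC = Id_\catC$ as underlying functions. As $\eta_\catC$ and $\xi_\catC$ are both morphisms of $\kcon$ and are mutually inverse on underlying functions, they are mutually inverse in $\kcon$; hence $\eta_\catC$ is an isomorphism with inverse $\xi_\catC$.

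Finally I would note that the family $(\xi_\catC)_\catC$ assembles, with no further work, into a natural transformation $\C \circ U \Rightarrow Id_{\kcon}$ inverse to $\eta$: applying $\xi_\catD = \eta_\catD^{-1}$ on one side and $\xi_\catC = \eta_\catC^{-1}$ on the other to the naturality square of $\eta$ along a contextual functor $F : \catC \to \catD$ turns it into the naturality square of $\xi$. That completes the argument. There is essentially no obstacle remaining at this stage; the genuinely hard part was already discharged in the preceding lemmas --- the inductive construction and well-definedness of $\mathcal{J}$, resting on \cref{lemma11256}, \cref{lemma12263}, \cref{lemma265} and \cref{corollary14272}, and the resulting triangle identities. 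The only mild point of care is the upgrade from ``mutually inverse as functions'' to ``mutually inverse in $\kcon$'', which is immediate once both maps are known to be contextual functors, together with the standard fact that the componentwise inverses of a natural isomorphism are again natural.
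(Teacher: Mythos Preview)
Your proposal is correct and matches the paper's approach: the paper states the theorem without an explicit proof, treating it as an immediate consequence of the preceding corollaries establishing that $\eta$ is natural, that $\eta_\catC$ and $\xi_\catC$ are mutual inverses as functions, and that $\xi_\catC$ is a contextual functor. Your packaging of these ingredients, including the routine observation that mutually inverse contextual functors give an isomorphism in $\kcon$ and that inverting a natural transformation componentwise preserves naturality, is exactly the intended reading.
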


Finally,
\begin{corollary}
	The categories $ \kcon $ of $ \kappa $-contextual categories and $ \kgat $ of $ \kappa $-algebraic theories are equivalent. 
\end{corollary}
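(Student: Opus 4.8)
The final statement to prove is:

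\begin{corollary}
	The categories $ \kcon $ of $ \kappa $-contextual categories and $ \kgat $ of $ \kappa $-algebraic theories are equivalent.
\end{corollary}

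\textbf{Proof proposal.} The plan is simply to assemble the two functors and the two natural isomorphisms that have been constructed over the course of \cref{appendix-b}. First I would recall that \cref{C-functor-kcon-kgat:corollary} gives the functor $\C : \kgat \to \kcon$, and that the functor $U : \kcon \to \kgat$ was constructed in \cref{functor-contextual-to-gat}; one should briefly note that $U$ is indeed functorial (this was checked immediately after its definition, by testing on type and operator symbols). With both functors in hand, the equivalence follows from exhibiting natural isomorphisms in both directions.

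The first natural isomorphism, $[\varphi_{-}] : Id_{\kgat} \Rightarrow U \circ \C$, was built in the subsection on the natural isomorphism $U \circ \C \cong Id_{\kappa\text{-}GAT}$: for each generalized $\kappa$-algebraic theory $T$ the interpretation $\varphi_T : T \to U(\C_T)$ was shown to be a well-defined morphism in $\kgat$ (\cref{isinterpreation}), its naturality in $T$ was verified via \cref{corollary2114}, and the two-sided inverse $\psi_T : U(\C_T) \to T$ was constructed with $\psi_T \circ \varphi_T \approx Id_T$ and $\varphi_T \circ \psi_T \approx Id_{U(\C_T)}$ (the two lemmas at the end of that subsection, both leaning on \cref{lemma7236}). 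The second natural isomorphism, $\eta_{-} : Id_{\kcon} \Rightarrow \C \circ U$, is \cref{synactic-forgetful-identiy}: for each $\kappa$-contextual category $\catC$ the contextual functor $\eta_\catC : \catC \to \C_{U(\catC)}$ of \cref{etaisfunctor} is natural in $\catC$, and the contextual functor $\xi_\catC : \C_{U(\catC)} \to \catC$ provides a two-sided inverse, $\eta_\catC \xi_\catC = id_{\C_{U(\catC)}}$ and $\xi_\catC \eta_\catC = Id_\catC$. Putting these together, $\C$ and $U$ are mutually quasi-inverse functors, so $\kcon \simeq \kgat$.

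Since essentially every ingredient has already been proved, there is no genuine obstacle left; the ``hard part'' has already been absorbed into the long chain of lemmas in \cref{appendix-b}, all of which are careful transfinite generalizations of the arguments of \cite{cartmell1978}. The only point requiring a sentence of care is confirming that the two-sided inverses $\psi_T$ and $\xi_\catC$ are themselves natural (so that each isomorphism of functors is witnessed by a natural transformation), but this is automatic: once $\varphi_{-}$ and $\eta_{-}$ are natural transformations that are pointwise isomorphisms, they are isomorphisms in the relevant functor category, and that is exactly the definition of an equivalence of categories. Thus the corollary is immediate from \cref{C-functor-kcon-kgat:corollary}, the construction of $U$ in \cref{functor-contextual-to-gat}, the natural isomorphism $Id_{\kgat} \Rightarrow U\circ\C$, and \cref{synactic-forgetful-identiy}.
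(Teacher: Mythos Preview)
Your proposal is correct and matches the paper's approach exactly: the paper does not even include an explicit proof of this corollary, since it is immediate from the two natural isomorphisms $Id_{\kgat} \cong U\circ\C$ and $\C\circ U \cong Id_{\kcon}$ constructed in the preceding subsections (culminating in \cref{synactic-forgetful-identiy}). Your write-up simply makes explicit the assembly of these ingredients, which is precisely what the paper leaves implicit.
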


\begin{remark} \label{map-syntactic-to-concat}
        In \cref{isinterpreation} we defined a map $T \to U(\C_T)$ simply by interpreting the axioms of the theory $T$ \ie by defining an interpretation sending axioms of the theory $T$ to derived rules of $U(\C_T)$. In the same way, given a $\kappa$-contextual category $\catC$, we can define a map $T \to U(\catC)$ by sending axioms of $T$ to derived rules in $U(\catC)$. It follows that we have a $\kappa$-contextual functor $\C_T \to \catC$.
      \end{remark}

\subsection{Models of a generalized Cartmell theory}
\label{appendix-b4-models}

In this section, we aim to make precise what we mean by a model of a generalized $\kappa$-algebraic theory $T$. Furthermore, if we were to prove a theorem in the same spirit of Lawvere's Functorial semantics, we would prove that there is an equivalence of categories \[T\text{-}\mathbf{Alg_\kappa} \cong [\C_T, \cfam_\kappa]\] where $T\text{-}\mathbf{Alg_\kappa}$ is the category of models of the theory $T$ and $\cfam_\kappa$ is a certain $\kappa$-contextual category of ``sets'' or rather families of sets, and $[\C_T, \cfam_\kappa]$ is the category of $\kappa$-contextual functors between these two $\kappa$-contextual categories. Since in the paper we do not use the category $T\text{-}\mathbf{Alg_\kappa}$, we are simply interested in constructing the (large) $\kappa$-contextual category $\cfam_\kappa$. Then we can define a model of the theory $T$ simply as a $\kappa$-contextual functor $M: \C_T \to \cfam_\kappa$. Once more, this is a straightforward generalization of Cartmell's construction of the contextual category $\cfam$ \cite[Section 2.2~pag. 2.9]{cartmell1978}.

\bigskip

We fix a set of sets $\catU$, which will play the role of the set of all sets. Ideally, $\catU$ is a Grothendieck universe and in some places we will assume this, though this is technically not needed for the definition to make sense.

An object $X$ of $\cfam_\kappa$ of height $\alpha$ is a functor $X:(\alpha+1)^\op \to \catU$, such that:
\begin{itemize}
\item $X_0 = 1$,
\item  For each $\beta < \alpha$ there is map $f:X_\beta \to \catU$ such that \[X_{\beta+1} = \coprod_{x \in X_\beta} f(x)\] where the map $X_{\beta+1} \to X_\beta$ is the canonical map $\coprod_{x \in X_\beta} f(x) \to X_\beta$,
\item For each limit ordinal $\beta$, $X_\beta = \lim_{\gamma<\beta} X_\gamma$.
\end{itemize}
 Note that in the definitions above, we do mean equality of sets. Alternatively, we can give a more categorical definition by asking for some compatible isomorphisms and identify objects that have isomorphism compatible to the map to $\catU$, or we can give an inductive presentation of the notion, but this makes the exposition slightly more complicated.

 Morphisms in $\cfam_\kappa$ between two objects $X$ and $Y$ of height $\alpha$ and $\beta$, respectively, are just functions $X_\alpha \to Y_\beta$. We call $X_\alpha$ and the underlying set of $X$: by construction, this underlying set gives us a functor $\cfam_\kappa \to \set$, which is an equivalence of categories (or at last a fully faithful functor depending on $\catU$). Display maps are functions from $X$ to the restriction of $X$ to an ordinal $\beta \leqslant \alpha$ given by the obvious map $X_\alpha \to X_\beta$.

Given a map $v:X_\alpha \to Y_\beta$ and a display map $Y_{\beta+\lambda} \to Y_\beta$, we can extend $X$ from $X_\alpha$ to $X_{\alpha+\lambda}$ with pullback squares
\[\begin{tikzcd}
X_{\alpha+\lambda} \ar[d] \ar[r] & Y_{\beta+\lambda} \ar[d] \\
  X_\alpha \ar[r,"v"] & Y_\beta
\end{tikzcd}
\]
where at each successor stage, we condition that the composite function $X_{\alpha+\lambda} \to Y_{\beta + \lambda} \to \catU$ to define $X_{\alpha+\lambda+1}$, and at a limit stage we just define $X$ to be the limit. 

One can easily check that $\cfam_\kappa$ and the datum specified above, constitute a $\kappa$-contextual category.

% In order to construct pullbacks of d thatisplay maps, it will be enough to give the pullback for projection operators. For $\kappa$-trees $A_0 \triangleleft A_1 \triangleleft \dots \triangleleft A_{\alpha}$, $B_0 \triangleleft B_1 \triangleleft \dots \triangleleft B_{\beta} \triangleleft B_{\beta+1}$ and an operator $\langle f_\sigma \rangle_{\sigma<\beta+1}: A_\alpha \to B_{\beta+1}$ we define the $\kappa$-tree $A_0 \triangleleft A_1 \triangleleft \dots A_\alpha \triangleleft f^*B$ where $f^*B$ is the $\alpha$-indexed family given by $B_{\beta+1}(f_{\sigma}(a_\delta)_{\delta<\alpha})_{\sigma<\beta+1}$ for each sequence $(a_\delta)_{\delta<\alpha}$ with $a_{\delta+1} \in A_{\delta+1}(a_{\delta'})_{\delta'<\delta+1}$. This produces a diagram
% \[\begin{tikzcd}
%     A_0 \triangleleft A_1 \triangleleft \dots A_\alpha \triangleleft f^*B \ar[r] \ar[d] & B_0 \triangleleft B_1 \triangleleft \dots \triangleleft B_{\beta} \triangleleft B_{\beta+1} \ar[d] \\
%   A_0 \triangleleft A_1 \triangleleft \dots \triangleleft A_{\alpha} \ar[r] & B_0 \triangleleft B_1 \triangleleft \dots \triangleleft B_{\beta}
% \end{tikzcd}
% \]
% One verifies that this is indeed a pullback square in a similar manner as for $\C_T$. We have set up the data for $\cfam_\kappa$ so that it is a $\kappa$-contextual category: the objects are $\kappa$-trees and the morphisms are the operators between $\kappa$-trees. Also, we have specified the display maps and how to construct pullbacks. As we anticipated at the beginning of the section, we make the following:

\begin{definition}
  Let $T$ be a generalized $\kappa$-algebraic theory. A \emph{model} for $T$ is a $\kappa$-contextual functor $M: \C_T \to \cfam_\kappa$.
\end{definition}

\begin{remark} \label{rk:models}
  Our definition of model might seem ad hoc; however, thanks to \cref{map-syntactic-to-concat}, in order to specify such a model we just need to specify how the axioms of $T$ are interpreted in $\cfam_\kappa$, and this corresponds to the naive notion of model---a structure where types are interpreted as sets, terms as functions and all equation axioms are valid. In other words, a model for a theory $T$ is really an interpretation of its axioms into the contextual category $\cfam_\kappa$.
\end{remark}

Recall that a context $\Gamma \in \C_T$ has an associated length or height. If $\Gamma$ is a context of height $\alpha$, then we extend it by adding a fresh variable to obtain a context of height $\alpha+1$. Moreover, we saw that a context whose height is a limit ordinal is obtained as a limit of generalized display maps. Throughout \cref{fol-gat}, and particularly in \cref{cstr:validity_of_formula_syntactic}, we use the notion of model of a generalized $\kappa$-algebraic theory. We take the time explain the notation used there.

\begin{remark} \label{rk:notation-models}
  Recall that we have an ``underlying set'' functor $\cfam_\kappa \to \set$. So given any model $X:\C_T \to \cfam_\kappa$, we get a composite functor $\C_T \to \cfam_\kappa \to \set$, so that each model of $\C_T$ provides a functor from $\C_T$ to $\set$. We will denote also this functor $X$, so that given a model $X$ and a context $\Gamma$, we can form the set $X(\Gamma)$, which is just $X(\Gamma)_\alpha$ where $\alpha$ is the height of the context $\Gamma$.
\end{remark}

\begin{remark}\label{rk:ModelsOfT_vs_ModelsOfClans}
  One could also define models more naively as functors $\C_T \to \set$ that preserve the pullbacks of display maps, the terminal object and limits of $\kappa$-small tower of display maps (in the usual up-to-isomorphisms sense).  We call this alternative notion of models the \emph{models of of the underlying $\kappa$-clan of $\C_T$}. There is an obvious forgetful functor from the category of models of $T$ to the category of models of the underlying $\kappa$-clan of $\C_T$, using \cref{rk:notation-models}. This functor $\cfam_\kappa \to \set$ is fully faithful by definition of morphisms in $\cfam_\kappa$, and this allows us to show the forgetful functor from models of $T$ to models of $\C_T$ as a $\kappa$-clan is also fully faithful.

If the theory $T$ has no type equality axiom, then it is also easy to show using \cref{rk:models} that this forgetful functor is essentially surjective, \ie that every model of the underlying $\kappa$-clan of $\C_T$ is isomorphic to a model of $T$. But if $T$ has type equality axioms this is no longer always possible, see \cref{rk:Models_of_clam_mismatch} below.
\end{remark}

\begin{construction}\label{cstr:representable_models} If $\Gamma \in \C_T$ is a context of $T$, then, assuming $\catU$ is a universe and the theory is (locally) $\catU$-small, the corresponding representable functor $\C_T \to \set$ can be promoted to a model $\Gamma^*$ of $T$. Indeed, to any context $\Delta = (x_i : X_i)_{i <\alpha}$, we can associate the tower of sets $\Hom(\Gamma,\Delta_\gamma)$ where $\Delta_\gamma = (x_i : X_i)_{i <\gamma}$, is the subcontext of $\Gamma$ containing the first $\gamma$-variables. Given any morphism $f:\Gamma \to \Delta_\gamma$, a lift of this as a morphism $\Gamma \to \Delta_{\gamma+1}$ is the same as a term $\Gamma \vdash t : f^*(X_\gamma)$. We can therefore iteratively replace the set $\Hom(\Gamma,\Delta_\gamma)$ such that these identifications become equalities. One can then check that this does provide a morphism of contextual categories. Note that, as morphisms of models are just natural transformations, the Yoneda Lemma applies here and for any model $M$ of $T$ we have that $\Hom(\Gamma^*,M) \simeq M(\Gamma)$.

  This defines a functor from $\C_T^\op$ to the category of models of $T$.
\end{construction}

\begin{remark} \label{rk:Models_of_clam_mismatch}
  In \cite{frey2025duality}, J.~Frey has given a characterization of the categories (with their weak factorization systems as discussed in \cref{sec:models_of_clans}) that arise as categories of models of an $\omega$-clan.

  Consider the theory $T$ with two type axioms:
  \[ \vdash X \, \type \qquad x : X \vdash O(x) \, \type \]
  One term axiom
  \[x:X \vdash s(x) :X\]
  and one type equality axiom
  \[ x: X \vdash O(x) = O(s(x))\]
  Models of $T$ are given by a set $X$, with a function $s:X \to X$ together with a collection of set indexed by the quotient $X/s$. It is then possible to prove ( we omit the details here) that:

  \begin{itemize}
  \item The category of models of $T$, equiped with its weak factorization system as defined in \cref{sec:models_of_clans}, does not satisfy J.~Frey's characterization, hence is not the category of model of a clan.
  \item The category of models of the underlying clan of $\C_T$ is equivalent to the category of models of the theory $T'$, similar to $T$ but where the type equality axiom is replaced by the existence of a bijection between $O(x)$ and $O(s(x))$.
  \end{itemize}

\end{remark}

\subsection{Coclans and contextual categories}\label{appendix-c}

In this section, we prove that every $ \kappa $-contextual category can be obtained by strictification of a $ \kappa $-clan. Clans were introduced in \cite{joyal2017clans}, a related definition appears in \cite{henry20weak} under the name category with fibrations.

\begin{definition}
	We say that a category $ \catC $ is a $ \kappa $-\emph{coclan} if it has a collection of maps $ \cof(\catC) $ satisfying the following conditions:
	\begin{enumerate}
		\item $ \catC $ has initial object $ 0 $.
		\item For any $ X \in \catC $, the map $ 0 \to X$ is an element in $ \cof(\catC)$.
		\item Any isomorphism is an element of $ \cof(\catC) $.
		\item $ \cof(\catC) $ is closed under compositions.
		\item $ \cof(\catC) $ is closed under pushouts: If $ f:A \to C $ is a morphism in $ \catC $ and $ A \to B \in \cof(\catC)$, then the map $ C \to C \coprod_A B$ is an element in $\cof(\catC) $.
		\item $ \cof(\catC) $ is closed under transfinite compositions: for any $ \lambda<\kappa $ and any $ \lambda $-diagram of maps in $ \cof(\catC) $
		% https://q.uiver.app/#q=WzAsNCxbMCwwLCJBXzEiXSxbMSwwLCJBXzIiXSxbMiwwLCJBXzMiXSxbMywwLCJcXGNkb3RzIl0sWzAsMV0sWzEsMl0sWzIsM11d
		\[\begin{tikzcd}
			{A_0} & {A_1} & {A_2} & \cdots
			\arrow[from=1-1, to=1-2]
			\arrow[from=1-2, to=1-3]
			\arrow[from=1-3, to=1-4]
		\end{tikzcd}\]
		$ \Colim_\lambda A_\alpha $ exists and the map $ A_0 \to \Colim_\lambda A_\alpha $ belongs to $ \cof(\catC) $. 
		
	\end{enumerate}
\end{definition}

As is usual, maps in $ \cof(\catC) $ are called \emph{cofibrations} and they are indicated by arrows $ `` \rightarrowtail " $.

Dually, a category $ \catC $ is $ \kappa $-\emph{clan} if $ \catC^{op} $ is a $ \kappa $-coclan. The distinguished maps are called \emph{fibrations} and they are denoted by $ \fib(\catC) $. The fibrations are indicated by arrows $`` \display"$. When working with $\kappa$-clans we keep the terminology ``transfinite compositions" from $\kappa$-coclans as there is no risk of confusion.

\begin{observation} \label{contextual:clan}
  The $\kappa$-contextual category $\C_T$ associated to a generalized $\kappa$-algebraic theory $T$ has a natural $\kappa$-clan structure. Indeed, we can take $\fib(\C_T)$ as the set of display maps. All the axioms are easily verified. Moreover, this is true for any $\kappa$-contextual category not only for $\C_T$.
\end{observation}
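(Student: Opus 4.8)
The plan is to prove \cref{contextual:clan} by a direct unwinding of definitions: match each axiom in the definition of a $\kappa$-clan against the corresponding clause of \cref{contextualcat}, applied to $\C_T$ via \cref{syntactic:contextual}. Concretely, a terminal object is the height-$0$ object $1$ (clause $2$ of \cref{contextualcat}); for every $X$, the unique display map $X \display 1$ exists by clause $7$ (with target of height $0$), so the unique map $X \to 1$ is a fibration; the display morphisms form a wide subcategory, hence contain identities and are closed under composition; canonical pullbacks of display maps along arbitrary morphisms exist and are again display (clause $8$), which handles stability of fibrations under pullback; and for limits of $\kappa$-small chains of fibrations one invokes clauses $4$--$6$ together with the fact that an object whose height is a limit ordinal is the limit of its own tower of display maps. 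Here one uses crucially that all heights are $<\kappa$ and $\kappa$ is regular, so any such chain has length $<\kappa$ and its limit lives in $\C_T$ with display projections.

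There is one point that is not purely formal, and it is the main thing to get right: in a $\kappa$-contextual category an arbitrary isomorphism need not be a display map. For example, in the theory of a set equipped with an endofunction $s$, the term $s(x_0)$ defines an automorphism of the context $[\{x_0:X\}]$ that is not a display morphism. So ``$\fib(\C_T) = $ display maps'' must be read as: $\fib(\C_T)$ is the class of all composites $i\circ p$ with $p$ a display map and $i$ an isomorphism --- equivalently, the morphisms isomorphic in the arrow category to a display map. With this reading, isomorphisms are fibrations (take $p=\mathrm{id}$), and every pullback, not merely the canonical one, of a fibration is a fibration, since an arbitrary pullback square is obtained from the canonical one of \cref{contextualcat}(8) by postcomposing the relevant legs with isomorphisms.

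It then remains to check that this mild enlargement is compatible with the other axioms: a composite of two maps of the form $i\circ p$ is again of that form, because display maps are closed under pullback (letting one slide the intermediate isomorphism past a display map) and under composition; and a $\kappa$-small chain of such maps can, after absorbing the isomorphisms, be replaced by an isomorphic chain of honest display maps, whose limit exists with fibration projections by clauses $4$--$6$. Finally, nothing in the argument uses anything specific to $\C_T$: every property invoked is part of \cref{contextualcat}, so the same verification shows that an arbitrary $\kappa$-contextual category $\catC$ is a $\kappa$-clan with $\fib(\catC)$ the class of display maps closed under composition with isomorphisms. The only real obstacle is this bookkeeping around strict display maps versus their closure under isomorphism; once that is set up, every clan axiom reduces immediately to the corresponding clause of \cref{contextualcat}.
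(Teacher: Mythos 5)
Your axiom-by-axiom matching against \cref{contextualcat} is exactly what the paper's ``easily verified'' elides, and your central observation is a genuine and worthwhile one: taken literally, ``$\fib(\C_T)=$ display maps'' violates the clan axiom that isomorphisms are fibrations, since by the uniqueness clause of \cref{contextualcat} the only display endomorphism of an object at its own height is the identity, so no non-identity automorphism is display. Two slips, though. First, your witnessing example is wrong: for a bare endofunction $s$, the context morphism $\langle s(x_0)\rangle$ is an endomorphism of $[\{x_0:X\}]$ but need not be invertible, so it is not an automorphism; a correct example is the swap automorphism of $\{x_0:X,\,x_1:X\}$. Second, your two descriptions of the enlarged class are not equivalent, and only one of them works. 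The class of composites $i\circ p$ (iso \emph{after} display) is not closed under composition: composing the swap automorphism of $\{x:X,\,y:Y\}$ with the display projection $\{y:Y,\,x:X\}\display\{y:Y\}$ yields the ``second projection'' $\{x:X,\,y:Y\}\to\{y:Y\}$, which is a composite of two maps of your form but cannot itself be written as an isomorphism after a display map (the displays out of $\{x:X,\,y:Y\}$ land in $\{x:X\}$ or $1$). The correct class is the maps isomorphic to display maps in the arrow category, equivalently the composites $p\circ j$ with $j$ an iso; here your sliding argument does work in the needed direction, since the canonical pullback of a display map $p$ along $m^{-1}$ exhibits $m\circ p$ as a display map precomposed with an iso, and this gives closure under composition, stability under arbitrary (not just canonical) pullbacks, and the straightening of $\kappa$-small chains. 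With that one correction your verification goes through, and, as you say, uses nothing specific to $\C_T$.
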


Recall that a \emph{comprehension category} consists of a category $ \catC $, a fibration $ p : \catE \to \catC $ and a functor $ F:\catE \to \catC^{\rightarrow} $ such that:
\begin{enumerate}
	\item $ \partial_0 F=p$.
	\item If $ f $ is a cartesian arrow in $ \catE $, then $ Ff $ is a pullback in $ \catC $; equivalently, $ Ff $ is a cartesian arrow with respect to the codomain functor $ \partial_0:\catC^{\rightarrow} \to \catC$.
\end{enumerate}

The fibration $ p $ is \emph{cloven} if it comes with a choice of cartesian lifts. The comprehension category is said to be \emph{split} is $ p $ is a split fibration. We also say that is \emph{full} if $ F $ is fully faithful, we use the notation $ (\catC,\catE,p,F) $ for a comprehension category.

The following example appears in \cite[Example 4.5]{jacobs1993}, we rewrite it in our setting of $\kappa$-clans. Let us fix a $ \kappa $-clan $ \catC $, then the inclusion functor $ \iota:\fib(\catC) \hookrightarrow \catC^{\rightarrow} $ and $ P = \partial_0 \iota $ form a full comprehension category. More precisely: $ \fib(\catC) $ has objects fibrations in $ \catC $ and arrows between two fibrations $ \alpha: f \to g $ are commutative squares of the form
% https://q.uiver.app/#q=WzAsNCxbMCwwLCJBIl0sWzEsMCwiQiJdLFswLDEsIlgiXSxbMSwxLCJZIl0sWzAsMiwiZiIsMix7InN0eWxlIjp7ImhlYWQiOnsibmFtZSI6ImVwaSJ9fX1dLFsxLDMsImciLDAseyJzdHlsZSI6eyJoZWFkIjp7Im5hbWUiOiJlcGkifX19XSxbMCwxLCJrIl0sWzIsMywibCIsMl1d
\[\begin{tikzcd}
	A & B \\
	\Delta & \Gamma.
	\arrow["f"', two heads, from=1-1, to=2-1]
	\arrow["g", two heads, from=1-2, to=2-2]
	\arrow["k", from=1-1, to=1-2]
	\arrow["l"', from=2-1, to=2-2]
\end{tikzcd}\]
Hence, an object in $ \fib(\catC)_\Gamma $ over $ \Gamma \in \catC $ is a fibration $ A \display \Gamma $. Observe that an arrow $ \alpha: f \to g $ as above is cartesian if and only if it is a pullback square in $ \catC $. In conclusion, for an arrow $ l:\Delta \to \Gamma $ and $ g:B \display \Gamma \in \fib(\catC)_\Gamma $, a cartesian lift in $ \fib(\catC) $ is a pullback square
% https://q.uiver.app/#q=WzAsNCxbMCwwLCJBIl0sWzEsMCwiQiJdLFswLDEsIlxcRGVsdGEiXSxbMSwxLCJcXEdhbW1hIl0sWzAsMiwiZiIsMix7InN0eWxlIjp7ImhlYWQiOnsibmFtZSI6ImVwaSJ9fX1dLFsxLDMsImciLDAseyJzdHlsZSI6eyJoZWFkIjp7Im5hbWUiOiJlcGkifX19XSxbMCwxLCJrIl0sWzIsMywibCIsMl0sWzAsMywiXFxscmNvcm5lciIsMSx7ImxhYmVsX3Bvc2l0aW9uIjowLCJzdHlsZSI6eyJib2R5Ijp7Im5hbWUiOiJub25lIn0sImhlYWQiOnsibmFtZSI6Im5vbmUifX19XV0=
\[\begin{tikzcd}
	A & B \\
	\Delta & \Gamma.
	\arrow["f"', two heads, from=1-1, to=2-1]
	\arrow["g", two heads, from=1-2, to=2-2]
	\arrow["k", from=1-1, to=1-2]
	\arrow["l"', from=2-1, to=2-2]
	\arrow["\lrcorner"{description, pos=0}, draw=none, from=1-1, to=2-2]
\end{tikzcd}\]

This comprehension category is not necessarily split, reflecting the fact that taking pullbacks is not strictly functorial. Nevertheless, we can replace it by a split one via the functor
\[
(-)_!:\mathbf{CompCat}(\catC) \to \mathbf{SplCompCat}(\catC)
\]
from the category of comprehension categories over $ \catC $ to the category of split comprehension categories over $ \catC $, the description of this functor appears in \cite[3.1]{lumsdaine2015local} which we now recall. This produces a split comprehension category $ (\catC_!,\fib(\catC)_!,p_!,F_!) $ which is equivalent to the one we started with. Unfolding the result, we take the $ \catC_!$ to be simply $ \catC $.

The category $ \fib(\catC)_! $ has:
\begin{itemize}
	\item Objects: for each $ \Gamma\in\catC $ an object is a tuple $ A \coloneqq (V_A,E_A,f_A) $ where $ V_A\in \catC $, $ E_A \display V_A \in \fib(\catC)_{V_A} $ and $ f_A:\Gamma \to V_A \in \catC$. We also employ the notation $ [A] \coloneqq f_A^*E_A $ given by taking the pullback of $ E_A \display V_A $ along $ f_A $, so we get a fibration $ [A] \display \Gamma  $. In addition, we write $ (E_A)_{f_A}$ for the arrow $[A] \to E_A $. Thus, an object over $ \Gamma $ is a diagram in $ \catC $ of the form
	% https://q.uiver.app/#q=WzAsMyxbMSwwLCJFX0EiXSxbMSwxLCJWX0EiXSxbMCwxLCJcXEdhbW1hIl0sWzIsMSwiZl9BIiwyXSxbMCwxLCIiLDIseyJzdHlsZSI6eyJoZWFkIjp7Im5hbWUiOiJlcGkifX19XV0=
	\[\begin{tikzcd}
		& {E_A} \\
		\Gamma & {V_A}.
		\arrow["{f_A}"', from=2-1, to=2-2]
		\arrow[two heads, from=1-2, to=2-2]
	\end{tikzcd}\]
    
    \item Morphisms: A map between $ (V_B,E_B,f_B) \to (V_A,E_A,f_A) $ over $ \sigma: \Delta \to \Gamma $ is a map in $ \catE $ between $ [B] \display \Delta  $ and $ [A] \display \Gamma $, \ie a commutative square
    % https://q.uiver.app/#q=WzAsNCxbMSwxLCJcXEdhbW1hIl0sWzEsMCwiW0FdIl0sWzAsMSwiXFxEZWx0YSJdLFswLDAsIltCXSJdLFsyLDAsIlxcc2lnbWEiLDJdLFsxLDAsIiIsMCx7InN0eWxlIjp7ImhlYWQiOnsibmFtZSI6ImVwaSJ9fX1dLFszLDIsIiIsMix7InN0eWxlIjp7ImhlYWQiOnsibmFtZSI6ImVwaSJ9fX1dLFszLDFdXQ==
    \[\begin{tikzcd}
    	{[B]} & {[A]} \\
    	\Delta & \Gamma.
    	\arrow["\sigma"', from=2-1, to=2-2]
    	\arrow[two heads, from=1-2, to=2-2]
    	\arrow[two heads, from=1-1, to=2-1]
    	\arrow[from=1-1, to=1-2]
    \end{tikzcd}\]
    
    \item Composition is induced by the composition in $ \catE $, consequently, given by pasting commutative squares.
    
    \item The identity for $ (V_A,E_A,f_A) $ is the identity of $ [A] \display \Gamma $ as an object in $ \catC^{\to} $.
\end{itemize}  

We now unpack the cartesian lifts for the induced functor $ p_!: \fib(\catC)_! \to \catC_! $. Let $ \sigma:\Delta \to \Gamma $ and $ (V_A,E_A,f_A) \in \fib(\catC)_! $ over $ \Gamma $. Set $A[\sigma] \coloneqq (V_A,E_A,f_A\sigma) $, pulling back along $ f_A\sigma $, we obtain the commutative outer rectangle below
% https://q.uiver.app/#q=WzAsNixbMCwxLCJcXERlbHRhIl0sWzEsMSwiXFxHYW1tYSJdLFsyLDEsIlZfQSJdLFsyLDAsIkVfQSJdLFsxLDAsIltBXSJdLFswLDAsIihmX0FcXHNpZ21hKV4qRV9BIl0sWzAsMSwiXFxzaWdtYSIsMl0sWzEsMiwiZl9BIiwyXSxbMywyLCIiLDAseyJzdHlsZSI6eyJoZWFkIjp7Im5hbWUiOiJlcGkifX19XSxbNCwxLCIiLDAseyJzdHlsZSI6eyJoZWFkIjp7Im5hbWUiOiJlcGkifX19XSxbNCwzLCIoRV9BKV97Zl9BfSJdLFs0LDIsIlxcbHJjb3JuZXIiLDEseyJsYWJlbF9wb3NpdGlvbiI6MCwic3R5bGUiOnsiYm9keSI6eyJuYW1lIjoibm9uZSJ9LCJoZWFkIjp7Im5hbWUiOiJub25lIn19fV0sWzUsMCwiIiwyLHsic3R5bGUiOnsiaGVhZCI6eyJuYW1lIjoiZXBpIn19fV0sWzUsMywiIiwyLHsiY3VydmUiOi0zfV0sWzUsNCwiIiwwLHsic3R5bGUiOnsiYm9keSI6eyJuYW1lIjoiZGFzaGVkIn19fV1d
\[\begin{tikzcd}
	{[A[\sigma]]} & {[A]} & {E_A} \\
	\Delta & \Gamma & {V_A.}
	\arrow["\sigma"', from=2-1, to=2-2]
	\arrow["{f_A}"', from=2-2, to=2-3]
	\arrow[two heads, from=1-3, to=2-3]
	\arrow[two heads, from=1-2, to=2-2]
	\arrow[ from=1-2, to=1-3]
	\arrow["\lrcorner"{description, pos=0}, draw=none, from=1-2, to=2-3]
	\arrow[two heads, from=1-1, to=2-1]
	\arrow[bend left, from=1-1, to=1-3]
	\arrow[dashed, from=1-1, to=1-2]
\end{tikzcd}\]
The universal property of the pullback on the right give us the unique map $ A_\sigma: [A[\sigma]] \to [A] $. Therefore, a lift for $ \sigma $ is given by the evident map $ A_\sigma:(V_A,E_A,f_A\sigma) \to (V_A,E_A,f_A) $. From the definition of $ A_\sigma $ the square
% https://q.uiver.app/#q=WzAsNCxbMSwxLCJcXEdhbW1hIl0sWzEsMCwiW0FdIl0sWzAsMSwiXFxEZWx0YSJdLFswLDAsIihmX0FcXHNpZ21hKV4qRV9BIl0sWzIsMCwiXFxzaWdtYSIsMl0sWzEsMCwiIiwwLHsic3R5bGUiOnsiaGVhZCI6eyJuYW1lIjoiZXBpIn19fV0sWzMsMiwiIiwyLHsic3R5bGUiOnsiaGVhZCI6eyJuYW1lIjoiZXBpIn19fV0sWzMsMSwiQV9cXHNpZ21hIl0sWzMsMCwiXFxscmNvcm5lciIsMSx7ImxhYmVsX3Bvc2l0aW9uIjowLCJzdHlsZSI6eyJib2R5Ijp7Im5hbWUiOiJub25lIn0sImhlYWQiOnsibmFtZSI6Im5vbmUifX19XV0=
\[\begin{tikzcd}
	{[A[\sigma]]} & {[A]} \\
	\Delta & \Gamma
	\arrow["\sigma"', from=2-1, to=2-2]
	\arrow[two heads, from=1-2, to=2-2]
	\arrow[two heads, from=1-1, to=2-1]
	\arrow["{A_\sigma}", from=1-1, to=1-2]
	\arrow["\lrcorner"{description, pos=0}, draw=none, from=1-1, to=2-2]
\end{tikzcd}\]
is a pullback, this implies that the square as a map in $ \fib(\catC)_! $ is a cartesian lift of $ \sigma $ for $ p_! $. Most importantly, this lift is uniquely determined by the composition $ f_A \sigma  $. Note that the transfinite composition of fibrations play no role in the construction. We summarize the discussion above in the following:

\begin{theorem}\label{split-comprehension-clan}
  For any $ \kappa $-clan $ \catC $ there exist a full split comprehension category $ (\catC', \catE,p_!,\iota_!) $ equivalent to $ (\catC,\fib(\catC),p,\iota). $
\end{theorem}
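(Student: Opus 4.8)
The plan is to obtain the desired split comprehension category by applying the standard strictification functor to the comprehension category attached to the clan, and then to check that the resulting object enjoys the three claimed properties (split, full, equivalent to the original). I do not expect any genuinely hard step here; the only place requiring care is verifying strict functoriality of the chosen lifts, and this reduces to the strict associativity of composition in $\catC$.

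First I would record that $(\catC, \fib(\catC), p, \iota)$ is genuinely a full comprehension category: the inclusion $\iota : \fib(\catC) \hookrightarrow \catC^{\rightarrow}$ is fully faithful by inspection, and $p = \partial_0 \circ \iota$ is a fibration precisely because pullbacks of fibrations exist and are again fibrations in a $\kappa$-clan. Note that this is the only clan axiom used — in particular closure of $\fib(\catC)$ under transfinite composition plays no role in the construction — so the statement really holds for any category equipped with a pullback-stable class of ``fibrations''. The chosen cartesian lifts are the chosen pullback squares, so $p$ is cloven but in general not split.

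Next I would apply the functor $(-)_! : \mathbf{CompCat}(\catC) \to \mathbf{SplCompCat}(\catC)$ of \cite[3.1]{lumsdaine2015local}, whose explicit description on our input is exactly the one spelled out in the discussion above: the base is again $\catC$, the total category $\fib(\catC)_!$ has as objects over $\Gamma$ the triples $A = (V_A, E_A, f_A)$ with $E_A \display V_A$ a fibration and $f_A : \Gamma \to V_A$, with associated fibration $[A] := f_A^* E_A \display \Gamma$; a morphism $B \to A$ over $\sigma : \Delta \to \Gamma$ is a morphism $[B] \to [A]$ in $\catC^{\rightarrow}$ over $\sigma$; and $\iota_!$ sends $A$ to $([A] \display \Gamma) \in \catC^{\rightarrow}$. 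I would then verify, using the cartesian lift $A_\sigma : A[\sigma] \to A$ with $A[\sigma] := (V_A, E_A, f_A \sigma)$ described above, that $p_!$ is split: the key point is that $A[\sigma][\tau]$ and $A[\sigma\tau]$ are the \emph{same} triple $(V_A, E_A, f_A\sigma\tau)$ because composition in $\catC$ is strictly associative and the display data $V_A, E_A$ are carried along unchanged, while $A[\mathrm{id}_\Gamma] = A$; hence the chosen lifts compose strictly. Fullness of $\iota_!$ is then immediate, since a morphism in $\fib(\catC)_!$ over $\sigma$ is by definition a morphism of the corresponding objects of $\catC^{\rightarrow}$ over $\sigma$.

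Finally I would produce the equivalence with the original comprehension category. There is an evident morphism $(\catC, \fib(\catC)_!, p_!, \iota_!) \to (\catC, \fib(\catC), p, \iota)$ over $\mathrm{id}_\catC$, given by $A \mapsto ([A]\display \Gamma)$ on objects and the identity on morphisms — this is the counit of the adjunction underlying $(-)_!$, or can simply be written down directly — and it is compatible with the comprehension functors by construction. It is essentially surjective on each fibre — a fibration $g : B \display \Gamma$ is hit, up to isomorphism in the fibre, by taking $V_A = \Gamma$, $E_A \display V_A$ equal to $g$ itself and $f_A = \mathrm{id}_\Gamma$, so that $[A] = B$ — and it is full and faithful on fibres for the same reason that $\iota_!$ is fully faithful, hence an equivalence of comprehension categories. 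This yields the statement with $\catC' = \catC$ and $\catE = \fib(\catC)_!$. As noted, the existence and coherence of $(-)_!$ is exactly the cited result, so one could alternatively bypass it entirely and simply verify the comprehension-category axioms for the explicit description above by hand; either way the argument is a routine unfolding of definitions.
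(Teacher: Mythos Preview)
Your proposal is correct and follows essentially the same approach as the paper: both apply the $(-)_!$ strictification from \cite{lumsdaine2015local} to the comprehension category $(\catC,\fib(\catC),p,\iota)$, verify splitness via the strict associativity $f_A(\sigma\chi)=(f_A\sigma)\chi$ of composition in $\catC$, and deduce fullness of $\iota_!$ from fullness of $\iota$. You spell out the equivalence with the original comprehension category more explicitly than the paper does (the paper leaves this to the cited reference), but this is a minor elaboration rather than a different route.
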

\begin{proof}
  We apply the previous construction, this give us $ (\catC_!,\fib(\catC)_!,p_!) $. Since the putative cartesian map is uniquely determined by the composition $ f_A\sigma  $, we can use a slight abuse of notation and write $ A_{\sigma} \coloneqq f_A\sigma $. Thus, if $ \chi:\Xi \to \Delta $ is another map then $ f(\sigma \chi)= (f\sigma) \chi$. This shows that the fibration $ p_!:\fib(\catC)_! \to \catC_! $ is indeed split.
  The functor $ \iota_!:\fib(\catC)_! \to \catC^{\to}  $ is defined as $ \iota_!(V_A,E_A,f_A) \coloneqq \iota ([A] \display \Gamma)= [A] \display \Gamma $; similarly for arrows. The comprehension category $ (\catC_!,\fib(\catC)_!,p_!,\iota_!) $ is full, since $ (\catC,\fib(\catC),p,\iota) $ is full.
\end{proof}

A \emph{category with attributes} is a comprehension category $ (\catC,\catE,p,F) $ such that $ p $ is a discrete fibration. Equivalently, a category with attributes can be defined by the following data:
\begin{enumerate}
\item A category $\catC$ with a terminal object $1$.
\item A presheaf $\ty: \catC^{op} \to \set$.
\item A function that assigns to each object $A \in \ty(\Gamma)$, an object $\Gamma. A \in \catC$, together with a map $\Gamma.A \to \Gamma$.
\item For each $A \in \ty(\Gamma)$ and $\sigma:\Delta \to \Gamma$, a pullback square
  % https://q.uiver.app/#q=WzAsNCxbMSwwLCJcXEdhbW1hLkEiXSxbMSwxLCJcXEdhbW1hIl0sWzAsMSwiXFxEZWx0YSJdLFswLDAsIlxcc2lnbWFeKlxcR2FtbWEuQSJdLFswLDFdLFsyLDEsIlxcc2lnbWEiLDJdLFszLDJdLFszLDBdLFszLDEsIlxcbHJjb3JuZXIiLDEseyJsYWJlbF9wb3NpdGlvbiI6MCwic3R5bGUiOnsiYm9keSI6eyJuYW1lIjoibm9uZSJ9LCJoZWFkIjp7Im5hbWUiOiJub25lIn19fV1d
\[\begin{tikzcd}
	{\sigma^*\Gamma.A} & {\Gamma.A} \\
	\Delta & \Gamma
	\arrow[from=1-2, to=2-2]
	\arrow["\sigma"', from=2-1, to=2-2]
	\arrow[from=1-1, to=2-1]
	\arrow[from=1-1, to=1-2]
	\arrow["\lrcorner"{description, pos=0}, draw=none, from=1-1, to=2-2]
\end{tikzcd}\]
\end{enumerate}

\begin{corollary}\label{category-attributes-clan}
	For any $ \kappa $-clan $ \catC $ there exist a category with attributes equivalent to $\catC$.
\end{corollary}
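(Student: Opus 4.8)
The plan is to deduce \cref{category-attributes-clan} from \cref{split-comprehension-clan} together with the standard strictification that turns a full split comprehension category into a category with attributes. First I would recall that \cref{split-comprehension-clan} already provides, for any $\kappa$-clan $\catC$, a full \emph{split} comprehension category $(\catC,\fib(\catC)_!,p_!,\iota_!)$ equivalent to $(\catC,\fib(\catC),p,\iota)$; in particular the functor $p_!:\fib(\catC)_! \to \catC$ is a split fibration, with cartesian lifts of $\sigma:\Delta\to\Gamma$ at an object $A=(V_A,E_A,f_A)$ given on the nose by $A[\sigma]=(V_A,E_A,f_A\sigma)$, so that reindexing is strictly functorial: $(A[\sigma])[\chi]=A[\sigma\chi]$ and $A[\idx_\Gamma]=A$.

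The one remaining gap is that a category with attributes additionally requires $p_!$ to be a \emph{discrete} fibration, \ie each fibre $\fib(\catC)_!{}_{/\Gamma}$ should be a set, not a mere category. The standard fix is to pass to the presheaf of isomorphism classes (or of connected components) of the fibre: define $\ty(\Gamma)$ to be the set of isomorphism classes of objects of $\fib(\catC)_!$ over $\Gamma$ whose vertical morphisms are identities, equivalently the set of objects $(V_A,E_A,f_A)$ identified when they give literally equal pullbacks $[A]\display\Gamma$. Since the cartesian lift $A[\sigma]$ depends only on the data $(V_A,E_A)$ and the composite $f_A\sigma$, reindexing descends to a well-defined action $\ty(\Gamma)\to\ty(\Delta)$, and the split functoriality above makes $\ty$ into a genuine presheaf $\catC^{\op}\to\set$. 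For each $A\in\ty(\Gamma)$ one sets $\Gamma.A \coloneqq [A]$ with the fibration $[A]\display\Gamma$ as the projection, and the required pullback square for $\sigma:\Delta\to\Gamma$ is exactly the cartesian square
\[
\begin{tikzcd}
  {[A[\sigma]]} & {[A]} \\
  \Delta & \Gamma
  \arrow["A_\sigma", from=1-1, to=1-2]
  \arrow[two heads, from=1-1, to=2-1]
  \arrow[two heads, from=1-2, to=2-2]
  \arrow["\sigma"', from=2-1, to=2-2]
  \arrow["\lrcorner"{description, pos=0}, draw=none, from=1-1, to=2-2]
\end{tikzcd}
\]
produced in the construction preceding \cref{split-comprehension-clan}. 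The terminal object of $\catC$ is the terminal object it already has as (the underlying category of) a $\kappa$-clan. One then checks that the resulting category with attributes is equivalent, as a comprehension category, to $(\catC,\fib(\catC),p,\iota)$: the comparison functor sends $A\in\ty(\Gamma)$ to its class in $\fib(\catC)_!{}_{/\Gamma}$ and then to $[A]\display\Gamma\in\fib(\catC)_{/\Gamma}$, and fullness of the comprehension plus the equivalence of \cref{split-comprehension-clan} give that this is an equivalence on each fibre.

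I expect the main obstacle to be purely bookkeeping: verifying that quotienting the fibres of $p_!$ by the appropriate equivalence relation is compatible with the split reindexing, so that $\ty$ is genuinely a presheaf and the chosen $\Gamma.A$ and pullback squares are canonically defined and natural. This is the usual ``saturation/discretization'' step in the folklore equivalence between comprehension categories, categories with attributes, and contextual categories, and the proof of the corollary should only need to observe that the cartesian lifts constructed before \cref{split-comprehension-clan} are literally determined by $f_A\sigma$, hence descend. (Note also that \cref{category-attributes-clan} is stated only as an equivalence, not an equality, so no height grading or tree structure needs to be arranged here — the further strictification to an honest $\kappa$-contextual category, which is what the section's opening sentence promises, is carried out elsewhere and is not part of this statement.) No substantial new idea beyond \cref{split-comprehension-clan} is required; the work is to spell out the presheaf $\ty$ and confirm the comprehension-category equivalence.
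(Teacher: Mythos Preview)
Your plan takes an unnecessary detour, and the particular quotient you propose does not actually work. The paper's argument is more direct: one simply takes $\ty(\Gamma)$ to be the set of \emph{objects} of the fibre $(\fib(\catC)_!)_\Gamma$, with no quotient at all. Because $p_!$ is split, reindexing along $\sigma:\Delta\to\Gamma$ is the strictly functorial operation $A\mapsto A[\sigma]=(V_A,E_A,f_A\sigma)$ on objects, so $\Gamma\mapsto\text{Ob}\big((\fib(\catC)_!)_\Gamma\big)$ is already a presheaf $\catC^{\op}\to\set$. Setting $\Gamma.A\coloneqq[A]$ and using the cartesian square for $A_\sigma$ then supplies the remaining data of a category with attributes. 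Your worry that $p_!$ is not discrete conflates the two equivalent descriptions of a category with attributes: one does not need to make $p_!$ itself discrete; one extracts from the split fibration a \emph{new} discrete fibration, namely the Grothendieck construction of this object-set presheaf.

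Moreover, the specific quotient you suggest---identify $A=(V_A,E_A,f_A)$ with $B=(V_B,E_B,f_B)$ whenever the fibrations $[A]\display\Gamma$ and $[B]\display\Gamma$ are literally equal---does not descend to a well-defined presheaf action. Even when $[A]\display\Gamma=[B]\display\Gamma$, the reindexed objects $[A[\sigma]]=(f_A\sigma)^*E_A$ and $[B[\sigma]]=(f_B\sigma)^*E_B$ are chosen pullbacks of \emph{different} cospans; the pasting lemma only tells you each is canonically isomorphic to a pullback of $[A]\display\Gamma$ along $\sigma$, not that the two choices coincide on the nose. So the ``bookkeeping'' step you flag as the main obstacle is not merely tedious but genuinely fails for this equivalence relation. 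The remedy is simply to drop the quotient: the split structure already gives you a set-valued presheaf for free.
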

\begin{proof}
  \Cref{split-comprehension-clan} give us a full split comprehension category $ (\catC_!,\fib(\catC)_!,p_!,\iota_!) $. We take the category to be $\catC_!=\catC$. The additional data is given in the obvious way. Defining $ \ty(\Gamma) \coloneqq (\fib(\catC)_!)_\Gamma$, for each $A \in \ty(\Gamma)$, we get $[A] \display \Gamma$ as described above. The required pullbacks are given by the cartesian lifts of $p_!$. Furthermore, these pullbacks are computed strictly along compositions, since $p_!$ is a split fibration.
\end{proof}

Our next goal is to define a $\kappa$-contextual category equivalent to $\catC$ from the category with attributes given by \cref{category-attributes-clan}. In particular, for each object $\Gamma \in \catC$, we get a $\kappa$-contextual category $\catC(\Gamma) $. We start with the following:

\begin{definition} \label{category-structure-c-gamma}

  The category structure is given by the following data:

\begin{itemize}
\item \textbf{Objects}: For each ordinal $\mu<\kappa$, we define the set $Ob_{\mu}(\catC (\Gamma))$ inductively over $\mu$;
  \begin{itemize}
  \item If $ \mu=\lambda+1 $, then we define $ Ob_\mu(\catC (\Gamma)) \coloneqq \ty( [A_\lambda])$. More explicitly, an object $A_\mu\in Ob_\mu(\catC(\Gamma)) $ can be represented as the sequence
    \[ A_\mu\display A_\lambda \display \cdots \display \Gamma \] and comes with a fibration $A_\mu \display \Gamma $.
  \item If $\mu$ is a limit ordinal, then $Ob_{\mu}(\catC (\Gamma))$ is the collection of objects of the form $A_\mu := \Lim_{\lambda<\mu}A_\lambda $ obtained as the transfinite composition of a sequence
\[ \cdots \display A_\lambda \display \cdots \display \Gamma. \]
Each object comes with a fibration $A_\mu \display \Gamma$. This is given by the transfinite composition axiom of $\catC$.
  \end{itemize}

\item \textbf{Morphisms}: For ordinals $\mu \leq \lambda<\kappa$ and objects $ B_\lambda \in Ob_\lambda(\catC (\Gamma)), A_\mu\in Ob_\mu(\catC(\Gamma))$, we set
  \[ \Hom_{\catC(\Gamma)}(B_\lambda,A_{\mu}) \coloneqq \Hom_{\catC/\Gamma}(B_\lambda,A_\mu). \]
\item The rest of the structure of $\catC(\Gamma)$ is induced by $\catC/\Gamma$, in particular, the transfinite composition is that of $\catC/\Gamma$.
\end{itemize}
\end{definition}

Before proving that this gives us a $\kappa$-contextual category, let us explain the objects of this category. Recall that for $ A \in \ty(\Gamma) $ means we have a diagram of the form
% https://q.uiver.app/#q=WzAsMyxbMSwwLCJFX0EiXSxbMSwxLCJWX0EiXSxbMCwxLCJcXEdhbW1hIl0sWzIsMSwiZl9BIiwyXSxbMCwxLCIiLDIseyJzdHlsZSI6eyJoZWFkIjp7Im5hbWUiOiJlcGkifX19XV0=
\[\begin{tikzcd}
	& {E_A} \\
	\Gamma & {V_A}.
	\arrow["{f_A}"', from=2-1, to=2-2]
	\arrow[two heads, from=1-2, to=2-2]
\end{tikzcd}\]
When we identify this object with $ [A]$, then $ \ty([A]) $ is the set of objects of the form
% https://q.uiver.app/#q=WzAsMyxbMSwwLCJFX0IiXSxbMSwxLCJFX0EiXSxbMCwxLCJbQV0iXSxbMiwxLCIoRV9BKV97Zl9BfSIsMl0sWzAsMSwiIiwyLHsic3R5bGUiOnsiaGVhZCI6eyJuYW1lIjoiZXBpIn19fV1d
\[\begin{tikzcd}
	& {E_B} \\
	{[A]} & {E_A.}
	\arrow["{(E_A)_{f_A}}"', from=2-1, to=2-2]
	\arrow[two heads, from=1-2, to=2-2]
\end{tikzcd}\]

Each of such objects gives $ (V_A,f_A,E_B) \in \ty(\Gamma)$, where $ E_B \display V_A $ is the composition $ E_B \display E_A \display V_A $. Equivalently, this is the composition $[B] \display [A] \display \Gamma$. Furthermore, if we write $\Gamma.A \coloneqq [A]$, then we can rewrite this in a more familiar fashion $\Gamma.A.B \display \Gamma.A \display \Gamma $. This illustrates the general procedure for successor ordinals. A related construction appears in \cite[Definition 4.3]{kapulkinlumsdaine2018}.

 \begin{lemma}\label{contextual-clan}
   For any $ \kappa $-clan $\catC$ and any $\Gamma \in \catC$, the category $\catC(\Gamma)$ is a $\kappa$-contextual category.
 \end{lemma}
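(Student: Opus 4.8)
The plan is to verify the ten axioms of \cref{contextualcat} one at a time for $\catC(\Gamma)$, using the category-with-attributes structure supplied by \cref{category-attributes-clan} together with the transfinite composition axiom of the $\kappa$-clan $\catC$ (via the slice $\kappa$-clan $\catC/\Gamma$, which is itself a $\kappa$-clan since clans are closed under slicing). Most of the axioms are either definitional or immediate once we unwind \cref{category-structure-c-gamma}, so the write-up should proceed by quickly disposing of the formal parts and then concentrating effort on the pullback/functoriality axioms.

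First I would record the grading: by construction $Ob(\catC(\Gamma)) = \coprod_{\mu<\kappa} Ob_\mu(\catC(\Gamma))$, and the height of an object $A_\mu$ is the ordinal $\mu$ appearing in its inductive construction. Axiom 2: the terminal object is $\Gamma$ itself (an object of height $0$), which is terminal in $\catC/\Gamma$ and hence in $\catC(\Gamma)$; it is unique up to equality because at height $0$ the inductive clause produces only $\Gamma$. Axiom 3: the display maps $Dis(\catC(\Gamma))$ are declared to be the maps coming from the tower structure, i.e. the fibrations $A_\lambda \display A_\mu$ over $\Gamma$ induced by the decomposition; these form a wide subcategory since identities and composites of fibrations are fibrations in $\catC$. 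Axiom 4 and 5 (closure of $Dis$ under transfinite composition and preservation by the inclusion): this is exactly the transfinite composition clause in the definition of $Ob_\mu(\catC(\Gamma))$ for $\mu$ a limit ordinal, combined with the transfinite composition axiom of the $\kappa$-clan $\catC$ — a $\kappa$-small chain of display maps in $\catC(\Gamma)$ is a $\kappa$-small chain of fibrations in $\catC$, whose limit exists, is a fibration onto each stage, and lands in $Ob_\mu(\catC(\Gamma))$ with the required factorizations. Axioms 6 and 7 (heights decrease along display maps; unique object and display map of each smaller height) follow directly from the inductive construction: an object $A_\lambda$ of height $\lambda$ comes with its canonical tower $A_\lambda \display A_{\lambda'} \display \cdots \display \Gamma$, and for $\mu \leq \lambda$ the subobject $A_\mu$ at stage $\mu$ of that tower is the unique object of height $\mu$ receiving a display map from $A_\lambda$ — uniqueness here uses that the category-with-attributes structure gives the tower as honest data, not up to isomorphism.

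The substance is in axioms 8, 9, 10, which assert that canonical pullbacks of display maps along arbitrary maps exist, are strictly functorial, and interact correctly with composites of display maps. Here I would use the split comprehension/category-with-attributes structure of \cref{category-attributes-clan}: given a display map $A_\lambda \display A_\mu$ in $\catC(\Gamma)$ and an arbitrary map $f : C \to A_\mu$, I build $f^*A_\lambda$ by iterating the (strictly functorial) pullback operation of the category with attributes along the successive display maps composing $A_\lambda \display A_\mu$ — at successor stages one pulls back a single attribute using the chosen pullback square (which is strictly natural in the base because $p_!$ is a \emph{split} fibration), and at limit stages one takes the transfinite composition, which is computed in $\catC$ and hence strictly. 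The length condition $lt(f^*p) = lt(p)$ holds stage by stage. Strict functoriality in $f$ (axiom 9(a),(b)) reduces, by induction on the length of $p$, to the strict functoriality of the chosen pullbacks in the split comprehension category, i.e. $\mathbf{id}^* = \mathbf{id}$ and $(gh)^* = h^*g^*$ on attributes, together with the fact that transfinite composition in $\catC$ strictly commutes with pullback along a fixed map. Axiom 10 (the compatibility of $f^*$ with a composite $A \display B \display C$ of display maps) is likewise an induction on lengths, using that pasting of pullback squares is a pullback square and that the split structure makes the resulting $q$-maps agree on the nose.

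The main obstacle I anticipate is bookkeeping the \emph{strictness} at limit ordinals: the transfinite composition axiom of a $\kappa$-clan guarantees the limit exists and comes with display maps to each stage, but to get a genuine contextual category one needs these limits to be strictly functorial under pullback — i.e. $f^*(\Lim_\alpha A_\alpha) = \Lim_\alpha f^*A_\alpha$ as objects, not merely canonically isomorphic. This is why one must pass through the split comprehension category rather than working with $\catC$ directly: after strictification the pullbacks along successor display maps are strictly functorial, and a standard (but slightly delicate) argument shows that iterating them and taking limits at limit stages is also strictly functorial, because a limit of a chain is determined on the nose by the chain. I would isolate this as a sublemma: \emph{in a category with attributes arising from a $\kappa$-clan, the operation $f \mapsto f^*$ extended to $\kappa$-small towers of attributes is strictly functorial and commutes strictly with forming the tower-limit.} Once that sublemma is in hand, all of axioms 8--10 follow by transfinite induction on the length of the display map, and the proof of \cref{contextual-clan} is complete.
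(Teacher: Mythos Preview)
Your proposal is correct and follows essentially the same approach as the paper: both verify the ten axioms of \cref{contextualcat} directly, invoking the category-with-attributes structure from \cref{category-attributes-clan} for the canonical-pullback axioms and the $\kappa$-clan's transfinite composition for the limit stages. Your write-up is considerably more detailed than the paper's (which disposes of axioms 8--10 in one line each), and your explicit worry about strictness at limit ordinals is a point the paper glosses over; isolating that as a sublemma is a reasonable expository choice but not a different argument.
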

Each axiom can be verified more or less immediately. We start with the category with attributes in \cref{category-attributes-clan} and the construction from \cref{category-structure-c-gamma}.
\begin{proof}
  \begin{enumerate}
		\item The objects of $ \catC(\Gamma) $ have grading $ Ob(\catC(\Gamma))=\coprod_{\mu<\kappa}Ob_\mu (\catC(\Gamma)) $ as in \cref{category-structure-c-gamma}. This grading determines the height of each object.
		\item The terminal object is $\Gamma$.
		\item Given ordinals $\mu \leq \lambda<\kappa$ and objects $A_\lambda, A_\mu \in \catC(\Gamma)$, the display maps between them are the maps in $Hom_{\catC(\Gamma)}(A_\lambda,A_\mu)$ which are also fibrations of $\catC$. We group these maps and objects in $Dis(\catC(\Gamma))$, which is easily seen to be a subcategory.
		\item $ Dis(\catC(\Gamma)) $ is closed under transfinite compositions, since $\catC$ is itself closed under such compositions.

		\item The inclusion functor $i:Dis(\catC(\Gamma)) \hookrightarrow \catC(\Gamma)$ preserve transfinite compositions.
		
		\item  If $ A \display B $ is an arrow in $ Dis(\catC(\Gamma)) $, then $ B \in Ob_\mu(\catC(\Gamma)) $ and $ A \in Ob_\lambda(\catC(\Gamma)) $ for some ordinals $ \lambda,\, \mu $ with $ \mu\leq\lambda $: This follows directly by the definition of the objects of $\catC(\Gamma)$
		
		\item For any object $ A \in Ob_\lambda(\catC(\Gamma)) $ and any $ \mu\leq\lambda $, there exists a unique object $ B \in Ob_\mu(\catC(\Gamma)) $ and a unique display map $ A \display B $: We can easily obtain this by induction on $\lambda$ and verify that the map has the correct length.
		
		\item Canonical pullbacks: This is given by the category with attributes structure on $\catC$, as explained in \cref{category-attributes-clan}.
		
              \item Canonical pullbacks are strictly functorial: This is exactly what \cref{category-attributes-clan} achieves.
              \item It follows from the description of objects given above.
	\end{enumerate}
 \end{proof}

 Before we can state our main result, we first need to state the appropriate notion of equivalence between $ \kappa$-clans. We borrow the definitions from \cite{joyal2017clans} adapted to our setting. Let $\catC$ and $\catE$ be two $\kappa$-coclans. We say that a functor $ F: \catC \to \catE $ is a \emph{morphism of $\kappa$-coclans} if
 
 \begin{enumerate}
   \item sends initial objects to initial objects,
   \item preserves cofibrations,
   \item preserves pushouts of cofibrations along any map
   \item preserves transfinite compositions.
 \end{enumerate}

 Furthermore, a morphism between $\kappa$-coclans $F: \catC \to \catE$ is an \emph{equivalence of $\kappa$-coclans} if there exists another morphism of $\kappa$-coclans $ G: \catE \to \catC $ and natural isomorphisms $GF\cong Id_\catC$ and $FG\cong Id_\catE$.

 Similarly, $F: \catC \to \catE$ is a \emph{morphism of $\kappa$-clans} simply if $F^{op}:\catC^{op} \to \catE^{op} $ morphism of $\kappa$-coclans, and an \emph{equivalence of $\kappa$-clans} if $F^{op}:\catC^{op} \to \catE^{op} $ is an equivalence $\kappa$-coclans. 
 
 \begin{proposition}
   A morphism of clans $ F:\catC \to \catE$ is an equivalence of clans if and only if $F$ reflects fibrations and transfinite compositions in $ Dis(\catE) $; that is, if $ F(\Lim_\lambda A_\alpha) \display F(A_0) $ is the transfinite composition of the sequence
   \[
    F(\Lim_\lambda A_\alpha) \cdots \display FA_2 \display FA_1 \display FA_0
  \]
  then $ \Lim_\lambda A_\alpha \display A_0$ is the transfinite composition of the sequence 
  \[
     \cdots \display A_2 \display A_1 \display A_0.
  \]
\end{proposition}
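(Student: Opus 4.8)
The plan is to reduce everything to the observation that, for the comparison functors relevant to this section (those built in \cref{contextual-clan}, which are bijective on objects and full and faithful by construction), $F$ is already an equivalence of underlying categories; so the only obstruction to $F$ being an equivalence of $\kappa$-clans is that a chosen quasi-inverse functor $G\colon\catE\to\catC$ need not preserve fibrations and transfinite compositions, and the two conditions in the statement are exactly what force $G$ to be a morphism of $\kappa$-clans. Accordingly I would first record this standing assumption and then prove the two implications.

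First I would dispatch the easy implication. Assume $F$ is an equivalence of $\kappa$-clans, with $G\colon\catE\to\catC$ a morphism of $\kappa$-clans and natural isomorphisms $\eta\colon\mathrm{Id}_\catC\cong GF$ and $\varepsilon\colon FG\cong\mathrm{Id}_\catE$. If $f$ in $\catC$ has $F(f)$ a fibration, then $GF(f)$ is a fibration since $G$ preserves fibrations, and $\eta$ exhibits $f$ as a composite of $GF(f)$ with isomorphisms; as isomorphisms are fibrations and fibrations compose, $f$ is a fibration, so $F$ reflects fibrations. Likewise, given a $\kappa$-small tower $(A_\alpha)$ in $\catC$ whose image under $F$ is a tower of fibrations with $F(\Lim_\alpha A_\alpha)\display F(A_0)$ the transfinite composite in $Dis(\catE)$, applying $G$ (which preserves fibrations and transfinite compositions) and transporting the whole tower diagram and its limit cone along $\eta$ shows that $(A_\alpha)$ is a tower of fibrations in $\catC$ and that $\Lim_\alpha A_\alpha\display A_0$ is its transfinite composite; hence $F$ reflects transfinite compositions.

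For the converse I would fix a quasi-inverse functor $G\colon\catE\to\catC$ of the equivalence $F$, with natural isomorphisms $\eta,\varepsilon$ as above, and verify that $G$ is a morphism of $\kappa$-clans. Being half of an equivalence, $G$ preserves all limits, so it preserves the terminal object and the canonical pullbacks of fibrations. That $G$ preserves fibrations is where reflection of fibrations enters: if $p$ is a fibration in $\catE$, then $FG(p)$ is isomorphic to $p$, hence a fibration, and since $F$ reflects fibrations, $G(p)$ is a fibration. Finally, for transfinite compositions, take a $\kappa$-small tower of fibrations $(B_\alpha)$ in $\catE$ with transfinite composite $B_\mu=\Lim_\alpha B_\alpha$; by the previous point $(GB_\alpha)$ is a tower of fibrations in $\catC$, so by the clan axioms in $\catC$ it has a transfinite composite, and $G$ hits it because $G$ preserves the limit — here reflection of transfinite compositions is used, via $\varepsilon$, to guarantee that the whole system of factorization maps $B_\mu\display B_\beta$ computed in $\catE$ is transported so that $G(B_\mu)\display G(B_0)$ is genuinely the transfinite composite in $Dis(\catC)$. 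Thus $G$ is a morphism of $\kappa$-clans and $F$ is an equivalence of $\kappa$-clans.

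The main obstacle, and the step deserving the most care, is the bookkeeping around the natural isomorphisms: one must check that $\eta$ and $\varepsilon$ are compatible with the strict data (the chosen canonical pullbacks and, above all, the canonical factorizations through the intermediate stages of a transfinite composite) well enough that ``reflecting'' or ``preserving'' a transfinite composition — which concerns not an object in isolation but the entire system of maps $\Lim_\alpha A_\alpha\display A_\beta$ — is transported across the equivalence. I would handle this by treating each tower as a functor $\mu^\op\to\catC$ and transporting the whole diagram together with its limiting cone along $\eta$ (respectively $\varepsilon$), rather than arguing object by object, so that the factorization maps come along automatically. The other point to spell out explicitly is the standing assumption that $F$ is an equivalence of categories, which for the comparison functors of \cref{contextual-clan} is immediate.
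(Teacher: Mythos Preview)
The paper states this proposition without proof, so there is no argument of the authors to compare your proposal against directly. Evaluating your proposal on its own merits:

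You correctly identify that the proposition as literally stated cannot hold: a morphism of clans that merely reflects fibrations and transfinite compositions need not be an equivalence (the inclusion of the terminal clan into any nontrivial clan reflects everything vacuously). You patch this by imposing the standing assumption that $F$ is already an equivalence of underlying categories, which is indeed true for the comparison functor built in \cref{contextual-clan} and is the only context in which the paper invokes the result. This is the right diagnosis.

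Your forward implication is clean and correct. In the converse, however, once you have shown that a quasi-inverse $G$ preserves fibrations (using that $F$ reflects them), the preservation of limits of $\kappa$-small chains of fibrations follows automatically: $G$ is an equivalence of categories, hence preserves all limits, and you have just checked the cone maps are fibrations. So the hypothesis that $F$ reflects transfinite compositions is not actually doing any work in your argument --- the sentence ``here reflection of transfinite compositions is used, via $\varepsilon$, to guarantee\dots'' carries no real weight. This is not a gap in your proof (the conclusion still follows), but it does mean that under your standing assumption the second reflection condition is redundant, which suggests the authors may have had a slightly different framing in mind or simply stated the proposition loosely.
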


The equivalence of \cref{split-comprehension-clan} give us an equivalence between clans.
 \begin{corollary} \label{clan-equivalent-contextual}
   For any $ \kappa $-coclan $\catC$ there exists a $\kappa$-contextual category equivalent to it.
 \end{corollary}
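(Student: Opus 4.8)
\textbf{Proof plan for \cref{clan-equivalent-contextual}.}

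The statement to prove is: for any $\kappa$-coclan $\catC$ there exists a $\kappa$-contextual category equivalent to it. By definition, $\catC$ being a $\kappa$-coclan means $\catC^{\op}$ is a $\kappa$-clan, and an equivalence of $\kappa$-coclans between $\catC$ and some category $\catD$ is exactly an equivalence of $\kappa$-clans between $\catC^{\op}$ and $\catD^{\op}$. So the plan is to reduce everything to the clan side: I will produce a $\kappa$-contextual category $\catE$ (which carries a natural $\kappa$-clan structure by \cref{contextual:clan}) together with an equivalence of $\kappa$-clans $\catC^{\op} \simeq \catE$; then $\catE^{\op}$ is the desired $\kappa$-contextual category equivalent to $\catC$ as a coclan (keeping in mind that ``contextual category'' here really refers to the coclan obtained by reversing arrows, exactly as in the paper's use of $\catM^\cof$).

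First I would apply the machinery of the previous subsection to the $\kappa$-clan $\catC^{\op}$. By \cref{split-comprehension-clan} there is a full split comprehension category $((\catC^{\op})_!, \fib(\catC^{\op})_!, p_!, \iota_!)$ equivalent, as a comprehension category over $\catC^{\op}$, to the canonical one $(\catC^{\op}, \fib(\catC^{\op}), p, \iota)$. Then \cref{category-attributes-clan} upgrades this to a category with attributes equivalent to $\catC^{\op}$. Finally, fixing the terminal object $1 \in \catC^{\op}$ (which exists since initial objects exist in the coclan $\catC$), \cref{contextual-clan} shows that the category $\catC^{\op}(1)$ built from this category with attributes is a $\kappa$-contextual category. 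So I set $\catE \coloneqq \catC^{\op}(1)$.

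The remaining work is to verify that $\catE$ is actually equivalent to $\catC^{\op}$ as a $\kappa$-clan, not merely that it was built out of $\catC^{\op}$'s data. Here I would exhibit the comparison functor: an object of $\catE$ of height $\mu$ is a tower of fibrations $A_\mu \display \cdots \display 1$ in $\catC^{\op}$ together with the composite fibration $A_\mu \display 1$, and sending such a tower to its top object $A_\mu \in \catC^{\op}$ gives a functor $\catE \to \catC^{\op}$. This functor sends display maps (in the contextual sense) to fibrations, preserves the terminal object and canonical pullbacks, and preserves the transfinite composites of display maps by construction, so it is a morphism of $\kappa$-clans. To see it is an equivalence of $\kappa$-clans I would invoke the criterion stated just above (the proposition characterizing equivalences of clans): I need that every object of $\catC^{\op}$ arises, up to isomorphism, from a tower of fibrations down to $1$ — which holds because in a $\kappa$-clan the map $A \to 1$ is a fibration and, by the axioms, can be presented as such a tower — and that the functor reflects fibrations and reflects transfinite composites of fibrations, both of which follow from the definition of morphisms in $\catC^{\op}(1)$ being inherited from $\catC^{\op}/1 = \catC^{\op}$. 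Combining this equivalence of $\kappa$-clans $\catE \simeq \catC^{\op}$ with the already-established equivalence passing through the split comprehension category, and then dualizing, yields an equivalence of $\kappa$-coclans $\catE^{\op} \simeq \catC$, which is the assertion.

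The main obstacle I anticipate is not any single deep step but the bookkeeping in the equivalence-of-clans verification: one must be careful that the height-grading and the ``length of a display map'' conditions match up when passing between the tower-of-fibrations description of objects in $\catC^{\op}(1)$ and plain objects of $\catC^{\op}$, and that limits of $\kappa$-small chains of fibrations are genuinely reflected (this is where the regularity of $\kappa$ and the transfinite-composition axiom of the coclan are used). Since every object of a $\kappa$-clan is canonically a limit of its defining tower and the comparison functor was designed to respect exactly this structure, I expect these checks to go through routinely, modulo care with the indexing.
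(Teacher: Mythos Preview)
Your proposal is correct and follows essentially the same route as the paper: dualize the coclan $\catC$ to the $\kappa$-clan $\catD=\catC^{\op}$, invoke \cref{contextual-clan} to obtain the $\kappa$-contextual category $\catD(1)$, argue that $\catD\simeq\catD(1)$, and dualize back. The paper's proof is extremely terse (it simply asserts ``we can then observe that $\catD\cong\catD(1)$''), whereas you spell out the comparison functor and invoke the equivalence-of-clans criterion stated just above; this extra care is appropriate but does not constitute a different argument.
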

 \begin{proof}
   Let us take the $\kappa$-clan given by $\catD:= \catC^{op}$. We can then observe that $\catD\cong \catD(1)$, where $\catD(1)$ is the $\kappa$-contextual category obtained from \cref{contextual-clan}. We can take the opposites again to get $\catC$.
 \end{proof}

%%%%%%%%%%%%%%%%%%%%%%%%%%%%%%%% 

%%% Local Variables:
%%% mode: latex
%%% TeX-master: "main"
%%% End:

        % Appendix C: weak model categories
        
        \section{Weak model categories}\label{appendix-c}

The most general setting in which we will show good homotopy-theoretic properties of the language introduced in \cref{fol-gat} is the framework of weak model categories introduced in \cite{henry20weak}, which we will briefly recall here. In practice this extra generality compared to a Quillen model structure is not extremely useful --- all the examples we will consider in \cref{sec:examples} are Quillen model structures --- so it would not be unreasonable to skip the present subsection. There are two reasons why we need weak model categories:
\begin{itemize}
\item A key construction toward the proof of the third invariance theorem in \cref{sec:invariance} is in general only a weak model structure, and we need to use its language as an intermediate tool.

\item Future applications to left and right semi-model structures --- actual weak model structure that are not left or right semi-model structures --- are fairly uncommon, but the weak model categories which include both left and right semi-model structure at the same time, are considerably more common.

\end{itemize}

\subsection{Review} \label{sec:wms_intro}

\begin{definition}\label{def:wms}
  A \emph{weak model category} is a category $\catM$ with three classes of maps: \emph{cofibrations}, \emph{fibrations} and \emph{weak equivalences} satisfying the following conditions:
  \begin{enumerate}
  \item \label{def:wms:initial-terminal} $\catM$ has an initial object $0$ and a terminal object $1$, the identity of $0$ is a cofibration, the identity of $1$ is a fibration.
  \item \label{def:wms:composite-fib-cofib} A composite of cofibrations with cofibrant domain is a cofibration. A composite of fibrations with fibrant codomain is a fibration.
  \item \label{def:wms:2-out-of-3} Given two composable arrows $X \overset{f}\to Y \overset{g}\to Z$ where each of $X$,$Y$ and $Z$ are fibrant or cofibrant, if two of $f$, $g$, $g \circ f$ are weak equivalences, then the third is also a weak equivalence.
     \item \label{def:wms:iso-are-we} Every isomorphism between objects that are either fibrant or cofibrant is a weak equivalence.
  \item \label{def:wms:po-cof} Given a solid diagram:
    \[\begin{tikzcd}
        A \ar[r] \ar[d,>->,"i"] \ar[dr,phantom,"\ulcorner"{description,very near end}] & B \ar[d,>->,dotted,"j"] \\
        C \ar[r,dotted] & D
      \end{tikzcd}\]
    Where $i$ is a cofibration and $A$ and $B$ are cofibrant, then the pushout $j$ exists and is a cofibration.
  \item \label{def:wms:pb-fib} The dual of condition \ref{def:wms:po-cof} holds for fibrations between fibrant objects.
  \item \label{def:wms:iso-closed} Every arrow isomorphic to a fibration, cofibration, or weak equivalence is also one.
  \item \label{def:wms:cof-trivfib-facto} Every arrow from a cofibrant to a fibrant object can be factored as a cofibration followed by a trivial fibration.
  \item \label{def:wms:trivcof-fib-facto} Every arrow from a cofibrant to a fibrant object can be factored as a trivial cofibration followed by a fibration.
  \item \label{def:wms:lifting-prop} Given a solid square:
\[ \begin{tikzcd}
    A \ar[r] \ar[d,>->,"i"] & X \ar[d,->>,"p"] \\
    B \ar[r] \ar[ur,dotted] & Y
    \end{tikzcd}\]
  Where $A$ and $B$ are cofibrant, $i$ is a cofibration, $X$ and $Y$ are fibrant, $p$ is a fibration and either $p$ or $i$ is a weak equivalence, then there exists a dotted map that makes the diagram to commute.
  \end{enumerate}

\end{definition}

\begin{remark}
  In \cref{def:wms} we use the usual conventions: a \emph{cofibrant object} is an object such that the unique map $0 \to X$ is a cofibration, and a \emph{fibrant object} is an object such that the unique map $X \to 1$ is a fibration. A trivial (co)fibration is a map which is both an equivalence and a (co)fibration.
 We will also use the term \emph{core cofibrations} to mean ``cofibration between cofibrant objects'' and \emph{core fibrations} to mean ``fibration between fibrant objects''.
\end{remark}

\begin{remark}\label{rk:only_core_matter}
  It is crucial to observe that \cref{def:wms} only involve the core cofibrations, core fibrations and weak equivalences between objects that are either fibrant or cofibrant. By that we mean that if given $\catM$ a category with these three classes of maps, then ($\catM$, cofibrations, fibrations, weak equivalences) is a weak model structure if and only if ($\catM$, core cofibrations, core fibrations, weak equivalences between objects that are either fibrant or cofibrant) is a model structure.

  For this reason, we generally consider that only core cofibrations, core fibrations and weak equivalence between objects that are either fibrant or cofibrant are to be treated as relevant notions. Nothing we will do here depends on the three class of maps outside these restrictions. In \cite{henry20weak} it was even considered that the words cofibrations, fibrations and weak equivalences to mean ``core cofibrations'', ``core fibrations'' and ``weak equivalences between fibrant or cofibrant objects''.
\end{remark}

\begin{remark}
  The definition of weak model structure in \cite{henry20weak} is different from \cref{def:wms}, but it is equivalent. It is stated without reference to the class of weak equivalence, and using the notion of (weak relative) path object and cylinder object. It is easy to show that a weak model structure in the sense of \cref{def:wms} is a weak model structure in the sense of \cite{henry20weak} by constructing the cylinder and path objects as factorization of the codiagonal and diagonal maps (see \ref{cstr:path_and_cylinder} below). Conversely, it is shown in \cite{henry20weak} that given a weak model structure, it admits a (unique\footnote{Keeping in mind \cref{rk:only_core_matter}. Only the class of weak equivalence between fibrant or cofibrant objects is uniquely defined, outside of this, there are no restriction whatsoever on weak equivalence from \cref{def:wms}.}) class of weak equivalences such that all conditions of \cref{def:wms} are satisfied.
\end{remark}

It is shown in \cite{henry20weak} that most of the basic theory of Quillen model categories carries over to weak model categories, with only some additional care taken - mostly replacing objects by fibrant and cofibrant replacement of objects before applying the usual construction. The main significant difference is that the homotopy category (defined in terms of homotopy class of maps between bifibrant objects as we will recall below) is no longer equivalent to $\catM[W^{-1}]$ - the localization of $\catM$ at weak equivalence, but only to $\catM^{\text{cof} \vee \text{fib}}[W^{-1}]$ the localization the full subcategory of objects that are either fibrant or cofibrant at the weak equivalences. The problem is that the axioms of a weak model category allows us to take a fibrant replacement of a cofibrant object $C$ as a (trivial cofibration/fibration) factorization of $C \to 1$. Similarly we can take a cofibrant replacement of a fibrant objects, but there is no way to do similar replacement with an object which is neither fibrant nor cofibrant.

We now quickly go over some aspects of the construction of the homotopy category of a weak model category, the results mentioned below are all proven in section 2.1 and 2.2 of \cite{henry20weak}.

\begin{construction}\label{cstr:path_and_cylinder}
If $X$ is a bifibrant object (i.e. fibrant and cofibrant), we can form a \emph{cylinder objects} $IX$ for $X$ as a (cofibration, trivial fibration) factorization:
 \[X \coprod X \cofibration IX \trivialfib X\]
and a path objects for $X$ as a (trivial cofibration, fibration) factorization
\[ X \trivialcof PX \fibration X \times X. \]

Given a pair of maps $f,g : X \rightrightarrows Y$ between bifibrant objects, we say they are homotopic if there is a dotted map $h$ making the diagram below commutative:
\[\begin{tikzcd}
  X \ar[d] \ar[rd,"f"] & \\
  IX \ar[r,"h"description,dotted] & Y \\
  X  \ar[u] \ar[ru,"g"swap]
\end{tikzcd}\]
or equivalently a map $h$
\[\begin{tikzcd}
   & Y \\
  X \ar[dr,"f"swap] \ar[ur,"g"] \ar[r,"h"description,dotted] & PY \ar[d] \ar[u]  \\
  &Y. 
\end{tikzcd}\]
This is an equivalence relation, and the homotopy category $\text{Ho}(\catM)$ of $\catM$ can be defined as the category of bifibrant objects with homotopy class of maps between them. Moreover, this category is equivalent to the formal localization $\catM^{\text{cof} \vee \text{fib}}[W^{-1}]$.
\end{construction}

\begin{construction}\label{cstr:weak_path_and_cylinder} Note that if an object $C \in \catM$ is only cofibrant and not fibrant we cannot define a cylinder object in the same way as above since the factorization axiom does not allow us to factor the maps $X \coprod X \to X$ if $X$ is not fibrant. In place of this, we can consider a fibrant replacement $X \trivialcof X^\fib \fibration 1$, and then form a factorization:
  \[\begin{tikzcd}
  X \coprod X \ar[d,"\nabla"] \ar[r,hook] & IX \ar[d,->>,"\sim"] \\    
X \ar[r,hook,"\sim"] & X^\fib.
\end{tikzcd}\]
This object $IX$, and more generally any object fitting into a diagram:
  \[\begin{tikzcd}
  X \coprod X \ar[d,"\nabla"] \ar[r,hook] & IX \ar[d,"\sim"] \\    
X \ar[r,hook,"\sim"] & DX   
\end{tikzcd}\]
is called a weak cylinder object. Dually, if $Y$ is fibrant we define a weak path object of $Y$ as any object $PY$ that fits into a diagram:
\[\begin{tikzcd}
    TX \ar[r,"\sim"] \ar[d,->>,"\sim"] & PX \ar[d,->>] \\
    X \ar[r,"\Delta"] & X \times X \\
  \end{tikzcd}\]

We can then show that for a pair of maps $X \rightrightarrows Y$ from a cofibrant object $X$ to a fibrant object $Y$ the following are equivalent:

\begin{itemize}
\item $f$ is homotopic to $g$ in terms of a weak cylinder object for $X$.
\item $f$ is homotopic to $g$ in terms of a weak path object for $Y$.
\item $f$ and $g$ are equal in the localization $\catM^{\text{cof} \vee \text{fib}}[W^{-1}]$.
\end{itemize}

Moreover, any arrow $X \to Y$ in the localization $\catM^{\text{cof} \vee \text{fib}}[W^{-1}]$ comes from an arrow $X \to Y$ in $\catM$.

\end{construction}

  \subsection{Weak Reedy model structure}

  Before doing all the constructions, we need to set up the formalism needed for them. In this section, we study Reedy weak model categories. These are, as the name suggests, the counterpart of Reedy model categories. Most of the proofs are straightforward adaptation of the classical ones, so they are omitted.

  \begin{definition}
    A \emph{Reedy category} is a category $R$ together with two wide subcategories $R_+$ and $R_-$ and a functor $deg: R \to \alpha $, where $\alpha$ is an ordinal, such that:
    \begin{enumerate}
    \item For every non-identity arrow $a \to b \in R_+$, $\deg(a) < \deg(b)$.
    \item For every $a \to b \in R_-$ a non-identity arrow, $\deg(b) < \deg(a)$.
    \item Every arrow in $R$ factors uniquely as an arrow in $R_-$ followed by an arrow in $R_+$.
    \end{enumerate}
    When the subcategory $R_-$ consists of identity arrows only, then $R$ is called a \emph{direct category}. Similarly, when the subcategory $R_+$ consists of identity arrows only, then $R$ is called an \emph{inverse category}. 
  \end{definition}

  Let $R$ be a Reedy category and $\catM$ be a weak model category. Consider $\catM^R$ the category of $R$-shaped diagram in $\catM$.
  Given $X: R \to \catM$ such a diagram and $r \in R$ any object. The \emph{latching object} at $r$ is the colimit (if it exists)
  \[L_rX \coloneqq \Colim_{s\in (R_+/r)-\{Id_r\}} X_s.\]
  Dually, the \emph{matching object} at $r$ is the limit (if it exists)
  \[M_rX \coloneqq \Lim_{s\in (r/R_-)-\{Id_r\}} X_s.\]
  
  \begin{definition}\label{def:ReedyMap}
    A map $f: X \to Y$ in $\catM^R$ is said to be a \emph{(trivial) Reedy cofibration} at  $r \in R$ if the colimit $L_rY \sqcup_{L_rX} X_r$ exists and the induced dotted map in the diagram below
% https://q.uiver.app/#q=WzAsNSxbMCwwLCJMX3JYIl0sWzAsMSwiTF9yWSJdLFsxLDAsIlhfciJdLFsxLDEsIkxfcllcXHNxY3VwX3tMX3JYfVhfciJdLFsyLDIsIllfciJdLFsyLDNdLFswLDJdLFswLDFdLFsxLDNdLFsyLDRdLFsxLDRdLFszLDQsIiIsMix7InN0eWxlIjp7ImJvZHkiOnsibmFtZSI6ImRhc2hlZCJ9fX1dLFszLDAsIiIsMix7InN0eWxlIjp7Im5hbWUiOiJjb3JuZXIifX1dXQ==
\[\begin{tikzcd}
	{L_rX} & {X_r} \\
	{L_rY} & {L_rY\sqcup_{L_rX}X_r} \\
	&& {Y_r}
	\arrow[from=1-1, to=1-2]
	\arrow[from=1-1, to=2-1]
	\arrow[from=1-2, to=2-2]
	\arrow[from=1-2, to=3-3,bend left]
	\arrow[from=2-1, to=2-2]
	\arrow[from=2-1, to=3-3,bend right]
	\arrow["\lrcorner"{anchor=center, pos=0.125, rotate=180}, draw=none, from=2-2, to=1-1]
	\arrow[dashed, from=2-2, to=3-3]
      \end{tikzcd}\]
 is a (trivial) cofibration in $\catM$. 
    
    Dually, $f: X \to Y$ in $\catM^R$ is said to be a \emph{(trivial) Reedy fibration} at $r \in R$ if the limit $M_rX\times_{M_rY} Y_r$ exists and the induced dotted map in the diagram below
    % https://q.uiver.app/#q=WzAsNSxbMSwyLCJNX3JYIl0sWzIsMSwiWV9yIl0sWzIsMiwiTV9yWSJdLFsxLDEsIk1fclhcXHRpbWVzX3tNX3JZfSBZX3IiXSxbMCwwLCJYX3IiXSxbMCwyXSxbMSwyXSxbMywwXSxbMywxXSxbMywyLCIiLDEseyJzdHlsZSI6eyJuYW1lIjoiY29ybmVyIn19XSxbNCwwXSxbNCwxXSxbNCwzLCIiLDEseyJzdHlsZSI6eyJib2R5Ijp7Im5hbWUiOiJkYXNoZWQifX19XV0=
\[\begin{tikzcd}
	{X_r} \\
	& {M_rX\times_{M_rY} Y_r} & {Y_r} \\
	& {M_rX} & {M_rY}
	\arrow[dashed, from=1-1, to=2-2]
	\arrow[from=1-1, to=2-3,bend left]
	\arrow[from=1-1, to=3-2,bend right]
	\arrow[from=2-2, to=2-3]
	\arrow[from=2-2, to=3-2]
	\arrow["\lrcorner"{anchor=center, pos=0.125}, draw=none, from=2-2, to=3-3]
	\arrow[from=2-3, to=3-3]
	\arrow[from=3-2, to=3-3]
      \end{tikzcd}\]
   exists and is a (trivial) fibration in $\catM$.

   A map is said to be a (trivial) Reedy (co)fibration if it is one at each $r \in R$.
 \end{definition}

 \begin{remark}
   We want to clarify that in \cref{def:ReedyMap} the colimit $L_rY \sqcup_{L_rX} X_r$ is considered as a single colimit and not as a pushout using the objects $L_r X$ and $L_r Y$. It is possible that $L_rY \sqcup_{L_rX} X_r$ exists without the colimit $L_r Y$ or $L_r X$ existing. Explicitly, it is the colimits of all the $X_{i}$ for $i \in R^+/r$ and of the $Y_i$ for $i \in R^+/r - \{id_r\}$. with all the maps coming from the functoriality in $i$ and the natural map $X_i \to Y_i$. We apply the same logic to the limit $M_rX\times_{M_rY} Y_r$. 
 \end{remark}

 \begin{definition}
   A Reedy category is said to be \emph{locally finite} if for any object $X \in R$ the categories $(R_+/X)$ and $(R_-/X)$ are finite.
 \end{definition}

  It is a classical result that for any Quillen model category $\catM$ and a Reedy category $R$ that the category of functors $\catM^R$ carries a model structure in which the weak equivalences are the level-wise weak equivalences, the (trivial) (co)fibrations are precisely the Reedy (trivial) (co)fibrations. The same result can be obtained if we simply assume that the base category carries a weak model structure.

  \begin{theorem} \label{reedy-model:theorem}
    Assume that $\catM$ is a weak model category and that $R$ is a locally finite Reedy category. Then there is a weak model structure on $\catM^R$ such that a map $f:X \to Y$ is:
    \begin{enumerate}
    \item A \textit{weak equivalence} if and only if $f_r:X_r \to Y_r$ is a weak equivalence for all $r \in R$.
    \item An \textit{(trivial) cofibration} if it is a (trivial) Reedy cofibration.
    \item An \textit{(trivial) fibration} if it is a (trivial) Reedy fibration.
    \end{enumerate}
  \end{theorem}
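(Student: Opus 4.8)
The plan is to run the classical induction‑on‑degree argument for the Reedy model structure, but to keep careful track of cofibrancy and fibrancy throughout, so that only the restricted axioms of a weak model category — those concerning maps from cofibrant to fibrant objects — are ever invoked. The first step is to establish the usual structural lemmas: if $X \in \catM^R$ is Reedy cofibrant then for each $r$ the latching object $L_rX$ exists, is cofibrant, and the latching map $L_rX \to X_r$ is a core cofibration; and if $f : X \to Y$ is a Reedy cofibration between Reedy cofibrant diagrams then each $f_r$ and each $L_rf$ is a core cofibration and $Y$ is again Reedy cofibrant. Local finiteness of $R$ is exactly what makes $L_rX$ a \emph{finite} colimit over $(R_+/r)-\{\mathrm{id}_r\}$, so it can be assembled as an iterated pushout of core cofibrations using the pushout axiom of \cref{def:wms}; the verification is by induction on $\deg(r)$. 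The dual statements for matching objects and Reedy fibrant diagrams follow from the pullback axiom. In particular Reedy cofibrant diagrams are levelwise cofibrant and Reedy fibrant diagrams are levelwise fibrant.

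With these in hand the ``easy'' axioms are quick. The constant diagrams at $0$ and $1$ serve as initial and terminal objects with the required identity (co)fibrations. The $2$-out-of-$3$ property and the isomorphism axiom for the levelwise weak equivalences are checked one object of $R$ at a time — legitimate because, by the previous paragraph, a Reedy fibrant‑or‑cofibrant diagram is levelwise fibrant‑or‑cofibrant, and by \cref{rk:only_core_matter} only such objects matter. Closure of core cofibrations under composition, and dually for core fibrations, follows from the corresponding axiom in $\catM$ applied to the relative latching (resp. matching) maps of \cref{def:ReedyMap}, using that these maps compose up to pushout (resp. pullback); closure of all three classes under isomorphism is formal. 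For the pushout axiom in $\catM^R$ one uses that colimits there, hence latching objects, are computed levelwise and commute with pushouts: given a core Reedy cofibration $i : A \to B$ and a map $A \to C$ with $A,B,C$ Reedy cofibrant, a ``cube'' diagram chase identifies the relative latching map of $C \to C \sqcup_A B$ at $r$ with the pushout of the relative latching map of $i$ at $r$ along a map out of a cofibrant object, so the pushout axiom of $\catM$ gives that it is a core cofibration and that $C \sqcup_A B$ is Reedy cofibrant. The pullback axiom is dual.

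The substance of the proof is the two factorization axioms and the lifting axiom, all by (transfinite) induction on $\deg : R \to \alpha$. To factor $f : X \to Y$ with $X$ Reedy cofibrant and $Y$ Reedy fibrant as a (trivial) Reedy cofibration followed by a Reedy (trivial) fibration, one builds the intermediate diagram $Z$ object by object, carrying the invariant that on objects of degree $< \deg(r)$ the diagram $Z$ is simultaneously Reedy cofibrant \emph{and} Reedy fibrant, that $X \to Z$ is a (trivial) Reedy cofibration there, and that $Z \to Y$ is a Reedy fibration there. At stage $r$ one factors the comparison map
\[
X_r \sqcup_{L_rX} L_rZ \longrightarrow M_rZ \times_{M_rY} Y_r
\]
in $\catM$; by the invariant together with the previous two paragraphs, its source is cofibrant and its target is fibrant, so the relevant $\catM$-factorization applies and produces $Z_r$, which is then bifibrant — cofibrant since it receives a cofibration from a cofibrant object, fibrant since it maps by a fibration to a fibrant object — and the invariant persists. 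The lifting axiom is handled identically: for a core Reedy (trivial) cofibration $i$ and a core Reedy (trivial) fibration $p$ with one of them a weak equivalence, one constructs the lift degree by degree, solving at stage $r$ the lifting problem in $\catM$ between the relative latching map of $i$ at $r$ and the relative matching map of $p$ at $r$, whose source is cofibrant and target fibrant by the same bookkeeping. Along the way one needs the standard identification ``Reedy cofibration $+$ levelwise weak equivalence $=$ Reedy trivial cofibration'' (and its dual), which in the weak setting must be re‑derived from \cref{def:wms} rather than quoted.

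The part I expect to be genuinely delicate is precisely this bookkeeping. In the Quillen setting the cofibrancy of $X_r \sqcup_{L_rX} L_rZ$ and the fibrancy of $M_rZ \times_{M_rY} Y_r$ hold automatically; here they are exactly the hypotheses one must have in order to be allowed to invoke a weak‑model‑category axiom at all. So the inductions for the two factorizations and for lifting have to be organised around an explicitly stated invariant — the simultaneous Reedy cofibrancy and fibrancy of the partial diagram built so far — and one must verify that this invariant genuinely propagates through the single $\catM$-factorization (resp. $\catM$-lift) performed at each degree, including at limit ordinals of the degree. The auxiliary recharacterization of trivial Reedy (co)fibrations in terms of levelwise weak equivalences is the other place where a small but nontrivial argument is required.
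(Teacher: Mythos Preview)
Your proposal is correct and follows essentially the same route as the paper: establish that latching objects of Reedy cofibrant diagrams exist and are cofibrant (by induction on degree, using local finiteness), deduce that core Reedy (co)fibrations are levelwise, prove the characterization ``core Reedy cofibration $+$ levelwise weak equivalence $=$ trivial Reedy cofibration'', and then run the classical degree-by-degree induction for factorization and lifting, the key observation being that the domain $X_r \sqcup_{L_rX} L_rZ$ is cofibrant and the codomain $M_rZ \times_{M_rY} Y_r$ is fibrant so that the weak-model-category axioms are applicable. One small organizational difference: for closure under composition you invoke the classical ``relative latching maps compose up to pushout'' argument, whereas the paper instead packages the composable pair $X \to Y \to Z$ as a single Reedy cofibrant diagram $R \times \{0<1<2\} \to \catM$ and reads off the desired latching map as a colimit over a sieve; both work, and the paper's trick also reappears in its treatment of the levelwise-cofibration claim.
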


  \begin{remark}\label{Rk:latching_map_as_single_colimits}
    When the Reedy category is directed, this model structure coincides with the projective weak model structure. It is straightforward to define this last weak model category. In this weak model, the weak equivalences and the fibrations are the level-wise weak equivalences and fibrations respectively. Similarly, when the Reedy category is an inverse category, then the Reedy weak model structure is Quillen equivalent to the injective model structure. In this other case, weak equivalences and cofibrations are given level-wise.
  \end{remark}

We now prove the theorem:

\begin{lemma}\label{lem:Direct_colimit}
   Let $I$ be a direct category and $X :I \to \catM$ be a diagram. Let $U \subset V \subset I$ be two sieves\footnote{That is subcategories with the property that if there is an arrow $x \to x'$ and $x' \in V$ then $x \in V$.} of $I$, such that $V - U$ has a finite number of objects. Assume that the colimit
   \[ X(U) \coloneq \Colim_{u \in U} X(u) \]
   exists and is cofibrant, and that for each $v \in V-U$, the latching object $L_v X$ exists and is cofibrant, and the map $L_v X \to X(v)$ is a cofibration. Then $X(V)$ exists and the comparison map $X(U) \to X(V)$ is a cofibration. If $L_v X \to X(v)$ is actually a trivial cofibration for every $v \in V-U$, then $X(U) \to X(V)$ is a trivial cofibration.
 \end{lemma}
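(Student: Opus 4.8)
The plan is to prove \cref{lem:Direct_colimit} by induction on the finite number of objects in $V - U$. The base case $V - U = \emptyset$ is trivial, since then $X(V) = X(U)$ and the comparison map is the identity, which is a cofibration (even a trivial one) by \cref{def:wms}. For the inductive step, since $I$ is a direct category and $V - U$ is finite and nonempty, there is at least one object $v \in V - U$ that is \emph{maximal}, in the sense that there is no non-identity arrow $v \to v'$ with $v' \in V - U$. Set $U' \coloneqq U \cup \{w \mid w \to v\} = U \cup \{v\}$, which is again a sieve: indeed $U \subset U' \subset V$, and $U'$ is closed under precomposition because $U$ is and because $v$ receives no non-identity arrow from within $V-U$, so any arrow into $v$ comes from $U$. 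Then $V - U'$ has strictly fewer objects than $V - U$.

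First I would handle the one-object extension $U \subset U'$ where $U' = U \cup \{v\}$ and $v$ is maximal in $U'$. The key observation is that $X(U')$ is computed as the pushout
\[
  \begin{tikzcd}
    L_v X \ar[r] \ar[d] & X(v) \ar[d] \\
    X(U) \ar[r] & X(U').
  \end{tikzcd}
\]
To justify this, note that the latching object $L_v X = \Colim_{s \in (I_+/v) - \{\idx_v\}} X(s)$ involves only objects $s$ with a non-identity arrow $s \to v$; since $U'$ is a sieve and $v \in U'$, all such $s$ lie in $U$, and in fact they form a subsieve of $U$, so there is a canonical map $L_v X \to X(U)$. One checks directly from the universal properties that a cocone on $X|_{U'}$ is precisely the data of a cocone on $X|_U$ together with a map out of $X(v)$ agreeing on $L_v X$, which is exactly a cocone on the span $X(U) \leftarrow L_v X \rightarrow X(v)$. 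Hence $X(U')$ is this pushout. Now $X(U)$ is cofibrant by hypothesis, $L_v X \to X(v)$ is a cofibration between cofibrant objects (the domain $L_v X$ is cofibrant by hypothesis, and $X(v)$ is cofibrant since a cofibration from a cofibrant object has cofibrant codomain), so by \cref{def:wms}\eqref{def:wms:po-cof} the pushout exists and the map $X(U) \to X(U')$ is a cofibration; moreover $X(U')$ is cofibrant as the codomain of a cofibration with cofibrant domain. If $L_v X \to X(v)$ is a trivial cofibration, then since trivial cofibrations are stable under pushout along maps between cofibrant objects (this is part of the basic theory of weak model categories from \cite{henry20weak}, and also used freely elsewhere in this appendix, e.g. in the proof of \cref{model-paths:wms}), the map $X(U) \to X(U')$ is a trivial cofibration.

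Then I would apply the inductive hypothesis to the pair $U' \subset V$: since $V - U'$ is finite with fewer objects than $V - U$, and $X(U')$ is cofibrant (just established), and for each $w \in V - U' \subset V - U$ the latching object $L_w X$ exists, is cofibrant, and $L_w X \to X(w)$ is a (trivial) cofibration by hypothesis, we conclude that $X(V)$ exists and $X(U') \to X(V)$ is a (trivial) cofibration. Composing $X(U) \to X(U') \to X(V)$ and using \cref{def:wms}\eqref{def:wms:composite-fib-cofib} (composites of cofibrations with cofibrant domain are cofibrations, and likewise for trivial cofibrations, using that all objects in sight are cofibrant), we obtain that $X(U) \to X(V)$ is a (trivial) cofibration, completing the induction.

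The main obstacle I anticipate is purely bookkeeping: verifying carefully that the pushout square above really does compute $X(U')$, which requires being precise about the fact that $L_v X$ maps into $X(U)$ and that the relevant index categories fit together correctly (the ``single colimit'' subtlety flagged in the remark after \cref{def:ReedyMap}). One must also be a little careful that choosing a maximal element of $V - U$ is always possible — this uses that $V - U$ is finite and that $I$ is direct, so the arrows within $V - U$ all strictly increase degree and cannot form cycles — and that removing it keeps $U'$ a sieve. None of this is deep, but it is where the argument could go wrong if stated sloppily. Everything else is a direct appeal to the pushout axiom, the composition axiom, and pushout-stability of (trivial) cofibrations between cofibrant objects.
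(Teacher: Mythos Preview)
Your overall strategy---induction on $|V-U|$, add one object at a time, identify the one-step extension as a pushout of the latching map---is exactly the paper's approach. However, there is a genuine slip in the choice of $v$.

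You pick $v$ \emph{maximal} in $V-U$, meaning there is no non-identity arrow $v \to v'$ with $v' \in V-U$. You then assert that ``$v$ receives no non-identity arrow from within $V-U$,'' but that is the condition for $v$ to be \emph{minimal}, not maximal; maximality says nothing about arrows \emph{into} $v$. With $v$ maximal, $U' = U \cup \{v\}$ need not be a sieve: there can be $w \in V-U$ with $w \to v$, and then $w \notin U'$. Worse, the pushout identification $X(U') \cong X(U) \sqcup_{L_vX} X(v)$ fails, because the objects indexing $L_vX$ need not all lie in $U$.

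The fix is simply to choose $v \in V-U$ of \emph{minimal} degree. Since $I$ is direct, every non-identity arrow strictly increases degree, so any non-identity arrow $w \to v$ has $\deg(w) < \deg(v)$; since $v \in V$ and $V$ is a sieve, $w \in V$, and by minimality of $\deg(v)$ in $V-U$ we get $w \in U$. This makes $U' = U \cup \{v\}$ a sieve and ensures that every object in the indexing category for $L_vX$ lies in $U$, so the map $L_vX \to X(U)$ and the pushout description of $X(U')$ go through exactly as you wrote. With this single correction, your argument is complete and coincides with the paper's proof.
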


 \begin{proof}
   This is immediate by induction on the number of objects of $V - U$. If it only has one object, then $X(U) \to X(V)$ can be seen to be a pushout of the core cofibration $L_v X \to X_v$ to the cofibrant object $X(U)$. If $V-U$ has several objects, we iterate this process once for each object of $V-U$.  
 \end{proof}

 \begin{corollary}\label{cor:Latching_are_cofibrant}
   Let $R$ be a locally finite Reedy category, $X: R \to \catM$ be a diagram and let $k \in R$ an object. Assume that $X$ is Reedy cofibrant at every $r$ such that $\deg(r) < \deg(k)$, then the latching object $L_k(X)$ exists and is cofibrant.
 \end{corollary}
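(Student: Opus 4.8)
The plan is to prove \Cref{cor:Latching_are_cofibrant} by applying \Cref{lem:Direct_colimit} to the restriction of $X$ to the subcategory $R_+$. The key observation is that the latching object $L_k(X)$ only depends on the $R_+$-part of the diagram: by definition $L_k X = \Colim_{s \in (R_+/k) - \{\idx_k\}} X_s$, and this colimit is indexed by a direct category. So the whole statement really lives inside $\catM^{R_+}$ where $R_+$ is a direct category.

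First I would set up the right sieves. Let $I \coloneqq (R_+/k) - \{\idx_k\}$, viewed as a direct category via the degree function inherited from $R$ (more precisely, we use that $R_+$ is direct, so restricting the degree function to objects mapping into $k$ still satisfies the direct category axioms after removing the terminal object). Inside $I$, for each ordinal $\beta$ let $I_{<\beta}$ be the full subcategory on objects $s$ with $\deg(s) < \beta$; since $I$ is direct, each $I_{<\beta}$ is a sieve. The hypothesis that $X$ is Reedy cofibrant at every $r$ with $\deg(r) < \deg(k)$ says exactly that for each such $r$, the latching map $L_r X \to X_r$ is a cofibration (and $L_r X$ is cofibrant, which we would establish by an ``inner'' induction, or just package everything into one transfinite induction on the degree). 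I would then run \Cref{lem:Direct_colimit} transfinitely: starting from the empty sieve (whose colimit is $0$, cofibrant), at each successor stage $\beta \to \beta+1$ the finitely many new objects $s$ of degree $\beta$ each have $L_s X \to X_s$ a cofibration with $L_s X$ cofibrant by the Reedy cofibrancy hypothesis at $s$ (note $\deg(s) < \deg(k)$ since $s \in I$), so \Cref{lem:Direct_colimit} lets us pass the colimit along and keep cofibrancy; at limit stages we take the colimit of the chain of cofibrations between cofibrant objects, which exists and stays cofibrant by the transfinite composition axiom for core cofibrations in a weak model category. Since $I$ itself has bounded degree, this terminates and produces $X(I) = \Colim_{s \in I} X_s = L_k X$, cofibrant.

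The one subtlety — and the main thing to get right rather than the main obstacle — is matching the hypotheses of \Cref{lem:Direct_colimit} exactly: that lemma requires $V - U$ to have finitely many objects, which is where local finiteness of $R$ enters (each $(R_+/k)$ is finite, so in fact $I$ is finite and the transfinite induction is really just a finite induction, making the limit-stage discussion vacuous). I would also need to verify that $L_s X$ computed inside $I$ agrees with $L_s X$ computed in $R$ for $s \in I$: this holds because $(R_+/s) - \{\idx_s\}$ is a full subcategory of $I$ (every arrow into $s$ in $R_+$ composes with the structure map $s \to k$ to give an object of $R_+/k$), so the indexing categories for the two latching colimits coincide. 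With these identifications in place, the result follows immediately, and the ``trivial'' refinement (if all latching maps are trivial cofibrations then $X(U) \to X(V)$ is a trivial cofibration) is recorded for later use in proving the factorization axioms of the Reedy weak model structure. The genuinely hard work is entirely in \Cref{lem:Direct_colimit}, which we have already assumed; this corollary is just its bookkeeping consequence.
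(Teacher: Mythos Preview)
Your proposal is correct and follows essentially the same approach as the paper: restrict to the direct part, set $I = (R_+/k) - \{\idx_k\}$, and apply \Cref{lem:Direct_colimit} with $U = \varnothing$ and $V = I$, using induction on degree to ensure the lower latching objects are cofibrant. The paper's proof is terser and skips the transfinite discussion entirely (which, as you yourself note, is vacuous here by local finiteness), and it does not spell out the identification of latching objects computed in $I$ versus in $R$; your added care on that point is correct but not strictly needed beyond a one-line remark.
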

 
 \begin{proof}
   Using a proof by induction on $\deg(x)$, we can freely assume that all the latching object $L_r(X)$ are cofibrant for all $r$ such that $\deg(r) < \deg(x)$. We can then just apply the \cref{lem:Direct_colimit} to the finite direct category $I = R^+/x$ and $U = \varnothing$, $V = I$.
 \end{proof}

 \begin{corollary}\label{cor:Direct_colimit} Let $I$ be a finite direct category, and let $X:I \to \catM$ be a Reedy cofibrant diagram and $U \subset I$ be a sieve, then $\Colim_{I} X$ and $\Colim_U X$ exist, are cofibrant and the obvious comparison map  $\Colim_U X \to \Colim_{I} X$ is a cofibration.

   If furthermore the latching map $L_r X \to X(r)$ is a trivial cofibration for each $r \in I-U$, then the map $\Colim_U X \to \Colim_{I} X$ is a trivial cofibration.

\end{corollary}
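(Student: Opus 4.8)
Looking at the statement of \cref{cor:Direct_colimit}, this is a corollary asserting that for a finite direct category $I$, a Reedy cofibrant diagram $X : I \to \catM$, and a sieve $U \subset I$, the colimits $\Colim_I X$ and $\Colim_U X$ both exist, are cofibrant, and the comparison map between them is a (trivial) cofibration.

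\medskip

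The plan is to derive this directly from \cref{lem:Direct_colimit}, which is the workhorse lemma. First I would note that $U$, being a sieve in $I$, is itself a direct category (a full subcategory closed under precomposition), and that $U \subset I$ are two sieves of $I$ with $I - U$ having only finitely many objects since $I$ is finite. To invoke \cref{lem:Direct_colimit}, I need its hypotheses: that $\Colim_{u \in U} X(u)$ exists and is cofibrant, and that for each $r \in I - U$ the latching object $L_r X$ exists, is cofibrant, and the map $L_r X \to X(r)$ is a cofibration (resp.\ trivial cofibration). The latching maps being (trivial) cofibrations is exactly the hypothesis that $X$ is Reedy cofibrant; and that the latching objects $L_r X$ exist and are cofibrant is precisely the content of \cref{cor:Latching_are_cofibrant} (applied to the Reedy category $I$, noting a direct category is a Reedy category with $R_- $ trivial). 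So the only remaining input is that $\Colim_U X$ exists and is cofibrant.

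\medskip

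For that last point I would apply \cref{lem:Direct_colimit} a second time, now with the pair of sieves $\varnothing \subset U$: the empty colimit $X(\varnothing)$ is the initial object $0$, which is cofibrant, and for each $u \in U$ the latching map $L_u X \to X(u)$ is a cofibration (again by Reedy cofibrancy of $X$ restricted to $U$, together with \cref{cor:Latching_are_cofibrant} to know $L_u X$ exists and is cofibrant — here one uses that the latching object of $X|_U$ at $u$ agrees with $L_u X$ since $U$ is a sieve, so $U^+/u = I^+/u$). Then \cref{lem:Direct_colimit} gives that $\Colim_U X$ exists and that $0 \to \Colim_U X$ is a cofibration, i.e.\ $\Colim_U X$ is cofibrant. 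Applying the same argument with $\varnothing \subset I$ shows $\Colim_I X$ exists and is cofibrant as well. Having verified all hypotheses, the first application of \cref{lem:Direct_colimit} to $U \subset I$ then yields that the comparison map $\Colim_U X \to \Colim_I X$ is a cofibration, and a trivial cofibration under the extra assumption on the latching maps for $r \in I - U$.

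\medskip

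The main thing to be careful about — not so much an obstacle as a point requiring a sentence of justification — is the identification of the latching object of the restricted diagram $X|_U$ at an object $u \in U$ with the latching object $L_u X$ computed in $I$; this holds because a sieve $U$ contains, for each of its objects $u$, all non-identity morphisms into $u$, so the indexing categories $U^+/u \smallsetminus \{\mathrm{id}_u\}$ and $I^+/u \smallsetminus \{\mathrm{id}_u\}$ coincide. Everything else is a routine bookkeeping matter of matching the hypotheses of \cref{lem:Direct_colimit} and \cref{cor:Latching_are_cofibrant}.
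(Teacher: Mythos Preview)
Your proposal is correct and follows essentially the same approach as the paper: invoke \cref{cor:Latching_are_cofibrant} to ensure the latching objects exist and are cofibrant, then apply \cref{lem:Direct_colimit} (first with $\varnothing \subset U$ to get $\Colim_U X$ cofibrant, then with $U \subset I$). The paper's proof is a one-liner that leaves the bootstrapping for $\Colim_U X$ and the sieve/latching-object compatibility implicit, whereas you have spelled these out carefully; your extra remark that $U^+/u = I^+/u$ because $U$ is a sieve is a genuine point worth making explicit.
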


\begin{proof}
   By \cref{cor:Latching_are_cofibrant} all the latching objects of $X$ are cofibrant, so we can simply apply  \cref{lem:Direct_colimit} and conclude.
 \end{proof}

 \begin{corollary}\label{cor:core_cof_are_levelwise} Let $R$ be a locally finite Reedy category.

   \begin{itemize}
   \item Any core (trivial) Reedy cofibration $X \to Y$ in $\catM^R$ is in particular a levelwise (trivial) cofibration. That is, the map $X(r) \to Y(r)$ are (trivial) cofibrations for any $r \in R$.
   \item A map $X \to Y$ in $\catM^R$ which is both a core Reedy cofibration and a level-wise weak equivalence is a trivial Reedy cofibration.
   \end{itemize}
   
 \end{corollary}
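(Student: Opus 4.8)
The plan is to reduce both statements to \cref{cor:Direct_colimit} by attaching to each object $r$ of $R$ an auxiliary diagram, built from $X \to Y$ and the latching data at $r$, over a suitable finite direct category, and then reading off the level-$r$ comparison map (resp. the relative latching map at $r$) as a comparison of colimits over a sieve.

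Fix $r \in R$ and set $J \coloneqq R_+/r$; this is a finite direct category (finite since $R$ is locally finite) with degree function $(a \xrightarrow{f} r) \mapsto \deg(a)$ and with $\mathrm{id}_r$ as terminal object, and there is a forgetful functor $u : J \to R$, $(a \to r) \mapsto a$. First I would check the unwinding of definitions: $X \circ u$ and $Y \circ u$ are Reedy cofibrant diagrams on $J$, with latching objects at $(a,f)$ equal to $L_aX$ and $L_aY$ (so the latching maps are the maps $L_aX\to X(a)$, $L_aY\to Y(a)$), and the relative latching map of $X\circ u \to Y\circ u$ at $(a,f)$ is exactly the relative latching map $X(a)\sqcup_{L_aX}L_aY \to Y(a)$ of $X\to Y$ at $a$. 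Then I would form the diagram $D$ over the finite direct category $\hat J$ obtained as the product of $J$ with the arrow category $\{0\to 1\}$, with $D(-,0) = X\circ u$, $D(-,1) = Y\circ u$, and structure maps supplied by $X$, $Y$ and $f$. A direct computation of its latching objects shows that the latching map of $D$ at $(i,0)$ is $L_aX\to X(a)$ and at $(i,1)$ is $X(a)\sqcup_{L_aX}L_aY \to Y(a)$; hence $D$ is Reedy cofibrant on $\hat J$ precisely because $X$ is Reedy cofibrant and $X\to Y$ is a Reedy cofibration. Since $J\times\{0\}$ is a sieve of $\hat J$ with $\Colim_{J\times\{0\}}D = X(r)$, and since $J\times\{1\}\hookrightarrow \hat J$ is final so that $\Colim_{\hat J}D = \Colim_J(Y\circ u) = Y(r)$, \cref{cor:Direct_colimit} gives that $X(r)\to Y(r)$ is a cofibration; and when $X\to Y$ is a trivial Reedy cofibration the latching maps of $D$ over $\hat J\setminus(J\times\{0\}) = J\times\{1\}$ are trivial cofibrations, so the same corollary yields a trivial cofibration. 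This proves the first bullet.

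For the second bullet I would argue by induction on $\deg(r)$, proving that the relative latching map of $X\to Y$ at $r$ is a trivial cofibration. By the first bullet it is in any case a core cofibration, and by hypothesis $X(r)\to Y(r)$ is a weak equivalence between cofibrant objects, so by $2$-out-of-$3$ it suffices to show $X(r)\to X(r)\sqcup_{L_rX}L_rY$ is a weak equivalence; as this map is a pushout of $L_rX\to L_rY$ along $L_rX\to X(r)$ (with cofibrant domains), it suffices to show $L_rX\to L_rY$ is a trivial cofibration. But $L_rX\to L_rY$ is $\Colim_{J^\circ}(X\circ u)\to\Colim_{J^\circ}(Y\circ u)$ with $J^\circ \coloneqq J\setminus\{\mathrm{id}_r\}$, so running the construction above with $J^\circ$ in place of $J$ reduces this, via \cref{cor:Direct_colimit}, to the statement that the relative latching maps of $X\to Y$ at all $(a,f)\in J^\circ$ are trivial cofibrations; these objects all have $\deg(a)<\deg(r)$, so this is exactly the inductive hypothesis. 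Feeding the resulting trivial cofibration $L_rX\to L_rY$ back gives that $X(r)\sqcup_{L_rX}L_rY\to Y(r)$ is a cofibration and a weak equivalence, which closes the induction.

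The only input beyond \cref{cor:Direct_colimit} is stability of trivial cofibrations between cofibrant objects under pushout, which is already built into the proof of \cref{lem:Direct_colimit}, together with the existence of the relevant latching objects, which is \cref{cor:Latching_are_cofibrant}. The routine but fiddly parts are the identification of the latching objects of $D$ (and of $X\circ u$, $Y\circ u$) and the verification that $J\times\{1\}\hookrightarrow\hat J$ is final; the main obstacle is simply keeping the bookkeeping honest — making sure ``Reedy cofibrant on $J$'' and ``Reedy cofibration on $J$'' unwind into the degree-$a$ latching data of $X\to Y$ on $R$, and that the induction in the second bullet is organised on $\deg(r)$ so that all latching data invoked lives strictly below $r$.
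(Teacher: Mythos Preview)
Your proof is correct and follows essentially the same approach as the paper's: the paper first reduces to $R$ direct and forms $T:R\times\{0<1\}\to\catM$, then restricts to the slice under $(r,1)$, which is exactly your $\hat J = (R_+/r)\times\{0<1\}$ with the same identification of latching maps; both then invoke \cref{cor:Direct_colimit} on the sieve $J\times\{0\}$ for the first bullet and run the induction on $\deg(r)$ via $L_rX\to L_rY$ for the second. One small wording issue: when you say ``by the first bullet it is in any case a core cofibration'', the cofibrancy of the domain $X(r)\sqcup_{L_rX}L_rY$ really comes from \cref{cor:Latching_are_cofibrant} applied to your $D$ (this is exactly what the paper records as \cref{lem:LatchingDomain_are_cof}), not from the first bullet itself---but you do cite that ingredient at the end, so the argument is complete.
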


Dually, the same is true for fibrations and trivial fibrations.
 
 \begin{proof} As both statement only depends on the restriction to the subcategory $R^+$, we can freely assume that $R$ is a (locally finite) direct category. In both cases, we consider the natural transformation $X \to Y$ as a diagram $T: R \times \{0 <1 \} \to \catM$. We then observe that the latching map of $T$ at an object $(r,0)$ is just $L_r X \to X$, and the latching map of $T$ at $(r,1)$ is
   \[L_rY \sqcup_{L_r X} X(r) \to Y(r) \]
   Hence the assumption that $X \to Y$ is a core Reedy cofibration translates into the fact that $T$ is Reedy cofibrant. For any object $r \in R$, the composite $R\times \{0 < 1 \} /(r,1) \to R \times \{0 <1\} \to \catM$ is immediately seen to be Reedy cofibrant as well, and we can then apply \cref{cor:Direct_colimit} to the sieve $U = R/r \times \{0\}$ to conclude that $X(r) \to Y(r)$ is a cofibration.

 If $X \to Y$ is further assumed to be trivial, then the latching map of $T$ at all objects of the form $(r,1)$ are trivial, and hence using the ``trivial'' case of \cref{cor:Direct_colimit}, we conclude that $X(r) \to Y(r)$ is trivial. 

 If instead we assume that $X(r) \to Y(r)$ is a weak equivalence for all $r$, then we proceed by strong induction on $\deg(r)$. Assume that we already know that at all $k$ such that $\deg(k)< \deg(r)$.

 If $\deg(r) = 0$, then the latching map is just $X(r) \to Y(r)$ itself, so it is a trivial cofibration as it is a cofibration and a weak equivalence. Assume now that we already know that all the latching maps
 \[L_rY \sqcup_{L_r X} X(r) \to Y(r) \]
 are trivial cofibrations for any $r$ such that $\deg(r) < \deg(k)$. We can then deduce by the same argument as above that the map $L_k(X) \to L_k(Y)$ is a core trivial cofibration, which shows that the map $X(r) \to L_rY \sqcup_{L_r X} X(r)$ is a trivial cofibration, hence an equivalence, and hence by $2$-out-of-$3$ for equivalences, the map $L_rY \sqcup_{L_r X} X(r) \to Y(r)$, is both an equivalence and a core cofibration, so it is a (core) trivial cofibration.
\end{proof}

Note that we have also proved that:

\begin{lemma} \label{lem:LatchingDomain_are_cof}
 Let $R$ be a locally finite Reedy category, and  $i:X \to Y$ be a core Reedy cofibration in $\catM^R$. Then the domain of the latching map $L_rY \sqcup_{L_r X} X(r)$ is cofibrant.
\end{lemma}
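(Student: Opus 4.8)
The plan is to deduce this from the machinery already assembled for \cref{cor:core_cof_are_levelwise} and \cref{cor:Latching_are_cofibrant}; indeed the statement is essentially a by-product of the proof of the former, reorganised so as to isolate the claim about the domain. Since latching objects of an $R$-diagram depend only on its restriction to the subcategory $R^+$, I would first reduce to the case where $R$ is a locally finite direct category. Then, exactly as in that proof, I would encode the core Reedy cofibration $i : X \to Y$ as a single diagram $T : R \times \{0 < 1\} \to \catM$ with $T(-,0) = X$, $T(-,1) = Y$ and transition maps $T(r,0) \to T(r,1)$ given by the components of $i$. The category $R \times \{0<1\}$ is again a locally finite direct category (for instance with degree function $(r,\epsilon) \mapsto \deg(r) + \epsilon$, noting that all arrows point $\epsilon \to \epsilon'$ with $\epsilon \leqslant \epsilon'$), and unwinding the definitions one sees that the latching map of $T$ at the object $(r,1)$ is precisely
\[ L_r Y \sqcup_{L_r X} X(r) \longrightarrow Y(r), \]
so that the object whose cofibrancy we must establish is exactly the latching object $L_{(r,1)}(T)$; moreover the hypothesis that $i$ is a core Reedy cofibration is equivalent to $T$ being Reedy cofibrant.

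With this translation in place, the conclusion follows at once. I would apply \cref{cor:Latching_are_cofibrant} to the diagram $T$ on the locally finite direct Reedy category $R \times \{0<1\}$ at the object $k = (r,1)$: since $T$ is Reedy cofibrant it is a fortiori Reedy cofibrant at every object of degree strictly below $\deg(r,1)$, so the corollary yields that $L_{(r,1)}(T) = L_r Y \sqcup_{L_r X} X(r)$ exists and is cofibrant, as desired. Equivalently, and perhaps more transparently, one may restrict $T$ to the finite direct category $I \coloneqq \big((R \times \{0<1\})/(r,1)\big) - \{\mathrm{id}_{(r,1)}\}$, observe that this restriction is still Reedy cofibrant — latching objects computed in the slice agree with those computed in $R \times \{0<1\}$, via the identification $(\mathcal{E}/k)/j \cong \mathcal{E}/\mathrm{dom}(j)$ valid in a direct category — and invoke \cref{cor:Direct_colimit} with $U = I$ to conclude that $\Colim_I T = L_{(r,1)}(T)$ is cofibrant.

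I do not anticipate any genuine difficulty, since all the homotopical content is already contained in \cref{lem:Direct_colimit} and its corollaries; the remaining work is purely bookkeeping. The two points requiring a moment's care are (i) verifying that $R \times \{0<1\}$ carries a locally finite direct Reedy structure and that passing to $R^+$ loses nothing, and (ii) identifying the latching object of $T$ at $(r,1)$ with the pushout $L_r Y \sqcup_{L_r X} X(r)$ — both of which are immediate unwindings of the definitions. The dual statement about matching objects of a core Reedy fibration follows by the same argument applied in $\catM^{\op}$.
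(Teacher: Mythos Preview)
Your proposal is correct and follows essentially the same route as the paper: encode $i$ as the diagram $T : R \times \{0<1\} \to \catM$ (as in the proof of \cref{cor:core_cof_are_levelwise}), identify $L_rY \sqcup_{L_rX} X(r)$ with the latching object $L_{(r,1)}T$, and apply \cref{cor:Latching_are_cofibrant}. The paper's proof is just a two-line pointer back to those earlier arguments, whereas you have spelled out the bookkeeping (degree function, local finiteness, the slice identification) that the paper leaves implicit.
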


\begin{proof}
 At the beginning of the proof of \cref{cor:core_cof_are_levelwise} we observed that it could be written as a latching object $L_{(r,1)} T$ of a cofibrant Reedy diagram $T$. Hence, the result follows from \cref{cor:Latching_are_cofibrant}. \end{proof}

 \begin{proposition}\label{prop:composite_fib}
 For any locally finite Reedy category $R$, in $\catM^R$, the composite of two Reedy core cofibrations is a Reedy core cofibrations.
 \end{proposition}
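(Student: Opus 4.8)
The plan is to reduce the statement about composites in $\catM^R$ to the corresponding statement in $\catM$, level by level, exactly as in the proof of \cref{cor:core_cof_are_levelwise}. Suppose $i:X\cofibration Y$ and $j:Y\cofibration Z$ are core Reedy cofibrations in $\catM^R$; I want to show that $ji:X\cofibration Z$ is a core Reedy cofibration, i.e.\ that for each $r\in R$ the latching map $L_rZ\sqcup_{L_rX}X(r)\to Z(r)$ exists and is a cofibration between cofibrant objects. Since all the relevant data only involve the restriction to $R_+$, I may assume $R$ is a locally finite direct category, so that all latching objects are computed as finite colimits over $R_+/r$.

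The first step is to record the objects involved are cofibrant: by \cref{cor:core_cof_are_levelwise} each of $i$ and $j$, being a core Reedy cofibration, is a levelwise cofibration, so $X(r),Y(r),Z(r)$ are all cofibrant; and by \cref{lem:LatchingDomain_are_cof} the domains $L_rY\sqcup_{L_rX}X(r)$ and $L_rZ\sqcup_{L_rY}Y(r)$ of the latching maps of $i$ and $j$ are cofibrant. The second step is the key diagram chase. Proceeding by strong induction on $\deg(r)$, I may assume that $L_r(ji):L_rX\to L_rZ$ is a core cofibration (this follows because $L_r(-)$ applied to the composite of core Reedy cofibrations restricted to objects of strictly smaller degree is a composite of core cofibrations, which is a core cofibration by the inductive hypothesis together with \cref{def:wms}\eqref{def:wms:composite-fib-cofib}). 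Then I form the canonical commutative diagram relating the three pushouts $L_rY\sqcup_{L_rX}X(r)$, $L_rZ\sqcup_{L_rX}X(r)$ and $L_rZ\sqcup_{L_rY}Y(r)$, and observe that $L_rZ\sqcup_{L_rX}X(r)$ exists as the pushout of the core cofibration $L_rY\to L_rZ$ along the map $L_rY\to L_rY\sqcup_{L_rX}X(r)$ with cofibrant domain (using \cref{def:wms}\eqref{def:wms:po-cof}), and hence is cofibrant. The latching map of $ji$ at $r$ then factors as
\[
L_rZ\sqcup_{L_rX}X(r)\;\cofibration\; L_rZ\sqcup_{L_rY}Y(r)\;\cofibration\; Z(r),
\]
where the first map is a pushout of the core cofibration $L_rY\sqcup_{L_rX}X(r)\to Y(r)$ (the latching map of $i$ at $r$) along a map out of a cofibrant object, hence a core cofibration, and the second map is the latching map of $j$ at $r$, a core cofibration by hypothesis. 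A composite of two core cofibrations between cofibrant objects is a core cofibration by \cref{def:wms}\eqref{def:wms:composite-fib-cofib}, which is exactly what we needed.

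The main obstacle is purely bookkeeping: one has to check that the various pushouts defining $L_rZ\sqcup_{L_rX}X(r)$ and its comparison maps actually exist and fit into the claimed factorization, given that in a weak model category pushouts of cofibrations are only guaranteed to exist when the domain is cofibrant; this is why the cofibrancy facts from \cref{cor:core_cof_are_levelwise} and \cref{lem:LatchingDomain_are_cof} must be established first, and why the strong induction on degree is necessary to keep all the latching objects cofibrant. Once those cofibrancy statements are in hand, the argument is a routine iteration of \cref{def:wms}\eqref{def:wms:po-cof} and \eqref{def:wms:composite-fib-cofib}, entirely parallel to the classical Reedy case, so I would present it tersely.
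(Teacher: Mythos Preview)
Your factorization
\[
L_rZ\sqcup_{L_rX}X(r)\;\to\; L_rZ\sqcup_{L_rY}Y(r)\;\to\; Z(r)
\]
is correct and gives a valid proof, but the approach differs from the paper's. The paper encodes the composite $X\to Y\to Z$ as a single diagram $T:R\times\{0<1<2\}\to\catM$, checks that $T$ is Reedy cofibrant, and then applies \cref{cor:Direct_colimit} once to the sieve $U\subset R\times\{0<1<2\}/(r,2)$ omitting $(r,1)$ and $(r,2)$; the resulting map $\Colim_U T\to Z(r)$ is identified directly with $L_rZ\sqcup_{L_rX}X(r)\to Z(r)$. This avoids any explicit pushout manipulation and any induction on degree.

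Your argument is more hands-on, but the justification of the key step is muddled. You invoke strong induction on $\deg(r)$ to obtain that $L_r(ji):L_rX\to L_rZ$ is a core cofibration, yet what you actually \emph{use} is that $L_r(j):L_rY\to L_rZ$ is a core cofibration. The latter has nothing to do with the inductive hypothesis about $ji$: it follows directly from $j$ being a core Reedy cofibration, by the same \cref{cor:Direct_colimit} argument used in the proof of \cref{cor:core_cof_are_levelwise} (form $T':R\times\{0<1\}\to\catM$ from $j$, restrict to $(R_+/r-\{\idx_r\})\times\{0<1\}$, and take $U$ to be the $\{0\}$-part). Once you state this cleanly, the induction on $\deg(r)$ becomes unnecessary and your argument goes through. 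In effect, both proofs rest on \cref{cor:Direct_colimit}; the paper applies it once to a $[2]$-shaped diagram, while you apply it (implicitly) to a $[1]$-shaped diagram and then do two pushouts by hand.
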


 \begin{proof}
 We use a strategy very similar to the proof of \cref{cor:core_cof_are_levelwise}. Here again, the result only depends on the restriction to $R^+$ so we can freely assume that $R$ is a direct category. Let $X \to Y \to Z$ be two composable Reedy core cofibrations in $\catM^R$. We consider this as a diagram $T:R \times \{0 < 1 < 2\} \to \catM$. As in the proof of \cref{cor:core_cof_are_levelwise}. We observe that the latching map at an element of the form $(r,0)$ is the latching map $L_r X \to X$ of $X$ hence is a cofibration as $X$ is Reedy cofibrant. The latching map at an element $(r,1)$ is the map
   \[L_rY \sqcup_{L_r X} X(r) \to Y(r) \]
   which is a cofibration as $X \to Y$ is assumed to be a Reedy cofibration. And finally, the latching map at $(r,2)$ is the map
   \[L_rZ \sqcup_{L_r Y} Y(r) \to Z(r) \]
   which is also a cofibration. So this diagram $R \times \{0 < 1< 2 \} \to \catM$ is Reedy cofibrant. It immediately follows that, for any $r \in R$ the composite $R \times \{0 <1 <2 \} / (r,2) \to R^- \times \{0 < 1< 2 \} \to \catM$ is a Reedy cofibrant diagram. Hence, applying \cref{cor:Direct_colimit}, we can deduce that the map
   \[\Colim_U T \to Z(r) \]
   is a cofibration, where $U \subset R \times \{0 <1 <2 \} / (r,2)$ is the sieve containing all the objects except $(r,1)$ and $(r,2)$. But this map can be seen to be exactly
   \[L_rZ \sqcup_{L_r X} X(r) \to Z(r) \]
   by \cref{Rk:latching_map_as_single_colimits}. This concludes the proof, as this can be applied to any object $r \in R$.
 \end{proof}

 \begin{proposition}\label{prop:po-Reedy-cofibration} Consider a cospan $Y \leftarrow X \to Z$ of diagram $R \to \catM$, such that $X,Y,Z$ are all Reedy cofibrant and the arrow $X \to Y$ is a Reedy cofibration. Then the (level-wise) pushout $Y \sqcup_X Z$ exists in $\catM^R$ and the natural transformation $Z \to Y \sqcup_X Z$ is a Reedy cofibration.
 \end{proposition}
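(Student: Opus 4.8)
The plan is to reduce the statement about the Reedy category $R$ to a statement about a single latching map, then use the usual pushout stability in $\catM$ itself. The key observation, which we already exploited in the proof of \cref{cor:core_cof_are_levelwise} and \cref{prop:composite_fib}, is that all the relevant data only depends on the restriction of the diagrams to the subcategory $R_+$, so I would first reduce to the case where $R$ is a locally finite direct category. This is because the latching object of any diagram $W:R \to \catM$ at an object $r$ only involves the values of $W$ on $R_+/r - \{\idx_r\}$.

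Next I would observe that the level-wise pushout can be computed degree by degree using an interchange of colimits. Concretely, for $r \in R$, the comparison map from the latching-object colimit $L_r(Y\sqcup_X Z) \sqcup_{L_r Z} Z(r) \to (Y\sqcup_X Z)(r)$ should be identified with the pushout in $\catM$ of the latching map of $X \to Y$ at $r$, namely
\[
  L_r Y \sqcup_{L_r X} X(r) \longrightarrow Y(r),
\]
along the map $L_r Y \sqcup_{L_r X} X(r) \to L_r Z \sqcup_{L_r X} X(r) \to$ (the relevant colimit). I would set this up carefully by writing $Y \sqcup_X Z$ as a colimit of a diagram over $R_+ \times \{0 < 1 < 2\}$ — exactly the trick used in \cref{prop:composite_fib} — but now with the ``middle'' object being the pushout span rather than a composite. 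The existence of the pushout at each level, and cofibrancy of $(Y\sqcup_X Z)(r)$, would be handled inductively on $\deg(r)$ using \cref{cor:Latching_are_cofibrant} to know $L_r(-)$ exists and is cofibrant, together with \cref{lem:LatchingDomain_are_cof} to know the domain $L_r Y \sqcup_{L_r X} X(r)$ of the latching map is cofibrant, so that axiom \ref{def:wms:po-cof} of \cref{def:wms} applies to produce the pushout of this core cofibration along a map with cofibrant domain.

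I would then conclude: since $X \to Y$ is a Reedy cofibration, the latching map $L_r Y \sqcup_{L_r X} X(r) \to Y(r)$ is a core cofibration; by induction the source of the map we are pushing out along is cofibrant; hence by \ref{def:wms:po-cof} the pushout exists and the resulting map — which is precisely the latching map of $Z \to Y \sqcup_X Z$ at $r$, after the colimit-interchange identification — is again a cofibration. Running this over all $r \in R$ gives that $Z \to Y \sqcup_X Z$ is a Reedy cofibration. The fact that $Y \sqcup_X Z$ is itself Reedy cofibrant then follows from \cref{prop:composite_fib} applied to $0 \to Z \to Y \sqcup_X Z$, or directly from the same inductive argument applied to the sieve that omits the top vertex.

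The main obstacle I anticipate is the bookkeeping in the colimit-interchange step: one must verify that the colimit defining $L_r(Y\sqcup_X Z)$, glued with $Z(r)$ over $L_r Z$, really is canonically isomorphic to the pushout in $\catM$ of $L_r Y \sqcup_{L_r X} X(r) \to Y(r)$ along the natural map into $L_r Z \sqcup_{L_r X} X(r)$ — and that all these colimits exist (which is where local finiteness and the inductive cofibrancy hypotheses are genuinely used, so that only finitely many pushouts of core cofibrations between cofibrant objects are involved, as in \cref{lem:Direct_colimit}). Once this identification is in place the homotopical content is immediate from \cref{def:wms}. As with the analogous Reedy lemmas above, the argument is a routine adaptation of the classical Reedy category theory, so I would state the identification of colimits as a lemma (or cite \cref{Rk:latching_map_as_single_colimits} for the analogous fact) and not belabour the diagram chase.
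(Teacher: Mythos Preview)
Your proposal is correct and follows essentially the same route as the paper: identify the latching map of $Z \to Y \sqcup_X Z$ at $r$ as a pushout of the latching map $L_rY \sqcup_{L_rX} X(r) \to Y(r)$, then invoke axiom~\ref{def:wms:po-cof}. The paper is slightly more direct than your outline: rather than packaging the data as a diagram over an auxiliary shape in the style of \cref{prop:composite_fib}, it simply uses that colimits commute with colimits to compute $L_r(Y\sqcup_X Z) = L_rY \sqcup_{L_rX} L_rZ$ and then rewrites the domain of the latching map as $(L_rY \sqcup_{L_rX} X(r)) \sqcup_{X(r)} Z(r)$, from which the pushout identification is immediate; no induction on $\deg(r)$ is needed for this step once the levelwise pushout is known to exist via \cref{cor:core_cof_are_levelwise}.
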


 \begin{proof}
   It follows from \cref{cor:core_cof_are_levelwise} that for each $r \in R$ the three objects in the diagram $Y(r) \leftarrow X(r) \to Z(r)$ are cofibrant and the map $X(r) \to Y(r)$ is a cofibration, so the levelwise pushout $Y(r) \sqcup_{X(r)} Z(r)$ exists and by general category-theoretic results is functorial in $r$ and is a pushout in the category of diagrams $\catM^R$. We only need to check that the map $Z(r) \to Y(r) \sqcup_{X(r)} Z(r)$ is a Reedy cofibration. For this observe that as colimits commute with colimits we have:
   \[ L_r( Y \sqcup_X Z) = \Colim_{r' \to r \in R^+} Y(r') \sqcup_{X(r')} Z(r') = L_r Y \sqcup_{L_r X} L_r Z  \]
   So that in the latching map
   \[ L_r(Y \sqcup_X Z) \sqcup_{L_r Z} Z \to Y \sqcup_X Z \]
   the domain can be identified with
   \[ \left( L_rY \sqcup_{L_rX} L_r Z \right) \sqcup_{L_r Z} Z = L_r Y \sqcup_{L_r X} Z = (L_r Y \sqcup_{L_r X} X ) \sqcup_X Z \]
   so the latching map is
   \[(L_r Y \sqcup_{L_r X} X ) \sqcup_X Z \to Y \sqcup_X Z\]
 which is a pushout of the latching map $L_r Y \sqcup_{L_r X} X \to Y$. The latter map is itself a core cofibration since $X \to Y$ is a core Reedy cofibration. Hence, this concludes the proof.
\end{proof}

We are now ready to prove \cref{reedy-model:theorem}:

\begin{proof}
  We go over all the conditions of \cref{def:wms}. The validity of conditions \ref{def:wms:initial-terminal}, \ref{def:wms:2-out-of-3}, \ref{def:wms:iso-closed} and \ref{def:wms:iso-are-we} is trivial.
  Condition \ref{def:wms:composite-fib-cofib} is \cref{prop:composite_fib} together with its dual.
  Condition \ref{def:wms:po-cof} is \cref{prop:po-Reedy-cofibration}, and condition \ref{def:wms:pb-fib} is the dual statement.

  The proof of conditions \ref{def:wms:lifting-prop} is essentially the same as the proof for ordinary model categories, as for example in Chapter 15 of \cite{hirschhorn2003model} or in Chapter 5.2 of \cite{hovey1999models}. The key step in the proof is that in order to construct a diagonal lift in a square:
\[ \begin{tikzcd}
    A \ar[r] \ar[d,>->,"i"] & X \ar[d,->>,"p"] \\
    B \ar[r] \ar[ur,dotted] & Y
  \end{tikzcd}\]
where say $i$ is a core cofibration and $p$ is a core fibration, one of them being a (level-wise) weak equivalence. Then we proceed by induction as in the usual proof, at each step we need to produce a diagonal lift in a square of the form
\[ \begin{tikzcd}
    A(r) \sqcup_{L_r A} L_r(B) \ar[r] \ar[d,>->] & X(r) \ar[d,->>] \\
    B(r) \ar[r] \ar[ur,dotted] & Y(r) \times_{M_r Y} M_r X
  \end{tikzcd}\]
Now, by \cref{lem:LatchingDomain_are_cof} (and its dual) the object $A(r) \sqcup_{L_r A} L_r(B)$ is cofibrant and $Y(r) \times_{M_r Y} M_r X$ is fibrant. By definition of Reedy cofibration and fibration, the left vertical map is a cofibration and the right vertical is a fibration, and if one of $i$ or $p$ (say $i$) is a weak equivalence. Then the second point of \cref{cor:core_cof_are_levelwise} shows that the left vertical map is a trivial cofibration, hence the square admits a diagonal lift, which concludes the proof.

The proof of  condition \ref{def:wms:cof-trivfib-facto} and (dually of condition \ref{def:wms:trivcof-fib-facto}), also follows very closely the classical proof, as in Chapter 15 of \cite{hirschhorn2003model} or in Chapter 5.2 of \cite{hovey1999models}. Given $A \to X$ a map from a Reedy cofibrant diagram to a Reedy fibrant diagram that we want to factor as a core trivial Reedy cofibration followed by a core Reedy fibration, $ A \to B \to X$. We proceed by induction to construct the diagram, the object $B(r)$, and the maps $A(r) \to B(r) \to X(r)$ gradually by induction on the degree of $r$. Following the classical proof, at each stage, we need to construct a factorization of a map in $\catM$:
\[ A(r) \sqcup_{L_r A} L_r B \to X(r) \times_{M_r X} M_r B \]
as a trivial cofibration followed by a fibration. But as observed above, the domain is cofibrant and the target is fibrant, so this is indeed possible in $\catM$. The case of condition \ref{def:wms:trivcof-fib-facto} is done in the exact same way, but factoring the map above as a cofibration followed by a trivial fibration

\end{proof}

\bibliography{homotopy-languages}
\bibliographystyle{alpha}

\end{document}